\tikzset{
  shift left/.style ={commutative diagrams/shift left={#1}},
  shift right/.style={commutative diagrams/shift right={#1}}
}
\theoremstyle{plain}
\newtheorem{theorem}{Theorem}[section]
\newtheorem{lemma}[theorem]{Lemma}
\newtheorem{proposition}[theorem]{Proposition}
\newtheorem{corollary}[theorem]{Corollary}
\newtheorem*{theorem*}{Theorem}
\newtheorem*{corollary*}{Corollary}
\theoremstyle{definition}
\newtheorem{definition}[theorem]{Definition}
\newtheorem{example}[theorem]{Example}
\newtheorem{remark}[theorem]{Remark}
\newtheorem{construction}[theorem]{Construction}
\newtheorem{observation}[theorem]{Observation}
\newenvironment{xreftheorem}[1]{\def\thexref{\ref{#1}}
\begin{thexreftheorem}\itshape}{\end{thexreftheorem}}
\newtheorem*{thexreftheorem}{Theorem \thexref}
\newcommand{\exend}{\unskip\nobreak\hfill$\circ$}
\newcommand{\defend}{\unskip\nobreak\hfill$\triangleleft$}
\newcommand{\N}{\mathbb N}
\newcommand{\Z}{\mathbb Z}
\newcommand{\Q}{\mathbb Q}
\renewcommand{\epsilon}{\varepsilon}
\renewcommand{\phi}{\varphi}
\renewcommand{\tilde}{\widetilde}
\begin{document}

\title[Reductive Borel--Serre and algebraic K-theory]{The reductive Borel--Serre compactification as a model for unstable algebraic K-theory}
\author{Dustin Clausen and Mikala {\O}rsnes Jansen}
\date{\today}
\address{Department of Mathematical Sciences, University of Copenhagen, 2100 Copenhagen, Denmark.}\email{mikala@math.ku.dk}
\thanks{Both authors were supported by the Danish National Research Foundation through the Centre for Symmetry and Deformation (DNRF92) and the Copenhagen Centre for Geometry and Topology (DNRF151). MJ was also supported by the European Research Council (ERC) under the European Unions Horizon 2020 research and innovation programme (grant agreement No 682922).}
\keywords{}

\begin{abstract}
Let $A$ be an associative ring and $M$ a finitely generated projective $A$-module. We introduce a category $\operatorname{RBS}(M)$ and prove several theorems which show that its geometric realisation functions as a well-behaved unstable algebraic K-theory space.  These categories $\operatorname{RBS}(M)$ naturally arise as generalisations of the exit path $\infty$-category of the reductive Borel--Serre compactification of a locally symmetric space, and one of our main techniques is to find purely categorical analogues of some familiar structures in these compactifications. 
\end{abstract}

\maketitle

\tableofcontents

\section{Introduction}

\subsection{Unstable algebraic K-theory}
 
Let $A$ be a ring.  The algebraic K-theory space $K(A)$ is an invariant of $A$ which is built from the concrete linear algebra of finitely generated projective modules over $A$.  But $K(A)$ has a subtle nature.  In fact, there are several different ways of defining $K(A)$ as a CW-complex, and they are all different up to homeomorphism; however, they are nonetheless canonically homotopy equivalent.  Thus the true $K(A)$ is this common homotopy type, or \emph{anima}.  An anima is elusive and difficult to grasp, but it anchors itself to reality via concrete invariants such as homotopy groups.  The homotopy groups of $K(A)$ are abelian groups known as the higher K-groups, and they have myriad connections to other invariants of $A$ arising in different contexts.

\medskip

To every finitely generated projective module $M$ corresponds a point in $K(A)$.  Moreover, this association is functorial for isomorphisms, so one obtains a map
$$BGL(M)\rightarrow K(A).$$
This is very far from being an isomorphism, for two reasons: first, these anima have very different nature (one is a $K(\pi,1)$ for a generally non-abelian $\pi$ and the other is a simple space), and second, $K(A)$ takes into account all finitely generated projective modules, not just $M$.  We would like to mitigate the first reason while keeping the second.  More precisely, we want to define an intermediary anima $\overline{BGL(M)}$, a sort of ``closure'' of $BGL(M)$ in $K(A)$, which is similar to $K(A)$ in terms of its nature and properties, but whose definition only uses linear algebra internal to $M$.  Such an intermediary anima is called an \emph{unstable algebraic K-theory}.

\medskip

There have already been several proposed definitions for unstable algebraic K-theory in the literature, mostly in the the special case $M=A^n$.  In contrast to the stable situation of $K(A)$, all of these definitions are in general pairwise inequivalent, even as anima.  Our definition will be yet another one which is generally inequivalent to the others, see below for more remarks on the comparisons.  It will be denoted
$$\vert \operatorname{RBS}(M)\vert.$$

\noindent The notation foreshadows that this anima arises as the geometric realisation of an explicit category $\operatorname{RBS}(M)$ built from linear algebra internal to $M$.  We will say more about the definition and origins of this category later (see \Cref{RBS(M) defined in introduction}).  But first let's state the main results, which all concern the question of how close the natural maps
$$BGL(M)\rightarrow \vert \operatorname{RBS}(M)\vert\rightarrow K(A)$$
are to being isomorphisms.

\medskip

Our arguments are based on an inductive strategy, and for carrying many of them out it is at the very least convenient to impose the following condition on our module $M$:

\begin{definition}
We say that a finitely generated projective $A$-module $M$ is \emph{split noetherian} if every increasing chain of splittable submodules of $M$ stabilises.\defend
\end{definition}

If the ring $A$ is either noetherian or commutative with connected spectrum, then every finitely generated projective $A$-module is split noetherian.

\medskip

Concerning the map $BGL(M)\rightarrow \vert \operatorname{RBS}(M)\vert$, both anima are connected, so the first question is what happens on $\pi_1$.  Let $E(M)\subset GL(M)=\pi_1 BGL(M)$ denote the subgroup generated by those automorphisms of $M$ which induce the identity on the associated graded of some splittable flag of submodules, where we stress the word some, i.e. that we are running through all choices of such flags. We think of $E(M)$ as the subgroup of those elements which map to zero in $K_1(A)$ for reasons purely internal to $M$.  It is a variant of the usual subgroup $E_n(A)\subset GL_n(A)$ generated by elementary matrices; there is a containment $E_n(A)\subset E(A^n)$ which is in general strict, but often an equality, for example $E_n(A)=E(A^n)$ if $n\geq 2 + \operatorname{sr}(A)$ so that $E_n(A)=\operatorname{ker}(GL_n(A)\rightarrow K_1(A))$, see \cite{V}.

\medskip

Our first result is actually fairly straightforward to prove from the definition, but it already gives a good indication of the nature of $\vert \operatorname{RBS}(M)\vert$.

\begin{theorem}
Let $A$ be a ring and $M$ a split noetherian finitely generated projective $A$-module.  The map $GL(M)=\pi_1 BGL(M)\rightarrow \pi_1\vert \operatorname{RBS}(M)\vert$ is surjective with kernel $E(M)$, so
$$\pi_1 \vert \operatorname{RBS}(M)\vert = GL(M)/E(M).$$

\end{theorem}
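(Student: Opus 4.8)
The plan is to compute $\pi_1\vert\operatorname{RBS}(M)\vert$ directly from the combinatorics of $\operatorname{RBS}(M)$, using that for any category $\mathcal{C}$ and object $c$ the group $\pi_1(\vert\mathcal{C}\vert,c)$ is the automorphism group of $c$ in the localisation $\mathcal{C}[W^{-1}]$ at the class $W$ of all morphisms, so that it has a presentation read off from the $1$- and $2$-simplices of the nerve. I will take as basepoint the object $[M]\in\operatorname{RBS}(M)$ given by the trivial flag $0\subsetneq M$; the functor $BGL(M)\to\operatorname{RBS}(M)$ hitting $[M]$ realises the map in question. From the construction of $\operatorname{RBS}(M)$ I will use the following description of the morphisms touching $[M]$: one has $\operatorname{Hom}_{\operatorname{RBS}(M)}([M],[M])=\operatorname{Aut}_{\operatorname{RBS}(M)}([M])=GL(M)$; every object is of the form $x_\mathcal{F}$ for a splittable flag $\mathcal{F}$ of $M$, with $\operatorname{Aut}_{\operatorname{RBS}(M)}(x_\mathcal{F})=P_\mathcal{F}/U_\mathcal{F}$ where $P_\mathcal{F}=\operatorname{Stab}_{GL(M)}(\mathcal{F})$ and $U_\mathcal{F}\trianglelefteq P_\mathcal{F}$ is the subgroup of automorphisms inducing the identity on $\operatorname{gr}\mathcal{F}$; and $\operatorname{Hom}_{\operatorname{RBS}(M)}(x_\mathcal{F},[M])\cong GL(M)/U_\mathcal{F}$ as a biset, with $GL(M)$ acting on the left by postcomposition, $P_\mathcal{F}/U_\mathcal{F}$ on the right by precomposition, and the trivial coset corresponding to the ``collapsing'' morphism $x_\mathcal{F}\to[M]$. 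The key point to retain is that $U_\mathcal{F}\subseteq E(M)$ for every splittable flag $\mathcal{F}$, which is immediate from the definition of $E(M)$. (Split noetherianity enters only to keep these flags of bounded length, making the localisation manageable; it is otherwise inessential to this argument.)

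\emph{Surjectivity.} Since every object admits a collapsing morphism to $[M]$, the realisation is connected, and any based loop is a zig-zag of morphisms of $\operatorname{RBS}(M)$. I fix once and for all a collapsing morphism $p_x\colon x\to[M]$ for every $x$ (with $p_{[M]}=\operatorname{id}$) and rewrite the loop in the fundamental groupoid by inserting $p_x^{-1}p_x$ at each object traversed. Because $GL(M)$ acts transitively on each $\operatorname{Hom}_{\operatorname{RBS}(M)}(x_\mathcal{F},[M])=GL(M)/U_\mathcal{F}$ by postcomposition, every morphism $m\colon x\to y$ in the zig-zag satisfies $p_y\circ m=u_m\circ p_x$ for some $u_m\in GL(M)$, and these identities collapse the loop to a word in the $u_m$'s, all of which lie in $\operatorname{Aut}_{\operatorname{RBS}(M)}([M])=GL(M)$. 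Hence $GL(M)\to\pi_1\vert\operatorname{RBS}(M)\vert$ is surjective.

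\emph{The kernel contains $E(M)$.} It suffices to kill each generator of $E(M)$, that is, an automorphism $u$ of $M$ inducing the identity on $\operatorname{gr}\mathcal{F}$ for some splittable flag $\mathcal{F}$. Such a $u$ lies in $U_\mathcal{F}$, so it fixes the collapsing morphism $p\colon x_\mathcal{F}\to[M]$, i.e.\ $u\circ p=p$. As $p$ becomes invertible in the fundamental groupoid, this forces $u=\operatorname{id}$ in $\pi_1\vert\operatorname{RBS}(M)\vert$, so $E(M)\subseteq\ker\bigl(GL(M)\to\pi_1\vert\operatorname{RBS}(M)\vert\bigr)$.

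\emph{The kernel is contained in $E(M)$.} This is the main obstacle. Taking the $p_x$ as a spanning tree, the standard presentation of $\pi_1$ of a nerve says $\pi_1\vert\operatorname{RBS}(M)\vert$ is generated by the $u\in GL(M)$ subject to the relations from the composition $2$-simplices: each composable pair $f\colon x\to y$, $g\colon y\to z$ yields $u_{gf}=u_gu_f$, with $u_f,u_g,u_{gf}\in GL(M)$ as above. Thus the kernel is the normal closure of the elements $u_{gf}^{-1}u_gu_f$, and the task is to show each lies in $E(M)$. In the representative case $x=y=z$ with $f,g\in\operatorname{Aut}_{\operatorname{RBS}(M)}(x_\mathcal{F})=P_\mathcal{F}/U_\mathcal{F}$: since precomposition by $f$ acts on $\operatorname{Hom}(x_\mathcal{F},[M])=GL(M)/U_\mathcal{F}$ by right multiplication, the defining relation $p\circ f=u_f\circ p$ forces $u_f$ to be a lift of $f$ along $P_\mathcal{F}\twoheadrightarrow P_\mathcal{F}/U_\mathcal{F}$, and likewise $u_g$, $u_{gf}$; then $u_gu_f$ and $u_{gf}$ are both lifts of $gf$, so $u_{gf}^{-1}u_gu_f\in U_\mathcal{F}\subseteq E(M)$. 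The remaining $2$-simplices should reduce to this one: by construction every morphism of $\operatorname{RBS}(M)$ factors as an automorphism followed by a collapsing morphism, compatibly with associated gradeds, so the same bookkeeping puts each $u_{gf}^{-1}u_gu_f$ into the subgroup of automorphisms acting as the identity on $\operatorname{gr}$ of some splittable flag, hence into $E(M)$---giving $\ker\subseteq E(M)$ and the theorem. The hard part will be this last reduction: everything before it only used the comparatively simple structure of the morphisms \emph{into} $[M]$, whereas here one must control the full morphism sets of $\operatorname{RBS}(M)$, and it is exactly at this point that one relies on $\operatorname{RBS}(M)$ being built from the \emph{reductive} (rather than the full) Borel--Serre data, so that no residual unipotent information can impose extra relations.
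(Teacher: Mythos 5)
Your overall strategy (spanning tree of collapsing morphisms $p_x\colon x\to[\emptyset]$, generators from edges, relations from $2$-simplices) is viable and genuinely different from the paper's, and your surjectivity argument and the inclusion $E(M)\subseteq\ker$ are correct. But the proof as written has a gap exactly where you flag it: the inclusion $\ker\subseteq E(M)$ is only verified for composable pairs of automorphisms of a single object, and the claim that ``the remaining $2$-simplices should reduce to this one'' is not an argument. The good news is that no reduction is needed — the general case is a direct computation with the paper's conventions. If $f\colon\mathcal{F}\to\mathcal{G}$ is represented by $a$ with $a\mathcal{F}\leq\mathcal{G}$ and $g\colon\mathcal{G}\to\mathcal{H}$ by $b$ with $b\mathcal{G}\leq\mathcal{H}$, then $u_f\in aU_{\mathcal{F}}$, $u_g\in bU_{\mathcal{G}}$, $u_{gf}\in baU_{\mathcal{F}}$, so $u_{gf}^{-1}u_gu_f\in U_{\mathcal{F}}\,(a^{-1}U_{\mathcal{G}}a)\,U_{\mathcal{F}}$; since $a\mathcal{F}\leq\mathcal{G}$ forces $U_{\mathcal{G}}\subseteq U_{a\mathcal{F}}=aU_{\mathcal{F}}a^{-1}$ (the same containment that makes composition in $\operatorname{RBS}(M)$ well defined), every relation element already lies in $U_{\mathcal{F}}\subseteq E(M)$. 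You should also record that $E(M)$ is normal in $GL(M)$ (because $gU_{\mathcal{F}}g^{-1}=U_{g\mathcal{F}}$), so the normal closure of the relation elements is contained in $E(M)$, and note that the representatives $u_f$ are only defined up to $U_{\mathcal{F}}$, which is harmless for the same reason. Your final appeal to ``no residual unipotent information'' from the reductive Borel--Serre picture plays no role and should be deleted.

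For comparison, the paper avoids presentation bookkeeping entirely in this direction: it defines a functor $\operatorname{RBS}(M)\to B(GL(M)/E(M))$ sending every flag to the basepoint and the morphism represented by $g$ to $\bar g$ (well defined precisely because $U_{\mathcal{F}}\subseteq E(M)$), which on $\pi_1$ retracts the map from $GL(M)$ onto $GL(M)/E(M)$; your relation computation is essentially an unwinding of why that functor is well defined. Conversely, the paper proves surjectivity by noetherian induction through the colimit decompositions of $\operatorname{RBS}(M)$ — this is where the split noetherian hypothesis enters its proof — whereas your tree argument gets surjectivity directly and, as you observe, does not use that hypothesis; that is a legitimate small gain of your route, at the cost of an argument that is less robust (the paper's colimit machinery is reused for the homology statements later).
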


Our next result says that for a large class of rings, this $\pi_1$ calculation completely captures the difference between $BGL(M)$ and $\vert \operatorname{RBS}(M)\vert$.  It is based on the work of Nesterenko-Suslin \cite{NS} who found a broadly satisfied hypothesis on a ring which guarantees that one can ignore the difference between block upper-triangular and block diagonal matrices when calculating group homology.

\begin{theorem}\label{plus}
Let $A$ be a ring with many units in the sense of \cite{NS}, and let $M$ be a split noetherian finitely generated projective $A$-module. Then the comparison map
$$c\colon BGL(M)\rightarrow \vert\operatorname{RBS}(M)\vert$$
is a $\mathbb{Z}$-homology isomorphism.

Suppose furthermore that every summand of $M$ is free.  Then $c$ is an isomorphism on homology with all local coefficient systems.  In particular, $E(M)$ is a perfect group, and
$$\vert \operatorname{RBS}(M)\vert \simeq BGL(M)^+,$$
the plus-construction taken with respect to $E(M)\subset \pi_1 BGL(M)$.  Equivalently, $\vert \operatorname{RBS}(M)\vert$ is the initial anima with a map from $BGL(M)$ which kills $E(M)\subset\pi_1 BGL(M)$.
\end{theorem}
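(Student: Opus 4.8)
The plan is to reduce everything to the first sentence of \Cref{plus}, the statement that $c$ is a $\mathbb{Z}$-homology isomorphism, together with some general nonsense about the plus construction. The key point that must be established separately is the upgrade from $\mathbb{Z}$-homology to homology with all local coefficient systems, and this is where the extra hypothesis that every summand of $M$ is free enters. I would first set up the comparison map $c\colon BGL(M)\to \lvert \operatorname{RBS}(M)\rvert$ on fundamental groups using the first theorem of the paper: we know $\pi_1 c$ is surjective with kernel $E(M)$, so $\pi_1 \lvert \operatorname{RBS}(M)\rvert = GL(M)/E(M)$. The target of $c$ therefore has the "correct" $\pi_1$, and the question of whether $c$ is a homology iso with all local systems is, by the standard recognition principle, equivalent to asking whether $c$ exhibits the target as the plus construction of $BGL(M)$ with respect to the normal subgroup $E(M)$: indeed a map $X\to Y$ inducing a surjection on $\pi_1$ with perfect kernel $P$ and an iso on $H_*(-;\mathbb{Z})$ is automatically a plus construction (acyclic map), and acyclicity is exactly homology iso with all local coefficients.

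So the real content to extract is: (i) $E(M)$ is perfect, and (ii) $c$ is acyclic, i.e.\ a $\mathbb{Z}[\pi_1]$-homology iso and not merely a $\mathbb{Z}$-homology iso. For (i), I would argue as follows. The map $c$ is a $\mathbb{Z}$-homology isomorphism by the first part of \Cref{plus}, and $\lvert \operatorname{RBS}(M)\rvert$ has fundamental group $GL(M)/E(M)$. Consider the covering space $\widetilde{Y}\to \lvert\operatorname{RBS}(M)\rvert$ corresponding to the trivial subgroup; pulling back along $c$ gives the covering of $BGL(M)$ corresponding to $E(M) \subset GL(M)$, which is $BE(M)$. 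A diagram chase with the Serre spectral sequence (or the comparison of the two fibration sequences $BE(M) \to BGL(M) \to B(GL(M)/E(M))$ and $\widetilde{Y}\to \lvert\operatorname{RBS}(M)\rvert \to B(GL(M)/E(M))$) combined with the $\mathbb{Z}$-homology iso on the base and total spaces forces $BE(M)\to \widetilde{Y}$ to be a $\mathbb{Z}$-homology iso onto a simply connected space; hence $BE(M)$ is $\mathbb{Z}$-acyclic, i.e.\ $E(M)$ is a perfect group and in fact $H_*(E(M);\mathbb{Z})$ vanishes in positive degrees, so $\widetilde Y$ is $\mathbb{Z}$-acyclic and simply connected, hence contractible. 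That already gives $\lvert \operatorname{RBS}(M)\rvert \simeq BGL(M)^+$ by the universal property of the plus construction, and the last ("equivalently") sentence is then just the mapping-space characterisation of the plus construction as the localisation of $BGL(M)$ at the map inverting $E(M)$-homology, i.e.\ the initial anima under $BGL(M)$ on which $E(M)$ acts trivially after looping — standard.

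The one subtlety, and what I expect to be the main obstacle, is the reliance on the freeness hypothesis: where exactly is "every summand of $M$ is free" used, given that the $\mathbb{Z}$-homology statement held without it? The point is that the argument above shows $\lvert\operatorname{RBS}(M)\rvert$ is the plus construction and hence $\mathbb{Z}[\pi_1]$-acyclic comparison, for \emph{any} split noetherian $M$ over a ring with many units — so actually the freeness is not needed for the plus-construction conclusion per se. Rather, I suspect the freeness is invoked to know that the first part of \Cref{plus} (the $\mathbb{Z}$-homology statement) is being applied not just to $M$ but to all the summands that show up in the inductive machinery underlying its proof, and more importantly to identify $BGL(M)^+$ with something recognisable; or it is used to ensure that $GL(M)/E(M)$ really is the target's $\pi_1$ in a way compatible with the Nesterenko–Suslin input. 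I would want to re-read the proof of the first part of \Cref{plus} to pin this down; pending that, the honest statement of the plan is: establish (i) and (ii) as above from the $\mathbb{Z}$-homology iso, deduce acyclicity of $c$, invoke the universal property of the plus construction, and isolate the single place where freeness of summands is genuinely required (most likely in guaranteeing the hypotheses of \cite{NS} throughout the induction, not in the final formal step).
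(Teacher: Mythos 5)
Your reduction of the second statement to the first is where the proposal breaks down. You claim that a map which is surjective on $\pi_1$ with perfect kernel and an isomorphism on $H_*(-;\mathbb{Z})$ is automatically acyclic (a plus construction); this is false. For a counterexample, let $X$ be obtained from $S^1\vee S^2$ by attaching one $3$-cell along the element $2t-1\in\mathbb{Z}[t,t^{-1}]\cong\pi_2(S^1\vee S^2)$, and map $X\to S^1$ by the identity on $\pi_1$. This is an isomorphism on $\pi_1$ (so the kernel is trivial, in particular perfect) and on integral homology, but $H_2(X;\mathbb{Z}[\pi_1])\cong\mathbb{Z}[1/2]\neq 0$ while $H_2(S^1;\mathbb{Z}[\pi_1])=0$: the homotopy fibre is not acyclic. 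For the same reason your fibration-comparison step ("$\mathbb{Z}$-homology iso on base and total spaces forces $BE(M)\to\widetilde{Y}$ to be a $\mathbb{Z}$-homology iso") is invalid: comparing Serre spectral sequences in that direction requires control of the twisted coefficients, which is precisely what is being claimed. There is no formal passage from constant to local coefficients here, and consequently your conclusion that the plus-construction statement holds without the freeness hypothesis is unsupported.

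This also explains why your guess about where freeness enters is off. In the paper the local-coefficient statement is not deduced from the $\mathbb{Z}$-statement; both are proved by the same Nesterenko--Suslin-style vanishing argument run with a local system $\mathcal{L}$ (\Cref{fromflagstoRBS(M)}, \Cref{vanishingtrick}, \Cref{NSvanishingthm}). Hypothesis (1) of \Cref{vanishingtrick} requires $\mathcal{L}$ to be constant on the diagonal subgroup $D$ generated by the elements $D_\lambda$; since $\mathcal{L}$ is pulled back from $\vert\operatorname{RBS}(M)\vert$, whose $\pi_1$ is $GL(M)/E(M)$ by \Cref{pi1}, this amounts to showing $D_\lambda\in E(M)$, and that is exactly \Cref{diagonal}(3), where writing a determinant-one diagonal matrix as a product of elementary matrices uses that the relevant summands are free. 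For constant coefficients this condition is vacuous, which is why the first statement needs no freeness. Perfectness of $E(M)$ and the identification $\vert\operatorname{RBS}(M)\vert\simeq BGL(M)^+$ are then formal consequences of the local-coefficient (i.e.\ acyclicity) statement together with \Cref{pi1} --- that final formal step of your plan is fine, but it sits downstream of the very point your argument fails to establish.
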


Thus, for such rings $\vert \operatorname{RBS}(M)\vert$ provides an explicit linear-algebraic model for the plus-construction, which is otherwise a slightly esoteric homotopy-theoretic construction.   There are lots of rings with  many units, for example any algebra over a commutative local ring with infinite residue field.  A commutative local ring also satisfies the hypothesis that every finitely generated projective module is free.

\medskip

The simplest non-example is a finite field, and our third theorem analyses this case to see the difference with the plus construction.  As we will explain below, the resulting theorem should properly be attributed to Jesper Grodal, since in \cite{G} he proved a more general result in the context of arbitrary finite groups which specialises to this result for $G=GL(M)$.  However, we do give an independent proof based on the general machinery for analysing $\operatorname{RBS}(M)$ categories that we develop.  For purposes of comparison, it is also worth noting that for a finite ring which is not a product of fields, our categories $\operatorname{RBS}(M)$ do not fit into Grodal's framework (nor do we prove any results about that situation).

\begin{theorem}\label{finite}
Let $k$ be a finite field of characteristic $p$ and $V$ a finite-dimensional $k$-vector space.  Then:
\begin{enumerate}
\item $\vert \operatorname{RBS}(V)\vert$ is a simple space;
\item The map $\vert\operatorname{RBS}(V)\vert\rightarrow \ast$ is an $\mathbb{F}_p$-homology isomorphism;
\item The map $BGL(V)\rightarrow \vert\operatorname{RBS}(V)\vert$ is a $\mathbb{Z}[1/p]$-homology isomorphism.
\end{enumerate}
In particular, $\vert \operatorname{RBS}(V)\vert$ is the $\mathbb{Z}[1/p]$-homology localisation of $BGL(V)$.
\end{theorem}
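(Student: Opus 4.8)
The plan is to prove parts (2) and (3) simultaneously by induction on $n=\dim_k V$, to deduce (1) afterwards, and to note that the ``in particular'' is then formal. The base case $n\le 1$ is immediate: $\operatorname{RBS}(0)\simeq\ast$, while $|\operatorname{RBS}(k)|=BGL(k)=Bk^{\times}$ is $\mathbb{F}_p$-acyclic because $|k^{\times}|=q-1$ is prime to $p$, and there the comparison map is the identity. For the inductive step I would feed in the structural description of $|\operatorname{RBS}(V)|$ developed earlier: it presents $|\operatorname{RBS}(V)|$ as a $GL(V)$-equivariantly stratified space whose open stratum is $BGL(V)$, whose remaining strata are indexed by the proper splittable flags in $V$ (over a field, all flags), with the stratum of a flag $F$ governed by its Levi $\prod_i GL(\operatorname{gr}_i F)$, with the closure of a stratum being again an $\operatorname{RBS}$ of lower rank, and with the normal/link data along the boundary assembled from the Tits buildings of the successive subquotients. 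The resulting stratification spectral sequence computing $H_{\ast}(|\operatorname{RBS}(V)|)$ then has $E_1$-terms built from the homology of parabolics (inductions from Levis) twisted by suspensions of Steinberg-type modules $\widetilde H_{\ast}$ of Tits buildings of subquotients, with the inductive hypothesis controlling the lower-rank $\operatorname{RBS}$'s.

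Into this I would plug the two facts special to a finite field. First, Solomon--Tits: the Tits building $\mathcal T(W)$ of an $m$-dimensional $k$-vector space is homotopy equivalent to a wedge of $(m-2)$-spheres, so its reduced homology is the Steinberg module $\operatorname{St}_W:=\widetilde H_{m-2}(|\mathcal T(W)|;\mathbb{Z})$ concentrated in degree $m-2$. Second, Steinberg's theorem that $\operatorname{St}_W\otimes\mathbb{F}_p$ is a projective $\mathbb{F}_p[GL(W)]$-module --- the irreducible Steinberg representation --- which for $m\ge 2$ has no trivial subquotient; hence $H_{\ast}(GL(W);\operatorname{St}_W\otimes\mathbb{F}_p)$ vanishes identically (it is concentrated in degree $0$ by projectivity, where it equals the coinvariants, which vanish), and likewise $H_{\ast}\big(\prod_i GL(W_i);\bigotimes_i\operatorname{St}_{W_i}\otimes\mathbb{F}_p\big)=0$ as soon as some $\dim W_i\ge 2$. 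Running these vanishings through the stratification spectral sequence with $\mathbb{F}_p$-coefficients --- together with the inductive hypothesis, which makes the proper strata $\mathbb{F}_p$-trivial --- forces a total collapse in which the boundary contributions cancel and one is left with $\widetilde H_{\ast}(|\operatorname{RBS}(V)|;\mathbb{F}_p)=0$, which is (2). For (3) I would run the same spectral sequence over $\mathbb{Z}[1/p]$; by induction the proper-flag strata contribute the $\mathbb{Z}[1/p]$-homology of the corresponding parabolics decorated by Steinberg modules, and the point is that $c$ is, $\mathbb{Z}[1/p]$-homologically, the inclusion of the open stratum, whose complementary contributions --- homotopy orbits over the (recursively thickened) building --- add nothing to the $\mathbb{Z}[1/p]$-homology already carried by $BGL(V)$; this is exactly the phenomenon underlying Quillen's computation of $H_{\ast}(GL(\mathbb{F}_q);\mathbb{Z}[1/p])$, in which the building does not contribute. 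Hence $c$ is a $\mathbb{Z}[1/p]$-homology isomorphism.

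For (1): by the first of the theorems above, $|\operatorname{RBS}(V)|$ is connected with $\pi_1=GL(V)/E(V)$, and for a vector space over a field one has $E(V)=SL(V)$ (the elementary subgroup contains $SL(V)$ and is contained in it, the latter because its generators are unipotent), so $\pi_1\cong k^{\times}$ is abelian; it then remains to see that this $\pi_1$ acts trivially on the homology of the universal cover. I would obtain this from the same structural machinery, which exhibits the $\pi_1$-action precisely enough on the stratification to see it is trivial on each $E_1$-page (using (2) and (3) on the subquotients); alternatively one may invoke Grodal's theorem \cite{G}, which proves simplicity for the analogous space attached to any finite group. Granting (1)--(3), the ``in particular'' is formal: a simple --- hence nilpotent --- space that is $\mathbb{F}_p$-acyclic has all its positive-degree integral homology groups being $\mathbb{Z}[1/p]$-modules, hence is $\mathbb{Z}[1/p]$-local, so by (3) the map $c$ is a $\mathbb{Z}[1/p]$-homology equivalence with $\mathbb{Z}[1/p]$-local target and thus exhibits $|\operatorname{RBS}(V)|$ as the $\mathbb{Z}[1/p]$-homology localisation of $BGL(V)$.

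The main obstacle is (2). Each individual Steinberg-module contribution vanishes mod $p$ by Steinberg's projectivity theorem, but the open stratum $BGL(V)$ is itself very far from $\mathbb{F}_p$-acyclic --- already $BGL_2(\mathbb{F}_2)\simeq BS_3$ has unbounded mod-$2$ cohomology, and the ``torus-heavy'' flags (whose Levis have order prime to $p$) likewise contribute mod $p$ a priori --- so the $\mathbb{F}_p$-acyclicity of $|\operatorname{RBS}(V)|$ is a genuine cancellation that has to occur across all depths of the stratification at once; it is the unstable incarnation of Quillen's theorem that $BGL_{\infty}(\mathbb{F}_q)^{+}$ is mod-$p$ acyclic, and the real work lies in setting up the stratification (and organising its spectral sequence) so precisely that this collapse becomes visible. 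A secondary difficulty is keeping track of the $GL(V)$- and Levi-actions on all the Steinberg modules carefully enough to run the induction, since it is precisely these twisted coefficients that make the mechanism work.
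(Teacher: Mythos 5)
Your plan for part (2) is essentially the paper's own proof: induction on $\dim V$ through the inductive/stratified structure of $\operatorname{RBS}(V)$, with the boundary handled by the inductive hypothesis together with the contractibility of the quotient poset of nonempty flags (it has a minimal element, the complete flags), and the relative term identified with the $GL(V)$-homotopy quotient of $\vert\mathcal{P}\vert/\vert\mathcal{P}\smallsetminus\{\emptyset\}\vert$, which dies mod $p$ by Solomon--Tits plus projectivity, irreducibility and nontriviality of the mod-$p$ Steinberg module; this is exactly \Cref{finite field Fp coeff}, and your two ``special facts'' are the right ones.

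The genuine gap is part (3). The paper does not prove (3) with the stratification/Steinberg apparatus at all: it is \Cref{primetop}, an instance of \Cref{NSvanishingthm} and \Cref{fromflagstoRBS(M)}, whose whole content is that the unipotent radicals $U_{\mathcal{G}}(\mathbb{F}_q)$ are (successive extensions of) finite elementary abelian $p$-groups, so that $H_*(B\mathbb{F}_q^r;k)=0$ for $k=\mathbb{Q},\mathbb{F}_\ell$ ($\ell\neq p$) and hence $BP_{\mathcal F}\rightarrow B(P_{\mathcal F}/U_{\mathcal G})$ is an isomorphism on homology with any prime-to-$p$ local coefficients; this input never appears in your sketch. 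In its place you assert that the complementary ``building'' contributions vanish with $\mathbb{Z}[1/p]$-coefficients, citing ``the phenomenon underlying Quillen's computation''. That assertion is false in general: with $\mathbb{F}_\ell$-coefficients, $\ell\neq p$, the groups $H_*(GL(V);\operatorname{St}\otimes\mathbb{F}_\ell)$ need not vanish --- e.g.\ for $GL_2(\mathbb{F}_q)$ and $\ell\mid q+1$ the sequence $0\rightarrow\operatorname{St}\rightarrow\mathbb{F}_\ell[\mathbb{P}^1(\mathbb{F}_q)]\rightarrow\mathbb{F}_\ell\rightarrow 0$ and Shapiro give $H_i(GL_2(\mathbb{F}_q);\operatorname{St}\otimes\mathbb{F}_\ell)\cong H_{i+1}(GL_2(\mathbb{F}_q);\mathbb{F}_\ell)\neq 0$ for infinitely many $i$, since the Borel subgroup has order prime to $\ell$ while $GL_2(\mathbb{F}_q)$ does not (and Quillen's computation proceeds via Brauer lifting, not via the building). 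So your proposed mechanism for (3) does not work as stated; the correct and much simpler mechanism is the $p$-group acyclicity of the unipotent radicals, which is also why (3) is far easier than (2). Relatedly, your argument for (1) is too thin: $\pi_1$ abelian plus trivial action on the homology of the universal cover does not yield simplicity, and Grodal's theorem as used in the paper gives the $\mathbb{F}_p$-acyclicity statement, not simplicity; the paper obtains (1) from the Friedlander--Quillen identification of $\vert\operatorname{RBS}(V)\vert$ with the $\psi^q$-homotopy fixed points of $(B\vert U(n)\vert)'$, which takes (2) and (3) as inputs.
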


We recall that the $\mathbb{Z}[1/p]$-homology of $BGL(V)$ was completely calculated by Quillen in the early days, \cite{Qfinite}.  On the other hand, the $\mathbb{F}_p$-homology is nontrivial, rather complicated, and still largely unknown, \cite{MP}, \cite{LS}.  However, Quillen in \cite{Qfinite} also showed that in the stable range the $\mathbb{F}_p$-homology vanishes, so the complicated part does not contribute to $K(\mathbb{F}_q)$.  Thus, compared to existing models such as the plus-construction, our new model for unstable algebraic K-theory exactly removes the complicated unknown part which anyway dies on stabilisation.  Actually we can rephrase the above theorem as giving an identification
$$\vert \operatorname{RBS}(V)\vert\simeq (BU(n)')^{h\psi^q}$$
as the homotopy fixed points for the unstable $q$-Adams operation on the prime-to-$p$ completion of $BU(n)$ for $n=\operatorname{dim}(V)$.  This is the evident unstable analogue of Quillen's identification of the $0$-component of the K-theory space
$$K(\mathbb{F}_q)_0\simeq (BU')^{h\psi^q}.$$

\begin{remark}
The crucial point is part 2, that $\vert \operatorname{RBS}(V)\vert$ has the $\mathbb{F}_p$-homology of a point.  This can also be deduced from a more general theorem of Jesper Grodal, \cite{G}.  Indeed, Grodal's Theorem 4.3 says that for any finite group $G$ and prime $p$, if $\mathcal{C}$ denotes the \emph{$p$-radical orbit category} of $G$, then $\vert \mathcal{C}\vert$ has the $\mathbb{F}_p$-homology of a point.  For $G=GL(V)$ it is a matter of comparing definitions and invoking a theorem of Borel-Tits (\cite{BorelTits71,BurgoyneWilliamson}), to see that $\mathcal{C}=\operatorname{RBS}(V)^{op}$, see the discussion in \cite{OrsnesJansen}, and hence our theorem follows from Grodal's.
\exend
\end{remark}

Now we turn to the relation between $\vert \operatorname{RBS}(M)\vert$ and $K(A)$.  Our last theorem gives a sense in which the $\vert \operatorname{RBS}(M)\vert$ stabilise to $K(A)$.

\begin{theorem}\label{gpcomplete}
Let $A$ be a ring.  Let $\mathcal{M}$ denote a set of representatives for the isomorphism classes of finitely generated projective $A$-modules.  Then there is a natural structure of a monoidal category on $\coprod_{M\in\mathcal{M}} \operatorname{RBS}(M)$ and an identification
$$K(A) \simeq \bigg\vert \coprod_{M\in\mathcal{M}} \operatorname{RBS}(M)\,\bigg\vert^{gp}$$
of $K(A)$ with the group completion of the realisation of this monoidal category.
\end{theorem}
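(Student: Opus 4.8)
The plan is to construct the monoidal structure and comparison functor, reduce the statement to an assertion about homology via the group completion theorem, and then prove that assertion by a stabilisation argument exploiting the flag stratification of $\operatorname{RBS}(M)$.

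First I would equip $\coprod_{M\in\mathcal M}\operatorname{RBS}(M)$ with the monoidal structure induced by direct sum — which is in fact symmetric, coming from $\oplus$, with unit the zero module — using the functoriality of $M\mapsto\operatorname{RBS}(M)$ in split inclusions (equivalently, a direct sum of splittable flags is a splittable flag). The comparison maps $c_M\colon BGL(M)\to\operatorname{RBS}(M)$ then assemble into a symmetric monoidal functor. Since the source $\coprod_{M\in\mathcal M}BGL(M)$ is the symmetric monoidal groupoid of finitely generated projective $A$-modules under isomorphism, whose group completion is one of the standard models for $K(A)$ (the $S^{-1}S$ description of the $K$-theory of a split-exact category), we obtain a comparison map $c^{gp}\colon K(A)\to\bigl\vert\coprod_{M}\operatorname{RBS}(M)\bigr\vert^{gp}$, and the task is to prove it is an equivalence.

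Next I would reduce to homology. Both sides of $c^{gp}$ are grouplike $E_\infty$-spaces, and a map of such is an equivalence as soon as it is an integral homology isomorphism: it is then an isomorphism on $\pi_0$ (read off from $H_0$), and a homology isomorphism between the path components, which are simple spaces. Because $\vert\operatorname{RBS}(M)\vert$ is connected, $\pi_0$ of both $\coprod_M BGL(M)$ and $\coprod_M\operatorname{RBS}(M)$ is the commutative monoid $\mathcal M$ of isomorphism classes, so the group completion theorem of McDuff--Segal identifies
$$H_*\Bigl(\bigl\vert{\textstyle\coprod_{M}}\operatorname{RBS}(M)\bigr\vert^{gp}\Bigr)\;\cong\;\Bigl(\bigoplus_{M} H_*\bigl(\vert\operatorname{RBS}(M)\vert\bigr)\Bigr)\bigl[\mathcal M^{-1}\bigr]$$
— the localisation of this $\mathbb Z[\mathcal M]$-module at the multiplicative subset $\mathcal M\subset H_0$ — and likewise for $BGL$. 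Thus it suffices to show that $\bigoplus_M H_*(BGL(M))\to\bigoplus_M H_*(\vert\operatorname{RBS}(M)\vert)$ becomes an isomorphism after inverting every class $[N]\in\mathcal M$; concretely, that $c$ induces an isomorphism on the filtered colimits $\operatorname{colim}_N H_*(BGL(M\oplus N))\to\operatorname{colim}_N H_*(\vert\operatorname{RBS}(M\oplus N)\vert)$ along the direct-sum maps.

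This last step is the main obstacle, and is where I expect the bulk of the paper's machinery to enter. The idea is to control the relative homology $H_*(\vert\operatorname{RBS}(M)\vert, BGL(M))$ through the stratification of $\operatorname{RBS}(M)$ whose generic stratum is $BGL(M)$ and whose remaining strata are indexed by the proper splittable flags of $M$; the associated spectral sequence has as input the homologies of the stratum categories, each built from $BGL$ of the proper subquotients of a flag, assembled according to the combinatorics of the poset of splittable flags. Invoking a Solomon--Tits-type high-connectivity statement for that poset together with homological stability for $GL(M)$ — or, more efficiently, an additivity argument reorganising the flag strata into a complex which becomes acyclic after group completion, so that the split sub/quotient pair attached to each flag step cancels — one shows that every term of this relative homology dies in the colimit over $\mathcal M$. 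Hence $c^{gp}$ is a homology isomorphism, and therefore an equivalence. Pinning down the precise form of the flag stratification of $\operatorname{RBS}(M)$ and proving the vanishing of the relative terms after stabilisation is the hard part.
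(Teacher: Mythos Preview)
Your proposal rests on a misidentification of the monoidal structure, and this undermines the entire strategy. The monoidal product on $\coprod_{M\in\mathcal M}\operatorname{RBS}(M)$ is \emph{not} direct sum and is \emph{not} symmetric. Under the equivalence $\coprod_M\operatorname{RBS}(M)\simeq M_{\mathcal P(A)}$, the objects are ordered lists $(N_1,\ldots,N_d)$ of nonzero finitely generated projectives (the associated gradeds of flags), and the product is \emph{concatenation} of lists. There is no evident way to make this symmetric: the morphisms $(N_1,\ldots,N_d)\to(N'_1,\ldots,N'_e)$ are controlled by order-preserving surjections, and swapping two blocks of a list is not a morphism in $M_{\mathcal P(A)}$. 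The paper flags this explicitly in the introduction: ``$\coprod BGL(M)$ forms a symmetric monoidal category, whereas $\coprod\operatorname{RBS}(M)$ really only forms a monoidal category. This means that as it stands we cannot use the `group completion theorem' \ldots{} the group completion theorem requires some commutativity hypothesis which we don't know whether is satisfied.'' So the reduction via McDuff--Segal to a stable homology isomorphism, which is the spine of your argument, is unavailable. Note also that ``direct sum of splittable flags is a splittable flag'' does not make sense without specifying how to interleave the two filtrations; the objects of $\operatorname{RBS}(M)$ are flags, not just their gradeds.

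The paper's route is entirely different and avoids group completion theorems. It introduces a $2$-categorical $Q$-construction $Q_2(M)$ attached to any strict monoidal category $M$ (objects of $M$; a morphism $m\to m'$ is a triple $(a,b,\phi\colon amb\to m')$; $2$-cells are pairs of morphisms in $a,b$). Via an edgewise-subdivision argument one shows $|Q_2(M)|\simeq B|M|$. For $M=M_{\mathcal P(A)}$ the hom-categories of $Q_2(M)$ have terminal objects componentwise (a ``terminal decomposition'' lemma for morphisms in $M_{\mathcal P(A)}$), so $Q_2(M_{\mathcal P(A)})$ contracts onto a $1$-category $Q_1(M_{\mathcal P(A)})$. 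Finally a fully faithful functor $Q(\mathcal P(A))\to Q_1(M_{\mathcal P(A)})$ is shown to satisfy the hypotheses of Quillen's Theorem~A by checking contractibility of the comma categories. This yields $|Q(\mathcal P(A))|\simeq B|M_{\mathcal P(A)}|$ and hence $K(A)\simeq\Omega B|M_{\mathcal P(A)}|$, without ever invoking commutativity or stabilisation.
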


We also prove a more general version of this theorem which describes in similar terms the K-theory of an arbitrary exact category in the sense of Quillen, \cite{QQ}.

\medskip

The version of \Cref{gpcomplete} with $BGL(M)$ instead of $\operatorname{RBS}(M)$ is essentially Segal's definition of algebraic K-theory, \cite{Segal}.  However, there is a very important technical difference between the two situations, in that $\coprod BGL(M)$ forms a symmetric monoidal category, whereas $\coprod \operatorname{RBS}(M)$ really only forms a monoidal category.  This means that as it stands we cannot use the ``group completion theorem'' of \cite{MS} to relate this group completion to the more naive procedure of taking the limiting object
$$\varinjlim_n \vert \operatorname{RBS}(A^n)\vert$$
along the natural stabilisation maps. Indeed, the group completion theorem requires some commutativity hypothesis which we don't know whether is satisfied for $\vert \coprod \operatorname{RBS}(M)\vert$ for general $A$ and $M$.

\subsection{The reductive Borel--Serre category}\label{RBS(M) defined in introduction}

Perhaps the most important aspect of our model is that it is given as the geometric realisation of an explicit category $\operatorname{RBS}(M)$.  Although we were led to this category by other means which we will discuss below, one can motivate it in terms of the following key property of algebraic K-theory:  if $M$ is a finitely generated projective $A$-module and
$$\mathcal{F}= (M_1\subsetneq\ldots \subsetneq M_{d-1})$$
is a splittable flag in $M$, so that each graded piece $M_i/M_{i-1}$ is nonzero and finitely generated projective (we set $M_0=0$ and $M_d=M$), then there is a canonically determined path
$$[M]\sim [\oplus_{i=1}^d M_i/M_{i-1}]$$
in $K(A)$.  Thus, in the eyes of K-theory, every filtration is split. One can also say this in a different way.  Let $P_\mathcal{F}\subset GL(M)$ denote the stabiliser of the flag $\mathcal{F}$ and $U_{\mathcal{F}}\subset P_{\mathcal{F}}$ the subgroup consisting of those elements which induce the identity on associated graded.  Then the restriction of $BGL(M)\rightarrow K(A)$ to $BP_\mathcal{F}$ naturally factors through $B(P_\mathcal{F}/U_\mathcal{F})$.  There are also a host of compatibilities satisfied by these canonical paths relating their functoriality under automorphisms and their behaviour under refinement of flags.  This leads to the following.

\begin{definition}
Let $A$ be a ring and $M$ a finitely generated projective $A$-module.  Define the category $\operatorname{RBS}(M)$ to have:
\begin{enumerate}
\item objects the splittable flags of submodules of $M$
$$\mathcal{F} = (M_1\subsetneq\ldots \subsetneq M_{d-1});$$
\item morphisms $\mathcal{F}\rightarrow \mathcal{F}'$ the set
$$\{g\in GL(M) : g\mathcal{F}\leq \mathcal{F}'\}/U_\mathcal{F},$$
where the partial order $\leq$ is the relation of refinement: $\mathcal{F}\leq \mathcal{F}'$ when the modules occurring in $\mathcal{F}'$ are a subset of those occurring in $\mathcal{F}$;
\item composition induced by multiplication in $GL(M)$.\defend
\end{enumerate}
\end{definition}

The empty flag $[\emptyset]$ has automorphism group $GL(M)$ in $\operatorname{RBS}(M)$. This produces a map $BGL(M)\rightarrow \vert\operatorname{RBS}(M)\vert$, and the preceding discussion hopefully makes it plausible that the natural map $BGL(M)\rightarrow K(A)$ factors through it:
$$BGL(M)\rightarrow \vert\operatorname{RBS}(M)\vert \rightarrow K(A).$$
However, this is a non-trivial claim which of course requires proof.  To explain this factoring as well as the more refined \Cref{gpcomplete}, it's useful to look at $\operatorname{RBS}(M)$ from a more intrinsic perspective. For a splittable flag $\mathcal{F}$, the automorphism group of $\mathcal{F}$ in $\operatorname{RBS}(M)$ identifies not with the automorphisms of $\mathcal{F}$ as a flag, but with the automorphisms of its associated graded.  Thus one should think that the objects of $\operatorname{RBS}(M)$ are not really flags, since giving a flag over-specifies the object. Rather the objects should be some abstract ordered list
$$(N_1,\ldots, N_d)$$
of nonzero finitely generated projective modules, which we imagine as the associated graded of some undetermined flag.  The flags themselves only really come in to play when describing the morphisms: namely a map $(N_1,\ldots,N_d)\rightarrow (N'_1,\ldots, N'_e)$ can only exist when $d\geq e$, and then it is the data of a flag on each $N'_j$ together with an isomorphism of the total associated graded of this list of flags with the $N_i$, in order.  There is an equivalent model for $\operatorname{RBS}(M)$ of exactly this form, and it is this model that is the most useful for giving the comparison with $K(A)$.

\medskip

Going in a different direction, when $A$ is a commutative ring there is yet another interpretation of $\operatorname{RBS}(M)$, this time in terms of the group $GL(M)$ viewed now as a reductive group scheme over $A$ instead of just the abstract group of its $A$-valued points.  This is simplest to state when $\operatorname{Spec}(A)$ is connected.  Then splittable flags $\mathcal{F}$ in $M$ are in bijection with \emph{parabolic subgroups} of $GL(M)$ via assigning to $\mathcal{F}$ its stabiliser $P_\mathcal{F}$.  Moreover the subgroup $U_\mathcal{F}\subset P_\mathcal{F}$ of those automorphisms of the flag inducing the identity on associated graded is recovered as the \emph{unipotent radical} of $P_\mathcal{F}$.  Thus one can also describe $\operatorname{RBS}(M)$ in reductive group terms: the objects are the parabolic subgroups, and the maps are the transporters of these subgroups taken modulo the unipotent radical of the source parabolic subgroup.

\medskip

This ties in to our initial motivation for defining $\operatorname{RBS}(M)$. Let $G$ be a connected reductive linear algebraic group defined over $\Q$, and $X=K\backslash G(\mathbb{R})/A_G$ the usual associated contractible symmetric space. For a neat arithmetic group $\Gamma\leq G(\Q)$, the locally symmetric space $\Gamma\backslash X$ is a model for the classifying space $B\Gamma$ --- unfortunately, it is very rarely compact. The Borel--Serre and reductive Borel--Serre compactification are two important compactifications of such locally symmetric spaces. The Borel--Serre compactification $\Gamma\backslash \widehat{X}$ is a compact smooth manifold with corners with the same homotopy type as $\Gamma\backslash X$. It was introduced in 1973 by Borel and Serre (\cite{BS}) and it was used crucially in Borel's calculation of the ranks of the K-groups $K_i(O_F)$ of the ring of integers $O_F$ in a number field $F$ (\cite{B}). It was also used by Quillen to show that these same K-groups are finitely generated (\cite{Qfg}). The reductive Borel--Serre compactification $\widehat{Y}_\Gamma$ was introduced by Zucker in 1982 as a quotient of the Borel--Serre compactification (\cite{Z}). Zucker was originally motivated by an interest in $L^2$-cohomology, but the reductive Borel--Serre compactification has since come to play a prominent and diverse role in the theory of compactifications.

\medskip

The Borel--Serre compactification is naturally stratified as a manifold with corners, and this stratification descends to define a natural stratification of the reductive Borel--Serre compactification. In \cite{OrsnesJansen}, the exit path $\infty$-category (or stratified homotopy type) of the reductive Borel--Serre compactification $\widehat{Y}_\Gamma$ is identified as a $1$-category $\operatorname{RBS}_\Gamma$ whose objects are the rational parabolic subgroups of $G$ and whose morphisms are given by transporters of these subgroups by elements in $\Gamma$ modulo an action of the unipotent radicals. The category $\operatorname{RBS}(M)$ introduced in this paper is a direct generalisation of the category $\operatorname{RBS}_\Gamma$, cf. the reductive group approach above.

\medskip

In fact, we also provide a proof of the identification of the exit path $\infty$-category of the reductive Borel--Serre compactification in this paper. Our proof uses entirely different methods to the one given in \cite{OrsnesJansen}, and we find that the two different proofs complement each other nicely, as they provide very different insights into the structure of the reductive Borel--Serre compactification.  Moreover, both methods are quite general in nature and have the potential to be useful for studying the exit path $\infty$-categories of other stratified spaces, so we think it is worthwhile to have them both explained.  Whereas the method in \cite{OrsnesJansen} is based on the idea of calculating mapping spaces in the exit path $\infty$-category in terms of the homotopy-theoretic data embodied in the links of the strata, the method in this paper is based on the idea of finding a way to glue our stratified space from simpler pieces whose exit path $\infty$-categories are equivalent to posets.  If the gluing is robust enough, this reduces the determination of the exit path $\infty$-category of our space to the calculation of a colimit in the $\infty$-category of $\infty$-categories.  We also provide a toolkit for computing such colimits.

\medskip

As we will see, the proof we present here has advantages with respect to the broader aim of this paper, namely in comparing with unstable K-theory, as the proof strategy by gluing can be transported over to the context of $\operatorname{RBS}(M)$ and exploited to make the necessary homology calculations.  In order to make the various calculations and identifications in this paper, we start out by developing a variety of tools for identifying and calculating colimits of $\infty$-categories. This allows us to exploit the inductive nature of the reductive Borel--Serre compactification, namely the fact that its boundary admits a closed cover by ``smaller'' reductive Borel--Serre compactifications, and we can mimick this when working with the generalisations $\operatorname{RBS}(M)$.

\medskip

We would like to note that the existence of a relationship between compactifications of locally symmetric spaces and algebraic K-theory is not original to this article.  As Dan Petersen pointed out to us, Charney and Lee wrote an article \cite{CL} in 1982 in which they established such a relationship for the Satake compactification of the Siegel modular variety.  They show that the homotopy type of the Satake compactification is rationally equivalent to the geometric realisation of a category $W_n$ whose stable version $W$ fits into a fibre sequence
$$K(\mathbb{Z})\rightarrow K^{sympl}(\mathbb{Z})\rightarrow \vert W\vert,$$
and therefore describes the difference between K-theory and symplectic K-theory of the integers.  What we have, then, is an analogue of the Charney--Lee result for the reductive Borel--Serre compactification and plain algebraic K-theory.  Moreover, the modern notion of exit path $\infty$-category lets us make a much more refined statement of the relationship, showing that not just the (rational) homotopy type, but the whole stratified homotopy type, as well as the theory of constructible sheaves, are determined by the associated category.

\medskip

Let us sum up and reiterate our main point: we provide an explicit category $\operatorname{RBS}(M)$ whose homotopy type is a reasonable unstable algebraic K-theory anima.  This indicates that unstable algebraic K-theory can naturally be viewed not as a bare anima or homotopy type, but rather as a stratified homotopy type, with structure very much akin to those stratified homotopy types arising from familiar compactifications of locally symmetric spaces.

\subsection{Comparison with previous approaches}

There have been several previous approaches to unstable algebraic K-theory.  Here we'd like to point out the ones we know about and say what we can about how our definition compares.

\medskip

First, there is the plus construction definition.  If $n\geq 3$, the subgroup $E_n(A)\subset GL_n(A)$ generated by elementary matrices is perfect, \cite{W} Lemma 1.3.2, so one can form the plus construction on $BGL_n(A)$ which kills the normal subgroup generated by $E_n(A)$.  By \Cref{plus} above, this agrees with our $\vert \operatorname{RBS}(A^n)\vert$ provided that $A$ is commutative and local with infinite residue field.   On the other hand, our \Cref{finite} shows that for finite fields, the two definitions differ, and ours yields an unstable algebraic K-theory space which is much simpler and closer in nature to the stable K-theory.

\medskip

Second, there is the Volodin definition, see \cite{S}.  At first glance this looks quite similar, since it is based on the same idea of contracting away unipotent matrix groups.  But the contraction happens in a very different way in Volodin's model: one considers all of the $\Sigma_n$-conjugates of the strict upper-triangular group and simultaneously collapses them, compatibly along their various intersections.  Already in unstable $K_1$ one sees a difference, in that the Volodin $K_1$ is the quotient $GL_n(A)/E_n(A)$, which is not necessarily a group in general but just a pointed set.  It also seems from our (albeit limited) experience that arguments which work for Volodin K-theory do not work for our model and vice-versa, so the nature of the two models really is quite different.

\medskip

Finally, there is Allen Yuan's quite recent \emph{partial K-theory}, \cite{Y}.  This had not yet appeared when we were proving our results, but it indeed seems very similar to our proposed model.  Partial K-theory is defined essentially so as to make the analogue of our \Cref{gpcomplete} a tautology (whereas for us the proof takes many pages of simplicial manipulations!).  That is, Yuan takes Waldhausen's S-dot construction, and instead of freely making a group-like $E_1$-anima out of it, which produces $K(A)$, he freely makes an $E_1$-anima without the group-like condition, and this is the definition of $K^\partial(A)$.  It is clear that partial K-theory should be similar to our $E_1$-anima $\vert\coprod \operatorname{RBS}(M)\vert$, because the S-dot construction exactly encodes filtrations and their associated gradeds with all compatibilities, and this was the essence of our $\operatorname{RBS}$ categories as well.  But it turns out that when Yuan unravels $K^\partial$ into something concrete, it ends up being slightly more combinatorially intricate, in that the basic objects are not lists of finitely generated projective modules, but lists of lists of finitely generated projective modules.  The two models for unstable K-theory unwind to the same thing when all flags on $M$ have length $\leq 2$, but in other cases they are a priori different and it's not clear whether or not the anima are nonetheless equivalent.  This would be interesting to investigate, because Yuan shows by an Eckmann--Hilton argument that $K^\partial(A)$ actually is $E_\infty$, which means the group completion theorem does apply to it.  Yuan also proves the analogue of our \Cref{finite} part 2 for $K^\partial$ of finite fields, and crucially uses this result in his work giving a new model for unstable homotopy theory.  Moreover, his proof has the same rough outline as ours: after some combinatorial shuffling one reduces to the fact that the $\mathbb{F}_p$-homology of the Steinberg representation of $GL_n(k)$ vanishes.

\medskip

We'd also like to make a small remark comparing the monoidal category $\coprod_{M\in \mathcal{M}} \operatorname{RBS}(M)$ with the symmetric monoidal category $\coprod_{M\in \mathcal{M}} BGL(M)$, where $\mathcal{M}$ is the set of isomorphism classes of finitely generated projective $A$-modules for a fixed ring $A$. Their realisations are naturally $\mathbb{E}_1$-spaces and we can compare their $\mathbb{E}_1$-homology, an $\mathbb{E}_1$-version of André-Quillen homology for simplicial commutative rings (see \cite{GKRWCellular}). Note that $\coprod_M BGL(M)$ is an $\mathbb{E}_\infty$-space, but because of the lack of commutativity in $\coprod_M \operatorname{RBS}(M)$, we do not at this point know if $\coprod_M|\operatorname{RBS}(M)|$ is an $\mathbb{E}_k$-space for $k>1$. For clarity, we will consider the case of a field $A=K$ (see the comment below about the generality in which the following calculations do or should hold). Then $\mathcal{M}\cong \N$ and the (bigraded) $\mathbb{E}_1$-homology of $\coprod_i BGL_i(K)$ is
\begin{align*}
H^{\mathbb{E}_1}_{n,d}\big(\textstyle\coprod_i BGL_i(K); \Z\big)\cong \bigoplus_n H_{d-n+1}(GL_n(K); \operatorname{St}_n^{\operatorname{split}}(K)),
\end{align*}
where $\operatorname{St}_n^{\operatorname{split}}(K)=\tilde{H}_{n-2}(S_n(K); \Z)$ is the split (or $\mathbb{E}_1$-) Steinberg module defined as the degree $n-2$ reduced homology of Charney's split building $S_n(K)$ (see the calculation in \cite[Section 17.2]{GKRWCellular}; see also \cite[Section 3.3]{GKRWFiniteFields} and \cite{Charney}). The following should be true and should follow by more or less directly generalising the calculation in \cite[Section 17.2]{GKRWCellular}:
\begin{align*}
H^{\mathbb{E}_1}_{n,d}\big(\textstyle\coprod_i | \operatorname{RBS}(K^i) |; \Z\big)\cong \bigoplus_n H_{d-n+1}(GL_n(K); \operatorname{St}_n(K)),
\end{align*}
where $\operatorname{St}_n(K)=\tilde{H}_{n-2}(T_n(K); \Z)$ is the usual (non-split) Steinberg module defined as the degree $n-2$ reduced homology of the Tits building $T_n(K)$. The calculation should reflect the fact that $\coprod_n \operatorname{RBS}(K^n)$ admits a filtration by monoidal categories whose associated graded is given by the $GL_n(K)$-coinvariants of the Steinberg-modules $\operatorname{St}_n(K)$ for varying $n$ (cf. the proof of \Cref{finite field Fp coeff}). The calculations mentioned here will only work when the reduced homology of the split (respectively non-split) Tits building is concentrated in one degree (for example by the Solomon--Tits Theorem (eg. \cite{Solomon, AbramenkoBrown}) or Charney's version for the split building (\cite{Charney})).  If $\mathcal{M}\ncong \N$, then the direct sums in the calculations above should just be replaced by the direct sum over $\mathcal{M}$ with bidegree $(n,d)$ on the left hand side corresponding to the direct sum over the rank $n$ modules of the degree $d-n+1$ homology of the corresponding $GL(M)$.

\subsection{Conventions and notation}

We let $\mathcal{S}$ denote the $\infty$-category of anima, and $\operatorname{Cat}_\infty$ the $\infty$-category of (small) $\infty$-categories.  We often view $\mathcal{S}$ as the full subcategory of $\operatorname{Cat}_\infty$ consisting of the $\infty$-groupoids.  For a topological space $X$, if we write $\operatorname{Sh}(X)$ or talk about sheaves on $X$ without specifying further, we mean to consider sheaves of anima, i.e.\ sheaves with values in the $\infty$-category $\mathcal{S}$.  The same goes for presheaves on a category or $\infty$-category.  We view posets as categories with at most one morphism between any two objects: $x\leq y$ means there is a map $x\rightarrow y$.

\subsection{Acknowledgements}

We would like to thank Ko Aoki, Dan Petersen, Allen Yuan, Joshua Hunt and S{\o}ren Galatius for helpful discussions.  We also thank the referee for numerous pertinent comments and questions which led to an improvement of the text.

\section{Colimits in \texorpdfstring{$\operatorname{Cat}_\infty$}{Cat-infinity}}

In this section we will describe how to calculate certain colimits in the $\infty$-category of small $\infty$-categories.  We note right away that there is a general description of such colimits as a localisation of the total space of the cartesian fibration classified by the diagram of $\infty$-categories, see \cite{L} 3.3.4; but this is not what we're after.  Rather we want simple criteria for showing that a given co-cone diagram is a colimit diagram.

\medskip

In the cases we care about all the $\infty$-categories in our colimit diagram will actually be 1-categories, but still it being a colimit diagram in $\operatorname{Cat}_\infty$ is stronger than it being a colimit diagram in $\operatorname{Cat}_1$, and we need this stronger fact to get our desired consequences.

\subsection{Some consequences of having a colimit in \texorpdfstring{$\operatorname{Cat}_\infty$}{Cat-infinity}}

We start by explaining why we care about colimits in $\operatorname{Cat}_\infty$.  First, as shown in \cite{HY}, they let you decompose both colimits and limits.  We give a slightly different argument based on the Yoneda embedding.

\begin{proposition}\label{decomposelimits}
Let $K$ be an $\infty$-category and $d\colon K\rightarrow\operatorname{Cat}_\infty$ a $K$-diagram in $\operatorname{Cat}_\infty$, with colimit $\mathcal{D}:=\varinjlim_K d$.  Suppose given an $\infty$-category $\mathcal{E}$ and a functor
$$F\colon\mathcal{D} \rightarrow\mathcal{E}.$$
\begin{enumerate}
\item We have
$$\varprojlim F\overset{\sim}{\rightarrow} \varprojlim_{k\in K^{op}} \varprojlim F\vert_{d(k)}$$
in the sense that if the limits on the right exist then so does the limit on the left, and the map is an equivalence.
\item We have
$$\varinjlim F\overset{\sim}{\leftarrow} \varinjlim_{k\in K} \varinjlim F\vert_{d(k)}$$
in the sense that if the colimits on the right exist then so does the colimit on the left, and the map is an equivalence.
\end{enumerate}
The natural comparison maps in play above will be constructed in the course of the proof.
\end{proposition}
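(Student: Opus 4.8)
The plan is to reduce everything to the universal case via the Yoneda embedding, exactly as hinted in the statement. First I would treat part (1), the assertion about limits, and then deduce part (2) from it by a dualization argument, using that $\operatorname{Cat}_\infty$ is self-dual-ish in the sense that $(-)^{op}$ is an autoequivalence which commutes with colimits and sends the diagram $d$ to the diagram $d^{op} := (-)^{op}\circ d$ with colimit $\mathcal{D}^{op}$; a limit of $F$ is a colimit of $F^{op}\colon \mathcal{D}^{op}\to\mathcal{E}^{op}$, so formally (1) applied to $F^{op}$ is (2) for $F$, modulo keeping track of the variance of the indexing ($K$ versus $K^{op}$), which matches what is written.

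For part (1), the key point is that $\varprojlim F$ can be computed after applying the Yoneda embedding $y\colon \mathcal{E}\hookrightarrow \mathcal{P}(\mathcal{E}) := \operatorname{Fun}(\mathcal{E}^{op},\mathcal{S})$, since $y$ preserves and detects all limits that exist. So it suffices to prove the statement when $\mathcal{E}$ is replaced by a presheaf $\infty$-category and $F$ by $y\circ F$; and for this I would further reduce, by writing any object of $\mathcal{P}(\mathcal{E})$ as a limit of representables, to the case where we are mapping to a \emph{representable} presheaf, i.e. to computing $\varprojlim \operatorname{Map}_{\mathcal{E}}(F(-),e)$ for a fixed $e\in\mathcal{E}$. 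But $\varprojlim_{\mathcal{D}}\operatorname{Map}(F(-),e) = \operatorname{Map}_{\operatorname{Fun}(\mathcal{D},\mathcal{S})}(\mathrm{const}_*, \operatorname{Map}(F(-),e))$, and now the crucial input is that the restriction functors $\operatorname{Fun}(\mathcal{D},\mathcal{S})\to \varprojlim_{k\in K^{op}}\operatorname{Fun}(d(k),\mathcal{S})$ is an equivalence --- this is precisely the statement that $\mathcal{D}=\varinjlim_K d$ \emph{in $\operatorname{Cat}_\infty$}, combined with the fact that $\operatorname{Fun}(-,\mathcal{S})\colon\operatorname{Cat}_\infty^{op}\to\widehat{\operatorname{Cat}}_\infty$ sends colimits to limits. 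Chasing a mapping space out of the terminal object through this equivalence of functor categories, and using that mapping spaces in a limit of $\infty$-categories are computed as the corresponding limit of mapping spaces, yields
$$\varprojlim_{\mathcal{D}} \operatorname{Map}(F(-),e) \;\simeq\; \varprojlim_{k\in K^{op}} \varprojlim_{d(k)} \operatorname{Map}(F|_{d(k)}(-),e),$$
which is the representable case of (1). The comparison map in general is the one induced by the cocone maps $d(k)\to\mathcal{D}$ and the universal property of the limit; the content is that it is an equivalence, and unwinding the above shows it is.

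I expect the main obstacle to be bookkeeping rather than a conceptual difficulty: one must be careful that the limits in question are not assumed to exist a priori, so the argument has to be phrased so as to simultaneously prove existence and the equivalence --- this is why passing to $\mathcal{P}(\mathcal{E})$, which is complete and cocomplete, is the right move, as there the limit always exists and one then checks it lies in the essential image of $y$ (equivalently, that the formula on the right, when it makes sense, computes something representable). A second delicate point is the interaction between the (large) $\infty$-category $\mathcal{P}(\mathcal{E})$ and size issues in forming $\operatorname{Fun}(-,\mathcal{S})$ of a colimit diagram; but since $K$ and all the $d(k)$ and $\mathcal{D}$ are small this is harmless, one just works in $\widehat{\operatorname{Cat}}_\infty$. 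Finally, for part (2) the dualization is clean but one should double-check the variance: colimits over $K$ stay colimits over $K$ under $(-)^{op}$ on the \emph{values}, whereas the limit statement naturally indexes over $K^{op}$ because $\operatorname{Fun}(-,\mathcal{S})$ is contravariant --- the asymmetry in the two displayed formulas (one indexed by $K^{op}$, the other by $K$) is exactly accounted for by this.
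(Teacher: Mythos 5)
Your proposal is correct and takes essentially the same route as the paper: deduce (2) from (1) via $(-)^{op}$, reduce via the Yoneda embedding to $\mathcal{S}$-valued (presheaf) targets, invoke the equivalence $\operatorname{Fun}(\mathcal{D},\mathcal{S})\overset{\sim}{\rightarrow}\varprojlim_{k\in K^{op}}\operatorname{Fun}(d(k),\mathcal{S})$ coming from $\mathcal{D}$ being a colimit in $\operatorname{Cat}_\infty$, and then evaluate maps out of the terminal functor. One small fix: the reduction from $\mathcal{P}(\mathcal{E})$ to $\mathcal{S}$ should be justified by the fact that limits in presheaf $\infty$-categories are computed objectwise (the evaluation functors jointly preserve and detect limits), not by writing a presheaf as a \emph{limit} of representables --- presheaves are colimits, not limits, of representables.
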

\begin{proof}
Since $\mathcal{C}\mapsto \mathcal{C}^{op}$ is an equivalence of $\operatorname{Cat}_\infty$ with itself, it preserves colimits. Thus 2 follows from 1 by replacing every $\infty$-category with its opposite.  We can always Yoneda-embed $\mathcal{E}\hookrightarrow \operatorname{Fun}(\mathcal{E}^{op},\mathcal{S})$ and therefore reduce to $\mathcal{E}$ being a presheaf $\infty$-category; thus to construct the comparison maps in general it suffices to construct them functorially in the case $\mathcal{E}=\mathcal{S}$, and similarly to prove they are equivalences it suffices to treat that case.

\medskip

We note that
$$\operatorname{Fun}(\mathcal{D},\mathcal{S})\overset{\sim}{\rightarrow} \varprojlim_{k\in K^{op}} \operatorname{Fun}(d(k),\mathcal{S})$$
by taking maps out of our colimit diagram to $\operatorname{Fun}(\Delta^n,\mathcal{S})$ and using adjunction.  Now given an $F\in \operatorname{Fun}(\mathcal{D},\mathcal{S})$ we can simply evaluate maps from the terminal functor $\ast$ to $F$ via the above equivalence to deduce the required equivalence.
\end{proof}

Let $\vert\cdot \vert\colon\operatorname{Cat}_\infty\rightarrow\mathcal{S}$ denote the left adjoint to the inclusion of anima into $\infty$-categories.  There are many ways of describing this functor; see Section \ref{realise}.  But in any case, it commutes with colimits, and so from a colimit diagram in $\operatorname{Cat}_\infty$ we obtain a colimit diagram in $\mathcal{S}$, that is a homotopy colimit diagram in the classical language:

\begin{proposition}
Let $K$ be an $\infty$-category and $d\colon K\rightarrow\operatorname{Cat}_\infty$ a $K$-diagram in $\operatorname{Cat}_\infty$, with colimit $\mathcal{D}:=\varinjlim_K d$.  Then
$$\varinjlim_{k\in K} \vert d(k)\vert\overset{\sim}{\rightarrow} \vert\mathcal{D}\vert.$$
\end{proposition}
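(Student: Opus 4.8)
The plan is to deduce this formally from the fact that $\vert\cdot\vert$ is a left adjoint, hence preserves all colimits. The only genuinely non-formal ingredient is the existence of $\vert\cdot\vert$ as a left adjoint to the inclusion $\mathcal{S}\hookrightarrow\operatorname{Cat}_\infty$; I would record this in Section \ref{realise}, deducing it for instance from the adjoint functor theorem ($\operatorname{Cat}_\infty$ is presentable and $\mathcal{S}$ is an accessible reflective localisation of it) or from an explicit construction. Granting this, the argument is essentially a one-liner.

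Concretely, I would unwind the notion of colimit diagram. The hypothesis provides an extension $\bar d\colon K^{\triangleright}\rightarrow\operatorname{Cat}_\infty$ of $d$ along the cone inclusion $K\hookrightarrow K^{\triangleright}$ which sends the cone point to $\mathcal{D}$ and exhibits it as $\varinjlim_K d$. Post-composing with $\vert\cdot\vert$ yields $\vert\cdot\vert\circ\bar d\colon K^{\triangleright}\rightarrow\mathcal{S}$, which carries the cone point to $\vert\mathcal{D}\vert$; since $\vert\cdot\vert$ commutes with colimits, this is again a colimit diagram. Its underlying cocone is exactly the image under $\vert\cdot\vert$ of the colimit cocone for $\mathcal{D}$ in $\operatorname{Cat}_\infty$, so the comparison map it induces, $\varinjlim_{k\in K}\vert d(k)\vert\rightarrow\vert\mathcal{D}\vert$, is the asserted equivalence.

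There is no real obstacle here once $\vert\cdot\vert$ is available as a bona fide left adjoint on $\operatorname{Cat}_\infty$; the statement is then immediate. The actual work of this part of the paper lies in the converse direction, namely producing workable criteria for recognising when a given co-cone in $\operatorname{Cat}_\infty$ is a colimit diagram, which is what makes this proposition (and \Cref{decomposelimits}) usable in practice.
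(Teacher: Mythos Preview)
Your proposal is correct and matches the paper's approach exactly: the paper simply notes that $\vert\cdot\vert$ is the left adjoint to the inclusion $\mathcal{S}\hookrightarrow\operatorname{Cat}_\infty$ (with details deferred to Section~\ref{realise}), hence commutes with colimits, and states the proposition as an immediate consequence without further argument. Your unwinding of the colimit cocone is more explicit than what the paper writes, but the content is identical.
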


In particular, this means we have a spectral sequence for computing homology of local systems on $\vert \mathcal{D}\vert $ in terms of the homology of their pullback to the $\vert d(k)\vert$, and in the case where $K$ is the poset $(1>0<1')$ this means a Mayer--Vietoris sequence.  (These consequences could also be obtained from the previous proposition by taking $\mathcal{E}=D(\operatorname{Ab})$).

\subsection{Testing by applying \texorpdfstring{$\operatorname{Fun}(-,\mathcal{S})$}{Fun(-,S)}}

Here we prove the following basic result.

\begin{theorem}\label{detectcolim}
Let $K$ be a small $\infty$-category, and $d\colon K^{\triangleright}\rightarrow\operatorname{Cat}_\infty$ a co-cone diagram of small $\infty$-categories indexed by $K$.  Then $d$ is a colimit diagram if and only if the following two conditions are satisfied:
\begin{enumerate}
\item As $k$ runs over the objects of $K$, the functors $d(k)\rightarrow d(\infty)$ are jointly essentially surjective.
\item The cone diagram of $\infty$-categories obtained by applying $\operatorname{Fun}(-,\mathcal{S})$ to $d$ is a limit diagram.
\end{enumerate}
\end{theorem}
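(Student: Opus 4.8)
The plan is to reduce the statement to recognising when a single comparison functor is an equivalence. Let $\mathcal{D}:=\varinjlim_{K}d|_{K}$ be the genuine colimit of the restricted diagram, equipped with its colimit cocone, and let $\phi\colon\mathcal{D}\to d(\infty)$ be the canonical functor induced by the given cocone $d|_{K}\to d(\infty)$. By the universal property of the colimit, $d$ is a colimit diagram if and only if $\phi$ is an equivalence, so it suffices to prove that $\phi$ is an equivalence if and only if (1) and (2) hold. I would base the argument on two standard inputs about $\operatorname{Cat}_{\infty}$. First, \emph{(a)}: a colimit cocone $d|_{K}\to\mathcal{D}$ is jointly essentially surjective on objects; this is standard, e.g.\ via the presentation of the colimit as a localisation of the total space of the fibration over $K$ associated to $d|_{K}$ (see \cite{L} 3.3.4), all of whose objects lie in some fibre $d(k)$ and whose localisation functor is essentially surjective. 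Second, \emph{(b)}: the functor $\operatorname{Fun}(-,\mathcal{S})$ carries colimits of $\infty$-categories to limits --- the internal-hom identity $\operatorname{Fun}(\varinjlim_{i}\mathcal{C}_{i},\mathcal{S})\simeq\varprojlim_{i}\operatorname{Fun}(\mathcal{C}_{i},\mathcal{S})$ --- which applied to $\mathcal{D}=\varinjlim_{K}d|_{K}$ gives an equivalence $\operatorname{Fun}(\mathcal{D},\mathcal{S})\simeq\varprojlim_{k\in K^{op}}\operatorname{Fun}(d(k),\mathcal{S})$ implemented by restriction along the colimit cocone.

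Granting (a) and (b), the ``only if'' direction is immediate: if $\phi$ is an equivalence, then (1) is input (a) and (2) is input (b), both transported along $\phi$. For the ``if'' direction, assume (1) and (2). Each composite $d(k)\to\mathcal{D}\xrightarrow{\ \phi\ }d(\infty)$ is the corresponding cocone map of $d$, so condition (1) forces $\phi$ to be essentially surjective; it remains to show $\phi$ is fully faithful. Input (b) for $\mathcal{D}$ and condition (2) each exhibit an equivalence --- from $\operatorname{Fun}(\mathcal{D},\mathcal{S})$, respectively $\operatorname{Fun}(d(\infty),\mathcal{S})$ --- onto the common limit $\varprojlim_{k\in K^{op}}\operatorname{Fun}(d(k),\mathcal{S})$, both given by restriction along the relevant cocones, and these cocones are compatible via $\phi$. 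Hence the restriction functor $\phi^{*}\colon\operatorname{Fun}(d(\infty),\mathcal{S})\to\operatorname{Fun}(\mathcal{D},\mathcal{S})$ is an equivalence, so its left adjoint $\phi_{!}$ (left Kan extension along $\phi$) is an inverse equivalence. Since a left Kan extension of a corepresentable functor along $\phi$ is corepresentable, $\phi_{!}$ sends the functor corepresented by $x$ to the one corepresented by $\phi(x)$; combining this with the Yoneda lemma gives, for all $x,y\in\mathcal{D}$,
\begin{align*}
\operatorname{Map}_{\mathcal{D}}(y,x)
&\ \simeq\ \operatorname{Map}\big(\operatorname{Map}_{\mathcal{D}}(x,-),\,\operatorname{Map}_{\mathcal{D}}(y,-)\big)\\
&\ \simeq\ \operatorname{Map}\big(\operatorname{Map}_{d(\infty)}(\phi x,-),\,\operatorname{Map}_{d(\infty)}(\phi y,-)\big)\ \simeq\ \operatorname{Map}_{d(\infty)}(\phi y,\phi x),
\end{align*}
using that $\phi_{!}$ is an equivalence in the middle step. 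Thus $\phi$ is fully faithful, hence an equivalence, and $d$ is a colimit diagram.

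The step I expect to take the most care is making inputs (a) and (b) precise and, in particular, checking that the two equivalences onto $\varprojlim_{k}\operatorname{Fun}(d(k),\mathcal{S})$ are genuinely given by restriction along the respective cocones --- so that it is the functor $\phi^{*}$ itself, rather than merely some abstract equivalence between source and target, that gets inverted. One must also keep straight the variance in $\operatorname{Fun}(-,\mathcal{S})=\mathcal{P}((-)^{op})$ and the interaction of left Kan extension with the Yoneda embedding. With those settled, the rest is formal.
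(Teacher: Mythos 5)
Your proof is correct, and it reaches the conclusion by a somewhat different route than the paper. Both arguments share the same skeleton: reduce to the comparison functor $\phi\colon\varinjlim d|_K\to d(\infty)$, use that $\operatorname{Fun}(-,\mathcal{S})$ converts colimits of $\infty$-categories into limits (cartesian closedness), handle objects by joint essential surjectivity, and handle mapping spaces at the level of presheaf categories. Where you diverge is in the key step ``$\phi^\ast$ an equivalence $\Rightarrow$ $\phi$ fully faithful'': the paper splits the theorem into two statements about idempotent completion and deduces this from its classification of presheaf categories via atomic objects (\Cref{atomic}, \Cref{smallvbig}), which it has set up because that machinery is reused elsewhere (e.g.\ for exit-path categories); you instead bypass idempotent completion entirely, arguing directly that the inverse equivalence $\phi_!$ carries corepresentables to corepresentables and then invoking Yoneda. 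Your route is more self-contained for this particular theorem, at the cost of not producing the intermediate characterisation (``$\operatorname{Fun}(-,\mathcal{S})$ gives a limit diagram iff the comparison is an equivalence after idempotent completion'') that the paper records and exploits. The one point to nail down is the issue you already flag: your displayed chain only exhibits an abstract equivalence $\operatorname{Map}_{\mathcal{D}}(y,x)\simeq\operatorname{Map}_{d(\infty)}(\phi y,\phi x)$, whereas full faithfulness requires the canonical map induced by $\phi$ to be the equivalence. The cleanest repair is to observe that since $\phi_!\dashv\phi^\ast$ are inverse equivalences, the unit $\operatorname{id}\to\phi^\ast\phi_!$ is an equivalence; evaluating it at the corepresentable $\operatorname{Map}_{\mathcal{D}}(x,-)$ and then at $y$ yields exactly the canonical map $\operatorname{Map}_{\mathcal{D}}(x,y)\to\operatorname{Map}_{d(\infty)}(\phi x,\phi y)$. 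With that adjustment, and the (formal) verification that the two restriction equivalences onto $\varprojlim_{k}\operatorname{Fun}(d(k),\mathcal{S})$ are compatible with $\phi^\ast$ so that 2-out-of-3 applies, your argument is complete.
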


To prove this we will need some preliminaries on presentable $\infty$-categories.   First, recall that for every $\mathcal{C}\in\operatorname{Cat}_\infty$, there is a presentable $\infty$-category $\mathcal{P}(\mathcal{C})$ with a fully faithful functor $\mathcal{C}\rightarrow \mathcal{P}(\mathcal{C})$ uniquely characterised by the universal property that colimit-preserving functors $\mathcal{P}(\mathcal{C})\rightarrow \mathcal{D}$ are equivalent, via restriction, to arbitrary functors $\mathcal{C}\rightarrow \mathcal{D}$.  In fact, $\mathcal{P}(\mathcal{C})$ can be taken to be the $\infty$-category $\operatorname{Fun}(\mathcal{C}^{op},\mathcal{S})$ of presheaves on $\mathcal{C}$ and $\mathcal{C}\rightarrow\operatorname{Fun}(\mathcal{C}^{op},\mathcal{S})$ to be the Yoneda embedding $h$, see \cite{L} 5.1, though we would rather not emphasise this description.

\medskip

Let us characterise the presentable $\infty$-categories of the form $\mathcal{P}(\mathcal{C})$.

\begin{definition}
An object $X$ of a presentable $\infty$-category $\mathcal{D}$ is called \emph{atomic} if the functor $\operatorname{Map}(X,-)\colon\mathcal{D}\rightarrow\mathcal{S}$ commutes with all colimits.   Write $\mathcal{D}^{atom}\subset \mathcal{D}$ for the full subcategory of atomic objects.\defend
\end{definition}

We refer to \cite{L} 4.4.5 for the notion of idempotent-complete $\infty$-categories and the operation of idempotent completion.

\begin{lemma}
For $\mathcal{D}\in\operatorname{Pr}^L$, the $\infty$-category $\mathcal{D}^{atom}$ is essentially small and idempotent-complete.
\end{lemma}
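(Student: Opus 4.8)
The plan is to show essential smallness and idempotent-completeness separately, using the characterisation of atomic objects via the colimit-preservation of $\operatorname{Map}(X,-)$.

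For idempotent-completeness, I would argue as follows. An idempotent $e\colon X\to X$ in $\mathcal{D}^{atom}$ admits a splitting in $\mathcal{D}$ since presentable $\infty$-categories are idempotent-complete (being cocomplete, they admit all colimits, in particular splittings of idempotents; see \cite{L} 4.4.5). So write $X\simeq Y\oplus Z$ — more precisely, the splitting exhibits $Y$ as a retract of $X$ in $\mathcal{D}$. It then suffices to check that a retract of an atomic object is atomic. This is formal: if $Y$ is a retract of $X$, then $\operatorname{Map}(Y,-)$ is a retract of $\operatorname{Map}(X,-)$ as functors $\mathcal{D}\to\mathcal{S}$, and a retract of a colimit-preserving functor is colimit-preserving (retracts are computed pointwise and colimits commute with retracts, i.e.\ with this particular finite colimit, in $\mathcal{S}$). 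Hence every idempotent in $\mathcal{D}^{atom}$ splits, inside $\mathcal{D}^{atom}$.

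For essential smallness, the key point is that $\mathcal{D}$, being presentable, is $\kappa$-accessible for some regular cardinal $\kappa$, so $\mathcal{D}\simeq\operatorname{Ind}_\kappa(\mathcal{D}^\kappa)$ where $\mathcal{D}^\kappa$ is the (essentially small) full subcategory of $\kappa$-compact objects. I claim every atomic object is $\kappa$-compact; this gives $\mathcal{D}^{atom}\subset\mathcal{D}^\kappa$ up to equivalence, hence essential smallness. Indeed, an atomic object $X$ has $\operatorname{Map}(X,-)$ preserving all colimits, in particular all $\kappa$-filtered ones, which is exactly the condition of $\kappa$-compactness. So $\mathcal{D}^{atom}$ is a full subcategory of the essentially small $\mathcal{D}^\kappa$, and full subcategories of essentially small $\infty$-categories are essentially small.

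I expect the mild subtlety — not a real obstacle — to be making sure the splitting produced in $\mathcal{D}$ of an idempotent on an atomic object genuinely lives in $\mathcal{D}^{atom}$; but as indicated above this reduces cleanly to the stability of atomicity under retracts, which follows because $\operatorname{Map}(-,-)\colon\mathcal{D}^{op}\times\mathcal{D}\to\mathcal{S}$ is a functor and hence sends the retract diagram witnessing $Y\lhd X$ to a retract diagram $\operatorname{Map}(Y,-)\lhd\operatorname{Map}(X,-)$ in $\operatorname{Fun}(\mathcal{D},\mathcal{S})$, and the class of colimit-preserving functors is closed under retracts in this functor $\infty$-category. Everything else is a direct invocation of standard facts about presentable and accessible $\infty$-categories from \cite{L}, Chapters 4 and 5.
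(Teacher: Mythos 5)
Your proof is correct and follows essentially the same route as the paper: idempotent-completeness comes from cocompleteness of $\mathcal{D}$ together with the (formal) stability of atomic objects under retracts, and essential smallness comes from presentability via $\kappa$-accessibility and the essentially small subcategory of $\kappa$-compact objects. The only harmless difference is in the smallness step, where you observe directly that atomic objects are $\kappa$-compact by definition, whereas the paper writes $X$ as a colimit of $\kappa$-compact objects and uses atomicity to factor $\operatorname{id}_X$ through a stage, exhibiting $X$ as a retract of a $\kappa$-compact object.
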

\begin{proof}
Since $\mathcal{D}$ is presentable, every object $X\in\mathcal{D}$ is a colimit of objects each of which lies in some fixed small idempotent-complete full subcategory of $\mathcal{D}$ (namely, the full subcategory of $\kappa$-small objects, if $\mathcal{D}$ is $\kappa$-accessible).  If $X$ is atomic, then this means the identity map on $X$ factors through an object of that full subcategory, hence $X$ lies in that full subcategory.  Thus $\mathcal{D}^{atom}$ is essentially small.  It is also idempotent-complete since $\mathcal{D}$ is (being co-complete), and a retract of an atomic object is clearly atomic.
\end{proof}

\begin{lemma}\label{atomic} \ 
\begin{enumerate}
\item For $\mathcal{C}\in\operatorname{Cat}_\infty$, an object of $\mathcal{P}(\mathcal{C})$ is atomic if and only if it is a retract of an object in the image of $\mathcal{C}\rightarrow\mathcal{P}(\mathcal{C})$.  In particular, $\mathcal{P}(\mathcal{C})$ is generated under colimits by its atomic objects (as it is generated under colimits by objects in the Yoneda image, \cite{L} 5.1).
\item Conversely, if $\mathcal{D}\in\operatorname{Pr}^L$ is generated under colimits by its atomic objects, then the induced colimit-preserving functor $\mathcal{P}(\mathcal{D}^{atom})\rightarrow\mathcal{D}$ is an equivalence.
\item For a colimit-preserving functor $f\colon\mathcal{C}\rightarrow \mathcal{D}$ between presentable $\infty$-categories generated under colimits by their atomic objects, we have $f(\mathcal{C}^{atom})\subset \mathcal{D}^{atom}$ if and only if the right adjoint of $f$ commutes with colimits.
\end{enumerate}

\end{lemma}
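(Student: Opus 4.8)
\emph{Plan.} The three parts are logically dependent, so I would establish them in order, and essentially all the genuine content sits in Part~1; Parts~2 and~3 are then formal manipulations with the universal property of free cocompletion and with adjunctions.

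\emph{Part 1.} Using the model $\mathcal{P}(\mathcal{C})=\operatorname{Fun}(\mathcal{C}^{op},\mathcal{S})$, the Yoneda lemma identifies $\operatorname{Map}_{\mathcal{P}(\mathcal{C})}(h(c),-)$ with evaluation at $c$, which preserves all colimits since colimits of presheaves are computed objectwise; hence every representable is atomic. Atomicity passes to retracts: if $X$ is a retract of $Y$ then $\operatorname{Map}(X,-)$ is a retract of $\operatorname{Map}(Y,-)$ in $\operatorname{Fun}(\mathcal{P}(\mathcal{C}),\mathcal{S})$, and since forming colimits is functorial a retract of a colimit-preserving functor is colimit-preserving. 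For the converse, I would use that any $F\in\mathcal{P}(\mathcal{C})$ is the colimit of the tautological diagram $(h(c)\to F)\mapsto h(c)$ over its category of elements. If $F$ is atomic, applying $\operatorname{Map}_{\mathcal{P}(\mathcal{C})}(F,-)$ to this colimit yields $\operatorname{Map}(F,F)\simeq\varinjlim_{(h(c)\to F)}\operatorname{Map}(F,h(c))$, and since $\pi_0$ of a colimit of anima is a quotient of the coproduct of the $\pi_0$'s of the terms, $\operatorname{id}_F$ must factor as $F\xrightarrow{s}h(c_0)\to F$ through some object $h(c_0)\to F$ of the diagram, exhibiting $F$ as a retract of $h(c_0)$. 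The ``in particular'' is then immediate, as the Yoneda image generates $\mathcal{P}(\mathcal{C})$ under colimits.

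\emph{Part 2.} Let $\iota\colon\mathcal{D}^{atom}\hookrightarrow\mathcal{D}$ and let $F\colon\mathcal{P}(\mathcal{D}^{atom})\to\mathcal{D}$ be the essentially unique colimit-preserving functor with $F\circ h\simeq\iota$. Its right adjoint is $G\colon d\mapsto\operatorname{Map}_{\mathcal{D}}(\iota(-),d)$, which preserves colimits precisely because every object of $\mathcal{D}^{atom}$ is atomic. I would first show $F$ is fully faithful by checking that the unit $\operatorname{id}_{\mathcal{P}(\mathcal{D}^{atom})}\to GF$ is an equivalence: on the Yoneda image it is the map induced by $\iota$ on mapping anima, hence an equivalence since $\iota$ is fully faithful, and since $GF$ and $\operatorname{id}$ both preserve colimits while the Yoneda image generates $\mathcal{P}(\mathcal{D}^{atom})$ under colimits, it is an equivalence everywhere. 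Consequently $F$ restricts to an equivalence onto its essential image; in particular diagrams in that essential image lift, so it is closed under colimits in $\mathcal{D}$, and as it also contains $\mathcal{D}^{atom}$ the hypothesis that $\mathcal{D}$ is generated under colimits by atomic objects forces it to be all of $\mathcal{D}$. Thus $F$ is also essentially surjective, hence an equivalence.

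\emph{Part 3.} The adjunction $f\dashv g$ gives, for each $X\in\mathcal{C}$, a natural equivalence of functors $\mathcal{D}\to\mathcal{S}$,
$$\operatorname{Map}_{\mathcal{D}}(f(X),-)\;\simeq\;\operatorname{Map}_{\mathcal{C}}(X,g(-)).$$
If $g$ preserves colimits and $X$ is atomic, the right-hand side is a composite of colimit-preserving functors, so $f(X)$ is atomic; this gives the ``if'' direction. For ``only if'', assume $f(\mathcal{C}^{atom})\subseteq\mathcal{D}^{atom}$ and let $d=\varinjlim_i d_i$. Since $\mathcal{C}$ is generated under colimits by $\mathcal{C}^{atom}$, these objects jointly detect equivalences, so it suffices to see that the comparison map $\varinjlim_i g(d_i)\to g(d)$ becomes an equivalence after applying $\operatorname{Map}_{\mathcal{C}}(X,-)$ for $X\in\mathcal{C}^{atom}$; this follows from the chain
$$\operatorname{Map}_{\mathcal{C}}(X,\varinjlim_i g(d_i))\simeq\varinjlim_i\operatorname{Map}_{\mathcal{C}}(X,g(d_i))\simeq\varinjlim_i\operatorname{Map}_{\mathcal{D}}(f(X),d_i)\simeq\operatorname{Map}_{\mathcal{D}}(f(X),d)\simeq\operatorname{Map}_{\mathcal{C}}(X,g(d)),$$
where the first equivalence uses atomicity of $X$ and the third uses atomicity of $f(X)$, and where all equivalences are compatible with the comparison map.

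\emph{Main obstacle.} The one step that is not pure bookkeeping is the converse in Part~1 --- promoting an atomic presheaf from being a colimit of representables to being an honest retract of a single representable --- which rests on the elementary fact that $\pi_0$ of a colimit of anima is generated by the $\pi_0$'s of the terms. With that in hand, Part~2 is the standard recognition theorem for free cocompletions and Part~3 is adjunction calculus together with the observation, immediate from the generation hypothesis, that atomic objects detect equivalences.
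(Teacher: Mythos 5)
Your proposal is correct and follows essentially the same route as the paper: representables are atomic since mapping out of them is evaluation and colimits are objectwise, atomicity passes to retracts, and an atomic presheaf retracts off a representable because the identity factors through a stage of the tautological colimit; parts 2 and 3 are then the same fully-faithful-plus-generation and adjunction arguments the paper gives, which you merely spell out in more detail.
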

\begin{proof}
First of all we note that each $X\in\mathcal{P}(\mathcal{C})$ in the Yoneda image is atomic, since $\operatorname{Map}(h_c,-) = (-)(c)$ and colimits are computed objectwise in presheaf categories, \cite{L} 5.1.  Since the collection of atomic objects is closed under retracts, this shows one direction of 1.  For the other direction, suppose $X$ is atomic.  Then we can write $X$ as a colimit of objects in the Yoneda image.  By definition of atomic the identity map $X\rightarrow X$ factors through some stage of this colimit, so $X$ is a retract of an object in the Yoneda image.

\medskip

Now we show 2.  The functor $\mathcal{P}(\mathcal{D}^{atom})\rightarrow\mathcal{D}$ is fully faithful for general $\mathcal{D}\in\operatorname{Pr}^L$, as we see by writing each object in $\mathcal{P}(\mathcal{D}^{atom})$ as a colimit of objects in $\mathcal{D}^{atom}$.  Then the assumption exactly guarantees that it's also essentially surjective.

\medskip

Finally, 3 is immediate by adjunction.
\end{proof}

We note that the universal property of $\mathcal{P}(-)$ gives a covariant functoriality, more specifically $\mathcal{P}\colon\operatorname{Cat}_\infty\rightarrow\operatorname{Pr}^L$.\footnote{In terms of the indentification of $\mathcal{P}(\mathcal{C})$ with $\operatorname{Fun}(\mathcal{C}^{op},\mathcal{S})$, there is another description of this same functoriality, as being obtained from the obvious contravariant pullback functoriality by coherent passage to left adjoints.  Thankfully, these two descriptions have now been proved to be equivalent in \cite{HLN}; this is a fundamental though quite subtle claim.  Using this equivalence would simplify the arguments which follow, but we will avoid doing so and work only with the functoriality coming from the universal property.}

\begin{proposition}\label{smallvbig}
The functor $\mathcal{C}\mapsto \mathcal{P}(\mathcal{C})$ gives an equivalence from the $\infty$-category of idempotent-complete small $\infty$-categories to the subcategory of $\operatorname{Pr}^L$ whose objects are the presentable $\infty$-categories generated by atomic objects and whose morphisms are the colimit-preserving functors whose right adjoint also preserves colimits.
\end{proposition}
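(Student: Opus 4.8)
The plan is to build the claimed equivalence out of the constructions already assembled in Lemma \ref{atomic}. On objects the functor is $\mathcal{C}\mapsto\mathcal{P}(\mathcal{C})$; by part 1 of Lemma \ref{atomic} its target does land in the indicated subcategory of $\operatorname{Pr}^L$, since $\mathcal{P}(\mathcal{C})$ is generated under colimits by atomic objects, and for a morphism $f\colon\mathcal{C}\to\mathcal{D}$ in $\operatorname{Cat}_\infty$ the induced colimit-preserving functor $\mathcal{P}(f)$ sends Yoneda image into Yoneda image, hence atomics into atomics (using that atomics are exactly retracts of representables), so by part 3 its right adjoint preserves colimits; thus $\mathcal{P}(f)$ is a morphism in the subcategory. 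So $\mathcal{P}$ restricts to a functor $\mathcal{P}\colon\operatorname{Cat}_\infty^{\mathrm{idem}}\to\mathcal{V}$, where I write $\mathcal{V}$ for the subcategory in the statement; I should first check $\mathcal{P}$ is well-defined on the idempotent-complete subcategory, but in fact $\mathcal{P}(\mathcal{C})$ only depends on $\mathcal{C}$ through its idempotent completion (a representable of $\mathcal{C}$ and of $\mathcal{C}^{\mathrm{idem}}$ generate the same presheaf category), so I restrict the source along $\operatorname{Cat}_\infty^{\mathrm{idem}}\hookrightarrow\operatorname{Cat}_\infty$.

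Next I construct the inverse. Send $\mathcal{D}\in\mathcal{V}$ to $\mathcal{D}^{atom}$, which by the preceding lemma is essentially small and idempotent-complete; send a morphism $f\colon\mathcal{C}\to\mathcal{D}$ of $\mathcal{V}$ to $f|_{\mathcal{C}^{atom}}\colon\mathcal{C}^{atom}\to\mathcal{D}^{atom}$, which is well-defined precisely because $f$ being in $\mathcal{V}$ means its right adjoint preserves colimits, so by part 3 of Lemma \ref{atomic} $f$ preserves atomics. Functoriality in $f$ is inherited from functoriality of $f\mapsto f$. Now I produce the two natural equivalences. For $\mathcal{C}\in\operatorname{Cat}_\infty^{\mathrm{idem}}$: part 1 of Lemma \ref{atomic} identifies $\mathcal{P}(\mathcal{C})^{atom}$ with the idempotent completion of the Yoneda image of $\mathcal{C}$, which is $\mathcal{C}$ itself since $\mathcal{C}$ is idempotent-complete and Yoneda is fully faithful; naturality in $\mathcal{C}$ is clear since everything in sight is compatible with the functoriality of $\mathcal{P}$. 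For $\mathcal{D}\in\mathcal{V}$: part 2 of Lemma \ref{atomic} says the canonical colimit-preserving comparison $\mathcal{P}(\mathcal{D}^{atom})\to\mathcal{D}$ is an equivalence, using exactly the hypothesis that $\mathcal{D}$ is generated under colimits by atomics; naturality in $\mathcal{D}$ (for morphisms in $\mathcal{V}$) follows from the universal property of $\mathcal{P}(-)$, which is how the comparison map is defined.

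Assembling these: $\mathcal{D}\mapsto\mathcal{D}^{atom}$ and $\mathcal{C}\mapsto\mathcal{P}(\mathcal{C})$ are mutually inverse up to natural equivalence, so both are equivalences of $\infty$-categories. It remains only to observe that the target subcategory $\mathcal{V}\subset\operatorname{Pr}^L$ is genuinely a (non-full) subcategory — closed under composition of the allowed morphisms and containing identities — which is immediate since right adjoints compose contravariantly and a composite of colimit-preserving functors is colimit-preserving.

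\textbf{Main obstacle.} The conceptually routine parts are the object-level identifications; the step requiring care is the coherent naturality of the comparison functor $\mathcal{P}(\mathcal{D}^{atom})\to\mathcal{D}$ in morphisms of $\mathcal{V}$, and more fundamentally the careful handling of \emph{which} functoriality of $\mathcal{P}$ one is using. As the footnote after Lemma \ref{atomic} stresses, the paper deliberately works only with the covariant functoriality $\mathcal{P}\colon\operatorname{Cat}_\infty\to\operatorname{Pr}^L$ coming from the universal property, not the "pass to left adjoints" description, so I must check that $\mathcal{D}\mapsto\mathcal{D}^{atom}$ assembles into a functor landing in $\operatorname{Cat}_\infty^{\mathrm{idem}}$ \emph{compatibly} with this universal-property functoriality — i.e. that the square relating $\mathcal{P}(f|_{\mathcal{C}^{atom}})$ and $\mathcal{P}(\mathcal{C}^{atom})\to\mathcal{C}$, $\mathcal{P}(\mathcal{D}^{atom})\to\mathcal{D}$ commutes coherently. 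This is true because both the restriction-of-$f$ and the comparison map are characterised by universal properties of $\mathcal{P}$, so the coherence is forced, but writing it out cleanly without invoking \cite{HLN} is the delicate point.
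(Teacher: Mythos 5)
Your proposal is correct and follows essentially the same route as the paper: the inverse is $\mathcal{D}\mapsto\mathcal{D}^{atom}$, well-defined on morphisms by part 3 of Lemma \ref{atomic}, with the two object-level identifications supplied by parts 1 and 2. The extra care you take (checking that $\mathcal{P}(f)$ preserves atomics so that $\mathcal{P}$ lands in the subcategory, and flagging the coherence of the naturality via the universal property of $\mathcal{P}(-)$) is consistent with, and slightly more explicit than, the paper's argument.
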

\begin{proof}
We claim that an inverse functor is given by $\mathcal{D}\mapsto \mathcal{D}^{atom}$.  This is well-defined on the subcategory by part 3 of the lemma above.  From part 1 of the lemma above, we know that if $\mathcal{C}$ is idempotent-complete, then $\mathcal{C}\overset{\sim}{\rightarrow}\mathcal{P}(\mathcal{C})^{atom}$.  On the other hand, from part 2 we know that if $\mathcal{D}$ lies in the subcategory then $\mathcal{P}(\mathcal{D}^{atom})\overset{\sim}{\rightarrow}\mathcal{D}$.
\end{proof}

Now, our desired \Cref{detectcolim} follows by combining parts 1 and 2 of the following.

\begin{proposition}
Let $K$ be a small $\infty$-category, and $d\colon K^{\triangleright}\rightarrow\operatorname{Cat}_\infty$ a co-cone diagram of small $\infty$-categories indexed by $K$.  Then:
\begin{enumerate}
\item the map $\varinjlim d\vert_K\rightarrow d(\infty)$ is an equivalence after applying idempotent completion if and only if applying $\operatorname{Fun}(-,\mathcal{S})$ to $d$ gives a limit diagram of $\infty$-categories;
\item $\varinjlim d\vert_K\rightarrow d(\infty)$ is an equivalence if and only if it is an equivalence after applying idempotent completion and the $d(k)\rightarrow d(\infty)$ are jointly essentially surjective, $k\in K$.
\end{enumerate}
\end{proposition}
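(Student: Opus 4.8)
The plan is to leverage the equivalence of \Cref{smallvbig} to transport the colimit question in $\operatorname{Cat}_\infty$ into a question about $\operatorname{Pr}^L$, where limits and colimits are better behaved. The key observation is that applying $\mathcal{P}(-)$ to a colimit diagram in $\operatorname{Cat}_\infty$ need not give a colimit diagram in $\operatorname{Pr}^L$ in the naive sense, but there is a precise relationship: a colimit in $\operatorname{Pr}^L$ computed along left adjoints can be computed as a limit along the corresponding right adjoints (by the standard $(\operatorname{Pr}^L)^{op}\simeq\operatorname{Pr}^R$ duality of \cite{L} 5.5.3). So for part 1, I would argue as follows. The functor $\mathcal{P}\colon\operatorname{Cat}_\infty\rightarrow\operatorname{Pr}^L$ is a left adjoint (to the forgetful functor sending a presentable category to its underlying $\infty$-category), hence preserves colimits; moreover $\mathcal{P}$ factors through the subcategory identified in \Cref{smallvbig} only after idempotent completion, since $\mathcal{P}(\mathcal{C})=\mathcal{P}(\mathcal{C}^{\wedge})$ where $\mathcal{C}^{\wedge}$ denotes idempotent completion (both having the same presheaf category). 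Therefore $\varinjlim d|_K\rightarrow d(\infty)$ becomes an equivalence after idempotent completion if and only if $\mathcal{P}(\varinjlim d|_K)\rightarrow\mathcal{P}(d(\infty))$ is an equivalence in $\operatorname{Pr}^L$, if and only if the colimit cocone $\mathcal{P}\circ d$ is a colimit diagram in $\operatorname{Pr}^L$ (using that $\mathcal{P}$ preserves colimits and reflects equivalences via $\mathcal{P}(\mathcal{C})^{atom}\simeq\mathcal{C}^{\wedge}$).

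**Next**, I would invoke the duality $(\operatorname{Pr}^L)^{op}\simeq\operatorname{Pr}^R$ to say that $\mathcal{P}\circ d$ is a colimit diagram in $\operatorname{Pr}^L$ if and only if the diagram obtained by passing to right adjoints is a limit diagram in $\operatorname{Pr}^R$, equivalently a limit diagram in $\operatorname{Cat}_\infty$ (since the forgetful functor $\operatorname{Pr}^R\rightarrow\operatorname{Cat}_\infty$ preserves limits, as both of the forgetful functors from $\operatorname{Pr}^L$ and $\operatorname{Pr}^R$ do). The right adjoint to the colimit-preserving functor $f^*\colon\mathcal{P}(\mathcal{D})\rightarrow\mathcal{P}(\mathcal{C})$ induced from $f\colon\mathcal{C}\rightarrow\mathcal{D}$ is, by construction, restriction along $f^{op}$, i.e.\ $\operatorname{Fun}(f^{op},\mathcal{S})$; and $\operatorname{Fun}(\mathcal{C}^{op},\mathcal{S})=\operatorname{Fun}(\mathcal{C},\mathcal{S})$ up to the harmless $op$ on the source, which is exactly what the theorem statement refers to when it says ``applying $\operatorname{Fun}(-,\mathcal{S})$''. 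This identifies the right-adjoint diagram with $\operatorname{Fun}(d(-),\mathcal{S})$, completing part 1. The one subtlety I would flag here is the footnote's warning: one must be careful that the covariant functoriality of $\mathcal{P}$ coming from the universal property agrees with ``pointwise right adjoint of the contravariant pullback functoriality'', which is precisely the content of \cite{HLN}; since we are told to avoid relying on that, I would instead phrase everything directly in terms of the right adjoints $\operatorname{Fun}(f^{op},\mathcal{S})$ and verify the duality statement using the explicit presheaf models, never needing to identify the covariant $\mathcal{P}$-functoriality abstractly.

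**For part 2**, this is the more elementary half. If $\varinjlim d|_K\rightarrow d(\infty)$ is already an equivalence, it is certainly an equivalence after idempotent completion, and it is essentially surjective, which forces joint essential surjectivity of the $d(k)\rightarrow d(\infty)$ since every object of $\varinjlim d|_K$ is (by the standard description of colimits in $\operatorname{Cat}_\infty$, e.g.\ \cite{L} 3.3.4, or simply by the fact that $|{-}|$-surjectivity on objects passes through the localisation) in the image of some $d(k)$. Conversely, suppose the map is an equivalence after idempotent completion and the $d(k)$ are jointly essentially surjective. A functor that is an equivalence after idempotent completion is automatically fully faithful (fully faithfulness is detected after idempotent completion since $\mathcal{C}\rightarrow\mathcal{C}^{\wedge}$ is fully faithful), so $\varinjlim d|_K\rightarrow d(\infty)$ is fully faithful; combined with essential surjectivity — which follows from joint essential surjectivity of the $d(k)$ together with the fact that every object of $\varinjlim d|_K$ comes from some $d(k)$ — it is an equivalence.

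**The main obstacle** I anticipate is making the duality argument in part 1 airtight while scrupulously avoiding \cite{HLN}: one needs the right adjoints of the maps $\mathcal{P}(\mathcal{C})\rightarrow\mathcal{P}(\mathcal{D})$ to assemble into a genuine (coherent) diagram $K^{op,\triangleright}\rightarrow\operatorname{Cat}_\infty$, and to know this diagram is the one computing the limit $\operatorname{Fun}(\mathcal{D},\mathcal{S})=\varprojlim\operatorname{Fun}(d(k),\mathcal{S})$. The cleanest route is probably to not pass through $\operatorname{Pr}^R$ at all, but instead to directly observe: a cocone $d|_K\rightarrow d(\infty)$ in $\operatorname{Cat}_\infty$ induces, after $\mathcal{P}$, a cocone $\mathcal{P}(d|_K)\rightarrow\mathcal{P}(d(\infty))$, and this is a colimit cocone iff for every $\mathcal{E}\in\operatorname{Pr}^L$ the map $\operatorname{Fun}^L(\mathcal{P}(d(\infty)),\mathcal{E})\rightarrow\varprojlim\operatorname{Fun}^L(\mathcal{P}(d(k)),\mathcal{E})$ is an equivalence; by the universal property of $\mathcal{P}$ this is the map $\operatorname{Fun}(d(\infty),\mathcal{E})\rightarrow\varprojlim\operatorname{Fun}(d(k),\mathcal{E})$, and taking $\mathcal{E}=\mathcal{S}$ (which suffices because $\mathcal{S}$ detects colimits in $\operatorname{Pr}^L$ in an appropriate sense, or because one reduces to it via presheaf embeddings exactly as in the proof of \Cref{decomposelimits}) gives precisely condition (2) of the theorem. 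I would then only need the comparatively soft fact that $\mathcal{P}$ reflects ``equivalence after idempotent completion'' into ``equivalence in $\operatorname{Pr}^L$'', which is \Cref{smallvbig} plus the observation $\mathcal{P}(\mathcal{C})\simeq\mathcal{P}(\mathcal{C}^{\wedge})$.
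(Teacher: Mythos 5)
Your final ``cleanest route'' is, in substance, the paper's own argument: both hinge on \Cref{smallvbig}, on the universal property $\operatorname{Fun}^L(\mathcal{P}(\mathcal{C}),\mathcal{E})\simeq\operatorname{Fun}(\mathcal{C},\mathcal{E})$, and on reducing the colimit question to the condition on $\operatorname{Fun}(-,\mathcal{S})$; your part 2 is also the paper's proof almost verbatim (the paper gets joint essential surjectivity from a section supplied by the universal property of the colimit rather than from the localisation description of \cite{L} 3.3.4, but both are fine). The cosmetic difference in part 1 is that you work with colimits in all of $\operatorname{Pr}^L$, using that $\mathcal{P}$ is a left adjoint and reflects equivalences up to idempotent completion, while the paper phrases the endgame inside the non-full subcategory of \Cref{smallvbig}; and you rightly abandon the $\operatorname{Pr}^R$-duality detour, which is where the coherence issue flagged in the footnote would bite.

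The one step you have not actually justified is the claim that $\mathcal{E}=\mathcal{S}$ suffices to detect the colimit in $\operatorname{Pr}^L$. ``$\mathcal{S}$ detects colimits in $\operatorname{Pr}^L$'' is not something you can cite, and the proposed reduction ``via presheaf embeddings exactly as in \Cref{decomposelimits}'' does not work for an arbitrary presentable $\mathcal{E}$: writing $\mathcal{E}$ as a reflective subcategory of a presheaf category, a functor $d(\infty)\rightarrow\mathcal{P}(\mathcal{D})$ all of whose restrictions to the $d(k)$ land in $\mathcal{E}$ need not itself land in $\mathcal{E}$, precisely because joint essential surjectivity is \emph{not} assumed in part 1. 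The fix is that you never need arbitrary $\mathcal{E}$: from the case $\mathcal{E}=\mathcal{S}$ the exponential identity $\operatorname{Fun}(\mathcal{C},\operatorname{Fun}(\mathcal{B},\mathcal{S}))\simeq\operatorname{Fun}(\mathcal{B},\operatorname{Fun}(\mathcal{C},\mathcal{S}))$ gives the limit-diagram statement for all presheaf targets $\mathcal{P}(\mathcal{D})$ (this is the paper's ``mapping in from an arbitrary small $\infty$-category'' step), and presheaf targets suffice because the comparison map in question, $\mathcal{P}(\varinjlim d\vert_K)\rightarrow\mathcal{P}(d(\infty))$, is a map between presheaf categories on small $\infty$-categories: inducing equivalences on $\operatorname{Fun}^L(-,\mathcal{P}(\mathcal{D}))$ for all small $\mathcal{D}$ lets you lift $\operatorname{id}$ on both sides and conclude it is an equivalence (equivalently, conclude as the paper does via the equivalence of \Cref{smallvbig} with idempotent-complete small $\infty$-categories). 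With that repair your argument goes through and coincides with the paper's.
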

\begin{proof}
Let's prove 1.  The direction $\Rightarrow$ follows by mapping out to $\operatorname{Fun}(\Delta^n,\mathcal{S})$ for all $n$.  For $\Leftarrow$, suppose we have a limit diagram on functors out to $\mathcal{S}$.  We want to show that the map $F:\varinjlim d|_K\rightarrow d(\infty)$ is an equivalence on idempotent completion.  By the above, for this it suffices to show that $F$ induces an isomorphism in $\operatorname{Pr}^L$ on applying $\mathcal{P}(-)$.  For that, by Yoneda, it suffices to show that $\operatorname{Fun}^L(-,\mathcal{P}(\mathcal{D}))$ applied to $\mathcal{P}(F)$ gives an isomorphism for all $\mathcal{D}\in\operatorname{Cat}_\infty$.  But $\operatorname{Fun}^L(-,\mathcal{P}(\mathcal{D}))=\operatorname{Fun}(\mathcal{D},\operatorname{Fun}^L(-,\mathcal{S}))$, so this follows from the assumption and the universal property of $\mathcal{P}(-)$.

\medskip

Now for 2, first suppose $\varinjlim d\vert_K\overset{\sim}{\rightarrow} d(\infty)$.  Then certainly we also have an equivalence on idempotent completion.  Let $\mathcal{C}\subset d(\infty)$ denote the union of the essential images of the $d(k)\rightarrow d(\infty)$.  Then by the universal property of colimits we deduce that this inclusion $\mathcal{C}\subset d(\infty)$ has a section, whence it's an equality, as desired.  Now suppose we have an equivalence on idempotent completion.  Since every $\infty$-category embeds fully faithfully in its idempotent completion, it follows that $\varinjlim d\vert_K\rightarrow d(\infty)$ is fully faithful.  But the other condition gives essential surjectivity, whence the conclusion.
\end{proof}

\subsection{Inverting all arrows}\label{realise}

In the following sections we will need to use several different ``formulas'' for the functor left adjoint to the inclusion $\mathcal{S}\rightarrow\operatorname{Cat}_\infty$.  The purpose of this section is to collect them.

\begin{theorem}
For a functor $F\colon\operatorname{Cat}_\infty\rightarrow\mathcal{S}$, the following properties are equivalent:
\begin{enumerate}
\item $F$ is left adjoint to the inclusion $\mathcal{S}\subset\operatorname{Cat}_\infty$.
\item $F$ preserves all colimits, $F(\ast)= \ast$, and $F(\Delta^1)=\ast$.
\item $F$ preserves all colimits, and $F(\Delta^n)=\ast$ for all $n$.
\end{enumerate}
Moreover, the $\infty$-category of all such functors is equivalent to the terminal $\infty$-category $\ast$. (In particular, the implicit data of the adjunction in 1 is unique.)
\end{theorem}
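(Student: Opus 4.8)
The plan is to exhibit $\mathcal S\subseteq\operatorname{Cat}_\infty$ as a Bousfield localization and deduce everything from the universal property of localizations, thereby never pushing colimits through the non-cocontinuous inclusion. Recall that $\operatorname{Cat}_\infty$ is presentable, and that an $\infty$-category $\mathcal C$ is an $\infty$-groupoid precisely when it is local with respect to the projection $p\colon\Delta^1\to\ast$: the induced map $\mathcal C^{\simeq}=\operatorname{Map}_{\operatorname{Cat}_\infty}(\ast,\mathcal C)\to\operatorname{Map}_{\operatorname{Cat}_\infty}(\Delta^1,\mathcal C)=\operatorname{Fun}(\Delta^1,\mathcal C)^{\simeq}$ is ``constant arrow on an object'', which is an equivalence exactly when every arrow of $\mathcal C$ is invertible. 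Hence $\mathcal S=\operatorname{Cat}_\infty[p^{-1}]$ is an accessible localization (\cite{L} 5.5.4), and the localization functor $L\colon\operatorname{Cat}_\infty\to\mathcal S$ is by construction left adjoint to the inclusion $i$, so it satisfies (1). It also satisfies (2): it preserves colimits, it restricts to the identity on $\mathcal S$ (so $L(\ast)\simeq\ast$), and it inverts $p$ (so $L(\Delta^1)\simeq\ast$).

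Next I would prove $(1)\Leftrightarrow(2)$ together with the last sentence of the theorem. By the universal property of localization (\cite{L} 5.5.4.20), composition with $L$ identifies the $\infty$-category of colimit-preserving functors $\operatorname{Cat}_\infty\to\mathcal S$ inverting $p$ with the $\infty$-category of colimit-preserving endofunctors of $\mathcal S$; the latter is in turn equivalent to $\mathcal S$ itself, since $\mathcal S=\mathcal P(\ast)$ is the free cocompletion of the point (\cite{L} 5.1.5.6), via $H\mapsto H(\ast)$, with the identity corresponding to $\ast$. Chasing $L(\ast)\simeq\ast$ through, the composite equivalence is $F\mapsto F(\ast)$. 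Now a colimit-preserving $F$ with $F(\ast)\simeq\ast$ inverts $p$ if and only if $F(\Delta^1)\simeq\ast$ (the map $F(\Delta^1)\to F(\ast)$ being then a self-map of $\ast$, automatically an equivalence), so the functors satisfying (2) are precisely these, and under the above equivalence they correspond to the full subgroupoid of $\mathcal S$ spanned by the objects equivalent to $\ast$ --- which is contractible. This is the asserted uniqueness; it gives $(2)\Rightarrow(1)$ (any such $F$ is equivalent to $L$, hence left adjoint to $i$), while $(1)\Rightarrow(2)$ holds because a left adjoint to $i$ is equivalent to $L$ by uniqueness of adjoints and $L$ satisfies (2).

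Finally, $(3)\Rightarrow(2)$ is immediate since $\Delta^0=\ast$. For $(2)\Rightarrow(3)$ I would use that the spine inclusion $\operatorname{Sp}^n\hookrightarrow\Delta^n$ is inner anodyne, so that in $\operatorname{Cat}_\infty$ the object $\Delta^n$ is the iterated pushout of $n$ copies of $\Delta^1$ glued end to end along $\ast$ (the naive pushout of simplicial sets along $\{1\},\{0\}\hookrightarrow\Delta^1$ already computing the homotopy pushout for the Joyal model structure). Then a functor $F$ as in (2) preserves these pushouts and sends each $\Delta^1$ and each $\ast$ to $\ast$, whence $F(\Delta^n)\simeq\ast$. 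Equivalently, once $(1)\Leftrightarrow(2)$ is known one may simply note $F(\Delta^n)\simeq\lvert\Delta^n\rvert\simeq\ast$, the nerve of $[n]$ having a terminal object.

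The only real subtlety I anticipate is bookkeeping: correctly assembling the chain $\{F\text{ as in }(2)\}\simeq\operatorname{Fun}^{L}(\mathcal S,\mathcal S)\simeq\mathcal S$ and tracking the normalisation $F(\ast)\simeq\ast$ through it. The structural inputs --- presentability of $\operatorname{Cat}_\infty$, the description of $\infty$-groupoids as the $p$-local objects, and the universal property of accessible localizations --- are all standard.
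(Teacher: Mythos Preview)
Your proof is correct and takes a genuinely different route from the paper's. The paper argues via the complete Segal space presentation: it realises $\operatorname{Cat}_\infty$ as a localisation of $\mathcal P(\Delta)$, so colimit-preserving functors out of $\operatorname{Cat}_\infty$ are left Kan extended from $\Delta$, and the contractibility statement follows because terminal functors $\Delta\to\mathcal S$ form a contractible space. The paper then reads off $F(\Delta^1)\simeq\ast$ from the \emph{completeness} condition in the Segal space model. You instead localise one level up: you identify $\mathcal S$ directly as $\operatorname{Cat}_\infty[p^{-1}]$ for $p\colon\Delta^1\to\ast$, and then compose the universal property of that localisation with $\operatorname{Fun}^L(\mathcal S,\mathcal S)\simeq\mathcal S$ (from $\mathcal S=\mathcal P(\ast)$) to get uniqueness. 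Your argument is arguably cleaner in that it avoids invoking the Segal model explicitly, needing only the elementary observation that $p$-local $\infty$-categories are exactly the $\infty$-groupoids; the paper's approach, on the other hand, makes transparent \emph{why} condition (3) singles out the localisation functor (it is the Kan-extension-from-$\Delta$ perspective that drives their Corollary~\ref{describeinvert}). Both proofs use the same ingredient for $(2)\Leftrightarrow(3)$: that $\Delta^n$ is an iterated pushout of $\Delta^1$'s along points, which you phrase via the spine being inner anodyne and the paper via the Segal condition. One minor quibble: the precise Lurie reference for the universal property you invoke is not 5.5.4.20 (that characterises localisations) but rather the general statement that precomposition with a localisation $L$ identifies $\operatorname{Fun}^L(\mathcal S,\mathcal D)$ with the $p$-inverting part of $\operatorname{Fun}^L(\operatorname{Cat}_\infty,\mathcal D)$; this is standard but you may want to cite it more carefully.
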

\begin{proof}
This is a simple consequence of the complete Segal space presentation of $\operatorname{Cat}_\infty$ (\cite{rezk}, and see \cite{L2} for a natively $\infty$-categorical account).  First, from that presentation (more specifically, from the fact that it realises $\operatorname{Cat}_\infty$ as a localisation of $\mathcal{P}(\Delta)$), one sees that any colimit-preserving functor out of $\operatorname{Cat}_\infty$ is the left Kan extension of its restriction to $\Delta$.  Since the $\infty$-category of terminal functors from any $\infty$-category to $\mathcal{S}$ is always $\ast$, this shows the last claim holds if we take equivalent condition 3.  On the other hand the complete Segal space presentation (more specifically, the Segal condition) also shows that $\Delta^n$ is the colimit of $n$ copies of $\Delta^1$ placed end-to-end, which implies that 2 $\Leftrightarrow$ 3.  Note that the functor in 1 is uniquely characterised up to equivalence, and so is the functor in 3, again by the complete Segal space presentation.  Thus, to see that 1 is equivalent to 2 and 3, we just need to see that the left adjoint $F$ to the inclusion indeed satisfies $F(\ast)=\ast$ and $F(\Delta^1)=\ast$.  The first claim is tautological as $\ast\in\mathcal{S}$.  For the second claim, it exactly corresponds to the completeness criterion in complete Segal spaces. \end{proof}

\medskip

From now on we will write $\mathcal{C}\mapsto \vert \mathcal{C}\vert$ for the functor $F$ characterised by the previous theorem.  For a given $\infty$-category $\mathcal{C}$, to verify a proposed description of $\vert \mathcal{C}\vert$, one has, generally speaking, two options: either make that description functorial in $\mathcal{C}$ and verify condition 2 of the theorem, or else produce a comparison map $\mathcal{C}\rightarrow\vert \mathcal{C}\vert$ and argue that it satisfies the universal property implicit in condition 1, namely that maps to $\infty$-groupoids from $\vert \mathcal{C}\vert$ are the same as from $\mathcal{C}$.

\begin{corollary}\label{describeinvert}
For each $\infty$-category $\mathcal{C}$, the following are descriptions of the $\infty$-groupoid $\vert \mathcal{C}\vert$:
\begin{enumerate}
\item $\vert \mathcal{C}\vert =\mathcal{C}[\operatorname{Ar}\mathcal{C}^{-1}]$, the $\infty$-category obtained by inverting all arrows.
\item $\vert \mathcal{C}\vert = \varinjlim_{[n]\in \Delta, [n]\rightarrow \mathcal{C}} \ast$.
\item $\vert \mathcal{C}\vert$ is the colimit of the simplicial anima $\Delta^n \mapsto \operatorname{Map}_{\operatorname{Cat}_\infty}(\Delta^n,\mathcal{C})$ (the complete Segal anima associated to $\mathcal{C}$).
\item $\vert \mathcal{C}\vert = \varinjlim_\mathcal{C}\ast$.
\item $\vert\mathcal{C}\vert = \mathcal{P}^{lc}(\mathcal{C})^{atom}$, where $\mathcal{P}^{lc}(\mathcal{C})\subset\mathcal{P}(\mathcal{C})$ is the full subcategory on those $\mathcal{C}^{op}\rightarrow\mathcal{S}$ which are constant on every simplex, or equivalently send every morphism to an isomorphism.
\item If a simplicial set $X\in \operatorname{sSet}$ localises to $\mathcal{C}$ in the Joyal presentation $\operatorname{Cat}_\infty\simeq \operatorname{sSet}[ce^{-1}]$, then $X$ localises to $\vert \mathcal{C}\vert$ in the Kan presentation $\mathcal{S}\simeq \operatorname{sSet}[we^{-1}]$.
\end{enumerate}
\end{corollary}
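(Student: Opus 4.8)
The plan is to test each of the six proposed descriptions against the previous theorem, which identifies $\vert\cdot\vert$ simultaneously as (i) the left adjoint of the inclusion $\mathcal{S}\hookrightarrow\operatorname{Cat}_\infty$ and (ii) the unique colimit-preserving functor $\operatorname{Cat}_\infty\to\mathcal{S}$ sending every $\Delta^n$ to a point. So for a candidate description $\mathcal{C}\mapsto F(\mathcal{C})$ I will either produce a natural functor $\mathcal{C}\to F(\mathcal{C})$ with $F(\mathcal{C})$ an $\infty$-groupoid and check the universal property in (i), or make $\mathcal{C}\mapsto F(\mathcal{C})$ visibly colimit-preserving with $F(\Delta^n)=\ast$ for all $n$. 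Descriptions (1) and (6) go by the first route, (3), (2), (4) by colimit-preservation (directly, or through the complete Segal presentation), and (5) is reduced to (1) together with the earlier results on atomic objects.

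For (1): inverting all morphisms of $\mathcal{C}$ produces an $\infty$-category in which every morphism is an equivalence --- up to homotopy every morphism of the localisation is a composite of images of morphisms of $\mathcal{C}$ and their formal inverses --- hence an $\infty$-groupoid, and then the universal property of this localisation says exactly that functors out of it are functors $\mathcal{C}\to\mathcal{D}$ inverting all morphisms, which for $\mathcal{D}$ an $\infty$-groupoid is no condition at all; this is the universal property of $\vert\mathcal{C}\vert$. For (4): the functor $\mathcal{C}\mapsto\varinjlim_{\mathcal{C}}\ast$ (constant diagram at $\ast\in\mathcal{S}$, colimit in $\mathcal{S}$) preserves colimits in $\mathcal{C}$, which is the case $F=\ast$, $\mathcal{E}=\mathcal{S}$ of \Cref{decomposelimits}(2); it lands in $\mathcal{S}$, and $\varinjlim_{[n]}\ast=\ast$ because $[n]$ has an initial object, so it is $\vert\cdot\vert$ by criterion (ii).

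For (3) and (2), I would work inside the complete Segal anima presentation $\operatorname{Cat}_\infty\simeq L\,\mathcal{P}(\Delta)$ from the proof of the previous theorem, with fully faithful right adjoint $\iota\colon\mathcal{C}\mapsto\big([n]\mapsto\operatorname{Map}_{\operatorname{Cat}_\infty}(\Delta^n,\mathcal{C})\big)$, and bring in the ``realisation of a simplicial anima'' functor $\vert\cdot\vert_{\Delta}\colon\mathcal{P}(\Delta)\to\mathcal{S}$, left adjoint to the constant-diagram functor. Both $\vert\cdot\vert_{\Delta}$ and $\vert\cdot\vert\circ L$ are colimit-preserving functors $\mathcal{P}(\Delta)\to\mathcal{S}$ sending every representable to $\ast$ (for the second, $L$ sends the representable $\Delta^n$ to the poset $[n]$, which realises to a point), hence they agree, being determined by their restriction to $\Delta$; evaluating at $\iota\mathcal{C}$ and using $L\iota\simeq\operatorname{id}$ gives $\vert\mathcal{C}\vert\simeq\vert\iota\mathcal{C}\vert_{\Delta}$, which is (3). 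For (2), the co-Yoneda lemma in $\mathcal{P}(\Delta)$ expresses $\iota\mathcal{C}$ as the colimit of representables indexed by its category of elements --- precisely the category of simplices $[n]\to\mathcal{C}$ --- so applying $\vert\cdot\vert_{\Delta}$ and using that representables realise to $\ast$, together with (3), gives (2); equivalently, this colimit is $\vert\{[n]\to\mathcal{C}\}\vert$ by (4) and that is $\vert\mathcal{C}\vert$ by (3).

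For (5): a presheaf $\mathcal{C}^{op}\to\mathcal{S}$ is locally constant exactly when it inverts every morphism, i.e.\ factors through the localisation of $\mathcal{C}^{op}$ at all its arrows, which by (1) is $\vert\mathcal{C}^{op}\vert\simeq\vert\mathcal{C}\vert$ (the opposite-category functor commutes with $\vert\cdot\vert$, being colimit-preserving and fixing each $\Delta^n$ up to equivalence); thus restriction identifies $\mathcal{P}^{lc}(\mathcal{C})$ with $\operatorname{Fun}(\vert\mathcal{C}\vert,\mathcal{S})=\mathcal{P}(\vert\mathcal{C}\vert)$ (using $\vert\mathcal{C}\vert\simeq\vert\mathcal{C}\vert^{op}$), an equivalence in $\operatorname{Pr}^L$, and passing to atomic objects --- preserved by $\operatorname{Pr}^L$-equivalences --- gives $\mathcal{P}^{lc}(\mathcal{C})^{atom}\simeq\mathcal{P}(\vert\mathcal{C}\vert)^{atom}\simeq\vert\mathcal{C}\vert$, the last step by \Cref{atomic} and \Cref{smallvbig} since $\vert\mathcal{C}\vert$ is an $\infty$-groupoid and hence idempotent-complete. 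For (6): every categorical equivalence of simplicial sets is a weak homotopy equivalence, so the Kan localisation $\operatorname{sSet}\to\mathcal{S}$ factors through the Joyal localisation as $\operatorname{sSet}\to\operatorname{Cat}_\infty\xrightarrow{G}\mathcal{S}$; I would identify $G$ as the left derived functor of the identity, regarded as a left Quillen functor from the Joyal to the Kan model structure, whose right adjoint is the right derived functor of the identity in the other direction (right Quillen because Kan fibrations are categorical fibrations), namely the inclusion $\mathcal{S}\hookrightarrow\operatorname{Cat}_\infty$; hence $G$ is left adjoint to that inclusion, i.e.\ $G=\vert\cdot\vert$, which is (6). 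The main obstacle here is (6): it is the one part that steps outside the purely $\infty$-categorical framework of the paper, needing the comparison of the Joyal and Kan model structures and the compatibility of derived functors with adjunctions; a lesser but fiddly point, used in (1) and (5), is that localising at all morphisms really does give an $\infty$-groupoid --- the clean argument being that the localisation map factors through the maximal sub-$\infty$-groupoid and that the universal property forces this factorisation to split the corresponding inclusion, which is hence an equivalence.
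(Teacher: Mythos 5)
Your proposal is correct, and its global strategy is the paper's: test each description against the two characterisations of $\vert\cdot\vert$ in the preceding theorem, either by checking the universal property or by exhibiting a colimit-preserving functor that kills every $\Delta^n$. Items (1), (5) and (6) are essentially the paper's arguments spelled out in more detail --- the paper treats (1) as tautological, compresses (5) into ``compute $\mathcal{P}(\vert\mathcal{C}\vert)$ from the universal property'' (your transport of atomic objects through the equivalence $\mathcal{P}^{lc}(\mathcal{C})\simeq\mathcal{P}(\vert\mathcal{C}\vert)$ and appeal to \Cref{atomic} and \Cref{smallvbig} is exactly what is meant), and for (6) invokes the Bousfield-localisation relationship between the Joyal and Kan model structures just as you do; your extra check that localising at all arrows really yields a groupoid (the retraction onto the maximal sub-groupoid) is a worthwhile addition. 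The genuine divergence is (4): the paper computes $\operatorname{Map}(\varinjlim_{\mathcal{C}}\ast,X)$ directly, using full faithfulness of pullback along the localisation $\mathcal{C}\to\vert\mathcal{C}\vert$ to reduce to a groupoid and then the identification $\operatorname{Fun}(\mathcal{C},\mathcal{S})\simeq\mathcal{S}_{/\mathcal{C}}$ plus a sections argument; you instead verify criterion 2 of the theorem, with colimit-preservation supplied by \Cref{decomposelimits}(2) --- a cleaner route avoiding straightening, at the (routine) price of making $\mathcal{C}\mapsto\varinjlim_{\mathcal{C}}\ast$ functorial. One small slip there: $\varinjlim_{[n]}\ast=\ast$ because $[n]$ has a \emph{terminal} object, whose inclusion is cofinal for colimits, not because it has an initial one (harmless, since $[n]$ has both). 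For (2) and (3) you use the same ingredient as the paper (the complete Segal presentation and the fact that colimit-preserving functors out of $\mathcal{P}(\Delta)$ are determined on $\Delta$) in a different order: the paper reads (2) as the left Kan extension from $\Delta$ and (3) as the restriction of the colimit/constant adjunction to complete Segal objects, whereas you first identify $\vert\cdot\vert\circ L$ with the realisation of simplicial anima and then deduce (2) by co-Yoneda; both are fine, and your parenthetical ``equivalently, \dots by (4) and (3)'' aside is the only loose step, but it is not needed for the argument.
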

\begin{proof}
1 is tautologically a description of the left adjoint to the inclusion $\mathcal{S}\subset\operatorname{Cat}_\infty$.  For 2, by the objectwise formula for left Kan extensions this is equivalent to saying that $\vert \cdot\vert$ is the left Kan extension of the terminal functor $\Delta\rightarrow \mathcal{S}$.  But it follows again from the complete Segal space picture that every colimit-preserving functor out of $\operatorname{Cat}_\infty$ is left Kan extended from $\Delta$, so this is a rephrasing of condition 3 in the above theorem.  For 3, the colimit of the simplicial space is by definition the left adjoint to the inclusion of constant simplicial spaces into all simplicial spaces, and this restricts to the desired adjunction on complete Segal spaces.  For 4, note that the colimit in question is by definition determined by
$$\operatorname{Map}(\varinjlim_\mathcal{C}\ast,X) = \operatorname{Map}_{\operatorname{Fun}(\mathcal{C},\mathcal{S})}(\ast, X),$$
the mapping space between the constant functor on $\ast$ and the constant functor on $X$.  As $\mathcal{C}\rightarrow\vert\mathcal{C}\vert$ is a localisation, the pullback map on functors to $\mathcal{S}$ is fully faithful, so we deduce $\varinjlim_\mathcal{C}\ast\overset{\sim}{\rightarrow}\varinjlim_{\vert\mathcal{C}\vert}\ast$.  So it suffices to assume $\mathcal{C}$ is an $\infty$-groupoid.  But then there is an equivalence $\operatorname{Fun}(\mathcal{C},\mathcal{S})\simeq \mathcal{S}_{/\mathcal{C}}$ given by pulling back along the forgetful functor $\mathcal{S}_\ast\rightarrow\mathcal{S}$, see \cite{L} 3.3.2.7, and in these terms we see that $\operatorname{Map}_{\operatorname{Fun}(\mathcal{C},\mathcal{S})}(\ast, X)$ identifies with the space of sections of the projection $\mathcal{C}\times X\rightarrow\mathcal{C}$.  But this is just $\operatorname{Map}(\mathcal{C},X)$, as desired.  For 5, we can calculate $\mathcal{P}(\vert \mathcal{C}\vert)$ using the universal property of $\vert \cdot \vert$ to see that 5 holds.  Finally, 6 follows from the fact that the identity functor exhibits $\operatorname{sSet}$ with the Kan model structure as a Bousfield localisation of $\operatorname{sSet}$ with the Joyal model structure, which verifies criterion 1 of the theorem.
\end{proof}

We also can generate more descriptions by applying part 1 of the following:

\begin{corollary}\label{propertiesinvert} \ 
\begin{enumerate}
\item There is a unique functorial equivalence $\vert \mathcal{C}\vert \simeq \vert \mathcal{C}^{op}\vert$.
\item $\vert \cdot \vert$ preserves finite products.
\item If two functors $\mathcal{C}\rightarrow\mathcal{D}$ are related by a natural transformation, they induce homotopic maps $\vert\mathcal{C}\vert\rightarrow\vert\mathcal{D}\vert$.
\item If a functor admits an adjoint in either direction, it induces an equivalence on $\vert \cdot \vert$.
\end{enumerate}
\end{corollary}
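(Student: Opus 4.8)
The plan is to prove the four assertions in order, each building on the previous ones. For (1), observe that $\mathcal{C}\mapsto\vert\mathcal{C}^{op}\vert$ is the composite of the self-equivalence $(-)^{op}$ of $\operatorname{Cat}_\infty$ with $\vert\cdot\vert$, hence preserves all colimits; moreover it sends $\ast$ to $\ast$ and sends $\Delta^1$ to $\vert(\Delta^1)^{op}\vert=\vert\Delta^1\vert=\ast$, using the order-reversing isomorphism $\Delta^1\cong(\Delta^1)^{op}$ in $\operatorname{Cat}_\infty$. Thus, by condition (2) of the theorem of \Cref{realise} characterising $\vert\cdot\vert$, this functor is again left adjoint to $\mathcal{S}\subset\operatorname{Cat}_\infty$; since the same result asserts that the $\infty$-category of such functors is contractible, there is a unique functorial equivalence $\vert\mathcal{C}\vert\simeq\vert\mathcal{C}^{op}\vert$.

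For (2), I would use the description $\vert\mathcal{C}\vert=\varinjlim_\mathcal{C}\ast$ from \Cref{describeinvert}(4), the fact that colimits commute with colimits, and the fact that $\mathcal{S}$ is cartesian closed so that $-\times Y\colon\mathcal{S}\to\mathcal{S}$ preserves colimits for every $Y\in\mathcal{S}$; in particular the colimit over $\mathcal{C}$ of the constant diagram with value $Y$ is $\vert\mathcal{C}\vert\times Y$ (as one checks from the universal property of colimits). Then
$$\vert\mathcal{C}\times\mathcal{D}\vert=\varinjlim_{\mathcal{C}\times\mathcal{D}}\ast\;\simeq\;\varinjlim_{c\in\mathcal{C}}\Big(\varinjlim_{d\in\mathcal{D}}\ast\Big)\;\simeq\;\varinjlim_{c\in\mathcal{C}}\vert\mathcal{D}\vert\;\simeq\;\vert\mathcal{C}\vert\times\vert\mathcal{D}\vert,$$
the nullary case being the normalisation $\vert\ast\vert=\ast$ and general finite products following by induction. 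The one point requiring care is to check that this composite equivalence is the canonical comparison map induced by the two projections $\mathcal{C}\times\mathcal{D}\to\mathcal{C}$ and $\mathcal{C}\times\mathcal{D}\to\mathcal{D}$, rather than just some abstract equivalence; this is the only genuinely fiddly bookkeeping in the corollary, and I expect it to be the main (if minor) obstacle.

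For (3), recall that a natural transformation $F\Rightarrow G$ between functors $\mathcal{C}\to\mathcal{D}$ is precisely a functor $h\colon\mathcal{C}\times\Delta^1\to\mathcal{D}$ restricting to $F$ and $G$ along the two vertex inclusions $i_0,i_1\colon\mathcal{C}\to\mathcal{C}\times\Delta^1$. Applying $\vert\cdot\vert$ and using (2) gives $\vert h\vert\colon\vert\mathcal{C}\vert\times\vert\Delta^1\vert\to\vert\mathcal{D}\vert$; since $\vert\Delta^1\vert=\ast$ the two vertices of $\Delta^1$ induce the unique map $\ast\to\vert\Delta^1\vert$, so $\vert i_0\vert$ and $\vert i_1\vert$ agree compatibly with the product decomposition of (2), whence $\vert F\vert=\vert h\vert\circ\vert i_0\vert=\vert h\vert\circ\vert i_1\vert=\vert G\vert$. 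Finally, for (4): if $f\colon\mathcal{C}\to\mathcal{D}$ admits a right adjoint $g$, then the unit and counit are natural transformations $\operatorname{id}\Rightarrow gf$ and $fg\Rightarrow\operatorname{id}$, so by (3) we get $\vert g\vert\circ\vert f\vert\simeq\operatorname{id}$ and $\vert f\vert\circ\vert g\vert\simeq\operatorname{id}$, exhibiting $\vert f\vert$ as an equivalence; the case of a left adjoint is identical. Apart from the bookkeeping in (2), everything here is a formal consequence of the characterisation of $\vert\cdot\vert$.
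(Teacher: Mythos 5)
Your proposal is correct, and parts (1), (3) and (4) run exactly as in the paper: (1) by the uniqueness clause of the characterisation theorem (the paper checks op-invariance via the terminal functor on $\Delta$, you check condition 2 directly — same idea), (3) from (2) together with $\vert\Delta^1\vert=\ast$, and (4) from (3) via unit and counit. The one genuine divergence is part (2): the paper uses description 3 of \Cref{describeinvert}, i.e.\ that $\vert\mathcal{C}\vert$ is the colimit over $\Delta^{op}$ of the complete Segal anima $n\mapsto\operatorname{Map}(\Delta^n,\mathcal{C})$, and concludes by siftedness of $\Delta^{op}$; since $\operatorname{Map}(\Delta^n,-)$ preserves products and sifted colimits in $\mathcal{S}$ commute with finite products, this immediately shows that the \emph{canonical} comparison map $\vert\mathcal{C}\times\mathcal{D}\vert\rightarrow\vert\mathcal{C}\vert\times\vert\mathcal{D}\vert$ is an equivalence. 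Your route via description 4, Fubini for colimits and universality of colimits in $\mathcal{S}$ is also valid, but — as you yourself flag — it produces an equivalence whose identification with the projection-induced comparison map still has to be checked, and this compatibility is actually used in (3) to conclude $\vert i_0\vert\simeq\vert i_1\vert$; it does hold (each step in your chain is natural and compatible with the projections), but you should either carry out that verification or simply switch to the siftedness argument, which buys you the canonicity for free. So: correct, with a routine but genuinely needed piece of bookkeeping in (2) that the paper's choice of model avoids.
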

\begin{proof}
Claim 1 follows from the theorem because the terminal functor $\Delta\rightarrow\mathcal{S}$ obviously has the required invariance property.  Claim 2 follows from description 3 of the above corollary, since $\Delta^{op}$ is sifted, \cite{L} 5.5.8.4.  Claim 3 follows from claim 2 and $\vert \Delta^1\vert=\ast$.  Claim 4 follows from claim 3.
\end{proof}

\subsection{Topological analogue: proper maps, proper base change, and proper descent}

In the next section we will discuss the notion of proper functors between $\infty$-categories, and the associated proper base-change and proper descent theorems.   But for motivation, and because we will later use it, we start by recalling the more familiar topological analogue.  A map of locally compact Hausdorff topological spaces $f\colon X\rightarrow Y$ is called \emph{proper} if the preimage of every compact subset is compact.  A crucial fact about proper maps is the \emph{tube lemma}: if $y\in Y$, then the $f^{-1}(U)$ for $U$ an open neighbourhood of $y$ form a cofinal system of open neighbourhoods of the fibre $X_y$.

\medskip

We start by recalling the version of the proper base-change theorem proved by Lurie in \cite{L} 7.3.

\begin{theorem}
Let
$$\xymatrix{
X'\ar[r]^{g'}\ar[d]^{f'} & X\ar[d]^f \\
Y'\ar[r]^{g} & Y \\
}$$
be a pullback diagram of locally compact Hausdorff spaces with $f$ proper.  Then the induced commutative diagram of $\infty$-categories gotten by applying $\operatorname{Sh}(-)$
$$\xymatrix{
\operatorname{Sh}(X')& \operatorname{Sh}(X)\ar[l]^{g'^\ast} \\
\operatorname{Sh}(Y')\ar[u]^{f'^\ast} & \operatorname{Sh}(Y)\ar[l]^{g^\ast}\ar[u]^{f^\ast} \\
}$$

\noindent is \emph{right adjointable} (or \emph{right Beck--Chevalley}): the vertical maps $f^\ast$ and $f'^\ast$ have right adjoints $f_\ast$ and $f'_\ast$ respectively, and the natural comparison map is an equivalence
$$g^\ast f_\ast \overset{\sim}{\rightarrow} f'_\ast g'^\ast.$$
\end{theorem}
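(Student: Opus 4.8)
The plan is to reduce the statement to a single, essentially point-set, input about proper maps, and then do some bookkeeping. The existence of the right adjoints $f_\ast$ and $f'_\ast$ needs no hypothesis: for any continuous map the assignment $(f_\ast\mathcal{F})(U):=\mathcal{F}(f^{-1}(U))$ is right adjoint to $f^\ast$. The map in the statement is then, by definition, the Beck--Chevalley transformation, i.e. the mate of the canonical equivalence $g'^\ast f^\ast\simeq f'^\ast g^\ast$ expressing the functoriality of $X\mapsto\operatorname{Sh}(X)$ under $(-)^\ast$. So the content is purely that this mate is an equivalence, and I would check this ``fibrewise'' over $Y'$.

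The crucial special case is $Y'=\{y'\}$ a point, with image $y=g(y')\in Y$; there the claim unwinds to the assertion that the stalk of the pushforward is computed by the fibre,
$$(f_\ast\mathcal{F})_y\;\overset{\sim}{\longrightarrow}\;\Gamma\big(X_y,\,\mathcal{F}|_{X_y}\big),\qquad X_y:=f^{-1}(y).$$
This is where properness enters, via the tube lemma recalled above: one has $(f_\ast\mathcal{F})_y=\varinjlim_{U\ni y}\mathcal{F}(f^{-1}(U))$, the tube lemma makes the sets $f^{-1}(U)$ cofinal among open neighbourhoods of the compact fibre $X_y$, so the colimit becomes $\varinjlim_{V\supseteq X_y}\mathcal{F}(V)$, and one identifies this with $\Gamma(X_y,\mathcal{F}|_{X_y})$ using compactness of $X_y$. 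This last identification --- sections of a sheaf of anima over a compact subset as the filtered colimit of sections over open neighbourhoods --- is not formal; it is (a form of) Lurie's comparison of sheaves with $\mathcal{K}$-sheaves on locally compact Hausdorff spaces.

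Granting the special case, the general case is an exercise in identifying fibres: in the pullback square $X'_{y'}=f'^{-1}(y')$ is canonically homeomorphic to $X_{g(y')}$, compatibly with the pullbacks of $\mathcal{F}$, so applying the special case to both $f$ and $f'$ shows the Beck--Chevalley map is an equivalence on the stalk at each $y'\in Y'$. Promoting this to an equivalence of sheaves needs a conservativity statement; for the (finite-dimensional) spaces relevant to this paper the stalk functors on $\operatorname{Sh}(Y')$ are jointly conservative, so one is done, but in the stated generality $\operatorname{Sh}(Y')$ need not be hypercomplete and stalks need not be conservative, so this last globalisation is genuinely delicate and is best handled by working systematically with $\mathcal{K}$-sheaves and factoring $g$ through a closed embedding.

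The hard part is therefore this interface between the topology and the $\infty$-categorical sheaf theory: the ``continuity'' input for sections over compacta and the globalisation just described. The mate calculus and the point-set manipulation of fibres are routine by comparison. In practice we will simply cite Lurie's treatment in \cite{L} 7.3, where all of this is carried out.
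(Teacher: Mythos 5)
Your proposal lands in the same place as the paper: the paper does not prove this theorem at all, but explicitly recalls it as Lurie's proper base-change theorem from \cite{L} 7.3, and your proof ultimately defers to exactly that citation. Your accompanying sketch (stalks of $f_\ast$ via the tube lemma, sections over the compact fibre via $\mathcal{K}$-sheaves, and the correctly flagged subtlety that stalks need not be jointly conservative without hypercompleteness) is a fair outline of how Lurie's argument actually goes, so there is nothing to correct.
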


As observed by Deligne in \cite{D}, this kind of proper base-change can be used to give ``proper descent'' results.  We start with ``cdh descent'':

\begin{corollary}\label{cdhdescenttop}
Suppose given a pullback square of locally compact Hausdorff spaces
$$\xymatrix{
X'\ar[r]^{g'}\ar[d]^{f'} & X\ar[d]^f \\
Y'\ar[r]^{g} & Y \\
}$$
such that:
\begin{enumerate}
\item $f$ is proper;
\item $g$ is the inclusion of a closed subspace;
\item the pullback of $f$ to the open complement $Y\smallsetminus Y'$ is an isomorphism.
\end{enumerate}

Then applying $\operatorname{Sh}(-)$ with pullback functoriality gives a pullback diagram, so
$$\operatorname{Sh}(Y)\overset{\sim}{\rightarrow}\operatorname{Sh}(X)\times_{\operatorname{Sh}(X')}\operatorname{Sh}(Y').$$
\end{corollary}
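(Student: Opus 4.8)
The plan is to deduce cdh descent from the proper base-change theorem by the classical Deligne-style argument, working relative to the closed/open decomposition $Y = Y' \sqcup (Y \smallsetminus Y')$. First I would recall the recollement: for a closed subspace $g\colon Y'\hookrightarrow Y$ with open complement $j\colon U = Y\smallsetminus Y' \hookrightarrow Y$, the functor $\operatorname{Sh}(Y) \to \operatorname{Sh}(Y')\times\operatorname{Sh}(U)$ given by $(g^\ast, j^\ast)$ is conservative, and an object of $\operatorname{Sh}(Y)$ is determined by the pair together with the gluing datum $g^\ast j_\ast j^\ast$. Likewise on $X$, using the pullback square and the fact that $g'\colon X' \hookrightarrow X$ is the inclusion of a closed subspace (being the base change of $g$) with open complement $j'\colon U' = X\smallsetminus X' \hookrightarrow X$; here $f$ restricts to an isomorphism $U' \xrightarrow{\sim} U$ by hypothesis (3).

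The key step is to produce a functor $\operatorname{Sh}(X)\times_{\operatorname{Sh}(X')}\operatorname{Sh}(Y') \to \operatorname{Sh}(Y)$ inverse to $\operatorname{Sh}(Y) \to \operatorname{Sh}(X)\times_{\operatorname{Sh}(X')}\operatorname{Sh}(Y')$. Given $(\mathcal{G} \in \operatorname{Sh}(X), \mathcal{H} \in \operatorname{Sh}(Y'), \alpha\colon f'^\ast\mathcal{H} \xrightarrow{\sim} g'^\ast\mathcal{G})$, I would set its image on $U \cong U'$ to be $j'^\ast \mathcal{G}$, on $Y'$ to be $\mathcal{H}$, and I would glue using $\alpha$ together with the gluing data already present in $\mathcal{G}$. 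To make this gluing coherent I would pass to the level of the explicit $j_\ast j^\ast \to \text{shift}$ description, or more cleanly, observe that the comparison functor $\operatorname{Sh}(Y) \to \operatorname{Sh}(X)\times_{\operatorname{Sh}(X')}\operatorname{Sh}(Y')$ is automatically a right adjoint (its left adjoint being computed by the evident colimit), so it suffices to check it is fully faithful and essentially surjective. Both checks reduce, via the conservative family $\{g^\ast, j^\ast\}$ on $\operatorname{Sh}(Y)$ and its counterpart on the fiber product, to the single nontrivial compatibility: that the proper base-change square for $f$ along $g$ (which holds by the previous Theorem, since $f$ is proper) is exactly what identifies the gluing datum on $Y$ with the gluing datum transported from $X$. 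Concretely, for $\mathcal{F} \in \operatorname{Sh}(Y)$ one needs $g^\ast j_\ast j^\ast \mathcal{F} \simeq g^\ast f_\ast j'_\ast j'^\ast f^\ast \mathcal{F}$, and this is furnished by combining $f_\ast j'_\ast = j_\ast f''_\ast$ (pushforward is functorial; $f'' \colon U' \to U$ is the restriction, an isomorphism) with proper base change $g^\ast f_\ast \simeq f'_\ast g'^\ast$.

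In more structural terms, I would phrase the whole argument as: the square of $\infty$-categories $\operatorname{Sh}(-)$ applied to the given pullback square is a pullback in $\operatorname{Pr}^R$ (equivalently in $\widehat{\operatorname{Cat}}_\infty$) precisely because (a) $\operatorname{Sh}(Y) \simeq \operatorname{Sh}(Y')\times_{\operatorname{Sh}(U)}\operatorname{Sh}(U)^{\mathrm{glue}}$ and similarly for $X$, both being instances of the closed-open recollement, and (b) the comparison of recollement data across $f$ is an equivalence, which is exactly the content of proper base change applied to the two squares obtained by intersecting the given square with $Y'$ and with $U$. Since restricting to $U$ gives an isomorphism $U' \to U$ by hypothesis (3), the $U$-component of the fiber product degenerates and one is left with the stated identification $\operatorname{Sh}(Y) \xrightarrow{\sim} \operatorname{Sh}(X)\times_{\operatorname{Sh}(X')}\operatorname{Sh}(Y')$.

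I expect the main obstacle to be the bookkeeping of the gluing/recollement data in the $\infty$-categorical setting — making precise that $\operatorname{Sh}(Y)$ really is the lax equalizer (or oriented fiber product) built from $g^\ast$, $j^\ast$ and the natural transformation $g^\ast j_\ast j^\ast$, and that the comparison functor respects this — rather than any single geometric input. The geometric inputs (properness of $f$, hence proper base change; the tube lemma feeding into proper base change; the isomorphism over the open complement) are all either hypotheses or the already-proven previous theorem, so the work is entirely in organizing the descent formalism and verifying the counit/unit of the relevant adjunction are equivalences after applying the conservative family of restriction functors.
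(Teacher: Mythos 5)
Your argument is correct, and it uses the same geometric inputs as the paper (proper base change for $f$ along $g$, the closed/open decomposition of $Y$, and the isomorphism over the open complement), but it organizes them differently. The paper never touches gluing data: it notes that equivalences of sheaves on $Y$ are detected by pullback to $Y'$ and $U=Y\smallsetminus Y'$, that sheaf pullbacks preserve finite limits, and then invokes the criterion of \cite{LHA} 5.2.2.37 together with proper base change to reduce the assertion ``$\operatorname{Sh}(Y)\to\operatorname{Sh}(X)\times_{\operatorname{Sh}(X')}\operatorname{Sh}(Y')$ is an equivalence'' to the same assertion after pulling the whole square back to $Y'$ and to $U$, where it is tautological (horizontal, resp.\ vertical, maps become equivalences). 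You instead identify both sides explicitly as recollements — $\operatorname{Sh}(Y)$ as triples $(\mathcal{H},\mathcal{F}_U,\mathcal{H}\to g^\ast j_\ast\mathcal{F}_U)$ and the fibre product, via the recollement of $\operatorname{Sh}(X)$ over $X'\sqcup U'$, as triples $(\mathcal{H},\mathcal{G}_{U'},f'^\ast\mathcal{H}\to g'^\ast j'_\ast\mathcal{G}_{U'})$ — and match the gluing data, with proper base change $g^\ast f_\ast\simeq f'_\ast g'^\ast$ supplying the adjunction-level comparison. Two small remarks: your displayed identity $g^\ast j_\ast j^\ast\mathcal{F}\simeq g^\ast f_\ast j'_\ast j'^\ast f^\ast\mathcal{F}$ already follows from $f_\ast j'_\ast=j_\ast f''_\ast$ and $f''$ being an isomorphism; proper base change is really needed at the step where the fibre-product gluing map $f'^\ast\mathcal{H}\to g'^\ast j'_\ast\mathcal{G}_{U'}$ is transposed to $\mathcal{H}\to f'_\ast g'^\ast j'_\ast\mathcal{G}_{U'}\simeq g^\ast j_\ast\mathcal{F}_U$ — which you do have in substance. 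What your route buys is a concrete description of the inverse functor; what it costs is exactly the recollement bookkeeping you flag (making the lax fibre-product description of $\operatorname{Sh}(Y)$ and its functoriality precise, e.g.\ via \cite{LHA} A.8), which the paper's appeal to the conservativity-plus-adjointability criterion sidesteps entirely.
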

\begin{proof}
Recall that if we have a closed subset $T'\subset T$ of a topological space $T$, then equivalences of sheaves on $T$ can be detected by pullback to $T'$ and $T\smallsetminus T'$, \cite{L} 7.3.2.  Furthermore, pullback functors on $\infty$-categories of sheaves associated to maps of topological spaces preserve finite limits, because they correspond to geometric morphisms of $\infty$-topoi.  It then follows from \cite{LHA} 5.2.2.37  and the proper base-change theorem that we can test the conclusion of this corollary after pulling back to $Y'$ and $Y\smallsetminus Y'$ (compare with the proof of \Cref{properdescentcat}).  But on pullback to $Y'$ the horizontal maps become equivalences and on pullback to $Y\smallsetminus Y'$ the vertical maps become equivalences, and in either case the conclusion is tautological.
\end{proof}

\begin{corollary}\label{closeddescenttop}
Let $T$ be a locally compact Hausdorff space, and let $P$ be a finite set of closed subsets of $T$.  Suppose that for all $P'\subset P$ the intersection $\cap_{S\in P'}S$ admits a cover by elements of $P$.  (In particular, taking $P'=\emptyset$, we deduce $\cup_{S\in P}S=T$.) Then
$$\operatorname{Sh}(T)\overset{\sim}{\rightarrow} \varprojlim_{S\in P^{op}}\operatorname{Sh}(S)$$
via pullback, viewing $P$ as a poset under inclusion.
\end{corollary}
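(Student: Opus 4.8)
The plan is to induct on $|P|$, peeling off one maximal member of $P$ at each stage and feeding the resulting closed decomposition of $T$ into the cdh-descent statement \Cref{cdhdescenttop}. When $|P|\leq 1$ there is nothing to prove: the hypothesis applied to $P'=\emptyset$ forces $\bigcup_{S\in P}S=T$, so either $P=\emptyset$ and $T=\emptyset$, with both sides the terminal $\infty$-category, or $P=\{T\}$ and both sides are $\operatorname{Sh}(T)$. For the inductive step, fix $S_0\in P$ maximal with respect to inclusion, set $Q:=P\smallsetminus\{S_0\}$ and $D:=\{S\in P:S\subsetneq S_0\}\subseteq Q$, and let $Y':=\bigcup_{S\in Q}S$, a closed subspace of $T$ as $Q$ is finite. (All the spaces that occur are closed subspaces of $T$, hence again locally compact Hausdorff.) First I would apply \Cref{cdhdescenttop} to the pullback square with corners $T$, $S_0$, $Y'$ and $S_0\cap Y'$: the closed embedding $S_0\hookrightarrow T$ is proper, $Y'\hookrightarrow T$ is a closed embedding, and since $\bigcup_{S\in P}S=T$ gives $T=S_0\cup Y'$, the open complement $T\smallsetminus Y'$ is contained in $S_0$, so the base change of $S_0\hookrightarrow T$ to $T\smallsetminus Y'$ is an isomorphism. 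Hence
$$\operatorname{Sh}(T)\ \xrightarrow{\ \sim\ }\ \operatorname{Sh}(S_0)\times_{\operatorname{Sh}(S_0\cap Y')}\operatorname{Sh}(Y')$$
via pullback.

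Next I would check that the families $Q$ on $Y'$ and $D$ on $S_0\cap Y'$ again satisfy the hypothesis of the corollary. For $Q$: if $\emptyset\neq Q'\subseteq Q$ then $\bigcap_{S\in Q'}S$ is a union of members of $P$ contained in it, and none of these can be $S_0$ by maximality, so it is a union of members of $Q$; and for $Q'=\emptyset$ one just needs $\bigcup_{S\in Q}S=Y'$. For $D$: applying the hypothesis to the pairs $\{S_0,S\}$ with $S\in Q$ shows each $S_0\cap S$ is a union of members of $P$ strictly contained in $S_0$, i.e.\ of members of $D$; this gives $S_0\cap Y'=\bigcup_{S\in D}S$, and the remaining conditions for $D$ then follow exactly as for $Q$. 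Since $|Q|=|P|-1$ and $|D|\leq|P|-1$, the inductive hypothesis applies and yields $\operatorname{Sh}(Y')\simeq\varprojlim_{S\in Q^{\mathrm{op}}}\operatorname{Sh}(S)$ and $\operatorname{Sh}(S_0\cap Y')\simeq\varprojlim_{S\in D^{\mathrm{op}}}\operatorname{Sh}(S)$, again via pullback.

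It remains to identify the pullback $\operatorname{Sh}(S_0)\times_{\varprojlim_{D^{\mathrm{op}}}\operatorname{Sh}}\varprojlim_{Q^{\mathrm{op}}}\operatorname{Sh}$ with $\varprojlim_{S\in P^{\mathrm{op}}}\operatorname{Sh}(S)$. Consider the span of $\infty$-categories $Q^{\mathrm{op}}\hookleftarrow D^{\mathrm{op}}\hookrightarrow(D^{\mathrm{op}})^{\triangleleft}$ (the latter being $D^{\mathrm{op}}$ with a new initial object $\star$ adjoined), together with the functors to $P^{\mathrm{op}}$ given by the inclusion on $Q^{\mathrm{op}}$ and by $D^{\mathrm{op}}\hookrightarrow P^{\mathrm{op}}$, $\star\mapsto S_0$ on $(D^{\mathrm{op}})^{\triangleleft}$. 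This is a pushout square in $\operatorname{Cat}_\infty$: any chain in the poset $P^{\mathrm{op}}$ avoiding $S_0$ lies in $Q^{\mathrm{op}}$, while a chain containing $S_0$ must begin with it and thereafter stay inside $D$ (again by maximality of $S_0$), so lies in $(D^{\mathrm{op}})^{\triangleleft}$; hence on nerves $N(P^{\mathrm{op}})=N(Q^{\mathrm{op}})\sqcup_{N(D^{\mathrm{op}})}N((D^{\mathrm{op}})^{\triangleleft})$, and since the maps are monomorphisms this is a homotopy pushout in the Joyal model structure, hence a pushout in $\operatorname{Cat}_\infty$ (one could also invoke \Cref{detectcolim}). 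Now \Cref{decomposelimits}(1) converts this colimit of indexing categories into a limit:
$$\varprojlim_{P^{\mathrm{op}}}\operatorname{Sh}\ \simeq\ \varprojlim_{Q^{\mathrm{op}}}\operatorname{Sh}\ \times_{\varprojlim_{D^{\mathrm{op}}}\operatorname{Sh}}\ \varprojlim_{(D^{\mathrm{op}})^{\triangleleft}}\operatorname{Sh},$$
and the last factor is $\operatorname{Sh}(S_0)$ because $(D^{\mathrm{op}})^{\triangleleft}$ has $S_0$ as initial object. Combining this with the two displays above, and verifying that all the equivalences in play respect the pullback functoriality, completes the induction.

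The topological work is entirely absorbed by \Cref{cdhdescenttop}, so I do not expect a genuinely hard step. The two points requiring care are the verification that the intersection hypothesis descends to $Y'$ and to $S_0\cap Y'$ — where the identity $S_0\cap Y'=\bigcup_{S\in D}S$ is what makes it go through — and the final matching of the cdh pullback square with the canonical decomposition of $\varprojlim_{P^{\mathrm{op}}}\operatorname{Sh}$, where one must keep track of variances.
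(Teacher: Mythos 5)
Your proposal is correct and follows essentially the same route as the paper, whose proof is just a compressed version of your argument: reduce to the case of \Cref{cdhdescenttop} where the proper map is a closed inclusion (your square $T$, $S_0$, $Y'$, $S_0\cap Y'$) and induct on $|P|$, with your extra work being the explicit verification that the hypothesis passes to $Q$ and $D$ and the poset-pushout/\Cref{decomposelimits} bookkeeping. The only remark I would drop is the parenthetical that \Cref{detectcolim} alone gives the pushout of posets (its condition (2) is exactly what would need checking there); your main justification via the monomorphism pushout of nerves being a homotopy pushout in the Joyal model structure is the right one and suffices.
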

\begin{proof}
When $P$ has $\leq 3$ elements this reduces to the special case of \Cref{cdhdescenttop} in which the proper map $f$ is also a closed inclusion.  The general case follows by induction.
\end{proof}

\begin{remark}
The locally compact Hausdorff hypothesis is unnecessary here.  Indeed, the proper base-change theorem holds for general topological spaces when the proper map $f$ is a closed inclusion, see \cite{L} 7.3.2.\exend
\end{remark}

If one uses open covers instead of closed covers, the finiteness requirements can be removed. 

\begin{theorem}\label{Bigopendescent}
Let $X$ be a topological space and $\{X_i\rightarrow X\}_{i\in I}$ a set of maps to $X$ such that for all $x\in X$, there is an open $U\subset X$ containing $x$ and an $i\in I$ such that the pullback of $X_i\rightarrow X$ to $U$ has a section.  Let $\mathcal{U}\subset\operatorname{Top}_{/X}$ denote the sieve generated by the $X_i$, so $Y\rightarrow X$ lies in $\mathcal{U}$ if and only if it factors through some $X_i$.  Then
$$\operatorname{Sh}(X)\overset{\sim}{\rightarrow} \varprojlim_{(Y\rightarrow X) \in \mathcal{U}} \operatorname{Sh}(Y)$$
via the pullback functors.
\end{theorem}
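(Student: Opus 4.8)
The plan is to mimic the proof of the closed-cover statement \Cref{closeddescenttop}, but using the open-cover version of proper base change: namely, the fact that for an \emph{open} inclusion $j\colon U\hookrightarrow X$, the square obtained by applying $\operatorname{Sh}(-)$ to a pullback along $j$ is always right adjointable (with $j_!$ the left adjoint to $j^*$ this time, but what we really need is just that restriction to an open cover is conservative and exact). So first I would reduce to understanding when a morphism in $\operatorname{Sh}(X)$ is an equivalence: by the sheaf condition itself, a map of sheaves on $X$ is an equivalence if and only if its restriction to each member of an open cover is an equivalence. This is the ``open tube lemma'' analogue, and it holds with no Hausdorff or local compactness hypotheses.

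Next I would set up the comparison functor. Pullback along each $Y\rightarrow X$ in the sieve $\mathcal{U}$ assembles, by the universal property of limits, into a single functor
\[
\Phi\colon \operatorname{Sh}(X)\longrightarrow \varprojlim_{(Y\rightarrow X)\in\mathcal{U}}\operatorname{Sh}(Y),
\]
and I must show $\Phi$ is an equivalence. Since each $\operatorname{Sh}(Y)$ is an $\infty$-topos and each transition functor in the diagram is a pullback (hence a geometric morphism, preserving finite limits and all colimits — it has a right adjoint), the limit on the right is computed in $\operatorname{Pr}^L$ (equivalently $\widehat{\operatorname{Cat}}_\infty$ along left-exact left adjoints), and $\Phi$ is itself a left-exact, colimit-preserving functor. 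To prove it is an equivalence I would exhibit an explicit inverse, or rather check fully faithfulness and essential surjectivity directly using the cover. Concretely: given a compatible family $(F_Y)_{Y\in\mathcal{U}}$, one wants to glue. Because the $X_i$ generate the sieve, it suffices to glue over the $X_i$; but over an individual $X_i$ one doesn't literally have an open subset of $X$, so the key point — and this is where the hypothesis is used — is that \emph{locally on $X$} the map $X_i\rightarrow X$ has a section, so after passing to a suitable open cover $\{U_\alpha\}$ of $X$ each $F_{X_i}$ restricts (via the section) to a sheaf on $U_\alpha$, and the compatibility data in the limit over $\mathcal{U}$ — which includes the fibre products $X_i\times_X X_j$, $X_i\times_X U_\alpha$, etc. — provides exactly the descent datum needed to glue these into a sheaf on $X$ by the open-cover sheaf condition. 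Running this argument for mapping spaces rather than objects gives fully faithfulness by the same mechanism.

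Let me restructure the essential-surjectivity/full-faithfulness step more cleanly, since that is the substantive part. Let $\mathcal{U}_0\subset\mathcal{U}$ be the further sieve on $X$ generated by those open $U\subset X$ which factor (as objects of $\operatorname{Top}_{/X}$) through some $X_i$; the hypothesis says precisely that $\mathcal{U}_0$ contains an open cover of $X$, hence that restriction $\operatorname{Sh}(X)\to\varprojlim_{U\in\mathcal{U}_0}\operatorname{Sh}(U)$ is an equivalence (ordinary sheaf descent for a sieve containing a cover; one may cite \cite{L} 7.3.5 or the fact that $\operatorname{Sh}(X)$ is the limit of $\operatorname{Sh}(-)$ over the site of opens and a sieve containing a cover is cofinal for this purpose). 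So it is enough to show the restriction functor $\varprojlim_{Y\in\mathcal{U}}\operatorname{Sh}(Y)\to\varprojlim_{U\in\mathcal{U}_0}\operatorname{Sh}(U)$ is an equivalence. But $\mathcal{U}_0\subset\mathcal{U}$ is a full subcategory, and I claim the inclusion is \emph{cofinal} — equivalently, that for each $(Y\to X)\in\mathcal{U}$ the comma category $\mathcal{U}_0{}_{/(Y\to X)}$ (pairs of an open $U$ of the above type together with a map $U\to Y$ over $X$) is weakly contractible. Given $(Y\to X)\in\mathcal{U}$ we have $Y\to X_i$ for some $i$; pulling back the open cover $\{U_\alpha\}$ of $X$ (over which $X_i\to X$ has sections $s_\alpha$) along $Y\to X$ gives an open cover $\{V_\alpha=Y\times_X U_\alpha\}$ of $Y$, and $V_\alpha\to Y\to X$ factors through $U_\alpha$ (via $Y\to X_i$ followed by nothing — rather: $V_\alpha$ maps to $U_\alpha$ by the projection, and $U_\alpha\in\mathcal{U}_0$), so each $V_\alpha$ is an object of the comma category. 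The comma category thus contains the poset of opens of $Y$ that refine this cover and map to members of $\mathcal{U}_0$; this poset is nonempty and closed under pairwise intersection (an intersection of two opens mapping to $\mathcal{U}_0$-members again maps to a $\mathcal{U}_0$-member, since $\mathcal{U}_0$ is a sieve), hence cofiltered, hence weakly contractible. Since cofinal functors induce equivalences on limits, we are done.

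The main obstacle is the bookkeeping in that last cofinality argument — making sure the comma categories are genuinely contractible and not merely nonempty, and handling the fact that $\mathcal{U}$ contains all the iterated fibre products $X_{i_0}\times_X\cdots\times_X X_{i_n}$ which are not spaces over $X$ with sections but only covered by such. The clean way to organise it is exactly as above: don't try to build an explicit gluing functor by hand, but instead identify $\mathcal{U}_0$ (opens factoring through some $X_i$) inside $\mathcal{U}$, observe it contains an honest open cover so that $\operatorname{Sh}(X)\simeq\varprojlim_{\mathcal{U}_0}\operatorname{Sh}$, and then show $\mathcal{U}_0\hookrightarrow\mathcal{U}$ is cofinal by the cofilteredness of refinements. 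One technical caveat to verify: $\operatorname{Sh}(-)$ must be taken with values large enough that these potentially large limits make sense, and the transition maps must genuinely be the geometric-morphism pullbacks so that the limit is the expected one — both are fine since all maps $Y\to X$ in sight are continuous maps of topological spaces.
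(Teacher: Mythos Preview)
Your cofinality step is the wrong way round, and the correct direction actually fails. You want
\[
\varprojlim_{\mathcal{U}^{op}}\operatorname{Sh}\ \overset{\sim}{\longrightarrow}\ \varprojlim_{\mathcal{U}_0^{op}}\operatorname{Sh},
\]
which requires the inclusion $\mathcal{U}_0^{op}\hookrightarrow\mathcal{U}^{op}$ to be a $\varprojlim$-equivalence; unwinding the $\operatorname{op}$, that means the \emph{coslice} categories $(\mathcal{U}_0)_{Y/}$ --- objects $U\in\mathcal{U}_0$ equipped with a map $Y\to U$ over $X$ --- must be contractible for every $Y\in\mathcal{U}$. You instead check the slices $(\mathcal{U}_0)_{/Y}$ (maps $U\to Y$), which is the condition for the opposite kind of (co)finality. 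And the coslices are genuinely empty in general: take $Y=X_i$ with $X_i\to X$ surjective (say a nontrivial covering map). A map $Y\to U$ over $X$ with $U\in\mathcal{U}_0$ forces $U=X$, but $X\in\mathcal{U}_0$ would require a global section of some $X_j\to X$, which need not exist. So $\mathcal{U}_0\hookrightarrow\mathcal{U}$ is \emph{not} cofinal, and no direct fibre-contractibility argument between these two sieves can work. (Incidentally, even the slices $(\mathcal{U}_0)_{/Y}$ are not obviously contractible: that category is the full sieve in $\operatorname{Top}_{/Y}$ generated by the pulled-back open cover, not just a cofiltered poset of opens, and its realisation is typically the nerve of the cover, not a point.)

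The paper sidesteps this entirely by invoking the Grothendieck-topology formalism rather than a direct comparison of the two sieves. One endows $\operatorname{Top}$ with the pretopology whose covering families are jointly-surjective open inclusions; your hypothesis says exactly that $\mathcal{U}$ is a covering sieve for the resulting topology. It then suffices to show that $X\mapsto\operatorname{Sh}(X)$ is a \emph{sheaf} for this topology, and for that one only has to check descent along the generating covers --- sieves in $\operatorname{Top}_{/X}$ generated by an honest open cover $\{U_i\hookrightarrow X\}$. For such a sieve the cofinality argument \emph{does} go through (the coslice over any $Y$ contains the open $U_i$ through which $Y$ factors, and is closed under intersection), reducing to $U\mapsto\operatorname{Sh}(U)$ being a sheaf on $\operatorname{Open}(X)$, which is \cite{L}~6.1.3. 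The point is that the passage ``descent for generating sieves $\Rightarrow$ descent for all covering sieves'' is part of the sheaf-condition package, not a cofinality statement; your attempt to replace it by cofinality is exactly where the argument breaks. Note also that your appeal to ``ordinary sheaf descent'' for $\mathcal{U}_0$ already presupposes this same package, so once you grant it for $\mathcal{U}_0$ you may as well apply it directly to $\mathcal{U}$ and skip the comparison altogether.
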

\begin{proof}
Let us define a covering family in $\operatorname{Top}$ to be a family of open inclusions $(U_i\rightarrow X)_{i\in I}$ whose images cover $X$.  Then the axioms of a pretopology are clearly satisfied, so we get a Grothendieck topology on $\operatorname{Top}$ for which the covering sieves over $X$ are those sieves which contain some open cover of $X$.  Our sieve $\mathcal{U}$ is clearly such a sieve, so it suffices to show that $X\mapsto \operatorname{Sh}(X)$ satisfies descent for this Grothendieck topology.
However, because the open subsets of $X$ are closed under finite intersection we see that the sieve generated by an open cover in $\operatorname{Open}(X)$ is cofinal in the sieve generated by that open cover in $\operatorname{Top}_{/X}$, meaning our desired descent is equivalent to saying that $U\mapsto \operatorname{Sh}(U)$ is a sheaf of $\infty$-categories on $X$.  But this is a general property of sheaf categories, \cite{L} 6.1.3.\end{proof}

\begin{corollary}\label{opendescent}
Let $X$ be a topological space and $P$ a collection of open subsets of $X$ with $U,V\in P \Rightarrow U\cap V\in P$ and $\cup_{U\in P}U=X$.  Then
$$\operatorname{Sh}(X)\overset{\sim}{\rightarrow} \varprojlim_{U\in P^{op}}\operatorname{Sh}(U)$$
via pullback, where we view $P$ as a poset under inclusion.
\end{corollary}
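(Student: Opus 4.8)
The plan is to deduce this from the open-descent theorem just proved, followed by a short cofinality argument.

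First I would apply that theorem to the family $\{U\hookrightarrow X\}_{U\in P}$ of open inclusions. Its hypothesis holds for trivial reasons: given $x\in X$, pick $U\in P$ with $x\in U$ (possible since $\bigcup_{U\in P}U = X$), and note that the pullback of $U\hookrightarrow X$ along $U\hookrightarrow X$ is $\mathrm{id}_U$, which has a section. Writing $\mathcal{U}\subseteq\operatorname{Top}_{/X}$ for the sieve generated by $P$ — concretely, $Y\xrightarrow{p} X$ lies in $\mathcal{U}$ precisely when $p(Y)\subseteq U$ for some $U\in P$ — the theorem gives an equivalence $\operatorname{Sh}(X)\xrightarrow{\ \sim\ }\varprojlim_{(Y\to X)\in\mathcal{U}}\operatorname{Sh}(Y)$ via pullback.

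It then remains to identify this limit over the (large) sieve $\mathcal{U}$ with the limit over the small poset $P$. I claim the inclusion $\iota\colon P\hookrightarrow\mathcal{U}$ is a \emph{final} functor; since $\operatorname{Sh}(-)$ is contravariant, this is exactly what is needed in order to restrict the above limit along $\iota^{op}$, by Quillen's Theorem A (\cite{L} 4.1.3.1). To check finality, one must show that for each $(Y\xrightarrow{p} X)\in\mathcal{U}$ the comma category of pairs $(U\in P,\ \text{a map }(Y\to X)\to(U\to X)\text{ in }\mathcal{U})$ has weakly contractible classifying space. But a morphism $(Y\to X)\to(U\to X)$ in $\operatorname{Top}_{/X}$ is a continuous map $Y\to U$ over $X$, and since $U\hookrightarrow X$ is a subspace inclusion such a map is unique when it exists and exists exactly when $p(Y)\subseteq U$; so this comma category is simply the poset $P_Y := \{\,U\in P : p(Y)\subseteq U\,\}$. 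This poset is nonempty (because $(Y\to X)\in\mathcal{U}$) and closed under finite intersections (because $P$ is, and $U_1\cap U_2\subseteq U_i$), hence cofiltered, hence has weakly contractible nerve. This establishes finality, and combining it with the first step yields $\operatorname{Sh}(X)\xrightarrow{\ \sim\ }\varprojlim_{U\in P^{op}}\operatorname{Sh}(U)$.

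There is no real obstacle here — the substance is all in the preceding theorem — but two points deserve a moment's care: getting the variance right (it is finality of $P\to\mathcal{U}$, not initiality, that restricts a limit of the contravariant functor $\operatorname{Sh}$, after passing to opposites), and the observation that the comma categories occurring in Theorem A collapse to the honest posets $P_Y$ precisely because continuous maps into an open subspace are unique, which is what makes the cofinality check elementary.
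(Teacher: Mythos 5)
Your proof is correct and follows essentially the same route as the paper: invoke the preceding open-descent theorem and then restrict the limit along the inclusion of $P$ into the sieve by a cofinality check, where the relevant comma categories are the posets $P_Y$ of elements of $P$ containing the image, nonempty and closed under intersection, hence (co)filtered and weakly contractible. The only cosmetic difference is that you run the cofinality argument directly into the large sieve in $\operatorname{Top}_{/X}$, whereas the paper works with the sieve of open subsets of $X$ contained in some $U\in P$; the contractibility argument is identical.
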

\begin{proof}
Let $\mathcal{U}$ denote the sieve of those open subsets of $X$ contained in some $U\in P$.  This is a covering sieve by the second condition, so it suffices to show that the inclusion $P\subset \mathcal{U}$ is cofinal, or a $\varinjlim$-equivalence in the language we will use in this paper (see the following section, \Cref{lim-equivalences}).  The right fibre over an element $V\in\mathcal{U}$ is the poset of those $U\in P$ containing $V$.  This is nonempty by definition and is closed under intersection as $P$ is by construction, therefore it is filtered and hence contractible.
\end{proof}

Recall that if $\Gamma$ is a discrete group, then a $\Gamma$-action on an object of a category (or $\infty$-category) $\mathcal{C}$ is a functor $B\Gamma\rightarrow \mathcal{C}$; the underlying object $X\in\mathcal{C}$ is the image of the unique object of $B\Gamma$.  The $\Gamma$-fixed point (or homotopy fixed point) object, if it exists, is the limit over this $B\Gamma$-diagram in $\mathcal{C}$ and is abusively denoted $X^\Gamma$.

\begin{corollary}\label{quotienttop}
Let $X$ be a topological space with a free proper left action of a discrete group $\Gamma$, meaning for all $x\in X$ there is an open neighbourhood $U$ of $x$ such that all the $\gamma\cdot U$ are disjoint, $\gamma\in \Gamma$.  Then
$$\operatorname{Sh}(\Gamma\backslash X)\overset{\sim}{\rightarrow} \operatorname{Sh}(X)^\Gamma$$
via pullback.\end{corollary}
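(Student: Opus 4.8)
The plan is to reduce the statement to the open-descent result \Cref{opendescent}, using that a free proper action is, locally on the quotient, a trivial $\Gamma$-bundle.

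First I would record that the quotient map $\pi\colon X\to\Gamma\backslash X$ is open (for $W\subseteq X$ open, $\pi^{-1}(\pi(W))=\bigcup_{\gamma\in\Gamma}\gamma W$ is open). Then, for $x\in X$, choosing a ``wandering'' open $U\ni x$ as in the hypothesis (the translates $\gamma U$ pairwise disjoint) and setting $V=\pi(U)$, one checks that $\pi^{-1}(V)=\coprod_{\gamma\in\Gamma}\gamma U$ and that each $\pi|_{\gamma U}\colon\gamma U\to V$ is a homeomorphism --- injectivity is exactly the wandering condition, and openness holds because $\pi$ is open. So $\pi^{-1}(V)\cong\Gamma\times V$ as $\Gamma$-spaces over $V$, with $\Gamma$ acting by translation on the $\Gamma$-factor. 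Let $P$ be the set of all open $V\subseteq\Gamma\backslash X$ admitting such a trivialisation. It is downward closed under inclusion (restrict the trivialisation), hence closed under finite intersections, and by the above it covers $\Gamma\backslash X$. Correspondingly $\{\pi^{-1}(V)\}_{V\in P}$ is an open cover of $X$ closed under finite intersections, and $V\mapsto\pi^{-1}(V)$ identifies $P$ with it as posets.

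Now \Cref{opendescent} yields $\operatorname{Sh}(\Gamma\backslash X)\simeq\varprojlim_{V\in P^{op}}\operatorname{Sh}(V)$ and $\operatorname{Sh}(X)\simeq\varprojlim_{V\in P^{op}}\operatorname{Sh}(\pi^{-1}(V))$, both via pullback. Each $\pi^{-1}(V)$ is $\Gamma$-stable and each inclusion $\pi^{-1}(V)\hookrightarrow X$ is $\Gamma$-equivariant, so the second equivalence is one of $P^{op}$-diagrams valued in $\infty$-categories-with-$\Gamma$-action; applying $(-)^\Gamma=\varprojlim_{B\Gamma}$ and commuting the two limits gives $\operatorname{Sh}(X)^\Gamma\simeq\varprojlim_{V\in P^{op}}\operatorname{Sh}(\pi^{-1}(V))^\Gamma$. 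It then remains to compare the pullback map $\operatorname{Sh}(V)\to\operatorname{Sh}(\pi^{-1}(V))^\Gamma$ --- induced, via the universal property of $\varprojlim_{B\Gamma}$, by the pullback $\pi_V^*$ together with its equivariant structure for the trivial action on $V$ --- naturally in $V$, and to see it is an equivalence. Here I would use the chosen trivialisation: $\operatorname{Sh}(\pi^{-1}(V))\cong\operatorname{Sh}(\Gamma\times V)\cong\operatorname{Fun}(\Gamma,\operatorname{Sh}(V))$ with $\Gamma$ acting by translation (using that $\operatorname{Sh}$ carries disjoint unions to products), under which the comparison map becomes the constant-diagram map $\operatorname{Sh}(V)\to\operatorname{Fun}(\Gamma,\operatorname{Sh}(V))^\Gamma$; and the homotopy fixed points of the translation action on $\operatorname{Fun}(\Gamma,\operatorname{Sh}(V))$ are $\operatorname{Sh}(V)$ --- because the $\Gamma$-action on the set $\Gamma$ is free and transitive --- compatibly with this map, so it is an equivalence. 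Taking $\varprojlim_{V\in P^{op}}$ and stringing together the three identifications produces an equivalence $\operatorname{Sh}(\Gamma\backslash X)\simeq\operatorname{Sh}(X)^\Gamma$; checking it on the legs of the limit, using $\pi\circ(\pi^{-1}(V)\hookrightarrow X)=(V\hookrightarrow\Gamma\backslash X)\circ\pi_V$, shows it is the asserted pullback map.

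The point-set preliminaries (openness of $\pi$; existence and intersection-stability of the trivialising cover) are routine. The substance is the handling of the $\Gamma$-equivariance: running \Cref{opendescent} on $X$ as an equivalence of $\Gamma$-objects, commuting it past $(-)^\Gamma$, and --- the step I expect to require the most care to phrase cleanly --- identifying the local comparison map with the constant-diagram map and computing $\operatorname{Fun}(\Gamma,\mathcal{C})^\Gamma\simeq\mathcal{C}$, all coherently in $V\in P^{op}$. Nothing here is deep once \Cref{opendescent} is available, but the coherence bookkeeping is where the real work lies.
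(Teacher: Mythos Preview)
Your argument is correct, but it takes a different route from the paper's. The paper does not use \Cref{opendescent} at all; instead it applies the preceding theorem on descent for sieves directly to the sieve on $\operatorname{Top}_{/Y}$ (with $Y=\Gamma\backslash X$) generated by the single map $\pi\colon X\to Y$, which is a covering sieve since $\pi$ admits local sections. This gives $\operatorname{Sh}(Y)\simeq\varprojlim_{\mathcal{U}}\operatorname{Sh}(-)$, and the remaining work is to show that the functor $B\Gamma\to\mathcal{U}$ classifying the $\Gamma$-action on $X$ is a $\varinjlim$-equivalence, which is done by checking that each right fibre is a free transitive $\Gamma$-set and hence contractible.

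Your approach trades this cofinality argument for an explicit local computation: you descend on both $Y$ and $X$ along a trivialising open cover, commute $(-)^\Gamma$ past the limit, and reduce to $\operatorname{Fun}(\Gamma,\mathcal{C})^\Gamma\simeq\mathcal{C}$ for the translation action. The two arguments have the same core content --- free transitivity of $\Gamma$ on itself --- but packaged differently. The paper's version is shorter and avoids the coherence bookkeeping you flag, since the comparison map is built once as a map of limits rather than assembled from local pieces; on the other hand, your version makes the local triviality visible and does not require the sieve-descent theorem, only the more elementary \Cref{opendescent}. Your worry about coherence is in fact not serious: the natural transformation $\operatorname{Sh}(V)\to\operatorname{Sh}(\pi^{-1}(V))^\Gamma$ is defined without any choice of trivialisation (it is just $\pi_V^*$), and the trivialisation is only invoked pointwise to check that this already-natural map is an equivalence, which is a property.
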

\begin{proof}
By the general descent result \Cref{Bigopendescent}, it suffices to show that if $h_X,h_Y$ denote the sheaves on $\operatorname{Top}$ represented by $X$ and $Y$ respectively, then $h_X/\Gamma\overset{\sim}{\rightarrow} h_Y$, as sheaves on $\operatorname{Top}$ (with the open cover topology).  But the assumption implies that $X\rightarrow Y$ has local sections, hence $h_X\rightarrow h_Y$ is a cover, and moreover $h_X\times_{h_Y}h_X = h_{X\times_YX}$ identifies with $\Gamma\times h_X$, with the two projection maps to $h_X$ identifying with the action and projection maps.  The claim follows.
\end{proof}

\subsection{Proper functors, proper base change, and proper descent}

The basic concepts in this section were picked up from reading Grothendieck's \emph{Pursuing stacks}.  A different kind of treatment of several of the same results and concepts, making explicit use of the quasi-category model for $\infty$-categories, is given in \cite{Cis}. 

\medskip

 We recall the following theorem/definition, Joyal's $\infty$-categorical generalisation of Quillen's theorem A; see \cite{L} 4.1.3.  Although Joyal (and later Lurie) prove this theorem in the quasi-category model using combinatorial arguments, if we take the $\infty$-categorical Yoneda lemma and related results for granted, we can give a quick non-combinatorial proof.

\begin{theorem}\label{lim-equivalences}
Let $f\colon\mathcal{C}\rightarrow\mathcal{D}$ be a functor of small $\infty$-categories.  The following properties are equivalent:
\begin{enumerate}
\item For any functor $X\colon\mathcal{D}\rightarrow \mathcal{E}$ to an $\infty$-category $\mathcal{E}$, the comparison map of limits
 $$\varprojlim X \rightarrow \varprojlim (f^\ast X)$$
 is an equivalence (in the sense that if one limit exists so does the other and the map is an equivalence).
\item Same condition, but just with $\mathcal{E}=\mathcal{S}$.
 \item For any $d\in \mathcal{D}$, the left fibre $\mathcal{C}_{/d}$ is contractible in the sense that $\vert\mathcal{C}_{/d}\vert\simeq \ast$.
\end{enumerate}
If these properties are satisfied, we say that $f$ is a \emph{$\varprojlim$-equivalence}.  If the dual properties are satisfied, meaning if the above conditions are satisfied for $f^{op}\colon\mathcal{C}^{op}\rightarrow\mathcal{D}^{op}$, we say $f$ is a \emph{$\varinjlim$-equivalence}.  (The usual terminology for $\varinjlim$-equivalence is ``cofinal functor'', see \cite{L} 4.1.)
\end{theorem}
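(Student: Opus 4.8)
The plan is to establish $1\Rightarrow 2$, $2\Rightarrow 1$, and $2\Leftrightarrow 3$; the first is immediate (take $\mathcal{E}=\mathcal{S}$), the second is a Yoneda reduction, and the third carries the actual content. For $2\Rightarrow 1$, given $X\colon\mathcal{D}\to\mathcal{E}$ I would compose with the Yoneda embedding $h\colon\mathcal{E}\hookrightarrow\operatorname{Fun}(\mathcal{E}^{op},\mathcal{S})$, which is fully faithful and preserves all limits that exist. As limits in a presheaf $\infty$-category are computed objectwise, hypothesis $2$ applied to each functor $\operatorname{Map}_{\mathcal{E}}(e,X(-))\colon\mathcal{D}\to\mathcal{S}$ and reassembled over $e\in\mathcal{E}^{op}$ shows that $\varprojlim_{\mathcal{D}}(h\circ X)\to\varprojlim_{\mathcal{C}}(h\circ f^{\ast}X)$ is an equivalence in $\operatorname{Fun}(\mathcal{E}^{op},\mathcal{S})$. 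Full faithfulness of $h$ then transfers this back to $\mathcal{E}$: the two sides have equivalent image under $h$, so one exists in $\mathcal{E}$ iff the other does (iff that common image lies in the essential image of $h$), and then the comparison map is an equivalence since $h$ detects equivalences.

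The crux is $2\Leftrightarrow 3$, which I would extract from a single adjunction computation. Recall that for $Y\colon\mathcal{K}\to\mathcal{S}$ with $\mathcal{K}$ small one has $\varprojlim_{\mathcal{K}}Y=\operatorname{Map}_{\operatorname{Fun}(\mathcal{K},\mathcal{S})}(\underline{\ast},Y)$, i.e.\ maps out of the constant functor, and that the left Kan extension $f_{!}\colon\operatorname{Fun}(\mathcal{C},\mathcal{S})\to\operatorname{Fun}(\mathcal{D},\mathcal{S})$ is left adjoint to $f^{\ast}$, with the pointwise formula $(f_{!}G)(d)=\varinjlim_{\mathcal{C}_{/d}}G|_{\mathcal{C}_{/d}}$. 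Since $f^{\ast}\underline{\ast}_{\mathcal{D}}=\underline{\ast}_{\mathcal{C}}$, the restriction map $\varprojlim_{\mathcal{D}}X\to\varprojlim_{\mathcal{C}}f^{\ast}X$ becomes, after applying the $(f_{!}\dashv f^{\ast})$-adjunction, precomposition with the counit $\epsilon\colon f_{!}\underline{\ast}_{\mathcal{C}}\to\underline{\ast}_{\mathcal{D}}$:
$$\operatorname{Map}_{\operatorname{Fun}(\mathcal{D},\mathcal{S})}(\underline{\ast}_{\mathcal{D}},X)\longrightarrow\operatorname{Map}_{\operatorname{Fun}(\mathcal{D},\mathcal{S})}(f_{!}\underline{\ast}_{\mathcal{C}},X).$$
By the Yoneda lemma this is an equivalence for all $X$ if and only if $\epsilon$ is an equivalence; and by the pointwise formula together with $\varinjlim_{\mathcal{C}_{/d}}\ast=\vert\mathcal{C}_{/d}\vert$ (description $4$ in \Cref{describeinvert}), one has $(f_{!}\underline{\ast}_{\mathcal{C}})(d)\simeq\vert\mathcal{C}_{/d}\vert$ with $\epsilon_{d}$ the map to $\ast$. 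Hence $\epsilon$ is an equivalence exactly when every left fibre $\mathcal{C}_{/d}$ is contractible, which is condition $3$.

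I expect the only real friction to be bookkeeping. The two places to be careful are: (i) checking that the comparison map named in conditions $1$ and $2$ really is the one induced by the counit $\epsilon$ under the adjunction --- the standard triangle-identity unwinding; and (ii) handling the ``if one limit exists so does the other'' clause in $2\Rightarrow 1$, where one must track membership in the essential image of $h$ across the equivalence. Neither is deep; the genuine inputs are the Yoneda lemma and Yoneda embedding, the pointwise formula for left Kan extensions, and the identity $\varinjlim_{\mathcal{K}}\ast=\vert\mathcal{K}\vert$, all of which are among the ``related results'' the introduction lets us assume.
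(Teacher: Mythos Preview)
Your proposal is correct and follows essentially the same approach as the paper: the reduction $1\Leftrightarrow 2$ via mapping spaces/Yoneda, and the core $2\Leftrightarrow 3$ via the $(f_!\dashv f^\ast)$-adjunction together with the pointwise Kan-extension formula and $\varinjlim_{\mathcal{K}}\ast\simeq|\mathcal{K}|$. The only cosmetic difference is that the paper phrases $f_!\underline{\ast}\to\underline{\ast}$ as ``the unique map'' (since $\underline{\ast}$ is terminal) rather than naming it as a counit, which makes your bookkeeping concern (i) evaporate.
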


Here the left fibre $\mathcal{C}_{/d}$ stands for the $\infty$-category given as the left pullback $\mathcal{C}\overset{\rightarrow}{\times_\mathcal{D}}\{d\}$ as defined in \cite{Tamme}.  Informally, an object of $\mathcal{C}_{/d}$ is an object $c\in\mathcal{C}$ together with a map $f(c)\rightarrow d$.

\begin{proof}
In condition 1 we may as well assume $\mathcal{E}=\mathcal{S}$, because limits in $\mathcal{E}$ can be tested on applying $\operatorname{Map}(e,-)$ for all $e\in\mathcal{E}$.  So 1 and 2 are equivalent.

\medskip

Note that the pullback functor $f^\ast\colon\operatorname{Fun}(\mathcal{D},\mathcal{S})\rightarrow\operatorname{Fun}(\mathcal{C},\mathcal{S})$ has a left adjoint $f_!$ given by Kan extension, \cite{L} 4.3.  By adjunction, 2 holds if and only if the (unique) map
$$f_!(\ast)\rightarrow \ast$$
in $\operatorname{Fun}(\mathcal{D},\mathcal{S})$ is an equivalence.  By the objectwise formula for left Kan extensions, this amounts to the assertion that for all $d\in\mathcal{D}$ the map
$$\varinjlim_{(\mathcal{C}_{/d})^{op}}\ast \rightarrow \ast$$
is an equivalence.  By Corollaries \ref{describeinvert} and \ref{propertiesinvert}, this is equivalent to condition 3.
\end{proof}

\begin{remark}
If $f\colon\mathcal{C}\rightarrow\mathcal{D}$ is a $\varprojlim$-equivalence, then it induces an equivalence on $\vert\cdot\vert$.  Indeed, we need to see that if $K\in\mathcal{S}$, then $\operatorname{Map}(\vert\mathcal{D}\vert,K)\overset{\sim}{\rightarrow}\operatorname{Map}(\vert\mathcal{C}\vert,K)$; but this is the special case of 2 where $X$ is the constant functor with value $K$.  This is why Joyal's theorem A is a generalisation of Quillen's.  (Recall Quillen's says that condition 3 implies that $f$ induces an equivalence on geometric realisation.)\exend
\end{remark}

\begin{example}\label{finalexample} \ 
\begin{enumerate}
\item Any left adjoint functor is a $\varprojlim$-equivalence.  Indeed, being a left adjoint is equivalent to each left fibre admitting a terminal object.
\item If $f$ is a localisation, i.e.\ if it is of the form $\mathcal{C}\rightarrow\mathcal{C}[S^{-1}]$ for some collection of arrows $S$ in $\mathcal{C}$, then $f$ is a $\varprojlim$-equivalence.  Indeed, in this case the map on functors out to $\mathcal{S}$ is fully faithful, so the comparison map in 2 is an equivalence.\exend
\end{enumerate}
\end{example}

\begin{definition}
Let $f\colon\mathcal{C}\rightarrow\mathcal{D}$ be a functor of small $\infty$-categories.  We say that $f$ is \emph{proper} if for every $d\in \mathcal{D}$, the inclusion $\mathcal{C}_d\rightarrow \mathcal{C}_{d/}$ of the fibre into the right fibre is a $\varprojlim$-equivalence.\defend
\end{definition}

Here the fibre $\mathcal{C}_d$ means the pullback of $\mathcal{C}\overset{f}{\rightarrow}\mathcal{D}\leftarrow\{d\}$ in the $\infty$-category of $\infty$-categories; it is the $\infty$-category of objects $c\in C$ together with an equivalence $d\simeq f(c)$.  For the right fibre, we have an arbitrary map $d\rightarrow f(c)$ instead of an equivalence.  This definition is some sort of analogue of the tube lemma for proper maps in the topological context.

\begin{remark}
There is a rather picturesque consequence of this definition which won't play an explicit role for us, but can be useful to keep in mind as intuition for proper functors.  Suppose given a map $d\rightarrow d'$ in $\mathcal{D}$.  Then there is the obvious funtor $\mathcal{C}_{d'}\rightarrow \mathcal{C}_{d/}$, giving the diagram
$$\mathcal{C}_{d'}\rightarrow \mathcal{C}_{d/}\leftarrow\mathcal{C}_d.$$
When $f$ is proper, the right hand map is a $\varprojlim$-equivalence, in particular an isomorphism on $\vert\cdot \vert$.  Composing with the inverse of this isomorphism, we get a natural map of anima
$$\vert \mathcal{C}_{d'}\vert \rightarrow\vert \mathcal{C}_d\vert.$$
In fact, a proper functor $f\colon\mathcal{C}\rightarrow\mathcal{D}$ gives rise to a canonical functor $\mathcal{D}^{op}\rightarrow \mathcal{S}$ whose value on $d$ is $\vert \mathcal{C}_d\vert$.

\medskip

In other words, for a proper functor, the homotopy types of the fibres are contravariantly functorial in the point the fibre is taken over.  There is also a similar phenomenon in the topological context, at least under certain regularity hypotheses: for a proper map, the homotopy types of the fibres form a constructible co-sheaf on the base.\exend
\end{remark}

We will soon show that the class of proper maps is closed under composition and base change.  Here are also some general examples.

\begin{example}\label{properexamples} \ 
\begin{enumerate}
\item Let $\mathcal{C}\overset{f}{\rightarrow} \mathcal{D}\overset{g}{\leftarrow} \mathcal{B}$ be arbitrary functors of $\infty$-categories as indicated.  Then the projection
$$\mathcal{C}\overset{\rightarrow}{\times}_{\mathcal{D}}\mathcal{B}\rightarrow \mathcal{C}$$
from the left pullback ($\infty$-category of tuples $(c\in\mathcal{C},b\in\mathcal{B},f(c)\rightarrow g(b))$, see \cite{Tamme}) is proper.  Indeed, for fixed $c\in\mathcal{C}$, the fibre is the $\infty$-category of $(b\in\mathcal{B},f(c)\rightarrow g(b))$ whereas the right fibre is the $\infty$-category of $(x\in\mathcal{C},b\in\mathcal{B}, c\rightarrow x, f(x)\rightarrow g(b))$.  The inclusion of the former into the latter has right adjoint given by sending this latter data to the object in the fibre given by $b\in\mathcal{B}$ and the composite $f(c)\rightarrow f(x)\rightarrow g(b)$.
\item For any $c\in\mathcal{C}$, the projection $\mathcal{C}_{/c}\rightarrow \mathcal{C}$ is proper.  This is a special case of 1.
\item Suppose $f\colon\mathcal{C}\subseteq \mathcal{D}$ is the inclusion of a full subcategory closed under isomorphisms.  Then $f$ is proper if and only if $\mathcal{C}$ is \emph{left closed}: $x\rightarrow y$ and $y\in \mathcal{C}$ implies $x\in\mathcal{C}$. (In site-theoretic terminology, this means $\mathcal{C}$ is a \emph{sieve} in $\mathcal{D}$.)  Indeed, if $y\in\mathcal{D}$ lies in $\mathcal{C}$ then the condition that $\mathcal{C}_y\rightarrow \mathcal{C}_{y/}$ be a $\varprojlim$-equivalence is automatic as it identifies with the inclusion of an initial object, whereas when $y\not\in\mathcal{C}$ we exactly need that $\mathcal{C}_{y/}$ be empty.
\item If every morphism in $\mathcal{D}$ is invertible then any functor $\mathcal{C}\rightarrow\mathcal{D}$ is proper, as the fibre identifies with the right fibre.
\item Encompassing all the above examples, any \emph{locally cartesian fibration} is proper.  Indeed, by \cite{AF} Lemma 2.20 the locally cartesian fibrations are characterised up to equivalence by $\mathcal{C}_d\rightarrow \mathcal{C}_{d/}$ being a left adjoint for all $d\in\mathcal{D}$.\exend
\end{enumerate}
\end{example}

\begin{proposition}\label{characteriseproper}
Let $f\colon\mathcal{C}\rightarrow\mathcal{D}$ be a functor.  The following are equivalent:
\begin{enumerate}
\item  $f$ is proper.
\item  For any functor $\Delta^1\rightarrow\mathcal{D}$, the pullback $\mathcal{C}'\rightarrow \Delta^1$ of $f$ satisfies the condition that the inclusion $\mathcal{C}'_0\rightarrow \mathcal{C}'$ of the fibre above $0$ is a $\varprojlim$-equivalence.
\end{enumerate}
\end{proposition}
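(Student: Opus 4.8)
The plan is to show that conditions (1) and (2) are each equivalent to one and the same pointwise criterion, from which the stated equivalence follows at once. Recall from \Cref{lim-equivalences} that a functor of small $\infty$-categories is a $\varprojlim$-equivalence precisely when all of its left fibres are contractible. Applying this to $\mathcal{C}_d\to\mathcal{C}_{d/}$, condition (1) unwinds to the assertion: for every $d\in\mathcal{D}$ and every object $(c,\alpha\colon d\to f(c))$ of the right fibre $\mathcal{C}_{d/}$, the left fibre $(\mathcal{C}_d)_{/(c,\alpha)}$ is contractible; unravelling the pullbacks involved, this left fibre is the $\infty$-category of triples $(c'\in\mathcal{C}_d,\ g\colon c'\to c \text{ in }\mathcal{C},\ f(g)\simeq\alpha)$. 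The goal is to see that condition (2) unwinds to exactly the same family of contractibility statements.

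So fix a functor $\Delta^1\to\mathcal{D}$, i.e.\ an arrow $\alpha\colon d_0\to d_1$, and let $\mathcal{C}'\to\Delta^1$ be the pullback of $f$; its fibre $\mathcal{C}'_0$ over $0$ is $\mathcal{C}_{d_0}$. By \Cref{lim-equivalences} again, $\mathcal{C}'_0\hookrightarrow\mathcal{C}'$ is a $\varprojlim$-equivalence iff $(\mathcal{C}'_0)_{/x}$ is contractible for every object $x$ of $\mathcal{C}'$, and I would split this according to whether $x$ lies over $0$ or over $1$. If $x$ lies over $0$, then since the only endomorphism of $0$ in the poset $\Delta^1$ is the identity, every morphism of $\mathcal{C}'$ with target $x$ already lies in the fibre $\mathcal{C}'_0$; hence $(\mathcal{C}'_0)_{/x}$ is the ordinary slice of $\mathcal{C}'_0$ over $x$, which has a terminal object and is therefore contractible --- this case is automatic. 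If $x=c_1$ lies over $1$, so that $f(c_1)=d_1$, then chasing through the definitions of the various pullbacks --- or, equivalently, identifying both sides with the fibre of the slice functor $\mathcal{C}_{/c_1}\to\mathcal{D}_{/d_1}$ over the object $\alpha\in\mathcal{D}_{/d_1}$ --- shows that $(\mathcal{C}'_0)_{/x}$ is precisely the left fibre $(\mathcal{C}_{d_0})_{/(c_1,\alpha)}$ of $\mathcal{C}_{d_0}\to\mathcal{C}_{d_0/}$ appearing above. This identification is the one genuinely computational step, and I expect it to be the main point to get right; it should nonetheless be routine, as $\Delta^1$ has discrete mapping spaces and hence contributes no higher coherences in the $\Delta^1$-direction.

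It then remains to assemble the pieces. Fixing $d_0\in\mathcal{D}$ and letting $\alpha$ range over the arrows out of $d_0$ and $c_1$ over the objects of $\mathcal{C}$ lying over the target of $\alpha$, the pairs $(c_1,\alpha)$ range over all objects of $\mathcal{C}_{d_0/}$. By the second paragraph, condition (2) restricted to functors $\Delta^1\to\mathcal{D}$ with source $d_0$ is thus equivalent to the contractibility of $(\mathcal{C}_{d_0})_{/(c_1,\alpha)}$ for every object $(c_1,\alpha)$ of $\mathcal{C}_{d_0/}$, i.e.\ to condition (1) at $d_0$; letting $d_0$ vary over $\mathcal{D}$ gives (1)$\,\Leftrightarrow\,$(2). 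Note that --- in contrast to the forthcoming stability of proper functors under composition and base change --- this argument uses nothing beyond \Cref{lim-equivalences} and the unwinding of definitions, and in particular none of the examples in \Cref{properexamples}.
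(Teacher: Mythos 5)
Your proposal is correct and follows essentially the same route as the paper: both reduce each condition via Theorem \ref{lim-equivalences} to contractibility of left fibres, dispose of the objects over $0$ using the terminal object in their left fibres, and identify the remaining left fibres over objects of $\mathcal{C}'_1$ with the left fibres of $\mathcal{C}_{d_0}\rightarrow\mathcal{C}_{d_0/}$ at $(c_1,\alpha)$. Your write-up just makes the matching of data $(d_0\rightarrow d_1,\,c_1)\leftrightarrow x\in\mathcal{C}_{d_0/}$ and the fibre identification slightly more explicit than the paper does.
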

\begin{proof}
First, we remark that, in the situation of a functor $\mathcal{C}'\rightarrow\Delta^1$, to test whether the inclusion $\mathcal{C}'_0\rightarrow \mathcal{C}'$ is a $\varprojlim$-equivalence, it suffices to prove that the left fibre above any object of $\mathcal{C}'_1$ is contractible.  Indeed, the left fibre over an object of $\mathcal{C}'_0$ has a terminal object, hence will automatically be contractible.

\medskip

By definition, 1 holds if and only if for any $d\in\mathcal{D}$ and any $x=(c,d\rightarrow f(c))\in \mathcal{C}_{d/}$, the left fibre of $\mathcal{C}_d\rightarrow\mathcal{C}_{d/}$ above $x$ is contractible.  On the other hand, consider an arbitrary functor $\Delta^1\rightarrow \mathcal{D}$ classifying a map $d_0\rightarrow d_1$.  Then 2 holds if and only if the left fibre of $\mathcal{C}_{d_0}$ including into $\Delta^1\times_\mathcal{D}\mathcal{C}$, taken at some $c$ lying above $d_1$, is contractible.  However, the data of $d_0\rightarrow d_1$ and $c$ is the same as the data of $x$, and the corresponding left fibres are equivalent.  Thus the conditions in the proposition are equivalent.
\end{proof}

\begin{corollary}\label{properclosedunderpullback}
The class of proper functors between small $\infty$-categories is closed under pullbacks.
\end{corollary}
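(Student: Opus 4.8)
The plan is to deduce this immediately from \Cref{characteriseproper}, which reformulates properness purely in terms of the behaviour of pullbacks along functors out of $\Delta^1$. The point is that such pullbacks compose well, whereas the condition in the \emph{definition} of properness does not obviously base-change: for $b\in\mathcal{B}$ the right fibre $\mathcal{C}'_{b/}$ involves morphisms in $\mathcal{B}$ rather than in $\mathcal{D}$, so it is not literally a right fibre of the original functor. The $\Delta^1$-reformulation circumvents exactly this subtlety, and there is essentially no further obstacle once \Cref{characteriseproper} is in hand.

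Concretely, suppose given a pullback square
$$\xymatrix{
\mathcal{C}'\ar[r]\ar[d]^{f'} & \mathcal{C}\ar[d]^f \\
\mathcal{B}\ar[r]^{g} & \mathcal{D} \\
}$$
with $f$ proper, and let us verify condition 2 of \Cref{characteriseproper} for $f'$. Fix a functor $u\colon\Delta^1\rightarrow\mathcal{B}$. Since a composite of pullback squares is a pullback square, the pullback of $f'$ along $u$ agrees with the pullback of $f$ along the composite $g\circ u\colon\Delta^1\rightarrow\mathcal{D}$; in symbols, $\Delta^1\times_{\mathcal{B}}\mathcal{C}' = \Delta^1\times_{\mathcal{B}}(\mathcal{B}\times_{\mathcal{D}}\mathcal{C}) = \Delta^1\times_{\mathcal{D}}\mathcal{C}$, compatibly with the projections to $\Delta^1$, hence compatibly with the fibres over $0$. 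By \Cref{characteriseproper} applied to the proper functor $f$ and the functor $g\circ u$, the inclusion of the fibre over $0$ into this pullback is a $\varprojlim$-equivalence. This is precisely the condition required of $f'$ relative to $u$, and as $u$ was arbitrary, \Cref{characteriseproper} gives that $f'$ is proper.

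Since $f'$ is exactly the base change of $f$ along $g$, this shows that the class of proper functors between small $\infty$-categories is closed under pullbacks, as claimed. If desired, one can also record the evident consequence that the fibre product, over $\mathcal{D}$, of two proper functors $\mathcal{C}\rightarrow\mathcal{D}\leftarrow\mathcal{B}$ is proper over $\mathcal{D}$, by combining this corollary with the (to be proven) stability of proper functors under composition.
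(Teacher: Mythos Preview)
Your proof is correct and takes essentially the same approach as the paper: the paper's proof is a one-liner stating that ``the other equivalent condition from the proposition manifestly satisfies this closure property,'' and you have simply spelled out in detail why condition 2 of \Cref{characteriseproper} is closed under base change (namely, because pullback squares compose).
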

\begin{proof}
The other equivalent condition from the proposition manifestly satisfies this closure property.
\end{proof}

The following is the proper base change theorem in this context.

\begin{theorem}
Let
$$\xymatrix{
\mathcal{C}'\ar[r]^{g'}\ar[d]^{f'} & \mathcal{C}\ar[d]^f \\
\mathcal{D}'\ar[r]^{g} & \mathcal{D} \\
}$$
be a pullback diagram of small $\infty$-categories, and let $\mathcal{E}$ be an $\infty$-category with all limits.

\medskip

Suppose $f$ is proper.  Then the induced commutative diagram gotten from applying $\operatorname{Fun}(-,\mathcal{E})$

$$\xymatrix{
\operatorname{Fun}(\mathcal{C}',\mathcal{E}) & \operatorname{Fun}(\mathcal{C},\mathcal{E})\ar[l]^{g'^\ast} \\
\operatorname{Fun}(\mathcal{D}',\mathcal{E})\ar[u]^{f'^\ast} & \operatorname{Fun}(\mathcal{D},\mathcal{E})\ar[l]^{g^\ast}\ar[u]^{f^\ast} \\
}$$

\noindent is \emph{right adjointable} (or \emph{right Beck--Chevalley}): the vertical maps $f^\ast$ and $f'^\ast$ have right adjoints $f_\ast$ and $f'_\ast$ respectively, and the natural comparison map
$$g^\ast f_\ast \rightarrow f'_\ast g'^\ast$$
is an equivalence.

\medskip

Conversely, suppose that the functor $f$, as well as all its pullbacks, satisfies the condition that the commutative diagram gotten by applying $\operatorname{Fun}(-,\mathcal{E})$ is right adjointable, even just in the special case $\mathcal{E}=\mathcal{S}$.  Then $f$ is proper.
\end{theorem}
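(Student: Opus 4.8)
The plan is to prove the two implications separately; both reduce to the pointwise formula for right Kan extensions, the definition of properness, and the characterisation of $\varprojlim$-equivalences in \Cref{lim-equivalences}. The underlying idea is that, for a proper functor, the ``stalk'' $f_\ast$ --- computed as a limit over a right fibre --- may be computed instead as a limit over the actual fibre (that is literally the definition of properness), and fibres are manifestly stable under base change. This is the categorical shadow of the tube lemma, mirroring the topological proper base-change theorem recalled above.

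Forward direction. Since $\mathcal{E}$ has all limits, $f^\ast$ and $f'^\ast$ admit right adjoints $f_\ast=\operatorname{Ran}_f$ and $f'_\ast=\operatorname{Ran}_{f'}$, the right Kan extensions (\cite{L} 4.3). To check that the Beck--Chevalley map $g^\ast f_\ast\to f'_\ast g'^\ast$ is an equivalence it suffices to evaluate at each $d'\in\mathcal{D}'$. There the pointwise formula for right Kan extensions identifies the source with $\varprojlim_{\mathcal{C}_{g(d')/}}X$ and the target with $\varprojlim_{\mathcal{C}'_{d'/}}g'^\ast X$, the comparison map being induced by restriction along the evident functor $\mathcal{C}'_{d'/}\to\mathcal{C}_{g(d')/}$ coming from $g$ and $g'$. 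Now $f'$ is proper by \Cref{properclosedunderpullback}, so the fibre inclusions $\mathcal{C}_{g(d')}\hookrightarrow\mathcal{C}_{g(d')/}$ and $\mathcal{C}'_{d'}\hookrightarrow\mathcal{C}'_{d'/}$ are $\varprojlim$-equivalences; since $\mathcal{C}'=\mathcal{C}\times_{\mathcal{D}}\mathcal{D}'$, pasting of pullback squares gives a canonical equivalence $\theta\colon\mathcal{C}'_{d'}\simeq\mathcal{C}_{g(d')}$ fitting into a commuting square with these two inclusions and the functor $\mathcal{C}'_{d'/}\to\mathcal{C}_{g(d')/}$, compatibly with the restrictions of $X$. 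Taking limits of $X$ over this square of $\infty$-categories produces a commuting square in $\mathcal{E}$ in which both vertical maps (induced by the $\varprojlim$-equivalences, hence equivalences by \Cref{lim-equivalences}) and the bottom horizontal map (induced by the equivalence $\theta$) are equivalences; therefore so is the top horizontal map, which is the Beck--Chevalley map at $d'$.

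Converse. By \Cref{characteriseproper} it suffices to show: for every functor $\Delta^1\to\mathcal{D}$, the pullback $p\colon\mathcal{B}\to\Delta^1$ of $f$ has the property that the fibre inclusion $\mathcal{B}_0\hookrightarrow\mathcal{B}$ is a $\varprojlim$-equivalence. Apply the hypothesis to the pullback of $p$ along $\{0\}\hookrightarrow\Delta^1$, which is again a pullback of $f$ and whose upper-left corner is $\mathcal{B}_0$: applying $\operatorname{Fun}(-,\mathcal{S})$ gives a right adjointable square. The crucial elementary point is that $0$ is initial in $\Delta^1$, so $(\Delta^1)_{0/}\to\Delta^1$ is an equivalence and the right fibre $\mathcal{B}_{0/}$ is identified with $\mathcal{B}$ itself; hence the pointwise formula gives $(p_\ast X)(0)\simeq\varprojlim_{\mathcal{B}}X$ for all $X\colon\mathcal{B}\to\mathcal{S}$, and, unwinding, right adjointability at the object $0$ says exactly that the restriction map $\varprojlim_{\mathcal{B}}X\to\varprojlim_{\mathcal{B}_0}X$ is an equivalence for all such $X$. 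By \Cref{lim-equivalences} this means $\mathcal{B}_0\hookrightarrow\mathcal{B}$ is a $\varprojlim$-equivalence, as needed.

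The part that will need genuine care --- though it is bookkeeping rather than mathematics --- is verifying in the forward direction that the Beck--Chevalley transformation really does unwind, via the counit of $(f^\ast,f_\ast)$ together with the pointwise Kan extension formula, to the map on limits induced by restriction along $\mathcal{C}'_{d'/}\to\mathcal{C}_{g(d')/}$, and that this functor sits in the claimed commuting square with the fibre inclusions and $\theta$ in a way compatible with all restrictions of $X$. A secondary point worth stating explicitly, since it is what makes the converse run, is that the right fibre over an initial object is the whole category. Everything else is formal.
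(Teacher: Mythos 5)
Your proposal is correct and follows essentially the same route as the paper: the forward direction rests on the pointwise right Kan extension formula plus the definition of properness (fibre into right fibre being a $\varprojlim$-equivalence) together with pullback-stability of properness, and the converse is the paper's argument verbatim (base-change along $\Delta^1\rightarrow\mathcal{D}$, pull back along $\{0\}\rightarrow\Delta^1$, use that the right fibre over the initial object is everything, and invoke \Cref{characteriseproper} and \Cref{lim-equivalences}). The only cosmetic difference is that you verify the Beck--Chevalley map pointwise at an arbitrary $d'\in\mathcal{D}'$ by comparing the right fibres of $f$ and $f'$ through their common fibre, whereas the paper first treats $\mathcal{D}'=\ast$ and then reduces the general case to it using that Beck--Chevalley squares compose; both reductions use the same ingredients.
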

\begin{proof}
The fact that $f^\ast$ and $f'^\ast$ admit right adjoints is purely formal and does not require the properness.  Indeed, the right adjoints are given by right Kan extension.  Now assume $f$ proper and choose $F\in\operatorname{Fun}(\mathcal{C},\mathcal{E})$; we want to see that
$$g^\ast f_\ast F\rightarrow f'_\ast g'^\ast F$$
is an equivalence in $\operatorname{Fun}(\mathcal{D'},\mathcal{E})$.

\medskip

First assume that $\mathcal{D}'$ is the terminal category $\ast$, so that the functor $g$ classifies an object $d\in\mathcal{D}$. The objectwise description of the right Kan extension shows that the value $g^\ast f_\ast F \in \mathcal{E}$ identifies with the limit of $F$ over the right fibre $\mathcal{C}_{d/}$.  Meanwhile the value $f'_\ast g'^\ast F$ identifies with the limit of $F$ over the fibre $\mathcal{C}_d$.  By definition of properness this comparison map is an equivalence.  This handles the case $\mathcal{D}'=\ast$.

\medskip

To deduce the general case, note that a map in $\operatorname{Fun}(\mathcal{D'},\mathcal{E})$ is an equivalence if and only if it is so after evaluating on any object, or in other words after pulling back along any functor from $\ast$.  As the pullback of a proper map is proper and Beck--Chevalley comparison maps compose, this reduces us to the case of a point.

\medskip

For the converse, suppose all base-changes of $f$ satisfy the proper base change theorem for $\mathcal{E}=\mathcal{S}$.  Consider the special case of the base-change by a map $\Delta^1\rightarrow\mathcal{D}$, and then apply the proper base change theorem to the pullback of that base-changed map $\mathcal{C}'\rightarrow\Delta^1$ along $0\rightarrow \Delta^1$.  For a functor $\Delta^1\rightarrow \mathcal{S}$ its limit is the same as its evaluation at the initial object $0$, so we see exactly the equivalent condition for properness enunciated in \Cref{characteriseproper}. 
\end{proof}

\begin{corollary}
The composition of proper functors is proper. The class of proper functors is closed under colimits in $\operatorname{Fun}(\Delta^1,\operatorname{Cat}_\infty)$.
\end{corollary}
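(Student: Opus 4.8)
\emph{Composition.} Let $f\colon\mathcal{C}\to\mathcal{D}$ and $g\colon\mathcal{D}\to\mathcal{E}$ be proper; I would verify the hypothesis of the converse part of the proper base change theorem for $g\circ f$. Let $u\colon\mathcal{E}'\to\mathcal{E}$ be any functor and set $\mathcal{D}':=\mathcal{D}\times_\mathcal{E}\mathcal{E}'$ and $\mathcal{C}':=\mathcal{C}\times_\mathcal{D}\mathcal{D}'$; by the pasting law for pullbacks $\mathcal{C}'\simeq\mathcal{C}\times_\mathcal{E}\mathcal{E}'$, and the pullback of $g\circ f$ along $u$ decomposes as the vertical composite of the pullback of $f$ along $\mathcal{D}'\to\mathcal{D}$ (on top) and the pullback of $g$ along $u$ (on the bottom). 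Applying $\operatorname{Fun}(-,\mathcal{S})$, the forward direction of proper base change makes each of these two squares right adjointable, since $f$ and $g$ are proper; as right adjointability is stable under vertical pasting of squares (the mate of a pasted square is the pasting of the mates), the composite square --- which is the square associated with the pullback of $g\circ f$ along $u$ --- is right adjointable. Since $u$ was arbitrary, and any pullback of a pullback of $g\circ f$ is again a pullback of $g\circ f$, the converse direction of proper base change gives that $g\circ f$ is proper.

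\emph{Colimits.} Let $i\mapsto(f_i\colon\mathcal{C}_i\to\mathcal{D}_i)$ be a diagram in $\operatorname{Fun}(\Delta^1,\operatorname{Cat}_\infty)$ indexed by a small $\infty$-category $K$, with each $f_i$ proper, and let $f\colon\mathcal{C}\to\mathcal{D}$ be its colimit, so $\mathcal{C}=\varinjlim_K\mathcal{C}_i$, $\mathcal{D}=\varinjlim_K\mathcal{D}_i$ and $f$ is induced. I would realise these colimits via the Grothendieck construction recalled at the start of this section: writing $\mathcal{E}^0\to K$ and $\mathcal{E}^1\to K$ for the cocartesian fibrations classifying $i\mapsto\mathcal{C}_i$ and $i\mapsto\mathcal{D}_i$, with $W_0$ and $W_1$ their classes of cocartesian edges, one has $\mathcal{C}\simeq\mathcal{E}^0[W_0^{-1}]$ and $\mathcal{D}\simeq\mathcal{E}^1[W_1^{-1}]$; the natural transformation $(f_i)_i$ is classified by a functor $\mathcal{F}\colon\mathcal{E}^0\to\mathcal{E}^1$ over $K$ carrying cocartesian edges to cocartesian edges and restricting to $f_i$ over each $i\in K$, and $f$ is the functor $\mathcal{F}$ induces on the localisations. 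The plan is to show (A) that $\mathcal{F}$ is proper, and (B) that properness of $f$ follows from that of $\mathcal{F}$ through the localisation functors.

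For (A) I would invoke \Cref{characteriseproper}. Pulling the tower $\mathcal{E}^0\to\mathcal{E}^1\to K$ back along an edge $\alpha\colon i\to j$ of $K$ reduces matters to $K=\Delta^1$, where $\mathcal{E}^1$ and $\mathcal{E}^0$ are the cocartesian fibrations over $\Delta^1$ associated with the transition functors $\mathcal{D}_i\to\mathcal{D}_j$ and $\mathcal{C}_i\to\mathcal{C}_j$. One then checks the $\varprojlim$-equivalence condition of \Cref{characteriseproper} for $\mathcal{F}$ against every functor $\Delta^1\to\mathcal{E}^1$. If the edge lies in the fibre over $0$, resp. over $1$, then the pullback of $\mathcal{F}$ along it is the pullback of the proper functor $f_i$, resp. $f_j$, and the condition holds by \Cref{characteriseproper} applied to those; if the edge runs from the fibre over $0$ to the fibre over $1$, one uses the cocartesian lift to factor the morphisms of $\mathcal{E}^0$ lying over $\alpha$ as a cocartesian morphism followed by a fibrewise one, and --- using that $\mathcal{F}$ preserves cocartesian edges and restricts over $1$ to $f_j$ --- identifies the left fibres that must be shown contractible with left fibres of an inclusion $(\mathcal{C}_j)_d\hookrightarrow(\mathcal{C}_j)_{d/}$ coming from $f_j$, which are contractible because $f_j$ is proper.

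Step (B) is the main obstacle. The localisation functors $\mathcal{E}^0\to\mathcal{C}$ and $\mathcal{E}^1\to\mathcal{D}$ are $\varprojlim$-equivalences (\Cref{finalexample}), but they do not commute with the pullbacks computing fibres and right fibres, so properness cannot simply be transported along them. Instead I would argue through \Cref{characteriseproper} directly: a functor $\Delta^1\to\mathcal{D}$ is a morphism of $\mathcal{D}=\mathcal{E}^1[W_1^{-1}]$, and using that $W_1$ consists of cocartesian edges --- hence is closed under composition and interacts well with the cocartesian factorisation in $\mathcal{E}^1$ --- such a morphism can be analysed in terms of morphisms of $\mathcal{E}^1$; one then compares the pullback of $f$ along this $\Delta^1$ with the pullback of $\mathcal{F}$ along a corresponding $\Delta^1\to\mathcal{E}^1$, verifying that the relevant left fibres are unchanged by the localisation (here the fact that an edge of $\mathcal{E}^0$ lying over an identity of $K$ whose $\mathcal{F}$-image lies in $W_1$ already lies in $W_0$ is needed in order to control morphisms after localising), and concludes by (A). Establishing this last comparison of left fibres is the technical heart of the argument; everything else is the abstract formalism of the proper base change theorem developed above.
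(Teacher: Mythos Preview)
Your treatment of composition is correct and is exactly the paper's argument: use the converse direction of the proper base change theorem together with the fact that right adjointable squares paste vertically.

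For the colimit statement your approach diverges sharply from the paper's, and the divergence is costly. The paper's proof is a single line: by the converse to proper base change, properness is detected after applying $\operatorname{Fun}(-,\mathcal{S})$ as a right-adjointability condition; since $\operatorname{Fun}(-,\mathcal{S})$ sends colimits in $\operatorname{Cat}_\infty$ to limits, one is reduced to the fact that right adjointability is preserved under limits of diagrams of squares, which is \cite{LHA}~4.7.4.18. No model for the colimit is ever chosen.

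Your route through the Grothendieck construction and localisation is not only longer but has a genuine gap. Step~(B) is, as you acknowledge, ``the main obstacle'', and the sketch you give does not close it. Localisation does not in general commute with pullbacks, so there is no reason for the fibre or right fibre of $f$ at a point of $\mathcal{D}=\mathcal{E}^1[W_1^{-1}]$ to agree with anything computed from $\mathcal{F}$ before localising; a morphism in $\mathcal{D}$ is represented by a zig-zag in $\mathcal{E}^1$, and the comparison of left fibres you allude to would require controlling all such zig-zags, not just single morphisms. Step~(A) is also thinner than it looks: in the cross-fibre case your identification of the relevant left fibre with one arising from $f_j$ implicitly uses that the transition functor $(\mathcal{C}_i)_{d_0}\to(\mathcal{C}_j)_{\alpha_!(d_0)}$ is an equivalence on the categories in play, which is not automatic. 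I would abandon the explicit model and argue via the functorial characterisation as the paper does.
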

\begin{proof}
This follows from the above converse to the proper base change theorem, as right adjointability composes and is preserved by limits \cite{LHA} 4.7.4.18.
\end{proof}

Now we turn to proper descent, or how to identify colimits of $\infty$-categories along proper maps.  This is our main purpose for discussing proper functors.

\begin{theorem}\label{properdescentcat}
Let $K$ be a small $\infty$-category and $d$ a functor $K^\triangleright\rightarrow\operatorname{Cat}_\infty$, viewed as a $K$-shaped diagram of small $\infty$-categories together with a co-cone for this diagram.  Suppose that:
\begin{enumerate}
\item The functor $d(f)$ is proper for all maps $f$ in $K^\triangleright$.
\item For all functors $f\colon\ast\rightarrow d(\infty)$ from the terminal category to the co-cone point of $f$, the pullback $f^{-1}d$ is a colimit diagram.  (Here $f^{-1}d$ is the functor $K^\triangleright\rightarrow\operatorname{Cat}_\infty$ defined by $(f^{-1}d)(k) = \ast\times_{d(\infty)}d(k)$.)
\end{enumerate}
Then $d$ is a colimit diagram.
\end{theorem}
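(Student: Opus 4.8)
The plan is to apply \Cref{detectcolim}. Write $\mathcal{D}=d(\infty)$ and $p_k=d(k\to\infty)\colon d(k)\to\mathcal{D}$; then it suffices to check (a) that the $p_k$ are jointly essentially surjective and (b) that applying $\operatorname{Fun}(-,\mathcal{S})$ to $d$ yields a limit diagram. For (a), fix an object $x$ of $\mathcal{D}$, viewed as a functor $x\colon\ast\to\mathcal{D}$. By hypothesis (2) the co-cone $x^{-1}d$ is a colimit diagram, and its co-cone point $\ast\times_{\mathcal{D}}\mathcal{D}$ is equivalent to $\ast$. Since $\vert\cdot\vert\colon\operatorname{Cat}_\infty\to\mathcal{S}$ preserves colimits and a colimit of empty anima is empty, a colimit diagram with nonempty co-cone point cannot have all its $K$-values empty, so some fibre $\ast\times_{\mathcal{D}}d(k)$ is nonempty; an object of it is precisely an object of $d(k)$ mapping to $x$. (If $\mathcal{D}$ is empty there is nothing to prove and $d$ is trivially a colimit diagram.)

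Condition (b) asserts that the canonical comparison $\Phi\colon\operatorname{Fun}(\mathcal{D},\mathcal{S})\to L:=\varprojlim_{k\in K^{op}}\operatorname{Fun}(d(k),\mathcal{S})$, whose legs are the pullback functors $p_k^{\ast}$, is an equivalence. This $\Phi$ has a right adjoint $\Phi^R$, computed legwise from the right Kan extension functors $p_{k\ast}$ — explicitly $\Phi^R\big((F_k)_k\big)\simeq\varprojlim_{k\in K^{op}}p_{k\ast}F_k$, with transition maps built from the units of the adjunctions $d(\alpha)^{\ast}\dashv d(\alpha)_{\ast}$. To prove $\Phi$ is an equivalence it is enough to show that the unit $\operatorname{id}\to\Phi^R\Phi$ and the counit $\Phi\Phi^R\to\operatorname{id}$ are equivalences; being natural transformations between endofunctors of functor $\infty$-categories and limits thereof, these may be tested after restricting along each point $x\colon\ast\to\mathcal{D}$, because the functors $\operatorname{Fun}(\mathcal{D},\mathcal{S})\to\mathcal{S}$ and $L\to L_x:=\varprojlim_{k\in K^{op}}\operatorname{Fun}(x^{-1}d(k),\mathcal{S})$, as $x$ runs over the objects of $\mathcal{D}$, jointly reflect equivalences.

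This is where properness is used. Hypothesis (1) gives that $d(f)$ is proper for \emph{every} arrow $f$ of $K^\triangleright$. Applying the categorical proper base-change theorem to the pullback squares $x^{-1}d(k)\to d(k)$ lying over $x\colon\ast\to\mathcal{D}$ yields base-change equivalences $x^{\ast}p_{k\ast}\simeq (p^{x}_k)_{\ast}(x'_k)^{\ast}$ for the legs, and applying it to the squares $x^{-1}d(k)\to x^{-1}d(k')$ over $d(k)\to d(k')$ — using properness of $d(k\to k')$ — shows that these are compatible with the transition functors of the diagram defining $L$. Assembling over $K^{op}$, and using that right adjointability is stable under passing to limits, one then identifies the restriction along $x$ of the adjunction $\Phi\dashv\Phi^R$ with the corresponding adjunction $\Phi_x\dashv\Phi_x^R$ for the co-cone $x^{-1}d$; in particular the unit and counit of $\Phi\dashv\Phi^R$ restrict to those of $\Phi_x\dashv\Phi_x^R$. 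But by hypothesis (2) the co-cone $x^{-1}d$ is a colimit diagram, so by the forward implication of \Cref{detectcolim} the diagram obtained by applying $\operatorname{Fun}(-,\mathcal{S})$ to it is a limit diagram, i.e.\ $\Phi_x$ is an equivalence, and hence its unit and counit are equivalences. Letting $x$ vary over $\mathcal{D}$, the unit and counit of $\Phi\dashv\Phi^R$ are equivalences, so $\Phi$ is an equivalence; combined with (a), \Cref{detectcolim} shows that $d$ is a colimit diagram.

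I expect the technical heart to be the assembling step: coherently patching the levelwise proper base-change equivalences $x^{\ast}p_{k\ast}\simeq(p^{x}_k)_{\ast}(x'_k)^{\ast}$ over the index category $K^{op}$ so that they are compatible with the transition functors — which involve the $d(\alpha)_{\ast}$ and therefore require properness of the corresponding $d(\alpha)$ — and then commuting the resulting base change past the limit $\varprojlim_{K^{op}}$ that computes $\Phi^R$ and $\Phi_x^R$. This is the categorical analogue of the combination of \cite{LHA} 5.2.2.37 with the stability of Beck--Chevalley squares under limits that underlies the topological statement \Cref{cdhdescenttop}, and it is the reason hypothesis (1) demands properness of all arrows of $K^\triangleright$ rather than only of the legs $p_k$.
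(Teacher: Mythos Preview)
Your proof is correct and follows essentially the same route as the paper: reduce via \Cref{detectcolim} to joint essential surjectivity plus the $\operatorname{Fun}(-,\mathcal{S})$ limit condition, then verify the latter by testing at points of $d(\infty)$ using proper base change. The only cosmetic difference is that the paper packages your ``assembling step'' by a direct appeal to \cite{LHA} 5.2.2.37 (applied to the comparison with $f\colon\sqcup_I\ast\to d(\infty)$ over all isomorphism classes at once), checking its four hypotheses, rather than constructing $\Phi^R$ explicitly and testing unit/counit pointwise; you correctly anticipated this.
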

\begin{proof}
First, we note that the collection of functors $d(k)\rightarrow d(\infty)$, ranging over all $k\in K$, is jointly essentially surjective.  Indeed, if an object were not in the joint essential image, the pullback of $d$ along the functor $\ast\rightarrow d(\infty)$ classifying that object would have empty restriction to $K$, whence empty colimit, contradicting the assumption.  Thus by \Cref{detectcolim} it suffices to see that applying $\operatorname{Fun}(-,\mathcal{S})$ to our diagram $d$ yields a limit diagram of $\infty$-categories, assuming the same for every pullback $f^{-1}d$ along a functor $f\colon\ast\rightarrow d(\infty)$.

\medskip

Consider $f\colon\sqcup_I \ast\rightarrow d(\infty)$, a disjoint union of terminal categories indexed by the isomorphism classes of objects in $d(\infty)$, mapping to $d(\infty)$ by selecting an object in each isomorphism class.  Consider the induced natural transformation
$$\operatorname{Fun}(d(-),\mathcal{S})\rightarrow \operatorname{Fun}((f^{-1}d)(-),\mathcal{S})$$
of diagrams $(K^{op})^\triangleleft\rightarrow\operatorname{CAT}_\infty$.  We want to see that the source is a limit diagram.  Using the criterion of \cite{LHA} 5.2.2.37, it suffices to check the following four conditions:
\begin{enumerate}
\item The target is a limit diagram.  This holds because it is a product of limit diagrams by assumption.
\item For each $k\in (K^{op})^\triangleleft$, the induced functor $\operatorname{Fun}(d(k),\mathcal{S})\rightarrow\operatorname{Fun}((f^{-1}d)(k),\mathcal{S})$ is conservative.  This holds because the functor $(f^{-1}d)(k)\rightarrow d(k)$ is a pullback of the essentially surjective functor $f$ hence is itself essentially surjective, and equivalences in presheaf categories are detected objectwise.
\item The $\infty$-category $\operatorname{Fun}(d(\infty),\mathcal{S})$ admits $K$-indexed limits, and these are preserved by
$$\operatorname{Fun}(d(\infty),\mathcal{S})\rightarrow \operatorname{Fun}((f^{-1}d)(\infty),\mathcal{S}).$$
In fact functor categories to $\mathcal{S}$ admit all limits and these are preserved by all pullbacks, since limits are calculated objectwise in functor categories.
\item For every morphism $\alpha$ in $(K^{op})^\triangleleft$, the commutative square of $\infty$-categories gotten by applying our natural transformation to $\alpha$ is right adjointable.  This holds by the proper base change theorem and our assumption that $d(\alpha)$ is proper.
\end{enumerate}
Thus the conditions of \cite{LHA} 5.2.2.37 apply and finish the proof.
\end{proof}

\begin{corollary}
Every colimit in $\operatorname{Cat}_\infty$ produced by the above theorem is \emph{universal}: stable under pullback (via an arbitrary map to the co-cone object).
\end{corollary}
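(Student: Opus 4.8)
The plan is to observe that both hypotheses of \Cref{properdescentcat} are themselves stable under base change, so that a base-changed colimit diagram again falls within the scope of that theorem. Concretely, let $d\colon K^\triangleright\to\operatorname{Cat}_\infty$ be a co-cone diagram satisfying conditions (1) and (2) of \Cref{properdescentcat}, and let $p\colon\mathcal{E}\to d(\infty)$ be an arbitrary functor to the co-cone object. Using the co-cone structure to lift $d$ to a functor $K^\triangleright\to(\operatorname{Cat}_\infty)_{/d(\infty)}$ and then postcomposing with $\mathcal{E}\times_{d(\infty)}(-)\colon(\operatorname{Cat}_\infty)_{/d(\infty)}\to\operatorname{Cat}_\infty$, we produce an honest co-cone diagram $p^{-1}d\colon K^\triangleright\to\operatorname{Cat}_\infty$ with $(p^{-1}d)(k)=\mathcal{E}\times_{d(\infty)}d(k)$ and co-cone object $\mathcal{E}\times_{d(\infty)}d(\infty)\simeq\mathcal{E}$. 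It then suffices to check that $p^{-1}d$ again satisfies (1) and (2), for then \Cref{properdescentcat} shows it is a colimit diagram, which is precisely the claimed universality (joint essential surjectivity need not be checked separately, as it is derived from (2) inside the proof of \Cref{properdescentcat}).

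For condition (1): for a morphism $f\colon k\to k'$ in $K^\triangleright$, the functor $(p^{-1}d)(f)$ is a base change of $d(f)$, since the iterated fibre product gives $\mathcal{E}\times_{d(\infty)}d(k)\simeq\bigl(\mathcal{E}\times_{d(\infty)}d(k')\bigr)\times_{d(k')}d(k)$; as $d(f)$ is proper by hypothesis and proper functors are closed under pullback by \Cref{properclosedunderpullback}, $(p^{-1}d)(f)$ is proper. For condition (2): given any $h\colon\ast\to(p^{-1}d)(\infty)\simeq\mathcal{E}$, coherent associativity of fibre products identifies $h^{-1}(p^{-1}d)$ with $(p\circ h)^{-1}d$, via $\ast\times_{\mathcal{E}}\bigl(\mathcal{E}\times_{d(\infty)}d(k)\bigr)\simeq\ast\times_{d(\infty)}d(k)$; since $p\circ h$ is a functor $\ast\to d(\infty)$, condition (2) for $d$ says exactly that this is a colimit diagram, as needed.

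I do not expect a genuine obstacle here: the argument only uses that the class of proper functors is closed under pullback and that iterated pullbacks are pullbacks. The one point requiring a little care is to set up $p^{-1}d$ as an actual $K^\triangleright$-diagram valued in $\operatorname{Cat}_\infty$ rather than a mere levelwise family of base changes, which is why I phrase the construction through the slice category $(\operatorname{Cat}_\infty)_{/d(\infty)}$; granting this, the verification of (1) and (2) is immediate and the corollary follows.
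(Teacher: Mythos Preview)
Your proof is correct and follows exactly the paper's approach: the paper's proof is the one-liner ``Clear, since the two conditions are stable under pullback,'' and you have simply spelled out in detail why each condition is indeed stable under pullback (via \Cref{properclosedunderpullback} for condition (1) and associativity of iterated fibre products for condition (2)). Your extra care in constructing $p^{-1}d$ through the slice $(\operatorname{Cat}_\infty)_{/d(\infty)}$ is a reasonable precision beyond what the paper makes explicit, but it does not change the strategy.
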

\begin{proof}
Clear, since the two conditions are stable under pullback.
\end{proof}

Here are some special cases.  First we have Cech descent along a covering map.

\begin{corollary}
Let $f\colon\mathcal{C}\rightarrow\mathcal{D}$ be an essentially surjective proper functor in $\operatorname{Cat}_\infty$.  Then $\mathcal{D}$ identifies with the colimit of the Cech nerve of $f$.
\end{corollary}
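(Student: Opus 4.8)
The plan is to deduce this from the proper descent theorem \Cref{properdescentcat}, applied not to the full Cech nerve of $f$ but to its restriction along the category of injections. Write $\check{C}\colon\Delta_+^{op}\to\operatorname{Cat}_\infty$ for the Cech nerve of $f$ together with its augmentation, so that $\check{C}([-1])=\mathcal{D}$, $\check{C}([n])$ is the $(n+1)$-fold fibre power $\mathcal{C}\times_\mathcal{D}\cdots\times_\mathcal{D}\mathcal{C}$, and the structure map $\check{C}([0])\to\check{C}([-1])$ is $f$; the goal is to show that $\check{C}$ is a colimit diagram.

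First I would record that the inclusion $\Delta_{inj}^{op}\hookrightarrow\Delta^{op}$ of the wide subcategory on the injective monotone maps is a $\varinjlim$-equivalence. By \Cref{lim-equivalences} this reduces to the contractibility of each comma category $(\Delta_{inj})_{/[n]}$, whose objects are the arbitrary monotone maps $[m]\to[n]$ with $[m]\in\Delta_{inj}$; and this in turn follows from \Cref{propertiesinvert}(3) applied to the endofunctor $\phi\mapsto\phi^+$, where $\phi^+\colon[m+1]\to[n]$ extends $\phi\colon[m]\to[n]$ by sending the new top element to $n$ --- it receives natural transformations from both the identity and from the constant functor at $([0]\xrightarrow{\,n\,}[n])$, so the identity of $\vert(\Delta_{inj})_{/[n]}\vert$ is homotopic to a constant map. (This cofinality is in any case standard.) Consequently $\check{C}$ is a colimit diagram if and only if its restriction $d:=\check{C}\vert_{\Delta_{inj,+}^{op}}$ to $\Delta_{inj,+}^{op}=(\Delta_{inj}^{op})^{\triangleright}$ is one, and it is $d$ to which I will apply \Cref{properdescentcat}, with $K=\Delta_{inj}^{op}$.

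It then remains to check the two hypotheses of \Cref{properdescentcat} for $d$. For hypothesis (1): every morphism of $\Delta_{inj,+}$ is a composite of coface inclusions $[k]\hookrightarrow[k+1]$ and of $[-1]\to[0]$, so every structure map of $d$ is a composite of coordinate-forgetting projections $\mathcal{C}^{\times_\mathcal{D}(k+2)}\to\mathcal{C}^{\times_\mathcal{D}(k+1)}$ together with copies of $f$; each such projection is a base change of $f$ (regroup $\mathcal{C}^{\times_\mathcal{D}(k+2)}\simeq\mathcal{C}^{\times_\mathcal{D}(k+1)}\times_\mathcal{D}\mathcal{C}$), hence proper by \Cref{properclosedunderpullback}, $f$ is proper by hypothesis, and proper functors are closed under composition; thus $d(g)$ is proper for every $g$. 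For hypothesis (2): given an object $d_0\in\mathcal{D}$ with classifying functor $x\colon\ast\to\mathcal{D}$, pulling back along $x$ turns the fibre powers over $\mathcal{D}$ into ordinary powers over $\ast$, so $x^{-1}d$ is the restriction to $\Delta_{inj,+}^{op}$ of the augmented Cech nerve of the fibre inclusion $\mathcal{C}_{d_0}\to\ast$; since $f$ is essentially surjective the fibre $\mathcal{C}_{d_0}$ is nonempty, so $\mathcal{C}_{d_0}\to\ast$ admits a section, its augmented Cech nerve is a split augmented simplicial object (the section supplying the extra degeneracies) and hence a colimit diagram by \cite{L} 6.1.3.16, and by the cofinality of the previous paragraph its restriction to $\Delta_{inj,+}^{op}$ remains a colimit diagram. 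So \Cref{properdescentcat} applies, $d$ is a colimit diagram, and therefore so is $\check{C}$.

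The main obstacle I anticipate is exactly the reason for the detour through $\Delta_{inj}^{op}$: one cannot feed the \emph{full} simplicial Cech nerve into \Cref{properdescentcat}, because its degeneracies --- the relative diagonals $\mathcal{C}^{\times_\mathcal{D}(k+1)}\to\mathcal{C}^{\times_\mathcal{D}(k+2)}$ --- need not be proper even when $f$ is (already $\Delta^1\to\Delta^1\times\Delta^1$ fails, the fibre over the object $(0,1)$ being empty while the corresponding right fibre is a point). The two places that then genuinely need care are establishing the cofinality of $\Delta_{inj}^{op}\hookrightarrow\Delta^{op}$ and correctly identifying the pullback $x^{-1}d$ in hypothesis (2) with the augmented Cech nerve of a map to the point.
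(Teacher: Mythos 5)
Your proposal is correct and follows essentially the same route as the paper: restrict to the semisimplicial (nondegenerate/injective) part of the Cech nerve by cofinality, note that those structure maps are pullbacks and composites of $f$ hence proper, apply \Cref{properdescentcat} to reduce to a point of $\mathcal{D}$, and conclude there via a section and the splitness criterion \cite{L} 6.1.3.16. The only difference is that you prove the cofinality of $\Delta_{inj}^{op}\hookrightarrow\Delta^{op}$ by hand, where the paper simply cites \cite{L} 6.5.3.7.
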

\begin{proof}
Recall that the nondegenerate simplex category is cofinal in $\Delta$, \cite{L} 6.5.3.7, so in calculating the colimit of the Cech nerve we can restrict to the functors induced by non-degenerate maps in $\Delta$.  But all such functors are pullbacks of $f$, hence are also proper by \Cref{properclosedunderpullback}.  Hence by \Cref{properdescentcat} we can reduce to the case $\mathcal{D}=\ast$.  But then $f$ admits a section and hence gives a colimit diagram, \cite{L} 6.1.3.16.
\end{proof}

Here is the analogue of ``cdh descent'' in algebraic geometry.

\begin{corollary}\label{cdhdescentcat}
Suppose given a pullback square $\sigma$ in $\operatorname{Cat}_\infty$
$$\xymatrix{
\mathcal{C}'\ar[r]^{g'}\ar[d]^{f'} & \mathcal{C}\ar[d]^f \\
\mathcal{D}'\ar[r]^{g} & \mathcal{D} \\
}$$
such that:
\begin{enumerate}
\item $f$ is proper;
\item $g$ is the inclusion of a left closed full subcategory;
\item the pullback of $f$ to the full subcategory given by the complement $\mathcal{D}\smallsetminus \mathcal{D}'$ is an equivalence.
\end{enumerate}

Then $\sigma$ is also a pushout square.
\end{corollary}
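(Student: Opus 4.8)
The plan is to realise $\sigma$ as the co-cone of a span diagram and invoke the categorical proper descent theorem, \Cref{properdescentcat}. Let $K$ be the span category $\bullet\leftarrow\bullet\rightarrow\bullet$, so that a functor $d\colon K^\triangleright\to\operatorname{Cat}_\infty$ is exactly the datum of a commutative square; the pullback square $\sigma$ furnishes such a $d$, with $d|_K$ the span $\mathcal{D}'\xleftarrow{f'}\mathcal{C}'\xrightarrow{g'}\mathcal{C}$ and co-cone point $d(\infty)=\mathcal{D}$. Asserting that $\sigma$ is a pushout square is precisely asserting that $d$ is a colimit diagram, so it suffices to check the two hypotheses of \Cref{properdescentcat}.

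For hypothesis (1), that $d$ sends every morphism of $K^\triangleright$ to a proper functor: up to identities these morphisms go to $f$, $g$, $f'$, $g'$, and the composite $\mathcal{C}'\to\mathcal{D}$. Here $f$ is proper by assumption; $g$ is the inclusion of a left closed full subcategory, in particular a replete one, hence proper by \Cref{properexamples}(3); $f'$ and $g'$ are base changes of $f$ and $g$ (as $\sigma$ is a pullback square), hence proper by \Cref{properclosedunderpullback}; and $\mathcal{C}'\to\mathcal{D}$ is proper, being a composite of proper functors.

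For hypothesis (2), I fix an object $x\in\mathcal{D}$, regard it as a functor $\ast\to\mathcal{D}$, and must show that the pulled-back co-cone diagram $k\mapsto\ast\times_\mathcal{D}d(k)$ --- which is the span of fibres $\mathcal{D}'_x\leftarrow\mathcal{C}'_x\rightarrow\mathcal{C}_x$ with co-cone point $\ast\times_\mathcal{D}\mathcal{D}\simeq\ast$ --- is a colimit diagram. I split into cases. If $x\notin\mathcal{D}'$, then repleteness of $\mathcal{D}'$ gives $\mathcal{D}'_x=\emptyset$, hence also $\mathcal{C}'_x=\emptyset$ (it maps to $\mathcal{D}'_x$), while $\mathcal{C}_x$ is the fibre over $x$ of the pullback of $f$ to the complement $\mathcal{D}\smallsetminus\mathcal{D}'$, which is an equivalence by hypothesis, so $\mathcal{C}_x\simeq\ast$; thus the span $\emptyset\leftarrow\emptyset\to\mathcal{C}_x$ has colimit $\mathcal{C}_x\simeq\ast$, matching the co-cone point. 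If $x\in\mathcal{D}'$, then $\mathcal{D}'_x$ is the fibre of the fully faithful functor $g$ over an object of its image, which is a contractible $\infty$-groupoid, so $\mathcal{D}'_x\simeq\ast$; moreover $\mathcal{C}'_x\simeq\mathcal{C}_x\times\mathcal{D}'_x$ with $\mathcal{C}'_x\to\mathcal{C}_x$ the projection, hence an equivalence; thus the span $\ast\leftarrow\mathcal{C}'_x\xrightarrow{\sim}\mathcal{C}_x$ has colimit $\ast$, again matching the co-cone point. In either case the diagram is a colimit diagram, so \Cref{properdescentcat} applies and $\sigma$ is a pushout square.

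The argument is essentially formal once \Cref{properdescentcat} is available. The two non-formal inputs are that a left closed full subcategory is replete (immediate) and that the fibre of a fully faithful functor over an object in its essential image is contractible (it is nonempty, and full faithfulness forces a contractible space of morphisms between any two of its objects). The only point needing real care is the bookkeeping identifying which maps appearing in $\sigma$ and in its fibrewise restrictions are base changes of $f$ versus of $g$; I do not expect any obstacle beyond this.
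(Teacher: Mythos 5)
Your proof is correct and follows essentially the same route as the paper: both apply \Cref{properdescentcat} after noting that all the functors in the square (and hence all maps in the co-cone diagram) are proper, and then reduce to fibres over a point of $\mathcal{D}$, where the dichotomy $x\in\mathcal{D}'$ versus $x\notin\mathcal{D}'$ makes the pushout statement trivial. The only cosmetic difference is that the paper phrases the reduction as ``the hypotheses are stable under base change, so assume $\mathcal{D}=\ast$,'' whereas you verify the fibrewise colimit condition by computing the fibres directly; the content is the same.
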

\begin{proof}
Note that the conditions are closed under base-change along any functor $\mathcal{X}\rightarrow\mathcal{D}$.  Furthermore, all of $f,g,f',g'$ are proper, as they are pullbacks of proper maps.  Therefore, by the proper descent theorem, it suffices to prove this when $\mathcal{D}=\ast$.  But then either $\mathcal{D}'=\emptyset$, in which case $f$ and $f'$ are equivalences and hence the square is a pushout, or $\mathcal{D}'=\ast$, in which case $g$ and $g'$ are equivalences and hence the square is a pushout.
\end{proof}

The following is ``descent for left-closed covers''.

\begin{corollary}\label{closeddescentcat}
Let $\mathcal{C}$ be a small $\infty$-category.  Suppose given a collection $P$ of left-closed full subcategories of $\mathcal{C}$, viewed as a poset under inclusion, such that for all $x\in\mathcal{C}$ the subposet of those elements of $P$ containing $x$ is contractible.  Then
$$\mathcal{C} = \varinjlim_{\mathcal{D}\in P}\mathcal{D}.$$
\end{corollary}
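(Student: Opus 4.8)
The plan is to apply the proper descent theorem \Cref{properdescentcat} to the evident co-cone diagram $d\colon P^{\triangleright}\to\operatorname{Cat}_\infty$ which sends each $\mathcal{D}\in P$ to $\mathcal{D}$, the cone point to $\mathcal{C}$, and each inclusion relation (both $\mathcal{D}\subseteq\mathcal{D}'$ inside $P$ and $\mathcal{D}\subseteq\mathcal{C}$) to the corresponding fully faithful embedding. Here $P$ is small, being a sub-poset of the set of iso-closed full subcategories of the small category $\mathcal{C}$. There are two hypotheses of \Cref{properdescentcat} to check.

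First I would verify that $d(f)$ is proper for every map $f$ of $P^{\triangleright}$. Each such $d(f)$ is the inclusion of one iso-closed full subcategory of $\mathcal{C}$ into another, so by \Cref{properexamples}(3) it suffices to see these inclusions are left-closed. The inclusions $\mathcal{D}\hookrightarrow\mathcal{C}$ are left-closed by the standing hypothesis on $P$, and an inclusion $\mathcal{D}\hookrightarrow\mathcal{D}'$ between members of $P$ is left-closed because a morphism $x\to y$ of $\mathcal{D}'$ with $y\in\mathcal{D}$ is in particular a morphism of $\mathcal{C}$ with $y\in\mathcal{D}$, so $x\in\mathcal{D}$; identities are covered trivially.

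The substantive step is the second hypothesis: for each object $x\in\mathcal{C}$, viewed as a functor $f\colon\ast\to\mathcal{C}$, the pulled-back diagram $f^{-1}d$ must be a colimit diagram. Since each $\mathcal{D}\in P$ is a full subcategory of $\mathcal{C}$ closed under isomorphism, the embedding $\mathcal{D}\hookrightarrow\mathcal{C}$ is a monomorphism in $\operatorname{Cat}_\infty$; hence $(f^{-1}d)(\mathcal{D})=\ast\times_{\mathcal{C}}\mathcal{D}$ is $\ast$ if $x\in\mathcal{D}$ and $\emptyset$ otherwise, while $(f^{-1}d)(\infty)=\ast\times_\mathcal{C}\mathcal{C}=\ast$. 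Let $P_x=\{\mathcal{D}\in P : x\in\mathcal{D}\}$; this is upward-closed in $P$ (a cosieve), since $x\in\mathcal{D}\subseteq\mathcal{D}'$ forces $x\in\mathcal{D}'$. Because $P_x$ is a cosieve, the restriction $f^{-1}d|_P$ is the left Kan extension along $P_x\hookrightarrow P$ of the constant functor at $\ast$: the pointwise formula produces $\ast$ on $P_x$ (a colimit over a poset with a terminal object) and $\emptyset$ off of it. Left Kan extension does not change the colimit, so
$$\varinjlim_{\mathcal{D}\in P}(f^{-1}d)(\mathcal{D})\;=\;\varinjlim_{\mathcal{D}\in P_x}\ast\;=\;\vert P_x\vert,$$
invoking \Cref{describeinvert}(4), and the co-cone map identifies with the canonical map $\vert P_x\vert\to\ast$. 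By the hypothesis of the corollary this map is an equivalence, so $f^{-1}d$ is a colimit diagram.

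With both hypotheses verified, \Cref{properdescentcat} gives that $d$ is a colimit diagram, which is exactly the assertion $\mathcal{C}=\varinjlim_{\mathcal{D}\in P}\mathcal{D}$. I expect the only delicate point to be the identification of the colimit of the $\{\emptyset,\ast\}$-valued ``indicator'' diagram with $\vert P_x\vert$: the crucial input is that $P_x$ is a cosieve in $P$, which is what makes extension by $\emptyset$ realize the left Kan extension and hence leaves the colimit undisturbed. One cannot simply appeal to cofinality of $P_x\hookrightarrow P$, as that inclusion need not be a $\varinjlim$-equivalence.
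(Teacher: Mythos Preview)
Your proof is correct and follows essentially the same route as the paper's: apply \Cref{properdescentcat}, check properness via \Cref{properexamples}(3), and then compute the fibrewise colimit of the $\{\emptyset,\ast\}$-valued diagram as $\vert P_x\vert$. Your left Kan extension argument for this last step is a more explicit justification of what the paper leaves as the one-line claim ``the colimit of the terminal diagram is terminal''; note also the paper's parenthetical remark that colimits in $\mathcal{S}$ are automatically colimits in $\operatorname{Cat}_\infty$, which you use implicitly.
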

\begin{proof}
As every inclusion of left closed full subcategories is itself a left closed inclusion, it is proper.  Therefore, by the proper descent theorem, it suffices to show that the pullback of our diagram along any functor $\ast\rightarrow \mathcal{C}$, classifying an object $c\in\mathcal{C}$, has colimit $\ast$.  But as the elements of $P$ are full subcategories, every term in this pullback is either $\ast$ (when $c$ lies in the corresponding full subcategory) or $\emptyset$ (otherwise).  Thus we see exactly the condition that the poset of those $\mathcal{D}\in P$ containing $c$ should be contractible, in the form that the colimit of the terminal diagram is terminal.  (Note that colimits in $\mathcal{S}$ are automatically also colimits in $\operatorname{Cat}_\infty$, as the inclusion has a right adjoint given by neglecting the non-invertible morphisms.)
\end{proof}

Here we calculate homotopy orbits for a group action.  We stick to the special case that's relevant for us.

\begin{corollary}\label{quotientcat}
Let $P$ be a poset and $G$ a group acting on $P$.  We can encode this action by a functor $\mathcal{P}\colon BG\rightarrow\operatorname{Posets}\subseteq \operatorname{Cat}_\infty$.  Then the colimit in $\operatorname{Cat}_\infty$
$$\varinjlim_{BG} \mathcal{P}$$
naturally identifies with the action category $G\backslash\backslash P$ whose objects are the $p\in P$ and whose morphisms $p\rightarrow p'$ are the $g\in G$ with $gp\leq p'$.
\end{corollary}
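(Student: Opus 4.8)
The plan is to compute this colimit through the unstraightening description of colimits in $\operatorname{Cat}_\infty$, and then to observe that in this particular case the localisation which appears is vacuous. Recall (\cite{L} 3.3.4) that for any diagram $d\colon K\rightarrow\operatorname{Cat}_\infty$ one has $\varinjlim_K d\simeq\widetilde{d}\,[W^{-1}]$, where $\widetilde{d}\rightarrow K$ is the cocartesian fibration classified by $d$ and $W$ is the class of $\widetilde{d}$-cocartesian morphisms. So the first task is to identify the total space $\widetilde{\mathcal{P}}$ and its class $W$ of cocartesian edges in the case $K=BG$, $d=\mathcal{P}$.

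For the total space: since $\mathcal{P}$ takes values in the nerves of $1$-categories and $BG$ is itself a $1$-category, $\widetilde{\mathcal{P}}\rightarrow BG$ is the nerve of the classical Grothendieck construction of $\mathcal{P}$. Unwinding that construction I would verify directly that $\widetilde{\mathcal{P}}=G\backslash\backslash P$: its objects are the pairs $(\ast,p)$ with $p\in P$; a morphism $(\ast,p)\rightarrow(\ast,p')$ is a pair $(g,\alpha)$ consisting of a morphism $g\colon\ast\rightarrow\ast$ in $BG$, i.e.\ an element $g\in G$, together with a morphism $\alpha\colon g\cdot p\rightarrow p'$ in $P$, which --- $P$ being a poset --- exists and is then unique exactly when $g\cdot p\leq p'$; and composition is induced by multiplication in $G$. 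Next I would identify $W$: the morphism $(g,\alpha)$ is $\widetilde{\mathcal{P}}$-cocartesian over $g$ precisely when $\alpha$ is an equivalence in the fibre $P$, i.e.\ when $g\cdot p=p'$. I claim these are exactly the isomorphisms of $G\backslash\backslash P$. Indeed $g\colon p\rightarrow g\cdot p$ has inverse $g^{-1}\colon g\cdot p\rightarrow p$, which is a legal morphism since $g^{-1}\cdot(g\cdot p)=p\leq p$. Conversely, an invertible $g\colon p\rightarrow p'$ must have inverse $g^{-1}\colon p'\rightarrow p$ --- as the underlying group element of a composite of morphisms in $G\backslash\backslash P$ is their product and that of an identity is $e$ --- so both $g\cdot p\leq p'$ and $g^{-1}\cdot p'\leq p$ hold; since $g\cdot(-)$ is an order-automorphism the latter is equivalent to $p'\leq g\cdot p$, and together with the former this forces $g\cdot p=p'$.

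Granting these two identifications, the last step is immediate: localising an $\infty$-category at a collection of morphisms all of which are already equivalences returns the same $\infty$-category, because the universal property of the localisation is already satisfied by the identity functor $\mathcal{C}\rightarrow\mathcal{C}$ (every functor inverts equivalences). Hence
\[
\varinjlim_{BG}\mathcal{P}\;\simeq\;\widetilde{\mathcal{P}}\,[W^{-1}]\;\simeq\;\widetilde{\mathcal{P}}\;=\;G\backslash\backslash P,
\]
and this equivalence is visibly natural in the action. I expect the only non-formal point to be the bookkeeping in the middle step, specifically the assertion that $\infty$-categorical unstraightening of a $1$-category-valued functor on a $1$-category recovers the ordinary Grothendieck construction and that the two notions of cocartesian morphism then agree; this is standard and can be pulled from \cite{L}, so in the write-up I would cite it and then run the short explicit computations above. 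As an alternative argument internal to the machinery developed here, one could instead note that $\ast\rightarrow BG$ is proper (by \Cref{properexamples}, since $BG$ is an $\infty$-groupoid) and essentially surjective, realise $BG$ as the colimit of the \v{C}ech nerve of this map, and push $\mathcal{P}$ through \Cref{decomposelimits}(2); but identifying the resulting two-sided bar construction $G^{\times\bullet}\times P$ with $G\backslash\backslash P$ is no shorter than the computation above.
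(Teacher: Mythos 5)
Your proof is correct, but it runs along a different track than the one taken in the paper. You invoke the general description of colimits in $\operatorname{Cat}_\infty$ as the localisation of the unstraightened (co)cartesian fibration at its cocartesian edges --- precisely the description from \cite{L} 3.3.4 that the paper mentions at the start of Section 2 and deliberately sets aside --- and then observe that in this case the Grothendieck construction is $G\backslash\backslash P$ on the nose and that its cocartesian edges (the $(g,\alpha)$ with $g\cdot p=p'$) are already isomorphisms, so the localisation is vacuous; your verification of that last point, using that composition in $G\backslash\backslash P$ is multiplication in $G$ and that $g\cdot(-)$ is an order-automorphism, is sound. The paper instead stays inside its own proper-descent toolkit: it builds the comparison map by pulling back $\ast\rightarrow BG$ along $G\backslash\backslash P\rightarrow BG$ (recovering $P$ with its $G$-action), then uses universality of proper-descent colimits to reduce to $\varinjlim_{BG}\ast\simeq BG$, and checks that by a further pullback to $\varinjlim_{BG}G=\ast$ (translation action), which holds since $G$ with translation is the left Kan extension of the terminal functor along $\ast\rightarrow BG$ and left Kan extension preserves colimits. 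Your route is shorter granted the straightening/unstraightening black box, and it isolates the real reason the answer is so clean --- the indexing category is a groupoid, so all cocartesian edges are already invertible --- which in fact gives the more general statement that a colimit of $\infty$-categories indexed by an $\infty$-groupoid is just the total space of the associated cocartesian fibration. The paper's route buys self-containedness within the descent machinery it develops anyway, and it illustrates the intended use of the universality corollary, which is reused in later arguments. The only small caution with your write-up is the naturality/cone bookkeeping: to say the colimit "naturally identifies" with $G\backslash\backslash P$ you should note that the identification is compatible with the evident cone given by the fibre inclusions $P\rightarrow G\backslash\backslash P$, which indeed falls out of the unstraightening description, so this is a remark rather than a gap.
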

\begin{proof}
Let's make the comparison map.  Note that $G\backslash\backslash \ast = BG$, and the functor $\ast\rightarrow BG$ is tautologically $G$-invariant for the trivial $G$-action on $\ast$.  Now consider the functor $G\backslash\backslash P \rightarrow BG$ induced by $P\rightarrow \ast$.  The pullback of $\ast\rightarrow BG$ along this functor recovers $P$ together with its $G$-action, which gives the desired comparison map.

\medskip

To show the comparison map is an isomorphism, because proper descent is universal it suffices to use proper descent to establish $\varinjlim_{BG}\ast \overset{\sim}{\rightarrow} BG$.  But after we pull back along $\ast \rightarrow BG$ we find that what we need is $\varinjlim_{BG} G=\ast$ where $G$ is promoted to a $G$-object by the translation action.  But $G$ with the translation action is the same as the left Kan extension of the terminal functor along $\ast\rightarrow BG$, so this follows because left Kan extensions preserve colimits.
\end{proof}

Note that the set of isomorphism classes of objects in $G\backslash\backslash P$ is the quotient set $G\backslash P$.  In general, this quotient set does not have a poset structure making the quotient $P\rightarrow G\backslash P$ a map of posets, but under a suitable regularity hypothesis this holds.

\begin{lemma}\label{posetquotient}
Let $P$ be a poset and $G$ a group acting on $P$.  Suppose that for $x\in P$ and $g\in G$ we have the implication $x\leq gx \Rightarrow x=gx$.  Then:
\begin{enumerate}
\item Every endomorphism in $G\backslash\backslash P$ is an isomorphism.
\item There is a poset structure on the quotient set $G\backslash P$ defined by $X\leq Y$ if and only if there exists an $x\in X$ and $y\in Y$ with $x\leq y$.
\item This poset structure on $G\backslash P$ serves as the quotient of $G$ acting on $P$ in the category of posets.
\end{enumerate}
\end{lemma}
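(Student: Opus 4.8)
The plan is to check the three assertions in turn; the one input used throughout is that $G$ acts by \emph{poset automorphisms}, so every $g$ and every $g^{-1}$ is order-preserving, together with the regularity hypothesis $x\le gx\Rightarrow x=gx$.

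For (1), recall from \Cref{quotientcat} that an endomorphism of $p$ in $G\backslash\backslash P$ is an element $g\in G$ with $gp\le p$, with composition given by multiplication in $G$. I would apply the order-preserving map $g^{-1}$ to the inequality $gp\le p$ to get $p\le g^{-1}p$, and then the hypothesis (applied to $x=p$ and the element $g^{-1}$) forces $p = g^{-1}p$, i.e.\ $gp=p$. Hence $\operatorname{End}_{G\backslash\backslash P}(p)$ is exactly the stabiliser subgroup of $p$; in particular every endomorphism is invertible, $g^{-1}$ being an honest inverse morphism since $g^{-1}p=p\le p$.

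For (2), reflexivity of $\le$ on $G\backslash P$ is immediate, and the content is in transitivity and antisymmetry; in both cases the trick is to translate one pair of witnessing representatives by a group element so that the two pairs share a representative in the common orbit. Concretely, for transitivity, from $x\le y$ ($x\in X$, $y\in Y$) and $y'\le z$ ($y'\in Y$, $z\in Z$) I write $y'=gy$ and obtain $gx\le gy=y'\le z$ with $gx\in X$. For antisymmetry, from $x_1\le y_1$ and $y_2\le x_2$ (with $x_i\in X$, $y_i\in Y$) I write $y_2=gy_1$, $x_2=hx_1$, apply $g$ to the first inequality to get $gx_1\le y_2\le hx_1$, hence $h^{-1}gx_1\le x_1$; applying $(h^{-1}g)^{-1}$ and invoking the hypothesis gives $h^{-1}gx_1=x_1$, i.e.\ $gx_1=hx_1$, and feeding this back into $gx_1\le gy_1\le hx_1=gx_1$ yields $gy_1=gx_1$, so $y_1=x_1$ and $X=Y$. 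I expect this antisymmetry bookkeeping to be the only place demanding care, though it remains entirely elementary.

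For (3), I would first note that $\pi\colon P\to G\backslash P$ is order-preserving directly from the definition of $\le$ on $G\backslash P$ and is visibly $G$-invariant, and then verify the relevant universal property: given any poset $Q$ and a map of posets $f\colon P\to Q$ with $f(gx)=f(x)$ for all $g\in G$, $x\in P$, the unique set-map $\bar f\colon G\backslash P\to Q$ with $\bar f\circ\pi=f$ is order-preserving, since $X\le Y$ means there are representatives $x\le y$, whence $\bar f(X)=f(x)\le f(y)=\bar f(Y)$. As maps of posets out of $G\backslash P$ clearly restrict to $G$-invariant maps of posets out of $P$, this bijection identifies $G\backslash P$ (with this order and the map $\pi$) with the colimit of the diagram $BG\to\operatorname{Posets}$ classifying the action, i.e.\ with the quotient of $P$ by $G$ in $\operatorname{Posets}$.
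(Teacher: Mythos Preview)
Your proof is correct. Parts (1) and (3) match the paper's argument essentially verbatim. For part (2) you take a slightly different route: you verify transitivity and antisymmetry directly by translating representatives and invoking the hypothesis, whereas the paper instead observes that (2) is a formal consequence of (1) via the general fact that in any category where every endomorphism is an isomorphism, the isomorphism classes form a poset under ``there exists a morphism $x\to y$''. The paper's approach is a bit slicker and makes the logical dependence of (2) on (1) explicit; your direct verification is more elementary and makes the use of the regularity hypothesis in the antisymmetry step visible. Both are entirely straightforward.
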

\begin{proof}
If $g\in G$ gives a map $x\rightarrow x$ in the action category, then $gx\leq x$, so by hypothesis (applied to $x\leq g^{-1}x$) we deduce that $gx=x$, and it follows that $g^{-1}$ gives an inverse map.  Thus part 1 holds.  Part 2 is a consequence: in general, if $\mathcal{C}$ is a category where every endomorphism is an isomorphism, then the set of isomorphism classes of objects in $\mathcal{C}$ forms a poset with $[x]\leq [y]$ iff there exists a map $x\rightarrow y$.  Finally, part 3 is clear once we know that the quotient set is indeed a poset.
\end{proof}

The last corollary is an almost tautological though fairly fundamental colimit diagram.

\begin{corollary}\label{tautologicalbutuseful}
Let $\mathcal{C}$ be a small $\infty$-category.  Then 
$$\varinjlim_{x\in\mathcal{C}} \mathcal{C}_{/x}\overset{\sim}{\rightarrow}\mathcal{C},$$
and this colimit diagram is universal (still gives a colimit after arbitrary pullback).
\end{corollary}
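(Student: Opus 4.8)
The plan is to deduce this from the proper descent theorem \Cref{properdescentcat}, applied to the tautological co-cone whose legs are the source projections $\mathcal{C}_{/x}\rightarrow\mathcal{C}$. First I would package the data as a functor $d\colon\mathcal{C}^{\triangleright}\rightarrow\operatorname{Cat}_\infty$ with $d(x)=\mathcal{C}_{/x}$ and $d(\infty)=\mathcal{C}$: post-composition makes $x\mapsto\mathcal{C}_{/x}$ into a functor $\mathcal{C}\rightarrow\operatorname{Cat}_\infty$, concretely the straightening of the cocartesian fibration $\operatorname{ev}_1\colon\operatorname{Fun}(\Delta^1,\mathcal{C})\rightarrow\mathcal{C}$; and the source projections, being natural in $x$ (they arise from the map of cocartesian fibrations $(\operatorname{ev}_0,\operatorname{ev}_1)\colon\operatorname{Fun}(\Delta^1,\mathcal{C})\rightarrow\mathcal{C}\times\mathcal{C}$ over the second factor), supply the co-cone on the constant diagram $\mathcal{C}$. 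The corollary is exactly the assertion that $d$ is a colimit diagram.

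Next I would verify hypothesis (1) of \Cref{properdescentcat}, that $d$ sends every morphism of $\mathcal{C}^{\triangleright}$ to a proper functor. The legs $\mathcal{C}_{/x}\rightarrow\mathcal{C}$ are proper by \Cref{properexamples}(2). For a morphism $u\colon x\rightarrow y$ of $\mathcal{C}$ I would identify the transition functor $\mathcal{C}_{/x}\rightarrow\mathcal{C}_{/y}$ with the slice projection $(\mathcal{C}_{/y})_{/u}\rightarrow\mathcal{C}_{/y}$ — viewing $u$ as an object of $\mathcal{C}_{/y}$ — which is again proper by \Cref{properexamples}(2); and $d(\mathrm{id}_\infty)=\mathrm{id}_\mathcal{C}$ is trivially proper. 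Since every morphism of $\mathcal{C}^{\triangleright}$ is a composite of these, and properness is closed under composition, hypothesis (1) holds.

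The real content is hypothesis (2): for each object $c\in\mathcal{C}$, viewed as a functor $\ast\rightarrow d(\infty)=\mathcal{C}$, the pullback $c^{-1}d$ must be a colimit diagram. Writing the leg $\mathcal{C}_{/x}\rightarrow\mathcal{C}$ as the left-pullback projection $\mathcal{C}\,\overset{\rightarrow}{\times}_\mathcal{C}\{x\}\rightarrow\mathcal{C}$, pulling back along $c$ gives $\{c\}\,\overset{\rightarrow}{\times}_\mathcal{C}\{x\}=\operatorname{Map}_\mathcal{C}(c,x)$, while pulling back $d(\infty)=\mathcal{C}$ gives $\ast$; so $c^{-1}d$ is the co-cone, with cone point $\ast$, on the corepresentable functor $\operatorname{Map}_\mathcal{C}(c,-)\colon\mathcal{C}\rightarrow\mathcal{S}$. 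As $\mathcal{S}\subseteq\operatorname{Cat}_\infty$ preserves colimits, it is enough to see $\varinjlim_{x\in\mathcal{C}}\operatorname{Map}_\mathcal{C}(c,x)\simeq\ast$ in $\mathcal{S}$, and for every anima $A$ this follows from
\[
\operatorname{Map}_\mathcal{S}\!\big(\varinjlim_{x\in\mathcal{C}}\operatorname{Map}_\mathcal{C}(c,x),\,A\big)\;\simeq\;\operatorname{Map}_{\operatorname{Fun}(\mathcal{C},\mathcal{S})}\!\big(\operatorname{Map}_\mathcal{C}(c,-),\,\underline{A}\big)\;\simeq\;A,
\]
naturally in $A$, by the defining adjunction of $\varinjlim_\mathcal{C}$ and the Yoneda lemma ($\underline{A}$ the constant functor); equivalently, $\operatorname{Map}_\mathcal{C}(c,-)$ is classified by the left fibration $\mathcal{C}_{c/}\rightarrow\mathcal{C}$, whose total space has an initial object and is hence weakly contractible. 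With both hypotheses in place, \Cref{properdescentcat} yields $\varinjlim_{x\in\mathcal{C}}\mathcal{C}_{/x}\overset{\sim}{\rightarrow}\mathcal{C}$; and since hypotheses (1) and (2) are stable under base change along an arbitrary functor to $\mathcal{C}$ (properness by \Cref{properclosedunderpullback}, and a pullback of $c^{-1}d$ being of the same form), the corollary immediately following \Cref{properdescentcat} gives universality.

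I expect the one genuine obstacle to be purely organisational: setting up the functor $d$ and its pullbacks with enough care that the two identifications invoked above — the transition map being a slice projection, and the leg pulled back to $c$ being $\operatorname{Map}_\mathcal{C}(c,-)$ — are actually justified, rather than merely plausible from the informal descriptions of objects. Everything else is a direct application of the proper descent machinery already developed.
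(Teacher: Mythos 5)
Your proposal is correct and follows essentially the same route as the paper: properness of all the functors in the co-cone via \Cref{properexamples} (slice projections), proper descent to reduce to the pullback along each $c\in\mathcal{C}$, the identification of that pullback with the co-cone on $\operatorname{Map}_\mathcal{C}(c,-)$, and contractibility of $\mathcal{C}_{c/}$ from its initial object, with universality from stability of the hypotheses under pullback. The extra detail you supply (straightening via $\operatorname{ev}_1\colon\operatorname{Fun}(\Delta^1,\mathcal{C})\rightarrow\mathcal{C}$ and the Yoneda computation of $\varinjlim_x\operatorname{Map}_\mathcal{C}(c,x)$) just makes explicit what the paper leaves implicit.
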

\begin{proof}
All the functors in the co-cone diagram are of the form $\mathcal{D}_{/y}\rightarrow\mathcal{D}$, hence are proper by \Cref{properexamples}.  Thus, by the proper descent theorem, it suffices to see that we have a colimit diagram after pullback along any $\ast\rightarrow\mathcal{C}$ classifying an object $c\in\mathcal{C}$.  Then the claim becomes that $\varinjlim_{x\in\mathcal{C}} \operatorname{Map}_\mathcal{C}(c,x)=\ast.$  Note that the functor $x\mapsto \operatorname{Map}_{\mathcal{C}}(c,x)$ under consideration here is the left Kan extension of the terminal functor along the projection $\mathcal{C}_{c/}\rightarrow\mathcal{C}$, thus the value of the colimit is equivalently $\varinjlim_{\mathcal{C}_{c/}}\ast=\vert \mathcal{C}_{c/}\vert$.  So we need that $\mathcal{C}_{c/}$ is contractible; but indeed it has an initial object.
\end{proof}

\begin{remark}
One can also give many other proofs of this result.  For example, one can directly check that applying $\operatorname{Map}(\Delta^n,-)$ gives a colimit diagram for all $n$, so that we have an a priori stronger statement: in the complete Segal anima world, we even have a colimit of simplicial anima.  Or else one can use $\infty$-topos theory: in $\operatorname{PSh}(\mathcal{C})$ we have $\ast = \varinjlim_{x\in\mathcal{C}} h_x$ because maps out of either side calculates the limit over a $\mathcal{C}^{op}$-diagram; then the conclusion follows by descent.\exend
\end{remark}

\section{Miscellaneous background on constructible sheaves}\label{posetsection}

Let $\pi\colon X\rightarrow P$ be a stratified topological space in the sense of Lurie, \cite{LHA} Appendix A: a continuous map from a topological space $X$ to a poset $P$ equipped with the Alexandroff topology.  The $X_p:=\pi^{-1}(\{p\})$ are the \emph{strata} of the stratified space.   Recall that in the Alexandroff topology, every point $p\in P$ has a minimal open neighbourhood, namely the set of $q$ with $q\geq p$.  The stratum $X_p$ is a closed subspace of the open subspace $U_p:= \pi^{-1}(\{q\geq p\})$ of $X$, the \emph{open star} around the $p$-stratum.

\medskip

A constructible sheaf on a stratified space is a sheaf which is locally constant along each stratum. It will be handy to be able to test equivalences of constructible sheaves by restricting to strata.  Some hypothesis on $X\rightarrow P$ is necessary for this to be possible.  We prefer to impose the hypothesis only on $P$, and we will take the condition singled out by Lurie in his Theorem A.9.3: $P$ satisfies the ascending chain condition, meaning there is no infinite chain $p_0<p_1<p_2<\ldots$ of strict inequalities in $P$.

\begin{lemma}\label{testonstrata}
Suppose $X\rightarrow P$ is a stratified space with $P$ a poset satisfying the ascending chain condition.  Then a map of constructible sheaves on $X$ is an equivalence if and only if its pullback to the stratum $X_p$ is an equivalence for all $p\in P$.
\end{lemma}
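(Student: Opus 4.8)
The plan is to reduce the statement to a local question around a single stratum and then to induct down the ascending chain condition. First I would observe that equivalences of sheaves are detected locally, so it suffices to show that for every point $x\in X$, a map of constructible sheaves which restricts to an equivalence on every stratum is an equivalence on some neighbourhood of $x$; better yet, since every $x$ lies in a unique stratum $X_p$ and $U_p$ is an open neighbourhood of $x$, it suffices to show that a map of constructible sheaves on $U_p$ whose restriction to each stratum $X_q$ (for $q\geq p$) is an equivalence is itself an equivalence. So I would fix a minimal $p$ with respect to which the conclusion could fail — here is where the ascending chain condition is used: the poset of those $p$ for which there exists a counterexample map on $U_p$ has a maximal element by ACC (a strictly \emph{ascending} chain of such $p$ cannot be infinite), and I want to derive a contradiction from the existence of such a maximal $p$.

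Next, on $U_p$ decompose into the closed stratum $X_p$ and its open complement $U_p\smallsetminus X_p=\bigcup_{q>p}U_q$ inside $U_p$. Gluing of sheaves along a closed subspace and its open complement (\cite{L} 7.3.2, which I would invoke here) says that an equivalence of sheaves on $U_p$ can be detected after pulling back to $X_p$ and to $U_p\smallsetminus X_p$. By hypothesis the map is already an equivalence after restriction to $X_p$. It remains to check it is an equivalence after restriction to the open complement. But the open complement is again a stratified space, stratified over the sub-poset $\{q : q>p\}$, and the restricted map is still a map of constructible sheaves which restricts to an equivalence on each stratum $X_q$, $q>p$. Now cover the complement by the opens $U_q$ for $q$ \emph{minimal} among elements $>p$ (or just by all $U_q$, $q>p$) and note that on each such $U_q$ we have a map of constructible sheaves which is an equivalence on all strata: by maximality of $p$, such a map is necessarily an equivalence. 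Since being an equivalence is local and the $U_q$ cover the complement, the restriction to $U_p\smallsetminus X_p$ is an equivalence, and combined with the $X_p$ case this contradicts the assumption that $p$ gave a counterexample.

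The one subtlety — and the main thing to get right — is the base/leapfrog of the induction: one must make sure that the "smaller" opens $U_q$ really are handled by the inductive hypothesis, i.e.\ that the relevant poset $\{q':q'\geq q\}$ is strictly smaller (in the ACC sense) than $\{q':q'\geq p\}$, which holds because $q>p$ strictly. There is also a mild point that when $p$ is such that there is no $q>p$ at all, $X_p=U_p$ is both open and closed and the statement is immediate; this is the true base case, and it exists precisely because of ACC (the stratified space over a poset with ACC has minimal open stars $U_p$ which are eventually just strata). A clean way to package all of this without explicitly invoking Zorn is to argue by contradiction: take a counterexample map $\phi$ on some $U_p$ and, using the above, produce a counterexample on some $U_q$ with $q>p$, iterating to build a strictly ascending chain $p<q<\cdots$, contradicting ACC.
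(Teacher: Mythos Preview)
Your proposal is correct and follows essentially the same route as the paper: reduce to the open stars $U_p$, use the closed/open decomposition $X_p\subset U_p$ with complement covered by the $U_q$ for $q>p$, invoke the gluing formalism to detect equivalences, and run a noetherian induction powered by the ascending chain condition. The only cosmetic difference is that the paper phrases it as a direct noetherian induction (``assume the claim for all $q>p$, deduce it for $p$'') rather than your proof-by-contradiction via a maximal counterexample; these are logically equivalent packagings of the same argument.
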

\begin{proof}
Suppose we have a map $f$ of sheaves which is an equivalence on each stratum.  Since the $U_p$ cover $X$, it suffices to show that $f$ induces an isomorphism on restriction to each $U_p$.  To prove this by noetherian induction on $p$, it suffices to show that if it holds for all $q>p$, then it holds for $p$.  But $U_p\smallsetminus X_p$ is covered by the $U_q$ for $q>p$, so we deduce $f$ gives an isomorphism there.  Since $f$ gives an isomorphism on the closed complement $X_p\subset U_p$ by assumption, it follows from the gluing formalism, \cite{LHA} A.8, that $f$ gives an isomorphism on $U_p$, as desired.
\end{proof}

Haine has proved homotopy invariance for hypercomplete constructible sheaves in \cite{Haine}.  Here we prove a non-hypercomplete variant.

\begin{proposition}
Let $X\rightarrow P$ be a stratified topological space such that $P$ satisfies the ascending chain condition.  Consider the projection $f\colon X\times[0,1]\rightarrow X$.  Then the pullback functor
$$f^\ast\colon\operatorname{Sh}^{constr}(X)\rightarrow \operatorname{Sh}^{constr}(X\times [0,1])$$
is an equivalence.  Here $X\times [0,1]$ is stratified by the composition $X\times [0,1]\rightarrow X\rightarrow P$.
\end{proposition}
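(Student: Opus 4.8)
The plan is to prove that $f^{\ast}$ is an equivalence by showing that its right adjoint $f_{\ast}$ (the sheaf pushforward) restricts to an inverse on constructible sheaves. Since $[0,1]$ is compact, $f$ is a proper map; moreover $X\times[0,1]$, stratified by the composite $X\times[0,1]\to X\to P$, has strata $X_p\times[0,1]$ and has underlying poset $P$, which still satisfies the ascending chain condition, so \Cref{testonstrata} is available on both $X$ and $X\times[0,1]$. One checks directly that $f^{\ast}$ preserves constructibility (for a stratum $X_p$ one has $(f^{\ast}\mathcal{G})|_{X_p\times[0,1]}\simeq f_p^{\ast}(\mathcal{G}|_{X_p})$, and a pullback of a locally constant sheaf is locally constant, where $f_p\colon X_p\times[0,1]\to X_p$). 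It therefore suffices to establish: (a) $f_{\ast}$ also preserves constructibility; (b) the unit $\mathcal{G}\to f_{\ast}f^{\ast}\mathcal{G}$ is an equivalence for every constructible $\mathcal{G}$ on $X$; (c) the counit $f^{\ast}f_{\ast}\mathcal{F}\to\mathcal{F}$ is an equivalence for every constructible $\mathcal{F}$ on $X\times[0,1]$. Together these say that $(f^{\ast},f_{\ast})$ restricts to an adjoint equivalence.

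Each of (a), (b), (c) reduces to a single stratum. The square with corners $X_p\times[0,1]$, $X\times[0,1]$, $X_p$, $X$ is cartesian and $f$ is proper, so by the (topological) proper base-change theorem, restriction to $X_p$ carries $f_{\ast}$ to $(f_p)_{\ast}$; likewise restriction to the stratum $X_p\times[0,1]$ of $X\times[0,1]$ carries $f_{\ast}$ to $(f_p)_{\ast}$, compatibly with units and counits. Applying \Cref{testonstrata} on $X$ for (b), and on $X\times[0,1]$ for (a) and (c), thus reduces everything to the case of the projection $\pi\colon Y\times[0,1]\to Y$ over an \emph{arbitrary} space $Y$ with \emph{locally constant} coefficients, where what must be shown is: $\pi_{\ast}$ preserves local constancy, and $\pi^{\ast}$ and $\pi_{\ast}$ are inverse equivalences between $\operatorname{Sh}^{lc}(Y)$ and $\operatorname{Sh}^{lc}(Y\times[0,1])$.

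Granting the computation $\underline{A}\overset{\sim}{\rightarrow}R\pi_{\ast}\underline{A}$ for constant sheaves (equivalently $R\Gamma(W\times[0,1];\underline{A})\simeq R\Gamma(W;\underline{A})$ for all open $W\subseteq Y$), the remaining assertions are formal. A locally constant $\mathcal{F}$ on $Y\times[0,1]$ is, locally on $Y$, pulled back from $Y$: given $y\in Y$, compactness of $[0,1]$ together with local constancy yields an open $V\ni y$ and a finite cover of $[0,1]$ by open intervals on each of which $\mathcal{F}|_{V\times(-)}$ is constant, which one can arrange so that consecutive intervals overlap and all triple overlaps are empty; since the Čech nerve of this cover is then one-dimensional there is no higher coherence to worry about, and one trivialises the descent data inductively to get $\mathcal{F}|_{V\times[0,1]}\simeq\pi_V^{\ast}(\underline{A})$ for a constant sheaf $\underline{A}$ on $V$. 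Combined with the facts that open restriction commutes with pushforward and that $R(\pi_V)_{\ast}\underline{A}\simeq\underline{A}$, this shows $\pi_{\ast}\mathcal{F}$ is locally constant and that the unit $\mathcal{G}\to\pi_{\ast}\pi^{\ast}\mathcal{G}$ is an equivalence (locally on $Y$ it is exactly the map $\underline{A}\to R(\pi_V)_{\ast}\underline{A}$); the counit $\pi^{\ast}\pi_{\ast}\mathcal{F}\to\mathcal{F}$ is then an equivalence by a triangle identity.

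So everything hinges on the single statement that $R\Gamma(W\times[0,1];\underline{A})\simeq R\Gamma(W;\underline{A})$ for $W$ an arbitrary space and $A$ an arbitrary anima, and this is the main obstacle. It is the homotopy invariance of sheaf cohomology with constant coefficients; Haine's homotopy-invariance theorem (cited above) gives it for hypercomplete sheaves, but the point here is precisely to do without hypercompleteness, and over a general base one cannot simply check on stalks. I would prove it by a subdivision argument: apply the closed Mayer--Vietoris sequence for sheaves (a special case of \Cref{closeddescenttop}, whose hypothesis on the ambient space is removable as in the remark following it) to the decomposition of $W\times[0,1]$ by $W\times[0,\tfrac12]$ and $W\times[\tfrac12,1]$ meeting along $W\times\{\tfrac12\}$; iterate over the dyadic subdivisions of $[0,1]$; and pass to the limit, using that $[0,1]$ is compact and contractible with $R\Gamma([0,1];\underline{A})\simeq A$, to express $R\Gamma(W\times[0,1];\underline{A})$ as a limit over the contractible nerve of the dyadic subdivision, every term of which is $R\Gamma(W;\underline{A})$. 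The delicate point is the passage from the finite subdivisions to the full dyadic one, i.e.\ interchanging the relevant limits.
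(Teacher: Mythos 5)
You take a genuinely different route from the paper: you work with the right adjoint $f_*$, use proper base change to reduce everything to strata, and then try to prove homotopy invariance with constant coefficients directly, whereas the paper imports the hard input from Lurie (Higher Algebra, A.2.10): for an \emph{arbitrary} topological space $T$ the pullback $\operatorname{Sh}(T)\to\operatorname{Sh}(T\times[0,1])$ is fully faithful and admits a left adjoint $f_\natural$ commuting with base change in $T$, also for open and half-open intervals. Given that input, the only remaining task in the paper is to identify the essential image with the constructible sheaves, which is done by the finite interval-cover argument that your third paragraph essentially reproduces.

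The genuine gap is the statement you yourself isolate at the end: $R\Gamma(W\times[0,1];\underline{A})\simeq R\Gamma(W;\underline{A})$ for arbitrary $W$ and arbitrary anima coefficients, without hypercompleteness. Your dyadic-subdivision sketch does not prove it. At every finite stage of the subdivision the two-dimensional pieces $W\times[a,b]$ are homeomorphic to $W\times[0,1]$ itself, so closed descent at each stage merely returns the object you started with; to gain anything you must, in the limit over subdivisions, replace those terms by $R\Gamma(W\times\{t\};\underline{A})$, which requires a shrinking-tube statement $\operatorname{colim}_k R\Gamma(W\times[a_k,b_k];\underline{A})\simeq R\Gamma(W\times\{t\};\underline{A})$ and then an interchange of this filtered colimit with the infinite limit over subdivision stages. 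Filtered colimits do not commute with infinite limits, and for non-hypercomplete sheaves of anima this interchange is precisely the hard content of the theorem, not a bookkeeping step; this is exactly why the paper cites HA A.2.10 (whose proof is itself a delicate compactness argument) instead of reproving it. Two smaller points: the proper base change theorem as recalled in the paper is stated for locally compact Hausdorff spaces, whereas here $X$ and its strata are arbitrary, so your base-change steps need the $\infty$-topos-theoretic fact that projection off a compact Hausdorff factor is a proper geometric morphism (HTT 7.3); and in your gluing step the identification of the constant values across the overlaps $V\times(I_a\cap I_{a+1})$ uses full faithfulness of pullback along \emph{open} subintervals, which your key lemma, stated only for $[0,1]$, does not directly supply.
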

\begin{proof}
Recall from \cite{LHA} A.2.10 that for arbitrary topological spaces $T$, the pullback
$$f^\ast\colon\operatorname{Sh}(T)\rightarrow\operatorname{Sh}(T\times [0,1])$$
is fully faithful and admits a left adjoint $f_{\natural}$ which commutes with pullbacks in the $T$ variable; and for future reference we recall this also holds for open and half-open intervals replacing $[0,1]$.  We deduce that the $f^\ast$ in our statement is fully faithful, and that an $\mathcal{F}\in\operatorname{Sh}^{constr}(X\times [0,1])$ lies in the essential image if and only if $f_\natural\mathcal{F}$ is constructible and $\mathcal{F}\overset{\sim}{\rightarrow} f^\ast f_\natural\mathcal{F}$.  By the lemma and compatibility of $f_\natural$ with pullbacks, we therefore reduce to the case where $\mathcal{F}$ is locally constant, provided we also show $f_\natural\mathcal{F}$ is locally constant.

\medskip

Thus suppose $\mathcal{F}\in\operatorname{Sh}(X\times [0,1])$ is locally constant.  By refining an open cover of $X\times [0,1]$, we find there is an open cover $\{U_i\}$ of $X$ and open subintervals $I_{1,i},\ldots, I_{n_i,i}$ covering $[0,1]$ such that $I_{a,i}\cap I_{b,i}=\emptyset$ unless $b=a+1$ in which case there is overlap, and moreover such that $\mathcal{F}\vert_{U_i\times I_{j,i}}$ is constant for all $j$.  In particular $\mathcal{F}\vert_{U_i\times I_{j,i}}$ is pulled back from a constant sheaf on $U_i$, hence by fully faithfulness of pullbacks along intervals, the constant sheaf on $U_i$ from which it's pulled back is uniquely and functorially determined.  Thus, working our way from $1$ to $n_i$ along the intersections, we can identify all these constant sheaves on $U_i$ with one another, hence $\mathcal{F}\vert_{U_i\times [0,1]}$ is pulled back from this same constant sheaf on $U_i$.  But again by compatibility of $f_\natural$ with pullbacks, the desired claims are local on $X$, so this suffices as the $\{U_i\}$ cover.
\end{proof}

\begin{corollary}\label{equivonglobalsections}
Suppose given a map $f\colon X\rightarrow Y$ of topological spaces, compatibly stratified by a map $Y\rightarrow P$ to a $P$ satisfying the ascending chain condition.

\medskip

If $f$ is a stratified homotopy equivalence\footnote{This means there is a stratum-preserving map backwards and stratum-preserving homotopies making both composites homotopic to the identity.  In particular, the restriction to each stratum is a homotopy equivalence, but also more.}, then pullback induces
$$f^\ast\colon\operatorname{Sh}^{constr}(Y)\overset{\sim}{\rightarrow}\operatorname{Sh}^{constr}(X),$$
and in particular for any constructible sheaf $\mathcal{F}$ on $Y$ the natural map is an equivalence
$$\Gamma(Y,\mathcal{F})\overset{\sim}{\rightarrow} \Gamma(X,f^\ast\mathcal{F}).$$
\end{corollary}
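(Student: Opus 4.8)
The plan is to reduce the statement to the preceding proposition on homotopy invariance of constructible sheaves, by the usual argument that a homotopy forces its two endpoint restrictions to agree. First I unwind the hypothesis: a stratified homotopy equivalence supplies a stratum-preserving map $g\colon Y\to X$ together with stratum-preserving homotopies $H\colon X\times[0,1]\to X$ from $g\circ f$ to $\operatorname{id}_X$ and $K\colon Y\times[0,1]\to Y$ from $f\circ g$ to $\operatorname{id}_Y$, where $X\times[0,1]$ and $Y\times[0,1]$ carry the stratifications pulled back along the projections. Since $g$, $H$, $K$ (and of course $f$ together with the endpoint inclusions) are then maps of stratified spaces over $P$, all of the pullback functors $g^\ast$, $H^\ast$, $K^\ast$, $i_0^\ast$, $i_1^\ast$ appearing below preserve constructibility.

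Next, observe that for the projection $p\colon X\times[0,1]\to X$ the two inclusions $i_0,i_1\colon X\hookrightarrow X\times[0,1]$ are sections, so $i_0^\ast\circ p^\ast\simeq\operatorname{id}\simeq i_1^\ast\circ p^\ast$ on $\operatorname{Sh}^{constr}(X)$; by the preceding proposition $p^\ast$ is an equivalence on constructible sheaves, so both $i_0^\ast$ and $i_1^\ast$ are canonically identified with its inverse, yielding a canonical equivalence $i_0^\ast\simeq i_1^\ast$ of functors $\operatorname{Sh}^{constr}(X\times[0,1])\to\operatorname{Sh}^{constr}(X)$. Applying this to $H^\ast\mathcal F$ for $\mathcal F\in\operatorname{Sh}^{constr}(X)$, and using $H\circ i_1=\operatorname{id}_X$ and $H\circ i_0=g\circ f$, we get $\mathcal F\simeq i_1^\ast H^\ast\mathcal F\simeq i_0^\ast H^\ast\mathcal F\simeq (g f)^\ast\mathcal F= f^\ast g^\ast\mathcal F$, naturally in $\mathcal F$; hence $f^\ast g^\ast\simeq\operatorname{id}$ on $\operatorname{Sh}^{constr}(X)$. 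The same argument with $K$ in place of $H$ gives $g^\ast f^\ast\simeq\operatorname{id}$ on $\operatorname{Sh}^{constr}(Y)$. Therefore $f^\ast\colon\operatorname{Sh}^{constr}(Y)\to\operatorname{Sh}^{constr}(X)$ is an equivalence, with inverse $g^\ast$.

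For the clause about global sections, recall that $\Gamma(Y,-)$ is corepresented on $\operatorname{Sh}(Y)$ by the terminal (constant) sheaf, which is locally constant, hence constructible; the same holds on $X$. Since $f^\ast$ is an inverse-image functor it preserves finite limits, so it carries the terminal sheaf on $Y$ to that on $X$, and since $\operatorname{Sh}^{constr}\subseteq\operatorname{Sh}$ is full the relevant mapping spaces are computed in either category the same way. Thus for constructible $\mathcal F$ on $Y$ the full faithfulness of $f^\ast$ identifies the natural map $\Gamma(Y,\mathcal F)\to\Gamma(X,f^\ast\mathcal F)$ with the action of $f^\ast$ on mapping spaces out of the terminal object, which is an equivalence.

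The only real work, and the point I would be most careful about, is the second paragraph: one must check that the identifications $i_0^\ast\simeq i_1^\ast$ are genuinely natural and compose correctly through $H^\ast$ (and $K^\ast$), so that the two homotopy invariances assemble into honest inverse equivalences rather than merely mutually inverse equivalences on objects. Everything else --- constructibility of the various pullbacks, left-exactness of $f^\ast$, fullness of the constructible subcategory --- is immediate from the preceding proposition and standard facts about $\infty$-topoi.
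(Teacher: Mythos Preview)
Your proof is correct and follows essentially the same route as the paper's: use the preceding homotopy-invariance proposition to see that stratified-homotopic maps induce equivalent pullback functors on constructible sheaves (your $i_0^\ast\simeq i_1^\ast$ argument is exactly how one unpacks this), whence $f^\ast$ and $g^\ast$ are mutually inverse; then deduce the global-sections statement by corepresenting $\Gamma$ by the terminal constructible sheaf. The paper states this in two sentences, but your more explicit version is the intended argument.
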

\begin{proof}
The lemma implies that any two stratified-homotopic maps induce the same pullback functor on constructible sheaves. This gives the first claim, and the second claim follows by taking mapping spaces from the constant sheaf on $\ast$ to the sheaf $\mathcal{F}$.
\end{proof}

Let us return to the general situation of a stratified space $\pi:X\rightarrow P$.  Viewing $P$ as a topological space, we get an induced geometric morphism of $\infty$-topoi
$$\pi_\ast\colon\operatorname{Sh}(X)\rightarrow \operatorname{Sh}(P),$$
defined by $(\pi_\ast\mathcal{F})(V)=\mathcal{F}(\pi^{-1}V)$.  The left adjoint $\pi^\ast$ lands inside the full subcategory $\operatorname{Sh}^{constr}(X)$ of constructible sheaves, because the restriction to each stratum $X_p$ is in fact constant, being pulled back from a sheaf on the one-point space $\{p\}$.  Thus we have a comparison functor
$$\pi^\ast\colon \operatorname{Sh}(P)\rightarrow \operatorname{Sh}^{constr}(X).$$

\medskip

However, when $P$ satisfies the ascending chain condition, Lemma \ref{testonstrata} applied to $X=P$ shows that equivalences of sheaves on $P$ are tested on stalks.  Thus every sheaf on $P$ is \emph{hypercomplete}:
$$\operatorname{Sh}(P) = \operatorname{Sh}^{hyp}(P).$$
On the other hand, Ko Aoki in \cite{Aoki} has shown for a general poset that 
$$\operatorname{Sh}^{hyp}(P)\overset{\sim}{\rightarrow} \operatorname{Fun}(P;\mathcal{S})$$
via the functor $\mathcal{F}\mapsto (p\mapsto \mathcal{F}(\{q:q\geq p\})$.  It follows that when $\pi:X\rightarrow P$ is a stratified space such that $P$ satisfies the ascending chain condition, we get a geometric morphism of $\infty$-topoi
$$\pi_\ast\colon\operatorname{Sh}(X)\rightarrow \operatorname{Fun}(P;\mathcal{S})$$
given by $(\pi_\ast\mathcal{F})(p)=\mathcal{F}(U_p)$, and an induced comparison functor
$$\pi^\ast\colon\operatorname{Fun}(P;\mathcal{S})\rightarrow \operatorname{Sh}^{constr}(X).$$

In the next theorem, we give sufficient conditions on $X$ for this comparison functor to be an equivalence.

\begin{theorem}\label{itsaposet}
Let $\pi\colon X\rightarrow P$ be a stratified topological space with $\pi$ surjective and $P$ satisfying the ascending chain condition.  Suppose there is a collection $\mathcal{B}$ of open subsets of $X$ such that:
\begin{enumerate}
\item the representable sheaves $h_U$ for $U\in\mathcal{B}$ generate the $\infty$-topos $\operatorname{Sh}(X)$;\footnote{This condition implies that $\mathcal{B}$ is a basis for the topology. If every sheaf is hypercomplete, the converse holds.  In general it's enough for $\mathcal{B}$ to be a basis closed under finite intersections, or even just a collection such that every open subset admits a truncated hypercover by elements of $\mathcal{B}$.}
\item for all $U\in\mathcal{B}$, there is a $p\in P$ such that $U$ includes into $U_p$ by a stratified homotopy equivalence.
\end{enumerate}
Then the pullback map
$$\pi^\ast\colon \operatorname{Fun}(P,\mathcal{S})\rightarrow \operatorname{Sh}(X)$$
preserves all limits and colimits and is fully faithful with essential image $\operatorname{Sh}^{constr}(X)$.  Moreover every constructible sheaf on $X$ is the limit of its Postnikov tower, and hence is hypercomplete, compare \cite{LHA} A.5.9.
\end{theorem}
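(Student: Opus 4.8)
The strategy is to reduce everything to a single stalk computation, exploiting the identification $\operatorname{Sh}(P) = \operatorname{Sh}^{hyp}(P) = \operatorname{Fun}(P,\mathcal{S})$ recorded just above the theorem. Write $\bar{G}\colon P\to\mathcal{S}$ for the functor corresponding to $G\in\operatorname{Sh}(P)$, recall that $\pi^\ast$ has right adjoint $\pi_\ast$ with $(\pi_\ast\mathcal{F})(p) = \mathcal{F}(U_p)$, and recall the observation preceding the theorem that $(\pi^\ast G)|_{X_p}$ is the constant sheaf with value $\bar{G}(p)$. Everything will follow from the

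\medskip
\textbf{Key Lemma.} \emph{For every constructible sheaf $\mathcal{F}$ on $X$, every $p\in P$ and every point $x\in X_p$, the restriction map $\Gamma(U_p,\mathcal{F})\to\mathcal{F}_x$ is an equivalence.}
\medskip

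To prove the Key Lemma I would argue as follows. For $q\in P$ with $q\not\geq p$ one has $x\notin\overline{X_q}$, since $\overline{X_q}\subseteq\pi^{-1}(\overline{\{q\}}) = \pi^{-1}(\{r:r\leq q\})$ and $p\not\leq q$. Consequently the family $\mathcal{N}_x := \{\,U\in\mathcal{B}\ :\ x\in U\subseteq U_p\,\}$ is a cofinal, cofiltered family of open neighbourhoods of $x$: given open $W\ni x$, the open set $W\cap U_p$ contains a basis element around $x$, because hypothesis (1) makes $\mathcal{B}$ a basis. For $U\in\mathcal{N}_x$ choose $p(U)$ as in hypothesis (2), so $U\hookrightarrow U_{p(U)}$ is a stratified homotopy equivalence; then $x\in U\subseteq U_{p(U)}$ gives $p(U)\leq p$, while restricting the homotopy equivalence to the (nonempty, since $\pi$ is surjective) $p(U)$-stratum shows that $U$ meets $X_{p(U)}$, whence $p(U)\in\pi(U)\subseteq\{r:r\geq p\}$ and so $p(U) = p$. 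Thus every inclusion $U\hookrightarrow U_p$ with $U\in\mathcal{N}_x$ is a stratified homotopy equivalence, so by \Cref{equivonglobalsections} the restriction $\Gamma(U_p,\mathcal{F})\to\Gamma(U,\mathcal{F})$ is an equivalence, naturally in $U\in\mathcal{N}_x$. Passing to the filtered colimit over $\mathcal{N}_x$, which computes $\mathcal{F}_x$ by cofinality, identifies $\mathcal{F}_x$ with $\Gamma(U_p,\mathcal{F})$ via the restriction map.

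Granting the Key Lemma, the rest is formal. Taking $\mathcal{F} = \pi^\ast G$ and using that its stalk at $x\in X_p$ is $\bar{G}(p)$, the Key Lemma says $(\pi^\ast G)(U_p)\simeq\bar{G}(p)$, i.e.\ the unit $G\to\pi_\ast\pi^\ast G$ is an equivalence, so $\pi^\ast$ is fully faithful. Combining this with \Cref{equivonglobalsections} for the stratified homotopy equivalence $U\hookrightarrow U_{p(U)}$ gives, for every $U\in\mathcal{B}$, a natural equivalence $\Gamma(U,\pi^\ast(-))\simeq\operatorname{ev}_{p(U)}(-)$ of functors $\operatorname{Fun}(P,\mathcal{S})\to\mathcal{S}$; since the $\Gamma(U,-) = \operatorname{Map}(h_U,-)$ are jointly conservative (hypothesis (1)) and preserve limits, while evaluation functors on $\operatorname{Fun}(P,\mathcal{S})$ preserve limits, $\pi^\ast$ preserves all limits (and all colimits already, being a left adjoint). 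For the essential image, one inclusion is the observation preceding the theorem; conversely, for $\mathcal{F}\in\operatorname{Sh}^{constr}(X)$ the counit $\pi^\ast\pi_\ast\mathcal{F}\to\mathcal{F}$ is a map of constructible sheaves, hence by \Cref{testonstrata} may be checked on strata, where it is a map of locally constant sheaves on $X_p$ inducing on each stalk the restriction $\Gamma(U_p,\mathcal{F})\to\mathcal{F}_x$, an equivalence by the Key Lemma. Finally, $\operatorname{Fun}(P,\mathcal{S})$ is hypercomplete (truncations and Postnikov limits are computed objectwise in $\mathcal{S}$) and $\pi^\ast$, being the pullback functor of a geometric morphism, is left exact and commutes with the truncations $\tau_{\leq n}$, so for $\mathcal{F} = \pi^\ast G$ we get $\mathcal{F} = \pi^\ast(\varprojlim_n\tau_{\leq n}G)\simeq\varprojlim_n\tau_{\leq n}\mathcal{F}$, which gives the last assertion and reproves that constructible sheaves here are hypercomplete, compare \cite{LHA} A.5.9.

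The main obstacle is the Key Lemma, and inside it the point that a point $x$ of a stratum $X_p$ has a cofinal system of basis neighbourhoods lying inside the open star $U_p$, which is what forces hypothesis (2) to produce the label $p(U) = p$ and a stratified homotopy equivalence $U\hookrightarrow U_p$. Once this is in place \Cref{equivonglobalsections} does the real work, and the remaining deductions are routine manipulations with geometric morphisms and \Cref{testonstrata}.
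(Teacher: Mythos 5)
Most of your argument is sound and in fact runs parallel to the paper's: your Key Lemma's core computation (that every basis neighbourhood of $x\in X_p$ lying in $U_p$ has label $p(U)=p$, so that \Cref{equivonglobalsections} makes $\Gamma(U_p,\mathcal{F})\to\Gamma(U,\mathcal{F})$ an equivalence) is exactly the paper's condition that $\mathcal{F}(U_{p})\overset{\sim}{\to}\mathcal{F}(U)$ for all $U\in\mathcal{B}$, and your deductions of fully faithfulness, limit preservation and the Postnikov/hypercompleteness statement from it are fine (the naive stalk formula $x^\ast\mathcal{F}\simeq\varinjlim_{U\ni x}\mathcal{F}(U)$ you use silently is standard, e.g.\ by comparing the two colimit-preserving left exact functors $x^\ast\circ L$ and $\varinjlim_{U\ni x}(-)(U)$ on $\operatorname{Open}(X)$).

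The genuine gap is in the essential-image step. After reducing via \Cref{testonstrata} to each stratum $X_p$, you conclude that the counit $\pi^\ast\pi_\ast\mathcal{F}\to\mathcal{F}$ is an equivalence on $X_p$ because it is an equivalence on every stalk. But detecting equivalences on stalks is precisely a hypercompleteness-type statement, and the strata carry no hypotheses whatsoever: $\operatorname{Sh}(X_p)$ need not be hypercomplete, and for sheaves of anima an $\infty$-connective (stalkwise) equivalence between locally constant sheaves on an arbitrary space need not be invertible; nothing in "locally constant" rescues this without niceness of $X_p$ (local contractibility, finite dimension, or the like). Worse, hypercompleteness of constructible sheaves is part of the theorem's \emph{conclusion}, obtained only after the equivalence with $\operatorname{Fun}(P,\mathcal{S})$ is in place, so assuming stalkwise detection here is close to circular. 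This is exactly where hypothesis (1) must be used in earnest, and it is how the paper closes the argument: since the $h_U$, $U\in\mathcal{B}$, generate $\operatorname{Sh}(X)$ under colimits, mapping spaces out of a constructible $\mathcal{F}$ are computed on the basis $\mathcal{B}$, where your Key Lemma (the paper's condition (2)) identifies $\mathcal{F}\vert_{\mathcal{B}}$ with the presheaf pullback of $\pi_\ast\mathcal{F}$; hence $\operatorname{Map}(\mathcal{F},\mathcal{G})\simeq\operatorname{Map}(\pi_\ast\mathcal{F},\pi_\ast\mathcal{G})$ for all $\mathcal{G}$, which gives $\pi^\ast\pi_\ast\mathcal{F}\overset{\sim}{\to}\mathcal{F}$ with no passage to points. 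Replacing your stratumwise stalk argument by this mapping-space argument (you already have all the needed inputs) repairs the proof.
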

\begin{proof}
Let $\mathcal{F}\in\operatorname{Sh}(X)$.  We claim the following are equivalent:
\begin{enumerate}
\item $\mathcal{F}$ is constructible;
\item For all $U\in\mathcal{B}$, we have $\mathcal{F}(U_p)\overset{\sim}{\rightarrow}\mathcal{F}(U)$ where $U_p$ is as in hypothesis 2 ($U_p$ is uniquely determined by $U$, as $U_p=\pi^{-1}(\pi(U))$);
\item $\pi^\ast\pi_\ast\mathcal{F}\overset{\sim}{\rightarrow}\mathcal{F}$.
\end{enumerate}
Indeed, 1 $\Rightarrow$ 2 follows from \Cref{equivonglobalsections}  and 3 $\Rightarrow$ 1 holds because $\pi^\ast$ lands inside the constructible sheaves.  For 2 $\Rightarrow 3$, note that 2 says that $\mathcal{F}\vert_{\mathcal{B}}$ is the presheaf pullback of the presheaf $\pi_\ast\mathcal{F}$.  Now take an arbitrary sheaf $\mathcal{G}$ on $X$.  By hypothesis 1 in our theorem, we can calculate $\operatorname{Map}(\mathcal{F},\mathcal{G})$ as maps of presheaves on $\mathcal{B}$.  Thus we deduce $\operatorname{Map}(\mathcal{F},\mathcal{G})=\operatorname{Map}_{\operatorname{PSh}(P^{op})}(\pi_\ast\mathcal{F},\pi_\ast\mathcal{G})$, which says exactly that $\pi^\ast\pi_\ast\mathcal{F}\overset{\sim}{\rightarrow}\mathcal{F}$.

\medskip

From $1\Leftrightarrow 2$, we already see that the full subcategory of constructible sheaves is closed under all limits.  Furthermore, from 1 $\Leftrightarrow$ 3 we see that if $\mathcal{F}$ is constructible then $\pi^\ast\pi_\ast\mathcal{F}\overset{\sim}{\rightarrow}\mathcal{F}$, so to see the equivalence $\pi^\ast\colon \operatorname{Fun}(P,\mathcal{S})\overset{\sim}{\rightarrow} \operatorname{Sh}^{constr}(X)$, which also gives preservation under colimits, we only need the other direction $\varphi\overset{\sim}{\rightarrow} \pi_\ast\pi^\ast \varphi$ for $\varphi\in\operatorname{Fun}(P,\mathcal{S})$.  However, by adjunction identities and the previous direction it suffices to show that if a map $\varphi\rightarrow\varphi'$ is an equivalence on $\pi^\ast$, then it is an equivalence.  But $\varphi(p)$ is recovered as the pullback of $\pi^\ast\varphi$ to any point in the stratum $X_p$, so this follows from the surjectivity of $\pi$.

\medskip

The final claim about Postnikov towers follows, because the analogous claim in $\operatorname{Fun}(P,\mathcal{S})$ is clear as Postnikov truncations and limits are computed objectwise.
\end{proof}

We can interpret the above theorem in light of the following definition.

\begin{definition}
Let $P$ be a poset satisfying the ascending chain condition.
\begin{enumerate}
\item We say that a stratified space $X\rightarrow P$ \emph{admits an exit path $\infty$-category} if the following conditions hold:
\begin{enumerate}
\item The full subcategory $\operatorname{Sh}^{constr}(X)\subset\operatorname{Sh}(X)$ is closed under all limits and colimits;
\item The $\infty$-category $\operatorname{Sh}^{constr}(X)$ is generated under colimits by a set of atomic objects (see \Cref{atomic}).
\item $\pi^\ast\colon\operatorname{Fun}(P,\mathcal{S})\rightarrow\operatorname{Sh}^{constr}(X)$ preserves all limits (and colimits, but that is automatic);
\end{enumerate}
\item If $X\rightarrow P$ admits an exit path $\infty$-category, we define its exit path $\infty$-category $\Pi(X\rightarrow P)$ to be the opposite category of the full subcategory $\left[\operatorname{Sh}^{constr}(X)\right ]^{atom}$ of atomic constructible sheaves.
\item If $f\colon(X\rightarrow P)\rightarrow (Y\rightarrow Q)$ is a map of stratified spaces, we say that \emph{$f$ respects exit path $\infty$-categories} if $f^\ast\colon\operatorname{Sh}^{constr}(Y)\rightarrow\operatorname{Sh}^{constr}(X)$ preserves limits (and colimits, but that is automatic).\defend
\end{enumerate}
\end{definition}

If $X\rightarrow P$ admits an exit path $\infty$-category, it follows from \Cref{smallvbig} that there is an induced ``exodromy'' equivalence (cf. \cite{BGH} for the terminology)
$$\operatorname{Fun}(\Pi(X\rightarrow P),\mathcal{S})\overset{\sim}{\rightarrow} \operatorname{Sh}^{constr}(X),$$
and it follows from \Cref{smallvbig} that, in terms of the exodromy equivalence, the pullback map $\pi^\ast\colon\operatorname{Fun}(P,\mathcal{S})\rightarrow\operatorname{Sh}^{constr}(X)$ is recovered as composition along a uniquely determined functor
$$\Pi(X\rightarrow P)\rightarrow P.$$

Similarly, the condition that $f\colon(X\rightarrow P)\rightarrow (Y\rightarrow Q)$ respect exit path $\infty$-categories is equivalent to the condition that the induced pullback functor on constructible sheaves is given, via exodromy, by composition with a functor
$$\Pi(X\rightarrow P)\rightarrow \Pi(Y\rightarrow Q).$$
Moreover, this functor is then uniquely determined as the restriction to atomic objects of the left adjoint to $f^\ast\colon\operatorname{Sh}^{constr}(Y)\rightarrow\operatorname{Sh}^{constr}(X)$, see \Cref{smallvbig}.

\medskip

\Cref{itsaposet} already gives examples of stratified spaces admitting an exit path $\infty$-category; indeed, in those cases the exit path $\infty$-category is the stratifying poset $P$ itself.  Note that if $(X\rightarrow P)$ and $(Y\rightarrow Q)$ are stratified spaces whose exit path $\infty$-category identifies with the stratifying poset, then every map $(X\rightarrow P)\rightarrow (Y\rightarrow Q)$ respects exit path $\infty$-categories simply because we are given the required map $P\rightarrow Q$ as part of the data.  This simple observation, together with the following permanence properties, will be enough for us to identify the exit path $\infty$-categories we need in the next section.

\begin{proposition}\label{calculatebydescent}
Let $P$ be a poset satisfying the ascending chain condition. 
\begin{enumerate}
\item Let $f\colon X\rightarrow P$ be a stratified space admitting an exit path $\infty$-category.  Then for every locally closed subset $Q\subset P$, the stratified space $f^{-1}(Q)\rightarrow Q$ admits an exit path $\infty$-category, the inclusion $(f^{-1}Q\rightarrow Q)\rightarrow (X\rightarrow P)$ respects exit path $\infty$-categories, and
$$\Pi(f^{-1}(Q)\rightarrow Q)\overset{\sim}{\rightarrow} \Pi(X\rightarrow P)\times_P Q.$$

\item Let $K$ be a small $\infty$-category and $\{X_k\rightarrow P_k\}_{k\in K}$ a $K$-shaped diagram of stratified spaces, equipped with a co-cone $X_\infty\rightarrow P_\infty$.  Suppose:
\begin{enumerate}
\item Each $P_k$ for $k\in K$ and $P_\infty$ satisfy the ascending chain condition.
\item Each object $X_k\rightarrow P_k$ admits an exit path $\infty$-category for $k\in K$.
\item Each map $(X_k\rightarrow P_k)\rightarrow (X_{k'}\rightarrow P_{k'})$ respects exit path $\infty$-categories for $k\rightarrow k'$.
\item We have $\operatorname{Sh}(X_\infty)\overset{\sim}{\rightarrow}\varprojlim_{k\in K^{op}}\operatorname{Sh}(X_k)$ and $\operatorname{Sh}^{constr}(X_\infty)\overset{\sim}{\rightarrow}\varprojlim_{k\in K^{op}}\operatorname{Sh}^{constr}(X_k)$ via pullback.
\end{enumerate}
Then:
\begin{enumerate}
\item $X_\infty\rightarrow P_\infty$ admits an exit path $\infty$-category.
\item Each map $(X_k\rightarrow P_k)\rightarrow (X_\infty\rightarrow P_\infty)$ respects exit path $\infty$-categories, $k\in K$.
\item $\Pi(X_\infty\rightarrow P_\infty)\overset{\sim}{\leftarrow} \varinjlim_{k\in K}\Pi(X_k\rightarrow P_k).$
\end{enumerate}
\end{enumerate}
\end{proposition}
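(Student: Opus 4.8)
The plan is to carry both parts through the exodromy identification together with the calculus of \Cref{smallvbig} and \Cref{atomic}. Recall that a stratified space $X\to P$ admitting an exit path $\infty$-category means precisely that $\operatorname{Sh}^{constr}(X)$ is presentable, generated under colimits by its atomic objects, and closed under limits and colimits in $\operatorname{Sh}(X)$, with $\pi^\ast\colon\operatorname{Fun}(P,\mathcal{S})\to\operatorname{Sh}^{constr}(X)$ preserving limits; in that case $\operatorname{Sh}^{constr}(X)\simeq\mathcal{P}(\Pi(X\to P)^{op})=\operatorname{Fun}(\Pi(X\to P),\mathcal{S})$ lies in the subcategory of $\operatorname{Pr}^L$ of \Cref{smallvbig}, $\pi^\ast$ is restriction along a canonical functor $\Pi(X\to P)\to P$, and a map of such spaces respects exit path $\infty$-categories exactly when the induced pullback on $\operatorname{Sh}^{constr}$ is, via exodromy, restriction along the induced functor on exit path $\infty$-categories. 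So in both parts the job is to exhibit the relevant $\operatorname{Sh}^{constr}$ as a presheaf category in a manner compatible with the given structure maps.

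For part 2, write $\Pi_k:=\Pi(X_k\to P_k)$. Combining exodromy for the $X_k$ (hypothesis (b)) with the fact that the transition maps respect exit path $\infty$-categories (hypothesis (c)) and the constructible half of hypothesis (d), one gets $\operatorname{Sh}^{constr}(X_\infty)\simeq\varprojlim_{k\in K^{op}}\operatorname{Fun}(\Pi_k,\mathcal{S})$ with transition functors given by restriction along the $\Pi_k\to\Pi_{k'}$. Each such restriction functor is the right adjoint of the left Kan extension $\mathcal{P}(g^{op})$, which is a morphism in the subcategory of \Cref{smallvbig} (its right adjoint, a restriction functor, preserves all colimits, cf. \Cref{atomic}); so, passing to left adjoints ($\operatorname{Pr}^L\simeq(\operatorname{Pr}^R)^{op}$), this limit becomes the colimit $\varinjlim_{k\in K}\operatorname{Fun}(\Pi_k,\mathcal{S})$ of a $K$-diagram of presheaf categories along left Kan extensions. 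As $\mathcal{C}\mapsto\mathcal{P}(\mathcal{C})$ is left adjoint to the forgetful functor $\operatorname{Pr}^L\to\operatorname{Cat}_\infty$, hence colimit preserving, this colimit is $\mathcal{P}\bigl((\varinjlim_K\Pi_k)^{op}\bigr)=\operatorname{Fun}(\varinjlim_K\Pi_k,\mathcal{S})$, the colimit being computed in $\operatorname{Cat}_\infty$. Now (b) for $X_\infty$ is \Cref{atomic}; (a) and (c) hold because limits, colimits, constructibility, and the comparison functor $\pi_\infty^\ast$ are all detected componentwise along the $X_k$ via hypothesis (d) and hold for each $X_k$; conclusion (ii) holds since under these identifications the pullback $\phi_k^\ast\colon\operatorname{Sh}^{constr}(X_\infty)\to\operatorname{Sh}^{constr}(X_k)$ is a projection out of a limit and hence preserves limits; and for (iii), $\Pi(X_\infty\to P_\infty)=\operatorname{Sh}^{constr}(X_\infty)^{atom,\,op}$, which by \Cref{atomic} is the idempotent completion of $\varinjlim_K\Pi_k$ — that is, the colimit $\varinjlim_K\Pi_k$ formed in idempotent-complete $\infty$-categories, which is the correct reading since each $\Pi_k$, being a category of atomic objects, is already idempotent complete.

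For part 1, since a locally closed $Q\subseteq P$ is a closed subset of an open subposet and all three assertions compose, it suffices to treat $Q$ open and $Q$ closed. Let $j\colon f^{-1}Q\hookrightarrow X$ and $i\colon f^{-1}(P\smallsetminus Q)\hookrightarrow X$ be the complementary open and closed inclusions, and $p\colon\Pi(X\to P)\to P$ the canonical functor. Suppose $Q$ is open. Then $j_!$ preserves constructibility and is fully faithful, while $j^\ast$, having adjoints on both sides, preserves all limits and colimits; using $j^\ast j_!\simeq\operatorname{id}$ one checks that $\operatorname{Sh}^{constr}(f^{-1}Q)\subseteq\operatorname{Sh}(f^{-1}Q)$ is closed under limits and colimits (giving (a)) and that $j_!$ identifies $\operatorname{Sh}^{constr}(f^{-1}Q)$ with the full subcategory of $\operatorname{Fun}(\Pi(X\to P),\mathcal{S})$ of functors carrying $p^{-1}(P\smallsetminus Q)$ to the initial object. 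Since $p^{-1}(Q)$ is a cosieve in $\Pi(X\to P)$, left Kan extension along $p^{-1}(Q)\hookrightarrow\Pi(X\to P)$ identifies $\operatorname{Fun}(p^{-1}(Q),\mathcal{S})=\operatorname{Fun}(\Pi(X\to P)\times_P Q,\mathcal{S})$ with exactly that subcategory; composing the two identifications yields (b), (c) and the formula $\Pi(f^{-1}Q\to Q)\simeq\Pi(X\to P)\times_P Q$, and shows $j$ respects exit path $\infty$-categories with induced functor the inclusion $p^{-1}(Q)\hookrightarrow\Pi(X\to P)$. The closed case is dual: replace $j_!$ by $i_\ast$ (which likewise preserves constructibility and is fully faithful), "initial object" by "terminal object", cosieve by sieve, and left Kan extension by right Kan extension.

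I expect the main obstacle to be the bookkeeping in part 1: checking that the constructible subcategories genuinely assemble into a recollement parallel to the sieve/cosieve recollement of $\operatorname{Fun}(\Pi(X\to P),\mathcal{S})$ — in particular that $i_\ast$ and $j_!$ preserve constructibility and that the resulting reflective, respectively coreflective, subcategories are the asserted presheaf categories — and matching the various adjoint functors on the two sides. Part 2, by contrast, is formally smooth once one is at ease with \Cref{smallvbig}; its only delicate point is the idempotent-completion convention flagged above.
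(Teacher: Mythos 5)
Your proof is correct and follows essentially the same route as the paper: part 1 by factoring the locally closed inclusion as an open composed with a closed inclusion and invoking the open/closed subtopos equivalences of Lurie 7.3.2 (your $j_!$/$i_\ast$ recollement and the cosieve/sieve Kan-extension identifications are exactly these equivalences, namely $\operatorname{Sh}(U)\simeq\operatorname{Sh}(X)_{/h_U}$ and the kernel description via pushforward, made explicit on the exodromy side), and part 2 by precisely the appeal to \Cref{smallvbig} that the paper's one-line proof intends, which you flesh out. Your reading of 2(iii) as a colimit in idempotent-complete $\infty$-categories is indeed what the argument (the paper's included) actually delivers --- the $\operatorname{Cat}_\infty$-colimit agrees with $\Pi(X_\infty\rightarrow P_\infty)$ only after idempotent completion --- and this nuance is harmless in the paper's applications, where the relevant colimits (posets, $\operatorname{RBS}_\Gamma$) are already idempotent complete.
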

\begin{proof}
For part 1, by factoring a locally closed inclusion as a closed inclusion followed by an open inclusion, it suffices to treat those cases separately.  For an open subset $U\subset P$, we have that $\operatorname{Sh}(U)\overset{\sim}{\rightarrow}\operatorname{Sh}(X)_{/h_U}$ via the left adjoint to the pullback, see \cite{L} 7.3.2.  Moreover, this description is compatible with base change, hence it passes to constructible sheaves: $\operatorname{Sh}^{constr}(U)\overset{\sim}{\rightarrow}(\operatorname{Sh}^{constr}(X))_{/h_U}$.  This gives the conclusion in that case.  For a closed subset $Z\subset P$, we argue similarly but using $\operatorname{Sh}(Z)\overset{\sim}{\rightarrow}\operatorname{ker}(\operatorname{Sh}(X)\rightarrow\operatorname{Sh}(U))$, \cite{L} 7.3.2, meaning those sheaves on $X$ which restrict to $\emptyset$ on $U=X\smallsetminus Z$, the equivalence being induced by pushforward.  Part 2 is straightforward from \Cref{smallvbig}.
\end{proof}

We also note that if we take $P=\ast$ then constructible sheaf means locally constant sheaf, and by comparing with \cite{LHA} A.1.5 we find that $X\rightarrow \ast$ admits an exit path $\infty$-category if and only if $\operatorname{Sh}(X)$ is locally of constant shape in the sense of \cite{LHA}, and the exit path $\infty$-category is the $\infty$-groupoid given by the shape.

\medskip

To finish, let us also recall from \cite{OrsnesJansen} that exodromy for constructible sheaves with values in $\mathcal{S}$ automatically extends to sheaves with values in an arbitrary compactly generated $\infty$-category.

\begin{proposition}
Let $P$ be a poset satisfying the ascending chain condition, let $X\rightarrow P$ be a stratified space which admits an exit path $\infty$-category, and let $\mathcal{E}$ be a compactly generated $\infty$-category.  Then there is a natural equivalence
$$\operatorname{Fun}(\Pi(X\rightarrow P),\mathcal{E})\overset{\sim}{\rightarrow}\operatorname{Sh}^{constr}(X;\mathcal{E})$$
where the constructible full subcategory of $\operatorname{Sh}(X;\mathcal{E})$ is again defined as the full subcategory of those sheaves whose pullback to each stratum is locally constant.

\medskip

This equivalence is essentially determined from the version where $\mathcal{E}=\mathcal{S}$ as follows: a functor $\varphi\in \operatorname{Fun}(\Pi(X\rightarrow P),\mathcal{E})$ and a constructible sheaf $\mathcal{F}\in \operatorname{Sh}^{constr}(X;\mathcal{E})$ correspond under the above equivalence if and only if for all $x\in\mathcal{E}$, the functor $\operatorname{Map}(x,\varphi(-))$ and the sheaf $\operatorname{Map}(x, \mathcal{F}(-))$ correspond under the equivalence for $\mathcal{E}=\mathcal{S}$.
\end{proposition}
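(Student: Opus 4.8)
The plan is to bootstrap from the case $\mathcal{E}=\mathcal{S}$ by tensoring every $\infty$-category in sight with $\mathcal{E}$ over $\operatorname{Pr}^L$. Write $\Pi:=\Pi(X\rightarrow P)$ and let $i_p\colon X_p\hookrightarrow X$ denote the stratum inclusions. By the defining axioms of an exit path $\infty$-category, the inclusion $\iota\colon\operatorname{Sh}^{constr}(X;\mathcal{S})\hookrightarrow\operatorname{Sh}(X;\mathcal{S})$ preserves all limits and colimits, and $\operatorname{Sh}^{constr}(X;\mathcal{S})$ is presentable (cocomplete and generated under colimits by a set of atomic objects, cf.\ \Cref{smallvbig}); hence $\iota$ admits a left adjoint $L$, and since $\iota$ is fully faithful, $L$ exhibits $\operatorname{Sh}^{constr}(X;\mathcal{S})$ as a reflective localization of $\operatorname{Sh}(X;\mathcal{S})$, i.e.\ $L$ is an epimorphism in $\operatorname{Pr}^L$. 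The colimit-preserving functor $-\otimes\mathcal{E}\colon\operatorname{Pr}^L\rightarrow\operatorname{Pr}^L$ carries the epimorphism $L$ to an epimorphism $L\otimes\mathcal{E}$, whose right adjoint $\iota\otimes\mathcal{E}$ is therefore again fully faithful; and tensoring the exodromy equivalence with $\mathcal{E}$ gives $\operatorname{Fun}(\Pi,\mathcal{S})\otimes\mathcal{E}\simeq\operatorname{Sh}^{constr}(X;\mathcal{S})\otimes\mathcal{E}$, the latter now realised as a reflective subcategory of $\operatorname{Sh}(X;\mathcal{S})\otimes\mathcal{E}$.

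Three identifications then finish the argument. First, $\operatorname{Fun}(\Pi,\mathcal{S})\otimes\mathcal{E}\simeq\operatorname{Fun}(\Pi,\mathcal{E})$: this is the standard computation of the Lurie tensor product, $\operatorname{Fun}(\mathcal{C},\mathcal{S})\otimes\mathcal{E}\simeq\operatorname{Fun}(\mathcal{C},\mathcal{E})$ for presentable $\mathcal{E}$, since $\operatorname{Fun}(\mathcal{C},\mathcal{S})=\mathcal{P}(\mathcal{C}^{op})$ is the free presentable $\infty$-category on $\mathcal{C}^{op}$ and $\mathcal{P}(\mathcal{C}^{op})\otimes\mathcal{E}\simeq\operatorname{Fun}(\mathcal{C},\mathcal{E})$. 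Second, $\operatorname{Sh}(X;\mathcal{S})\otimes\mathcal{E}\simeq\operatorname{Sh}(X;\mathcal{E})$, naturally in $X$, so that under this identification $i_p^\ast$ becomes $i_p^\ast\otimes\mathcal{E}$; this is also standard, and for peace of mind one may note that we only use it on the subcategories of constructible sheaves, where by \Cref{itsaposet} everything is hypercomplete. Third --- the only point requiring real work --- the essential image of the reflective embedding $\operatorname{Sh}^{constr}(X;\mathcal{S})\otimes\mathcal{E}\hookrightarrow\operatorname{Sh}(X;\mathcal{E})$ is precisely the full subcategory $\operatorname{Sh}^{constr}(X;\mathcal{E})$ of $\mathcal{E}$-valued constructible sheaves.

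To establish this third identification I would re-run the proof of \Cref{itsaposet} with coefficients in $\mathcal{E}$ rather than $\mathcal{S}$. Its only inputs are: that $\operatorname{Sh}(X;-)$ is generated under colimits by the shrieked representables attached to the basis $\mathcal{B}$ (now one also allows a set of compact generators of $\mathcal{E}$); the stratified homotopy invariance of \Cref{equivonglobalsections}; and the test-on-strata \Cref{testonstrata}. The latter two persist with $\mathcal{E}$-coefficients because the pullback functors of sheaves involved are geometric-morphism pullbacks, hence commute with $-\otimes\mathcal{E}$ --- concretely, the pullbacks along intervals remain reflective localization embeddings, a property preserved by $-\otimes\mathcal{E}$ --- so the $f_\natural$-argument and the noetherian induction on $P$ carry over verbatim. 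The upshot, exactly as in \Cref{itsaposet}, is that $\operatorname{Sh}^{constr}(X;\mathcal{E})$ is the reflective localization of $\operatorname{Sh}(X;\mathcal{E})$ at the morphisms $h_U\rightarrow h_{U_p}$ ($U\in\mathcal{B}$, with $\mathcal{E}$-coefficients), which is exactly the reflective subcategory produced in the first paragraph. Alternatively one can reduce to the strata: by part 1 of \Cref{calculatebydescent} applied to the locally closed subset $\{p\}\subseteq P$, each $X_p\rightarrow\ast$ admits an exit path $\infty$-category, hence $X_p$ is locally of constant shape, so the locally constant $\mathcal{E}$-valued sheaves $\operatorname{Sh}^{lc}(X_p;\mathcal{E})$ form $\operatorname{Fun}(\Pi_\infty X_p,\mathcal{E})\simeq\operatorname{Sh}^{lc}(X_p;\mathcal{S})\otimes\mathcal{E}$ compatibly with $i_p^\ast$; then \Cref{testonstrata} detects which $\mathcal{E}$-sheaves on $X$ lie in the tensored subcategory, namely the constructible ones.

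Finally, the corepresentability characterisation is a formal consequence of the fact that the whole construction is $\operatorname{Pr}^L$-linear, being obtained from the $\mathcal{S}$-case by $-\otimes\mathcal{E}$. Since $\mathcal{E}$ is compactly generated, the functors $\operatorname{Map}(x,-)\colon\mathcal{E}\rightarrow\mathcal{S}$ for $x\in\mathcal{E}$ --- it suffices to take $x$ compact --- are jointly conservative and preserve all limits; applying them objectwise turns $\operatorname{Sh}^{constr}(X;\mathcal{E})$ and $\operatorname{Fun}(\Pi,\mathcal{E})$ back into $\operatorname{Sh}^{constr}(X;\mathcal{S})$ and $\operatorname{Fun}(\Pi,\mathcal{S})$, compatibly with our equivalence, so $\varphi$ and $\mathcal{F}$ correspond if and only if $\operatorname{Map}(x,\varphi(-))$ and $\operatorname{Map}(x,\mathcal{F}(-))$ correspond under the $\mathcal{S}$-equivalence for every $x$. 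The main obstacle, I expect, is the third identification: not the abstract statement that the two sides are equivalent, but the bookkeeping needed to check that the $\mathcal{E}$-linearly extended subcategory is literally the sheaf-theoretically defined $\operatorname{Sh}^{constr}(X;\mathcal{E})$ sitting inside $\operatorname{Sh}(X;\mathcal{E})$, and that this identification is compatible with the comparison functor to $\operatorname{Fun}(P,\mathcal{S})$ and with pullback.
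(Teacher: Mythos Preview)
The paper does not actually prove this proposition; it states it and defers to \cite{OrsnesJansen}. So there is no in-paper proof to compare against.

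Your tensor-product approach is sound and is a standard way to bootstrap such coefficient-change statements. Two refinements are worth noting. First, your primary route to the third identification --- re-running the proof of \Cref{itsaposet} with $\mathcal{E}$-coefficients --- presupposes the specific hypotheses of that theorem (a basis $\mathcal{B}$ of good opens), which are strictly stronger than the abstract axioms defining ``admits an exit path $\infty$-category''; so this covers the paper's examples but not the proposition as stated. Second, in your alternative route via strata, the appeal to \Cref{testonstrata} is not quite the right tool: that lemma detects \emph{equivalences} between constructible sheaves, not which sheaves are constructible. The clean argument is the one implicit in your final corepresentability paragraph. For compactly generated $\mathcal{E}$ one has $\mathcal{C}\otimes\mathcal{E}\simeq\operatorname{Fun}^{\mathrm{lex}}\big((\mathcal{E}^\omega)^{op},\mathcal{C}\big)$ for presentable $\mathcal{C}$, so $\operatorname{Sh}^{constr}(X)\otimes\mathcal{E}$ sits inside $\operatorname{Sh}(X;\mathcal{E})$ as those $\mathcal{F}$ with $\operatorname{Map}(x,\mathcal{F})\in\operatorname{Sh}^{constr}(X)$ for every compact $x\in\mathcal{E}$ (using that the inclusion $\operatorname{Sh}^{constr}(X)\hookrightarrow\operatorname{Sh}(X)$ preserves finite limits, which is part of axiom (a)). Since $i_p^\ast$ commutes with $\operatorname{Map}(x,-)$, this reduces to the unstratified statement that an $\mathcal{E}$-sheaf on $X_p$ is locally constant iff $\operatorname{Map}(x,-)$ of it is, for all compact $x$ --- exactly the $P=\ast$ case you correctly attribute to Lurie via \Cref{calculatebydescent}. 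This line of reasoning also makes the second paragraph of the proposition immediate rather than a separate verification.
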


\section{Borel--Serre and reductive Borel--Serre compactifications}

We start with a recap of some material from Borel--Serre's article (\cite{BorelSerre}), taken from a slightly different perspective.

\subsection{The canonical homogeneous space over \texorpdfstring{$\mathbb{R}$}{R}}

Let $G=G_\mathbb{R}$ be a connected reductive group over $\mathbb{R}$.  The canonical homogeneous space $X$ is a transitive $G(\mathbb{R})$-space whose isotropy groups are exactly the maximal compact subgroups of $G(\mathbb{R})$.  It exists and is unique up to isomorphism as $G(\mathbb{R})$ admits a unique conjugacy class of maximal compact subgroups (\cite{Mostow}).  This does not quite justify calling it ``canonical'' because it is not in general unique up to unique isomorphism.  But we can always fix a choice for $G$, say $X=G(\mathbb{R})/K$ for some choice of maximal compact subgroup $K$, and as we discuss later this determines a choice for each Levi factor of $G$ as well, and that will be enough canonicity for us.

\subsection{The Borel--Serre corners}

Now, switching notation, let us take a connected reductive group $G$ over $\mathbb{Q}$ giving rise to a $G_{\mathbb{R}}$ as in the previous section by extension of scalars.  We will be interested in the restriction of the $G(\mathbb{R})$-action on $X$ to arithmetic subgroups $\Gamma\subset G(\mathbb{Q})$.  The basic problem to ``fix'' is that the $\Gamma$-action on $X$, while properly discontinuous, is not cocompact.  It turns out the explanation for this non-cocompactness lies in the parabolic subgroups of $G$, and we start with a brief recap on those and their relation to relative root systems.

\medskip

For a parabolic subgroup $P$ of $G$, let $S_P$ denote the maximal split torus in the centre of the Levi factor $P/U_P$, where $U_P$ is the unipotent radical of $P$.  If $P\subset Q$ then there is an induced natural injection $S_Q\hookrightarrow S_P$. Moreover, if $P$ is conjugate to $P'$ then any choice of conjugating element induces the \emph{same} isomorphism $S_P\overset{\sim}{\rightarrow} S_{P'}$.  In this sense $S_P=S_{[P]}$ only depends on the conjugacy class $[P]$ of $P$, and all the $S_{[P]}$ can be compatibly viewed as subtori of the ``abstract maximal split torus'', which is $S:=S_{[P_0]}$ for a minimal parabolic $P_0$.

\medskip

We recall also that there is a canonical finite subset $\Delta\subset X^\ast(S)=\operatorname{Hom}(S,\mathbb{G}_m)$ such that if we choose a maximal split torus $S_0$ inside a minimal parabolic $P_0$, determining an isomorphism $S_0\simeq S$ via projection to $P_0/U_{P_0}$, then $\Delta$ corresponds, via this isomorphism, to the basis of the relative root system $\Phi(S_0,G)$ occuring as weights in $\operatorname{Lie}(U_{P_0})$, compare \cite[\S 5]{BorelTits}, \cite[III.1.14]{BorelJi}.  Then there is an inclusion-reversing bijective correspondence between conjugacy classes of parabolic subgroups $P\subset G$ and subsets of $\Delta$, determined as follows: if $\Delta_P\subset \Delta$ is the subset corresponding to $[P]$, then $S_{[P]}=\left(\cap_{\chi\in \Delta\smallsetminus \Delta_P}\operatorname{ker}(\chi)\right)^\circ$, see \cite[4.1]{BS}.  Another way of describing the situation is that the restriction of $\Delta_P$ to $S_{[P]}$ gives a basis of $\operatorname{ker}(X^\ast(S_{[P]})\otimes\mathbb{Q}\rightarrow X^\ast(S_{[G]})\otimes\mathbb{Q})$.

\medskip

Let us note the following consequence of this discussion of conjugacy classification of parabolic subgroups.  It will be used over and over again.
\begin{proposition}\label{parabolicnormalise}
Let $P'\subset P$ be an inclusion of parabolic subgroups of $G$.  If $\gamma \in G(\mathbb{Q})$ satisfies $\gamma P'\gamma^{-1} \subset  P$, then $\gamma\in P(\mathbb{Q})$.
\end{proposition}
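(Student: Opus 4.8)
The plan is to reduce everything to the combinatorial classification of parabolic subgroups by subsets of $\Delta$ recalled just above. The key point is that conjugation by $\gamma$ is an automorphism of $G$ (as a $\mathbb{Q}$-group), so it permutes parabolic subgroups and preserves the inclusion relation; hence if $\gamma P'\gamma^{-1}\subset P$, then passing to conjugacy classes we get $[\gamma P'\gamma^{-1}]=[P']$ — conjugation by a rational point cannot change the conjugacy class — while at the same time $[\gamma P'\gamma^{-1}]\leq [P]$ in the poset of conjugacy classes, i.e. $\Delta_{P'}\subseteq \Delta_P$, which is automatic from $P'\subseteq P$ anyway, so this on its own gives nothing. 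The actual content is that $\gamma$ conjugates $P'$ into $P$ \emph{and} $P'$ is already contained in $P$; I want to upgrade ``$P'$ and $\gamma P' \gamma^{-1}$ are two parabolics of $P$ that are $G(\mathbb{Q})$-conjugate'' to ``they are $P(\mathbb{Q})$-conjugate,'' and then feed in the fact that a parabolic subgroup is its own normaliser.

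Concretely, I would argue as follows. Work inside $P$: both $P'$ and $Q':=\gamma P'\gamma^{-1}$ are parabolic subgroups of $G$ contained in $P$, hence are parabolic subgroups of $P$. Standard structure theory of parabolic subgroups over a field (here $\mathbb{Q}$) says that any two parabolic subgroups of $P$ that are conjugate under $G(\mathbb{Q})$ are already conjugate under $P(\mathbb{Q})$ — more precisely, the $P(\mathbb{Q})$-conjugacy classes of parabolic subgroups of $P$ inject into the $G(\mathbb{Q})$-conjugacy classes of parabolic subgroups of $G$ (they are both classified compatibly by subsets of a basis of the relative root system, with $\Delta_P$ playing the role of the full set of simple roots for $P$). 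Therefore there is some $p\in P(\mathbb{Q})$ with $p P' p^{-1}=\gamma P'\gamma^{-1}$. Then $p^{-1}\gamma$ normalises $P'$, and since a parabolic subgroup equals its own normaliser in $G$ (again valid over any field, in particular $\mathbb{Q}$: $N_G(P')=P'$), we get $p^{-1}\gamma\in P'(\mathbb{Q})\subseteq P(\mathbb{Q})$, whence $\gamma=p\cdot(p^{-1}\gamma)\in P(\mathbb{Q})$, as desired.

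The step I expect to be the crux is the claim that $G(\mathbb{Q})$-conjugacy of two parabolic subgroups of $P$ implies $P(\mathbb{Q})$-conjugacy; everything else is formal. This should be extracted from the root-theoretic classification already recalled in the paper: inside $P$, choose a minimal parabolic $P_0\subseteq P'$ of $G$ and a maximal split torus $S_0\subseteq P_0$; then $P$ corresponds to a subset $\Delta_P\subseteq\Delta$, the standard parabolic subgroups of $P$ containing $P_0$ are classified by subsets of $\Delta_P$, every parabolic of $P$ is $P(\mathbb{Q})$-conjugate to a unique such standard one, and this matches up with the $G(\mathbb{Q})$-conjugacy classification of parabolics of $G$ via the same subsets of $\Delta_P\subseteq\Delta$. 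I would cite \cite[\S 4]{BS} (and the general theory, e.g. \cite{BorelTits}) for these facts rather than reprove them. An alternative, perhaps cleaner, route avoiding the conjugacy bookkeeping: since $\gamma P'\gamma^{-1}\subseteq P$, the parabolic $\gamma P' \gamma^{-1}$ contains a minimal parabolic $P_0''$ of $G$; also $P'\supseteq P_0$ for some minimal parabolic $P_0$; minimal parabolics of $G$ contained in $P$ are all $P(\mathbb{Q})$-conjugate (they are the minimal parabolics of $P$), so after adjusting $\gamma$ on the left by an element of $P(\mathbb{Q})$ we may assume $\gamma$ sends $P_0$ to $P_0$, i.e. $\gamma\in N_G(P_0)(\mathbb{Q})=P_0(\mathbb{Q})\subseteq P(\mathbb{Q})$, and we are done. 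I would likely present this second version, as it invokes only the transitivity of $P(\mathbb{Q})$ on minimal parabolics of $P$ and the self-normalising property, both of which are cleanly citable.
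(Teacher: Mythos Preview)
Your first argument is correct and is essentially the paper's proof: the paper phrases the key step as ``two parabolic subgroups of $P/U_P$ are $P/U_P$-conjugate if and only if their preimages in $P$ are $G$-conjugate'' (passing explicitly to the Levi quotient rather than speaking of ``parabolics of $P$''), then produces $\rho\in P(\mathbb{Q})$ with $\rho P'\rho^{-1}=\gamma P'\gamma^{-1}$ and concludes via $N_G(P')=P'$ exactly as you do. Your alternative route via minimal parabolics is also valid and ultimately rests on the same input, namely conjugacy of minimal parabolics in the Levi.
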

\begin{proof}
The classification of conjugacy classes of parabolic subgroups recalled above, applied to both $G$ and the Levi factor $P/U_P$, implies in particular that two parabolic subgroups of $P/U_P$ are $P/U_P$-conjugate if and only if their preimages are $G$-conjugate.  We deduce that there is a $\rho\in P(\mathbb{Q})$ with $\rho P'\rho^{-1}/U_P = \gamma P'\gamma^{-1}/U_P$, which implies $\rho P'\rho^{-1} = \gamma P'\gamma^{-1}$.  (Note that $U_{P}\subset U_{P'}\subset P'$.)  Thus $\rho^{-1}\gamma$ normalises $P'$.  But every parabolic is its own normaliser, \cite[4.3]{BorelTits}, so $\gamma \in P(\mathbb{Q})$ as desired.
\end{proof}

Now, again for a parabolic subgroup $P\subset G$, let $A_P=S_P(\mathbb{R})^\circ$.  This group plays a crucial role in the story.  Namely, on the one hand there is a natural proper and free right action $\bullet_P\colon X\times A_P\rightarrow X$ of $A_P$ on $X$, the \emph{geodesic action} of \cite[\S 3]{BS}.  But on the other hand the discussion above provides natural \emph{root coordinates}

$$A_P/A_G\overset{\sim}{\rightarrow} (\mathbb{R}_{>0})^{\Delta_P},$$

\noindent see \cite[4.2]{BS}.  For $P\subset Q$, the geodesic actions of $A_P$ and $A_Q$ are compatible under the natural injection $A_Q\hookrightarrow A_P$, and the root coordinates are too, in that they make this injection correspond to the inclusion $(\mathbb{R}_{>0})^{\Delta_Q}\subset(\mathbb{R}_{>0})^{\Delta_P}$ of the coordinate hypersurface corresponding to $\Delta_Q\subset\Delta_P$.

\medskip

Take care that while $A_P$, $\Delta_P$, and the root coordinates only depend on the conjugacy class $[P]$, the geodesic action depends on $P$ itself. In fact, if $\gamma\in G(\mathbb{Q})$ and $a\in A_{[P]}$, then
$$x\bullet_{\gamma P\gamma^{-1}}a = \gamma x \bullet_{P}a,$$
see \cite[5.6]{BS}.  Note that this in particular says that the $P(\mathbb{Q})$-action on $X$ commutes with the geodesic action by $A_P$; but in fact the whole $P(\mathbb{R})$-action does.  (This is explained by the fact that one can also define an analogous geodesic action for an arbitrary parabolic subgroup of $G_{\mathbb{R}}$, and then the previous formula holds for all $\gamma\in G(\mathbb{R})$.)

\medskip

Loosely speaking, the point of all this is that the geodesic actions by parabolic subgroups give enough directions via which a point in $X$ can ``wander off to $\infty$'' to fully account for the non-cocompactness of the $\Gamma$-action on $X$.  Actually, with the conventions of  \cite{BS}, it is the limit as $t\rightarrow 0$ in $\mathbb{R}_{>0}$ that corresponds, under the root coordinates and the geodesic action, to wandering off to $\infty$ in $X$.  This leads to the following definition.
\begin{definition}
Let $G$ be a reductive group over $\mathbb{Q}$, and recall the canonical homogeneous space $X$ associated to $G_{\mathbb{R}}$ as above, whose stabilisers are the maximal compact subgroups of $G(\mathbb{R})$.  Let $P\subset G$ be a parabolic subgroup.
\begin{enumerate}
\item The \emph{Borel--Serre corner} is the topological space defined by
$$\widehat{X}_{\geq P} := X\times^{A_P}(\mathbb{R}_{\geq 0})^{\Delta_P},$$
the quotient of $X\times (\mathbb{R}_{\geq 0})^{\Delta_P}$ which equalises the right geodesic action on $X$ and the left action by componentwise multiplication on $(\mathbb{R}_{\geq 0})^{\Delta_P}$ via the root coordinates $A_P/A_G\simeq (\mathbb{R}_{>0})^{\Delta_P}$.

\item The \emph{combinatorial Borel--Serre corner} is the partially ordered set
$$\{0<1\}^{\Delta_P},$$
which we will identify with the poset $\mathcal{P}_{P/}$ of parabolic subgroups containing $P$ under containment, by matching $Q\supseteq P$ with the indicator function of the subset $\Delta_P\smallsetminus \Delta_Q\subseteq \Delta_P$.
\item The \emph{stratified Borel--Serre corner} is the continuous projection map
$$\widehat{X}_{\geq P}\rightarrow \{0<1\}^{\Delta_P}$$
induced by the map $\mathbb{R}_{\geq 0}\rightarrow \{0<1\}$ sending $0$ to $0$ and $t\neq 0$ to $1$.\defend
\end{enumerate}
\end{definition}

We have a stratified homeomorphism $\widehat{X}_{\geq P} \simeq \mathbb{R}^d\times (\mathbb{R}_{\geq 0})^{\Delta_P}\rightarrow \{0<1\}^{\Delta_P}$ for some $d\geq 0$.  Indeed, if we fix a point $x$ on $X$ determining a maximal compact subgroup of $G(\mathbb{R})$, then the Langlands decomposition of $P(\mathbb{R})$ and the fact that $P(\mathbb{R})$ acts transitively on $X$ (by the Iwasawa decomposition) give an isomorphism $X\simeq \mathbb{R}^d\times A_P$ via which the geodesic action is the right action on the second coordinate, see \cite[5.4]{BS}.

\begin{lemma}\label{exitcorner}
The Borel--Serre corner $\widehat{X}_{\geq P}$ admits an exit path $\infty$-category, which identifies with its stratifying poset $\mathcal{P}_{P/}$, see Section \ref{posetsection}.  In particular, the pullback functor
$$\operatorname{Fun}(\mathcal{P}_{P/},\mathcal{S})\rightarrow \operatorname{Sh}^{constr}(\widehat{X}_{\geq P})$$
is an equivalence of $\infty$-categories.
\end{lemma}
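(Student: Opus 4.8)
The plan is to apply \Cref{itsaposet} to the stratified space $\pi\colon\widehat{X}_{\geq P}\rightarrow\mathcal{P}_{P/}$. Two hypotheses are immediate: $\pi$ is surjective, and the stratifying poset $\mathcal{P}_{P/}\cong\{0<1\}^{\Delta_P}$ is finite, hence satisfies the ascending chain condition. So the task reduces to exhibiting a collection $\mathcal{B}$ of open subsets of $\widehat{X}_{\geq P}$ satisfying conditions (1) and (2) of that theorem, and for this I will work through the stratified homeomorphism $\widehat{X}_{\geq P}\cong\mathbb{R}^d\times(\mathbb{R}_{\geq 0})^{\Delta_P}$ recorded just above, under which the stratifying map sends a point to the indicator function of the set of coordinates of $(\mathbb{R}_{\geq 0})^{\Delta_P}$ that are nonzero.

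Let $\mathcal{B}$ be the collection of ``open boxes'' $W\times\prod_{\chi\in\Delta_P}I_\chi$, where $W\subseteq\mathbb{R}^d$ is an open box and each $I_\chi\subseteq\mathbb{R}_{\geq 0}$ is a basic open interval, i.e.\ either $(a_\chi,b_\chi)$ with $0\leq a_\chi<b_\chi$, or $[0,b_\chi)$. This is a basis for the topology of $\widehat{X}_{\geq P}$ closed under finite intersections, so by the footnote to \Cref{itsaposet} the representable sheaves $h_U$, $U\in\mathcal{B}$, generate $\operatorname{Sh}(\widehat{X}_{\geq P})$, giving condition (1). For condition (2), fix $U=W\times\prod_\chi I_\chi\in\mathcal{B}$ and let $p\in\{0<1\}^{\Delta_P}$ be the point whose $\chi$-coordinate is $0$ precisely when $0\in I_\chi$. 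Then the open star is $U_p=\pi^{-1}(\{q\geq p\})=\mathbb{R}^d\times\prod_\chi J_\chi$, with $J_\chi=\mathbb{R}_{\geq 0}$ when $p_\chi=0$ and $J_\chi=\mathbb{R}_{>0}$ when $p_\chi=1$, and clearly $U\subseteq U_p$. This inclusion is the product of $W\hookrightarrow\mathbb{R}^d$ with the $I_\chi\hookrightarrow J_\chi$; since the stratifying map depends only on the $J_\chi$-coordinates, it suffices to check each factor is a stratified homotopy equivalence. The factor $W\hookrightarrow\mathbb{R}^d$, and each factor $(a_\chi,b_\chi)\hookrightarrow\mathbb{R}_{>0}$ with $p_\chi=1$, is an ordinary homotopy equivalence of contractible spaces lying in a single stratum; and for $p_\chi=0$ the inclusion $[0,b_\chi)\hookrightarrow[0,\infty)$ has stratum-preserving homotopy inverse $r_\chi(t)=b_\chi t/(1+t)$, with the straight-line homotopies giving $\iota_\chi r_\chi\simeq\mathrm{id}$ and $r_\chi\iota_\chi\simeq\mathrm{id}$, which one checks stay within the prescribed intervals and send a coordinate to $0$ only when it is already $0$. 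Thus both conditions of \Cref{itsaposet} hold.

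Consequently $\pi^\ast\colon\operatorname{Fun}(\mathcal{P}_{P/},\mathcal{S})\rightarrow\operatorname{Sh}(\widehat{X}_{\geq P})$ preserves all limits and colimits and is fully faithful with essential image $\operatorname{Sh}^{constr}(\widehat{X}_{\geq P})$. In particular $\operatorname{Sh}^{constr}$ is closed under all limits and colimits, $\pi^\ast$ into it preserves limits, and $\operatorname{Sh}^{constr}(\widehat{X}_{\geq P})\simeq\operatorname{Fun}(\mathcal{P}_{P/},\mathcal{S})=\mathcal{P}((\mathcal{P}_{P/})^{op})$ is generated under colimits by its atomic objects; hence $\widehat{X}_{\geq P}$ admits an exit path $\infty$-category. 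By \Cref{atomic} the atomic objects of $\mathcal{P}((\mathcal{P}_{P/})^{op})$ form the idempotent completion of $(\mathcal{P}_{P/})^{op}$, which is already idempotent-complete since a poset has no nonidentity idempotent endomorphisms, so
$$\Pi(\widehat{X}_{\geq P}\rightarrow\mathcal{P}_{P/})=\big(\big[\operatorname{Sh}^{constr}(\widehat{X}_{\geq P})\big]^{atom}\big)^{op}\simeq\big((\mathcal{P}_{P/})^{op}\big)^{op}=\mathcal{P}_{P/};$$
the asserted equivalence of $\infty$-categories is then the last sentence of \Cref{itsaposet}. The step requiring genuine care is condition (2), and specifically the half-open factor $[0,b_\chi)\hookrightarrow[0,\infty)$, where one must confirm the contraction never collapses a strictly positive coordinate onto the boundary stratum; everything else is either a reduction to a single stratum or formal manipulation of $\mathcal{P}(-)$ and atomic objects.
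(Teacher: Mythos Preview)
Your proof is correct and follows essentially the same approach as the paper: apply \Cref{itsaposet} via the stratified homeomorphism $\widehat{X}_{\geq P}\cong\mathbb{R}^d\times(\mathbb{R}_{\geq 0})^{\Delta_P}$, taking as $\mathcal{B}$ the open boxes in $\mathbb{R}^{d+|\Delta_P|}$ intersected with this space. The paper's proof is a two-line sketch indicating exactly this choice of $\mathcal{B}$; you have simply filled in the verification of condition~(2) and the unpacking of the exit path $\infty$-category in terms of atomic objects, all of which is sound.
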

\begin{proof}
We need to produce the neighbourhood bases as in condition 1 and 2 of \Cref{itsaposet}, for $X=\mathbb{R}^d\times(\mathbb{R}_{\geq 0})^n\rightarrow \{0<1\}^n$.  It suffices to take the open boxes $(a_1,b_1)\times\ldots\times (a_{d+n},b_{d+n})$ in $\mathbb{R}^{d+n}$ and intersect with $X$.
\end{proof}

\subsection{The Borel--Serre compactification}

In order to define the Borel--Serre compactification, we need to discuss the functoriality of the Borel--Serre corners.  There are two types of functoriality:
\begin{enumerate}
\item First of all, if $P\subset Q$ is an inclusion of parabolics, then the compatibility of the geodesic action and root coordinates with the inclusion $A_Q\hookrightarrow A_P$ gives a natural open inclusion
$$\widehat{X}_{\geq Q}\hookrightarrow \widehat{X}_{\geq P}$$
lying above the combinatorial analogue
$$\mathcal{P}_{Q/}\hookrightarrow \mathcal{P}_{P/}$$
coming from including the poset of parabolics containing $Q$ into that of those containing $P$.
\item Second, for $\gamma\in G(\mathbb{Q})$ the action of $\gamma$ on $X$ induces a natural homeomorphism
$$\widehat{X}_{\geq P}\overset{\sim}{\rightarrow} \widehat{X}_{\geq \gamma P\gamma^{-1}},$$
lying above the combinatorial analogue
$$\mathcal{P}_{P/}\overset{\sim}{\rightarrow}\mathcal{P}_{\gamma P\gamma^{-1}/}$$
coming from conjugating a parabolic containing $P$ by $\gamma$.
\end{enumerate}

The first functoriality is more formally a functor $P\mapsto \widehat{X}_{\geq P}$ from the poset $\mathcal{P}^{op}$ of parabolic subgroups under reverse inclusion to topological spaces, lying over an analogous functor $P\mapsto \mathcal{P}_{P/}$ from $\mathcal{P}^{op}$ to posets.

\begin{definition}
For $G$ a reductive group over $\mathbb{Q}$, the \emph{Borel--Serre partial compactification} (of $X/A_G$) is the topological space defined as the colimit
$$\widehat{X}:=\varinjlim_{P\in \mathcal{P}^{op}} \widehat{X}_{\geq P},$$
viewed as a stratified space over
$$\mathcal{P}=\varinjlim_{P\in\mathcal{P}^{op}} \mathcal{P}_{P/},$$
the poset of parabolic subgroups of $G$ under inclusion.\defend
\end{definition}

Note that the entire structure of the colimit defining $\widehat{X}$ is recovered from the output stratified space $\pi\colon\widehat{X}\rightarrow \mathcal{P}$, because $\widehat{X}_{\geq P}$ identifies with the open star $\pi^{-1}(\mathcal{P}_{P/})$ around the $P$-stratum of $\widehat{X}$.

\begin{proposition}
Let $G$ be a reductive group over $\mathbb{Q}$.  Then via the natural pullback functors we have:
\begin{enumerate}
\item $\operatorname{Sh}(\widehat{X})\overset{\sim}{\rightarrow} \varprojlim_{P\in\mathcal{P}^{op}}\operatorname{Sh}(\widehat{X}_{\geq P})$.
\item $\operatorname{Sh}^{constr}(\widehat{X})\overset{\sim}{\rightarrow} \varprojlim_{P\in\mathcal{P}^{op}}\operatorname{Sh}^{constr}(\widehat{X}_{\geq P})$.
\item $\operatorname{Fun}(\mathcal{P},\mathcal{S})\overset{\sim}{\rightarrow}\varprojlim_{P\in\mathcal{P}^{op}}\operatorname{Fun}(\mathcal{P}_{P/},\mathcal{S})$.
\end{enumerate}
\end{proposition}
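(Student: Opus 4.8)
The plan is to recognise all three statements as instances of open‑cover descent. The geometric input is that the corner $\widehat{X}_{\geq P}$ is exactly the open star $\pi^{-1}(\mathcal{P}_{P/})$ around the $P$‑stratum of $\pi\colon\widehat{X}\rightarrow\mathcal{P}$, so that $\{\widehat{X}_{\geq P}\}_{P\in\mathcal{P}}$ is an open cover of $\widehat{X}$, and $P\mapsto\widehat{X}_{\geq P}$ identifies $\mathcal{P}^{op}$ with this collection of opens ordered by inclusion (using $\pi(\widehat{X}_{\geq P})=\mathcal{P}_{P/}$, $\pi$ surjective, and $P=\min\mathcal{P}_{P/}$). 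The same remarks apply to the tautological stratified space $\mathcal{P}\rightarrow\mathcal{P}$, whose open stars are the $\mathcal{P}_{P/}$.

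For part 1, I would first check that this cover is closed under finite intersections: $\widehat{X}_{\geq P}\cap\widehat{X}_{\geq Q}=\pi^{-1}(\mathcal{P}_{P/}\cap\mathcal{P}_{Q/})$, and $\mathcal{P}_{P/}\cap\mathcal{P}_{Q/}=\{R:R\supseteq P,\ R\supseteq Q\}=\mathcal{P}_{(P\vee Q)/}$, where $P\vee Q$ is the smallest parabolic containing $P$ and $Q$ --- this exists because the subgroup scheme generated by two parabolics is closed (being generated by connected closed subgroups) and contains a Borel subgroup, hence is parabolic. Then \Cref{opendescent} immediately gives $\operatorname{Sh}(\widehat{X})\overset{\sim}{\rightarrow}\varprojlim_{P\in\mathcal{P}^{op}}\operatorname{Sh}(\widehat{X}_{\geq P})$ via pullback. (One can also bypass the existence of $P\vee Q$ by using the sieve form of open descent preceding \Cref{opendescent}: the left fibre over an open $W$ lying inside some corner is the poset of parabolics contained in every element of $\pi(W)$, which is nonempty and has $\bigcap_{q\in\pi(W)}q$ as a maximum, hence is contractible.)

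For part 3, the stratifying poset is itself a colimit $\mathcal{P}=\varinjlim_{P\in\mathcal{P}^{op}}\mathcal{P}_{P/}$, and \Cref{closeddescentcat} upgrades this to a colimit in $\operatorname{Cat}_\infty$: applied to $\mathcal{P}^{op}$ with the left‑closed subcategories $(\mathcal{P}_{P/})^{op}\subseteq\mathcal{P}^{op}$ it gives $\mathcal{P}^{op}=\varinjlim_{P\in\mathcal{P}^{op}}(\mathcal{P}_{P/})^{op}$ in $\operatorname{Cat}_\infty$, the hypothesis being that for each parabolic $Q$ the subposet of those $(\mathcal{P}_{P/})^{op}$ containing $Q$ --- that is, the $P\subseteq Q$ --- has $Q$ itself as a minimum and so is contractible. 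Passing to opposite categories and then applying $\operatorname{Fun}(-,\mathcal{S})$, which sends colimits in $\operatorname{Cat}_\infty$ to limits, yields part 3. (Alternatively part 3 follows from the open‑descent argument of part 1 applied to $\mathcal{P}\rightarrow\mathcal{P}$: since chains of parabolic subgroups of $G$ have length at most the relative rank, $\mathcal{P}$ and each $\mathcal{P}_{P/}$ satisfy the ascending chain condition, so Aoki's theorem \cite{Aoki} identifies $\operatorname{Sh}(\mathcal{P})\simeq\operatorname{Fun}(\mathcal{P},\mathcal{S})$ and $\operatorname{Sh}(\mathcal{P}_{P/})\simeq\operatorname{Fun}(\mathcal{P}_{P/},\mathcal{S})$, with the pullback functors going over to restriction.)

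Finally part 2 is obtained by restricting the equivalence of part 1 to constructible objects, once one observes that constructibility is detected on the cover: the strata of $\widehat{X}_{\geq P}$ are precisely the strata $X_q$ of $\widehat{X}$ with $q\supseteq P$ (each $X_q$ sitting inside $\widehat{X}_{\geq q}$), and pullback along the open inclusions $\widehat{X}_{\geq Q}\hookrightarrow\widehat{X}_{\geq P}$ preserves constructibility, so $\mathcal{F}\in\operatorname{Sh}(\widehat{X})$ is constructible if and only if every restriction $\mathcal{F}|_{\widehat{X}_{\geq P}}$ is. Hence $\varprojlim_{P\in\mathcal{P}^{op}}\operatorname{Sh}^{constr}(\widehat{X}_{\geq P})$ is the full subcategory of $\varprojlim_{P\in\mathcal{P}^{op}}\operatorname{Sh}(\widehat{X}_{\geq P})$ on those tuples all of whose components are constructible, which corresponds under part 1 exactly to $\operatorname{Sh}^{constr}(\widehat{X})$. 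The only genuinely non‑formal point in all of this is the combinatorial fact that the cover $\{\widehat{X}_{\geq P}\}$ (equivalently, the collection $\{\mathcal{P}_{P/}\}$ of minimal open neighbourhoods in $\mathcal{P}$) is closed under finite intersections --- i.e. that any two parabolics of $G$ admit a least common upper parabolic; everything else is bookkeeping around the descent results and Aoki's theorem recalled in the preceding sections.
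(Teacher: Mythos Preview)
Your proof is correct and follows essentially the same route as the paper: \Cref{opendescent} for part 1, and for part 2 the observation that constructibility is detected on the cover since each stratum lies in its own corner. For part 3 the paper instead cites \Cref{tautologicalbutuseful} directly --- the identity $\varinjlim_{x\in\mathcal{C}}\mathcal{C}_{/x}\simeq\mathcal{C}$ with $\mathcal{C}=\mathcal{P}^{op}$ gives the colimit in one stroke without needing to verify the contractibility hypothesis of \Cref{closeddescentcat} --- but your argument via left-closed covers is equally valid and both are corollaries of the same proper-descent theorem. Your care in checking that the corners are closed under intersection (via the existence of $P\vee Q$) fills in a hypothesis the paper leaves implicit.
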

\begin{proof}
Part 1 follows from \Cref{opendescent} and part 3 follows from \Cref{tautologicalbutuseful}.  To deduce 2 from 1, we need to know that a sheaf on $\widehat{X}$ is constructible if its pullback to each $\widehat{X}_{\geq P}$ is.  For that it suffices to note that the $P$-stratum is fully contained in $\widehat{X}_{\geq P}$.
\end{proof}

\begin{corollary}\label{partialexit}
The stratified space $\widehat{X}\rightarrow\mathcal{P}$ admits an exit path $\infty$-category which identifies with its stratifying poset $\mathcal{P}$; in particular, the comparison functor gives an equivalence
$$c^\ast\colon\operatorname{Fun}(\mathcal{P},\mathcal{S})\overset{\sim}{\rightarrow}\operatorname{Sh}^{constr}(\widehat{X}).$$
\end{corollary}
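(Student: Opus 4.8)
The plan is to deduce this directly from \Cref{calculatebydescent}(2), applied to the colimit diagram $P\mapsto\widehat{X}_{\geq P}$ over $P\in\mathcal{P}^{op}$ which defines $\widehat{X}$, lying over the corresponding diagram of posets $P\mapsto\mathcal{P}_{P/}$ (whose transition maps are the open inclusions $\widehat{X}_{\geq Q}\hookrightarrow\widehat{X}_{\geq P}$ over $\mathcal{P}_{Q/}\hookrightarrow\mathcal{P}_{P/}$). First I would check the hypotheses of that proposition. Condition (a), the ascending chain condition: each $\mathcal{P}_{P/}=\{0<1\}^{\Delta_P}$ is a finite poset, and $\mathcal{P}$ itself satisfies ACC because the dimension of a parabolic strictly increases along any strict chain of inclusions and is bounded by $\dim G$. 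Condition (b), that each $\widehat{X}_{\geq P}\rightarrow\mathcal{P}_{P/}$ admits an exit path $\infty$-category equal to its stratifying poset, is exactly \Cref{exitcorner}. Condition (c), that each transition map respects exit path $\infty$-categories, is the observation recorded just after \Cref{calculatebydescent}: a map between stratified spaces whose exit path $\infty$-categories are their stratifying posets automatically respects exit path $\infty$-categories, since the required map of posets is part of the data. Finally, condition (d) is precisely parts 1 and 2 of the proposition immediately preceding this corollary.

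Granting the hypotheses, \Cref{calculatebydescent}(2) gives that $\widehat{X}\rightarrow\mathcal{P}$ admits an exit path $\infty$-category, that each $(\widehat{X}_{\geq P}\rightarrow\mathcal{P}_{P/})\rightarrow(\widehat{X}\rightarrow\mathcal{P})$ respects it, and that the canonical comparison map is an equivalence
$$\varinjlim_{P\in\mathcal{P}^{op}}\Pi(\widehat{X}_{\geq P}\rightarrow\mathcal{P}_{P/})\overset{\sim}{\rightarrow}\Pi(\widehat{X}\rightarrow\mathcal{P}).$$
By \Cref{exitcorner} the left-hand colimit is $\varinjlim_{P\in\mathcal{P}^{op}}\mathcal{P}_{P/}$, compatibly with the maps to $\mathcal{P}_{P/}$, so the only remaining point is to identify this colimit, taken in $\operatorname{Cat}_\infty$, with the poset $\mathcal{P}$ via the evident comparison functor. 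Here I would invoke \Cref{tautologicalbutuseful}: applying the equivalence $\varinjlim_{x\in\mathcal{C}}\mathcal{C}_{/x}\overset{\sim}{\rightarrow}\mathcal{C}$ to $\mathcal{C}=\mathcal{P}^{op}$ and then taking opposites (using that $(-)^{op}$ preserves colimits and that $(\mathcal{P}^{op})_{/P}=(\mathcal{P}_{P/})^{op}$) produces $\varinjlim_{P\in\mathcal{P}^{op}}\mathcal{P}_{P/}\overset{\sim}{\rightarrow}\mathcal{P}$, and one checks that this is the comparison functor induced by the maps $\mathcal{P}_{P/}\rightarrow\mathcal{P}$. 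Hence $\Pi(\widehat{X}\rightarrow\mathcal{P})\simeq\mathcal{P}$ via the canonical functor, and the asserted equivalence $c^\ast\colon\operatorname{Fun}(\mathcal{P},\mathcal{S})\overset{\sim}{\rightarrow}\operatorname{Sh}^{constr}(\widehat{X})$ is then the resulting exodromy equivalence, via \Cref{smallvbig}.

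I do not expect a serious obstacle, since all the real work has been front-loaded into \Cref{calculatebydescent}, \Cref{exitcorner}, \Cref{tautologicalbutuseful}, and the descent proposition above. The one place to be careful is the bookkeeping in the last step: keeping the opposite-category conventions straight so that the $\operatorname{Cat}_\infty$-colimit $\varinjlim_{P\in\mathcal{P}^{op}}\mathcal{P}_{P/}$ is genuinely identified with $\mathcal{P}$ (and not merely abstractly equivalent to it), and confirming that this identification is compatible with the stratification maps, so that it is really the canonical comparison functor $\Pi(\widehat{X}\rightarrow\mathcal{P})\rightarrow\mathcal{P}$ that is shown to be an equivalence.
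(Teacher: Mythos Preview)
The proposal is correct and follows essentially the same approach as the paper: apply \Cref{calculatebydescent}(2) to the open cover by Borel--Serre corners, with hypotheses supplied by \Cref{exitcorner} and the preceding descent proposition, and identify the resulting colimit of posets with $\mathcal{P}$ via \Cref{tautologicalbutuseful}. Your write-up is simply more explicit than the paper's terse proof, which compresses all of this into a single sentence; in particular your careful handling of the $(-)^{op}$ bookkeeping in the colimit identification is the one detail the paper leaves implicit (it is hidden in part~3 of the preceding proposition).
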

\begin{proof}
The comparison functor is natural in the stratified space by construction, so \Cref{calculatebydescent} parts 2 and 3 reduce us to the analogous claim for the $\widehat{X}_{\geq P}$, which is \Cref{exitcorner}.
\end{proof}

Now it is time to consider the second functoriality on the Borel--Serre corners.  In terms of the glued space $\widehat{X}$, this simply manifests itself in a continuous action of the discrete group $G(\mathbb{Q})$, extending the natural action on the interior $X/A_G$ and covering the conjugation action of $G(\mathbb{Q})$ on $\mathcal{P}$.  The main result of Borel--Serre is that if $\Gamma\subset G(\mathbb{Q})$ is a torsionfree arithmetic subgroup, then $\Gamma$ acts properly, freely, and cocompactly on $\widehat{X}$.  Thus the quotient space $\Gamma\backslash \widehat{X}$ is a compact Hausdorff space compactifying its interior $\Gamma \backslash X/A_G$, which has the homotopy type of $B\Gamma$.

\begin{definition}
Let $G$ be a reductive group over $\mathbb{Q}$ and let $\Gamma\subset G(\mathbb{Q})$ be a torsionfree arithmetic subgroup.  Then the \emph{Borel--Serre compactification} (of $\Gamma\backslash X/A_G$) is the quotient space
$$\Gamma\backslash \widehat{X}$$
of the Borel--Serre partial compactification by the natural $\Gamma$-action, viewed as a stratified space over the quotient poset $\Gamma\backslash \mathcal{P}$ of $\Gamma$-conjugacy classes of parabolic subgroups under the relation induced by inclusion.\defend
\end{definition}

By \Cref{parabolicnormalise} and \Cref{posetquotient}, the quotient $\Gamma\backslash \mathcal{P}$ in the category of sets gets an induced poset structure from $\mathcal{P}$.  However, this is not the same as the (homotopy) quotient in the $\infty$-category of $\infty$-categories; rather that is the action category $\Gamma\backslash \backslash \mathcal{P}$, see \Cref{quotientcat}, whose objects are the $P\in\mathcal{P}$ and whose morphisms $P\rightarrow Q$ are the $\gamma\in\Gamma$ with $\gamma P\gamma^{-1}\subseteq Q$.

\begin{proposition}
Let $G$ be a reductive group over $\mathbb{Q}$ and $\Gamma\subset G(\mathbb{Q})$ a torsionfree arithmetic subgroup.  Then:
\begin{enumerate}
\item $\operatorname{Sh}(\Gamma\backslash\widehat{X})\overset{\sim}{\rightarrow} \operatorname{Sh}(\widehat{X})^{\Gamma}$.
\item $\operatorname{Sh}^{constr}(\Gamma\backslash\widehat{X})\overset{\sim}{\rightarrow} \operatorname{Sh}^{constr}(\widehat{X})^{\Gamma}$
\item $\operatorname{Fun}(\Gamma\backslash\backslash \mathcal{P},\mathcal{S})\overset{\sim}{\rightarrow} \operatorname{Fun}(\mathcal{P},\mathcal{S})^\Gamma.$
\end{enumerate}
\end{proposition}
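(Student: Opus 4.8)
The plan is to deduce all three parts from the descent results established above, the one genuinely geometric input being the theorem of Borel--Serre recalled before the statement: a torsionfree arithmetic subgroup $\Gamma\subset G(\mathbb{Q})$ acts freely and properly discontinuously on $\widehat{X}$, so in particular every point of $\widehat{X}$ has an open neighbourhood $U$ with the translates $\gamma\cdot U$ $(\gamma\in\Gamma)$ pairwise disjoint. Granting this, part 1 is immediate: \Cref{quotienttop} applies verbatim to the $\Gamma$-space $\widehat{X}$ and yields $\operatorname{Sh}(\Gamma\backslash\widehat{X})\overset{\sim}{\to}\operatorname{Sh}(\widehat{X})^\Gamma$ by pullback, where the $\Gamma$-action on $\operatorname{Sh}(\widehat{X})$ is the one induced, through the pullback functoriality of $\operatorname{Sh}(-)$, by the action of $\Gamma$ on $\widehat{X}$ covering the conjugation action on $\mathcal{P}$.

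Part 3 is purely formal and uses none of the geometry. By \Cref{quotientcat} the action category $\Gamma\backslash\backslash\mathcal{P}$ is the colimit $\varinjlim_{B\Gamma}\mathcal{P}$ in $\operatorname{Cat}_\infty$ of the conjugation action. Since $\operatorname{Fun}(-,\mathcal{S})$ is an internal-hom functor on $\operatorname{Cat}_\infty$ and hence carries colimits of $\infty$-categories to limits (as already exploited in the proof of \Cref{decomposelimits}), we obtain $\operatorname{Fun}(\Gamma\backslash\backslash\mathcal{P},\mathcal{S})\simeq\varprojlim_{(B\Gamma)^{op}}\operatorname{Fun}(\mathcal{P},\mathcal{S})$; identifying $(B\Gamma)^{op}$ with $B\Gamma$ via $\gamma\mapsto\gamma^{-1}$, this limit is by definition the homotopy fixed points $\operatorname{Fun}(\mathcal{P},\mathcal{S})^\Gamma$ for the $\Gamma$-action obtained by applying $\operatorname{Fun}(-,\mathcal{S})$ to the conjugation action on $\mathcal{P}$.

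For part 2 I would combine part 1 with the observation that constructibility can be tested after pulling back along the quotient map $q\colon\widehat{X}\to\Gamma\backslash\widehat{X}$. Indeed $q$ is a covering map, hence a local homeomorphism; it is a map of stratified spaces over the map of posets $\mathcal{P}\to\Gamma\backslash\mathcal{P}$ (the target being a poset of finite height by \Cref{parabolicnormalise} and \Cref{posetquotient}), and it restricts over each stratum of $\Gamma\backslash\widehat{X}$ to a covering map. Hence a sheaf $\mathcal{F}$ on $\Gamma\backslash\widehat{X}$ is constructible if and only if $q^\ast\mathcal{F}$ is constructible on $\widehat{X}$. On the other hand, each $\gamma\in\Gamma$ acts on $\widehat{X}$ by a stratification-permuting homeomorphism, so the $\Gamma$-action on $\operatorname{Sh}(\widehat{X})$ preserves the full subcategory $\operatorname{Sh}^{constr}(\widehat{X})$; taking homotopy fixed points of this $B\Gamma$-diagram of fully faithful inclusions, the induced functor $\operatorname{Sh}^{constr}(\widehat{X})^\Gamma\to\operatorname{Sh}(\widehat{X})^\Gamma$ is again fully faithful, with essential image exactly those equivariant sheaves whose underlying sheaf is constructible. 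Combined with part 1 and the preceding sentence, this identifies $\operatorname{Sh}^{constr}(\Gamma\backslash\widehat{X})$ with $\operatorname{Sh}^{constr}(\widehat{X})^\Gamma$.

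I expect the main obstacle to be not any single deep point but the careful matching of the several $\Gamma$-actions involved --- on $\operatorname{Sh}(\widehat{X})$, on $\operatorname{Sh}^{constr}(\widehat{X})$, and on $\operatorname{Fun}(\mathcal{P},\mathcal{S})$ --- so that the three equivalences are "the same" equivalence in the appropriate sense, together with the routine but slightly fiddly verification that constructibility descends along the covering map $q$. Modulo these bookkeeping points, each part is a direct application of a descent result already in hand.
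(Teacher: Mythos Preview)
Your proposal is correct and follows essentially the same approach as the paper: part~1 via \Cref{quotienttop}, part~3 via \Cref{quotientcat}, and part~2 deduced from part~1 by checking that constructibility descends along the covering map $q$ (the paper phrases this last point as the stratum-wise projection having local sections, being the quotient by the proper free $\Gamma_P$-action). Your extra care in noting that the $\Gamma$-action preserves $\operatorname{Sh}^{constr}(\widehat{X})$ and that fully faithful inclusions remain fully faithful on homotopy fixed points is implicit in the paper's terse argument.
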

\begin{proof}
Part 1 follows from \Cref{quotienttop}, and part 3 follows from \Cref{quotientcat}.  To deduce part 2 from part 1, we need to know that a sheaf on $\Gamma\backslash \widehat{X}$ is constructible if its pullback to $\widehat{X}$ is.  This follows because for any parabolic $P$, the projection from the $P$-stratum in $\widehat{X}$ to the $[P]$-stratum in $\Gamma\backslash \widehat{X}$ has local sections; in fact it is the quotient by the proper free action of $\Gamma_P=\Gamma \cap P(\mathbb{Q})$.
\end{proof}

\begin{corollary}
Let $G$ be a reductive group over $\mathbb{Q}$ and $\Gamma\subset G(\mathbb{Q})$ a torsionfree arithmetic subgroup.  Then $\Gamma\backslash\widehat{X}$ admits an exit path $\infty$-category which identifies with $\Gamma\backslash \backslash \mathcal{P}$.  In particular, there is a natural equivalence

$$\operatorname{Fun}(\Gamma\backslash \backslash \mathcal{P},\mathcal{S})\overset{\sim}{\rightarrow} \operatorname{Sh}^{constr}(\Gamma\backslash \widehat{X}).$$

\end{corollary}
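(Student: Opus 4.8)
The plan is to obtain this exactly as $\Gamma\backslash\widehat{X}$ was built from $\widehat{X}$ and its $\Gamma$-action, namely by applying the gluing principle \Cref{calculatebydescent}(2) to the homotopy quotient, and then identifying the resulting colimit via \Cref{quotientcat}. Concretely, take $K=B\Gamma$ and let the $B\Gamma$-shaped diagram of stratified spaces have value $\widehat{X}\rightarrow\mathcal{P}$ equipped with the $G(\mathbb{Q})$-action restricted to $\Gamma$ (covering the conjugation action of $\Gamma$ on the poset $\mathcal{P}$), with co-cone the stratified map $\Gamma\backslash\widehat{X}\rightarrow\Gamma\backslash\mathcal{P}$, the target poset getting its structure from \Cref{parabolicnormalise} and \Cref{posetquotient}.

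Next I would check the four hypotheses of \Cref{calculatebydescent}(2). For (a): $\mathcal{P}$ satisfies the ascending chain condition since a chain of parabolic subgroups has length bounded by the semisimple rank of $G$, and the same bound shows $\Gamma\backslash\mathcal{P}$ satisfies it. For (b): $\widehat{X}\rightarrow\mathcal{P}$ admits an exit path $\infty$-category identifying with $\mathcal{P}$ itself, which is \Cref{partialexit}. For (c): each $\gamma\colon(\widehat{X}\rightarrow\mathcal{P})\rightarrow(\widehat{X}\rightarrow\mathcal{P})$ respects exit path $\infty$-categories automatically, by the observation recorded just before \Cref{calculatebydescent} that any map of stratified spaces whose exit path $\infty$-categories are their stratifying posets respects exit path $\infty$-categories. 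For (d): using $B\Gamma^{op}\simeq B\Gamma$, the required equivalences $\operatorname{Sh}(\Gamma\backslash\widehat{X})\simeq\operatorname{Sh}(\widehat{X})^{\Gamma}$ and $\operatorname{Sh}^{constr}(\Gamma\backslash\widehat{X})\simeq\operatorname{Sh}^{constr}(\widehat{X})^{\Gamma}$ are precisely parts 1 and 2 of the preceding proposition.

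Granting these, \Cref{calculatebydescent}(2) yields that $\Gamma\backslash\widehat{X}\rightarrow\Gamma\backslash\mathcal{P}$ admits an exit path $\infty$-category, that each map $(\widehat{X}\rightarrow\mathcal{P})\rightarrow(\Gamma\backslash\widehat{X}\rightarrow\Gamma\backslash\mathcal{P})$ respects it, and that
$$\Pi(\Gamma\backslash\widehat{X}\rightarrow\Gamma\backslash\mathcal{P})\;\simeq\;\varinjlim_{B\Gamma}\Pi(\widehat{X}\rightarrow\mathcal{P})\;=\;\varinjlim_{B\Gamma}\mathcal{P}.$$
By \Cref{quotientcat}, the colimit in $\operatorname{Cat}_\infty$ of the $\Gamma$-action on the poset $\mathcal{P}$ is the action category $\Gamma\backslash\backslash\mathcal{P}$, whose objects are the parabolic subgroups and whose morphisms $P\rightarrow Q$ are the $\gamma\in\Gamma$ with $\gamma P\gamma^{-1}\subseteq Q$; this gives the asserted identification. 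The ``in particular'' equivalence then follows from the exodromy equivalence $\operatorname{Fun}(\Pi(X\rightarrow P),\mathcal{S})\simeq\operatorname{Sh}^{constr}(X)$, itself a consequence of \Cref{smallvbig}. The only steps requiring any real input are hypotheses (b) and (d), both already established; everything else is bookkeeping, so no serious obstacle remains.
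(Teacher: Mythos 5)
Your proposal is correct and follows the paper's own route: the paper's proof is exactly the combination of the preceding proposition (parts 1 and 2 giving the sheaf-level descent along $\widehat{X}\rightarrow\Gamma\backslash\widehat{X}$), \Cref{calculatebydescent}(2) applied to the $B\Gamma$-shaped diagram with co-cone $\Gamma\backslash\widehat{X}\rightarrow\Gamma\backslash\mathcal{P}$, and \Cref{partialexit}, with the colimit identified as $\Gamma\backslash\backslash\mathcal{P}$ via \Cref{quotientcat}. Your verification of the hypotheses (ACC, the automatic respect of exit path $\infty$-categories for poset-type stratified spaces, and the fixed-point identifications) just makes explicit what the paper leaves implicit.
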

\begin{proof}
Follows by combining the previous proposition,  \Cref{calculatebydescent} and \Cref{partialexit}.
\end{proof}

\subsection{The reductive Borel--Serre compactification}

To motivate the reductive Borel--Serre compactification, we start by taking a closer look at the Borel--Serre compactification.  For a parabolic subgroup $P$, the $P$-stratum of $\widehat{X}$ identifies with $X\times^{A_P}\ast = X/A_P$, and the $[P]$-stratum of $\Gamma\backslash \widehat{X}$ identifies with the quotient
$$\Gamma_P\backslash X/A_P.$$
Thus all the strata are of a similar form as the open stratum $\Gamma \backslash X/A_G$, except with the reductive group $G$ replaced by the non-reductive group $P$.

\medskip

(One may be bothered by the fact that this description of the $[P]$-stratum, on the face of it, depends on the chosen representative $P$.  But this is an illusion: if $P$ is conjugate to $P'$ via $\gamma$, then the induced homeomorphism $\Gamma_P\backslash X/A_P\simeq \Gamma_{P'}\backslash X/A_P$ is independent of $\gamma$.  This follows from the fact that parabolic subgroups are their own normalisers.)

\medskip

To get a better inductive structure we would like to replace the parabolic subgroup by its Levi quotient $L=P/U_P$, which is reductive.   Let $\Gamma_L\subset L(\mathbb{Q})$ denote the quotient $\Gamma_P/\Gamma_{U_P}$. If $\Gamma$ is not just torsionfree but \emph{neat}, \cite{Ji}, then $\Gamma_L$ is also a neat, and in particular torsionfree, arithmetic subgroup of $L(\mathbb{Q})$.  Moreover, there is a natural map

$$\Gamma_P\backslash X/A_P\rightarrow \Gamma_L \backslash X_L/A_L$$
from the $[P]$-stratum of the Borel--Serre compactification for $\Gamma \subset G(\Q)$ to the open stratum of the Borel--Serre compactification for $\Gamma_L\subset L(\Q)$, which is in fact a fibre bundle with compact fibre $U_P(\mathbb{R})/\Gamma_{U_P}$.  Indeed, this map comes from the canonical identification
$$X_L = U_P(\mathbb{R})\backslash X$$
of the canonical homogeneous space of $L_{\mathbb{R}}$ with the indicated quotient of the canonical homogeneous space of $G_{\mathbb{R}}$, together with the fact that $A_L$ acting on $X_L$ identifies with $A_P$ acting on $U_P(\mathbb{R})\backslash X$. More generally if $P'\subset P$ is parabolic, then the geodesic action of $A_{P'/U_P}$ on $X_L$ identifies with the geodesic action of $A_{P'}$ on $U_P(\mathbb{R})\backslash X$. We will write $X_G=X$ if we want to stress the distinction between the canonical homogeneous spaces associated to these different reductive algebraic groups, $G$ and the Levi quotients $L$. This leads to the following definition made by Zucker (\cite{Z}).

\begin{definition}
Let $G$ be a reductive group over $\mathbb{Q}$ and $\Gamma\subset G(\mathbb{Q})$ a neat arithmetic subgroup.  The \emph{reductive Borel--Serre compactification} (of $\Gamma\backslash X/A_G$) is the quotient topological space
$$\widehat{Y}_\Gamma=(\Gamma\backslash \widehat{X})/\sim$$
obtained from $\Gamma\backslash \widehat{X}$ by collapsing the $[P]$-stratum to $\Gamma_L\backslash X_L/A_L$ via the above quotient map, for all parabolic subgroups $P$ (or just one representative from each $\Gamma$-conjugacy class).

\medskip

We view $\widehat{Y}_\Gamma$ as stratified over the poset $\Gamma\backslash \mathcal{P}$ of $\Gamma$-conjugacy classes of parabolic subgroups by the unique factoring
$$\widehat{Y}_\Gamma\rightarrow \Gamma\backslash\mathcal{P}$$
of the stratifying map $\Gamma\backslash \widehat{X}\rightarrow \Gamma\backslash \mathcal{P}$ of the Borel--Serre compactification.\defend
\end{definition}

Zucker checked that that $\widehat{Y}_\Gamma$ is Hausdorff, hence it is a compact Hausdorff space.   An important aspect of the reductive Borel--Serre compactification is its inductive nature, based on the following:

\begin{proposition}\label{inductivetop}
Let $G$ be a reductive group over $\mathbb{Q}$ and $\Gamma\subset G(\mathbb{Q})$ a neat arithmetic subgroup.  Then:
\begin{enumerate}
\item The projection map $\Gamma\backslash\widehat{X}\rightarrow \widehat{Y}_\Gamma$ is proper and restricts to an isomorphism over the open stratum $\Gamma\backslash X/A_G$ of $\widehat{Y}_\Gamma$;
\item For $P\subset G$ parabolic with Levi factor $L$, there is a canonical closed inclusion
$$\widehat{Y}_{\Gamma_L}\hookrightarrow \widehat{Y}_{\Gamma}$$
identifying $\widehat{Y}_{\Gamma_L}$ with $\widehat{Y}_\Gamma\times_{\Gamma\backslash \mathcal{P}}(\Gamma\backslash \mathcal{P})_{/[P]}$, the closure of the $[P]$-stratum in $\widehat{Y}_{\Gamma}$.
\end{enumerate}
\end{proposition}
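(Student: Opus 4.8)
The plan is to treat the two statements in turn; the first is soft, and the second is an unwinding of the constructions together with the properness statements of Borel--Serre and Zucker.

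\emph{Part 1.} Both $\Gamma\backslash\widehat X$ (Borel--Serre) and $\widehat Y_\Gamma$ (Zucker) are compact Hausdorff, so the quotient map $q\colon\Gamma\backslash\widehat X\to\widehat Y_\Gamma$ is automatically closed with compact fibres, hence proper. It lies over $\Gamma\backslash\mathcal P$, and the collapsing relation $\sim$ defining $\widehat Y_\Gamma$ affects the $[P]$-stratum only through the map $\Gamma_P\backslash X/A_P\to\Gamma_L\backslash X_L/A_L$; for $P=G$ one has $L=G$, $U_P=1$, $A_P=A_G$ and $X_L=X$, so this is the identity. Hence $q^{-1}(\Gamma\backslash X/A_G)=\Gamma\backslash X/A_G$ and $q$ restricts there to the identity.

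\emph{Part 2, combinatorics.} Parabolic subgroups of $L=P/U_P$ are exactly the $Q/U_P$ for $Q\subseteq P$ parabolic in $G$, and under this bijection the conjugation action of $\Gamma_L=\Gamma_P/\Gamma_{U_P}$ matches that of $\Gamma_P$ on $\{Q:Q\subseteq P\}$, so $\Gamma_L\backslash\mathcal P_L=\Gamma_P\backslash\{Q:Q\subseteq P\}$. The assignment $Q\mapsto[Q]$ descends to a map $\Gamma_P\backslash\{Q:Q\subseteq P\}\to\Gamma\backslash\mathcal P$ which is injective by \Cref{parabolicnormalise} (if $\gamma\in\Gamma$ with $\gamma Q\gamma^{-1}\subseteq P$ and $Q\subseteq P$, then $\gamma\in P(\mathbb Q)$, hence $\gamma\in\Gamma_P$) and, by \Cref{posetquotient}, has image exactly the down-set $(\Gamma\backslash\mathcal P)_{/[P]}$; a direct check of the partial orders shows it is an isomorphism of posets $\Gamma_L\backslash\mathcal P_L\overset{\sim}{\rightarrow}(\Gamma\backslash\mathcal P)_{/[P]}$.

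\emph{Part 2, topology.} Let $W\subseteq\widehat X$ be the closed subspace lying over the down-set $\{Q:Q\subseteq P\}\subseteq\mathcal P$, with its $\Gamma_P$-action. Unwinding the root-coordinate description of the corners $X\times^{A_P}(\mathbb R_{\geq 0})^{\Delta_P}$ and using $X_L=U_P(\mathbb R)\backslash X$ and $A_L=A_P$ (so that the corner of $W$ over a parabolic $Q\subseteq P$ becomes $(X/A_P)\times^{A_Q/A_P}(\mathbb R_{\geq 0})^{\Delta_P\smallsetminus\Delta_Q}$, matching the corresponding corner of the Borel--Serre partial compactification $\widehat X_L$ of $L$ with $X/A_P$ in place of $X_L/A_L$), one identifies $W$, as a stratified space over $\mathcal P_L$, with the Borel--Serre gluing for $L$ applied to the trivialisable principal $U_P(\mathbb R)$-bundle $X/A_P$ over $X_L/A_L$ (the trivialisation coming from the Langlands decomposition). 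The fibrewise $U_P(\mathbb R)$-quotients assemble to a $\Gamma_P$-equivariant fibre bundle $W\to\widehat X_L$ with fibre $U_P(\mathbb R)$, hence on passing to quotients a fibre bundle $\Gamma_P\backslash W\to\Gamma_L\backslash\widehat X_L$ with compact fibre $\Gamma_{U_P}\backslash U_P(\mathbb R)$; since $\Gamma_L$ is a neat (in particular torsionfree) arithmetic subgroup of $L(\mathbb Q)$, the base $\Gamma_L\backslash\widehat X_L$ is compact, so $\Gamma_P\backslash W$ is compact. On the other hand the natural continuous map $\Gamma_P\backslash W\to\Gamma\backslash\widehat X$ (the inclusion $W\hookrightarrow\widehat X$ followed by the quotient) is injective (by \Cref{parabolicnormalise} and freeness of the $\Gamma$-action) with image the closed subset $\widehat X_\Gamma\times_{\Gamma\backslash\mathcal P}(\Gamma\backslash\mathcal P)_{/[P]}$ of $\Gamma\backslash\widehat X$ (the image being the required down-set by the combinatorial surjectivity above), hence a homeomorphism onto it, its source being compact and its target Hausdorff. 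Passing to collapses, this identifies $\widehat Y_\Gamma\times_{\Gamma\backslash\mathcal P}(\Gamma\backslash\mathcal P)_{/[P]}$ with $(\Gamma_P\backslash W)/\sim$, and it remains to check that the restriction of $\sim$ to $\Gamma_P\backslash W$ is the composite of the $U_P(\mathbb R)$-quotient $\Gamma_P\backslash W\to\Gamma_L\backslash\widehat X_L$ with the reductive collapse defining $\widehat Y_{\Gamma_L}$: over the $[Q]$-stratum ($Q\subseteq P$) the former collapses the $U_Q(\mathbb R)$-directions and the latter collapses first the $U_P(\mathbb R)$- and then the $U_{Q/U_P}(\mathbb R)=U_Q(\mathbb R)/U_P(\mathbb R)$-directions, so they agree. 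This yields $\widehat Y_{\Gamma_L}\overset{\sim}{\rightarrow}\widehat Y_\Gamma\times_{\Gamma\backslash\mathcal P}(\Gamma\backslash\mathcal P)_{/[P]}$; and since $(\Gamma\backslash\mathcal P)_{/[P]}$ is the down-set generated by $[P]$ and the collapse map $\Gamma\backslash\widehat X\to\widehat Y_\Gamma$ is closed (compact to Hausdorff), this closed subset is exactly the closure of the $[P]$-stratum in $\widehat Y_\Gamma$.

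\emph{Main obstacle.} The delicate point is the first half of the last paragraph: identifying $W$ with an $L$-flavoured Borel--Serre bordification (equivalently, recognising the closure of a boundary stratum in $\widehat X$ as being built from the parabolics inside $P$), and then verifying that the reductive collapse, restricted to $\widehat Y_\Gamma\times_{\Gamma\backslash\mathcal P}(\Gamma\backslash\mathcal P)_{/[P]}$, factors through the $U_P(\mathbb R)$-quotient. This is conceptually dictated by Borel--Serre's analysis of the geodesic action and root coordinates together with the identifications $X_L=U_P(\mathbb R)\backslash X$, $A_L=A_P$, $\Delta_L\leftrightarrow\Delta\smallsetminus\Delta_P$, but it requires a careful corner-by-corner matching of the gluing data and of the two equivalence relations. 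Everything else is either formal (compact--Hausdorff arguments) or already packaged in \Cref{parabolicnormalise} and \Cref{posetquotient}.
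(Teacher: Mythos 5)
Your proposal is correct and follows essentially the same route as the paper: part 1 is the same compact--Hausdorff observation, and part 2 rests on exactly the paper's ingredients — the identification $X_L=U_P(\mathbb{R})\backslash X$, compatibility of the geodesic actions, \Cref{parabolicnormalise} for injectivity, and compactness of the source — with your more detailed treatment of the closure $W=\widehat{X}_{\leq P}$ as a $U_P(\mathbb{R})$-bundle over $\widehat{X}_L$ being precisely what the paper records separately as the proposition preceding \Cref{exitRBScorner}. One harmless slip: for $Q\subseteq P$ the corner of $W$ over $Q$ is $(X/A_P)\times^{A_Q/A_P}(\mathbb{R}_{\geq 0})^{\Delta_Q\smallsetminus\Delta_P}$, not $(\mathbb{R}_{\geq 0})^{\Delta_P\smallsetminus\Delta_Q}$ (the latter is empty since the parametrisation of parabolics by subsets of $\Delta$ is inclusion-reversing).
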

\begin{proof}
The first claim is obvious from the definition (and the fact that $\widehat{Y}_\Gamma$ is Hausdorff).  For the second claim, the existence of the map follows from the identification of symmetric spaces $X_L = U_P(\mathbb{R})\backslash X_G$ and the compatibility of the geodesic actions, as discussed above.  The map clearly restricts to a homeomorphism on each stratum, so to see it is an inclusion it suffices to recall the fact that if $P',P''\subset P$ are parabolic subgroups and $\gamma\in G(\mathbb{Q})$ conjugates $P'$ to $P''$, then $\gamma$ actually lies in $P(\mathbb{Q})$, see \Cref{parabolicnormalise}.
\end{proof}

Let $\partial \widehat{Y}_\Gamma= \widehat{Y}_\Gamma\smallsetminus (\Gamma\backslash X/A_G)$ denote the \emph{boundary} of the reductive Borel--Serre compactification: the complement of the open stratum. Similarly set $\partial \Gamma\backslash\widehat{X}=(\Gamma\backslash \widehat{X})\smallsetminus (\Gamma\backslash X/A_G)$. Then it follows from the above proposition that, in the category of topological spaces, we have:
\begin{enumerate}
\item $\widehat{Y}_\Gamma = (\Gamma\backslash \widehat{X}) \coprod_{\partial \Gamma\backslash \widehat{X}}\partial \widehat{Y}_\Gamma$, and
\item $\partial \widehat{Y}_\Gamma = \varinjlim_{[P]\in (\Gamma\backslash\mathcal{P})^{op}, [P]\neq [G]} \widehat{Y}_{\Gamma_{P/U_P}}$,
\end{enumerate}

\noindent giving a sense in which reductive Borel--Serre compactifications for a given group $G$ are built up from Borel--Serre compactifications together with reductive Borel--Serre compactifications for proper Levi factors of $G$.  In fact, this inductive nature of the reductive Borel--Serre compactification is very robust: these colimit diagrams turn into limit diagrams on categories of sheaves of all sorts.  This follows not from the bare statement about colimits in topological spaces, but from the more primitive \Cref{inductivetop}:

\begin{corollary}\label{inductivesheaf}
We have:
\begin{enumerate}
\item $\operatorname{Sh}(\widehat{Y}_\Gamma)\overset{\sim}{\rightarrow} \operatorname{Sh}(\Gamma\backslash \widehat{X})\times_{\operatorname{Sh}(\partial \Gamma\backslash \widehat{X})}\operatorname{Sh}(\partial\widehat{Y}_\Gamma)$, and similarly for constructible sheaves;
\item $\operatorname{Sh}(\partial \widehat{Y}_\Gamma)\overset{\sim}{\rightarrow}\varprojlim_{[P]\in \Gamma\backslash\mathcal{P}, [P]\neq [G]}\operatorname{Sh}(\widehat{Y}_{\Gamma_{P/U_P}})$ and similarly for constructible sheaves.
\end{enumerate}
\end{corollary}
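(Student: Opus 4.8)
The plan is to deduce both equivalences from the topological descent theorems of the previous section, and then upgrade each to constructible sheaves by the standard device of detecting constructibility on strata. Throughout, recall that $\widehat{Y}_\Gamma$ and all the subspaces in play are compact Hausdorff (Zucker), that the stratifying poset $\Gamma\backslash\mathcal{P}$ is finite (finitely many $\Gamma$-conjugacy classes of parabolic subgroups, by reduction theory) and in particular satisfies the ascending chain condition, and that $\widehat{Y}_{\Gamma_{P/U_P}}$ is identified by \Cref{inductivetop}(2) with the closure of the $[P]$-stratum, i.e.\ with $\widehat{Y}_\Gamma\times_{\Gamma\backslash\mathcal{P}}(\Gamma\backslash\mathcal{P})_{/[P]}$.

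For part 1, I would first observe that the open stratum $\Gamma\backslash X/A_G$ is the preimage in $\widehat{Y}_\Gamma$ of the open subset $\{[G]\}\subset\Gamma\backslash\mathcal{P}$, so $\partial\widehat{Y}_\Gamma$ is closed; and since the projection $f\colon\Gamma\backslash\widehat{X}\rightarrow\widehat{Y}_\Gamma$ lies over $\Gamma\backslash\mathcal{P}$, its preimage of $\partial\widehat{Y}_\Gamma$ is exactly $\partial\Gamma\backslash\widehat{X}$, so the square
$$\xymatrix{
\partial\Gamma\backslash\widehat{X}\ar[r]\ar[d] & \Gamma\backslash\widehat{X}\ar[d]^f \\
\partial\widehat{Y}_\Gamma\ar[r]^g & \widehat{Y}_\Gamma \\
}$$
is a pullback. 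Now \Cref{inductivetop}(1) says precisely that $f$ is proper and restricts to an isomorphism over the open complement of $\partial\widehat{Y}_\Gamma$, while $g$ is by construction the inclusion of a closed subspace; hence the hypotheses of \Cref{cdhdescenttop} are met and we get $\operatorname{Sh}(\widehat{Y}_\Gamma)\overset{\sim}{\rightarrow}\operatorname{Sh}(\Gamma\backslash\widehat{X})\times_{\operatorname{Sh}(\partial\Gamma\backslash\widehat{X})}\operatorname{Sh}(\partial\widehat{Y}_\Gamma)$. For the constructible variant, by \Cref{testonstrata} constructibility is tested on strata, and the open stratum of $\widehat{Y}_\Gamma$ lies in $\Gamma\backslash\widehat{X}$ (where $f$ is an isomorphism) while every other stratum lies in $\partial\widehat{Y}_\Gamma$; so a sheaf on $\widehat{Y}_\Gamma$ is constructible iff its pullbacks along $f$ and $g$ are, which gives the constructible equivalence.

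For part 2, I would apply the closed-cover descent theorem \Cref{closeddescenttop} to the finite family $\mathcal{Q}=\{\widehat{Y}_{\Gamma_{P/U_P}}:[P]\in\Gamma\backslash\mathcal{P},\ [P]\neq[G]\}$ of closed subsets of $\partial\widehat{Y}_\Gamma$. Their union is all of $\partial\widehat{Y}_\Gamma$ since every boundary stratum lies in its own closure; and for a subfamily indexed by $\{[P_i]\}$ one has $\bigcap_i\widehat{Y}_{\Gamma_{P_i/U_{P_i}}}=\widehat{Y}_\Gamma\times_{\Gamma\backslash\mathcal{P}}D$ with $D=\{[Q]:[Q]\le[P_i]\ \forall i\}$ a down-set of $\Gamma\backslash\mathcal{P}$ not containing $[G]$, so by finiteness $D$ is the union of the principal down-sets below its maximal elements, each of which corresponds to a member of $\mathcal{Q}$ — verifying the intersection hypothesis. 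Thus \Cref{closeddescenttop} yields $\operatorname{Sh}(\partial\widehat{Y}_\Gamma)\overset{\sim}{\rightarrow}\varprojlim_{S\in\mathcal{Q}^{op}}\operatorname{Sh}(S)$, and since $[P]\mapsto\widehat{Y}_{\Gamma_{P/U_P}}$ is an order isomorphism from $\Gamma\backslash\mathcal{P}\smallsetminus\{[G]\}$ onto $(\mathcal{Q},\subseteq)$ (it is order-preserving by \Cref{inductivetop}(2), and order-reflecting, hence injective, because the surjectivity of $\widehat{Y}_\Gamma\rightarrow\Gamma\backslash\mathcal{P}$ recovers $(\Gamma\backslash\mathcal{P})_{/[P]}$ as the image of $\widehat{Y}_{\Gamma_{P/U_P}}$), this is exactly the claimed limit over $[P]\in\Gamma\backslash\mathcal{P}$, $[P]\neq[G]$. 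The constructible version follows as in part 1: the $[P]$-stratum of $\partial\widehat{Y}_\Gamma$ is the open stratum of $\widehat{Y}_{\Gamma_{P/U_P}}$, so constructibility is detected after pullback to the members of $\mathcal{Q}$.

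The only real work, beyond citing \Cref{cdhdescenttop}, \Cref{closeddescenttop} and \Cref{testonstrata}, is bookkeeping: realising $\partial\widehat{Y}_\Gamma$ and the closed pieces $\widehat{Y}_{\Gamma_{P/U_P}}$ as pullbacks of (principal) down-sets of $\Gamma\backslash\mathcal{P}$ via \Cref{inductivetop}, checking the finite-intersection-is-covered condition of \Cref{closeddescenttop}, and matching the resulting index poset of closed subsets with the poset $\Gamma\backslash\mathcal{P}\smallsetminus\{[G]\}$ in the statement. I do not anticipate a genuine obstacle here, only the need to keep the variances and index categories straight.
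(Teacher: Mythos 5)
Your proof is correct and follows the paper's own route: part 1 by the topological cdh descent of \Cref{cdhdescenttop} applied to the proper quotient $\Gamma\backslash\widehat{X}\rightarrow\widehat{Y}_\Gamma$ (using \Cref{inductivetop}), part 2 by the closed-cover descent of \Cref{closeddescenttop} applied to the closures of boundary strata, with constructibility in both cases checked stratum by stratum. The extra bookkeeping you supply (finiteness of $\Gamma\backslash\mathcal{P}$, the intersection-is-covered condition, and the identification of the index poset) is exactly what the paper leaves implicit and is verified correctly.
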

\begin{proof}
Part 1 for sheaves follows from the topological cdh descent, \Cref{cdhdescenttop}.  To deduce the claim for constructible sheaves, we need to know that a sheaf on $\widehat{Y}_\Gamma$ is constructible if its pullback to the other three terms is.  This is clear because the only stratum not in the boundary is the open stratum, and the projection from $\Gamma\backslash \widehat{X}$ is an isomorphism over the open stratum.  Part 2 follows for sheaves from the descent for closed covers, \Cref{closeddescenttop}.  To deduce the claim for constructible sheaves it suffices to note that every stratum is contained in its closure which is some $\widehat{Y}_{\Gamma_{P/U_P}}$.
\end{proof}

In principle, this corollary inductively yields an identification of the exit path $\infty$-category of $\widehat{Y}_\Gamma$, based on the case of $\Gamma\backslash \widehat{X}$ treated in the previous section.  But for technical reasons we will need to make a comparison functor before we can use the inductive description to prove it's an equivalence.  To accomplish that we will describe the reductive Borel--Serre compactification in terms of stratified spaces whose exit path $\infty$-categories are equivalent to posets; on such stratified spaces the required comparison functor comes for free, see Section \ref{posetsection}, and then we deduce the correct comparison functor for $\widehat{Y}_\Gamma$ by passing to colimits.

\medskip

Since the inductive description is based on closed subsets and not open subsets, the suitable building blocks will be the closures of strata in Borel--Serre compactifications, for all Levi factors of $G$.  We therefore start with a discussion of these.

\medskip

Let $G$ be a reductive group over $\mathbb{Q}$, and let $P$ be a parabolic subgroup of $G$. Denote by
$$\widehat{X}_{\leq P} := \widehat{X}\times_{\mathcal{P}}\mathcal{P}_{/P}$$
the closure of the $P$-stratum in $\widehat{X}$, which we view as stratified over the poset $\mathcal{P}_{/P}$ of parabolic subgroups contained in $P$.  

\begin{proposition}
Let $G$ be a reductive group over $\mathbb{Q}$, let $P$ be a parabolic subgroup of $G$, and set $L=P/U_P$.  There is a natural stratum-preserving free and proper $U_P(\mathbb{R})$-action on $\widehat{X}_{\leq P}$ extending the $U_P(\mathbb{Q})$-action, and the quotient
$$U_P(\mathbb{R})\backslash \widehat{X}_{\leq P}$$
identifies with the Borel--Serre partial compactification associated to the reductive group $L$.

\medskip

More generally, if $Q$ is a parabolic subgroup containing $P$, then $U_Q(\mathbb{R})\backslash \widehat{X}_{\leq P}$ identifies with the closure of the $P/U_Q$-stratum in the Borel--Serre partial compactification associated to $Q/U_Q$.
\end{proposition}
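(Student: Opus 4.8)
The plan is to establish the first assertion for an arbitrary pair $(G,P)$, and then to deduce the ``more general'' statement formally by applying the first assertion to the pair $(G,Q)$ and base-changing to the closed substratum lying over $\mathcal{P}_{/P}$; note that the first assertion is itself the case $Q=P$ of the general statement (there $M:=Q/U_Q=L$, the parabolic $P/U_Q$ of $M$ is all of $M$, and the closure of its stratum is the whole partial compactification), so nothing is lost.

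\medskip

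For the first assertion, put $L=P/U_P$ and let $q\colon X\rightarrow X_L=U_P(\mathbb{R})\backslash X$ be the quotient; recall from \cite{BS} that $A_L$ acting on $X_L$ is the geodesic action obtained from $A_P$ acting on $U_P(\mathbb{R})\backslash X$, and more generally that for $P'\subseteq P$ the geodesic action of $A_{P'/U_P}$ on $X_L$ is obtained from $A_{P'}$ acting on $U_P(\mathbb{R})\backslash X$ (in particular $A_{P'/U_P}=A_{P'}$). The space $\widehat{X}_{\leq P}$ is covered by the open stars of its strata, and for $P'\subseteq P$ the open star around the $P'$-stratum is $\widehat{X}_{\leq P}\cap\widehat{X}_{\geq P'}$; since $P'\subseteq P$ we have $\Delta_P\subseteq\Delta_{P'}$, and using the root coordinates this open star is identified with the closed subspace of $\widehat{X}_{\geq P'}=X\times^{A_{P'}}(\mathbb{R}_{\geq 0})^{\Delta_{P'}}$ obtained by setting the coordinates indexed by $\Delta_P$ equal to $0$, i.e.\ with
$$X\times^{A_{P'}}\big(\{0\}^{\Delta_P}\times(\mathbb{R}_{\geq 0})^{\Delta_{P'}\smallsetminus\Delta_P}\big).$$
On this space the subtorus $A_P\subseteq A_{P'}$ acts trivially on the cube factor while $U_P(\mathbb{R})\subseteq U_{P'}(\mathbb{R})\subseteq P'(\mathbb{R})$ acts only on $X$, commuting with the geodesic $A_{P'}$-action; by the Langlands decomposition of $X$ (\cite[5.4]{BS}) this $U_P(\mathbb{R})$-action is, after unravelling the corner structure, translation on one factor of a product, hence free and proper. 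The open inclusions between these open stars are the identity on the $X$-coordinate, so the local actions agree on overlaps and glue to a stratum-preserving, free, proper $U_P(\mathbb{R})$-action on $\widehat{X}_{\leq P}$ extending the evident $U_P(\mathbb{Q})$-action.

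\medskip

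Dividing $X$ by $U_P(\mathbb{R})$ in each open star and using the cited compatibility of geodesic actions, the map $q\times\mathrm{id}$ descends to a continuous open surjection
$$X\times^{A_{P'}}\big(\{0\}^{\Delta_P}\times(\mathbb{R}_{\geq 0})^{\Delta_{P'}\smallsetminus\Delta_P}\big)\;\longrightarrow\; X_L\times^{A_{P'/U_P}}(\mathbb{R}_{\geq 0})^{\Delta_{P'}\smallsetminus\Delta_P}=\widehat{(X_L)}_{\geq P'/U_P}$$
(using that $\Delta_{P'/U_P}=\Delta_{P'}\smallsetminus\Delta_P$ under the classification of parabolics of $L$) whose fibres are exactly the $U_P(\mathbb{R})$-orbits; these are compatible with the open inclusions of open stars on both sides, so they glue to a continuous open surjection $\widehat{X}_{\leq P}\rightarrow\widehat{X_L}$ with $U_P(\mathbb{R})$-orbits as fibres, exhibiting $\widehat{X_L}$ as $U_P(\mathbb{R})\backslash\widehat{X}_{\leq P}$, stratified over $\mathcal{P}_{/P}\cong\mathcal{P}(L)$. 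To deduce the general statement, let $Q\supseteq P$ and $M=Q/U_Q$, so $U_Q(\mathbb{R})\subseteq U_P(\mathbb{R})$; the first assertion applied to $(G,Q)$ gives a $U_Q(\mathbb{R})$-equivariant homeomorphism $U_Q(\mathbb{R})\backslash\widehat{X}_{\leq Q}\overset{\sim}{\rightarrow}\widehat{X_M}$ of stratified spaces over $\mathcal{P}_{/Q}\cong\mathcal{P}(M)$ (the latter via $P'\mapsto P'/U_Q$). Since $\mathcal{P}_{/P}$ is a down-closed, hence closed, subset of $\mathcal{P}_{/Q}$ and $\widehat{X}_{\leq P}=\widehat{X}_{\leq Q}\times_{\mathcal{P}_{/Q}}\mathcal{P}_{/P}$ is the preimage of $\mathcal{P}_{/P}$ under the $U_Q(\mathbb{R})$-invariant stratifying map, the orbit space commutes with this base change, so
$$U_Q(\mathbb{R})\backslash\widehat{X}_{\leq P}\;\cong\;\big(U_Q(\mathbb{R})\backslash\widehat{X}_{\leq Q}\big)\times_{\mathcal{P}_{/Q}}\mathcal{P}_{/P}\;\cong\;\widehat{X_M}\times_{\mathcal{P}(M)}\mathcal{P}(M)_{/(P/U_Q)},$$
where $\mathcal{P}_{/P}$ corresponds to $\mathcal{P}(M)_{/(P/U_Q)}$ under $\mathcal{P}_{/Q}\cong\mathcal{P}(M)$; the right-hand side is by definition the closure of the $(P/U_Q)$-stratum in $\widehat{X_M}$, as claimed.

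\medskip

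The main obstacle is the first assertion, and inside it the combinatorial bookkeeping: keeping straight the relative root systems, the root coordinates, and the geodesic actions for $G$, for the various $P'$, and for their Levi quotients, and checking that the open-star identifications above are compatible with the gluing data so that they descend to honest homeomorphisms of the glued spaces. All of this is routine given the Borel--Serre package recalled above and on the preceding pages, but it is where the care is needed.
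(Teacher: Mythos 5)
Your proof is correct and follows essentially the same route as the paper: cover $\widehat{X}_{\leq P}$ by the pieces $\widehat{X}_{[P',P]}\subset\widehat{X}_{\geq P'}$ cut out by setting the $\Delta_P$-coordinates to zero, use $U_P(\mathbb{R})\subset U_{P'}(\mathbb{R})\subset P'(\mathbb{R})$ and the commutation with the geodesic $A_{P'}$-action to define the action piecewise and glue, and identify the quotient with the Borel--Serre corners of $L$ via $X_L=U_P(\mathbb{R})\backslash X$ and the compatibility of geodesic actions. The only (harmless) deviation is that you deduce the ``more general'' claim formally from the first one by base change along the closed subposet $\mathcal{P}_{/P}\subset\mathcal{P}_{/Q}$, where the paper simply notes the argument is completely analogous, and you spell out freeness/properness and the fibres-equal-orbits check in slightly more detail than the paper does.
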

\begin{proof}  Note that $\widehat{X}_{\leq P}$ is glued from its open subsets $\widehat{X}_{[P',P]}$ indexed by parabolic subgroups $P'\subset P$, which are in turn closed subsets of the Borel--Serre corners $\widehat{X}_{\geq P'}$, namely
$$\widehat{X}_{[P',P]}= X\times^{A_{P'}}(\mathbb{R}_{\geq 0})^{\Delta_{P'}\smallsetminus\Delta_{P}}\subset X\times^{A_{P'}}(\mathbb{R}_{\geq 0})^{\Delta_{P'}} = \widehat{X}_{\geq P'},$$
coming from the inclusion of the coordinate hypersurface $(\mathbb{R}_{\geq 0})^{\Delta_{P'}\smallsetminus\Delta_{P}}\subset (\mathbb{R}_{\geq 0})^{\Delta_{P'}}$.  We recall that the $P'(\mathbb{R})$-action on $X$ commutes with the geodesic action by $A_{P'}$.  Since 
$$U_P(\mathbb{R})\subset U_{P'}(\mathbb{R})\subset P'(\mathbb{R})$$
when $P'\subset P$, this induces compatible $U_P(\mathbb{R})$-actions on all the $\widehat{X}_{[P',P]}$, whence the required action on $\widehat{X}_{\leq P}$.  For the identification of the quotient with the Borel--Serre partial compactification associated to $\Gamma_L$, it follows from comparing the definitions using the identification $X_L=U_P(\mathbb{R})\backslash X_G$ discussed above.  The more general claim is completely analogous.
\end{proof}

\begin{corollary}\label{exitRBScorner}
The exit path $\infty$-category of $U_Q(\mathbb{R})\backslash \widehat{X}_{\leq P}$ identifies with its stratifying poset $\mathcal{P}_{\leq P}$.
\end{corollary}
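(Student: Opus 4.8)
The plan is to deduce this from \Cref{partialexit}, which identifies the exit path $\infty$-category of the Borel--Serre partial compactification $\widehat{X}\rightarrow\mathcal{P}$ with the stratifying poset $\mathcal{P}$ itself, together with the behaviour of exit path $\infty$-categories under passage to locally closed subsets of the base, part 1 of \Cref{calculatebydescent}.

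First I would treat the case $Q=G$, i.e.\ the space $\widehat{X}_{\leq P}=\widehat{X}\times_{\mathcal{P}}\mathcal{P}_{/P}$ itself. Here the subset $\mathcal{P}_{\leq P}=\mathcal{P}_{/P}\subseteq\mathcal{P}$ of parabolic subgroups contained in $P$ is a down-set, hence a closed (in particular locally closed) subset of $\mathcal{P}$ in the Alexandroff topology, and it satisfies the ascending chain condition since $\mathcal{P}$ does. Thus part 1 of \Cref{calculatebydescent}, applied to $\widehat{X}\rightarrow\mathcal{P}$ (which admits an exit path $\infty$-category by \Cref{partialexit}) and to the locally closed subset $\mathcal{P}_{/P}$, shows that $\widehat{X}_{\leq P}\rightarrow\mathcal{P}_{/P}$ admits an exit path $\infty$-category, and that it is
$$\Pi(\widehat{X}\rightarrow\mathcal{P})\times_{\mathcal{P}}\mathcal{P}_{/P}\;=\;\mathcal{P}\times_{\mathcal{P}}\mathcal{P}_{/P}\;=\;\mathcal{P}_{/P}\;=\;\mathcal{P}_{\leq P},$$
as desired. (Alternatively, one can mimic the proof of \Cref{partialexit} directly: $\widehat{X}_{\leq P}$ is the colimit of its open subsets $\widehat{X}_{[P',P]}$ over $P'\in(\mathcal{P}_{\leq P})^{op}$, each of which is stratified-homeomorphic to $\mathbb{R}^d\times(\mathbb{R}_{\geq 0})^{\Delta_{P'}\smallsetminus\Delta_P}\rightarrow\{0<1\}^{\Delta_{P'}\smallsetminus\Delta_P}$ and hence has exit path $\infty$-category the poset $\mathcal{P}_{[P',P]}$ of parabolics between $P'$ and $P$ by the argument of \Cref{exitcorner}; then \Cref{opendescent}, \Cref{tautologicalbutuseful} and part 2 of \Cref{calculatebydescent} assemble these into the statement for $\widehat{X}_{\leq P}$.)

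For general $Q\supseteq P$, I would invoke the previous proposition: it provides a stratum-preserving homeomorphism between $U_Q(\mathbb{R})\backslash\widehat{X}_{\leq P}$ and the closure of the $(P/U_Q)$-stratum in the Borel--Serre partial compactification of the connected reductive $\mathbb{Q}$-group $Q/U_Q$, compatibly with the identification of stratifying posets $\mathcal{P}_{\leq P}\cong\{\,\text{parabolics of }Q/U_Q\text{ contained in }P/U_Q\,\}$ given by $P'\mapsto P'/U_Q$ (using $U_Q\subseteq U_{P'}\subseteq P'$ for $P'\subseteq P\subseteq Q$). Applying the case already treated, with $Q/U_Q$ in place of $G$ and $P/U_Q$ in place of $P$, then yields the claim.

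There is essentially no serious obstacle here beyond bookkeeping. The only points that need a moment of care are the identification of the stratifying posets under the homeomorphism of the previous proposition, and the observation that the relevant slice $\mathcal{P}_{/P}$ of $\mathcal{P}$ is locally closed and inherits the ascending chain condition --- both of which are immediate from the definitions.
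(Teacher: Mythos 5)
Your proposal is correct and follows essentially the same route as the paper: the paper's one-line proof likewise combines the preceding proposition (identifying $U_Q(\mathbb{R})\backslash\widehat{X}_{\leq P}$ with a stratum closure in the Borel--Serre partial compactification of $Q/U_Q$) with \Cref{partialexit} and the locally-closed-restriction statement, part 1 of \Cref{calculatebydescent}. Your reduction to the case $Q=G$ first is just a reorganisation of the same argument, and the details you check (the slice $\mathcal{P}_{/P}$ being closed, the ascending chain condition, the matching of stratifying posets) are exactly the implicit verifications.
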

\begin{proof}
By \Cref{calculatebydescent} this follows from the identification of the exit path $\infty$-category of the Borel--Serre partial compactification, \Cref{partialexit}.
\end{proof}

The following category will end up being the exit path $\infty$-category of $\widehat{Y}_\Gamma$, see \cite{OrsnesJansen}.

\begin{definition}

Let $G$ be a reductive group over $\mathbb{Q}$ and $\Gamma\subset G(\mathbb{Q})$ a neat arithmetic subgroup.  Let $\operatorname{RBS}_\Gamma$ denote the category whose objects are the parabolic subgroups $P\subset G$, and where the set of maps $P\rightarrow Q$ is the quotient
$$\{\gamma\in\Gamma : \gamma P\gamma^{-1}\subset Q\}/\Gamma_{U_P},$$
composition being induced by multiplication in $\Gamma$, which is well-defined as $\Gamma_{U_Q}\subset \Gamma_{U_{\gamma P\gamma^{-1}}}$.\defend

\end{definition}

Recall the \emph{twisted arrow category} $\operatorname{Tw}(\mathcal{C})$ of a category $\mathcal{C}$: its objects are the maps $X\rightarrow Y$ in $\mathcal{C}$, and a map $(f\colon X\rightarrow Y)\rightarrow (f'\colon X'\rightarrow Y')$ is a factorisation of the latter through the former, namely maps $a\colon X'\rightarrow X$ and $b\colon Y\rightarrow Y'$ such that $f'=bfa$.  We will identify the exit path $\infty$-category of $\widehat{Y}_\Gamma$ with $\operatorname{RBS}_\Gamma$ by expressing $\widehat{Y}_\Gamma$ as a colimit indexed by $\operatorname{Tw}(\operatorname{RBS}_\Gamma)^{op}$ of the spaces $U_Q(\mathbb{R})\backslash \widehat{X}_{\leq P}$ discussed above.  To make this work we need some lemmas.

\begin{lemma}\label{twistedcofinal}
Let $\mathcal{C}$ be a category.  The projection functor $\operatorname{Tw}(\mathcal{C})^{op}\rightarrow \mathcal{C}$ sending $x\rightarrow y$ to $x$ is a $\varinjlim$-equivalence.
\end{lemma}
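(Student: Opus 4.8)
The plan is to use the criterion for $\varinjlim$-equivalences from \Cref{lim-equivalences}: a functor $F$ is a $\varinjlim$-equivalence precisely when $F^{op}$ is a $\varprojlim$-equivalence, which (by condition 3 of that theorem applied to $F^{op}$) means that for every object $c \in \mathcal{C}$ the appropriate fibre is contractible. Concretely, writing $p\colon \operatorname{Tw}(\mathcal{C})^{op}\rightarrow\mathcal{C}$ for the projection sending $(x\rightarrow y)$ to $x$, I want to show that for each $c\in\mathcal{C}$ the relevant comma category — the right fibre of $p$ over $c$, equivalently the left fibre of the opposite functor $\operatorname{Tw}(\mathcal{C})\rightarrow\mathcal{C}^{op}$ over $c$ — is contractible. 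So the first step is to unwind what this fibre is: its objects are pairs consisting of an arrow $(x\rightarrow y)$ in $\mathcal{C}$ together with a morphism in $\mathcal{C}$ from $c$ to $x$ (the direction dictated by the op's), and morphisms are the evident maps in $\operatorname{Tw}(\mathcal{C})$ compatible with these structure maps.

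The key step is then to identify this fibre, up to a $\varinjlim$-equivalence (or better, an adjunction, so that \Cref{propertiesinvert}(4) applies), with a category that manifestly has an initial or terminal object. The natural candidate: the fibre should admit the pair $(\operatorname{id}_c \colon c\rightarrow c,\ \operatorname{id}_c)$ as an initial object, or else deformation-retract onto the subcategory $c_{/} = \mathcal{C}_{c/}$ of objects under $c$, which has $\operatorname{id}_c$ as initial object hence is contractible by \Cref{finalexample}. The cleanest route is probably to exhibit a functor from this fibre to $\mathcal{C}_{c/}$ (forget $y$ and the map $x\rightarrow y$, remembering only $c\rightarrow x$) and a functor back (send $c\rightarrow x$ to the pair $(x\xrightarrow{\operatorname{id}} x,\ c\rightarrow x)$), and check these are adjoint: given an object $(c\rightarrow x\rightarrow y)$ of the fibre, a map to $(x\xrightarrow{\operatorname{id}}x',\ c\rightarrow x')$ in the twisted-arrow sense amounts exactly to a factorisation, and one checks the unit/counit triangles directly. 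Then by \Cref{lim-equivalences} and \Cref{propertiesinvert}(4) both functors induce equivalences on $\vert\cdot\vert$, so the fibre is contractible, and we are done.

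The main obstacle I anticipate is purely bookkeeping: getting the variances right. The twisted arrow category, its opposite, and the comma/fibre constructions each flip a direction, and it is easy to end up checking contractibility of the wrong comma category (e.g. the one over $y$ rather than over $x$, which would be false in general). So I would be careful to pin down at the outset, via the informal description of $\mathcal{C}_{/d}$ recalled after \Cref{lim-equivalences} ("an object of $\mathcal{C}_{/d}$ is an object $c$ together with a map $f(c)\rightarrow d$"), exactly which direction the structure map $c \rightarrow x$ points, and then verify the adjunction with that convention fixed. An alternative, if the adjunction bookkeeping gets unwieldy, is to observe directly that $(\operatorname{id}_c, \operatorname{id}_c)$ is initial in the fibre: any object $(c \xrightarrow{u} x \xrightarrow{f} y)$ receives a unique map from it, given by the pair $(u\colon c\rightarrow x,\ f\colon y\rightarrow y$ composed appropriately$)$ — again the precise form depends on the variance, but the uniqueness is immediate once the shape is correct. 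Either way the argument is short; the care is entirely in the directions of arrows.
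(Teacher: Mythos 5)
Your reduction to contractibility of the right fibre over $c\in\mathcal{C}$ is the right move and matches the paper, but the key step fails: the assignment $\mathcal{C}_{c/}\to(\text{fibre})$, $(c\to x)\mapsto(x\xrightarrow{\operatorname{id}}x,\ c\to x)$, is not even a functor. Unwind the variance: a morphism in the fibre from $(c\xrightarrow{u}x\xrightarrow{f}y)$ to $(c\xrightarrow{u'}x'\xrightarrow{f'}y')$ is a pair $(a\colon x\to x',\ b\colon y'\to y)$ with $au=u'$ and $f=bf'a$ --- the source component is covariant but the target component is \emph{contravariant}. So a map $a\colon x\to x'$ in $\mathcal{C}_{c/}$ would have to be accompanied by some $b\colon x'\to x$ with $ba=\operatorname{id}_x$ to produce a morphism $(x\xrightarrow{\operatorname{id}}x)\to(x'\xrightarrow{\operatorname{id}}x')$, and such a retraction need not exist; for the same reason ``forget $y$'' admits no adjoint in either direction, so the claimed adjunction cannot be repaired by switching sides. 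Your fallback is also false: $(\operatorname{id}_c,\operatorname{id}_c)$ is neither initial nor terminal in the fibre, since a map from it to $(c\xrightarrow{u}x\xrightarrow{f}y)$ is precisely a retraction $b$ of $fu$ (i.e.\ $b\colon y\to c$ with $bfu=\operatorname{id}_c$), which may fail to exist or to be unique. The fibre is contractible, but not by having a (co)terminal object.

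The fix, which is the paper's argument, is to collapse in the other direction: retract onto the full subcategory of objects whose structure map $c\to x$ is the \emph{identity}. Because of the contravariance in $y$, that subcategory is $(\mathcal{C}_{c/})^{op}$ (objects are maps $c\to y$, morphisms are maps $y'\to y$ under $c$), which is contractible since $\mathcal{C}_{c/}$ has an initial object. The retraction sends $(c\xrightarrow{u}x\xrightarrow{f}y)$ to $(c\xrightarrow{=}c\xrightarrow{fu}y)$, i.e.\ it remembers the \emph{composite} to the target rather than the map to the source; the pair $(u,\operatorname{id}_y)$ then gives a canonical morphism from the retracted object to the original one, natural in the object, so by \Cref{propertiesinvert} the fibre is homotopy equivalent to $(\mathcal{C}_{c/})^{op}$ and hence contractible, and \Cref{lim-equivalences} concludes. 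So your instinct about the danger being ``entirely in the directions of arrows'' was exactly right --- but the arrow you chose to keep is the one that must be discarded.
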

\begin{proof}
The right fibre over an object $c\in \mathcal{C}$ identifies with the category whose objects are the composable maps
$$c\rightarrow x\rightarrow y$$
emanating from $c$, and whose morphisms $(c\rightarrow x\rightarrow y)\rightarrow (c\rightarrow x'\rightarrow y')$ are pairs of maps
$$(x\rightarrow x',y'\rightarrow y)$$
making the evident diagram commute.  The full subcategory on those objects for which $c\overset{=}{\rightarrow} x$ is equivalent to $(\mathcal{C}_{c/})^{op}$ and is therefore contractible; but on the other hand there is a retraction to the inclusion of this full subcategory given by composition, $c\rightarrow x\rightarrow y\mapsto (c=c\rightarrow y)$, and an obvious natural transformation from this retraction to the identity.  Thus the right fibre is homotopy equivalent to $(\mathcal{C}_{c/})^{op}$ and is therefore also contractible, hence by \Cref{lim-equivalences} our functor is a $\varinjlim$-equivalence as desired.
\end{proof}

\begin{lemma}\label{twistedRBS}
The category $\operatorname{Tw}(\operatorname{RBS}_\Gamma)^{op}$ is equivalent to the category whose objects are the inclusions $P\subset Q$ of parabolic subgroups, and whose set of maps $(P\subset Q)\rightarrow (P'\subset Q')$ is given by
$$\Gamma_{U_{Q'}}\backslash \{\gamma\in \Gamma: \gamma P\gamma^{-1}\subset P', \gamma Q\gamma^{-1}\supset Q'\}$$
with composition induced by multiplication in $\Gamma$.

\medskip

More precisely, the equivalence is given by the functor which on objects sends the inclusion $P\subset Q$ to the map $P\overset{[id]}{\rightarrow} Q$ in $\operatorname{RBS}_{\Gamma}$, and on maps sends $[\gamma]\colon(P\subset Q)\rightarrow (P'\subset Q')$ to the pair of maps $P\overset{[\gamma]}{\rightarrow} P'$ and $Q'\overset{[\gamma^{-1}]}{\rightarrow} Q$ in $\operatorname{RBS}$.
\end{lemma}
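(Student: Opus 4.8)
The plan is to exhibit the functor $F$ described in the statement and verify directly that it is a well-defined, fully faithful, essentially surjective functor. Throughout, the only structural facts used are elementary properties of unipotent radicals: for $\gamma\in\Gamma$ one has $\gamma U_P\gamma^{-1}=U_{\gamma P\gamma^{-1}}$, hence $\gamma\,\Gamma_{U_P}\,\gamma^{-1}=\Gamma_{U_{\gamma P\gamma^{-1}}}$; for an inclusion $R\subset R'$ of parabolics, $U_{R'}\subset U_R\subset R\subset R'$; parabolic subgroups of $G$ contained in a parabolic $S$ correspond bijectively, via their image, to parabolic subgroups of $S/U_S$ (so conjugation by an element of $U_S$ fixes any parabolic of $G$ contained in $S$); and $\Gamma_{U_R}=\Gamma\cap U_R(\mathbb Q)$. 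I also record that the quotient defining $\operatorname{Hom}_{\operatorname{RBS}_\Gamma}(P,Q)$ is by right multiplication by $\Gamma_{U_P}$, so $[\gamma_1]=[\gamma_2]$ there if and only if $\gamma_2^{-1}\gamma_1\in\Gamma_{U_P}$, and composition is $([\gamma],[\delta])\mapsto[\delta\gamma]$.

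First I would check that the candidate target category $\mathcal A$ --- objects the inclusions $P\subset Q$, morphisms $\Gamma_{U_{Q'}}\backslash\{\gamma\in\Gamma:\gamma P\gamma^{-1}\subset P',\ \gamma Q\gamma^{-1}\supset Q'\}$ with composition induced by multiplication in $\Gamma$ --- is genuinely a category. For $\gamma$ in the displayed set one has $\gamma P\gamma^{-1}\subset P'\subset Q'\subset\gamma Q\gamma^{-1}$; left multiplication by $v\in\Gamma_{U_{Q'}}$ preserves the set because $v\in Q'\subset\gamma Q\gamma^{-1}$ normalises $\gamma Q\gamma^{-1}$, while conjugation by $v\in U_{Q'}$ fixes the parabolic $\gamma P\gamma^{-1}$ of $G$ inside $Q'$; well-definedness of composition is the same sort of bookkeeping, using that $\gamma Q\gamma^{-1}\supset Q''$ forces $U_{\gamma Q\gamma^{-1}}\subset U_{Q''}$. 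Now for $F$: on objects $F(P\subset Q)$ is the object ``$P\xrightarrow{[1]}Q$'' of $\operatorname{Tw}(\operatorname{RBS}_\Gamma)$, legitimate since $1\cdot P\cdot 1=P\subset Q$; a morphism $[\gamma]\colon(P\subset Q)\to(P'\subset Q')$ is sent to the pair $(P\xrightarrow{[\gamma]}P',\ Q'\xrightarrow{[\gamma^{-1}]}Q)$, and this is a morphism in $\operatorname{Tw}(\operatorname{RBS}_\Gamma)^{op}$ from $P\xrightarrow{[1]}Q$ to $P'\xrightarrow{[1]}Q'$ because $[\gamma^{-1}]\circ[1]\circ[\gamma]=[\gamma^{-1}\gamma]=[1]$ in $\operatorname{RBS}_\Gamma$. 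Independence of the representative $\gamma$: replacing $\gamma$ by $v\gamma$ with $v\in\Gamma_{U_{Q'}}$ leaves the first component unchanged since $\gamma^{-1}v\gamma\in U_{\gamma^{-1}Q'\gamma}\subset U_P$ (as $P\subset\gamma^{-1}P'\gamma\subset\gamma^{-1}Q'\gamma$), and leaves the second unchanged since $v^{-1}\in\Gamma_{U_{Q'}}$ and the relevant quotient is by right $\Gamma_{U_{Q'}}$-multiplication.

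Next I would verify functoriality. Compatibility with identities is immediate, and compatibility with composition falls out of unwinding the composition law in $\operatorname{Tw}$ --- where the source-maps compose contravariantly and the target-maps covariantly --- together with the variance reversal in $(-)^{op}$: for $[\gamma]\colon(P\subset Q)\to(P'\subset Q')$ and $[\gamma']\colon(P'\subset Q')\to(P''\subset Q'')$, both $F([\gamma'])\circ F([\gamma])$ and $F([\gamma'\gamma])$ come out as the morphism with first component $[\gamma'\gamma]\colon P\to P''$ and second component $[(\gamma'\gamma)^{-1}]=[\gamma^{-1}\gamma'^{-1}]\colon Q''\to Q$. Faithfulness: if $F([\gamma_1])=F([\gamma_2])$, then in particular $[\gamma_1^{-1}]=[\gamma_2^{-1}]$ in $\operatorname{Hom}_{\operatorname{RBS}_\Gamma}(Q',Q)$, i.e.\ $\gamma_1=w^{-1}\gamma_2$ for some $w\in\Gamma_{U_{Q'}}$, whence $[\gamma_1]=[\gamma_2]$ in $\mathcal A$; reading this argument together with the well-definedness check backwards shows that the fibre of $F$ over a given pair is precisely a left $\Gamma_{U_{Q'}}$-coset, which combined with fullness recovers the asserted hom-set formula.

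For fullness: a morphism from $P\xrightarrow{[1]}Q$ to $P'\xrightarrow{[1]}Q'$ in $\operatorname{Tw}(\operatorname{RBS}_\Gamma)^{op}$ is a pair $([\alpha]\colon P\to P',\ [\beta]\colon Q'\to Q)$ satisfying $[\beta\alpha]=[1]$ in $\operatorname{Hom}_{\operatorname{RBS}_\Gamma}(P,Q)$, i.e.\ $\beta\alpha\in\Gamma_{U_P}$; then $\gamma:=\beta^{-1}$ lies in the $\mathcal A$-hom-set, since $\gamma Q\gamma^{-1}=\beta^{-1}Q\beta\supset Q'$ follows from $\beta Q'\beta^{-1}\subset Q$ and $\gamma P\gamma^{-1}=\beta^{-1}P\beta=\alpha P\alpha^{-1}\subset P'$ follows from $\beta\alpha\in U_P$ normalising $P$, and $F([\gamma])=([\alpha],[\beta])$ because $[\beta^{-1}]=[\alpha]$ in $\operatorname{Hom}_{\operatorname{RBS}_\Gamma}(P,P')$ while $[\gamma^{-1}]=[\beta]$ tautologically. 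For essential surjectivity: an arbitrary object $P\xrightarrow{[\delta]}Q$ of $\operatorname{Tw}(\operatorname{RBS}_\Gamma)^{op}$ is isomorphic to $F(\delta P\delta^{-1}\subset Q)=(\delta P\delta^{-1}\xrightarrow{[1]}Q)$ via the mutually inverse pairs $([\delta^{-1}]\colon\delta P\delta^{-1}\to P,\ [1]\colon Q\to Q)$ and $([\delta]\colon P\to\delta P\delta^{-1},\ [1]\colon Q\to Q)$. I expect the only real difficulty here to be organisational --- keeping the left-versus-right quotients straight and tracking the $\operatorname{Tw}$-versus-$\operatorname{Tw}^{op}$ variance --- since the substantive mathematical input is merely the elementary interaction of conjugation, inclusion and unipotent radicals recorded at the outset.
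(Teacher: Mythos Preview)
Your proof is correct and follows essentially the same approach as the paper's: verify that the described functor is well-defined, then show it is fully faithful and essentially surjective. The paper's argument is terser --- for well-definedness it simply notes $\Gamma_{U_{Q'}}\subset\Gamma_{U_{P'}}\subset\Gamma_{U_{\gamma P\gamma^{-1}}}$ so that $\Gamma_{U_{Q'}}\gamma\subset\gamma\Gamma_{U_P}$, and for full faithfulness it parametrises morphisms by pairs $(\gamma_a,\gamma_b)$ with $\gamma_b\gamma_a\in\Gamma_{U_P}$ and observes one may uniquely set $\gamma_a=\gamma_b^{-1}$ --- but this is exactly your $\gamma=\beta^{-1}$ move, and your essential surjectivity argument via $P\xrightarrow{\sim}\delta P\delta^{-1}\hookrightarrow Q$ is identical to theirs.
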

\begin{proof}
The functor is well-defined because $\Gamma_{U_{Q'}}\subset \Gamma_{U_{P'}}\subset \Gamma_{U_{\gamma P\gamma^{-1}}}$ so that $\Gamma_{U_{Q'}}\gamma \subset \gamma \Gamma_{U_P}$.  To give the identification, we should show that every map $P\rightarrow Q$ in $\operatorname{RBS}_\Gamma$ is equivalent to an inclusion, meaning a map given by $\gamma=id$, and that maps in $\operatorname{Tw}(\operatorname{RBS}_\Gamma)^{op}$ between inclusions are given by the set posited in the statement.

\medskip

For the first claim, if $P\rightarrow Q$ is induced by $\gamma\in \Gamma$ then we can factor it as $P\overset{\sim}{\rightarrow} \gamma P\gamma^{-1}\overset{\subset}\rightarrow Q$ where the first map is induced by $\gamma$ and the second map is an inclusion.  For the second claim, maps $(P\subset Q)\rightarrow (P'\subset Q')$ in $\operatorname{Tw}(\operatorname{RBS}_\Gamma)^{op}$ are by definition given by
$$\{\gamma_a,\gamma_b\in \Gamma: \gamma_aP\gamma_a^{-1}\subset P', \gamma_bQ'\gamma_b^{-1}\subset Q, \gamma_b\gamma_a\in \Gamma_{U_P}\}/\sim$$
where $(\gamma_a,\gamma_b)\sim (\rho_a,\rho_b)$ iff $\gamma_a\Gamma_{U_P}=\rho_a\Gamma_{U_P}$ and $\gamma_b\Gamma_{U_{Q'}}=\rho_b\Gamma_{U_{Q'}}$.  It follows that we can uniquely specify $\gamma_a$ in terms of $\gamma_b$ by setting $\gamma_a=\gamma_b^{-1}$, which leads to the claim.
\end{proof}

\begin{lemma}\label{RBSlocallyaposet}
For $P$ a parabolic subgroup, the natural functor $\mathcal{P}_{/P}\rightarrow (\operatorname{RBS}_\Gamma)_{/P}$ induced by $(P'\subset P) \mapsto (P'\overset{[id]}{\rightarrow}P)$ is an equivalence.
\end{lemma}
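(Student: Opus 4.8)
The plan is to show directly that the functor in question --- call it $F\colon \mathcal{P}_{/P}\to(\operatorname{RBS}_\Gamma)_{/P}$, which is obtained by passing to slices over $P$ along the canonical identity-on-objects functor $\mathcal{P}\to\operatorname{RBS}_\Gamma$ sending an inclusion $P'\subseteq P''$ to $[\mathrm{id}]\colon P'\to P''$ --- is essentially surjective and fully faithful, hence an equivalence of categories. Throughout I would use the explicit description $\operatorname{Hom}_{\operatorname{RBS}_\Gamma}(A,B)=\{\gamma\in\Gamma:\gamma A\gamma^{-1}\subseteq B\}/\Gamma_{U_A}$ with composition induced by multiplication in $\Gamma$, together with the elementary facts that $U_A\subseteq A$ and that parabolic subgroups are self-normalising, so that any $\gamma\in A(\mathbb{Q})$ has $\gamma A\gamma^{-1}=A$. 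Note also that $\mathcal{P}_{/P}$ is simply the poset of parabolic subgroups of $G$ contained in $P$.

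For essential surjectivity: an object of $(\operatorname{RBS}_\Gamma)_{/P}$ is a pair $(P',[\gamma])$ with $P'$ a parabolic subgroup, $\gamma\in\Gamma$, and $\gamma P'\gamma^{-1}\subseteq P$. Since $\gamma P'\gamma^{-1}$ is again a parabolic subgroup contained in $P$, it is $F$ applied to the object $(\gamma P'\gamma^{-1}\subseteq P)$, and I claim $(P',[\gamma])\cong(\gamma P'\gamma^{-1},[\mathrm{id}])$ in the slice. The morphism $[\gamma]\colon P'\to\gamma P'\gamma^{-1}$ of $\operatorname{RBS}_\Gamma$ is well-defined (as $\gamma P'\gamma^{-1}\subseteq\gamma P'\gamma^{-1}$), has two-sided inverse $[\gamma^{-1}]$ by the composition formula, and respects the structure maps to $P$ because $[\mathrm{id}]\circ[\gamma]=[\gamma]$ and $[\gamma]\circ[\gamma^{-1}]=[\mathrm{id}]$; this provides the required isomorphism.

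For full faithfulness: since $\mathcal{P}_{/P}$ is a poset, its hom-sets are empty or singletons, so it suffices to show that $\operatorname{Hom}_{(\operatorname{RBS}_\Gamma)_{/P}}\big((P',[\mathrm{id}]),(P'',[\mathrm{id}])\big)$ is a single point when $P'\subseteq P''$ (necessarily the image under $F$ of the unique arrow $P'\subseteq P''$) and is empty otherwise. A morphism there is a class $[\epsilon]\colon P'\to P''$ in $\operatorname{RBS}_\Gamma$ with $\epsilon P'\epsilon^{-1}\subseteq P''$ such that $[\mathrm{id}]\circ[\epsilon]=[\mathrm{id}]$ holds in $\operatorname{Hom}_{\operatorname{RBS}_\Gamma}(P',P)$; this last equality forces $\epsilon\in\Gamma_{U_{P'}}$, hence $\epsilon P'\epsilon^{-1}=P'$, so the constraint $\epsilon P'\epsilon^{-1}\subseteq P''$ reduces to $P'\subseteq P''$, and in that case $[\epsilon]=[\mathrm{id}]$ is the only possibility. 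Thus $F$ induces a bijection on every hom-set, and combined with essential surjectivity it is an equivalence.

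I do not expect a serious obstacle here: the argument is pure bookkeeping with the quotient by $\Gamma_{U_{P'}}$ and the composition-by-multiplication law. The step demanding the most care is the isomorphism used for essential surjectivity, where one must simultaneously check that $[\gamma]\colon P'\to\gamma P'\gamma^{-1}$ is well-defined, that $[\gamma^{-1}]$ is a genuine inverse on both sides, and that both morphisms lie over $P$ --- all of which drop out of the composition formula once one keeps track of coset representatives. (Note that, unlike for the analogous statement in \Cref{twistedRBS}, the subtle \Cref{parabolicnormalise} is not actually needed here.)
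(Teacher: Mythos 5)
Your proof is correct and matches the paper's approach: the paper also disposes of this lemma by a direct check of essential surjectivity and full faithfulness, merely omitting the details you spell out (the isomorphism $(P',[\gamma])\cong(\gamma P'\gamma^{-1},[\mathrm{id}])$ and the observation that the slice condition forces $\epsilon\in\Gamma_{U_{P'}}$). Your aside that \Cref{parabolicnormalise} is not needed here is also accurate.
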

\begin{proof}
This follows similarly: by direct calculation, the functor is essentially surjective and fully faithful.
\end{proof}

In terms of the equivalence of \Cref{twistedRBS}, we find that the spaces $U_Q(\mathbb{R})\backslash \widehat{X}_{\leq P}$ organise into a functor
 $$\widehat{Y}\colon \operatorname{Tw}(\operatorname{RBS}_\Gamma)^{op}\rightarrow \operatorname{Top}$$
which sends $(P\subset Q) \mapsto U_Q(\mathbb{R})\backslash \widehat{X}_{\leq P}$ on objects, and on morphisms is induced by the $\Gamma$-action on $\widehat{X}$.  This lies above the combinatorial analogue $\operatorname{Tw}(\operatorname{RBS}_\Gamma)^{op}\rightarrow\operatorname{Posets}$ defined by
$$(P\subset Q)\mapsto \mathcal{P}_{\leq P}$$
on objects, and induced by $\Gamma$-conjugation on maps.  Thus we promote $\widehat{Y}$ to a functor from $\operatorname{Tw}(\operatorname{RBS}_\Gamma)^{op}$ to stratified spaces.

\medskip

A rephrasing of Zucker's definition of the reductive Borel--Serre compactification is that
$$\widehat{Y}_\Gamma = \varinjlim \widehat{Y}.$$
Indeed, in both cases we are gluing together $\Gamma$-orbits and collapsing unipotent fibres, we just do it in a different order.  But this new description is more robust, in that it promotes to a statement about categories of sheaves:

\begin{theorem}\label{RBSdescent}
Let $G$ be a reductive group over $\mathbb{Q}$ and $\Gamma\subset G(\mathbb{Q})$ a neat arithmetic subgroup.  Then 
$$\operatorname{Sh}(\widehat{Y}_\Gamma)\overset{\sim}{\rightarrow} \varprojlim_{(P\subset Q)\in \operatorname{Tw}(\operatorname{RBS}_\Gamma)}\operatorname{Sh}(U_Q(\mathbb{R})\backslash \widehat{X}_{\leq P}),$$
and similarly for constructible sheaves.
\end{theorem}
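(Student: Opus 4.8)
The plan is to prove the equivalence by induction on the semisimple rank of $G$, using the Borel--Serre cdh presentation of $\operatorname{Sh}(\widehat{Y}_\Gamma)$ from \Cref{inductivesheaf} as the engine and matching it against a sieve/cosieve decomposition of the indexing category $\operatorname{Tw}(\operatorname{RBS}_\Gamma)^{op}$.

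First I would fix the comparison map: it is the one induced on $\varprojlim$ by applying $\operatorname{Sh}(-)$ with pullback functoriality to the co-cone maps $U_Q(\mathbb{R})\backslash \widehat{X}_{\leq P}\rightarrow \widehat{Y}_\Gamma$ of the colimit presentation $\widehat{Y}_\Gamma=\varinjlim\widehat{Y}$ recorded above. I would also reduce the constructible assertion to the unrestricted one: a sheaf on $\widehat{Y}_\Gamma$ is constructible as soon as its pullback to every building block $U_Q(\mathbb{R})\backslash\widehat{X}_{\leq P}$ is, since for the block indexed by $(P\subseteq P)$ the map to $\widehat{Y}_\Gamma$ restricts, on the open stratum of that block (a Borel--Serre partial compactification of $L=P/U_P$), to the covering map $X_L/A_L\rightarrow \Gamma_L\backslash X_L/A_L$ onto the $[P]$-stratum of $\widehat{Y}_\Gamma$; so constructibility is detected stratumwise, exactly as in the proof of \Cref{inductivesheaf}.

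For the induction, the base case is $G$ with no proper parabolic subgroup: then $\mathcal{P}=\{G\}$, \Cref{twistedRBS} identifies $\operatorname{Tw}(\operatorname{RBS}_\Gamma)^{op}$ with $B\Gamma$, the lone building block is $X/A_G$ with its $\Gamma$-action, $\widehat{Y}_\Gamma=\Gamma\backslash X/A_G$, and the statement is \Cref{quotienttop} for the free proper $\Gamma$-action. For the inductive step, write $\mathcal{I}:=\operatorname{Tw}(\operatorname{RBS}_\Gamma)^{op}$ and split it, using the description in \Cref{twistedRBS}, into the full subcategory $\mathcal{I}_1$ on the objects $(P\subseteq G)$ and the full subcategory $\mathcal{I}_0$ on those $(P\subseteq Q)$ with $Q\neq G$. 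Since a morphism $(P\subseteq Q)\rightarrow (P'\subseteq Q')$ in $\mathcal{I}$ entails $\gamma Q\gamma^{-1}\supseteq Q'$ for a $\gamma\in\Gamma$, the subcategory $\mathcal{I}_1$ is a sieve and $\mathcal{I}_0$ its complementary cosieve; moreover $\mathcal{I}_1\simeq\Gamma\backslash\backslash\mathcal{P}$ with $\widehat{Y}\vert_{\mathcal{I}_1}$ the functor $P\mapsto \widehat{X}_{\leq P}$. Because $\mathcal{P}$ has a maximum $G$ and $\widehat{X}_{\leq G}=\widehat{X}$, the limit of $\operatorname{Sh}(\widehat{X}_{\leq -})$ over $\mathcal{P}^{op}$ is just $\operatorname{Sh}(\widehat{X})$, so \Cref{decomposelimits} together with \Cref{quotientcat} and \Cref{quotienttop} yield $\varprojlim_{\mathcal{I}_1^{op}}\operatorname{Sh}\widehat{Y}\simeq\operatorname{Sh}(\widehat{X})^\Gamma\simeq\operatorname{Sh}(\Gamma\backslash\widehat{X})$. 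For $\mathcal{I}_0$, the identification (in the proposition preceding \Cref{exitRBScorner}) of $U_Q(\mathbb{R})\backslash\widehat{X}_{\leq P}$ with closures of strata in Borel--Serre partial compactifications of Levi factors matches, for each proper parabolic $Q_0$, the blocks with $Q\subseteq Q_0$ with the blocks occurring in the presentation of $\widehat{Y}_{\Gamma_{Q_0/U_{Q_0}}}$; running \Cref{twistedRBS} for $Q_0/U_{Q_0}$, this exhibits $\mathcal{I}_0$ as a colimit in $\operatorname{Cat}_\infty$, indexed by a category built from the poset $\Gamma\backslash\mathcal{P}\smallsetminus\{[G]\}$, of the categories $\operatorname{Tw}(\operatorname{RBS}_{\Gamma_{P/U_P}})^{op}$. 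By \Cref{decomposelimits} and the inductive hypothesis this gives $\varprojlim_{\mathcal{I}_0^{op}}\operatorname{Sh}\widehat{Y}\simeq\varprojlim_{[P]\neq[G]}\operatorname{Sh}(\widehat{Y}_{\Gamma_{P/U_P}})\simeq \operatorname{Sh}(\partial\widehat{Y}_\Gamma)$ by \Cref{inductivesheaf}. Finally, the recollement attached to the sieve/cosieve pair $(\mathcal{I}_1,\mathcal{I}_0)$ presents $\varprojlim_{\mathcal{I}^{op}}\operatorname{Sh}\widehat{Y}$ as a fibre product of $\varprojlim_{\mathcal{I}_1^{op}}\operatorname{Sh}\widehat{Y}$ and $\varprojlim_{\mathcal{I}_0^{op}}\operatorname{Sh}\widehat{Y}$ over an interface term, which I would identify with $\operatorname{Sh}(\partial\,\Gamma\backslash\widehat{X})$ compatibly with the maps, so that this square coincides with the one exhibiting $\operatorname{Sh}(\widehat{Y}_\Gamma)$ in \Cref{inductivesheaf}; this closes the induction, and the constructible statement follows from the detection remark.

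The step I expect to be the main obstacle is this matching of indexing categories: that $\mathcal{I}_0$ is the asserted colimit of the $\operatorname{Tw}(\operatorname{RBS}_{\Gamma_{P/U_P}})^{op}$ and that the sieve/cosieve recollement of indexing categories is compatible with the Borel--Serre cdh square of \Cref{inductivesheaf}. This is where the conjugacy theory of parabolic subgroups (\Cref{parabolicnormalise}) and the colimit toolkit of Section 2 (\Cref{detectcolim}, \Cref{properdescentcat}, \Cref{twistedcofinal}, \Cref{RBSlocallyaposet}) do the real work; once the combinatorics of the indexing categories is settled, the passage from colimits of spaces to limits of sheaf $\infty$-categories is immediate from \Cref{cdhdescenttop}, \Cref{opendescent} and \Cref{quotienttop}.
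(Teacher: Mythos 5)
Your overall skeleton — induction on the $\mathbb{Q}$-rank, reduction of the constructible statement to the existence of local sections over the strata, identification of the $\mathcal{I}_1$-limit with $\operatorname{Sh}(\Gamma\backslash\widehat{X})$ via \Cref{quotientcat}, \Cref{quotienttop} and the initial object $G$ of $\mathcal{P}^{op}$, and the final matching against \Cref{inductivesheaf} — agrees with the paper's proof. But the gluing step that carries the whole argument is asserted rather than proved, and it is genuinely different from what the paper does. You partition $\operatorname{Tw}(\operatorname{RBS}_\Gamma)^{op}$ into the disjoint sieve $\mathcal{I}_1$ (objects $(P\subseteq G)$) and cosieve $\mathcal{I}_0$ and invoke a ``recollement'' presenting the limit as a fibre product over an interface term. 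Such a formula does exist, but it is not among the tools of Section 2: \Cref{decomposelimits} needs a colimit decomposition of the index category, and \Cref{closeddescentcat} needs all pieces to be left-closed with contractible overlap posets, which a disjoint sieve/cosieve pair is not (there are morphisms from $\mathcal{I}_1$ into $\mathcal{I}_0$, so $\mathcal{I}\neq\mathcal{I}_1\sqcup\mathcal{I}_0$ in $\operatorname{Cat}_\infty$). Moreover the interface in the recollement is not the value of the diagram on any subcategory: it is the right Kan extension of $\operatorname{Sh}\circ\widehat{Y}\vert_{\mathcal{I}_1}$ evaluated on $\mathcal{I}_0$, whose value at $(P\subset Q)$ is a limit over a comma category of $\Gamma$-translates of the $\operatorname{Sh}(\widehat{X}_{\leq P'})$. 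Identifying that limit with $\operatorname{Sh}((\Gamma\backslash\widehat{X})_{\leq[P]})$, then the total interface with $\operatorname{Sh}(\partial\,\Gamma\backslash\widehat{X})$, and checking that both legs of the resulting square are the ones in \Cref{inductivesheaf} part 1, is exactly where the content of the theorem sits; nothing in your sketch supplies it, and you yourself flag it as the ``main obstacle''.

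There is also a concrete slip in your description of $\mathcal{I}_0$: the subcategories ``blocks with $Q\subseteq Q_0$'' are indeed full subcategories equivalent to $\operatorname{Tw}(\operatorname{RBS}_{\Gamma_{Q_0/U_{Q_0}}})^{op}$ (by \Cref{parabolicnormalise}, as you say), but they are \emph{not} left-closed in $\mathcal{I}_0$: a morphism $(P\subset Q)\rightarrow(P'\subset Q')$ only gives $\gamma Q\gamma^{-1}\supseteq Q'$, so containment conditions on the $Q$-coordinate do not pull back along morphisms; left-closedness in $\operatorname{Tw}(\operatorname{RBS}_\Gamma)^{op}$ is governed by the $P$-coordinate. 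So the claimed colimit presentation of $\mathcal{I}_0$ cannot be run with those pieces directly. The paper's route avoids both problems at once: it covers $\operatorname{Tw}(\operatorname{RBS}_\Gamma)^{op}$ by the two \emph{overlapping} left-closed subcategories $\operatorname{Tw}(\operatorname{RBS}_\Gamma)^{op}\smallsetminus B\Gamma$ and $(\Gamma\backslash\backslash\mathcal{P})^{op}$ (embedded by $P\mapsto(P\subset G)$), so that \Cref{closeddescentcat} and \Cref{decomposelimits} give a fibre product whose interface is visibly the limit over the intersection $(\Gamma\backslash\backslash\mathcal{P})^{op}\smallsetminus B\Gamma$; and it handles the pieces via the $P$-indexed left-closed subcategories $\operatorname{Tw}(\operatorname{RBS}_\Gamma)^{op}_{\leq[P]}$ together with the left adjoints $(P'\subset Q')\mapsto(P'\subset P\cap Q')$ and $B\Gamma_P\subset(\Gamma\backslash\backslash\mathcal{P})_{\leq[P]}$, which by \Cref{finalexample} identify the three limits with exactly the three terms of \Cref{inductivesheaf}. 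If you either switch to that overlapping left-closed cover, or prove the sieve/cosieve recollement lemma and carry out the Kan-extension computation of the interface (which amounts to the same adjoint-functor arguments), your induction closes; as written the proof has a genuine gap at precisely this point.
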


We will prove this theorem shortly, but for now let us deduce the following consequence.

\begin{corollary}\label{RBSexit}
The stratified space $\widehat{Y}_\Gamma$ admits an exit path $\infty$-category, and this exit path $\infty$-category identifies with the category $\operatorname{RBS}_\Gamma$.
\end{corollary}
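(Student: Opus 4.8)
The plan is to deduce this formally from \Cref{RBSdescent} together with the descent machinery for exit path $\infty$-categories recorded in \Cref{calculatebydescent}. Recall that \Cref{RBSdescent} exhibits $\operatorname{Sh}(\widehat{Y}_\Gamma)$, and its constructible variant, as the limit over $\operatorname{Tw}(\operatorname{RBS}_\Gamma)$ of the sheaf categories of the stratified spaces $U_Q(\mathbb{R})\backslash\widehat{X}_{\leq P}$, i.e.\ exactly as the value on the co-cone point of the diagram $\widehat{Y}\colon\operatorname{Tw}(\operatorname{RBS}_\Gamma)^{op}\to\operatorname{Top}$ (promoted to stratified spaces over the combinatorial shadow $(P\subset Q)\mapsto\mathcal{P}_{\leq P}$). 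So the first step is to verify, for this diagram with $K=\operatorname{Tw}(\operatorname{RBS}_\Gamma)^{op}$, the hypotheses (a)--(d) of \Cref{calculatebydescent}(2); hypothesis (d) is precisely \Cref{RBSdescent}.

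For hypothesis (a), I would note that the poset $\mathcal{P}$ of parabolic subgroups under inclusion satisfies the ascending chain condition because a strictly increasing chain of parabolics has strictly increasing dimension, hence length at most $\dim G$; this passes at once to each $\mathcal{P}_{\leq P}=\mathcal{P}_{/P}$, and also to the quotient poset $\Gamma\backslash\mathcal{P}$, since using \Cref{posetquotient}(2) and \Cref{parabolicnormalise} a strict chain $[P_0]<[P_1]<\cdots$ can be lifted one step at a time to a strict chain in $\mathcal{P}$ and is therefore again bounded by $\dim G$. Hypothesis (b) is exactly \Cref{exitRBScorner}: each $U_Q(\mathbb{R})\backslash\widehat{X}_{\leq P}$ admits an exit path $\infty$-category identifying with its stratifying poset $\mathcal{P}_{\leq P}$. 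Hypothesis (c) is then automatic: since source and target of every structure map of the diagram have exit path $\infty$-category equal to their stratifying poset, the map respects exit path $\infty$-categories as soon as the underlying poset map is given --- and it is given by construction --- as observed just before \Cref{calculatebydescent}.

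Granting (a)--(d), \Cref{calculatebydescent}(2) gives immediately that $\widehat{Y}_\Gamma\to\Gamma\backslash\mathcal{P}$ admits an exit path $\infty$-category and that
$$\Pi(\widehat{Y}_\Gamma\to\Gamma\backslash\mathcal{P})\;\overset{\sim}{\longleftarrow}\;\varinjlim_{(P\subset Q)\in\operatorname{Tw}(\operatorname{RBS}_\Gamma)^{op}}\mathcal{P}_{\leq P}.$$
It remains to identify this colimit with $\operatorname{RBS}_\Gamma$. Here I would invoke \Cref{RBSlocallyaposet} to rewrite $\mathcal{P}_{\leq P}=\mathcal{P}_{/P}$ as the slice $(\operatorname{RBS}_\Gamma)_{/P}$, and \Cref{twistedRBS} to check that the $\Gamma$-conjugation functoriality of the combinatorial shadow of $\widehat{Y}$ corresponds under this identification to the postcomposition functoriality of the slices, so that the displayed diagram is the restriction, along the projection $\operatorname{Tw}(\operatorname{RBS}_\Gamma)^{op}\to\operatorname{RBS}_\Gamma$ sending $(P\subset Q)$ to $P$, of the tautological diagram $P\mapsto(\operatorname{RBS}_\Gamma)_{/P}$. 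By \Cref{twistedcofinal} this projection is a $\varinjlim$-equivalence, so the colimit is unchanged on passing to it, and by \Cref{tautologicalbutuseful} it equals $\varinjlim_{P\in\operatorname{RBS}_\Gamma}(\operatorname{RBS}_\Gamma)_{/P}\simeq\operatorname{RBS}_\Gamma$, as desired. I expect the one point requiring genuine care to be this last compatibility check --- matching the $\Gamma$-conjugation structure maps between the $\mathcal{P}_{\leq P}$ with the postcomposition maps between the $(\operatorname{RBS}_\Gamma)_{/P}$ so that the colimit really is a restriction of the tautological diagram, not merely termwise equivalent to it; everything else is an assembly of results already in hand, the substantive descent-theoretic input being quarantined in \Cref{RBSdescent}.
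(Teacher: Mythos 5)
Your proposal is correct and follows essentially the same route as the paper: combine \Cref{RBSdescent} with \Cref{calculatebydescent} and \Cref{exitRBScorner} to reduce to computing $\varinjlim_{(P\subset Q)\in\operatorname{Tw}(\operatorname{RBS}_\Gamma)^{op}}\mathcal{P}_{/P}$, then identify it with $\operatorname{RBS}_\Gamma$ via \Cref{RBSlocallyaposet}, \Cref{twistedcofinal} and \Cref{tautologicalbutuseful}. The only difference is that you spell out the hypotheses of \Cref{calculatebydescent} and the functoriality compatibility more explicitly than the paper does, which is harmless.
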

\begin{proof}
By \Cref{exitRBScorner}, the exit path $\infty$-category of $U_Q(\mathbb{R})\backslash \widehat{X}_{\leq P}$ identifies with its stratifying poset $\mathcal{P}_{/P}$.  Thus, by \Cref{calculatebydescent}, it suffices to calculate that in $\operatorname{Cat}_\infty$ we have
$$\varinjlim_{(P\subset Q)\in \operatorname{Tw}(\operatorname{RBS}_\Gamma)^{op}} \mathcal{P}_{/P}\simeq \operatorname{RBS}_\Gamma.$$
However, for a parabolic subgroup $P$ the natural functor $\mathcal{P}_{/P}\rightarrow (\operatorname{RBS}_\Gamma)_{/P}$ sending $P'\subset P$ to $[id]\colon P'\rightarrow P$ is an equivalence by \Cref{RBSlocallyaposet}.  Thus this will follow from the more general claim that for any category $\mathcal{C}$ we have
$$\varinjlim_{(c\rightarrow d)\in \operatorname{Tw}(\mathcal{C})^{op}}\mathcal{C}_{/c}\overset{\sim}{\rightarrow}\mathcal{C}.$$
But we have $\varinjlim_{c\in\mathcal{C}} \mathcal{C}_{/c}\overset{\sim}{\rightarrow}\mathcal{C}$ by \Cref{tautologicalbutuseful}, so this follows from \Cref{twistedcofinal}. \end{proof}

To prove \Cref{RBSdescent}, we need the following diagrammatic analogue of the inductive structure of the reductive Borel--Serre compactification.

\begin{proposition}
Let $G$ be a reductive group over $\mathbb{Q}$ and $\Gamma\subset G(\mathbb{Q})$ a subgroup.  Consider the following full subcategories of $\operatorname{Tw}(\operatorname{RBS}_\Gamma)^{op}$:
\begin{enumerate}
\item For a $\Gamma$-conjugacy class of parabolic subgroups $[P]$ of $G$, we write 
$$\operatorname{Tw}(\operatorname{RBS}_\Gamma)^{op}_{\leq [P]}$$
for the full subcategory of $\operatorname{Tw}(\operatorname{RBS}_\Gamma)^{op}$ on those inclusions $P'\subset Q'$ with $P'$ contained in a representative of $[P]$.
\item For a parabolic subgroup $P$, we view
$$\operatorname{Tw}(\operatorname{RBS}_{\Gamma_{P/U_P}})^{op}$$
as the further full subcategory of those $P'\subset Q'$ with $Q'\subset P$.
\item We view $\Gamma\backslash\backslash \mathcal{P}$ as a full subcategory of $\operatorname{Tw}(\operatorname{RBS}_\Gamma)^{op}$ by the embedding $P\mapsto (P\subset G)$.
\item For $P\subset Q$, view $B\Gamma_{P/U_Q}$ as the full subcategory spanned by $(P\subset Q)$.
\end{enumerate}
Then:
\begin{enumerate}
\item The full subcategories $\operatorname{Tw}(\operatorname{RBS}_\Gamma)^{op}_{\leq [P]}$  and $\Gamma\backslash\backslash \mathcal{P}$ are left-closed.
\item The union of the subcategories $\operatorname{Tw}(\operatorname{RBS}_\Gamma)^{op}_{\leq [P]}$ for $[P]\neq [G]$ is equal to the complement $\operatorname{Tw}(\operatorname{RBS}_\Gamma)^{op}\smallsetminus B\Gamma$, and for all $(P'\subset Q')\in \operatorname{Tw}(\operatorname{RBS}_\Gamma)^{op}\smallsetminus B\Gamma$ the collection of those $\operatorname{Tw}(\operatorname{RBS}_\Gamma)^{op}_{\leq [P]}$ containing $P'\subset Q'$ has a minimal element, namely $\operatorname{Tw}(\operatorname{RBS}_\Gamma)^{op}_{\leq [P']}$.
\item For a parabolic subgroup $P\subset G$, the inclusion
$$B\Gamma_P\subset (\Gamma\backslash\backslash\mathcal{P})_{\leq [P]}=(\Gamma\backslash\backslash\mathcal{P})\cap\operatorname{Tw}(\operatorname{RBS}_\Gamma)^{op}_{\leq [P]}$$
has a left adjoint.
\item For a parabolic subgroup $P\subset G$, the inclusion
$$\operatorname{Tw}(\operatorname{RBS}_{\Gamma_{P/U_P}})^{op}\subset \operatorname{Tw}(\operatorname{RBS}_\Gamma)^{op}_{\leq [P]}$$
has a left adjoint.
\end{enumerate}
\end{proposition}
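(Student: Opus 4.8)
The plan is to work throughout with the explicit model of $\operatorname{Tw}(\operatorname{RBS}_\Gamma)^{op}$ furnished by \Cref{twistedRBS}: an object is an inclusion $P'\subseteq Q'$ of parabolic subgroups of $G$, and a morphism $(P'\subseteq Q')\to(P''\subseteq Q'')$ is the class $[\gamma]$ of an element $\gamma\in\Gamma$ with $\gamma P'\gamma^{-1}\subseteq P''$ and $\gamma Q'\gamma^{-1}\supseteq Q''$, taken modulo left multiplication by $\Gamma_{U_{Q''}}$. The single recurring tool is \Cref{parabolicnormalise}: if $R\subseteq P$ are parabolic subgroups of $G$ and $\eta\in G(\mathbb{Q})$ satisfies $\eta R\eta^{-1}\subseteq P$, then $\eta\in P(\mathbb{Q})$, so in particular $\eta\in\Gamma_P$ if $\eta\in\Gamma$.

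Parts 1 and 2 I would dispatch directly. Left-closedness of $\operatorname{Tw}(\operatorname{RBS}_\Gamma)^{op}_{\leq[P]}$: if $[\gamma]\colon(P_1\subseteq Q_1)\to(P_2\subseteq Q_2)$ and $P_2\subseteq\rho P\rho^{-1}$, then $P_1\subseteq\gamma^{-1}P_2\gamma\subseteq(\gamma^{-1}\rho)P(\gamma^{-1}\rho)^{-1}$, again a representative of $[P]$. Left-closedness of $\Gamma\backslash\backslash\mathcal{P}$ (embedded as the objects $(P'\subseteq G)$): a morphism into $(P'\subseteq G)$ forces $\gamma Q_1\gamma^{-1}\supseteq G$, hence $Q_1=G$. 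For part 2: since $P'\subseteq Q'\subseteq G$ for every object, $(P'\subseteq Q')$ fails to be $(G\subseteq G)$ exactly when $P'\neq G$, and then it lies in $\operatorname{Tw}(\operatorname{RBS}_\Gamma)^{op}_{\leq[P']}$ with $[P']\neq[G]$; conversely $\operatorname{Tw}(\operatorname{RBS}_\Gamma)^{op}_{\leq[P]}$ never contains $(G\subseteq G)$ when $[P]\neq[G]$, so the union is precisely the complement of $B\Gamma$. Finally, whenever $P'$ is contained in a representative of $[P]$ one has $\operatorname{Tw}(\operatorname{RBS}_\Gamma)^{op}_{\leq[P']}\subseteq\operatorname{Tw}(\operatorname{RBS}_\Gamma)^{op}_{\leq[P]}$ (conjugate the containment), so $\operatorname{Tw}(\operatorname{RBS}_\Gamma)^{op}_{\leq[P']}$ is in fact the least member of the indicated collection, in particular a minimal one.

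For parts 3 and 4 I would invoke the standard criterion that a fully faithful functor $\iota\colon\mathcal{A}\hookrightarrow\mathcal{B}$ admits a left adjoint if and only if for every $b\in\mathcal{B}$ the slice category $b/\mathcal{A}$ of pairs $(a\in\mathcal{A},\,b\to\iota a)$ has an initial object. In part 3, $\mathcal{A}=B\Gamma_P$ has the single object $P$, so for $b=P'$ the slice $b/\mathcal{A}$ has objects the $\delta\in\Gamma$ with $\delta P'\delta^{-1}\subseteq P$ and a (necessarily unique) morphism $\delta_1\to\delta_2$ given by $\delta_2\delta_1^{-1}$ whenever that element lies in $\Gamma_P$. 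It is nonempty because $P'\in(\Gamma\backslash\backslash\mathcal{P})_{\leq[P]}$, and it is a connected groupoid with trivial automorphism groups, since for objects $\delta_1,\delta_2$ the element $\delta_2\delta_1^{-1}$ conjugates $\delta_1 P'\delta_1^{-1}\subseteq P$ onto $\delta_2 P'\delta_2^{-1}\subseteq P$ and hence lies in $\Gamma_P$ by \Cref{parabolicnormalise}; any such category has an initial object.

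Part 4 is the substantive case. Fix $b=(P'\subseteq Q')$ in $\operatorname{Tw}(\operatorname{RBS}_\Gamma)^{op}_{\leq[P]}$ and choose $g\in\Gamma$ with $R:=g^{-1}P'g\subseteq P$, which exists by definition of $\operatorname{Tw}(\operatorname{RBS}_\Gamma)^{op}_{\leq[P]}$. Any morphism $[\gamma]\colon b\to(P''\subseteq Q'')$ into $\mathcal{A}=\operatorname{Tw}(\operatorname{RBS}_{\Gamma_{P/U_P}})^{op}$ has $\gamma P'\gamma^{-1}\subseteq P''\subseteq P$, so \Cref{parabolicnormalise} applied to $R\subseteq P$ and the element $\gamma g$ gives $\gamma g\in\Gamma_P$. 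I would then show the initial object of $b/\mathcal{A}$ is $[g^{-1}]\colon(P'\subseteq Q')\to(R\subseteq Q_0)$, where $Q_0$ is the meet of $P$ and $g^{-1}Q'g$ in the Boolean lattice $\{0<1\}^{\Delta_R}\cong\mathcal{P}_{R/}$ of parabolics containing $R$ --- that is, the largest parabolic with $R\subseteq Q_0\subseteq P$ and $Q_0\subseteq g^{-1}Q'g$, which in particular satisfies $Q_0\subseteq P$ so that the target does lie in $\mathcal{A}$. To verify initiality, given a morphism $[\gamma]\colon b\to(P''\subseteq Q'')$ in $b/\mathcal{A}$ I set $\delta:=\gamma g\in\Gamma_P$; then $\delta R\delta^{-1}=\gamma P'\gamma^{-1}\subseteq P''$, and conjugation by $\delta$ is an order-isomorphism of the poset of parabolics fixing $P$ and sending $R$, $g^{-1}Q'g$ to $\gamma P'\gamma^{-1}$, $\gamma Q'\gamma^{-1}$, so it carries $Q_0$ onto the largest parabolic lying between $\gamma P'\gamma^{-1}$ and both $P$ and $\gamma Q'\gamma^{-1}$; since $Q''$ also lies there, $\delta Q_0\delta^{-1}\supseteq Q''$, and $[\delta]$ is the unique morphism of $b/\mathcal{A}$ from $[g^{-1}]$ to $[\gamma]$ (uniqueness because $[\delta']\circ[g^{-1}]=[\gamma]$ forces $\delta'\equiv\gamma g\pmod{\Gamma_{U_{Q''}}}$). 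The main obstacle is exactly this last verification: one must identify $Q_0$ canonically enough (as a lattice meet) that the single transporter $g$ produces a target dominating all possible $(P''\subseteq Q'')$ at once; granted \Cref{parabolicnormalise} and the Boolean lattice structure of $\mathcal{P}_{R/}$ recalled in the definition of the combinatorial Borel--Serre corner, the remaining bookkeeping is routine.
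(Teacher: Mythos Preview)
Your argument is correct. Parts 1 and 2 match the paper's treatment (which simply says ``obvious''), and your use of \Cref{parabolicnormalise} in parts 3 and 4 is exactly the right ingredient.

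The paper's presentation of parts 3 and 4 is organised slightly differently and is somewhat slicker. Rather than verifying initial objects in slice categories while carrying a conjugating element $g$, the paper first observes that the ambient category $(\Gamma\backslash\backslash\mathcal{P})_{\leq[P]}$ (respectively $\operatorname{Tw}(\operatorname{RBS}_\Gamma)^{op}_{\leq[P]}$) is equivalent to its full subcategory on those $P'$ (respectively $(P'\subset Q')$) with $P'\subset P$ --- this is essentially surjective because one can conjugate, and \Cref{parabolicnormalise} controls the morphisms. Having made this reduction, the left adjoint is then written down as an explicit functor: in part 3 it is the projection $\Gamma_P\backslash\backslash\mathcal{P}_{/P}\to B\Gamma_P$, and in part 4 it is $(P'\subset Q')\mapsto(P'\subset P\cap Q')$ on objects, $[\gamma]\mapsto[\gamma]$ on morphisms. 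Your $Q_0$ is precisely $P\cap g^{-1}Q'g$, so after your conjugation by $g$ the two constructions coincide; the paper just absorbs the choice of $g$ into the preliminary equivalence rather than tracking it through the slice-category verification. Both approaches rest on the same two facts (self-normalisation via \Cref{parabolicnormalise}, and the lattice structure of $\mathcal{P}_{R/}$), and neither buys anything the other does not, but the paper's version is shorter to write down.
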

\begin{proof}
Claims 1 and 2 are obvious.  For claim 3, note that $(\Gamma\backslash\backslash\mathcal{P})_{\leq [P]}$ is equivalent to its full subcategory on those parabolic $P'$ such that $P'\subset P$.  Recall that any $\gamma\in \Gamma$ which conjugates such a $P'\subset P$ back inside $P$ must necessarily lie in $\Gamma_P$, see \Cref{parabolicnormalise}.  Thus we can identify $(\Gamma\backslash\backslash\mathcal{P})_{\leq [P]}\simeq \Gamma_P\backslash\backslash \mathcal{P}_{/P}$, and the projection to $B\Gamma_P$ provides a left adjoint proving the claim.  Claim 4 follows similarly: we can replace $\operatorname{Tw}(\operatorname{RBS}_\Gamma)^{op}_{\leq [P]}$ with the equivalent full subcategory of those $(P'\subset Q')$ such that $P'\subset P$, and then the functor backwards given by $(P'\subset Q')\mapsto (P'\subset P\cap Q')$ on objects and $[\gamma]\mapsto [\gamma]$ on morphisms provides a left adjoint to the inclusion.
\end{proof}

\begin{corollary}
Let $F\colon\operatorname{Tw}(\operatorname{RBS}_\Gamma)\rightarrow \mathcal{C}$ be a functor to an arbitrary $\infty$-category $\mathcal{C}$ with all limits.  Then:
\begin{enumerate}
\item $\varprojlim F\overset{\sim}{\rightarrow} \varprojlim F\vert_{\operatorname{Tw}(\operatorname{RBS}_\Gamma)\smallsetminus B\Gamma}\times_{\varprojlim F\vert_{(\Gamma\backslash\backslash\mathcal{P})^{op}\smallsetminus B\Gamma}} \varprojlim F\vert_{(\Gamma\backslash\backslash\mathcal{P})^{op}}$.
\item $\varprojlim F\vert_{\operatorname{Tw}(\operatorname{RBS}_\Gamma)\smallsetminus B\Gamma}\overset{\sim}{\rightarrow} \varprojlim_{[P]\in(\Gamma\backslash \mathcal{P})^{op}\smallsetminus [G]} \varprojlim F\vert_{\operatorname{Tw}(\operatorname{RBS}_\Gamma)_{\leq [P]}},$
and
$$\varprojlim F\vert_{\operatorname{Tw}(\operatorname{RBS}_\Gamma)_{\leq [P]}} \overset{\sim}{\rightarrow} \varprojlim F\vert_{\operatorname{Tw}(\operatorname{RBS}_{\Gamma_{P/U_P}})}.$$
\item $\varprojlim F\vert_{(\Gamma\backslash\backslash\mathcal{P})^{op}\smallsetminus B\Gamma}\overset{\sim}{\rightarrow} \varprojlim_{[P]\in(\Gamma\backslash \mathcal{P})^{op}\smallsetminus [G]} \varprojlim F\vert_{((\Gamma\backslash\backslash\mathcal{P})_{\leq [P]})^{op}}$, and for all parabolic $P\subset G$ we have
$$\varprojlim F\vert_{((\Gamma\backslash\backslash\mathcal{P})_{\leq [P]})^{op}}\overset{\sim}{\rightarrow} \varprojlim F\vert_{B\Gamma_P}.$$
\end{enumerate}
\end{corollary}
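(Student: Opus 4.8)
The plan is to deduce all three parts from the structural facts about full subcategories of $\operatorname{Tw}(\operatorname{RBS}_\Gamma)^{op}$ recorded in the preceding proposition, feeding them into two mechanisms: the decomposition of a limit along a colimit of $\infty$-categories (\Cref{decomposelimits}(1)), where the relevant colimits in $\operatorname{Cat}_\infty$ are supplied by the categorical descent results \Cref{cdhdescentcat} and \Cref{closeddescentcat}; and the fact that a functor admitting an adjoint on either side induces an equivalence of limits, i.e.\ is a $\varprojlim$-equivalence (\Cref{finalexample}(1), \Cref{lim-equivalences}). Throughout one has to keep track of the passage to opposite categories: $F$ lives on $\operatorname{Tw}(\operatorname{RBS}_\Gamma)$, the subcategories of the proposition live in $\operatorname{Tw}(\operatorname{RBS}_\Gamma)^{op}$, and $(-)^{op}$ preserves colimits while interchanging left- and right-adjointness.

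First I would dispose of the two ``second'' assertions in parts 2 and 3, which are the easiest. Parts 3 and 4 of the proposition provide \emph{left} adjoints to the inclusions $B\Gamma_P\hookrightarrow (\Gamma\backslash\backslash\mathcal{P})_{\leq [P]}$ and $\operatorname{Tw}(\operatorname{RBS}_{\Gamma_{P/U_P}})^{op}\hookrightarrow \operatorname{Tw}(\operatorname{RBS}_\Gamma)^{op}_{\leq [P]}$. Passing to opposite categories turns each of these into a fully faithful inclusion admitting a \emph{right} adjoint, hence a left adjoint functor in the sense of \Cref{finalexample}(1), hence a $\varprojlim$-equivalence by \Cref{lim-equivalences}. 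Restricting $F$ along these inclusions therefore induces the asserted equivalences $\varprojlim F\vert_{\operatorname{Tw}(\operatorname{RBS}_\Gamma)_{\leq [P]}}\overset{\sim}{\rightarrow}\varprojlim F\vert_{\operatorname{Tw}(\operatorname{RBS}_{\Gamma_{P/U_P}})}$ and $\varprojlim F\vert_{((\Gamma\backslash\backslash\mathcal{P})_{\leq [P]})^{op}}\overset{\sim}{\rightarrow}\varprojlim F\vert_{B\Gamma_P}$.

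For the ``first'' assertions of parts 2 and 3 I would invoke descent for left-closed covers, \Cref{closeddescentcat}. For part 2: parts 1 and 2 of the proposition show that the left-closed full subcategories $\operatorname{Tw}(\operatorname{RBS}_\Gamma)^{op}_{\leq [P]}$, for $[P]\neq [G]$, cover $\operatorname{Tw}(\operatorname{RBS}_\Gamma)^{op}\smallsetminus B\Gamma$, and that the subposet of those containing a given object has a least element, hence is contractible; the assignment $[P]\mapsto \operatorname{Tw}(\operatorname{RBS}_\Gamma)^{op}_{\leq [P]}$ moreover identifies the indexing poset with $(\Gamma\backslash\mathcal{P})\smallsetminus [G]$. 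So \Cref{closeddescentcat} gives $\operatorname{Tw}(\operatorname{RBS}_\Gamma)^{op}\smallsetminus B\Gamma\simeq \varinjlim_{[P]}\operatorname{Tw}(\operatorname{RBS}_\Gamma)^{op}_{\leq [P]}$ in $\operatorname{Cat}_\infty$; taking opposites and applying \Cref{decomposelimits}(1) yields the first formula of part 2. Part 3 is parallel, using instead the left-closed cover of $(\Gamma\backslash\backslash\mathcal{P})\smallsetminus B\Gamma$ by the subcategories $(\Gamma\backslash\backslash\mathcal{P})_{\leq [P]}$; here the subposet of members through a given $P'$ is the up-set of $[P']$ in $(\Gamma\backslash\mathcal{P})\smallsetminus [G]$, again contractible since it has a minimum.

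Part 1 is the cdh-type step. The subcategory $\operatorname{Tw}(\operatorname{RBS}_\Gamma)^{op}\smallsetminus B\Gamma$, being a union of the left-closed $\operatorname{Tw}(\operatorname{RBS}_\Gamma)^{op}_{\leq [P]}$ (proposition, part 2), is itself left-closed, and $\Gamma\backslash\backslash\mathcal{P}$ is left-closed by the proposition; hence both inclusions into $\operatorname{Tw}(\operatorname{RBS}_\Gamma)^{op}$ are proper by \Cref{properexamples}(3). Since $B\Gamma\subseteq \Gamma\backslash\backslash\mathcal{P}$, the pullback of the first inclusion to the complement of $\Gamma\backslash\backslash\mathcal{P}$ is the identity functor, so the three hypotheses of \Cref{cdhdescentcat} hold and the square with corners $(\Gamma\backslash\backslash\mathcal{P})\smallsetminus B\Gamma$, $\operatorname{Tw}(\operatorname{RBS}_\Gamma)^{op}\smallsetminus B\Gamma$, $\Gamma\backslash\backslash\mathcal{P}$, $\operatorname{Tw}(\operatorname{RBS}_\Gamma)^{op}$ is a pushout in $\operatorname{Cat}_\infty$. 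Passing to opposites exhibits $\operatorname{Tw}(\operatorname{RBS}_\Gamma)$ as the corresponding pushout, and \Cref{decomposelimits}(1) — whose indexing category $K$ for a pushout has $K^{op}$ a pullback diagram — turns this into the fibre-square description of $\varprojlim F$ in part 1. The main obstacle is not conceptual but organizational: one must check carefully that the named subcategories genuinely assemble into left-closed covers with contractible overlaps (the ``least element'' observations), and that each application of \Cref{decomposelimits} uses a genuine colimit diagram in $\operatorname{Cat}_\infty$ — precisely what \Cref{cdhdescentcat} and \Cref{closeddescentcat} are built to guarantee — rather than merely a colimit in $\operatorname{Cat}_1$.
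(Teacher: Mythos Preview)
Your proposal is correct and follows essentially the same approach as the paper: feed the structural facts of the preceding proposition into \Cref{closeddescentcat} (and \Cref{cdhdescentcat}) to obtain colimit decompositions in $\operatorname{Cat}_\infty$, then apply \Cref{decomposelimits} and \Cref{finalexample}. The paper's one-line proof cites only \Cref{closeddescentcat}, but your use of \Cref{cdhdescentcat} for part 1 is an equally valid (and slightly more direct) way to obtain that particular pushout square; your careful tracking of the $(-)^{op}$ passage and the identification of the indexing poset with $(\Gamma\backslash\mathcal{P})\smallsetminus[G]$ just makes explicit what the paper leaves implicit.
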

\begin{proof}
This follows by descent for closed covers, \Cref{closeddescentcat} which lets one decompose limits by \Cref{decomposelimits}, and the fact that a left adjoint functor is a $\varprojlim$-equivalence, \Cref{finalexample}.
\end{proof}

Now we can prove \Cref{RBSdescent}.

\begin{proof}
Consider the functor $F\colon\operatorname{Tw}(\operatorname{RBS}_\Gamma)\rightarrow \operatorname{Cat}_\infty$ defined by
$$F = \operatorname{Sh}\circ \widehat{Y},$$
so $F(P\subset Q)=\operatorname{Sh}(U_Q(\mathbb{R})\backslash \widehat{X}_{\leq P})$ with pullback functoriality. For any full subcategory $\mathcal{D}\subset \operatorname{Tw}(\operatorname{RBS}_\Gamma)$, we have the associated comparison map
$$\operatorname{Sh}(\varinjlim \widehat{Y}\vert_{\mathcal{D}^{op}})\rightarrow \varprojlim F\vert_{\mathcal{D}}.$$
We want to prove that this is an equivalence for $\mathcal{D}=\operatorname{Tw}(\operatorname{RBS}_\Gamma)$.  Proceeding by induction on the $\mathbb{Q}$-rank of $G$, we can assume it is an equivalence for $\mathcal{D}=\operatorname{Tw}(\operatorname{RBS}_{\Gamma_{P/U_P}})$ for any proper parabolic $P\subset G$.  Then comparing part 2 of the corollary above with part 2 from \Cref{inductivesheaf} we get that it is an equivalence for $\mathcal{D}=\operatorname{Tw}(\operatorname{RBS}_{\Gamma})\smallsetminus B\Gamma$; on the other hand part 3 of the corollary above plus descent for the closed cover of $\partial \Gamma\backslash \widehat{X}$ by the $(\Gamma\backslash \widehat{X})_{\leq [P]}$ for $[P]\neq [G]$ shows that it is an equivalence for $\mathcal{D}=(\Gamma\backslash\backslash\mathcal{P})^{op}\smallsetminus B\Gamma$.  Then comparing part 1 of the corollary with part 1 of \Cref{inductivesheaf} gives the desired claim, finishing the proof of \Cref{RBSdescent} for sheaves, without the constructibility condition.  But since the maps from the strata of the Borel--Serre compactification to those of the reductive Borel--Serre compactification are fibre bundles and hence have local sections, a sheaf on $\widehat{Y}_\Gamma$ is constructible if and only if its pullback to $\Gamma\backslash\widehat{X}$ is constructible, which shows that the variant with constructible sheaves follows.\end{proof}

\section{\texorpdfstring{$\operatorname{RBS}(M)$}{RBS(M)} as unstable algebraic K-theory}

We now turn to our main goal of describing an unstable algebraic K-theory space.  As an intermediary step, let us transport some of the above discussion into the general context of reductive groups over commutative rings.  We recall the definition from \cite{OrsnesJansen}.

\begin{definition}
Let $G$ be a reductive group over a commutative ring $R$.  Define the category $\operatorname{RBS}_G$  to have objects the parabolic subgroups $P\subset G$ and morphisms $P\rightarrow P'$ the set
$$\{g\in G(R):gPg^{-1}\subset P'\}/U_P(R),$$
composition being induced by multiplication in $G(R)$.
\defend
\end{definition}

Thus, if we take $R=\mathbb{Q}$ and further restrict to the subcategory specified by the choice of an arithmetic subgroup $\Gamma\subset G(\mathbb{Q})$, this recovers the category $\operatorname{RBS}_\Gamma$ of the previous section, which we identified with the exit path $\infty$-category of the reductive Borel--Serre compactification $\widehat{Y}_\Gamma$.  But now we want to consider general $R$ and forget about $\Gamma$.

\medskip

For connections to algebraic K-theory, we restrict to $G=GL_n$, or more generally $G=GL(M)$ for a finitely generated projective $R$-module $M$.  For the classification of parabolic subgroups of reductive group schemes we refer to \cite{SGA} Expos\'{e} XXVI.  We see that if $\operatorname{Spec}(R)$ is connected, then parabolic subgroups of $GL(M)$ correspond to splittable flags $\mathcal{F}$ of submodules of $M$: chains of inclusions
$$\mathcal{F}=(M_1\subsetneq \ldots \subsetneq M_{d-1})$$
such that each quotient $M_i/M_{i-1}$ is nonzero and projective (we set $M_0=0$ and $M_d=M$).  We call $d$ the \emph{length} of the flag; it is the number of these associated graded pieces $M_i/M_{i-1}$.  The corresponding parabolic subgroup $P_\mathcal{F}$ represents the automorphisms $g$ of $M$ preserving the flag, meaning $g(M_i)=M_i$ for all $i$, and its unipotent radical $U_\mathcal{F}\subset P_\mathcal{F}$ represents those automorphisms preserving the flag and inducing the identity on each $M_i/M_{i-1}$.  Note that the Levi factor $L_\mathcal{F}=P_\mathcal{F}/U_\mathcal{F}$ identifies with the product
$$L_{\mathcal{F}} = \prod_{i=1}^d GL(M_i/M_{i-1}).$$
Furthermore, the partial order of inclusion of parabolic subgroups translates into the partial order of refinement of flags, defined by setting $\mathcal{F} \leq \mathcal{G}$ if and only if the set of submodules occurring in $\mathcal{G}$ is a subset of the set of submodules occurring in $\mathcal{F}$.  Note that the inclusion on unipotent radicals goes the opposite direction: if $\mathcal{F}\leq \mathcal{G}$ then while $P_\mathcal{F}\subset P_{\mathcal{G}}$, we have $U_{\mathcal{F}}\supset U_{\mathcal{G}}$.

\medskip

This discussion of flags $\mathcal{F}$ of splittable submodules of $M$ and their associated subgroups $P_\mathcal{F}\subset GL(M)$ quotients $L_{\mathcal{F}}=P_\mathcal{F}/U_\mathcal{F}$ makes no use of the commutativity of $R$.  Thus we arrive at the following.

\begin{definition}\label{RBS(M)definition}
Let $A$ be an associative ring and $M$ a finitely generated projective $A$-module. Define the category $\operatorname{RBS}(M)$ to have objects the splittable flags $\mathcal{F}$ of submodules of $M$, with set of maps $\mathcal{F}\rightarrow \mathcal{F}'$ given by
$$\{g\in GL(M): g \mathcal{F}\leq \mathcal{F}'\}/U_\mathcal{F},$$
with composition induced by multiplication in $GL(M)$ (it is well-defined because $\mathcal{F}\leq \mathcal{G}$ implies that $U_\mathcal{G}\subset U_\mathcal{F}$).

\medskip

Let also $\mathcal{P}$ denote the poset of splittable flags of submodules of $M$ with partial order $\leq$ given by refinement, as above.\defend
\end{definition}

First we show that $\operatorname{RBS}(M)$ ``behaves'' like an exit path $\infty$-category with stratifying poset $GL(M)\backslash \mathcal{P}$ and $K(\pi,1)$ strata.

\begin{lemma}
Let $A$ be an associative ring and $M$ a finitely generated projective $A$-module.  Then:
\begin{enumerate}
\item The quotient set $GL(M)\backslash \mathcal{P}$ inherits the poset structure from $\mathcal{P}$.
\item There is a functor
$$\pi\colon\operatorname{RBS}(M)\rightarrow GL(M)\backslash\mathcal{P}$$
given by $\pi(\mathcal{F})=[\mathcal{F}]$.
\item For a point $x\in GL(M)\backslash\mathcal{P}$ the fibre $\pi^{-1}(x)$ is a connected groupoid.
\item For a splittable flag $\mathcal{F}$, the automorphism group of $\mathcal{F}$ in $\operatorname{RBS}(M)$ identifies as
$$\operatorname{Aut}_{\operatorname{RBS}(M)}(\mathcal{F}) = P_{\mathcal{F}}/U_\mathcal{F} = L_{\mathcal{F}}.$$
\end{enumerate}
\end{lemma}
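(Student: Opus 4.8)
The plan is to deduce all four statements from one basic observation: the action of $GL(M)$ on the set of splittable flags preserves the length (the number of associated graded pieces), since $g\in GL(M)$ carries the chain $\mathcal{F}=(M_1\subsetneq\cdots\subsetneq M_{d-1})$ bijectively to the chain $g\mathcal{F}=(gM_1\subsetneq\cdots\subsetneq gM_{d-1})$. Consequently, whenever $g\mathcal{F}$ and $\mathcal{F}$ — or any two flags in a common $GL(M)$-orbit — are comparable under the refinement order $\leq$, they must be equal, because a containment of finite sets of equal cardinality is an equality. For part 1 I would verify the hypothesis of \Cref{posetquotient} for the $GL(M)$-action on $\mathcal{P}$: if $\mathcal{F}\leq g\mathcal{F}$, then the submodules of $g\mathcal{F}$ form a subset of those of $\mathcal{F}$, and the length observation forces $\mathcal{F}=g\mathcal{F}$. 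So \Cref{posetquotient} applies and yields the induced poset structure on $GL(M)\backslash\mathcal{P}$ — and, as a byproduct, that every endomorphism in the action category $GL(M)\backslash\backslash\mathcal{P}$ is invertible, a fact that will reappear below.

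For part 2, since the target $GL(M)\backslash\mathcal{P}$ is a poset, a functor out of $\operatorname{RBS}(M)$ into it amounts to no more than the assertion that $\mathcal{F}\mapsto[\mathcal{F}]$ sends morphisms to $\leq$-relations: if $g$ represents a map $\mathcal{F}\to\mathcal{F}'$ in $\operatorname{RBS}(M)$, then $g\mathcal{F}\leq\mathcal{F}'$, so $[\mathcal{F}]=[g\mathcal{F}]\leq[\mathcal{F}']$, which is visibly independent of the choice of representative $g$ and of its $U_\mathcal{F}$-coset. Compatibility with composition and identities is immediate because composition in $\operatorname{RBS}(M)$ is induced by multiplication in $GL(M)$.

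For parts 3 and 4, fix $x=[\mathcal{F}_0]\in GL(M)\backslash\mathcal{P}$. Since the base is a poset, any morphism of $\operatorname{RBS}(M)$ between two objects lying over $x$ automatically lies over $\mathrm{id}_x$; hence $\pi^{-1}(x)$ is precisely the full subcategory of $\operatorname{RBS}(M)$ on the flags in the $GL(M)$-orbit of $\mathcal{F}_0$. It is connected because for $\mathcal{F},\mathcal{F}'$ in this orbit there is $g\in GL(M)$ with $g\mathcal{F}=\mathcal{F}'$, hence in particular $g\mathcal{F}\leq\mathcal{F}'$, giving a morphism $\mathcal{F}\to\mathcal{F}'$. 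It is a groupoid because a morphism $[g]\colon\mathcal{F}\to\mathcal{F}'$ in $\pi^{-1}(x)$ has $g\mathcal{F}$ in the same orbit as $\mathcal{F}'$ together with $g\mathcal{F}\leq\mathcal{F}'$, so $g\mathcal{F}=\mathcal{F}'$ by the length observation, and then $[g^{-1}]$ is an inverse. Taking $\mathcal{F}=\mathcal{F}'$ proves part 4: the monoid of endomorphisms $\{g\in GL(M):g\mathcal{F}\leq\mathcal{F}\}/U_\mathcal{F}$ equals $P_\mathcal{F}/U_\mathcal{F}$, since $\{g:g\mathcal{F}\leq\mathcal{F}\}=\{g:g\mathcal{F}=\mathcal{F}\}=P_\mathcal{F}$ is a group ($U_\mathcal{F}$ being normal in it); hence $\operatorname{Aut}_{\operatorname{RBS}(M)}(\mathcal{F})=\operatorname{End}_{\operatorname{RBS}(M)}(\mathcal{F})=P_\mathcal{F}/U_\mathcal{F}=L_\mathcal{F}$, the last equality being the identification $L_\mathcal{F}=\prod_{i=1}^d GL(M_i/M_{i-1})$ recalled above.

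None of this is hard; once the length observation is in place each assertion is a line or two. The one point I would take care over is the identification of the fibre $\pi^{-1}(x)$ underlying parts 3 and 4 — namely that pulling back along $\pi$ loses no morphisms, so that $\pi^{-1}(x)$ is genuinely a \emph{full} subcategory of $\operatorname{RBS}(M)$ rather than a wide-but-not-full one. This is what legitimises reading off $\operatorname{Aut}_{\operatorname{RBS}(M)}(\mathcal{F})$ from the fibre, and it holds for the structural reason that $GL(M)\backslash\mathcal{P}$ is a poset, with no non-identity endomorphisms to account for.
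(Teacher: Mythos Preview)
Your proof is correct and follows essentially the same approach as the paper: the key observation that the $GL(M)$-action preserves flag length, hence comparable flags in the same orbit coincide, is exactly what the paper uses to verify the hypothesis of \Cref{posetquotient} for part 1, after which it declares parts 2--4 ``immediate.'' You have simply spelled out those immediate verifications in more detail.
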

\begin{proof}
For part 1, by \Cref{posetquotient} we need to check that $g\mathcal{F}\leq \mathcal{F}$ implies $g\mathcal{F}=\mathcal{F}$.  But the $GL(M)$-action preserves length, and a refinement between flags of equal length is necessarily an identity.  Then parts 2, 3 and 4 are immediate.
\end{proof}

Now we describe the basic inductive structure of $\operatorname{RBS}(M)$.  For this we will impose the split noetherian hypothesis on $M$ described in the introduction: that there are no infinite ascending sequences of split submodules of $M$.  This has the following consequence.

\begin{lemma}\label{noetherianlemma}
Let $A$ be a ring and $M$ a split noetherian finitely generated projective $A$-module.  Then:
\begin{enumerate}
\item The posets $\mathcal{P}$ and $GL(M)\backslash \mathcal{P}$ satisfy the descending chain condition.
\item If $N\subset M$ is a split submodule and we have $g\in GL(M)$ with $gN\subset N$, then $gN=N$.
\end{enumerate}
\end{lemma}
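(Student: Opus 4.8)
The plan is to first distill from the split noetherian hypothesis a \emph{descending} chain condition on splittable submodules of $M$, and then reduce both parts of the lemma to it.

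I would start by recording the elementary observation that if $N$ is a direct summand of $M$ and $N\subseteq N'\subseteq M$, then $N$ is also a direct summand of $N'$: writing $M=N\oplus K$, the modular law gives $N'=N\oplus(N'\cap K)$. The key step is then to show that split noetherian implies that every descending chain $N_0\supseteq N_1\supseteq N_2\supseteq\cdots$ of splittable submodules of $M$ stabilises. For this, since each $N_{i+1}$ is a summand of $N_i$ by the observation, I can choose complements inductively to produce internal direct sum decompositions $N_0=N_k\oplus E_k$ with $E_1\subseteq E_2\subseteq E_3\subseteq\cdots$. Each $E_k$ is a summand of $N_0$, hence of $M$, so $(E_k)_k$ is an ascending chain of splittable submodules and therefore stabilises by hypothesis; a short manipulation with the modular law (comparing $N_0=N_k\oplus E_k$ and $N_0=N_{k+1}\oplus E_{k+1}$ once $E_k=E_{k+1}$) upgrades this to the stabilisation of $(N_k)_k$.

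Granting this, part 2 is immediate: if $g\in GL(M)$ and $N$ is splittable with $gN\subseteq N$, then each $g^{i}N$ is splittable (being the image of a summand under an automorphism), and $N\supseteq gN\supseteq g^2N\supseteq\cdots$ is a descending chain, so $g^{k}N=g^{k+1}N$ for some $k$; applying the automorphism $g^{-k}$ yields $N=gN$. For part 1, the quotient poset $GL(M)\backslash\mathcal{P}$ is handled directly using flag length, which is $GL(M)$-invariant: along a strict inequality in $GL(M)\backslash\mathcal{P}$ the length drops strictly, because a refinement of flags of equal length is an equality (this is exactly the input already used to know the quotient is a poset), so any descending chain has at most $\ell(\mathcal{F}_0)$ terms. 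For $\mathcal{P}$ itself, a strictly descending chain $\mathcal{F}_0>\mathcal{F}_1>\cdots$ gives a strictly increasing nested sequence of the finite chains $T_i$ of submodules occurring in the $\mathcal{F}_i$, so the union $T=\bigcup_i T_i$ is an infinite totally ordered set of splittable submodules of $M$; but by hypothesis and the key step this set satisfies both the ascending and the descending chain conditions, and a totally ordered set with both is finite (repeatedly extracting minima produces an infinite strictly ascending chain otherwise) --- a contradiction.

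The main obstacle is the key step: converting an ascending chain condition into a descending one. Once that is available the rest is formal. The delicate point within it is precisely the passage back from stabilisation of the auxiliary increasing chain $(E_k)$ to stabilisation of $(N_k)$, which relies on the $E_k$ being honest internal complements (so that the modular law applies) rather than on any abstract cardinality bookkeeping.
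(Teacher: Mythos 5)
The heart of your argument is correct and is essentially an expanded version of the paper's (much terser) proof: the observation that a summand of $M$ contained in a larger submodule is a summand of that submodule, the conversion of a descending chain $N_0\supseteq N_1\supseteq\cdots$ of splittable submodules into an ascending chain of internal complements $E_k$ with $N_0=N_k\oplus E_k$, and the resulting DCC on splittable submodules are exactly the content behind the paper's phrase that one can ``convert one situation to the other''. Your deduction of part 2 from this, and your proof that $\mathcal{P}$ itself satisfies the descending chain condition via the infinite totally ordered union $T=\bigcup_i T_i$, are both fine.

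The genuine problem is your direct argument for the quotient poset $GL(M)\backslash\mathcal{P}$. With the paper's convention, $\mathcal{F}\leq\mathcal{G}$ means $\mathcal{F}$ \emph{refines} $\mathcal{G}$ (the submodules of $\mathcal{G}$ form a subset of those of $\mathcal{F}$), so complete flags are minimal and the empty flag is maximal; hence along a strictly descending chain in $GL(M)\backslash\mathcal{P}$ the flag length strictly \emph{increases} rather than drops --- indeed you use this correct direction one sentence later, when you say that a descending chain in $\mathcal{P}$ produces an increasing sequence of sets $T_i$. Consequently the claimed bound of ``at most $\ell(\mathcal{F}_0)$ terms'' is unjustified, and increasing lengths alone do not force termination unless one first proves a uniform bound on lengths of splittable flags, which you have not done. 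The repair is short and is what the paper does implicitly by saying it suffices to treat $\mathcal{P}$: since $GL(M)$ acts on $\mathcal{P}$ by poset automorphisms, any strictly descending chain $[\mathcal{F}_0]>[\mathcal{F}_1]>\cdots$ in the quotient lifts inductively to a strictly descending chain in $\mathcal{P}$ (given a chosen representative $\mathcal{G}_i$ of $[\mathcal{F}_i]$ and elements $x\leq y$ with $[x]=[\mathcal{F}_{i+1}]$ and $[y]=[\mathcal{F}_i]$, translate by the $g$ with $gy=\mathcal{G}_i$ and take $\mathcal{G}_{i+1}=gx$), so the DCC for $GL(M)\backslash\mathcal{P}$ follows from the DCC for $\mathcal{P}$ that you have already established.
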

\begin{proof}
For part 1, it suffices to show that just $\mathcal{P}$ satisfies the descending chain condition.  If not, we would get an infinite chain of split submodules of $M$, so there would either be an infinite descending sequence of split submodules or an infinite ascending sequence.  But as the submodules are split we can convert one situation to the other so both are ruled out by our split noetherian hypothesis.  For 2, if $gN\subsetneq N$ then we get the infinite chain
$$\ldots \subsetneq g^kN\subsetneq g^{k-1}N \subsetneq \ldots \subsetneq N$$ which contradicts our assumption.
\end{proof}

Furthermore, it is often the case that every $M$ is split noetherian.

\begin{lemma}\label{whennoetherian}
Let $A$ be a ring.  If either:
\begin{enumerate}
\item $A$ is noetherian, or
\item $A$ is commutative and $\operatorname{Spec}(A)$ has only finitely many connected components,
\end{enumerate}
then every finitely generated projective $A$-module $M$ is split noetherian.
\end{lemma}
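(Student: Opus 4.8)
The two conditions are independent, and condition (1) is immediate: if $A$ is noetherian then any finitely generated $A$-module is a noetherian module, so \emph{every} ascending chain of submodules of $M$ stabilises, in particular every ascending chain of splittable submodules. So the plan is to devote essentially all the work to condition (2).

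Assume $A$ is commutative and $\operatorname{Spec}(A)$ has finitely many connected components. The governing invariant is the rank function $\operatorname{rk}_N\colon\operatorname{Spec}(A)\to\mathbb{Z}_{\geq 0}$, $\mathfrak{p}\mapsto \dim_{\kappa(\mathfrak{p})}(N\otimes_A\kappa(\mathfrak{p}))$, attached to a finitely generated projective module $N$; for such $N$ this function is locally constant, hence constant on each connected component, hence takes only finitely many values, and is in particular bounded, say by $r:=\max\operatorname{rk}_M$. First I would observe that given an ascending chain $N_1\subseteq N_2\subseteq\cdots$ of splittable submodules of $M$, each $N_i$ is a direct summand of $M$, hence finitely generated projective, and $\operatorname{rk}_{N_i}\leq\operatorname{rk}_{N_{i+1}}$ pointwise. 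Thus the $\operatorname{rk}_{N_i}$ form an ascending chain in the finite poset of locally constant functions $\operatorname{Spec}(A)\to\{0,1,\ldots,r\}$, and this chain stabilises: there is $n$ with $\operatorname{rk}_{N_i}=\operatorname{rk}_{N_n}$ for all $i\geq n$.

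It then remains to upgrade ``equal ranks'' to ``equal submodules'', i.e.\ to show that for $i\geq n$ the inclusion $N_i\subseteq N_{i+1}$ is an equality. For this I would fix a complement $M=N_i\oplus C$; since $N_i\subseteq N_{i+1}$, the modular law gives $N_{i+1}=N_i\oplus(N_{i+1}\cap C)$, so $N_{i+1}\cap C$ is a direct summand of $N_{i+1}$, hence of $M$, hence a finitely generated projective module, of rank $\operatorname{rk}_{N_{i+1}}-\operatorname{rk}_{N_i}=0$. A module whose localisation at every prime vanishes is zero, so $N_{i+1}\cap C=0$ and $N_{i+1}=N_i$; thus the original chain stabilises at $n$.

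The only genuinely delicate point is this last paragraph, the passage from equal ranks to an equality of submodules, and it is exactly here that the splittability hypothesis (rather than mere finite generation) is used; everything else is bookkeeping with locally constant functions. I would also note in passing that the finiteness of the set of connected components is essential: for $A=\prod_{\mathbb{N}}k$ with $k$ a field, the splittable submodules of $A$ cut out by the clopen subsets $\{1\}\subset\{1,2\}\subset\{1,2,3\}\subset\cdots$ of $\operatorname{Spec}(A)$ form a non-stabilising chain.
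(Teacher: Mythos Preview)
Your proof is correct and follows essentially the same approach as the paper: both use the locally constant rank function on $\operatorname{Spec}(A)$ to reduce to a chain condition in a finite poset, and both argue that a split inclusion of finitely generated projectives with equal rank functions must be an equality. The only cosmetic difference is that the paper phrases the key step in terms of the quotient $M/N$ (invoking Nakayama to kill a rank-zero projective), whereas you isolate $N_{i+1}/N_i$ as $N_{i+1}\cap C$ via a chosen complement and the modular law; your version is slightly more explicit about why $N_i$ is split in $N_{i+1}$, a point the paper leaves implicit.
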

\begin{proof}
The claim is clear if $A$ is noetherian.  If $A$ is commutative, then the dimension function $x\mapsto \operatorname{dim}_{k(x)}(M\otimes_{A} k(x))$ on $\operatorname{Spec}(A)$ is locally constant as every finitely generated projective module is locally free; moreover if $N$ is a proper split submodule of $M$ then the dimension of $N$ must be strictly less than that of $M$ on at least one connected component, as otherwise $M/N$ would be a finitely generated projective module of dimension $0$ everywhere, whence $M/N=0$ by Nakayama's lemma.  Thus the claim reduces to the fact that a finite product of copies of the poset $(\mathbb{N},\leq)$ satisfies the descending chain condition, which is clear.
\end{proof}

\begin{definition}
Let $A$ be an associative ring and $M$ a finitely generated projective $A$-module. For $[\mathcal{F}]\in GL(M)\backslash \mathcal{P}$ an orbit of splittable flags, denote by $\operatorname{RBS}(M)_{\leq [\mathcal{F}]}$ the full subcategory of those objects which admit a map to $\mathcal{F}$, and similarly for $(GL(M)\backslash\backslash\mathcal{P})_{\leq [\mathcal{F}]}$.\defend
\end{definition}

The following will be the basis of many inductive arguments.

\begin{proposition}\label{inductive structure}
Let $A$ be an associative ring and $M$ a split noetherian finitely generated projective $A$-module.
\begin{enumerate}
\item For a splittable flag $\mathcal{F}$ in $M$ with associated graded $gr(\mathcal{F})=(M_1,\ldots,M_d)$  we have identifications
$$\operatorname{RBS}(M)_{\leq [\mathcal{F}]}=\prod_{i=1}^d \operatorname{RBS}(M_i)$$
and
$$(GL(M)\backslash\backslash\mathcal{P})_{\leq [\mathcal{F}]} = P_\mathcal{F}\backslash\backslash\mathcal{P}_{\leq \mathcal{F}},$$
where $P_\mathcal{F}\subset GL(M)$ is the stabiliser group of $\mathcal{F}$. Moreover, the natural functor $BP_\mathcal{F}\rightarrow P_\mathcal{F}\backslash\backslash \mathcal{P}_{\leq \mathcal{F}}$ is a right adjoint and in particular induces an isomorphism on anima.
\item The natural functor $p\colon GL(M)\backslash\backslash\mathcal{P}\rightarrow\operatorname{RBS}(M)$ is proper and an isomorphism over $BGL(M)$, the full subcategory spanned by the empty flag.
\end{enumerate}
\end{proposition}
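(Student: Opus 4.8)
The plan is to deduce part (1) from a single combinatorial observation about split noetherian modules, and to prove properness in part (2) by directly computing the fibres of $p$.

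For part (1), the key point is the following extension of \Cref{noetherianlemma}(2): \emph{if $\mathcal{G}_1,\mathcal{G}_2$ are splittable flags refining $\mathcal{F}$ and $g\in GL(M)$ satisfies $g\mathcal{G}_1\leq\mathcal{G}_2$, then $g$ stabilises $\mathcal{F}$}. Indeed, from $g\mathcal{G}_1\leq\mathcal{G}_2\leq\mathcal{F}$ each submodule $M_j$ occurring in $\mathcal{F}$ occurs in the chain $g\cdot\mathcal{G}_1$, so $g^{-1}M_j$ occurs in $\mathcal{G}_1$; since $M_j$ itself occurs in $\mathcal{G}_1$ and the submodules of a flag form a chain, $g^{-1}M_j$ and $M_j$ are comparable, whence $g^{-1}M_j=M_j$ by \Cref{noetherianlemma}(2) (applied to whichever of $g,g^{-1}$ shrinks $M_j$). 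Granting this, $\operatorname{RBS}(M)_{\leq[\mathcal{F}]}$ is equivalent to its full subcategory $\mathcal{R}$ spanned by the flags that actually refine $\mathcal{F}$, since every object admitting a map to $\mathcal{F}$ is, via that map, isomorphic in $\operatorname{RBS}(M)$ to such a refinement. The canonical functor $\mathcal{R}\to\prod_{i=1}^d\operatorname{RBS}(M_i)$, where $(M_1,\dots,M_d)=gr(\mathcal{F})$, sends a refinement to the tuple of flags it induces on the graded pieces and a morphism $[g]$ to the tuple of components of the image $\bar g$ of $g$ in $L_\mathcal{F}=\prod_i GL(M_i)$; it is well defined because $U_\mathcal{F}\subseteq U_\mathcal{G}$ for $\mathcal{G}\leq\mathcal{F}$, it is fully faithful because the observation places every relevant $g$ in $P_\mathcal{F}$ and under the Levi decomposition $P_\mathcal{F}=U_\mathcal{F}\rtimes L_\mathcal{F}$ the subgroup $U_\mathcal{G}/U_\mathcal{F}$ becomes $\prod_i U_{\mathcal{G}_i}$ (matching the hom-sets), and it is essentially surjective by reconstructing a refinement of $\mathcal{F}$ from a tuple of flags on the graded pieces by taking preimages. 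The same observation yields $(GL(M)\backslash\backslash\mathcal{P})_{\leq[\mathcal{F}]}=P_\mathcal{F}\backslash\backslash\mathcal{P}_{\leq\mathcal{F}}$: the evident functor is essentially surjective as above and fully faithful since the observation says any transporter of one refinement of $\mathcal{F}$ into another already lies in $P_\mathcal{F}$. Finally, $\mathcal{F}$ is the maximum of $\mathcal{P}_{\leq\mathcal{F}}$ and is fixed by $P_\mathcal{F}$, so $BP_\mathcal{F}\hookrightarrow P_\mathcal{F}\backslash\backslash\mathcal{P}_{\leq\mathcal{F}}$ is right adjoint to the projection induced by $\mathcal{P}_{\leq\mathcal{F}}\to\ast$ (with unit at $\mathcal{G}$ the canonical map $\mathcal{G}\to\mathcal{F}$), and hence induces an equivalence on $\vert\cdot\vert$ by \Cref{propertiesinvert}(4).

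For part (2), that $p$ restricts to an isomorphism over $BGL(M)$ is immediate: the preimage of the full subcategory on the empty flag $[\emptyset]$ is the full subcategory of $GL(M)\backslash\backslash\mathcal{P}$ on $[\emptyset]$, again $BGL(M)$, and $p$ there is the quotient by $U_{[\emptyset]}=1$. For properness, fix a flag $\mathcal{F}$ and compute the fibre and right fibre of $p$ over $\mathcal{F}$: objects of $p^{-1}(\mathcal{F})$ are flags $\mathcal{G}$ equipped with an isomorphism $\mathcal{G}\overset{\sim}{\rightarrow}\mathcal{F}$ in $\operatorname{RBS}(M)$, while objects of $(GL(M)\backslash\backslash\mathcal{P})_{\mathcal{F}/}$ are flags $\mathcal{G}$ equipped with an arbitrary morphism $\mathcal{F}\to\mathcal{G}$ in $\operatorname{RBS}(M)$. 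For an object $x=(\mathcal{G},\varphi)$ of the right fibre, unwinding the definitions identifies the comparison left fibre $\big(p^{-1}(\mathcal{F})\big)_{/x}$ with the category of factorisations of $\varphi$ as $\mathcal{F}\overset{\sim}{\rightarrow}\mathcal{G}_0\to\mathcal{G}$ with the first map an isomorphism in $\operatorname{RBS}(M)$ and the second map lifted to a morphism of $GL(M)\backslash\backslash\mathcal{P}$. Since the constraint $\mathcal{G}_0\leq\mathcal{G}$ on the second map turns out to be automatic once $\varphi$ is given, a direct check (tracking the various $U_\mathcal{F}$-coset ambiguities) shows that there is exactly one morphism between any two such factorisations and that at least one exists, e.g. $\mathcal{G}_0=h\mathcal{F}$ with second map the identity, where $[h]=\varphi$. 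Thus each such left fibre is a nonempty chaotic groupoid, hence contractible, so $p^{-1}(\mathcal{F})\hookrightarrow(GL(M)\backslash\backslash\mathcal{P})_{\mathcal{F}/}$ is a $\varprojlim$-equivalence by \Cref{lim-equivalences}; as $\mathcal{F}$ was arbitrary, $p$ is proper. (Since these left fibres even have terminal objects, the inclusion is a left adjoint, so $p$ is a locally cartesian fibration and part (2) alternatively follows from \Cref{properexamples}(5).)

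I expect the only real work to be in part (2): correctly forming the pullbacks in $\operatorname{Cat}_\infty$ that define the fibre and right fibre, and keeping honest track of the $U_\mathcal{F}$-coset ambiguities, so as to verify that the comparison left fibres are genuinely chaotic. Part (1) is essentially bookkeeping around the displayed observation, but it does genuinely use that $M$ is split noetherian, through \Cref{noetherianlemma}(2).
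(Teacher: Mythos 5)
Your argument is correct. Part (1) follows the paper's own route: the paper likewise replaces both categories by their full subcategories of flags actually refining $\mathcal{F}$, and the transporter claim you isolate (any $g$ carrying one refinement of $\mathcal{F}$ into another lies in $P_\mathcal{F}$) is exactly the paper's key step, deduced in the same way from \Cref{noetherianlemma}(2); you merely spell out the hom-set matching via $U_\mathcal{G}/U_\mathcal{F}\cong\prod_i U_{\mathcal{G}_i}$ that the paper leaves implicit, and the adjunction for $BP_\mathcal{F}\to P_\mathcal{F}\backslash\backslash\mathcal{P}_{\leq\mathcal{F}}$ is identical. For part (2) you diverge: the paper observes that $\mathcal{P}\simeq\operatorname{RBS}(M)_{/\emptyset}$, hence $GL(M)\backslash\backslash\mathcal{P}\simeq\operatorname{RBS}(M)\overset{\rightarrow}{\times}_{\operatorname{RBS}(M)}BGL(M)$, so that properness is an instance of \Cref{properexamples}(1), whereas you verify the defining criterion by hand, computing the comparison left fibres of the fibre-to-right-fibre inclusion and checking they are nonempty chaotic groupoids (I checked the coset bookkeeping: for two factorisations the unique candidate $v=(u')^{-1}u$ does satisfy $v\mathcal{G}_0=\mathcal{G}_1$ and $[v]\psi=\psi'$, using $ug_0\in u'g_1U_\mathcal{F}$, so your claim of "exactly one morphism" is right, though your phrase about the constraint "$\mathcal{G}_0\leq\mathcal{G}$" being automatic is a slight misstatement of what needs checking). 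The two routes end up proving the same adjointness — your computation in effect re-proves \Cref{properexamples}(1)/(5) in this special case, and exhibits $p$ as a locally cartesian fibration — so what the paper's version buys is brevity plus the conceptually useful identification of $\mathcal{P}$ with $\operatorname{RBS}(M)_{/\emptyset}$, while yours buys an explicit, self-contained description of the fibres; both correctly handle the isomorphism over $BGL(M)$.
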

\begin{proof}
For claim 1, note that every splittable flag $\mathcal{G}$ with a map to $\mathcal{F}$ is $GL(M)$-equivalent to a splittable flag with $\mathcal{G}\leq \mathcal{F}$, so one can replace the left-hand categories by their full subcategories on such flags.  This lets one match up the objects, and then one has to calculate maps, where one needs the claim that if $g\in GL(M)$ satisfies $g\mathcal{G}\leq \mathcal{F}$, then necessarily $g\in P_\mathcal{F}$.\footnote{This is the analogue of the key lemma about parabolic subgroups, \Cref{parabolicnormalise}.}  But this follows from \Cref{noetherianlemma} part 2. The last claim, about $BP_\mathcal{F}\rightarrow P_\mathcal{F}\backslash\backslash \mathcal{P}_{\leq \mathcal{F}}$ being a right adjoint, is immediate to verify by taking the left adjoint to be the projection map backwards.

\medskip
For claim 2, note that $GL(M)\backslash\backslash\mathcal{P}$ identifies with the left pullback
$$\operatorname{RBS}(M)\overset{\rightarrow}{\times}_{\operatorname{RBS}(M)}BGL(M).$$
Indeed, $\mathcal{P}$ identifies with $\operatorname{RBS}(M)_{/\emptyset}$ by sending $\mathcal{F}$ to the map $\mathcal{F}\rightarrow \emptyset$ given by the identity $e\in GL(M)$; then when we factor in the automorphisms of $\emptyset$ we get the claim.
\end{proof}

In terms of colimits in $\operatorname{Cat}_\infty$, or colimits in $\mathcal{S}$ after applying geometric realisation, we have the following.

\begin{corollary}\label{inductiveRBScat}
Let $A$ be a ring and $M$ a split noetherian finitely generated projective $A$-module.  There are the following colimits in $\operatorname{Cat}_\infty$:
\begin{enumerate}
\item $$GL(M)\backslash\backslash\mathcal{P}\sqcup_{GL(M)\backslash\backslash\mathcal{P}\smallsetminus BGL(M)} (\operatorname{RBS}(M) \smallsetminus BGL(M))\overset{\sim}{\rightarrow} \operatorname{RBS}(M).$$
\item $$\operatorname{RBS}(M) \smallsetminus BGL(M) = \varinjlim_{[\mathcal{F}]\in GL(M)\backslash\mathcal{P}, [\mathcal{F}]\neq [\emptyset]} \operatorname{RBS}(M)_{\leq [\mathcal{F}]}$$
and
$$(GL(M)\backslash\backslash\mathcal{P})\smallsetminus BGL(M) = \varinjlim_{[\mathcal{F}]\in GL(M)\backslash\mathcal{P}, [\mathcal{F}]\neq [\emptyset]} (GL(M)\backslash\backslash\mathcal{P})_{\leq [\mathcal{F}]}.$$
\end{enumerate}
\end{corollary}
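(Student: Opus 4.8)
The plan is to read both parts off the general descent results of Section~2 — \Cref{cdhdescentcat} for part 1 and \Cref{closeddescentcat} for part 2 — using as the only genuinely geometric input the properness of $p$ and the description of its fibre over $BGL(M)$ recorded in \Cref{inductive structure}.

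For part 1, I would consider the pullback square with lower-right corner $\operatorname{RBS}(M)$, top-right corner $GL(M)\backslash\backslash\mathcal{P}$ mapping down by $p$, bottom-left corner the full subcategory $\operatorname{RBS}(M)\smallsetminus BGL(M)$ with its inclusion into $\operatorname{RBS}(M)$, and top-left corner the resulting fibre product. The first thing to check is that $\operatorname{RBS}(M)\smallsetminus BGL(M)$ is left-closed in $\operatorname{RBS}(M)$: a morphism $\emptyset\to\mathcal{F}'$ would force $[\emptyset]\leq[\mathcal{F}']$ in $GL(M)\backslash\mathcal{P}$, but $[\emptyset]$ is the top element of that poset, so $\mathcal{F}'=\emptyset$; hence no nonempty flag receives a map from $\emptyset$, which is exactly left-closedness of the complement. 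Next, since $p$ sends a flag to itself on objects and $p(\mathcal{F})=\emptyset$ only when $\mathcal{F}=\emptyset$, the fibre product in the top-left corner is precisely $(GL(M)\backslash\backslash\mathcal{P})\smallsetminus BGL(M)$. The three hypotheses of \Cref{cdhdescentcat} now read: $p$ is proper and the pullback of $p$ over the complementary full subcategory $BGL(M)$ is an equivalence, both of which are part 2 of \Cref{inductive structure}; and the bottom map is a left-closed full inclusion, just checked. So the square is also a pushout, giving part 1.

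For part 2, I would apply \Cref{closeddescentcat} to the ambient category $\mathcal{C}=\operatorname{RBS}(M)\smallsetminus BGL(M)$ and the collection $P=\{\operatorname{RBS}(M)_{\leq[\mathcal{F}]}:[\mathcal{F}]\neq[\emptyset]\}$. Each member is left-closed (morphisms compose), and by the definition of $\operatorname{RBS}(M)_{\leq[\mathcal{F}]}$ together with the fact that a flag $\mathcal{G}$ admits a map to $\mathcal{F}$ in $\operatorname{RBS}(M)$ exactly when $[\mathcal{G}]\leq[\mathcal{F}]$, the member $\operatorname{RBS}(M)_{\leq[\mathcal{F}]}$ is the full subcategory on the flags $\mathcal{G}$ with $[\mathcal{G}]\leq[\mathcal{F}]$; in particular $[\mathcal{F}]\mapsto\operatorname{RBS}(M)_{\leq[\mathcal{F}]}$ is an order isomorphism of $(GL(M)\backslash\mathcal{P})\smallsetminus[\emptyset]$ onto $P$, so the colimit over $P$ agrees with the colimit over the indexing poset in the statement. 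The descent hypothesis to verify is that for each object $\mathcal{G}$ of $\mathcal{C}$ the subposet $\{[\mathcal{F}]\neq[\emptyset]:\mathcal{G}\in\operatorname{RBS}(M)_{\leq[\mathcal{F}]}\}=\{[\mathcal{F}]\neq[\emptyset]:[\mathcal{F}]\geq[\mathcal{G}]\}$ is contractible; but $[\mathcal{G}]$ belongs to it (as $\mathcal{G}\notin BGL(M)$ gives $[\mathcal{G}]\neq[\emptyset]$) and is a minimum, so this subposet has an initial object and is contractible. This yields the first identity; the second is obtained by running the identical argument with $GL(M)\backslash\backslash\mathcal{P}$ in place of $\operatorname{RBS}(M)$, the subcategory $(GL(M)\backslash\backslash\mathcal{P})_{\leq[\mathcal{F}]}$ being again the preimage of $\{[\mathcal{G}]\leq[\mathcal{F}]\}$ and the contractibility computation being word-for-word the same.

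I do not anticipate a real obstacle: the content-bearing facts — properness of $p$ and the identification of its restriction over $BGL(M)$ — are already in hand from \Cref{inductive structure} (which is where the split noetherian hypothesis gets used), and everything else is bookkeeping with left-closed full subcategories and downsets in $GL(M)\backslash\mathcal{P}$. The only spot that needs a moment's care is part 2: one must match the intrinsic inclusion order on the cover $P$ with the refinement order on $GL(M)\backslash\mathcal{P}$, and check that deleting the top element $[\emptyset]$ does not destroy the contractibility of the up-sets $\{[\mathcal{F}]\geq[\mathcal{G}]\}$ — which it does not, since the relevant minimum $[\mathcal{G}]$ is itself different from $[\emptyset]$.
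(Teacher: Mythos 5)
Your proposal is correct and takes essentially the same route as the paper, which deduces part 1 from \Cref{cdhdescentcat} and part 2 from \Cref{closeddescentcat}, with the properness of $p$ and its identification over $BGL(M)$ supplied by \Cref{inductive structure}. Your write-up simply makes explicit the left-closedness, fibre-product and contractibility verifications that the paper leaves implicit.
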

\begin{proof}
This follows from the cdh descent, \Cref{cdhdescentcat}, and descent for covers, \Cref{closeddescentcat}.
\end{proof}

There is a natural comparison map
$$BGL(M)\rightarrow \vert\operatorname{RBS}(M)\vert$$
coming from the empty flag, and we want to understand the extent to which this is an equivalence.  First of all, it is clear that $\operatorname{RBS}(M)$ is connected, as every flag maps to the empty flag. Thus the first thing to look at is $\pi_1$.  This turns out to not be so difficult to analyse.

\begin{theorem}\label{pi1}
Let $A$ be an associative ring and $M$ a split noetherian finitely generated projective $A$-module.  Denote by $E(M)\subset GL(M)$ the subgroup generated by the $U_\mathcal{F}$ as $\mathcal{F}$ runs through all splittable flags in $M$.  Then the map
$$GL(M)=\pi_1 BGL(M)\rightarrow \pi_1\vert \operatorname{RBS}(M)\vert$$
is surjective with kernel $E(M)$.
\end{theorem}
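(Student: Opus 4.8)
The plan is to use the inductive/colimit description of $\operatorname{RBS}(M)$ from \Cref{inductiveRBScat} together with van Kampen, which applies since $\vert\cdot\vert$ commutes with colimits in $\operatorname{Cat}_\infty$. The key observation is that $\pi_1$ of a geometric realisation only sees a colimit of groupoids, so we can work throughout with the fundamental groupoids. We wish to compute $\pi_1$ of the pushout in \Cref{inductiveRBScat} part 1, taking $BGL(M)\subset\operatorname{RBS}(M)$ as the basepoint-containing piece. The strategy splits into two parts: first, show that the map on $\pi_1$ out of $BGL(M)=BGL(M)$, namely $GL(M)\to\pi_1\vert\operatorname{RBS}(M)\vert$, is surjective; second, identify the kernel as exactly $E(M)$.

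\textbf{Surjectivity.} This is the easy half. Since every object of $\operatorname{RBS}(M)$ (every splittable flag) admits a morphism to the empty flag $\emptyset$, the category $\operatorname{RBS}(M)$ is connected; moreover the full subcategory $BGL(M)$ on $\emptyset$ already contains the basepoint, and by van Kampen for the cover/pushout description — or more simply, because any loop in $\vert\operatorname{RBS}(M)\vert$ based at $\emptyset$ can be homotoped, one $1$-simplex at a time, into the subcategory $BGL(M)$ using that each object connects to $\emptyset$ — every element of $\pi_1\vert\operatorname{RBS}(M)\vert$ is in the image of $\pi_1 BGL(M)=GL(M)$. (Concretely: $\pi_1\vert\operatorname{RBS}(M)\vert$ is the localisation of the free groupoid on the morphisms of $\operatorname{RBS}(M)$ at all of them; choosing for each flag $\mathcal{F}$ a path to $\emptyset$ exhibits every generator as conjugate to a composite of automorphisms of $\emptyset$.)

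\textbf{Identifying the kernel.} Here I would argue that $\pi_1\vert\operatorname{RBS}(M)\vert = GL(M)/E(M)$ by exhibiting the quotient map $GL(M)\to GL(M)/E(M)$ as satisfying the universal property of $\pi_1\vert\operatorname{RBS}(M)\vert$ — i.e. producing a functor $\operatorname{RBS}(M)\to B(GL(M)/E(M))$ and showing it is initial among maps to groupoids, or equivalently a $\pi_1$-isomorphism. The functor sends every flag $\mathcal{F}$ to the unique object and a morphism $[g]\colon\mathcal{F}\to\mathcal{F}'$ (a coset $gU_\mathcal{F}$) to the class of $g$ in $GL(M)/E(M)$; this is well-defined precisely because $U_\mathcal{F}\subset E(M)$ by definition of $E(M)$, and it is functorial since composition is induced by multiplication in $GL(M)$. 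Restricted to $BGL(M)$ it is the quotient map. It remains to see this induces an \emph{iso} on $\pi_1$ after realisation; for this I would compute $\pi_1\vert\operatorname{RBS}(M)\vert$ directly as the fundamental group of the $1$-skeleton (objects, morphisms, and the relations coming from composable pairs, i.e. $2$-simplices). One picks a maximal tree in this graph — e.g. for each flag $\mathcal{F}$ the chosen edge $\mathcal{F}\to\emptyset$ given by $e\in GL(M)$ — and then $\pi_1$ is the free group on the remaining edges modulo the $2$-simplex relations. Using the tree to normalise, each edge $[g]\colon\mathcal{F}\to\mathcal{F}'$ contributes the element $g\in GL(M)$ (via: go $\mathcal{F}'\to\emptyset$, back $\emptyset\to\mathcal{F}$ is not allowed, so instead compare $\mathcal{F}\xrightarrow{[g]}\mathcal{F}'\to\emptyset$ with $\mathcal{F}\to\emptyset$, both realised by genuine group elements in $GL(M)$, giving the class of $g$), and the relations say: (a) two parallel edges differing by $U_\mathcal{F}$ are identified, forcing $U_\mathcal{F}\mapsto 1$; (b) composition in $GL(M)$ is respected. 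Thus $\pi_1\vert\operatorname{RBS}(M)\vert$ is generated by $GL(M)$ with the only relations being $U_\mathcal{F}=1$ for all $\mathcal{F}$, i.e. it is $GL(M)/\langle\!\langle E(M)\rangle\!\rangle$; but $E(M)$ is generated by conjugates of $U_\mathcal{F}$'s (since $gU_\mathcal{F}g^{-1}=U_{g\mathcal{F}}$), so the normal closure is $E(M)$ itself and we get $GL(M)/E(M)$.

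\textbf{Main obstacle.} The subtle point — and the step I'd spend the most care on — is the bookkeeping in the $1$-skeleton computation: making precise, via the chosen maximal tree, that \emph{every} $2$-simplex relation reduces either to "kill $U_\mathcal{F}$" or to "multiplication in $GL(M)$", with no extra relations sneaking in and no element of $E(M)$ surviving. The split noetherian hypothesis enters to guarantee (via \Cref{noetherianlemma}) that $GL(M)\backslash\mathcal{P}$ is a genuine poset with the descending chain condition, so that the inductive/colimit machinery and the identification of $GL(M)\backslash\backslash\mathcal{P}$ as a left pullback (\Cref{inductive structure}) are available — though for the bare $\pi_1$ statement one might alternatively avoid the descending chain condition and argue directly with the $1$-skeleton, in which case the hypothesis would only be needed to ensure morphism sets behave well (e.g. part 2 of \Cref{noetherianlemma}, that $g\mathcal{F}\le\mathcal{F}\Rightarrow g\mathcal{F}=\mathcal{F}$, which keeps the fibres of $\pi$ from collapsing pathologically). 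I would present the argument via the explicit $1$-skeleton / maximal-tree computation, as it gives the cleanest route to pinning down the kernel as precisely $E(M)$.
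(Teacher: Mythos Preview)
Your proposal is correct, and in fact both of your key ingredients — the functor $\operatorname{RBS}(M)\to B(GL(M)/E(M))$ and the observation that $E(M)$ is normal because $gU_\mathcal{F}g^{-1}=U_{g\mathcal{F}}$ — appear in the paper's proof. But the overall architecture differs in two places.

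\emph{Surjectivity.} You argue directly from the $1$-skeleton: choose the spanning tree of edges $\mathcal{F}\xrightarrow{[e]}\emptyset$, and use $2$-simplices to show every generator-loop is homotopic to some $g\in GL(M)=\operatorname{Aut}(\emptyset)$. This is correct and elementary; as you suspect, it does not even need the split noetherian hypothesis. The paper instead proves surjectivity by noetherian induction on $M$, using the colimit decomposition of \Cref{inductiveRBScat} and the fact that the class of maps which are $\pi_0$-iso and $\pi_1$-surjective is closed under colimits (being left orthogonal to $0$-truncated maps). The paper's route is less direct here, but the inductive template is what gets reused in the later homology arguments (\Cref{fromflagstoRBS(M)}), so it earns its keep.

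\emph{The kernel.} You propose computing the full presentation of $\pi_1$ from the $2$-skeleton, checking that all relations reduce to ``multiply in $GL(M)$'' or ``kill $U_\mathcal{F}$''. This works but is more than necessary. The paper does it in two cheap strokes: first, for $u\in U_\mathcal{F}$ the $2$-simplex $\mathcal{F}\xrightarrow{[e]}\emptyset\xrightarrow{u}\emptyset$ with composite $\mathcal{F}\xrightarrow{[e]}\emptyset$ exhibits $u$ as nullhomotopic, so $E(M)\subset\ker$; second, the functor to $B(GL(M)/E(M))$ (which you also construct) gives $\ker\subset E(M)$. Together with surjectivity this finishes. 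You could simplify your write-up by adopting this split instead of the full presentation computation — your ``main obstacle'' then evaporates.
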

\begin{proof}
First let us note that $E(M)$ is in the kernel.  Indeed, if $\mathcal{F}$ is a splittable flag and $g\in U_\mathcal{F}$, then the refinement $\mathcal{F}\leq \emptyset$ is invariant under the $g$ action on $\emptyset$, which produces a nullhomotopy of the image of $g$ in $\pi_1\vert\operatorname{RBS}(M)\vert$.

\medskip

Next let us produce a map $\pi_1\vert \operatorname{RBS}(M)\vert \rightarrow GL(M)/E(M)$ such that the composition with $GL(M)\rightarrow \pi_1\vert\operatorname{RBS}(M)\vert$ is the natural quotient.  For this define a functor
$$\operatorname{RBS}(M)\rightarrow B(GL(M)/E(M))$$
by sending each flag to the basepoint and the map $\mathcal{F}\rightarrow\mathcal{F}'$ induced by an element $g\in GL(M)$ with $g\mathcal{F}\leq \mathcal{F}'$ to the image of $g$ in $GL(M)/E(M)$.  This is clearly well-defined and functorial.

\medskip

To finish the proof, it suffices to show that the map $BGL(M)\rightarrow \vert\operatorname{RBS}(M)\vert$ is surjective on $\pi_1$.  For this, recall that a map of anima $X\rightarrow Y$ is an isomorphism on $\pi_0$ and surjective on $\pi_1$ if and only if it is left orthogonal to the class of $0$-truncated maps, meaning those maps each of whose homotopy fibres is $0$-truncated.  It follows that the collection of maps $X\rightarrow Y$ which are isomorphism on $\pi_0$ and surjective on $\pi_1$ is closed under colimits in $\operatorname{Fun}(\Delta^1,\mathcal{S})$.  It is also clearly closed under products and composition.

\medskip

Using these permanence properties, let us now prove the claim by noetherian induction on $M$.  Thus, we can assume the claim holds for all proper splittable submodules of $M$, hence it holds for all associated graded pieces of nonempty flags in $M$.  But then part 1 of \Cref{inductive structure} shows that
$$BL_\mathcal{F} \rightarrow \vert\operatorname{RBS}(M)_{\leq [\mathcal{F}]}\vert$$
is an isomorphism on $\pi_0$ and surjective on $\pi_1$.  It follows that the same is true for the composition $BP_\mathcal{F}\rightarrow BL_\mathcal{F} \rightarrow \vert \operatorname{RBS}(M)_{\leq [\mathcal{F}]}\vert$, which is equivalent to saying that the same is true for
$$\vert (GL(M)\backslash\backslash\mathcal{P})_{\leq [\mathcal{F}]}\vert \rightarrow \vert \operatorname{RBS}(M)_{\leq [\mathcal{F}]}\vert,$$
since $(GL(M)\backslash\backslash\mathcal{P})_{\leq [\mathcal{F}]}=P_\mathcal{F}\backslash\backslash\mathcal{P}_{\leq \mathcal{F}}$ by part 2 of \Cref{inductive structure}.

\medskip

But parts 1 and 2 of \Cref{inductiveRBScat} show that our map $BGL(M)\rightarrow\vert\operatorname{RBS}(M)\vert$ is an iterated colimit of such maps, so we deduce the desired claim.
\end{proof}

In particular, if the subgroup $E(M)$ happens to be perfect, we can perform the plus construction and obtain a comparison map
$$BGL(M)^+\rightarrow \vert \operatorname{RBS}(M)\vert.$$
which is an isomorphism on $\pi_0$ and $\pi_1$.  In the next section we will see that if $A$ \emph{has many (central) units} in the technical sense introduced by Nesterenko--Suslin, and every finitely generated projective $A$-module is free, then this map is an equivalence.  For the proof we use the inductive structure explained in this section to reduce to proving a certain homology isomorphism for matrix groups.  This is a close analogue to the homology isomorphism proved by Nesterenko--Suslin in \cite{NS}, and our proof is based on theirs.  We do have to take care to ensure that we get the desired result with local coefficient systems as well, though.

\subsection{Comparison with the plus-construction}

\begin{lemma}\label{fromflagstoRBS(M)}
Let $A$ be an associative ring and $M$ a split noetherian finitely generated projective $A$-module.  Let $\mathcal{L}$ be a local system of abelian groups on $\vert \operatorname{RBS}(M)\vert $, viewed also as a local system on $BL_\mathcal{F}$ for any splittable flag $\mathcal{F}$ of submodules of $M$,  by pullback to the full subcategory on $\mathcal{F}$.  Suppose that for all $\mathcal{F}\leq \mathcal{G}$ the quotient map $BP_\mathcal{F}\rightarrow B(P_\mathcal{F}/U_\mathcal{G})$ induces an isomorphism on homology with $\mathcal{L}$ coefficients.  Then the map
$$BGL(M)\rightarrow \vert \operatorname{RBS}(M)\vert$$
also induces an isomorphism on homology with $\mathcal{L}$-coefficients.
\end{lemma}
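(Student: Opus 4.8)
The plan is to prove this by Noetherian induction on $M$, exploiting the inductive colimit presentation of $\operatorname{RBS}(M)$ from \Cref{inductiveRBScat} together with the $\mathcal{L}$-homology spectral sequence that comes from having a colimit diagram in $\operatorname{Cat}_\infty$. Concretely, recall from the discussion after \Cref{decomposelimits} that a colimit diagram in $\operatorname{Cat}_\infty$ produces, upon applying $\vert\cdot\vert$, a homotopy colimit diagram of anima, hence a spectral sequence computing $\mathcal{L}$-homology of the total space in terms of the $\mathcal{L}$-homology of the pieces; and for the pushout square in part 1 of \Cref{inductiveRBScat} this is just a Mayer--Vietoris sequence. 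So if I can show that the comparison map is an $\mathcal{L}$-homology isomorphism on each of the three corners of that square — namely on $GL(M)\backslash\backslash\mathcal{P}$, on $\operatorname{RBS}(M)\smallsetminus BGL(M)$, and on $(GL(M)\backslash\backslash\mathcal{P})\smallsetminus BGL(M)$ — then the five lemma applied to the Mayer--Vietoris sequences gives the result for $\operatorname{RBS}(M)$ itself. (Here I should be slightly careful that the relevant comparison is a map of colimit diagrams; the map $p\colon GL(M)\backslash\backslash\mathcal{P}\to\operatorname{RBS}(M)$ from \Cref{inductive structure} restricts compatibly to all the subcategories in sight, so this is fine.)

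The first reduction is to the ``boundary'' pieces. By part 2 of \Cref{inductiveRBScat}, both $\operatorname{RBS}(M)\smallsetminus BGL(M)$ and $(GL(M)\backslash\backslash\mathcal{P})\smallsetminus BGL(M)$ are colimits over the poset $\{[\mathcal{F}]\neq[\emptyset]\}$ of the subcategories $\operatorname{RBS}(M)_{\leq[\mathcal{F}]}$, resp. $(GL(M)\backslash\backslash\mathcal{P})_{\leq[\mathcal{F}]}$, and the comparison map is a map of such colimit diagrams; so by the spectral sequence it suffices to treat each $[\mathcal{F}]\neq[\emptyset]$ separately, i.e. to show
$$\vert(GL(M)\backslash\backslash\mathcal{P})_{\leq[\mathcal{F}]}\vert\longrightarrow\vert\operatorname{RBS}(M)_{\leq[\mathcal{F}]}\vert$$
is an $\mathcal{L}$-homology isomorphism for each nonempty flag class. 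By part 1 of \Cref{inductive structure} the left side is $BP_\mathcal{F}$ (since $BP_\mathcal{F}\to P_\mathcal{F}\backslash\backslash\mathcal{P}_{\leq\mathcal{F}}$ is a right adjoint, hence an equivalence on $\vert\cdot\vert$ by \Cref{propertiesinvert}), and the right side is $\vert\prod_{i=1}^d\operatorname{RBS}(M_i)\vert=\prod_{i=1}^d\vert\operatorname{RBS}(M_i)\vert$ (using that $\vert\cdot\vert$ preserves finite products, \Cref{propertiesinvert}). Now factor the comparison map as
$$BP_\mathcal{F}\longrightarrow BL_\mathcal{F}=\prod_{i=1}^d BGL(M_i)\longrightarrow\prod_{i=1}^d\vert\operatorname{RBS}(M_i)\vert.$$
The second map is an $\mathcal{L}$-homology isomorphism by the inductive hypothesis of the Noetherian induction applied to each proper summand $M_i$ (together with a Künneth argument for the product, noting the $\mathcal{L}$-coefficients restrict compatibly along $BL_\mathcal{F}=\prod BGL(M_i)$ — or more cleanly, one runs the whole argument for arbitrary finite products of $\operatorname{RBS}(M_i)$'s). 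The first map $BP_\mathcal{F}\to BL_\mathcal{F}=B(P_\mathcal{F}/U_\mathcal{F})$ is an $\mathcal{L}$-homology isomorphism precisely by the hypothesis of the lemma (the case $\mathcal{G}=\mathcal{F}$, so $U_\mathcal{G}=U_\mathcal{F}$). Hence both boundary-piece comparison maps are $\mathcal{L}$-homology isomorphisms.

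It remains to handle the corner $GL(M)\backslash\backslash\mathcal{P}$ itself, i.e. to show $\vert GL(M)\backslash\backslash\mathcal{P}\vert\to\vert\operatorname{RBS}(M)\vert$ is an $\mathcal{L}$-homology isomorphism — but this is not separately needed: in the Mayer--Vietoris square for part 1 of \Cref{inductiveRBScat}, the map on the corner $GL(M)\backslash\backslash\mathcal{P}$ and the map on $\operatorname{RBS}(M)$ are the outer terms, while the maps on the two pieces $GL(M)\backslash\backslash\mathcal{P}\smallsetminus BGL(M)$ and $\operatorname{RBS}(M)\smallsetminus BGL(M)$ have a common ``overlap'' map on $(GL(M)\backslash\backslash\mathcal{P})\smallsetminus BGL(M)$, all of which we have just shown are $\mathcal{L}$-isomorphisms — so I instead set up the induction so that the statement being proved by Noetherian induction is: \emph{the map $p\colon\vert GL(M)\backslash\backslash\mathcal{P}\vert\to\vert\operatorname{RBS}(M)\vert$ is an $\mathcal{L}$-homology isomorphism, and hence so is $BGL(M)\to\vert\operatorname{RBS}(M)\vert$} (the latter since $BGL(M)\to\vert GL(M)\backslash\backslash\mathcal{P}\vert$ is an equivalence, as $GL(M)\backslash\backslash\mathcal{P}$ is the left pullback $\operatorname{RBS}(M)\overset{\rightarrow}{\times}_{\operatorname{RBS}(M)}BGL(M)$ with projection to $BGL(M)$ a proper functor with contractible... actually: $BGL(M)\subset GL(M)\backslash\backslash\mathcal{P}$ is the inclusion of the full subcategory on $\emptyset$, and $p$ restricted there is the equivalence onto the full subcategory on $\emptyset$ in $\operatorname{RBS}(M)$, so we get $BGL(M)\to\vert\operatorname{RBS}(M)\vert$ factoring compatibly). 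The base case of the induction is $M=0$, where everything is trivial. The main obstacle, and the only place real content enters beyond bookkeeping, is the hypothesis-driven step $BP_\mathcal{F}\to BL_\mathcal{F}$: everything else is formal manipulation of colimit diagrams and spectral sequences, but verifying that the various comparison maps genuinely assemble into maps of colimit diagrams — so that the spectral-sequence/five-lemma argument applies — requires care with the functoriality established in \Cref{inductive structure} and \Cref{inductiveRBScat}. I would also need to double check the Künneth step is clean with twisted coefficients, which is why phrasing the induction for arbitrary finite products $\prod_i\operatorname{RBS}(M_i)$ (with $M=\oplus_i M_i$, each $M_i$ allowed to be all of $M$ only in the trivial one-factor case) is the safest bookkeeping.
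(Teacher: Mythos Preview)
Your overall strategy --- use the colimit descriptions from \Cref{inductiveRBScat}, then Mayer--Vietoris and the spectral sequence from \Cref{decomposelimits} --- matches the paper's. But there is a genuine gap in your inductive setup, and it shows up exactly where you write ``by the inductive hypothesis of the Noetherian induction applied to each proper summand $M_i$''.

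You are inducting on $M$, and in the inductive step you only invoke the lemma's hypothesis in the special case $\mathcal{G}=\mathcal{F}$ (the map $BP_\mathcal{F}\to BL_\mathcal{F}$). To apply the inductive hypothesis to $M_i$, however, you would need the lemma's hypothesis to hold for $M_i$ with the restricted local system: for every pair $\mathcal{F}'\leq\mathcal{G}'$ of flags in $M_i$, the map $BP^{M_i}_{\mathcal{F}'}\to B(P^{M_i}_{\mathcal{F}'}/U^{M_i}_{\mathcal{G}'})$ must be an $\mathcal{L}$-isomorphism. You never check this, and it does not follow from the $\mathcal{F}=\mathcal{G}$ case of the $M$-hypothesis. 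If you try to deduce it by extending $\mathcal{F}',\mathcal{G}'$ to flags $\hat{\mathcal{F}}'\leq\hat{\mathcal{G}}'\leq\mathcal{F}$ in $M$, you find you need the $M$-hypothesis for $\hat{\mathcal{F}}'\leq\mathcal{F}$ and $\hat{\mathcal{F}}'\leq\hat{\mathcal{G}}'$ --- precisely the general $\mathcal{F}\leq\mathcal{G}$ case, not just $\mathcal{F}=\mathcal{G}$. So your proof, as written, uses strictly less than the stated hypothesis and does not close.

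The paper sidesteps this by inducting not on $M$ but on flags $\mathcal{G}$ within the fixed $M$, proving that $BL_\mathcal{G}\to\vert\operatorname{RBS}(M)_{\leq[\mathcal{G}]}\vert$ is an $\mathcal{L}$-isomorphism for all $\mathcal{G}$ (the case $\mathcal{G}=\emptyset$ being the target). The cdh-descent reduction then lands on maps $B(P_\mathcal{F}/U_\mathcal{G})\to\vert\operatorname{RBS}(M)_{\leq[\mathcal{F}]}\vert$ for finer $\mathcal{F}$, and the hypothesis for the pair $\mathcal{F}\leq\mathcal{G}$ (together with $\mathcal{F}\leq\mathcal{F}$ and 2-out-of-3) handles exactly this. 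Inducting on flags rather than on modules keeps everything inside one $M$ and one $\mathcal{L}$, so the full strength of the hypothesis is available and no descent-to-summands or K\"unneth-with-twisted-coefficients issue arises.
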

\begin{proof}
Using the inductive nature of the $\operatorname{RBS}$ categories, \Cref{inductive structure} and \Cref{inductiveRBScat}, we will prove by noetherian induction on a splittable flag $\mathcal{G}$ that the map
$$BL_\mathcal{G}\rightarrow \vert\operatorname{RBS}(M)_{\leq [\mathcal{G}]}\vert$$
is an isomorphism on homology with $\mathcal{L}$-coefficients.  Thus, assume the claim holds for all finer flags.  Let $(M_1,\dots,M_d)$ denote the associated graded of $\mathcal{G}$, and consider the proper functor
$$\prod_{i=1}^d GL(M_i)\backslash\backslash \mathcal{P}_i\rightarrow \prod_{i=1}^d \operatorname{RBS}(M_i) = \operatorname{RBS}(M)_{\leq [\mathcal{G}]},$$
where $\mathcal{P}_i$ denotes the poset of splittable flags in $M_i$.  We want to show it's an isomorphism on $\mathcal{L}$-homology.  By cdh descent, it suffices to show the same for its pullback to $\operatorname{RBS}(M)_{\leq [\mathcal{F}]}$ for any finer flag $\mathcal{F}\leq \mathcal{G}$.  But if we write $\mathcal{F}_i$ for the image of $\mathcal{F}$ in $M_i$, this pullback gives
$$\prod_{i=1}^d P_{\mathcal{F}_i}\backslash\backslash (\mathcal{P}_i)_{\leq [\mathcal{F}_i]}\rightarrow \operatorname{RBS}(M)_{\leq [\mathcal{F}]},$$
so it suffices to see that $B(P_\mathcal{F}/U_\mathcal{G})=\prod_{i=1}^d BP_{\mathcal{F}_i}\rightarrow \operatorname{RBS}(M)_{\leq [\mathcal{F}]}$ gives an isomorphism on $\mathcal{L}$-homology.  But now this follows from the inductive hypothesis, our hypothesis, and the 2 out of 3 property for isomorphisms.
\end{proof}

\begin{lemma}\label{vanishingtrick}
Let $k$ be a prime field, let $1\rightarrow U\rightarrow P\rightarrow L\rightarrow 1$ be a short exact sequence of groups and let $\mathcal{L}$ be a local system of $k$-modules on $BL$.  Suppose there exist:
\begin{enumerate}
\item A normal subgroup $D\subset L$;
\item A map $s\colon D\rightarrow P$ giving a splitting of the pullback of $P\rightarrow L$ to $D$;
\end{enumerate}
such that:
\begin{enumerate}
\item The local system $\mathcal{L}$ is constant when restricted to $BD$;
\item For all $i\geq 1$ the $k$-module $H_i(BU;k)$, equipped with $D$-action induced by the conjugation action of $s(D)$ on $U$, has vanishing $D$-homology in all degrees.
\end{enumerate}
Then the map
$$BP\rightarrow BL$$
induces an isomorphism on homology with $\mathcal{L}$-coefficients.
\end{lemma}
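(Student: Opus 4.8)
The plan is to run the Lyndon--Hochschild--Serre spectral sequence for the extension $1\to U\to P\to L\to 1$ with coefficients in $\mathcal{L}$ and show that it collapses onto its bottom row. Since $\mathcal{L}$ is pulled back from $BL$, its restriction to $U$ is the trivial local system on the underlying $k$-module $M$ of $\mathcal{L}$, so $H_q(BU;\mathcal{L}\vert_{BU})\cong H_q(BU;k)\otimes_k M$ and the $E^2$-page reads
$$E^2_{p,q}=H_p\big(L;\,H_q(BU;k)\otimes_k M\big)\ \Longrightarrow\ H_{p+q}(BP;\mathcal{L}),$$
where $L$ acts diagonally: on $H_q(BU;k)$ via the intrinsic conjugation action (well-defined without a lift, since inner automorphisms act trivially on homology) and on $M$ via the local system. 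On the $q=0$ line this gives $E^2_{p,0}=H_p(L;M)=H_p(BL;\mathcal{L})$, and the edge homomorphism $H_p(BP;\mathcal{L})\to E^2_{p,0}$ is exactly the map induced by $BP\to BL$. So it suffices to show $E^2_{p,q}=0$ for all $p$ and all $q\geq 1$.

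For that, fix $q\geq 1$, set $N:=H_q(BU;k)\otimes_k M$, and compute $H_\ast(L;N)$ by the Lyndon--Hochschild--Serre spectral sequence of the normal subgroup $D\triangleleft L$,
$${}'\!E^2_{r,s}=H_r\big(L/D;\,H_s(D;N)\big)\ \Longrightarrow\ H_{r+s}(L;N).$$
I would show $H_s(D;N)=0$ for every $s$, which forces $H_\ast(L;N)=0$. By hypothesis $\mathcal{L}\vert_{BD}$ is constant, so $D$ acts trivially on $M$; as $k$ is a field, $M$ is a direct sum of trivial copies of $k$, whence $N\cong\bigoplus H_q(BU;k)$ as a $k[D]$-module and $H_s(D;N)\cong\bigoplus H_s(D;H_q(BU;k))$ (group homology commutes with direct sums of coefficients). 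The $D$-action on $H_q(BU;k)$ obtained by restricting the intrinsic $L$-action agrees with the one induced by conjugation of $s(D)$ on $U$, because $s$ is a section of $P\vert_D\to D$, so each $s(d)$ lifts $d\in L$ and conjugation by $s(d)$ realizes the action of $d$ on $H_q(BU;k)$. Hence hypothesis~(2) applies (we are in the range $q\geq 1$) and gives $H_s(D;H_q(BU;k))=0$ for all $s$, so $H_s(D;N)=0$ for all $s$, as wanted.

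Consequently every row $q\geq 1$ of the first spectral sequence vanishes, the abutment filtration on $H_p(BP;\mathcal{L})$ is trivial, $E^2_{p,0}=E^\infty_{p,0}$, and the edge map gives an isomorphism $H_p(BP;\mathcal{L})\xrightarrow{\ \sim\ }H_p(BL;\mathcal{L})$ which is the map induced by $BP\to BL$. The only point that needs genuine care is the bookkeeping of the diagonal $L$-action on $H_q(BU;k)\otimes_k M$ and, in particular, checking that its restriction to $D$ is on the first factor precisely the action appearing in hypothesis~(2) (this uses both the triviality of $\mathcal{L}\vert_{BD}$ and the fact that $s$ is a genuine homomorphic splitting, not just a set-theoretic one); once that is pinned down, the rest is a formal two-spectral-sequence argument.
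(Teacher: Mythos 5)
Your proof is correct and follows essentially the same route as the paper: the Serre/Lyndon--Hochschild--Serre spectral sequence for $U\to P\to L$, then the one for $D\triangleleft L$, using constancy of $\mathcal{L}$ on $BD$ (and that $k$ is a field, so $M$ is a sum of trivial copies of $k$) together with the splitting $s$ to identify the $D$-action on $H_q(BU;k)$ with the conjugation action of $s(D)$, so that hypothesis (2) kills all rows $q\geq 1$. You merely spell out a couple of steps the paper leaves implicit (the tensor decomposition of the coefficients and the edge-map identification), which is fine.
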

\begin{proof}
By the Serre spectral sequence, it suffices to show that $H_p(BL;H_q(BU;\mathcal{L}))=0$ for $p\geq 0$ and $q\geq 1$.  By the Serre spectral sequence for $BD \rightarrow BL\rightarrow B(L/D)$, for this it suffices to show that $H_p(BD;H_q(BU;\mathcal{L}))=0$ for $p\geq 0$ and $q\geq 1$.  But now by hypothesis 1 the local system is constant so it suffices to show $H_p(BD;H_q(BU;k))=0$ for $p\geq 0$ and $q\geq 1$.  But using the splitting $s$, the action of $D$ on $H_q(BU;k)$ is induced by the conjugation action of $s(D)$ on $U$, so this is handled by hypothesis 2.
\end{proof}

\begin{lemma}\label{diagonal}
Let $A$ be an associative ring, let $\lambda \in Z(A)^\times$ be a central unit, and let $N$ and $N'$ be finitely generated projective $A$-modules.  Fix $p,q\in \mathbb{N}$ and let $D_\lambda$ denote the element of $GL(N)\times GL(N')\subset GL(N\oplus N')$ given by multiplication by $\lambda^p$ in the first factor and multiplication by $\lambda^{-q}$ in the second factor.  Then:
\begin{enumerate}
\item $D_\lambda$ lies in the centre of $GL(N)\times GL(N')$.
\item For a homomorphism $f\colon N'\rightarrow N$, let $U_f\in GL(N\oplus N')$ denote the map which fixes $N$ and sends $N'\rightarrow N\oplus N'$ via $(f,id)$.  Then
$$D_\lambda\cdot U_f \cdot (D_\lambda)^{-1}= U_{\lambda^{p+q}f}.$$
\item If $N\simeq A^n$ and $N'\simeq A^{n'}$ and we choose $p=n'$ and $q=n$, then $D_\lambda$ lies in $E(N\oplus N')$.
\end{enumerate}
\end{lemma}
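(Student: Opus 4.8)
The plan is to dispatch the three parts in turn, with the first two essentially immediate and the third being the only one requiring an argument. For part~1, note that $D_\lambda$ is intrinsically multiplication by the central element $\lambda^p$ on $N$ and by $\lambda^{-q}$ on $N'$; since multiplication by a central element commutes with every $A$-linear endomorphism, $D_\lambda$ lies in the centre not only of $GL(N)\times GL(N')$ but of all of $GL(N\oplus N')$. For part~2, I would simply compute: writing elements of $N\oplus N'$ as pairs, $U_f(n,n')=(n+f(n'),n')$ and $D_\lambda(n,n')=(\lambda^p n,\lambda^{-q}n')$, so chasing $(n,n')$ through $D_\lambda^{-1}$, then $U_f$, then $D_\lambda$, and using that $f$ is $A$-linear and $\lambda$ is central, one arrives at $(n+\lambda^{p+q}f(n'),n')=U_{\lambda^{p+q}f}(n,n')$.

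For part~3, fix isomorphisms $N\cong A^n$ and $N'\cong A^{n'}$, so that $M:=N\oplus N'\cong A^{n+n'}$ and $D_\lambda=\operatorname{diag}(\lambda^{n'}I_n,\lambda^{-n}I_{n'})$. First I would recall that the elementary subgroup is contained in $E(M)$: for indices $k\neq l$ the elementary matrix $e_{kl}(a)$ lies in the unipotent radical $U_{\mathcal{F}}$ of the length-two flag $\mathcal{F}=(\langle e_m:m\neq l\rangle)$ (it fixes that submodule pointwise and induces the identity on the rank-one quotient, as $e_k$ lies in the submodule), so $E_{n+n'}(A)\subseteq E(M)$ — this is the containment already mentioned in the introduction. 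Next, for distinct indices $i,j$ let $d_{ij}\in GL_{n+n'}(A)$ be the diagonal matrix with entry $\lambda$ in position $i$, entry $\lambda^{-1}$ in position $j$, and $1$ elsewhere; by the classical identity writing $\operatorname{diag}(u,u^{-1})$ as a product of $2\times2$ elementary matrices, $d_{ij}\in E_{n+n'}(A)\subseteq E(M)$. Finally, since diagonal matrices commute, the product $\prod_{i=1}^{n}\prod_{j=n+1}^{n+n'}d_{ij}$ is diagonal with entry $\lambda^{n'}$ in each of the first $n$ positions and $\lambda^{-n}$ in each of the last $n'$ positions, i.e.\ it equals $D_\lambda$; hence $D_\lambda\in E(M)$.

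The one point worth flagging is conceptual rather than technical: the specific choice $p=n'$, $q=n$ is exactly what makes the total $\lambda$-exponent $np-n'q$ of $D_\lambda$ vanish, which is necessary for $D_\lambda$ to lie in $E(M)$ at all (for $n=n'=1$ one already sees $\operatorname{diag}(\lambda^2,\lambda^{-1})\notin E$ in general), and the complete bipartite family $\{d_{ij}\}$ used above is the way to realise the balanced exponent vector $(\lambda^{n'},\dots,\lambda^{n'},\lambda^{-n},\dots,\lambda^{-n})$ as a product of $\operatorname{diag}(\lambda,\lambda^{-1})$-type factors — equivalently, to write the weight $n'\sum_{i\le n}e_i-n\sum_{j>n}e_j$ as a sum of roots $e_i-e_j$. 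No step presents a genuine obstacle.
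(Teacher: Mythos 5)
Your proof is correct where it matters, and part~3 takes a mildly different route from the paper's. The paper also reduces everything to the Whitehead identity $\operatorname{diag}(\lambda,\lambda^{-1})\in E_2(A)$, but then proves the more general statement that \emph{any} diagonal matrix with central entries and determinant $1$ lies in $E_d(A)$: after reducing to the commutative case, it multiplies by $\operatorname{diag}(u,u^{-1})$-type blocks in adjacent positions to match the first $d-1$ entries inductively, and lets the determinant condition force the last entry. You instead exploit the specific shape of $D_\lambda$, writing it as the commuting product $\prod_{i\le n}\prod_{j>n} d_{ij}$ of $nn'$ Whitehead-type matrices; this explicit factorization avoids both the induction and the determinant argument (your closing remark, that the choice $p=n'$, $q=n$ makes the weight $n'\sum_{i\le n}e_i-n\sum_{j>n}e_j$ a sum of roots $e_i-e_j$, is exactly the point). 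Both arguments rest on the containment $E_{n+n'}(A)\subseteq E(N\oplus N')$, which the paper uses without comment (it is only recorded in the introduction) and which you rightly justify by placing each elementary matrix in the unipotent radical of a length-two splittable flag. So the paper's proof yields a slightly stronger general fact, while yours is more elementary and self-contained; both are valid.

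One genuine, though harmless, error to fix: in part~1 you assert that $D_\lambda$ is central in all of $GL(N\oplus N')$. That is false unless $\lambda^{p+q}=1$ --- indeed part~2 of the very same lemma exhibits the failure, since conjugation by $D_\lambda$ sends $U_f$ to $U_{\lambda^{p+q}f}$. The flaw is that $D_\lambda$ is not multiplication by a single central element on $N\oplus N'$, only block-wise so. The correct reasoning is the factor-wise one you actually invoke: on each summand $D_\lambda$ acts as multiplication by a central unit, hence commutes with every element of $GL(N)\times GL(N')$, which is all the lemma claims. Your computation in part~2 is fine as stated.
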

\begin{proof}
Parts 1 and 2 are simple calculations.  For part 3, note that if we consider $D_\lambda$ as an $(n+n')\times (n+n')$ matrix, then it has determinant $1$.  Thus it suffices to show in general that a $d\times d$ diagonal matrix with entries lying in $Z(A)$ and determinant $1$ necessarily lies in $E_d(A)$.  We can clearly assume $A=R$ commutative.  In the world of $2\times 2$ matrices, a standard calculation shows that $(\lambda, 0;0,\lambda^{-1})$ lies in $E_2(A)$.  Hence a diagonal matrix with determinant one and only two nontrivial adjacent entries lies in $E_d(A)$.  But by multiplying by such matrices we can inductively arrange to make a matrix in $E_d(A)$ which agrees with our given diagonal matrix in its first $d-1$ diagonal entries.  Then the last diagonal entries have to also be the same because of the determinant condition.\end{proof}

Let us adopt the following notation.  If $V$ is an $Z(A)$-module and $n\in\mathbb{Z}$, write $V(n)$ for $V$ considered as an additive abelian group, equipped with $Z(A)^\times$-action described by
$$\lambda \cdot m := \lambda^n m.$$

\begin{theorem}\label{NSvanishingthm}
Let $k$ be a prime field, and let $A$ be an associative ring with centre $R=Z(A)$.  Suppose that for all $R$-modules $V$ isomorphic to $A^d$ for some $d\geq 0$ we have
$$H_p(BR^\times;H_q(BV(n);k))=0$$
for all $p\geq 0$, all $q\geq 1$, and all $n\geq 1$.  Here the $V(n)$-action on $k$ is trivial and the $R^\times$-action on $H_q(BV(n);k)$ comes by functoriality from its action on $V(n)$.

\medskip

Then for all split noetherian finitely generated projective $A$-modules $M$, the natural map
$$BGL(M)\rightarrow \vert \operatorname{RBS}(M)\vert$$
is an isomorphism on homology with $k$-coefficients.

\medskip

If furthermore we assume that either:
\begin{enumerate}
\item every split submodule of $M$ is free, or
\item $H_q(BV;k)=0$ for all $q\geq 1$ and all $R$-modules $V$ isomorphic to $A^d$ for some $d$,
\end{enumerate}
then it is an isomorphism on homology with all $k$-module local coefficients.
\end{theorem}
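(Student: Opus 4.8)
The plan is to deduce everything, via \Cref{fromflagstoRBS(M)}, from a purely group-theoretic statement and then prove that statement by running the Nesterenko--Suslin argument in the form of \Cref{vanishingtrick}. By \Cref{fromflagstoRBS(M)} it suffices to show that for every pair of splittable flags $\mathcal{F}\leq\mathcal{G}$ in $M$ the quotient map $BP_\mathcal{F}\to B(P_\mathcal{F}/U_\mathcal{G})$ is an isomorphism on homology with $\mathcal{L}$-coefficients, where for the first assertion $\mathcal{L}=k$ is constant and for the second $\mathcal{L}$ is an arbitrary $k$-module local system on $BL_\mathcal{F}$, pulled back along $P_\mathcal{F}\twoheadrightarrow P_\mathcal{F}/U_\mathcal{G}\twoheadrightarrow L_\mathcal{F}$. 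Writing $\operatorname{gr}(\mathcal{G})=(N_1,\dots,N_e)$ and letting $\mathcal{F}_j$ be the induced flag on $N_j$, one has a canonical identification $P_\mathcal{F}/U_\mathcal{G}\cong\prod_{j=1}^{e}P_{\mathcal{F}_j}$, a short exact sequence $1\to U_\mathcal{G}\to P_\mathcal{F}\to\prod_j P_{\mathcal{F}_j}\to 1$, a filtration of $U_\mathcal{G}$ with abelian graded pieces $\operatorname{Hom}_A(N_b,N_a)$ for $a<b$, and the fact that $U_\mathcal{G}\subseteq U_\mathcal{F}$ acts trivially on $\mathcal{L}$.

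To prove this group-theoretic claim I would apply \Cref{vanishingtrick} to the above extension with $D\subseteq L:=P_\mathcal{F}/U_\mathcal{G}$ the image of the homomorphism $R^\times\to L$, $\lambda\mapsto D_\lambda$, where $D_\lambda$ acts on $N_j$ by $\lambda^{b_j}$ for a strictly decreasing sequence of integers $b_1>\dots>b_e$ chosen with $\gcd_j(b_j)=1$ (so $\lambda\mapsto D_\lambda$ is injective and $D\cong R^\times$) and, in the free case, additionally with $\sum_j b_j\operatorname{rk}(N_j)=0$ (such $b_j$ exist). By \Cref{diagonal}(1) the element $D_\lambda$ is central in $\prod_j GL(N_j)\supseteq L$, so $D$ is normal in $L$; a block scalar preserves every refinement of $\mathcal{G}$, so $D_\lambda$ lifts along $P_\mathcal{F}\to L$ and provides the splitting $s$; and a direct computation (cf.\ \Cref{diagonal}(2)) shows the conjugation action of $D_\lambda$ scales $\operatorname{Hom}_A(N_b,N_a)$ by $\lambda^{b_a-b_b}$, with every exponent $b_a-b_b\geq 1$. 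Hypothesis (1) of \Cref{vanishingtrick} is automatic when $\mathcal{L}=k$. For the second assertion under alternative (1) of the theorem, I would verify hypothesis (1) of \Cref{vanishingtrick} by observing that the action of $D$ on $\mathcal{L}$ factors through $\pi_1|\operatorname{RBS}(M)|=GL(M)/E(M)$ of \Cref{pi1} --- the two maps $BP_\mathcal{F}\to|\operatorname{RBS}(M)|$ given by $P_\mathcal{F}\to L_\mathcal{F}=\operatorname{Aut}_{\operatorname{RBS}(M)}(\mathcal{F})$ and by $P_\mathcal{F}\subseteq GL(M)=\operatorname{Aut}_{\operatorname{RBS}(M)}([\emptyset])$ are homotopic, the morphism $[\mathrm{id}]\colon\mathcal{F}\to[\emptyset]$ giving a natural transformation and \Cref{propertiesinvert}(3) the rest --- and then, since every split submodule of $M$, in particular each $N_j$, is free, using the determinant-$1$ choice of the $b_j$ so that the argument of \Cref{diagonal}(3) puts $D_\lambda$ in $E_{\operatorname{rk}(M)}(A)\subseteq E(M)$; then $D$ dies in $\pi_1|\operatorname{RBS}(M)|$ and $\mathcal{L}|_{BD}$ is constant.

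The remaining ingredient, hypothesis (2) of \Cref{vanishingtrick} --- that for each $i\geq 1$ the module $H_i(BU_\mathcal{G};k)$, with its $D\cong R^\times$-action, has vanishing $R^\times$-homology in all degrees --- is the technical heart, and I expect it to be the main obstacle. First one records that the modules $H_q(B\operatorname{Hom}_A(N_b,N_a)(b_a-b_b);k)$ are $R^\times$-acyclic for $q\geq 1$: each $\operatorname{Hom}_A(N_b,N_a)$, as an abelian group with $\lambda$ acting by multiplication by $\lambda^{b_a-b_b}$, is a direct summand of some $(A^d)(b_a-b_b)$, so the Künneth theorem over the field $k$ presents $H_q(B\operatorname{Hom}_A(N_b,N_a)(b_a-b_b);k)$ as an $R^\times$-module direct summand of $H_q(B(A^d)(b_a-b_b);k)$, which is $R^\times$-acyclic for $q\geq 1$ by the standing hypothesis (note $b_a-b_b\geq 1$). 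Then, following \cite{NS}, I would filter $U_\mathcal{G}$ $D$-equivariantly by its lower central series and feed the resulting $D$-equivariant Lyndon--Hochschild--Serre spectral sequences into a weight argument: because every generator of $U_\mathcal{G}$ has strictly positive $D$-weight, $H_i(BU_\mathcal{G};k)$ for $i\geq 1$ acquires a finite $D$-stable filtration whose graded pieces are subquotients of tensor products of the modules $H_a(B\operatorname{Hom}_A(N_b,N_a)(b_a-b_b);k)$ with at least one tensor factor of positive homological degree, and each such piece is $R^\times$-acyclic. (When $\mathcal{G}$ has only two blocks, $U_\mathcal{G}$ is already abelian and this reduces immediately to the first sentence of this paragraph.) Given hypotheses (1) and (2), \Cref{vanishingtrick} yields that $BP_\mathcal{F}\to B(P_\mathcal{F}/U_\mathcal{G})$ is an isomorphism on $\mathcal{L}$-homology, and \Cref{fromflagstoRBS(M)} concludes the first assertion as well as the second under alternative (1).

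Finally I would dispatch alternative (2) of the theorem directly: the same Künneth argument upgrades ``$H_q(BV;k)=0$ for all $q\geq1$ and free $V$'' to the same for all finitely generated projective $V$, hence $H_i(BU_\mathcal{G};k)=0$ for all $i\geq 1$; the Serre spectral sequence of $1\to U_\mathcal{G}\to P_\mathcal{F}\to P_\mathcal{F}/U_\mathcal{G}\to1$ with $\mathcal{L}$-coefficients (on which $U_\mathcal{G}$ acts trivially) then collapses onto the edge, giving the isomorphism with no appeal to $D$ or \Cref{vanishingtrick}. Under alternative (1) the modules $\operatorname{Hom}_A(N_b,N_a)$ are themselves free, so the passage to projective modules in the previous paragraph is unnecessary though harmless.
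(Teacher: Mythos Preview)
Your overall strategy---reduce via \Cref{fromflagstoRBS(M)} to showing that $BP_\mathcal{F}\to B(P_\mathcal{F}/U_\mathcal{G})$ is an $\mathcal{L}$-homology isomorphism, then appeal to \Cref{vanishingtrick}---is the same as the paper's, and your treatment of hypothesis~(1) of \Cref{vanishingtrick} (the natural transformation argument showing $D_\lambda$ dies in $\pi_1|\operatorname{RBS}(M)|$) and of alternative~(2) is fine. The divergence, and a real gap, is in how you verify hypothesis~(2).

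You attack the full nilpotent $U_\mathcal{G}$ in one blow, with a $D_\lambda$ carrying distinct weights $b_1>\dots>b_e$ on the blocks. Then the conjugation action on $U_\mathcal{G}$ has \emph{different} weights $b_a-b_b$ on the various pieces $\operatorname{Hom}_A(N_b,N_a)$, and your spectral-sequence argument for $H_*(R^\times;H_i(BU_\mathcal{G};k))=0$ leads (via K\"unneth over the field $k$, since the successive quotients are central) to needing that tensor products
\[
\textstyle\bigotimes_j H_{q_j}\big(BW_j(m_j);k\big),\qquad \sum_j q_j\ge 1,\ m_j\ge 1,
\]
are $R^\times$-acyclic. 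That does follow from the Nesterenko--Suslin $S_n(R)$-module mechanism, but it is \emph{not} a consequence of the theorem's hypothesis as stated, which only gives acyclicity of a single factor $H_q(BV(n);k)$. (In general $H_*(G;M)=0$ does not imply $H_*(G;M\otimes_k N)=0$: already for $G=\mathbb{Z}$, take $M,N$ one-dimensional with weights $\lambda$ and $\lambda^{-1}$.) So the sentence ``each such piece is $R^\times$-acyclic'' is unjustified.

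The paper sidesteps this by inducting on the length $d$ of $\mathcal{G}$ rather than applying \Cref{vanishingtrick} once. Let $\mathcal{G}_1$ be the length-$2$ coarsening $(M_1)$, and factor
\[
BP_\mathcal{F}\longrightarrow B(P_\mathcal{F}/U_{\mathcal{G}_1})\longrightarrow B(P_\mathcal{F}/U_\mathcal{G}).
\]
The second map is the product with $BP_{\mathcal{F}\vert_{M_1}}$ of an instance of the same statement for $M/M_1$ with $\mathcal{G}'$ of length $d-1$, so is handled by induction. For the first map one applies \Cref{vanishingtrick} with the \emph{two-block} $D_\lambda$ of \Cref{diagonal}, acting by $\lambda^p$ on $N=M_1$ and by $\lambda^{-q}$ on all of $N'\cong M/M_1$. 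Now $U_{\mathcal{G}_1}=\operatorname{Hom}_A(N',N)$ is abelian and the conjugation action has a \emph{single} weight $p+q$, so hypothesis~(2) reduces directly (after the same retract-of-free argument you give) to the theorem's hypothesis for $V=A^d$ and $n=p+q$. No mixed-weight tensor products arise. Your argument is easily repaired by adopting this induction; everything else you wrote then carries over unchanged.
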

\begin{proof}
By \Cref{fromflagstoRBS(M)}, it suffices to show that for all splittable flags $\mathcal{F}\leq \mathcal{G} $ on $M$, the map
$$BP_\mathcal{F} \rightarrow B(P_\mathcal{F}/U_\mathcal{G})$$
induces an isomorphism on homology with the correct coefficients.  Let's prove this by induction on $d$, the length of $\mathcal{G}=(M_1\subsetneq \ldots\subsetneq M_{d-1})$.  Write $\mathcal{G}_1$ for the flag of length $2$ given by just $M_1$.  Then we can factor the map in two steps:
$$BP_\mathcal{F}\rightarrow B(P_{\mathcal{F}}/U_{\mathcal{G}_1}) \rightarrow B(P_\mathcal{F}/U_\mathcal{G}).$$
The second map is the product with $BGL(M_1)$ of an instance of our comparison map with length $d-1$, thus it gives an isomorphism.  For the first map, set $N=M_1$ and let $N'$ be the image of a splitting of $M\rightarrow M/M_1$.  By \Cref{vanishingtrick}, it suffices to find a central subgroup
\begin{align*}
D\subset P_{\mathcal{F}}/U_{\mathcal{G}_1}= GL(M_1)\times GL(M/M_1) = GL(N)\times GL(N')
\end{align*}
such that:
\begin{enumerate}
\item the local system is constant on $BD$; and
\item $H_*(BD;H_q(BU_{\mathcal{G}_1};k))=0$ for all $q\geq 1$.
\end{enumerate}
For the constant local system, claim 1 is trivial, and to arrange claim 2, we can fix $p,q\geq 1$ arbitrarily and let $D$ be the subgroup formed by all $D_\lambda$ from \Cref{diagonal}.  Note that since $N,N'$ are finitely generated projective, the $R$-module $U_{\mathcal{G}_1}=\operatorname{Hom}_A(N,N')$ is isomorphic to a retract of $A^d$ for some $d$, so the assumption on homology vanishing for $A^d(p+q)$ implies it for $U_{\mathcal{G}_1}$.
Next, for non-constant local systems, if every split submodule of $M$ is free, then we can make the specific choice of $p,q$ as in (3) of \Cref{diagonal}, and then claim (1) holds by (3) of \Cref{diagonal} and claim (2) follows as before.  Finally, again for non-constant local systems, if we have the strong vanishing $H_q(BV;k)=0$ for all $q\geq 1$ as given in assumption (2), then we get the same vanishing $H_q(BU_{\mathcal{G}_1};k)=0$ for $q\geq 1$, and we can take $D=\{1\}$ to conclude.
\end{proof}

The following is Nesterenko--Suslin's key observation, see \cite{NS}.

\begin{lemma}\label{NSlemma}
Let $R$ be a commutative ring \emph{with many units}: for any $n\geq 1$ there exist $r_1,\ldots, r_n \in R^\times$ such that for all nonempty $I\subset \{1,\ldots, n\}$, the sum $\sum_{i\in I} r_i$ is also a unit.

\medskip

Then for $n> 0$, all prime fields $k$, and all $R$-modules $M$, we have
$$H_p(BR^\times;H_q(BM(n);k))=0$$
for all $p\geq 0$ and $q\geq 1$.
\end{lemma}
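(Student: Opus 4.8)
The plan is to exploit the following elementary fact about the homology of abelian groups. If $G$ is an abelian group and $A$ a $G$-module, then for any $g\in G$ the ``multiplication by $g$'' map on $A$ is $G$-linear (since $G$ is abelian), and the endomorphism it induces on $H_*(G;A)$ by functoriality is the identity --- this is just the triviality of the conjugation action of $g$, specialised to the case where conjugation on $G$ itself is trivial. Consequently, if for some single $g\in G$ the $G$-linear endomorphism $(g\cdot)-\mathrm{id}$ of $A$ is an \emph{isomorphism}, then $H_*(G;A)=0$: the endomorphism of $H_*(G;A)$ it induces is on the one hand $\mathrm{id}-\mathrm{id}=0$, and on the other an automorphism. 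I would apply this with $G=R^\times$ and, for fixed $q\ge 1$ and $n\ge 1$, with $A=H_q(BM(n);k)$.

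For $\lambda\in R^\times$, the endomorphism of $A=H_q(BM(n);k)$ coming from the module action is the one induced by functoriality of $N\mapsto H_q(BN;k)$ from the automorphism $m\mapsto \lambda^n m$ of $M$; call it $\phi_\lambda$. So it suffices to find $\lambda$ with $\phi_\lambda-\mathrm{id}$ invertible. For this one computes $\phi_\lambda$ using the structure of the homology of an abelian group: for a prime field $k$ and an abelian group $N$, $H_*(BN;k)$ is built functorially out of $N\otimes_\Z k$ and (when $k=\F_p$) the $p$-torsion $N[p]$ by forming exterior and divided powers, and when $N=M$ is an $R$-module these are all modules over $R\otimes_\Z k$; the operator $\phi_\lambda$ is the one induced, through these functorial constructions, by multiplication by the unit $\overline{\lambda^{\,n}}\in (R\otimes_\Z k)^\times$. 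Tracing through the weights, $\phi_\lambda-\mathrm{id}$ is invertible on $H_q(BM(n);k)$ provided the elements $1-\overline{\lambda}^{\,j}\in R\otimes_\Z k$ are units for $1\le j\le nq$ (equivalently, provided the cyclotomic values $\Phi_d(\overline\lambda)$ are units for $d\le nq$): on each weight-$j$ piece $\phi_\lambda$ is an exterior or divided power of multiplication by $\overline{\lambda}^{\,n}$, and such an operator differs from the identity by an invertible one exactly under this condition.

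It then remains to produce, for given $n,q$, a unit $\lambda\in R^\times$ with $1-\lambda^{\,j}\in R^\times$ for $1\le j\le nq$; its image in $R\otimes_\Z k$ has the required property. This is exactly what the ``many units'' hypothesis delivers: applied with parameter $N=nq+1$ it yields $r_1,\dots,r_N\in R^\times$ all of whose nonempty partial sums are units, whence the partial sums $\sigma_i=r_1+\dots+r_i$ are themselves units with all differences $\sigma_j-\sigma_i$ ($i<j$, being nonempty partial sums) again units, and from such a family one extracts the desired $\lambda$ by a standard manipulation, exactly as in Nesterenko--Suslin \cite{NS}.

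The delicate point is the computation of $\phi_\lambda$ in the second paragraph. Over $k=\Q$ it reduces to the easy fact that $\Lambda^q$ of multiplication by $\lambda^n$ on $M\otimes\Q$ has invertible difference from the identity once the relevant $1-\lambda^{\,j}$ are units; over $k=\F_p$ one must in addition control the divided-power part of $H_*(BM;\F_p)$ and the contribution of $M[p]$, with estimates uniform in the arbitrary module $M$. This is precisely Nesterenko--Suslin's key computation --- of which this lemma is the case $n=1$ --- and passing from $n=1$ to general $n$ changes nothing except replacing the scalar $\overline\lambda$ by $\overline{\lambda}^{\,n}$, hence only which units one must extract from the many-units hypothesis above.
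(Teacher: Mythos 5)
Your use of the center-kills trick is fine as far as it goes: the identity action of $g\in G$ on $H_*(G;A)$ for abelian $G$, the additivity argument, and the computation of $\phi_\lambda$ on $\Lambda^q M_{\Q}$ and on the graded pieces $\Lambda^{q-2j}(M/pM)\otimes\Gamma_j(M[p])$ are all correct. The genuine gap is the last step: the assertion that the many-units hypothesis ``delivers'' a single unit $\lambda$ with $1-\lambda^j$ invertible (in $R$, or even just in $R\otimes_\Z k$) for all $1\le j\le nq$. The hypothesis gives only additive genericity (all subset sums of a family of units are units); it does not give a unit of large multiplicative order at every residue field, which is what your single-$\lambda$ argument needs, and no ``standard manipulation'' of the partial sums produces one. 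Concretely, let $R$ be the localisation of $\F_2[x_1,x_2,\dots]$ at the multiplicative set generated by all subset sums $s_I=\sum_{i\in I}x_i$ ($I$ finite nonempty). Taking $r_i=x_i$ shows $R$ has many units. The $s_I$ are prime, so every $\lambda\in R^\times$ is $u/v$ with $u,v$ coprime products of $s_I$'s; if $\lambda^2+\lambda+1$ were a unit, then (since $v^2$ is a unit) every prime factor of $u^2+uv+v^2$ would be some $s_I$, but reducing modulo any $s_I$ and using coprimality would produce a root of $t^2+t+1$ in the fraction field of a polynomial ring over $\F_2$, which does not exist; a leading-form count likewise rules out $u^2+uv+v^2$ being constant unless $u=v=1$. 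So the only unit with $\Phi_3(\lambda)$ a unit is $\lambda=1$, and since $1-\lambda^3=(1-\lambda)(1+\lambda+\lambda^2)$ there is no $\lambda\in R^\times$ with $1-\lambda$ and $1-\lambda^3$ both units; as $R$ has characteristic $2$, the relevant prime field is $k=\F_2$ and $R\otimes_\Z\F_2=R$, so even the weakened requirement fails. Thus your method cannot prove the lemma for this $R$ (already for $n=1$, $q=3$), although the lemma is true there.

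This is also not ``exactly as in Nesterenko--Suslin'': their argument never produces such a $\lambda$. What they prove, and what the paper invokes, is a family version of center-kills: $H_q(BM(n);k)$, through the functorial pieces you describe, is made into a module over the symmetric-tensor ring $S_m(R)=(R^{\otimes m})^{\Sigma_m}$ (with $m=nq$, resp.\ $m=n(q-j)$ for the mod $p$ graded pieces) compatibly with the diagonal embedding $R^\times\to S_m(R)^\times$, and one uses their theorem that $H_p(BR^\times;N)=0$ for every $S_m(R)$-module $N$ with $m\ge 1$. There center-kills only shows that $H_*(BR^\times;N)$ is a module over the quotient of $S_m(R)$ by the relations $\lambda\otimes\cdots\otimes\lambda=1$, and the whole many-units family $r_1,\dots,r_m$ is used simultaneously to show this quotient vanishes. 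Your single-element shortcut does work when $R$ is (semi)local with infinite residue fields, where a unit of large order modulo every maximal ideal exists, but the lemma is stated and used for arbitrary rings with many units, so the extraction step must be replaced by the symmetric-tensor argument.
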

\begin{proof}
This is explained in \cite{NS} when $n=1$, and the same argument works in general.  Recall Nesterenko--Suslin's result: for all $n\geq 1$, if we let $S_n(R)$ denote the ring $(R^{\otimes n})^{S_n}$ and take the diagonal embedding $R^\times\rightarrow S_n(R)^\times$, then for every $S_n(R)$-module $N$ we have $H_p(BR^\times;N)=0$ for all $p\geq 0$.  Here all tensor products are over $\mathbb{Z}$. Now, if $k=\mathbb{Q}$ we have
$$H_q(BM(n);\mathbb{Q})=\Lambda^q M_{\mathbb{Q}},$$
with $R^\times$-action given by
$$\lambda \cdot (m_1\wedge \ldots \wedge m_q) = (\lambda^nm_1)\wedge \ldots \wedge (\lambda^n m_q).$$
We can put an $S_{nq}(R)$-module structure on $\Lambda^qM$ by viewing $M$ as an $R^{\otimes n}$-module via restriction along the multiplication map $R^{\otimes n}\rightarrow R$, then using this to view $\otimes^q M$ as an $(R^{\otimes n})^{\otimes q}$-module, hence as an $S_{nq}(R)$-module by restriction, and passing to the quotient $\Lambda^q M$.  The correct $R^\times$-action is recovered and so Nesterenko--Suslin's result proves the desired vanishing in this case.

\medskip

If $k=\mathbb{F}_p$, then we have to use the fact that there is functorial filtration on $H_q(M;\mathbb{F}_p)$ with associated graded pieces given by $\Lambda^{q-2j} (M/pM) \otimes \Gamma_j(M[p])$, and then we can similarly argue for vanishing  of $H_\ast(R^\times;-)$ on these pieces, hence on $H_q(M;\mathbb{F}_p)$, by equipping $\Lambda^{q-2j} (M/pM) \otimes \Gamma_j(M[p])$ with appropriate $S_{n\cdot(q-j)}$-action.
\end{proof}

\begin{theorem}
Let $A$ be an associative ring with many units, meaning that its centre $R=Z(A)$ has many units in the sense described in the above lemma, and let $M$ be a split noetherian finitely generated projective $A$-module.  Then
$$BGL(M)\rightarrow\vert \operatorname{RBS}(M)\vert$$
is an isomorphism on $\mathbb{Z}$-homology, and if every split submodule of $M$ is free then it is also an isomorphism on homology with all local coefficient systems, and hence $E(M)\subset GL(M)$ is perfect and for the associated plus construction we have
$$BGL(M)^+ \overset{\sim}{\rightarrow}\vert\operatorname{RBS}(M)\vert.$$
\end{theorem}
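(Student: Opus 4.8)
The plan is to derive the theorem formally from \Cref{NSvanishingthm}, \Cref{NSlemma}, and \Cref{pi1}, together with the standard theory of acyclic maps and the plus construction; the analytic and combinatorial content has already been isolated in those statements, so what remains is to turn prime-field vanishing into integral and then homotopy-theoretic conclusions.

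First I would treat the $\mathbb{Z}$-homology claim, which needs no freeness hypothesis. By \Cref{NSlemma}, the assumption that $R=Z(A)$ has many units ensures that for every prime field $k$ and every $A$-module $V$ isomorphic to some $A^d$ we have $H_p(BR^\times;H_q(BV(n);k))=0$ for all $p\geq 0$, $q\geq 1$, $n\geq 1$; this is exactly the running hypothesis of \Cref{NSvanishingthm}, so that theorem gives that $c\colon BGL(M)\to|\operatorname{RBS}(M)|$ is a $k$-homology isomorphism for $k=\mathbb{Q}$ and $k=\mathbb{F}_p$, all primes $p$. Feeding this into the universal coefficient theorem applied to the mapping cone $C$ of $c$: the $\mathbb{F}_p$-vanishing makes $\widetilde H_\ast(C;\mathbb{Z})$ uniquely divisible in each degree, and then the $\mathbb{Q}$-vanishing makes it zero, so $c$ is a $\mathbb{Z}$-homology isomorphism.

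Next, assume every split submodule of $M$ is free, so that hypothesis (1) of \Cref{NSvanishingthm} holds too; together with the verification of the running hypothesis above, \Cref{NSvanishingthm} then says that $c$ is an isomorphism on homology with all $k$-module local coefficient systems, for every prime field $k$. I would apply this to a single well-chosen local system. Write $Y=|\operatorname{RBS}(M)|$ and, using \Cref{pi1}, $Q=\pi_1 Y=GL(M)/E(M)$, so that $\pi_1 c$ is the quotient $GL(M)\to Q$ with kernel $E(M)$; take $\mathcal{L}=k[Q]$. On the source, $c^\ast k[Q]$ is the $GL(M)$-module induced from the trivial $k[E(M)]$-module, so Shapiro's lemma identifies $H_\ast(BGL(M);c^\ast k[Q])$ with $H_\ast(BE(M);k)$, where $BE(M)=K(E(M),1)$ is the cover of $BGL(M)$ with fundamental group $E(M)$; on the target, $H_\ast(Y;k[Q])=H_\ast(\widetilde Y;k)$ with $\widetilde Y$ the universal cover; and the comparison map is induced by the lift $\widetilde c\colon BE(M)\to\widetilde Y$ of $c$. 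So $\widetilde c$ is a $k$-homology isomorphism for every prime field $k$, hence, by the same mapping-cone argument, a $\mathbb{Z}$-homology isomorphism. Since $\widetilde Y$ is simply connected every local system on it is constant, so $\widetilde c$ is an isomorphism on homology with all local coefficients; equivalently $\widetilde c$ is acyclic, i.e.\ its homotopy fibre $F$ is $\mathbb{Z}$-acyclic. As $F$ is also the homotopy fibre of $c$ (forming the pullback along a covering does not change homotopy fibres), $c$ itself is acyclic, which is precisely the assertion that $c$ is an isomorphism on homology with all local coefficient systems.

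It then remains to extract the statements about $E(M)$ and the plus construction. Acyclicity of $c$ (equivalently, of $\widetilde c$) together with $\widetilde Y$ simply connected gives $E(M)^{\mathrm{ab}}=H_1(BE(M);\mathbb{Z})\cong H_1(\widetilde Y;\mathbb{Z})=0$, so the normal subgroup $E(M)\subset GL(M)$ is perfect and $BGL(M)^+$ (relative to $E(M)$) is defined. As $c$ kills $E(M)$ on $\pi_1$, its universal property factors $c$ as $BGL(M)\to BGL(M)^+\xrightarrow{g}Y$ with $g$ a $\pi_1$-isomorphism; since $BGL(M)\to BGL(M)^+$ and $c$ are both acyclic, so is $g$ (its homotopy fibre sits in a fibre sequence with $\mathbb{Z}$-acyclic total space and fibre, hence is $\mathbb{Z}$-acyclic by a Serre spectral sequence collapse), and an acyclic $\pi_1$-isomorphism has simply connected, $\mathbb{Z}$-acyclic and therefore contractible homotopy fibre. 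Thus $g$ is an equivalence and $BGL(M)^+\simeq|\operatorname{RBS}(M)|$. I expect the only real difficulty to be the bookkeeping in the middle step — correctly identifying the coefficient systems in the Shapiro-lemma computation and keeping straight which isomorphisms hold with which coefficients (integral versus prime-field, constant versus local, with or without the freeness hypothesis) — rather than anything of genuine depth; once those translations are in place, the conclusion is a routine application of the standard theory of acyclic maps and plus constructions.
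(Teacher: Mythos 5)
Your proposal is correct and follows essentially the same route as the paper: the substantive inputs are exactly \Cref{pi1}, \Cref{NSlemma} and \Cref{NSvanishingthm}, and the rest of your argument (universal coefficients over prime fields to get the $\mathbb{Z}$-statement, the $k[\pi_1]$/Shapiro passage to covers, and the standard acyclic-map formalism for the plus construction) is precisely the routine bookkeeping the paper's two-line proof leaves implicit. No gaps.
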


\begin{proof}
We already saw that the map
$$c\colon BGL(M)\rightarrow \vert \operatorname{RBS}(M)\vert$$
is an isomorphism on $\pi_0$, and on $\pi_1$ it identifies $\pi_1\vert \operatorname{RBS}(M)\vert$ with $GL(M)/E(M)$.  Thus it suffices to show the homology isomorphism statements.  But these follow from \Cref{NSlemma}  and \Cref{NSvanishingthm}.\end{proof}

\subsection{The case of finite fields}

The simplest example of a ring not having many units is a finite field $\mathbb{F}_q$, and here we will see that for a finite dimensional $\mathbb{F}_q$-vector space the anima $\vert \operatorname{RBS}(V)\vert$ is ``better'' than the plus construction in the sense that it can be computed and identified with the absolute most naive unstable analogue of the K-theory $K(\mathbb{F}_q)$ as computed by Quillen.

\medskip

First we note that with $\mathbb{Q}$-coefficients or $\mathbb{F}_\ell$ coefficients for $\ell\neq p$, there is no difference.  So all the interest lies in $\mathbb{F}_p$-coefficients.

\begin{lemma}\label{primetop}
Let $k$ be a prime field and $A$ be an associative ring with $A\otimes_\mathbb{Z}k=0$.  Then for any split noetherian finitely generated projective $A$-module $M$ the map
$$BGL(M)\rightarrow \vert \operatorname{RBS}(M)\vert$$
is an isomorphism on homology with local coefficients a $k$-module.
\end{lemma}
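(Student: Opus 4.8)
The plan is to deduce the statement directly from \Cref{NSvanishingthm}, whose hypotheses become almost vacuous once we unwind the assumption $A\otimes_{\mathbb Z}k=0$. So the first step is to analyse this assumption. If $k=\mathbb Q$, it says exactly that the underlying abelian group of $A$ — and hence that of any $V\cong A^d$ — is torsion. If $k=\mathbb F_p$, it says $A/pA=0$; since $A$ is unital we may write $1=pb$, and because $p=p\cdot 1$ is central this forces multiplication by $p$ on the underlying abelian group of $A$ to be bijective: surjective as $A=pA$, and injective since $pa=0$ implies $a=a(pb)=(pa)b=0$. Thus in the $\mathbb F_p$-case the additive group of $A$, and of any $V\cong A^d$, is a $\mathbb Z[p^{-1}]$-module. (Here it is essential to use unitality of $A$: mere $p$-divisibility would not suffice, as $B\mathbb Z/p^\infty$ shows.)

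The second and key step is to prove that for every $A$-module $V$ with $V\cong A^d$ and every $q\ge 1$ one has $H_q(BV;k)=0$. In the $\mathbb Q$-case, $H_q(BV;\mathbb Q)=H_q(V;\mathbb Z)\otimes\mathbb Q$, and writing $V$ as the filtered colimit of its finite subgroups shows that $H_q(V;\mathbb Z)$ is a torsion group for $q\ge1$ (homology of a finite group in positive degrees is annihilated by the group order), so the tensor product vanishes. In the $\mathbb F_p$-case, $BV$ is a simple space all of whose homotopy groups are $\mathbb Z[p^{-1}]$-modules, hence by standard localization theory $H_q(BV;\mathbb Z)$ is a $\mathbb Z[p^{-1}]$-module for $q\ge 1$; the universal coefficient sequence then gives $H_q(BV;\mathbb F_p)=0$ for $q\ge 1$, since $M\otimes_{\mathbb Z}\mathbb F_p=0=\operatorname{Tor}_1^{\mathbb Z}(M,\mathbb F_p)$ for any $\mathbb Z[p^{-1}]$-module $M$ and $\operatorname{Tor}_1^{\mathbb Z}(\mathbb Z,\mathbb F_p)=0$.

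Finally I would feed this into \Cref{NSvanishingthm}. Its main hypothesis asks that $H_p(BR^\times;H_q(BV(n);k))=0$ for $p\ge 0$, $q\ge 1$, $n\ge 1$, where $R=Z(A)$; but the space $BV(n)$ is simply $BV$ — the decoration $(n)$ only records a $Z(A)^\times$-action, which is irrelevant once the group vanishes — so Step 2 gives $H_q(BV(n);k)=0$ for $q\ge 1$ and the hypothesis holds trivially. Moreover Step 2 is literally hypothesis (2) of the second part of \Cref{NSvanishingthm}. Hence that theorem applies and yields that $BGL(M)\to\vert\operatorname{RBS}(M)\vert$ is an isomorphism on homology with all $k$-module local coefficient systems, which is the claim. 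There is no substantial obstacle in this argument; the only point requiring genuine care is the one flagged in Step 1, namely that in the $\mathbb F_p$-case one must use unitality of $A$ to upgrade $p$-divisibility to a $\mathbb Z[p^{-1}]$-module structure.
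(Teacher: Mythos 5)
Your proof is correct, and its skeleton coincides with the paper's: both arguments reduce the lemma to \Cref{NSvanishingthm} by establishing the vanishing $H_q(BV;k)=0$ for $q\geq 1$ and $V\cong A^d$, which is hypothesis (2) of the ``furthermore'' clause and simultaneously makes the main hypothesis on $H_p(BR^\times;H_q(BV(n);k))$ vacuous. Where you genuinely diverge is in how that vanishing is proved. The paper argues algebraically: the functorial description of the homology of abelian groups with $k$-coefficients recalled in the proof of \Cref{NSlemma} ($\Lambda^q V_{\mathbb{Q}}$ for $k=\mathbb{Q}$; a filtration with graded pieces $\Lambda^{q-2j}(V/pV)\otimes\Gamma_j(V[p])$ for $k=\mathbb{F}_p$) is built from modules over $A\otimes_{\mathbb{Z}}k=0$, hence vanishes in positive degrees. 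You argue homotopy-theoretically: for $k=\mathbb{Q}$ via torsionness of $A^d$ and a filtered colimit of finite subgroups, and for $k=\mathbb{F}_p$ by upgrading $A=pA$ to unique $p$-divisibility (using unitality and centrality of $p$) and then invoking the standard localization theory of simple spaces together with universal coefficients. Both routes are valid and yield the same input to \Cref{NSvanishingthm}; the paper's version stays entirely within the algebra already set up for \Cref{NSlemma} (note that $V[p]$ is an $A/pA$-module, so unique divisibility is handled implicitly there), whereas yours imports an external but classical fact about localization of simple spaces, and your explicit remark that unitality is needed to rule out phenomena like $B\mathbb{Z}/p^\infty$ is a careful articulation of exactly the point the module-structure trick absorbs silently.
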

\begin{proof}
By \Cref{NSvanishingthm}, it suffices to show that $H_p(B A^r;k)=0$ for all $r\geq 0$, and $p\geq 1$; actually we can take $r=1$ by the K\"unneth theorem.  But the description of homology of abelian groups with $k$-coefficients recalled in the proof of \Cref{NSlemma} shows that each $H_p(BA;k)$ admits an $A\otimes_{\mathbb{Z}}k$-module structure, hence vanishes.
\end{proof}

Also, the $\pi_1$ is easy to identify.

\begin{lemma}\label{SLn}
Let $A$ be a local commutative ring and $M$ a finitely generated projective $A$-module. Then
$$\pi_1\vert \operatorname{RBS}(M)\vert=GL(M)/SL(M)=A^\times.$$
\end{lemma}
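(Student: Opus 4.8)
The plan is to combine \Cref{pi1} with an explicit identification of the subgroup $E(M)\subset GL(M)$. Since $A$ is local, $\operatorname{Spec}(A)$ is connected and every finitely generated projective $A$-module is free, so $M$ is split noetherian by \Cref{whennoetherian} and \Cref{pi1} applies, giving $\pi_1\vert\operatorname{RBS}(M)\vert\cong GL(M)/E(M)$ with $E(M)$ generated by the unipotent radicals $U_\mathcal{F}$, as $\mathcal{F}$ ranges over splittable flags in $M$. We may assume $M\neq 0$, so $M\cong A^n$ with $n\geq 1$; the case $n=1$ is trivial since then there are no splittable flags of length $\geq 2$, so $E(A)=1=SL_1(A)$, and hence we may assume $n\geq 2$. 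Since the determinant $\det\colon GL_n(A)\to A^\times$ is surjective with kernel $SL_n(A)$, it then suffices to prove $E(A^n)=SL_n(A)$.

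The inclusion $E(A^n)\subseteq SL_n(A)$ is easy: fixing, for a splittable flag $\mathcal{F}$, a basis of $A^n$ adapted to a splitting of $\mathcal{F}$, every element of $U_\mathcal{F}$ is block upper-triangular with identity diagonal blocks, hence has determinant $1$; so all the generators of $E(A^n)$ lie in $SL_n(A)$. For the reverse inclusion I would first observe that $E_n(A)\subseteq E(A^n)$: for $i\neq j$ the elementary matrix $e_{ij}(\lambda)$ fixes $e_i$ and induces the identity on $A^n/\langle e_i\rangle$, hence lies in $U_\mathcal{F}$ for the splittable length-two flag $\mathcal{F}=(0\subsetneq\langle e_i\rangle\subsetneq A^n)$. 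It then remains to invoke the classical fact that $SL_n(A)=E_n(A)$ over a local (indeed semilocal) commutative ring: given $g\in GL_n(A)$, some entry of its first column is a unit (the entries generate the unit ideal and $A$ is local), so ordinary Gaussian elimination by elementary row and column operations reduces $g$ to a diagonal matrix, which moreover has the same determinant as $g$; and a diagonal matrix of determinant one lies in $E_n(A)$ by iterated use of the relation $\left(\begin{smallmatrix}u&0\\0&u^{-1}\end{smallmatrix}\right)\in E_2(A)$ already exploited in the proof of \Cref{diagonal}. Hence $SL_n(A)=E_n(A)\subseteq E(A^n)$.

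Combining the two inclusions gives $E(A^n)=SL_n(A)$, whence $\pi_1\vert\operatorname{RBS}(M)\vert\cong GL_n(A)/SL_n(A)\cong A^\times$ via the determinant. There is no real obstacle in this argument; the only non-formal input is the equality $SL_n=E_n$ over a local ring, which is entirely standard. The part requiring the most care is thus just the bookkeeping: checking that each elementary matrix lies in the unipotent radical of a concrete parabolic, and running the Gaussian elimination over $A$ to reduce an arbitrary invertible matrix to diagonal form.
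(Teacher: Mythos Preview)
Your proof is correct and follows essentially the same approach as the paper's: reduce to $M\cong A^n$ via locality, invoke \Cref{whennoetherian} and \Cref{pi1} to reduce to showing $E(A^n)=SL_n(A)$, and then establish the latter via the chain $E_n(A)\subseteq E(A^n)\subseteq SL_n(A)$ together with $SL_n(A)=E_n(A)$ over a local ring. The paper's proof is terser---it cites the identity $GL_n(A)=E_n(A)\cdot GL_{n-1}(A)$ from \cite{W} III.1.4 and proceeds by induction on $n$---but this is exactly the Gaussian elimination step you spell out, so the content is the same.
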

\begin{proof}
By Nakayama's lemma it follows that $M\simeq A^n$ is free.  Recall that $M$ is automatically split noetherian as $\operatorname{Spec}(A)$ is connected, \Cref{whennoetherian}.  Then by \Cref{pi1} it suffices to see that $E(A^n)=SL_n(A)$.  But this follows by induction because $GL_n(A)=E_n(A)\cdot GL_{n-1}(A)$, see \cite{W} III.1.4.
\end{proof}

Now we handle $\mathbb{F}_p$-coefficients.

\begin{theorem}\label{finite field Fp coeff}
Let $k$ be a finite field with $q=p^r$ elements, $p$ prime, and let $V$ be a finite dimensional $k$-vector space.  Then the Postnikov truncation map $\vert \operatorname{RBS}(V)\vert\rightarrow Bk^\times$ induces an isomorphism on homology with coefficients in any local system $\mathcal{L}$ of $\mathbb{F}_p$-modules.
\end{theorem}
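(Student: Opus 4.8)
The plan is to proceed by induction on $n=\dim_k V$. Since $k$ is a field, $V\cong k^n$ is free and (being over a commutative ring with connected spectrum) split noetherian, and by \Cref{pi1} together with \Cref{SLn} we have $\pi_1|\operatorname{RBS}(V)|=GL_n(k)/SL_n(k)=k^\times$, so the Postnikov truncation $|\operatorname{RBS}(V)|\rightarrow Bk^\times$ is defined; as $|k^\times|=q-1$ is prime to $p$, the homology $H_*(Bk^\times;\mathcal{L})$ is concentrated in degree $0$, where it equals $\mathcal{L}_{k^\times}$, for every local system $\mathcal{L}$ of $\mathbb{F}_p$-modules. Thus the assertion to prove is that $H_*(|\operatorname{RBS}(k^n)|;\mathcal{L})$ is concentrated in degree $0$, equal to $\mathcal{L}_{k^\times}$ there, for every $\mathbb{F}_p[k^\times]$-module $\mathcal{L}$; the cases $n\leq 1$ are clear (for $n=1$, $|\operatorname{RBS}(k)|=Bk^\times$), and we assume the statement for all smaller ranks.

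The crux is a \emph{Key Lemma}: for $n\geq 2$ the inclusion of the full subcategory on nonempty flags induces an isomorphism $H_*(|\operatorname{RBS}(V)\smallsetminus BGL(V)|;\mathcal{L})\overset{\sim}{\rightarrow}H_*(|\operatorname{RBS}(V)|;\mathcal{L})$. To prove it I would apply $|\cdot|$ to the cdh-descent pushout square of \Cref{inductiveRBScat}(1); since $|GL(V)\backslash\backslash\mathcal{P}|=|\mathcal{P}|_{hGL(V)}=BGL(V)$ (the poset $\mathcal{P}$ has a terminal object, the empty flag), this exhibits $|\operatorname{RBS}(V)|$ as the pushout of $BGL(V)\leftarrow|(GL(V)\backslash\backslash\mathcal{P})\smallsetminus BGL(V)|\rightarrow|\operatorname{RBS}(V)\smallsetminus BGL(V)|$, so the cofibres of the two horizontal maps agree and it suffices to show the map $|(GL(V)\backslash\backslash\mathcal{P})\smallsetminus BGL(V)|\rightarrow BGL(V)$ is an $\mathcal{L}$-homology isomorphism. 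By \Cref{quotientcat} this map is $(|T(V)|\rightarrow\ast)_{hGL(V)}$, where $T(V)$ is the Tits building of $GL_n(k)$ (flags of subspaces are exactly its simplices, so the order complex of nonempty flags is its barycentric subdivision). On $\mathbb{F}_p$-chains the augmentation $C_*(|T(V)|;\mathbb{F}_p)\rightarrow\mathbb{F}_p$ has fibre the reduced chains, which by the Solomon--Tits theorem are quasi-isomorphic, as a complex of $\mathbb{F}_p[GL_n(k)]$-modules, to the mod-$p$ Steinberg module $\operatorname{St}_n\otimes\mathbb{F}_p$ placed in degree $n-2$. The essential input is then Steinberg's theorem that $\operatorname{St}_n\otimes\mathbb{F}_p$ is projective over $\mathbb{F}_p[GL_n(k)]$ (its dimension $q^{\binom n2}$ is the full $p$-part of $|GL_n(k)|$ and it is irreducible, hence a block of defect zero) and remains irreducible and nontrivial on $SL_n(k)$; since $\mathcal{L}$ factors through $\det$, hence is $SL_n(k)$-trivial, this forces $H_*(GL_n(k);\operatorname{St}_n\otimes\mathbb{F}_p\otimes\mathcal{L})=0$ in \emph{all} degrees. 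Tensoring the fibre sequence with $\mathcal{L}$ over $\mathbb{F}_p[GL_n(k)]$ therefore shows the augmentation, and with it the map $|T(V)|_{hGL(V)}\rightarrow BGL(V)$, is an $\mathcal{L}$-homology isomorphism, proving the Key Lemma.

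It remains to compute the left-hand side. By \Cref{inductiveRBScat}(2) and \Cref{inductive structure}(1), $|\operatorname{RBS}(V)\smallsetminus BGL(V)|=\varinjlim_{[\mathcal{F}]\neq[\emptyset]}\prod_i|\operatorname{RBS}(V_i)|$, a homotopy colimit over the poset $\mathcal{Q}$ of nonempty $GL(V)$-orbits of flags, each term a finite product over the graded pieces $V_i$ (of dimension $<n$), using that $|\cdot|$ preserves finite products (\Cref{propertiesinvert}). By the inductive hypothesis each $|\operatorname{RBS}(V_i)|$ has $\mathbb{F}_p$-acyclic universal cover, hence so does any such finite product; feeding this through the Serre spectral sequence of its $1$-truncation $\prod_i Bk^\times$ — whose fundamental group has order prime to $p$ — shows $H_*(\prod_i|\operatorname{RBS}(V_i)|;\mathcal{L})$ is concentrated in degree $0$ with value $\mathcal{L}_{k^\times}$, the same for every $[\mathcal{F}]$, with identity transition maps (the maps $\prod k^\times\rightarrow\prod k^\times$ induced by coarsening all compose with multiplication down to the identity on $\mathcal{L}_{k^\times}$). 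So the diagram of $\mathbb{F}_p$-chain complexes over $\mathcal{Q}$ is objectwise quasi-isomorphic to the constant diagram $\mathcal{L}_{k^\times}$ in degree $0$, giving $H_*(|\operatorname{RBS}(V)\smallsetminus BGL(V)|;\mathcal{L})=H_*(|N\mathcal{Q}|;\mathbb{F}_p)\otimes_{\mathbb{F}_p}\mathcal{L}_{k^\times}$. Since $\mathcal{Q}$ is the poset of nonempty subsets of an $(n-1)$-element set, its nerve is the barycentric subdivision of $\Delta^{n-2}$ and thus contractible, so this is $\mathcal{L}_{k^\times}$ in degree $0$ and zero above. Combined with the Key Lemma this yields $H_*(|\operatorname{RBS}(V)|;\mathcal{L})=\mathcal{L}_{k^\times}[0]=H_*(Bk^\times;\mathcal{L})$, closing the induction.

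The main obstacle is the Key Lemma, and within it the classical but non-formal fact that the mod-$p$ Steinberg module of $GL_n(\mathbb{F}_q)$ is projective (equivalently, that the $\mathbb{F}_p$-homology of the Steinberg representation vanishes); everything else is bookkeeping with the colimit descriptions of \Cref{inductiveRBScat} and the $\operatorname{Cat}_\infty$-colimit toolkit of the earlier sections. Some care is also needed to identify $|(GL(V)\backslash\backslash\mathcal{P})\smallsetminus BGL(V)|$ with the Borel construction on the Tits building (via functoriality of barycentric subdivision under the $GL(V)$-action) and to check the small-rank edge cases, notably $n=2$, where $T(V)$ is the discrete set of $q+1$ lines.
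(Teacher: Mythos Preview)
Your proposal is correct and follows essentially the same approach as the paper's own proof: induction on $\dim V$, the cdh-descent pushout of \Cref{inductiveRBScat}(1), the Solomon--Tits theorem, and the projectivity/irreducibility of the mod-$p$ Steinberg module to kill the relative homology, together with the colimit description of the boundary over the contractible poset $GL(V)\backslash\mathcal{P}\smallsetminus[\emptyset]$. The only difference is organisational: you first prove your Key Lemma (equivalently, that the relative homology vanishes) and then compute the boundary, whereas the paper computes the boundary first and then shows the relative homology vanishes; the paper also argues contractibility of the indexing poset simply by noting it has a minimal element (the class of full flags) rather than identifying it with the barycentric subdivision of $\Delta^{n-2}$.
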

\begin{proof}
By extending coefficients to an algebraic closure $\overline{\mathbb{F}_p}$ of $\mathbb{F}_p$, it suffices to prove the claim for local systems of $\overline{\mathbb{F}_p}$-modules instead.  Then, since $k^\times$ is abelian and of order prime to $p$, we see that every such local system is a direct sum of one-dimensional local systems.  In total, we can therefore reduce to the case where $\mathcal{L}$ is a one-dimensional local system over $\overline{\mathbb{F}_p}$ (corresponding to a character $k^\times\rightarrow\overline{\mathbb{F}_p}^\times$).

\medskip

Let us prove the homology isomorphism statement by induction on the dimension $n$ of $V$.   For $n=1$ we have that $\operatorname{RBS}(V)=Bk^\times$ and the claim is tautological.  Now, since $k^\times$ has order prime to $p$ its $\mathcal{L}$-homology vanishes in positive degrees, and its degree zero part is $\mathcal{L}_{k^\times}$, so we have to show the same for the $\mathcal{L}$-homology of $\vert \operatorname{RBS}(V)\vert$.  We can assume $n>1$ and use induction.

\medskip

Note that the inductive hypothesis and the decomposition in part 1 of \Cref{inductive structure} imply that for any nonempty flag $\mathcal{F}$ in $V$ the pullback of $\mathcal{L}$ to $\operatorname{RBS}(V)_{\leq [\mathcal{F}]}$ has vanishing homology in positive degrees and is $\mathcal{L}_{k^\times}$ in degree $0$.  By part 2 of \Cref{inductiveRBScat} it follows that the $\mathcal{L}$-homology of the boundary $\vert \operatorname{RBS}(V)\smallsetminus BGL(V)\vert$ identifies with the homology of the constant local system with value $\mathcal{L}_{k^\times}$ on the poset $(GL(V)\backslash \mathcal{P})\smallsetminus [\emptyset]$; but this poset has a minimal element given by the full flags and hence is contractible, so this homology is just $\mathcal{L}_{k^\times}$ in degree $0$.  Thus it suffices to show that the $\mathcal{L}$-homology of $\operatorname{RBS}(V)$ relative to the boundary vanishes.

\medskip

However, by \Cref{inductiveRBScat} part 1 we can replace $\operatorname{RBS}(V)$ by $GL(V)\backslash\backslash\mathcal{P}$ for this question.  Now, $\vert GL(V)\backslash\backslash \mathcal{P}\vert $ is the (homotopy) quotient of $\vert \mathcal{P}\vert$ by the $GL(V)$-action, \Cref{quotientcat}, and similarly $\vert \partial\, GL(V)\backslash\backslash\mathcal{P}\vert$ is the analogous quotient of $\vert \mathcal{P}\smallsetminus \{\emptyset\}\vert$ of nonempty flags, also known as the Tits building.  Thus $\vert\operatorname{RBS}(V)\vert/\vert \partial\operatorname{RBS}(V)\vert$ identifies with the homotopy quotient
$$(\vert\mathcal{P}\vert /\vert\mathcal{P}\smallsetminus \{\emptyset\}\vert)_{hGL(V)}.$$
Now, recall the Solomon--Tits theorem: $\vert\mathcal{P}\vert/\vert\mathcal{P}\smallsetminus \{\emptyset\}\vert\simeq \Sigma \vert\mathcal{P}\smallsetminus\{\emptyset\}\vert$ has the pointed homotopy type of a wedge of $q^{\frac{n^2-n}{2}}$ many $(n-1)$-spheres.  Thus what need to show is that for the \emph{Steinberg representation} of $GL(V)$ with $\overline{\mathbb{F}_p}$-coefficients, defined as
$$\operatorname{St}_{\mathbb{F}_p}:= \widetilde{H}_{n-1}(\vert\mathcal{P}\vert/\vert\mathcal{P}\smallsetminus \{\emptyset\}\vert;\overline{\mathbb{F}_p}),$$
we have that $\operatorname{St}_{\mathbb{F}_p}\otimes_{\overline{\mathbb{F}_p}} \mathcal{L}$ has vanishing $GL(V)$-homology.  We refer to the survey \cite{HS} for the Steinberg representation.  Character computations and the Brauer--Nesbitt theorem show it is projective and irreducible, hence so is its tensor with the character $\mathcal{L}$.  The desired homology vanishing in positive degrees follows directly from projectivity, and as for degree $0$, it suffices to see that $\operatorname{St}_{\mathbb{F}_p}\otimes_{\overline{\mathbb{F}_p}} \mathcal{L}$ has no nonzero trivial quotients.  Being projective and irreducible, this is equivalent to saying it's not the trivial one-dimensional character.  But its dimension $q^{\frac{n^2-n}{2}}>1$, whence the conclusion.
\end{proof}

It follows from these lemmas that each homotopy group of $\vert \operatorname{RBS}(V)\vert$ is prime-to-$p$-torsion, and is all accounted for by the homology of $GL(V)$ with mod $\ell$-coefficients, $\ell\neq p$, which Quillen has computed, \cite{Qfinite}.  In fact we can be more precise:

\begin{corollary}
Let $k$ be a finite field with $q=p^r$ elements and $V$ a $k$-vector space of dimension $n\in\mathbb{N}$.  Then
$$\vert \operatorname{RBS}(V)\vert \simeq ((B\vert U(n)\vert)')^{\psi^q \sim \operatorname{id}},$$
the homotopy fixed space for the unstable $q$-Adams operation on the prime-to-$p$ completion of the delooping of the group anima underlying the compact Lie group $U(n)$.\footnote{Thus, $\psi^q$ restricted to $B\vert U(1)^n\vert'$ is induced by the endomorphism $x\mapsto x^q$ of $U(1)$.  See \cite{JMO} for unstable Adams operations.}
\end{corollary}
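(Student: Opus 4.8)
The plan is to deduce the corollary from \Cref{finite}, which asserts that the natural map $c\colon BGL(V)\rightarrow \lvert\operatorname{RBS}(V)\rvert$ exhibits $\lvert\operatorname{RBS}(V)\rvert$ as the $H\mathbb{Z}[1/p]$-homology localisation of $BGL(V)$. Write $X=\lvert\operatorname{RBS}(V)\rvert$ and $n=\dim_k V$, so that $BGL(V)=BGL_n(\mathbb{F}_q)$, and let $Y=((B\lvert U(n)\rvert)')^{\psi^q\sim\operatorname{id}}$ be the homotopy fixed points of the $\mathbb{Z}$-action by the unstable $q$-th Adams operation on the prime-to-$p$ completion $(BU(n))'=(B\lvert U(n)\rvert)'$, i.e.\ the homotopy equalizer of $\psi^q,\operatorname{id}\colon (BU(n))'\rightrightarrows (BU(n))'$; that $\psi^q$ exists and is unique is due to \cite{JMO}. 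It then suffices to establish: (a) there is a map $f\colon BGL_n(\mathbb{F}_q)\rightarrow Y$; (b) $Y$ is $H\mathbb{Z}[1/p]$-local; (c) $f$ is an $H\mathbb{Z}[1/p]$-homology equivalence. Granting these, one applies the localisation functor $L_{H\mathbb{Z}[1/p]}$ to $f$: the source localises to $X=L_{H\mathbb{Z}[1/p]}BGL_n(\mathbb{F}_q)$ by \Cref{finite}, the target is unchanged by (b), and the resulting map is an equivalence since $f$ is an $L_{H\mathbb{Z}[1/p]}$-equivalence by (c); hence $X\simeq Y$.

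Step (b) is soft. The space $(BU(n))'$ is simply connected with all homotopy groups $\mathbb{Z}[1/p]$-modules --- the prime-to-$p$ completion of the finitely generated group $\pi_m BU(n)$ is a module over $\prod_{\ell\neq p}\mathbb{Z}_\ell$ --- so by Bousfield's criterion it is $H\mathbb{Z}[1/p]$-local; and the $H\mathbb{Z}[1/p]$-local spaces, being the local objects of a reflective localisation, are closed under limits, so the homotopy limit $Y$ is $H\mathbb{Z}[1/p]$-local. For orientation: the homotopy fixed point spectral sequence $H^s(\mathbb{Z};\pi_t(BU(n))')\Rightarrow \pi_{t-s}Y$, combined with the fact that $\psi^q$ acts by $q^j$ on $\pi_{2j}BU(n)\cong\mathbb{Z}$ for $1\leq j\leq n$ and that $q^j-1$ is prime to $p$, shows that $Y$ is connected with finite homotopy groups of order prime to $p$; in particular $\pi_1 Y\cong\mathbb{Z}/(q-1)\cong k^\times$, matching \Cref{SLn}, and $Y$ is $\mathbb{F}_p$-acyclic, matching \Cref{finite field Fp coeff}.

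Step (a) is the Brauer lift. Quillen's construction (carried out at finite level by Friedlander) yields, after prime-to-$p$ completion, a Frobenius-equivariant equivalence $BGL_n(\overline{\mathbb{F}}_q)'\simeq (BU(n))'$ under which the geometric Frobenius corresponds to $\psi^q$; restricting along the inclusion $GL_n(\mathbb{F}_q)\hookrightarrow GL_n(\overline{\mathbb{F}}_q)$ of the Frobenius-fixed subgroup and passing to homotopy fixed points gives $f\colon BGL_n(\mathbb{F}_q)\rightarrow (BGL_n(\overline{\mathbb{F}}_q)')^{h\mathrm{Frob}}\simeq Y$. (Arranging the Frobenius-equivariance to be strict enough to pass to homotopy fixed points is part of this cited input.)

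Step (c) is the heart of the matter and is exactly the (finite-rank form of the) classical computation: rationally both $BGL_n(\mathbb{F}_q)$ and $Y$ are acyclic, while with $\mathbb{F}_\ell$-coefficients for $\ell\neq p$, Quillen's computation of $H^\ast(GL_n(\mathbb{F}_q);\mathbb{F}_\ell)$ in \cite{Qfinite} produces precisely the abutment of the homotopy fixed point spectral sequence for $\psi^q$ on $(BU(n))^\wedge_\ell$, realised by the Brauer-lift map $f$ --- equivalently, $BGL_n(\mathbb{F}_q)$ completed away from $p$ is the homotopy fibre of an unstable ``$\psi^q-1$''. (No $\mathbb{F}_p$-information is needed here; for consistency one also notes, via \Cref{primetop} and \Cref{finite field Fp coeff}, that $X$ and $Y$ each have the $\mathbb{F}_p$-homology of a point.) The main obstacle is thus entirely in step (c): assembling the precise unstable Brauer-lift statement from the literature of Quillen and Friedlander and checking that it is implemented by the map $f$ of step (a); everything else --- the reduction through \Cref{finite} and the locality of $Y$ --- is formal.
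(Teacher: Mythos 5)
Your outline uses the same essential inputs as the paper: the only external ingredient is the Friedlander--Quillen/Brauer-lift map $BGL(V)\rightarrow Y=((B\vert U(n)\vert)')^{\psi^q\sim\operatorname{id}}$, and the conclusion is obtained by playing prime-to-$p$ information against $\mathbb{F}_p$-information. (Your steps (a) and (c), which you flag as the main obstacle, are exactly the paper's single citation of \cite{FQ}, Theorem 12.2: that reference already supplies a map which is an $\mathbb{F}_\ell$-homology equivalence for all $\ell\neq p$, i.e.\ identifies $Y$ with the prime-to-$p$ completion of $BGL(V)$; no further assembly is needed.) The genuine problem is your reduction step: you quote from \Cref{finite} that $\vert\operatorname{RBS}(V)\vert$ is the $H\mathbb{Z}[1/p]$-homology localisation of $BGL(V)$. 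In the paper's logical order this clause is not available before the corollary. What is proved beforehand is only \Cref{primetop} (the map $BGL(V)\rightarrow\vert\operatorname{RBS}(V)\vert$ is an isomorphism on homology with local coefficients in $\mathbb{Q}$- or $\mathbb{F}_\ell$-modules, $\ell\neq p$), \Cref{SLn} ($\pi_1\vert\operatorname{RBS}(V)\vert\cong k^\times$), and \Cref{finite field Fp coeff} (the truncation $\vert\operatorname{RBS}(V)\vert\rightarrow Bk^\times$ is an isomorphism on homology with all $\mathbb{F}_p$-module local systems). To know that $\vert\operatorname{RBS}(V)\vert$ is itself $H\mathbb{Z}[1/p]$-local one needs some nilpotence/simplicity input (part 1 of \Cref{finite}), and the paper obtains simplicity and the localisation clause as consequences of this very corollary; so, as written, your appeal to that clause is circular. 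Your step (b), that $Y$ is $H\mathbb{Z}[1/p]$-local, is correct and unproblematic.

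The repair is essentially the paper's proof, and it uses only the pre-corollary results. From the locality of $Y$ and the fact that $c\colon BGL(V)\rightarrow\vert\operatorname{RBS}(V)\vert$ is an $H\mathbb{Z}[1/p]$-equivalence (\Cref{primetop}), the Friedlander--Quillen map factors essentially uniquely through $c$, giving a comparison map $g\colon\vert\operatorname{RBS}(V)\vert\rightarrow Y$ (the paper phrases this as: $Y$ is the prime-to-$p$ completion of $BGL(V)$, which agrees with that of $\vert\operatorname{RBS}(V)\vert$). But being an $H\mathbb{Z}[1/p]$-homology equivalence does not yet make $g$ an equivalence without knowing the source is local; instead one checks that $g$ is an isomorphism on $\pi_1$ (both sides are $k^\times$, by \Cref{SLn} and your fixed-point computation) and on homology with \emph{all} local coefficient systems: with $\mathbb{Z}[1/p]$-module coefficients this is \Cref{primetop}, and with $\mathbb{F}_p$-module coefficients it follows because both sides map to the common $1$-truncation $Bk^\times$ by $\mathbb{F}_p$-homology isomorphisms --- the source by \Cref{finite field Fp coeff}, the target because its higher homotopy groups are finite of order prime to $p$. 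A map of connected anima which is an isomorphism on $\pi_1$ and on homology with every local coefficient system is an equivalence, which finishes the proof without ever presupposing that $\vert\operatorname{RBS}(V)\vert$ is simple or local; those statements of \Cref{finite} are then read off afterwards from the corollary.
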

\begin{proof}
The Friedlander--Quillen argument, see \cite{FQ} Theorem 12.2, uses étale homotopy theory and the Lang isogeny to produce a map
$$BGL(V)\rightarrow ((B\vert U(n)\vert)') ^{\psi^q \sim \operatorname{id}},$$
which is an equivalence on homology with $\mathbb{F}_\ell$-coefficients for $\ell\neq p$, hence identifies the target as the prime-to-$p$ completion of the source.  Since \Cref{primetop} implies that the map $BGL(V)\rightarrow\vert \operatorname{RBS}(V)\vert$ is a $\mathbb{Z}[1/p]$-equivalence, we deduce a comparison map
$$\vert \operatorname{RBS}(V)\vert\rightarrow  ((B\vert U(n)\vert)') ^{\psi^q \sim \operatorname{id}}$$
which is an isomorphism with $\mathbb{Z}[1/p]$-coefficients, and on $\pi_0$ and $\pi_1$ as we see from \Cref{SLn}.  But the homotopy groups of the target are prime-to-$p$, so the map from the target to its $\tau_{\leq 1}$-Postnikov truncation induces an isomorphism on homology with coefficients an $\mathbb{F}_p$-module.  The same was checked for the left hand side in the previous theorem, so we conclude our comparison map is an isomorphism both on $\tau_{\leq 1}$ and on homology with all local coefficient systems, hence is an isomorphism of anima.
\end{proof}

\section{Monoidal categories and actions: a lemma}\label{monoidal categories and actions}

In the next section, \Cref{comparison with algebraic K-theory}, we relate the categories $\operatorname{RBS}(M)$ for finitely generated projective modules $M$ over an associative ring $A$ to the algebraic K-theory space $K(A)$. The present section is in some sense just a long and technical lemma that we will need to make the final comparison. To motivate the work to be done, we will describe in broad terms what we do in the following section and in slightly more detail what will happen in this section.

\medskip

For an associative ring $A$ and the exact category $\mathcal{P}(A)$ of finitely generated projective $A$-modules, we consider a monoidal category $M_{\mathcal{P}(A)}$ whose objects are finite ordered lists $(M_1,\ldots,M_d)$ of objects in $\mathcal{P}(A)$ and a morphism $(M_1,\ldots, M_d)\rightarrow (N_1,\ldots,N_e)$ is the data of a flag on each $N_i$ together with an isomorphism of the total associated graded of this list of flags with the $M_i$, in order. In particular, such a morphism can only exist if $e\leq d$. The relationship between $M_{\mathcal{P}(A)}$ and the categories $\operatorname{RBS}(M)$ should be thought of as an analogue of the relationship between the (symmetric) monoidal category $i\mathcal{P}(A)$ and the $BGL(M)$: if $\mathcal{M}$ is a set of representatives of isomorphism classes of finitely generated projective $A$-modules, then, in the same way that $i\mathcal{P}(A)\simeq \coprod_{M\in \mathcal{M}}BGL(M)$ with monoidal product given by direct sum, we have $M_{\mathcal{P}(A)}\simeq \coprod_{M\in \mathcal{M}} \operatorname{RBS}(M)$ with monoidal product, not symmetric, induced by concatenation.

\medskip

In \Cref{comparison with algebraic K-theory}, we do all this in much greater generality, defining a monoidal category $M_{\mathscr{E}}$ for any exact category $\mathscr{E}$ (and in fact, a little more generally than that), and the main theorem of that section is the following (see \Cref{the K-theory space}).

\begin{theorem*}
For any exact category $\mathscr{E}$, the geometric realisation of Quillen's Q-construction $Q(\mathscr{E})$ is homotopy equivalent to the classifying space $B|M_{\mathscr{E}}|$ of $|M_{\mathscr{E}}|$. In particular,
$$\Omega B|M_{\mathscr{E}}|\simeq K(\mathscr{E}).$$
\end{theorem*}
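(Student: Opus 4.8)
The plan is to relate $|M_{\mathscr{E}}|$ to the geometric realisation of Quillen's $Q$-construction by means of an explicit comparison functor, and then invoke the group-completion/$S_\bullet$-type machinery together with the fact that $M_{\mathscr{E}}$ is a monoidal category whose realisation is connected after delooping. Concretely, I would first recall Quillen's theorem that $\Omega|Q(\mathscr{E})|\simeq K(\mathscr{E})$ by definition, so the content is entirely in producing a homotopy equivalence $|Q(\mathscr{E})|\simeq B|M_{\mathscr{E}}|$. Here $B(-)$ on a monoidal category (or its realisation, which is an $\mathbb{E}_1$-anima) means the bar construction / one-fold delooping $B|M_{\mathscr{E}}| = |N_\bullet(M_{\mathscr{E}})|$ realised in the monoidal direction — equivalently $|[n]\mapsto |M_{\mathscr{E}}^{\times n}||$.

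The key steps, in order. \emph{Step 1:} Unpack $M_{\mathscr{E}}$: an object is a finite list $(M_1,\dots,M_d)$ of objects of $\mathscr{E}$, a morphism $(M_1,\dots,M_d)\to(N_1,\dots,N_e)$ is a choice of admissible filtration on each $N_j$ together with an identification of the concatenated associated graded with $(M_1,\dots,M_d)$; composition is by refinement of filtrations, and the monoidal structure is concatenation of lists. This is exactly the data that the $Q$-construction encodes, but organised ``by associated graded'' rather than ``by subquotient''. \emph{Step 2:} Build a functor $\Phi$ realising $B|M_{\mathscr{E}}|$ from an explicit simplicial object. The bar construction $B|M_{\mathscr{E}}|$ is the realisation of $[n]\mapsto |M_{\mathscr{E}}^{\times n}|$ with the usual face maps given by concatenation; I would identify $|M_{\mathscr{E}}^{\times n}|$ with the realisation of a category $M_{\mathscr{E}}^{[n]}$ whose objects are $n$-tuples of lists, and observe that these assemble into a simplicial category $[n]\mapsto M_{\mathscr{E}}^{[n]}$ whose realisation is $B|M_{\mathscr{E}}|$. \emph{Step 3:} Construct a map of simplicial $\infty$-categories from $[n]\mapsto M_{\mathscr{E}}^{[n]}$ (or an equivalent thickening) to $[n]\mapsto S_n\mathscr{E}$, Waldhausen's $S_\bullet$-construction, or directly to the simplicial category whose realisation in $n$ is the $n$-th level of the $Q$-construction's nerve; the point is that a point of $S_n\mathscr{E}$ is a flag with chosen subquotients, and forgetting down to the associated graded list gives the comparison. \emph{Step 4:} Show this comparison is a levelwise equivalence on realisations, hence an equivalence after realising in $[n]$, using that for each fixed $n$ the fibres of the forgetful functor (flags-with-subquotients $\to$ associated-graded list) are contractible — essentially because choosing a flag refining a prescribed associated graded, compatibly, is a contractible choice in $\mathscr{E}$ (here one uses that admissible short exact sequences are ``split up to contractible choice'' in the realisation, which is the standard input to the additivity theorem). \emph{Step 5:} Identify the realisation of $[n]\mapsto S_n\mathscr{E}$ with $|Q(\mathscr{E})|$ (Waldhausen, or directly compare $S_\bullet$ and $Q$), and conclude $B|M_{\mathscr{E}}|\simeq |Q(\mathscr{E})|$, whence looping gives $\Omega B|M_{\mathscr{E}}|\simeq \Omega|Q(\mathscr{E})|\simeq K(\mathscr{E})$.

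The main obstacle I expect is \emph{Step 4}: proving that the forgetful comparison is an equivalence after geometric realisation. This is where the genuine work lies, because $M_{\mathscr{E}}$ remembers only associated gradeds whereas $S_\bullet$ or $Q$ remembers actual subobjects, and bridging this gap requires showing a whole tower of filtration-refinement choices is homotopically contractible — morally an instance of the additivity theorem / Quillen's Theorem A applied fibrewise, carried out simplicial-degree by simplicial-degree and compatibly with the monoidal (concatenation) structure. The colimit-in-$\mathrm{Cat}_\infty$ technology of Section~2 (proper base change and proper descent, \Cref{properdescentcat}, together with \Cref{quotientcat} and \Cref{tautologicalbutuseful}) is exactly what I would use to reduce these contractibility statements to fibrewise checks over a point, as in the $\operatorname{RBS}_\Gamma$ computation; the inductive structure of $\operatorname{RBS}(M)$ established in \Cref{inductive structure} and \Cref{inductiveRBScat} provides the template for organising the induction on list length. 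A secondary subtlety is keeping track of the monoidal (rather than symmetric monoidal) structure throughout, so that $B(-)$ genuinely means the one-fold bar delooping and one does not accidentally need an $\mathbb{E}_2$-structure — but this is automatic since the $S_\bullet$ and $Q$ constructions only ever use the $\mathbb{E}_1$-structure coming from direct sum / concatenation at this stage.
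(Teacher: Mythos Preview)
Your proposed route via $S_\bullet$ is genuinely different from the paper's, and as stated it has a real gap. The claim in Step~4 that the forgetful comparison is a \emph{levelwise} equivalence is false: already at $n=1$ you are comparing $|M_{\mathscr{E}}|$ with $|i\mathscr{E}|$ (or $|S_1\mathscr{E}|$), i.e.\ $\coprod_M |\operatorname{RBS}(M)|$ with $\coprod_M BGL(M)$, and the entire point of the paper is that these are generally \emph{not} equivalent (cf.\ \Cref{finite}). The fibre of ``flag $\mapsto$ associated graded'' is not contractible --- it is a space of iterated extensions, and short exact sequences in an exact category are not ``split up to contractible choice''. So the levelwise argument cannot work; any comparison with $S_\bullet$ would have to be made only after realising in the simplicial direction, and you have given no mechanism for that. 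Relatedly, your invocation of the Section~2 proper-descent machinery is misplaced here: the paper explicitly notes (Remark at the start of Section~5) that the proof of this theorem abandons the soft $\infty$-categorical techniques and proceeds instead by direct simplicial manipulations of $1$-, $2$-, and double categories.

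What the paper actually does is insert an intermediary \emph{$2$-categorical $Q$-construction} $Q_2(M)$, defined for any strict monoidal category $M$ as the action $2$-category of $M\times M^{\otimes^{\mathrm{op}}}$ acting on $M$ by two-sided multiplication. Via a $2$-categorical generalisation of Segal's edgewise-subdivision argument (Section~5, culminating in \Cref{Q construction and monoidal category have homotopy equivalent classifying spaces}) one obtains $B|M|\simeq |Q_2(M)|$ for arbitrary $M$; this is where the many pages of simplicial work go, and it is the step your outline is missing. Specialising to $M=M_{\mathscr{E}}$, one then shows (\Cref{terminal decomposition}, \Cref{Q-construction contract to 1-categories}) that the hom-categories of $Q_2(M_{\mathscr{E}})$ have terminal objects in each component, so $|Q_2(M_{\mathscr{E}})|\simeq |Q_1(M_{\mathscr{E}})|$ for the $1$-category obtained by taking $\pi_0$ of hom-categories. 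Finally an explicit fully faithful functor $\Psi\colon Q(\mathscr{E})\to Q_1(M_{\mathscr{E}})$ is checked to satisfy the hypotheses of Quillen's Theorem~A (\Cref{contractible comma category}, \Cref{Quillen's Q-construction and our collapsed Q-construction}). Chaining these gives $|Q(\mathscr{E})|\simeq B|M_{\mathscr{E}}|$.
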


\begin{remark}
The arguments in this section and the next are of a very different type from the arguments in the previous sections.  Whereas up to now we've relied on soft $\infty$-categorical techniques, here we use direct and explicit simplicial manipulations.  Our basic objects are not $\infty$-categories, but rather $1$-categories, $2$-categories, and double categories and their associated multi-simplicial objects; see \Cref{appendix nerves and geometric realisations} for the relevant definitions.  Thus we adopt a more classical language and notation.  In particular, when we use the notation $|\cdot |$ here, we mean the classical geometric realisation, a topological space.  So the $|M_{\mathscr{E}}|$ above is a topological monoid, though the reader may take it to mean the underlying $E_1$-anima instead if desired.  Indeed, we had previously used the $|\cdot |$ notation for the anima associated to an $\infty$-category as discussed in \Cref{describeinvert}; the consistency between the two choices in notation results from \Cref{propertiesinvert} part 6 together with the Quillen equivalence between simplicial sets and topological spaces.
\exend
\end{remark}

To make the comparison in the above theorem we introduce an intermediary Q-construction, which we on the one hand can compare with Quillen's Q-construction and on the other hand can compare with the classifying space of our monoidal category. This latter comparison is what we prepare for in this section.

\medskip

Recall Segal's classical result in which he uses edgewise subdivision to prove that the classifying space of a monoid $M$ is homeomorphic to the geometric realisation of the category $\mathscr{C}(M)$ with objects the elements of $M$ and morphisms $(a,b)\colon m\rightarrow m'$ where $a,b\in M$ such that $amb=m'$ (\cite[Proposition 2.5]{Segal73}). The intermediary Q-construction that we introduce is a $2$-categorical version of Segal's $\mathscr{C}(M)$ associated to a monoidal category instead of a monoid, and the comparison of the classifying space with the geometric realisation also goes through edgewise subdivision. The extra categorical level means, however, that we have to go through a wealth of simplicial manipulations to make the comparison.

\medskip

To ease notation, we work in greater generality and introduce a 2-categorical Q-construction $Q(M,X)$ encoding the action of a monoidal category $M$ on a category $X$, and we then compare the geometric realisation of this $2$-category with the total realisation of the simplicial category whose category of $n$-simplices is $M^n\times X$ and whose structure maps are given by the action, multiplication and projection maps (let us stress here once and for all that by a simplicial category, we mean a simplicial object in categories and \textit{not} a category enriched in simplicial sets). The case that we will then ultimately be interested in is the following: for a monoidal category $M$, the product $M\times M^{\otimes^{\operatorname{op}}}$ acts on $M$ by left and right multiplication, and in this case our $2$-categorical $Q$-construction generalises Segal's category, and the simplicial category $(M\times M^{\otimes^{\operatorname{op}}})^\bullet\times M$ identifies with the edgewise subdivision of the usual bar construction on $M$.

\medskip

\textbf{Outline and proof strategy.} 
Let $M$ be a strict monoidal category acting strictly on a category $X$ via a functor $M\times X\rightarrow X$ satisfying the necessary coherency axioms. We will compare various constructions which encode this action in different ways, but before we begin we sketch the outline of the section so as to make the ideas easier to follow. The proofs given here are straightforward but the technicalities build up as we enlarge different structures by incorporating ``redundant'' data in order to compare them, so it is easy to loose sight of the bigger picture.

\medskip

We consider the double category $\mathscr{M}\times X=[M\times X\rightrightarrows X]$ which encodes the action of $M$ on $X$ and whose geometric realisation is a model for the homotopy quotient $|X|_{|M|}$ of the topological monoid $|M|$ acting on the geometric realisation $|X|$. We define another double category $\mathscr{M}\ltimes X$, which is in some sense a lax version of $\mathscr{M}\times X$, and a double functor $\mathscr{M}\times X\rightarrow \mathscr{M}\ltimes X$ and we show that this induces a homotopy equivalence of geometric realisations.

\medskip

We then define $Q(M,X)$ to be the vertical $2$-category of $\mathscr{M}\ltimes X$; that is, the sub-double category whose only horizontal morphisms are the identities. More precisely, the objects are those of $\mathscr{M}\ltimes X$, the morphisms are the vertical morphisms and the $2$-cells are the $2$-cells whose source and target horizontal morphisms are identities. It is the proof of the following theorem which will take up the most of this section.

\begin{xreftheorem}{classifying spaces of lax action double cat and action 2-cat are htpy eq}
The inclusion $Q(M,X) \rightarrow \mathscr{M}\ltimes X$ induces a homotopy equivalence of geometric realisations.
\end{xreftheorem}

The horizontal morphisms of $\mathscr{M}\ltimes X$ are special cases of the vertical morphisms, so this result sounds like a double categorical version of Waldhausen's swallowing lemma (\cite[Lemma 1.6.5]{Waldhausen}). The proof that we present is, although much more involved, inspired by Waldhausen's proof.

\medskip

The proof strategy is as follows. We define a bisimplicial category $\mathscr{B}_{\bullet\bullet}$ which horizontally collapses to a simplicial double category $\mathscr{B}_\bullet$ and vertically collapses to a simplicial $2$-category $\mathscr{A}_\bullet$. For every $n\geq 0$, we define a double functor $\mathscr{M}\ltimes X\rightarrow \mathscr{B}_n$ and a pseudofunctor $Q(M,X)\rightarrow \mathscr{A}_n$, and we show that these induce homotopy equivalences of geometric realisations. In both cases, the proofs are analogous to that of Waldhausen's swallowing lemma, in that we have adjunctions given by inclusion at zero and retraction to zero. It follows that we have a zig-zag of homotopy equivalences
\begin{align*}
|\mathscr{M}\ltimes X|\xrightarrow{\ \simeq\ }|\mathscr{B}_{\bullet\bullet}|\xleftarrow{\ \simeq \ }|Q(M,X)|,
\end{align*}
and one can then verify on the diagonals that this is given by the inclusion $Q(M,X)\rightarrow \mathscr{M}\ltimes X$.

\medskip

The idea behind the construction of $\mathscr{B}_{\bullet\bullet}$ is to incorporate the $2$-cells of $\mathscr{M}\ltimes X$ both horizontally and vertically. When we collapse $\mathscr{B}_{\bullet\bullet}$ vertically, we ``swallow'' the vertical $2$-cell structure into the horizontal $2$-cell structure, and vice versa in the other direction.

\medskip

Note that the proof as it is written up runs through this procedure backwards. We define double categories $\mathscr{B}_n$ and $2$-categories $\mathscr{A}_n$ for all $n\geq 0$ and establish the desired homotopy equivalences, and we then incorporate the $\mathscr{A}_n$'s and $\mathscr{B}_n$'s into a bisimplicial category $\mathscr{B}_{\bullet\bullet}$ at the end.

\subsection{Lax action double category}

Let $M$ be a strict monoidal category acting strictly on a category $X$, i.e.~via a functor $M\times X\rightarrow X$ satisfying the necessary coherency axioms. To ease notation, we denote the monoidal product and the action by juxtaposition, and we also write $m\phi$ in place of $\operatorname{id}_m\phi$ for an object $m$ in $M$ and a morphism $\phi$ in $X$.

\medskip

The action of $M$ on $X$ gives rise to a double category
\begin{align*}
\mathscr{M}\times X=[M\times X\rightrightarrows X]
\end{align*}
where the source map is projection to $X$, the target map is given by the action map, the identity section is the section at the identity element $e$ of $M$, and vertical composition is given by the product in $M$. In other words, the objects and horizontal morphisms are those of $X$, the vertical morphisms are of the form $m\colon x\rightarrow mx$ with $m$ in $M$ and $x$ in $X$ and with composition given by the monoidal product, and finally the 2-cells are morphisms $(\alpha,f)\colon (m,x)\rightarrow (m',x')$ in $M\times X$, which can be interpreted as commutative diagram
\begin{center}
\begin{tikzpicture}
\matrix (m) [matrix of math nodes,row sep=1em,column sep=1em, text height=1.5ex, text depth=0.25ex]
  {
	x & x' \\
	mx & m'x' \\
  };
  \path[-stealth] 
	(m-1-1) edge node[above]{$f$} (m-1-2) edge node[left]{$m$} (m-2-1)
	(m-2-1) edge node[below]{$\alpha f$} (m-2-2)
	(m-1-2) edge node[right]{$m'$}(m-2-2)
  ;
\end{tikzpicture}
\end{center}

We now define another double category, which can be interpreted as a ``lax'' version of $\mathscr{M}\times X$.

\begin{construction}\label{lax action double category}
Let $M.X$ be the following category. The objects are of the form
\begin{align*}
(x,y,m,\phi\colon mx\rightarrow y),
\end{align*}
where $x,y$ are objects of $X$, $m$ is an object of $M$ and $\phi$ is a morphism in $X$. The morphisms are tuples
\begin{align*}
(x,y,m,\phi)\xrightarrow{(f,g,\alpha)} (x',y',m',\phi')
\end{align*}
where $f\colon x\rightarrow x'$, $g\colon y\rightarrow y'$ in $X$ and $\alpha\colon m\rightarrow m'$ in $M$ such that the following diagram commutes
\begin{center}
\begin{tikzpicture}
\matrix (m) [matrix of math nodes,row sep=1em,column sep=1em, text height=1.5ex, text depth=0.25ex]
  {
	mx & m'x' \\
	y & y' \\
  };
  \path[-stealth] 
	(m-1-1) edge node[above]{$\alpha f$} (m-1-2) edge node[left]{$\phi$} (m-2-1)
	(m-2-1) edge node[below]{$g$} (m-2-2)
	(m-1-2) edge node[right]{$\phi'$}(m-2-2)
  ;
\end{tikzpicture}
\end{center}
Composition is given by coordinatewise composition.

\medskip

Define a double category
\begin{align*}
\mathscr{M}\ltimes X=[M.X \rightrightarrows X]
\end{align*}
whose source and target maps are the projections
\begin{align*}
s\colon (x,y,m,\phi)\mapsto x,\quad(f,g,\alpha)\mapsto f,\quad \text{and}\quad t\colon (x,y,m,\phi)\mapsto y,\quad(f,g,\alpha)\mapsto g.
\end{align*}
Vertical composition is given by the monoidal product: for morphisms
\begin{align*}
(y,z,n,\psi)\circ_v (x,y,m,\phi) = (x,z,nm,\psi\circ n\phi),
\end{align*}
and for $2$-cells
\begin{align*}
(g,h,\beta)\circ_v (f,g,\alpha)= (f,h,\beta\alpha).
\end{align*}
The identity section is the functor $X\rightarrow M.X$, $x\mapsto (x,x,e,\operatorname{id}_x)$.
\exend
\end{construction}

\begin{definition}\label{action Q-construction}
The \textit{action Q-construction} of the action of $M$ on $X$ is the vertical $2$-category $Q(M,X)$ of $\mathscr{M}\ltimes X$. More precisely, the objects are those of $\mathscr{M}\ltimes X$, i.e. the objects of $X$, the morphisms are the vertical morphisms and the $2$-cells are the $2$-cells whose source and target morphisms are identities. We denote the hom-categories of $Q(M,X)$ by $M(x,y)$.
\exend
\end{definition}

\begin{remark}
Recall that a strict monoidal category $M$ can be viewed as a $2$-category $\mathscr{M}$ with one object: the morphisms are the objects of $M$ with composition given by the monoidal product, and the $2$-cells are the morphisms of $M$ with the usual composition. The double category $\mathscr{M}\ltimes X$ can be viewed as a ``double categorical Grothendieck construction'' for the functor $\mathscr{M}\rightarrow \operatorname{Cat}$ from the 2-category associated to $M$ into the $2$-category of small categories which sends the unique object to the category $X$, a morphism $m$ to the map $m\colon X\rightarrow X$ given by the action of $M$, and a $2$-cell $m\rightarrow m'$ to the corresponding natural transformation.
\exend
\end{remark}

\begin{remark}\label{Q(M,X) vs Quillen's SS}
The proof of the ``$Q=+$'' Theorem uses an intermediary $S^{-1}S$-construction (see \cite{Grayson}) which is a category $\langle S\times S, S\rangle$ defined for a monoidal category $S$ and an action of $S\times S$ acting on $S$. The construction $\langle M,X\rangle$ is defined more generally in \cite{Grayson} for an action of a monoidal $M$ on a category $X$. The construction $Q(M,X)$ above is related to the category $\langle M,X \rangle$ in the following way: $\langle M,X\rangle$ is the $1$-category obtained by taking isomorphism classes of objects in the hom-categories of $Q(M,X)$.
\exend
\end{remark}

Consider the functor $\Phi_1\colon M\times X\rightarrow M.X$ given by 
\begin{align*}
\Phi_1(m,x)=(x,mx,m,\operatorname{id}_{mx})\quad \text{and}\quad \Phi_1(\alpha,f)=(f,\alpha f ,\alpha)
\end{align*}
and consider the double functor
\begin{align*}
\Phi\colon \mathscr{M}\times X\rightarrow \mathscr{M}\ltimes X
\end{align*}
restricting to the identity on the object category $X$ and given by $\Phi_1$ on morphism categories.

\begin{lemma}\label{Phi induces homotopy equivalence}
The double functor $\Phi\colon \mathscr{M}\times X\rightarrow \mathscr{M}\ltimes X$ induces a homotopy equivalence of geometric realisations.
\end{lemma}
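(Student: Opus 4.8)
The plan is to exhibit a retraction of $\mathscr{M}\ltimes X$ onto the image of $\mathscr{M}\times X$ and a natural transformation (in the $2$-categorical sense, i.e.\ a natural transformation through each of the four relevant functors) connecting the composite to the identity, so that by \Cref{propertiesinvert}(3) the map on geometric realisations is an equivalence. The key observation is that an object $(x,y,m,\phi\colon mx\to y)$ of $M.X$ carries the ``redundant'' datum of $y$ together with the isomorphism-or-not morphism $\phi$; the functor $\Phi_1$ produces exactly those objects for which $y=mx$ and $\phi=\operatorname{id}$. The natural candidate for a retraction is the assignment $\Psi_1(x,y,m,\phi)=(x,mx,m,\operatorname{id}_{mx})$ on objects, sending a morphism $(f,g,\alpha)$ to $(f,\alpha f,\alpha)$. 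One checks directly that this is well-defined (the square defining a morphism of $M.X$ becomes trivial after setting $\phi=\phi'=\operatorname{id}$, so only the commutativity $m'x'\xrightarrow{\alpha f}$ needs the data already present) and that $\Psi_1\Phi_1=\operatorname{id}$. Moreover $\Psi_1$ is compatible with source, target, identity section, and vertical composition, so it assembles into a double functor $\Psi\colon\mathscr{M}\ltimes X\to\mathscr{M}\times X$ with $\Psi\Phi=\operatorname{id}$.

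For the other composite $\Phi\Psi$, I would write down the morphism of $M.X$
$$(x,y,m,\phi)\xrightarrow{(\operatorname{id}_x,\phi,\operatorname{id}_m)}(x,mx,m,\operatorname{id}_{mx})$$
(note $\phi\colon mx\to y$ goes the ``wrong'' way relative to $g\colon y\to y'$, so one must be careful about the direction: if $\phi$ is not invertible this is a morphism $(x,y,m,\phi)\to(x,mx,m,\operatorname{id})$ only when the defining square $mx\xrightarrow{\operatorname{id}_m\operatorname{id}_x=\operatorname{id}}mx$, $y\xleftarrow{?}$ commutes — so in fact the correct arrow runs $\Phi_1\Psi_1(x,y,m,\phi)=(x,mx,m,\operatorname{id})\to(x,y,m,\phi)$ via $(\operatorname{id}_x,\phi,\operatorname{id}_m)$, whose square is $mx\xrightarrow{\operatorname{id}}mx\xrightarrow{\phi}y$ versus $mx\xrightarrow{\operatorname{id}}mx\xrightarrow{?}$, commuting on the nose). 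This collection of arrows is natural in $(x,y,m,\phi)$, i.e.\ it defines a natural transformation $\Phi_1\Psi_1\Rightarrow\operatorname{id}_{M.X}$, and it is compatible with the double-category structure in the sense required to conclude that $\Phi$ and $\Psi$ are inverse homotopy equivalences: concretely, one restricts to the object category $X$ (where $\Phi,\Psi$ are already the identity), and to the morphism category $M.X$, and invokes that a natural transformation of functors between ($1$-)categories induces a homotopy on nerves, hence on the bisimplicial set computing $|\mathscr{M}\ltimes X|$ and so on its realisation.

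The main technical point — and the step I expect to require the most care — is bookkeeping the direction of $\phi$ and verifying that the candidate natural transformation $\Phi_1\Psi_1\Rightarrow\operatorname{id}$ is genuinely natural with respect to \emph{all} morphisms $(f,g,\alpha)$, since the component at the target involves $\phi'$ while $\Psi_1$ has already forgotten it; the defining square of a morphism in $M.X$ is precisely what makes the naturality square commute, so this is a diagram chase but one that must be done honestly. Once naturality is in hand, the passage to geometric realisations is formal: the nerve of a double category is a bisimplicial set, $|\cdot|$ of it is computed by the diagonal, a natural transformation of double functors gives a simplicial homotopy after taking nerves (equivalently one may phrase it via \Cref{propertiesinvert}(3) applied levelwise and then take the colimit over $\Delta^{op}\times\Delta^{op}$), and $\Psi\Phi=\operatorname{id}$ on the nose handles the other side. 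This gives that $\Phi$ induces a homotopy equivalence $|\mathscr{M}\times X|\xrightarrow{\ \simeq\ }|\mathscr{M}\ltimes X|$, as claimed.
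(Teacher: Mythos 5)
There is a genuine gap: the assignment $\Psi_1(x,y,m,\phi)=(m,x)$ (in your notation, $(x,mx,m,\operatorname{id}_{mx})$) does \emph{not} assemble into a double functor $\mathscr{M}\ltimes X\to\mathscr{M}\times X$, so the step ``$\Psi_1$ is compatible with source, target, identity section, and vertical composition'' fails on two counts. First, source compatibility forces the object-level functor to be the identity on $X$, but then target compatibility requires $t(\Psi_1(x,y,m,\phi))=mx$ to equal $t(x,y,m,\phi)=y$, which is false unless $\phi$ happens to be an identity. Second, and relatedly, $\Psi_1$ destroys composability: the vertical composite of $(x,y,m,\phi)$ and $(y,z,n,\psi)$ is $(x,z,nm,\psi\circ n\phi)$, whereas $\Psi_1$ sends the two factors to $x\to mx$ and $y\to ny$, which are not composable in $\mathscr{M}\times X$ (and applying $\Psi_1$ to the composite gives $x\to nmx$, which is not the composite of the images). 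Consequently the ``formal'' passage to realisations breaks down: your natural transformation $\Phi_1\Psi_1\Rightarrow\operatorname{id}_{M.X}$ is indeed correct and natural as a transformation of endofunctors of the $1$-category $M.X$ (it is exactly the $n=1$ case of what is needed), but it only controls the realisation of the morphism category, not the realisation of the double category $[M.X\rightrightarrows X]$, which is built from the full (bi)simplicial nerve.

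The repair is to work levelwise on the vertical nerve, which is what the paper does. An $n$-simplex of $N^v_\bullet(\mathscr{M}\ltimes X)$ is a composable chain $(x_0,x_1,m_1,\phi_1),\ldots,(x_{n-1},x_n,m_n,\phi_n)$, and one defines $\Psi_n$ by transporting the whole chain to its initial vertex: the image is $x_0\to m_1x_0\to m_2m_1x_0\to\cdots\to m_n\cdots m_1x_0$. This is not ``apply $\Psi_1$ to each arrow''; the composites of the $\phi_i$'s enter essentially, namely in the counit $2$-cells $(\ast,\phi_i\circ m_i(\ast))$ with $\ast=\phi_{i-1}\circ m_{i-1}(\phi_{i-2}\circ\cdots)$, exhibiting $\Psi_n$ as right adjoint to $\Phi_n$ (unit the identity). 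Since $\Phi_\bullet$ \emph{is} a map of simplicial categories (being induced by the genuine double functor $\Phi$) and is a levelwise homotopy equivalence by these adjunctions, the realisation lemma gives the equivalence on geometric realisations; note that the $\Psi_n$ themselves need not, and do not, assemble into a simplicial map or a double functor, which is precisely the point your argument elides.
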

\begin{proof}
First of all, $\Phi$ induces a morphism of the vertical nerves
\begin{align*}
\Phi_\bullet \colon N_\bullet^v(\mathscr{M}\times X)\rightarrow N_\bullet^v(\mathscr{M}\ltimes X).
\end{align*}
Recall that the vertical nerve of a double category $\mathscr{C}=[C_1\rightrightarrows C_0]$ is a simplicial category $N_\bullet^v(\mathscr{C})$, where $N_n^v(\mathscr{C})$ has as objects sequences of vertical morphisms
\begin{align*}
c_0\xrightarrow{\phi_1} c_1 \xrightarrow{\phi_2} \cdots \xrightarrow{\phi_n} c_n
\end{align*}
and a morphism from $c_0\xrightarrow{\phi_1} \cdots \xrightarrow{\phi_n} c_n$ to $d_0\xrightarrow{\psi_1} \cdots \xrightarrow{\psi_n} d_n$ is a collection of $2$-cells
\begin{align*}
\alpha_i\colon \phi_i\Rightarrow \psi_i
\end{align*}
satisfying $t(\alpha_i)=s(\alpha_{i+1})$ for all $i$ (see also \Cref{appendix nerves and geometric realisations}).

\medskip

We show that for all $n$, the functor $\Phi_n$ admits a right adjoint, which proves the claim in view of the realisation lemma. Consider the functor $\Psi_n\colon N_n^v(\mathscr{M}\ltimes X)\rightarrow N_n^v(\mathscr{M}\times X)$ which sends an object
\begin{align*}
x_0\xrightarrow{(x_0,x_1,m_1,\phi_1)} x_1 \xrightarrow{(x_1,x_2,m_2,\phi_2)} \cdots \xrightarrow{(x_{n-1},x_n,m_n,\phi_n)} x_n
\end{align*}
to
\begin{align*}
x_0\xrightarrow{m_1} m_1x_0 \xrightarrow{m_2} m_2m_1x_0\xrightarrow{m_3} \cdots \xrightarrow{m_n} m_n\cdots m_1x_0
\end{align*}
and a morphism
\begin{align*}
(f_{i-1},f_i,\alpha_i)\colon (x_{i-1},x_i,m_i,\phi_i)\Rightarrow (x'_{i-1},x'_i,m'_i,\phi'_i),\quad i=1,\ldots,n,
\end{align*}
to
\begin{align*}
(\alpha_i,\alpha_{i-1}\cdots \alpha_1f_0)\colon (m_i,m_{i-1}\cdots m_1 x_0)\Rightarrow (m_i',m_{i-1}'\cdots m_1' x_0'),\quad i=1,\ldots,n.
\end{align*}

We claim that $\Psi_n$ is right adjoint to $\Phi_n$. Indeed, the unit transformation is the identity and for the counit transformation $\Phi_n\circ \Psi_n\Rightarrow \operatorname{id}$, we take the morphisms
\begin{align*}
(*,\phi_i\circ m_i(*))\colon (m_{i-1}\cdots m_1x_0, m_i\cdots m_1x_0,m_i,\operatorname{id})\Rightarrow (x_{i-1},x_i,m_i,\phi_i)
\end{align*}
where $*=\phi_{i-1}\circ m_{i-1}(\phi_{i-2}\circ m_{i-2}(\cdots m_3(\phi_2\circ m_2\phi_1))$.
\end{proof} 

\subsection{Enlarging the lax action double category}

Fix $n\geq 0$. We define a double category $\mathscr{B}_n$ and a double functor $\mathscr{M}\ltimes X\rightarrow \mathscr{B}_n$ which induces a homotopy equivalence of geometric realisations. There will be quite a bit of redundant data in our notation, but we keep it in order to make the final comparison clearer.

\begin{construction}
We define a double category $\mathscr{B}_n=[\operatorname{mor}\mathscr{B}_n\rightrightarrows X]$ whose morphism category $\operatorname{mor} \mathscr{B}_n$ is given as follows. The object set is
\begin{align*}
\coprod_{x,y\in \operatorname{ob} X} N_n M(x,y),
\end{align*}
where $M(x,y)$ is the hom-category in $Q(M,X)$ (\Cref{action Q-construction}). In other words, an object in $\operatorname{mor} \mathscr{B}_n$ is a sequence 
\begin{align*}
(x,y,m_0,\phi_0)\xrightarrow{(\operatorname{id}_x,\operatorname{id}_y,\beta_1)}(x,y,m_1,\phi_1) \xrightarrow{(\operatorname{id}_x,\operatorname{id}_y,\beta_2)}\cdots \xrightarrow{(\operatorname{id}_x,\operatorname{id}_y,\beta_n)}(x,y,m_n,\phi_n)
\end{align*}
in $M. X$. The morphisms in $\operatorname{mor} \mathscr{B}_n$ are given by commutative diagrams in $M. X$ as pictured below.
\begin{center}
\begin{tikzpicture}
\matrix (m) [matrix of math nodes,row sep=2em,column sep=3em, text height=1.5ex, text depth=0.25ex]
  {
\scriptstyle(x,y,m_0,\phi_0) & \scriptstyle(x,y,m_1,\phi_1) & \ & \ & \scriptstyle(x,y,m_n,\phi_n) \\
\scriptstyle(x',y',m'_0,\phi'_0)& \scriptstyle(x',y',m'_1,\phi'_1)& \ & \ & \scriptstyle(x',y',m'_n,\phi'_n) \\
  };
   \path[-stealth]
   (m-1-1) edge node[above]{$\scriptstyle(\operatorname{id}_x,\operatorname{id}_y,\beta_1)$} (m-1-2) 
   (m-1-2) edge node[above]{$\scriptstyle(\operatorname{id}_x,\operatorname{id}_y,\beta_2)$} (m-1-3)
   (m-1-4) edge node[above]{$\scriptstyle(\operatorname{id}_x,\operatorname{id}_y,\beta_n)$} (m-1-5)
   (m-2-1) edge node[below]{$\scriptstyle(\operatorname{id}_{x'},\operatorname{id}_{y'},\beta'_1)$} (m-2-2) 
   (m-2-2) edge node[below]{$\scriptstyle(\operatorname{id}_{x'},\operatorname{id}_{y'},\beta'_2)$} (m-2-3)
   (m-2-4) edge node[below]{$\scriptstyle(\operatorname{id}_{x'},\operatorname{id}_{y'},\beta'_n)$} (m-2-5)
   (m-1-1) edge node[left]{$\scriptstyle(f,g,\alpha_0)$} (m-2-1)
   (m-1-2) edge node[left]{$\scriptstyle(f,g,\alpha_1)$} (m-2-2)
   (m-1-5) edge node[left]{$\scriptstyle(f,g,\alpha_n)$} (m-2-5)
   ;
	\path[dotted]
		(m-1-3) edge (m-1-4)
		(m-2-3) edge (m-2-4)	
	;
\end{tikzpicture}
\end{center}
Composition is given by composition of the $(f,g,\alpha_i)$ in $M. X$.

\medskip

To ease notation, we denote objects by
\begin{align*}
\bigg[(x,y,m_{i-1},\phi_{i-1})\xrightarrow{\ (\operatorname{id}_x,\operatorname{id}_y,\beta_i)\ } (x,y,m_i, \phi_i)\bigg]_{1\leq i\leq n}
\end{align*}
or simply $(\operatorname{id}_x,\operatorname{id}_y, \beta_i)_{1\leq i\leq n}$ if the $m_i$ and $\phi_i$ are implicit. A morphism will be denoted by
\begin{center}
\[\left[
\begin{tikzpicture}[baseline=(current bounding box.center)]
\matrix (m) [matrix of math nodes,row sep=2em,column sep=4em, text height=1ex, text depth=0.25ex]
  {
(x,y,m_{i-1},\phi_{i-1}) & (x,y,m_i,\phi_i) \\
(x',y',m'_{i-1},\phi'_{i-1})& (x',y',m'_i,\phi'_i)\\
  };
   \path[-stealth]
   (m-1-1) edge node[above]{$\scriptstyle(\operatorname{id}_x,\operatorname{id}_y,\beta_i)$} (m-1-2) 
   (m-2-1) edge node[below]{$\scriptstyle(\operatorname{id}_{x'},\operatorname{id}_{y'},\beta'_i)$} (m-2-2) 
   (m-1-1) edge node[left]{$\scriptstyle(f,g,\alpha_{i-1})$} (m-2-1)
   (m-1-2) edge node[right]{$\scriptstyle(f,g,\alpha_i)$} (m-2-2)
;
\end{tikzpicture}
\right]_{1\leq i\leq n}\]
\end{center}
or simply by $(f,g,\alpha_i)_{0\leq i\leq n}$ if the source and target are implicit.

\medskip

The double category $\mathscr{B}_n=[\operatorname{mor}\mathscr{B}_n\rightrightarrows X]$ has the following structure maps: the source and target maps are the obvious projections
\begin{align*}
s(\operatorname{id}_x,\operatorname{id}_y,\beta_i)=x,\quad s(f,g,\alpha_i)=f\quad \text{and}\quad t(\operatorname{id}_x,\operatorname{id}_y,\beta_i)=y, \quad t(f,g,\alpha_i)=g,
\end{align*}
the identity section sends an object $x$ in $X$ to the sequence
\begin{align*}
\bigg[(x,x,e,\operatorname{id}_x)\xlongequal{(\operatorname{id}_x,\operatorname{id}_x,\operatorname{id}_e)}(x,x,e,\operatorname{id}_x)\bigg]_{1\leq i\leq n}
\end{align*}
and it sends a morphism $h\colon x\rightarrow y$ to the morphism $(h,h,\operatorname{id}_e)_{0\leq i\leq n}$.

\medskip

Vertical composition is given by vertical composition in $\mathscr{M}\ltimes X$. More precisely, the vertical composite of
\begin{align*}
\bigg[(x,y,m_{i-1},\phi_{i-1})\xrightarrow{\ (\operatorname{id}_x,\operatorname{id}_y,\beta_i)\ } (x,y,m_i, \phi_i)\bigg]_{1\leq i\leq n}
\end{align*}
and
\begin{align*}
\bigg[(y,z,m_{i-1}',\phi_{i-1}')\xrightarrow{\ (\operatorname{id}_y,\operatorname{id}_z,\beta_i')\ } (y,z,m_i', \phi_i')\bigg]_{1\leq i\leq n}
\end{align*}
is the sequence
\begin{align*}
\bigg[(x,z,m_{i-1}'m_{i-1},&\phi_{i-1}'\circ m_{i-1}'\phi_{i-1})\xrightarrow{(\operatorname{id}_x,\operatorname{id}_z,\beta'_i\beta_i)}(x,z,m_i'm_i,\phi_i'\circ m_i'\phi_i)\bigg]_{1\leq i\leq n}.
\end{align*}

The vertical composite of $2$-cells is given by
\begin{align*}
(f,g,\alpha_i)_{0\leq i\leq n}\circ_v(g,h,\alpha_i')_{0\leq i\leq n}=(f,h,\alpha_i'\alpha_i)_{0\leq i\leq n}
\end{align*}
for composable $2$-cells.
\exend
\end{construction}

We define a functor $\iota_n\colon M.X\rightarrow \operatorname{mor} \mathscr{B}_n$ sending an object $(x,y,m,\phi)$ of $M.X$ to the sequence
\begin{align}\label{spanning objects}
\bigg[(x,y,m,\phi)\xlongequal{(\operatorname{id}_x,\operatorname{id}_y,\operatorname{id}_m)}(x,y,m,\phi)\bigg]_{1\leq i\leq n},
\end{align}
and a morphism $(f,g,\alpha)\colon (x,y,m,\phi)\rightarrow (x',y',m',\phi')$ to $(f,g, \alpha)_{0\leq i\leq n}$. The functor $\iota_n$ can be thought of as inclusion at zero and it identifies $M.X$ with the full subcategory of $\operatorname{mor} \mathscr{B}_n$ on the objects of the form (\ref{spanning objects}). In fact, if $n=0$, this is an equality $M.X=\operatorname{mor} \mathscr{B}_0$.

\medskip

Consider the double functor $I_n\colon \mathscr{M}\ltimes X\hookrightarrow \mathscr{B}_n$ which restricts to the identity on object categories and is given by the embedding $\iota_n\colon M.X\hookrightarrow \operatorname{mor}\mathscr{B}_n$ on morphism categories.

\medskip

The proof of the following lemma resembles the proof of Waldhausen's swallowing lemma (\cite[Lemma 1.6.5]{Waldhausen}). We show that $\mathscr{B}_n$ retracts to $\mathscr{M}\ltimes X$.

\begin{lemma}\label{iota_n induces homotopy equivalence}
The double functor $I_n\colon \mathscr{M}\ltimes X\hookrightarrow \mathscr{B}_n$ induces a homotopy equivalence of geometric realisations.
\end{lemma}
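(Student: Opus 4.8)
The plan is to follow the template of the proof of \Cref{Phi induces homotopy equivalence}: pass to vertical nerves and exhibit, at each simplicial level, a retraction which is adjoint to the inclusion. So the first step is to apply the vertical nerve functor. The double functor $I_n$ induces a morphism of simplicial categories $N^v_\bullet(I_n)\colon N^v_\bullet(\mathscr{M}\ltimes X)\to N^v_\bullet(\mathscr{B}_n)$, and since the geometric realisation of a double category is by definition the realisation of its vertical nerve (see \Cref{appendix nerves and geometric realisations}), the realisation lemma reduces us to showing that $N^v_m(I_n)$ induces a homotopy equivalence on classifying spaces for every $m\geq 0$.

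Next I would write down the retraction ``to zero''. Define a double functor $R_n\colon\mathscr{B}_n\to\mathscr{M}\ltimes X$ which is the identity on the object category $X$ and which on morphism categories sends a sequence
\[
\bigl[(x,y,m_{i-1},\phi_{i-1})\xrightarrow{(\operatorname{id}_x,\operatorname{id}_y,\beta_i)}(x,y,m_i,\phi_i)\bigr]_{1\le i\le n}
\]
to its initial term $(x,y,m_0,\phi_0)$ in $M.X$, and a morphism $(f,g,\alpha_i)_{0\le i\le n}$ to $(f,g,\alpha_0)$. Unwinding the structure of $\mathscr{B}_n$, the $i=0$ components of the source, target, identity section, and vertical composition agree on the nose with those of $\mathscr{M}\ltimes X$, so $R_n$ is a well-defined double functor with $R_n\circ I_n=\operatorname{id}$; hence $N^v_m(R_n)\circ N^v_m(I_n)=\operatorname{id}$ for all $m$, and the unit of the hoped-for adjunction is the identity.

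For the counit, given a vertical morphism $\xi=\bigl[(x,y,m_{i-1},\phi_{i-1})\xrightarrow{(\operatorname{id}_x,\operatorname{id}_y,\beta_i)}(x,y,m_i,\phi_i)\bigr]_{1\le i\le n}$ of $\mathscr{B}_n$, I would set $\alpha_0:=\operatorname{id}_{m_0}$ and $\alpha_i:=\beta_i\beta_{i-1}\cdots\beta_1\colon m_0\to m_i$. The defining compatibility of each $\beta_i$ as a morphism of $M.X$ reads $\phi_i\circ(\beta_i x)=\phi_{i-1}$, so $\phi_i\circ(\alpha_i x)=\phi_0$ follows by induction; this is precisely the condition for $(\operatorname{id}_x,\operatorname{id}_y,\alpha_i)$ to be a morphism $(x,y,m_0,\phi_0)\to(x,y,m_i,\phi_i)$ in $M.X$, and the relevant squares in $\operatorname{mor}\mathscr{B}_n$ commute because all the $\beta$'s do. Thus $\varepsilon_\xi:=(\operatorname{id}_x,\operatorname{id}_y,\alpha_i)_{0\le i\le n}$ is a $2$-cell $I_nR_n(\xi)\Rightarrow\xi$ of $\mathscr{B}_n$ with identity horizontal source and target; using functoriality of the monoidal product of $M$ one checks that the $\varepsilon_\xi$ are natural and compatible with vertical composition and the identity section, i.e.\ form a vertical transformation $I_nR_n\Rightarrow\operatorname{id}_{\mathscr{B}_n}$, which therefore induces natural transformations $N^v_m(I_n)\circ N^v_m(R_n)\Rightarrow\operatorname{id}$ for all $m$. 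Since $\varepsilon_{I_n(-)}$ and $R_n(\varepsilon_{(-)})$ are identities, the triangle identities hold, so $N^v_m(I_n)\dashv N^v_m(R_n)$; an adjunction induces a homotopy equivalence on classifying spaces (a natural transformation yields a homotopy on realisations, cf.\ \Cref{propertiesinvert}), and the realisation lemma then finishes the proof.

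The only real obstacle is the bookkeeping: the definitions of $\mathscr{B}_n$, of $M.X$, and of the vertical nerve carry a great deal of redundant decoration, and the actual work is to verify that $R_n$ and $\varepsilon$ are honest double-categorical data --- in particular that $\varepsilon$ is a \emph{vertical} transformation, so that it descends to a natural transformation on each level $N^v_m$ rather than only on objects. This is the step that, exactly as in the proof of Waldhausen's swallowing lemma, demands care; but no idea beyond inserting the composite $\beta_i\cdots\beta_1$ is needed.
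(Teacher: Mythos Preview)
Your proof is correct and follows essentially the same approach as the paper's: define the retraction-to-zero $R_n$ (the paper's $\rho_n$) on morphism categories, observe $R_n\circ I_n=\operatorname{id}$, and build the counit $I_nR_n\Rightarrow\operatorname{id}$ from the composites $\alpha_i=\beta_i\cdots\beta_1$, then conclude via the levelwise adjunction on vertical nerves. The paper phrases this slightly more tersely by noting that $\rho_n$ and the counit ``sit above the identity on $X$'' and therefore pass to adjunctions on each $N^v_m$, but the content is identical.
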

\begin{proof}
Define a functor $\rho_n\colon \operatorname{mor}\mathscr{B}_n\rightarrow M.X$ given by retraction to zero, that is, an object
\begin{align*}
\bigg[(x,y,m_{i-1},\phi_{i-1})\xrightarrow{(\operatorname{id}_x,\operatorname{id}_y,\beta_i)}(x,y,m_i,\phi_i) \bigg]_{1\leq i\leq n}
\end{align*}
is mapped to $(x,y,m_0,\phi_0)$, and a morphism
\vspace*{-1.5em}
\begin{center}
\[\left[
\begin{tikzpicture}[baseline=(current bounding box.center)]
\matrix (m) [matrix of math nodes,row sep=2em,column sep=4em, text height=1ex, text depth=0.25ex]
  {
(x,y,m_{i-1},\phi_{i-1}) & (x,y,m_i,\phi_i) \\
(x',y',m'_{i-1},\phi'_{i-1})& (x',y',m'_i,\phi'_i)\\
  };
   \path[-stealth]
   (m-1-1) edge node[above]{$\scriptstyle(\operatorname{id}_x,\operatorname{id}_y,\beta_i)$} (m-1-2) 
   (m-2-1) edge node[below]{$\scriptstyle(\operatorname{id}_{x'},\operatorname{id}_{y'},\beta'_i)$} (m-2-2) 
   (m-1-1) edge node[left]{$\scriptstyle(f,g,\alpha_{i-1})$} (m-2-1)
   (m-1-2) edge node[right]{$\scriptstyle(f,g,\alpha_i)$} (m-2-2)
;
\end{tikzpicture}
\right]_{1\leq i\leq n}\]
\end{center}
is mapped to $(x,y,m_0,\phi_0) \xrightarrow{(f,g,\alpha_0)} (x',y',m'_0,\phi'_0)$.

\medskip

The composite $\rho_n\circ \iota_n$ is the identity and the morphisms
\vspace*{-1.5em}
\begin{center}
\[\left[
\begin{tikzpicture}[baseline=(current bounding box.center)]
\matrix (m) [matrix of math nodes,row sep=2em,column sep=4em, text height=1ex, text depth=0.25ex]
  {
(x,y,m_0,\phi_0) & (x,y,m_0,\phi_0) \\
(x,y,m_{i-1},\phi_{i-1})& (x,y,m_i,\phi_i)\\
  };
  \path[-]
(m-1-1) edge[double equal sign distance] node[above]{$\scriptstyle(\operatorname{id}_x,\operatorname{id}_y,\operatorname{id}_{m_0})$} (m-1-2)   
  ;
   \path[-stealth]
   (m-2-1) edge node[below]{$\scriptstyle(\operatorname{id}_x,\operatorname{id}_y,\beta_i)$} (m-2-2) 
   (m-1-1) edge node[left]{$\scriptstyle(\operatorname{id}_x,\operatorname{id}_y,\beta_{i-1}\circ \cdots\circ \beta_1)$} (m-2-1)
   (m-1-2) edge node[right]{$\scriptstyle(\operatorname{id}_x,\operatorname{id}_y,\beta_i\circ \cdots \circ \beta_1)$} (m-2-2)
;
\end{tikzpicture}
\right]_{1\leq i\leq n}\]
\end{center}
define a counit transformation $\iota_n\circ \rho_n\Rightarrow \operatorname{id}$. The functor $\rho_n$ and the unit and counit transformations sit above the identity on $X$, and it follows that $\rho_n$ induces a morphism of vertical nerves which at each simplicial level is right adjoint to the morphism induced by $\iota_n$. This proves the claim.
\end{proof}

\subsection{Enlarging the action \texorpdfstring{$2$}{2}-category}

Fix $n\geq 0$. Consider the vertical $2$-category $Q(M,X)$ of the double category $\mathscr{M}\ltimes X=[M.X\rightrightarrows X]$ (\Cref{action Q-construction}). We define a $2$-category $\mathscr{A}_n$ and a pseudofunctor $Q(M,X)\rightarrow \mathscr{A}_n$ which induces a homotopy equivalence of geometric realisations. As with the double category $\mathscr{B}_n$ in the previous section, there will be some redundant data in the notation, but this should make the final comparison somewhat clearer, and the reader might already notice parallels with the $\mathscr{B}_n$ construction.

\begin{construction}\label{lax action 2-category}
The $2$-category $\mathscr{A}_n$ is given as follows. The object set is $N_n(X)$ and the morphisms are elements in $N_n(M.X)$ with source and target maps inherited from $\mathscr{M}\ltimes X$. In other words, the objects are sequences $x_0\rightarrow x_1 \rightarrow \cdots \rightarrow x_n$ in $X$, and a morphism
\begin{align*}
(x_0\xrightarrow{f_1}x_1\xrightarrow{f_2}\cdots \xrightarrow{f_n}x_n)\ \longrightarrow \ (y_0\xrightarrow{g_1}y_1\xrightarrow{g_2}\cdots \xrightarrow{g_n}y_n)
\end{align*}
is a sequence
\begin{align*}
(x_0,y_0,m_0,\phi_0)\xrightarrow{(f_1,g_1,\alpha_1)}(x_1,y_1,m_1,\phi_1)
\xrightarrow {(f_2,g_2,\alpha_2)}\cdots \xrightarrow{(f_n,g_n,\alpha_n)}(x_n,y_n,m_n,\phi_n)
\end{align*}
in $M.X$. Finally, the $2$-cells are given by commutative diagrams in $M.X$ as pictured below.
\begin{center}
\begin{tikzpicture}
\matrix (m) [matrix of math nodes,row sep=2em,column sep=3em, text height=1.5ex, text depth=0.25ex]
  {
\scriptstyle(x_0,y_0,m_0,\phi_0) & \scriptstyle(x_1,y_1,m_1,\phi_1) & \ & \ & \scriptstyle(x_n,y_n,m_n,\phi_n) \\
\scriptstyle(x_0,y_0,m'_0,\phi'_0)& \scriptstyle(x_1,y_1,m'_1,\phi'_1)& \ & \ & \scriptstyle(x_n,y_n,m'_n,\phi'_n) \\
  };
   \path[-stealth]
   (m-1-1) edge node[above]{$\scriptstyle(f_1,g_1,\alpha_1)$} (m-1-2) 
   (m-1-2) edge node[above]{$\scriptstyle(f_2,g_2,\alpha_2)$} (m-1-3)
   (m-1-4) edge node[above]{$\scriptstyle(f_n,g_n,\alpha_n)$} (m-1-5)
   (m-2-1) edge node[below]{$\scriptstyle(f_1,g_1,\alpha'_1)$} (m-2-2) 
   (m-2-2) edge node[below]{$\scriptstyle(f_2,g_2,\alpha'_2)$} (m-2-3)
   (m-2-4) edge node[below]{$\scriptstyle(f_n,g_n,\alpha'_n)$} (m-2-5)
   (m-1-1) edge node[left]{$\scriptstyle(\operatorname{id}_{x_0},\operatorname{id}_{y_0},\beta_0)$} (m-2-1)
   (m-1-2) edge node[left]{$\scriptstyle(\operatorname{id}_{x_1},\operatorname{id}_{y_1},\beta_1)$} (m-2-2)
   (m-1-5) edge node[left]{$\scriptstyle(\operatorname{id}_{x_n},\operatorname{id}_{y_n},\beta_n)$} (m-2-5)
   ;
	\path[dotted]
		(m-1-3) edge (m-1-4)
		(m-2-3) edge (m-2-4)	
	;
\end{tikzpicture}
\end{center}

To ease notation, we will denote a morphism by 
\begin{align*}
\bigg[ (x_{i-1},y_{i-1},m_{i-1},\phi_{i-1})\xrightarrow{(f_i,g_i,\alpha_i)} (x_i,y_i,m_i,\phi_i)\bigg]_{1\leq i\leq n}
\end{align*}
or simply $(f_i,g_i,\alpha_i)_{1\leq i\leq n}$ if the objects are implicit. Similarly, we denote a $2$-cell by
\vspace*{-1.5em}
\begin{center}
\[\left[
\begin{tikzpicture}[baseline=(current bounding box.center)]
\matrix (m) [matrix of math nodes,row sep=2em,column sep=4em, text height=1ex, text depth=0.25ex]
  {
(x_{i-1},y_{i-1},m_{i-1},\phi_{i-1}) & (x_i,y_i,m_i,\phi_i) \\
(x_{i-1},y_{i-1},m_{i-1}',\phi_{i-1}') & (x_i,y_i,m_i',\phi_i')\\
  };
  \path[-stealth]
	(m-1-1) edge node[above]{$\scriptstyle(f_i,g_i,\alpha_i)$} (m-1-2)   
   (m-2-1) edge node[below]{$\scriptstyle(f_i,g_i,\alpha'_i)$} (m-2-2) 
   (m-1-1) edge node[left]{$\scriptstyle(\operatorname{id}_{x_{i-1}},\operatorname{id}_{y_{i-1}},\beta_{i-1})$} (m-2-1)
   (m-1-2) edge node[right]{$\scriptstyle(\operatorname{id}_{x_i},\operatorname{id}_{y_i},\beta_i)$} (m-2-2)
;
\end{tikzpicture}
\right]_{1\leq i\leq n}\]
\end{center}
or simply by $(\operatorname{id}_{x_i}, \operatorname{id}_{y_i},\beta_i)_{0\leq i\leq n}$ if the objects and morphisms are implicit.

\medskip

Composition of morphisms is given by vertical composition in $\mathscr{M}\ltimes X$, that is, the composite of
\begin{align*}
\bigg[ (x_{i-1},y_{i-1},m_{i-1},\phi_{i-1})\xrightarrow{(f_i,g_i,\alpha_i)} (x_i,y_i,m_i,\phi_i)\bigg]_{1\leq i\leq n}
\end{align*}
and
\begin{align*}
\bigg[ (y_{i-1},z_{i-1},m'_{i-1},\phi'_{i-1})\xrightarrow{(g_i,h_i,\alpha'_i)} (y_i,z_i,m'_i,\phi'_i)\bigg]_{1\leq i\leq n}
\end{align*}
is the sequence
\begin{align*}
\bigg[(x_{i-1},z_{i-1},m'_{i-1}m_{i-1},&\phi'_{i-1}\circ m'_{i-1}\phi_{i-1})\xrightarrow{(f_i,h_i,\alpha'_i\alpha_i)}(x_i,z_i,m'_im_i,\phi'_i\circ m'_i\phi_i)\bigg]_{1\leq i\leq n}.
\end{align*}

Composition of the $2$-cells along morphisms (within the hom-categories) is given by horizontal composition in $\mathscr{M}\ltimes X$, that is, by composition of morphisms in $M$:
\begin{align*}
(\operatorname{id}_{x_i},\operatorname{id}_{y_i},\beta_i')_{0\leq i\leq n}\circ_h(\operatorname{id}_{x_i},\operatorname{id}_{y_i},\beta_i)_{0\leq i\leq n}=(\operatorname{id}_{x_i},\operatorname{id}_{y_i},\beta_i'\circ\beta_i)_{0\leq i\leq n}
\end{align*}
for composable morphisms.

\medskip

Composition of the $2$-cells along objects is given by vertical composition in $\mathscr{M}\ltimes X$, that is, by the product in $M$:
\begin{align*}
(\operatorname{id}_{y_i},\operatorname{id}_{z_i},\beta_i')_{0\leq i\leq n} \circ_v (\operatorname{id}_{x_i},\operatorname{id}_{y_i},\beta_i)_{0\leq i\leq n}=(\operatorname{id}_{x_i},\operatorname{id}_{z_i},\beta_i'\beta_i)_{0\leq i\leq n}
\end{align*}
for composable morphisms.
\exend
\end{construction}

Consider the (strict) pseudofunctor $\Upsilon_n\colon Q(M,X)\rightarrow \mathscr{A}_n$ given by inclusion at zero; that is, it sends an object $x$ to the sequence $x=x=\cdots=x$, a morphism $(x,y,m,\phi)$ to the sequence
\begin{align*}
\bigg[(x,y,m,\phi)\xlongequal{(\operatorname{id}_x,\operatorname{id}_y,\operatorname{id}_m)}(x,y,m,\phi)\bigg]_{1\leq i\leq n}
\end{align*}
and a 2-cell $(\operatorname{id}_x,\operatorname{id}_y,\beta)\colon (x,y,m,\phi)\rightarrow (x,y,m',\phi')$ to the $2$-cell $(\operatorname{id}_x,\operatorname{id}_y,\beta)_{0\leq i\leq n}$. If $n=0$, this is an equality $Q(M,X)=\mathscr{A}_0$.

\medskip

As was the case for \Cref{Phi induces homotopy equivalence}, the proof of the following lemma resembles the proof of Waldhausen's swallowing lemma (\cite[Lemma 1.6.5]{Waldhausen}). We show that $\mathscr{A}_n$ retracts onto $Q(M,X)$.

\begin{lemma}\label{I_n induces homotopy equivalence}
The pseudofunctor $\Upsilon_n\colon Q(M,X)\rightarrow \mathscr{A}_n$ admits a right $2$-adjoint. In particular, it induces a homotopy equivalence of geometric realisations.
\end{lemma}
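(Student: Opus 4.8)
The strategy is to imitate the proofs of \Cref{Phi induces homotopy equivalence} and \Cref{iota_n induces homotopy equivalence}: produce an explicit ``retraction to zero'' functor $\Theta_n\colon \mathscr{A}_n\rightarrow Q(M,X)$ which is right $2$-adjoint to $\Upsilon_n$, and then invoke the general principle that a strict $2$-adjunction induces a homotopy equivalence on geometric realisations. Concretely, this last step rests on the fact that a $2$-natural transformation between $2$-functors $\mathscr{C}\rightarrow\mathscr{D}$ is classified by a $2$-functor out of a cylinder $\mathscr{C}\times[1]$ (with $[1]$ the arrow category regarded as a $2$-category with only identity $2$-cells), hence induces a homotopy between the two maps $|\mathscr{C}|\rightarrow|\mathscr{D}|$ after passing to nerves and geometric realisation; see \Cref{appendix nerves and geometric realisations}.

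First I would define $\Theta_n$ by evaluating at the $0$th vertex. It sends an object $x_0\rightarrow\cdots\rightarrow x_n$ of $\mathscr{A}_n$ to $x_0$, a morphism $\bigl[(x_{i-1},y_{i-1},m_{i-1},\phi_{i-1})\rightarrow(x_i,y_i,m_i,\phi_i)\bigr]_{1\le i\le n}$ to $(x_0,y_0,m_0,\phi_0)$, and a $2$-cell $(\operatorname{id}_{x_i},\operatorname{id}_{y_i},\beta_i)_{0\le i\le n}$ to $(\operatorname{id}_{x_0},\operatorname{id}_{y_0},\beta_0)$. Since composition of morphisms in $\mathscr{A}_n$, and both compositions of $2$-cells, are computed componentwise using the monoidal product of $M$ and composition in $M$ (see \Cref{lax action 2-category}), $\Theta_n$ is a strict $2$-functor, and by inspection $\Theta_n\circ\Upsilon_n=\operatorname{id}_{Q(M,X)}$; so I take the unit of the adjunction to be the identity.

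Next I would construct the counit $\varepsilon\colon \Upsilon_n\circ\Theta_n\Rightarrow\operatorname{id}_{\mathscr{A}_n}$. For an object $x_\bullet=(x_0\xrightarrow{u_1}x_1\xrightarrow{u_2}\cdots\xrightarrow{u_n}x_n)$ the component $\varepsilon_{x_\bullet}$ is the morphism of $\mathscr{A}_n$ from the constant sequence on $x_0$ to $x_\bullet$ whose $i$th object is $(x_0,x_i,e,\,u_i\circ\cdots\circ u_1)$ --- interpreting the empty composite as $\operatorname{id}_{x_0}$ and using that $e$ acts as the identity on $X$ --- and whose $i$th structure morphism is $(\operatorname{id}_{x_0},u_i,\operatorname{id}_e)$; the square defining a morphism of $M.X$ commutes here precisely because $u_i\circ\cdots\circ u_1=u_i\circ(u_{i-1}\circ\cdots\circ u_1)$. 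I would then check that $\varepsilon$ is $2$-natural and that the triangle identities hold: both reduce, after unwinding the componentwise formulas for composition, to strict associativity and unitality of composition in $X$ and to strictness of the unit object $e$ of $M$ (one triangle identity is automatic from the identity unit, while the other says that $\Theta_n$ applied to $\varepsilon_{x_\bullet}$ is the identity of $x_0$, which is clear from the $i=0$ term). This exhibits $\Upsilon_n\dashv\Theta_n$ as a strict $2$-adjunction.

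With the $2$-adjunction in hand, $\Theta_n\circ\Upsilon_n=\operatorname{id}$ on the nose and $\varepsilon$ gives a $2$-natural transformation $\Upsilon_n\circ\Theta_n\Rightarrow\operatorname{id}_{\mathscr{A}_n}$, so $|\Upsilon_n|$ and $|\Theta_n|$ are mutually inverse homotopy equivalences; in particular $\Upsilon_n$ induces a homotopy equivalence on geometric realisations. The main obstacle I expect is not conceptual but clerical: verifying $2$-naturality of $\varepsilon$ and the triangle identities against the heavy explicit descriptions of the morphisms and $2$-cells of $\mathscr{A}_n$, and pinning down the precise sense in which a strict $2$-adjunction descends to a homotopy equivalence of realisations under the nerve conventions of \Cref{appendix nerves and geometric realisations} --- the same careful-but-routine bookkeeping already carried out for $\mathscr{B}_n$ in \Cref{iota_n induces homotopy equivalence}.
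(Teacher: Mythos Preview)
Your overall strategy and the construction of the retraction $\Theta_n$ and the counit components $\varepsilon_{x_\bullet}$ are exactly what the paper does (the paper calls them $R_n$ and $\epsilon_{f_i}$). However, there is a genuine gap: your claim that $\varepsilon$ is strictly $2$-natural is false, and the check you propose would fail.

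To see this, take a morphism $F\colon x_\bullet\rightarrow y_\bullet$ in $\mathscr{A}_n$ with $i$th object $(x_i,y_i,m_i,\phi_i)$ and structure maps involving $\alpha_i\colon m_{i-1}\rightarrow m_i$. Composing in $\mathscr{A}_n$ (which is vertical composition in $\mathscr{M}\ltimes X$), the $i$th object of $\varepsilon_{y_\bullet}\circ(\Upsilon_n\Theta_n)(F)$ is
\[
\bigl(x_0,\,y_i,\,m_0,\;g_i\circ\cdots\circ g_1\circ\phi_0\bigr),
\]
whereas the $i$th object of $F\circ\varepsilon_{x_\bullet}$ is
\[
\bigl(x_0,\,y_i,\,m_i,\;\phi_i\circ m_i(u_i\circ\cdots\circ u_1)\bigr).
\]
These differ because $(\Upsilon_n\Theta_n)(F)$ is constant at $m_0$, while $F$ carries $m_i$; in general $m_0\neq m_i$, so the two morphisms are not equal and strict naturality cannot hold. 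What does hold is an \emph{oplax} natural transformation: for each $F$ there is a $2$-cell $A_F\colon \varepsilon_{y_\bullet}\circ(\Upsilon_n\Theta_n)(F)\Rightarrow F\circ\varepsilon_{x_\bullet}$ whose component at position $i$ uses the composite $\alpha_i\circ\cdots\circ\alpha_1\colon m_0\rightarrow m_i$. This is precisely the additional data the paper supplies, and one must then check that these $A_F$ respect identities and composition and that the triangle identities hold with this oplax structure. The appendix (\Cref{appendix nerves and geometric realisations}) records that oplax natural transformations still induce homotopies on realisations, so the conclusion goes through --- but only after you weaken ``$2$-natural'' to ``oplax natural'' and provide the $2$-cells $A_F$.
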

\begin{proof}
Consider the (strict) pseudofunctor $R_n\colon \mathscr{A}_n\rightarrow Q(M,X)$ which sends an object
\begin{align*}
x_0\rightarrow x_1\rightarrow \cdots \rightarrow x_n
\end{align*}
to $x_0$, a morphism
\begin{align*}
(x_0,y_0,m_0,\phi_0)\rightarrow (x_1,y_1,m_1,\phi_1)\rightarrow\, \cdots\, \rightarrow (x_n,y_n,m_n,\phi_n)
\end{align*}
to $(x_0,y_0,m_0,\phi_0)$ and a 2-cell $(\operatorname{id}_{x_i},\operatorname{id}_{y_i}, \beta_i)_{0\leq i\leq n}$ to
\begin{align*}
(\operatorname{id}_{x_0},\operatorname{id}_{y_0},\beta_0)\colon (x_0,y_0,m_0,\phi_0)\rightarrow (x_0,y_0,m_0',\phi_0').
\end{align*}

The composite $R_n\circ \Upsilon_n$ is equal to the identity, and for the other composite, we define an oplax natural transformation $\epsilon\colon \Upsilon_n\circ R_n\Rightarrow \operatorname{id}$. Given an object $x_0\xrightarrow{f_1}x_1\xrightarrow{f_2}\cdots \xrightarrow{f_n}x_n$, consider the morphism
\begin{align*}
\epsilon_{f_i}\colon (x_0=x_0=\cdots =x_0) \ \longrightarrow \ (x_0\xrightarrow{f_1}x_1\xrightarrow{f_2}\cdots \xrightarrow{f_n}x_n)
\end{align*}
in $\mathscr{A}_n$ given by the sequence
\begin{align*}
\bigg[
(x_0,x_{i-1},e,f_{i-1}\circ \cdots \circ f_1)&\xrightarrow{(\operatorname{id}_{x_0},f_i,\operatorname{id}_e)}(x_0,x_i,e,f_i\circ \cdots \circ f_1)
\bigg]_{1\leq i\leq n}.
\end{align*}

For a morphism $F\colon (x_0\xrightarrow{f_1}x_1\xrightarrow{f_2}\cdots \xrightarrow{f_n}x_n)\longrightarrow (y_0\xrightarrow{g_1}y_1\xrightarrow{g_2}\cdots \xrightarrow{g_n}y_n)$ given by a sequence
\begin{align*}
\bigg[(x_{i-1},y_{i-1},m_{i-1},\phi_{i-1})\xrightarrow{(f_i,g_i,\alpha_i)}(x_i,y_i,m_i,\phi_i)
\bigg]_{1\leq i\leq n},
\end{align*}
consider the $2$-cell $A_F\colon \epsilon_{g_i}\circ (\Upsilon_n\circ R_n )(F)\rightarrow F\circ \epsilon_{f_i}$ given by
\vspace*{-1.5em}
\begin{center}
\[\left[\!\!
\begin{tikzpicture}[baseline=(current bounding box.center)]
\matrix (m) [matrix of math nodes,row sep=2em,column sep=3em, text height=1ex, text depth=0.25ex]
  {
\scriptstyle(x_0,y_{i-1},m_0,g_{i-1}\circ \cdots \circ g_1\circ \phi_0) & \scriptstyle(x_0,y_i,m_0,g_i\circ \cdots \circ g_1\circ\phi_0) \\
\scriptstyle(x_0,y_{i-1},m_{i-1},\phi_{i-1}\circ m_{i-1}(f_{i-1}\circ \cdots \circ f_1)) & \scriptstyle(x_0,y_i,m_i,\phi_i\circ m_i(f_i\circ \cdots \circ f_1))\\
  };
  \path[-stealth]
	(m-1-1) edge node[above]{$\scriptstyle(\operatorname{id}_{x_0},g_i,\operatorname{id}_{m_0})$} (m-1-2)   
   (m-2-1) edge node[below]{$\scriptstyle(\operatorname{id}_{x_0},g_i,\alpha_i)$} (m-2-2) 
   (m-1-1) edge node[left]{$\scriptstyle(\operatorname{id}_{x_0},\operatorname{id}_{y_{i-1}},\alpha_{i-1})$} (m-2-1)
   (m-1-2) edge node[right]{$\scriptstyle(\operatorname{id}_{x_0},\operatorname{id}_{y_i},\alpha_i)$} (m-2-2)
;
\end{tikzpicture}
\!\!\right]_{1\leq i\leq n}\]
\end{center}

The $2$-cells $A_F$ assemble to define natural transformations $(\epsilon_{g_i})_*\circ (\Upsilon_n\circ R_n)\Rightarrow (\epsilon_{f_i})^*$, and they respect identities and composition. Hence, we have an oplax natural transformation $\epsilon\colon \Upsilon_n\circ R_n\Rightarrow \operatorname{id}$. One can check that the triangle identities are satisfied.
\end{proof}

\subsection{Comparing geometric realisations}

Consider the double category $\mathscr{M}\ltimes X$, its vertical $2$-category $Q(M,X)$ and for all $n\geq 0$, the $2$-category $\mathscr{A}_n$ and the double category $\mathscr{B}_n$ as constructed in the previous sections. For the remainder of this section, we consider $\mathscr{A}_n$ as a double category with only identity horizontal morphisms.

\begin{construction}\label{simplicial double categories}
We can define two simplicial double categories $\mathscr{A}_\bullet$ and $\mathscr{B}_\bullet$ by defining the simplicial structure maps as below --- they are defined in the ``obvious'' way, but to be precise we write them out.

\medskip

For $\theta\colon [k]\rightarrow [n]$ in $\Delta$, the structure map $\theta^*\colon \mathscr{A}_n\rightarrow \mathscr{A}_k$ is given by the usual structure map $\theta^*\colon N_n(X)\rightarrow N_k(X)$ on object categories (recall that we interpret it as a double category with discrete object category), and on the morphism categories it is given on objects by the usual structure map
\begin{align*}
\theta^*\colon N_n(M. X)\rightarrow N_k(M. X)
\end{align*}
and on morphisms by removing or repeating the $\beta_i$'s accordingly.

\medskip

The structure map $\theta^*\colon \mathscr{B}_n\rightarrow \mathscr{B}_k$ is given by the identity $X\rightarrow X$ on object categories and on morphism categories by the usual structure map
\begin{align*}
\theta^*\colon \coprod_{x,y}N_n(M(x,y))\rightarrow \coprod_{x,y}N_k(M(x,y))
\end{align*}
on objects and on morphisms by removing or repeating the $\alpha_i$'s accordingly.

\medskip

Define two bisimplicial categories $\mathscr{A}_{\bullet\bullet}$ and $\mathscr{B}_{\bullet\bullet}$ by applying the horizontal nerve functor levelwise: $\mathscr{A}_{nk}=N^h_k(\mathscr{A}_n)$, and $\mathscr{B}_{nk}=N^h_k(\mathscr{B}_n)$.
\exend
\end{construction}

\begin{lemma}\label{same nerves up to transposition}
The bisimplicial category $\mathscr{A}_{\bullet\bullet}$ is isomorphic to the transpose of the bisimplicial category $\mathscr{B}_{\bullet\bullet}$.
\end{lemma}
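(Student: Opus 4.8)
The plan is to unwind both bisimplicial categories completely and observe that in bidegree $(n,k)$ they encode literally the same combinatorial data, with the two simplicial directions interchanged. The transpose of $\mathscr{B}_{\bullet\bullet}$ in bidegree $(n,k)$ is $\mathscr{B}_{kn}=N^h_n(\mathscr{B}_k)$, so I must produce an isomorphism of categories $\mathscr{A}_{nk}=N^h_k(\mathscr{A}_n)\cong N^h_n(\mathscr{B}_k)$, natural in $[n],[k]\in\Delta$.

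First I would match objects. Since $\mathscr{A}_n$ has only identity horizontal morphisms, a $k$-chain of horizontal morphisms in $\mathscr{A}_n$ is just an object of $\mathscr{A}_n$, so $\operatorname{ob}\mathscr{A}_{nk}=\operatorname{ob}\mathscr{A}_n=N_n(X)$. On the other side a horizontal morphism of $\mathscr{B}_k$ is a morphism of $X$, so an $n$-chain of them is an element of $N_n(X)$, and $\operatorname{ob}(N^h_n(\mathscr{B}_k))=N_n(X)$ as well; the object part of the isomorphism is the identity of $N_n(X)$.

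The crux is to spell out a morphism on each side and see that they coincide. Unravelling $N^h_k(\mathscr{A}_n)$: a morphism from $\xi=(x_0\xrightarrow{f_1}\cdots\xrightarrow{f_n}x_n)$ to $\eta=(y_0\xrightarrow{g_1}\cdots\xrightarrow{g_n}y_n)$ is a chain of $k$ composable $2$-cells of $\mathscr{A}_n$ relating $k+1$ parallel morphisms $\xi\to\eta$; expanding the definition of $M.X$, of its morphisms and $2$-cells, and of the $\mathscr{A}_n$-level chain, this amounts to a grid of objects $m_i^{(j)}$ of $M$ (for $0\le i\le n$, $0\le j\le k$), ``horizontal'' arrows $\alpha_i^{(j)}\colon m_{i-1}^{(j)}\to m_i^{(j)}$, ``vertical'' arrows $\beta_i^{(j)}\colon m_i^{(j-1)}\to m_i^{(j)}$ making the grid commute in $M$, together with morphisms $\phi_i^{(j)}\colon m_i^{(j)}x_i\to y_i$ in $X$ satisfying the evident naturality square against the $\alpha$'s and $f$'s and the one against the $\beta$'s. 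Doing the same unravelling for $N^h_n(\mathscr{B}_k)$ --- a morphism of $n$-chains in $X$ assembled from vertical morphisms of $\mathscr{B}_k$ (elements of $N_k M(x_i,y_i)$) at the vertices and $2$-cells of $\mathscr{B}_k$ along the edges --- produces exactly the same list of data, with the role of the index $i$ (the ``$X$-chain'' or ``$f$'' direction, coming from the outer $\mathscr{B}_\bullet$-index on one side and from the inner $N^h_n$ on the other) and the index $j$ (the ``$M$-chain'' or ``$\beta$'' direction) interchanged. The map $\mathscr{A}_{nk}\to N^h_n(\mathscr{B}_k)$ is then ``the identity'' on this data, hence a bijection on objects and morphisms; I would check that it respects composition and identities, which is immediate since on both sides composition is induced by the composition laws of $M$, $X$ and $M.X$, just organised in the two possible orders, so it is an isomorphism of categories.

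It remains to check naturality in $[n]$ and $[k]$. On the $\mathscr{A}$-side the first index is the simplicial structure of $\mathscr{A}_\bullet$, which by \Cref{simplicial double categories} acts on the $i$-direction of the grid (through $N_\bullet(X)$, $N_\bullet(M.X)$ and the $\beta$'s), while the second index is the horizontal-nerve direction, which stacks $2$-cells of $\mathscr{A}_n$ and so acts on the $j$-direction; on the transposed $\mathscr{B}$-side these roles are exactly reversed, because $N^h_n(\mathscr{B}_k)$ has its first ($N^h$) index acting on the $i$-direction and its second ($\mathscr{B}_\bullet$) index, again by \Cref{simplicial double categories}, acting on the $j$-direction. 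Hence every face and degeneracy map matches under the transposition by inspection. The only real difficulty is the bookkeeping in the crux step --- keeping track of which of the several ``$\alpha$'', ``$\beta$'', ``$\phi$'' families is indexed by which of the two variables --- but once that dictionary is fixed the lemma is a Fubini-type symmetry: forming the ``action chain'' $N_\bullet(M.X)$ and then its horizontal nerve yields the same bisimplicial object as taking the horizontal nerve first and then the $M(-,-)$-nerve.
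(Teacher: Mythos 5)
Your proposal is correct and follows essentially the same route as the paper: both unwind the two bisimplicial categories in bidegree $(n,k)$, identify the object sets with $N_n(X)$ and the morphisms with the same commuting grid of data in $M.X$ (the roles of the two indices being swapped), and then note that composition and the simplicial structure maps match by inspection. The paper simply states this more tersely, leaving the composition and structure-map checks to the reader, while you spell out the naturality in $[n]$ and $[k]$ a bit more explicitly.
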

\begin{proof}
This is a case of writing out the definitions. We verify that the objects and morphisms coincide and leave composition and simplicial structure maps to the reader. Let $n,k\geq 0$ and consider the categories $\mathscr{A}_{nk}=N^h_k(\mathscr{A}_n)$ and $\mathscr{B}_{kn}=N^h_n(\mathscr{B}_k)$.

\medskip

Since we only have identity horizontal morphisms in $\mathscr{A}_n$, the object set of $\mathscr{A}_{nk}$ is in bijection with the object set $N_n(X)$ of $\mathscr{A}_n$. The object set of $\mathscr{B}_{kn}$ is the set of $n$-simplices in the nerve of the object category of $\mathscr{B}_k$, i.e.~$N_n(X)$.

\medskip

A morphism in $\mathscr{A}_{nk}$ is a sequence of $k$ vertical morphisms in $\mathscr{A}_n$ connected by $2$-cells: the diagram below is a morphism from $x_0\xrightarrow{f_1}\cdots \xrightarrow{f_n}x_n$ to $y_0\xrightarrow{g_1}\cdots \xrightarrow{g_n}y_n$, where the horizontal sequences come from the morphisms and the vertical sequences from the $2$-cells. A morphism in $\mathscr{B}_{kn}$ is a sequence of $n$ vertical morphisms in $\mathscr{B}_k$ connected by $2$-cells: the diagram below is a morphism from $x_0\xrightarrow{f_1}\cdots \xrightarrow{f_n}x_n$ to $y_0\xrightarrow{g_1}\cdots \xrightarrow{g_n}y_n$, but now the vertical sequences come from the morphisms and the horizontal sequences from the $2$-cells.

\begin{center}
\begin{tikzpicture}
\matrix (m) [matrix of math nodes,row sep=2em,column sep=3em, text height=1.5ex, text depth=0.25ex]
  {
(x_0,y_0,m^0_0,\phi^0_0) & (x_1,y_1,m^0_1,\phi^0_1) & \cdots & (x_n,y_n,m^0_n,\phi^0_n) \\
(x_0,y_0,m^1_0,\phi^1_0)& (x_1,y_1,m^1_1,\phi^1_1)& \cdots &(x_n,y_n,m^1_n,\phi^1_n) \\
\vdots & \vdots & \ddots & \vdots \\
(x_0,y_0,m^k_0,\phi^k_0)& (x_1,y_1,m^k_1,\phi^k_1)& \cdots &(x_n,y_n,m^k_n,\phi^k_n) \\
  };
   \path[-stealth]
   (m-1-1) edge node[above]{$\scriptstyle(f_1,g_1,\alpha^0_1)$} (m-1-2) 
   (m-1-2) edge node[above]{$\scriptstyle(f_2,g_2,\alpha^0_2)$} (m-1-3)
   (m-1-3) edge node[above]{$\scriptstyle(f_n,g_n,\alpha^0_n)$} (m-1-4)
   (m-2-1) edge node[below]{$\scriptstyle(f_1,g_1,\alpha^1_1)$} (m-2-2) 
   (m-2-2) edge node[below]{$\scriptstyle(f_2,g_2,\alpha^1_2)$} (m-2-3)
   (m-2-3) edge node[below]{$\scriptstyle(f_n,g_n,\alpha^1_n)$} (m-2-4)
   (m-4-1) edge node[below]{$\scriptstyle(f_1,g_1,\alpha^k_1)$} (m-4-2) 
   (m-4-2) edge node[below]{$\scriptstyle(f_2,g_2,\alpha^k_2)$} (m-4-3)
   (m-4-3) edge node[below]{$\scriptstyle(f_n,g_n,\alpha^k_n)$} (m-4-4)
   (m-1-1) edge node[left]{$\scriptstyle(\operatorname{id}_{x_0},\operatorname{id}_{y_0},\beta^1_0)$} (m-2-1)
   (m-1-2) edge node[left]{$\scriptstyle(\operatorname{id}_{x_1},\operatorname{id}_{y_1},\beta^1_1)$} (m-2-2)
   (m-1-4) edge node[left]{$\scriptstyle(\operatorname{id}_{x_n},\operatorname{id}_{y_n},\beta^1_n)$} (m-2-4)
   (m-2-1) edge node[left]{$\scriptstyle(\operatorname{id}_{x_0},\operatorname{id}_{y_0},\beta^2_0)$} (m-3-1)
   (m-2-2) edge node[left]{$\scriptstyle(\operatorname{id}_{x_1},\operatorname{id}_{y_1},\beta^2_1)$} (m-3-2)
   (m-2-4) edge node[left]{$\scriptstyle(\operatorname{id}_{x_n},\operatorname{id}_{y_n},\beta^2_n)$} (m-3-4)
   (m-3-1) edge node[left]{$\scriptstyle(\operatorname{id}_{x_0},\operatorname{id}_{y_0},\beta^k_0)$} (m-4-1)
   (m-3-2) edge node[left]{$\scriptstyle(\operatorname{id}_{x_1},\operatorname{id}_{y_1},\beta^k_1)$} (m-4-2)
   (m-3-4) edge node[left]{$\scriptstyle(\operatorname{id}_{x_n},\operatorname{id}_{y_n},\beta^k_n)$} (m-4-4)	
	;
\end{tikzpicture}
\end{center}

In both cases, composition is given by vertical composition of morphisms and $2$-cells in $\mathscr{M}\ltimes X$, and one can verify that these coincide. Likewise, one can check that the simplicial structure maps can be identified.
\end{proof}

We can combine this with the homotopy equivalences of the previous sections to show that the geometric realisations of $\mathscr{M}\times X$, $\mathscr{M}\ltimes X$ and $Q(M,X)$ are homotopy equivalent.

\begin{theorem}\label{classifying spaces of lax action double cat and action 2-cat are htpy eq}
The inclusion $Q(M,X) \rightarrow \mathscr{M}\ltimes X$ induces a homotopy equivalence of geometric realisations.
\end{theorem}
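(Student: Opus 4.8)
The strategy is to realize the inclusion $Q(M,X)\to\mathscr{M}\ltimes X$ as the diagonal of a zig-zag of maps of bisimplicial categories, each of which is a levelwise homotopy equivalence, and then invoke the realization lemma. Concretely, I would proceed as follows. First, apply the horizontal nerve $N^h_\bullet$ to the double functor $I_n\colon\mathscr{M}\ltimes X\hookrightarrow\mathscr{B}_n$ of \Cref{iota_n induces homotopy equivalence}; since this holds for every $n$, and the $\mathscr{B}_n$ assemble into the simplicial double category $\mathscr{B}_\bullet$ of \Cref{simplicial double categories}, we get a map of bisimplicial categories $N^h_\bullet(\mathscr{M}\ltimes X)\to \mathscr{B}_{\bullet\bullet}$ (where the source is constant in the first simplicial direction, coming from $\mathscr{B}_0=\mathscr{M}\ltimes X$). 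By \Cref{iota_n induces homotopy equivalence} this is a levelwise equivalence in the first direction, so it induces a homotopy equivalence on geometric realizations. Second, do the dual thing with the pseudofunctors $\Upsilon_n\colon Q(M,X)\to\mathscr{A}_n$ of \Cref{I_n induces homotopy equivalence}: these assemble into a map $N^h_\bullet(Q(M,X))\to \mathscr{A}_{\bullet\bullet}$, again a levelwise equivalence, hence a homotopy equivalence on realizations. Third, invoke \Cref{same nerves up to transposition}: $\mathscr{A}_{\bullet\bullet}$ is isomorphic to the transpose of $\mathscr{B}_{\bullet\bullet}$, so $|\mathscr{A}_{\bullet\bullet}|\cong|\mathscr{B}_{\bullet\bullet}|$. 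Stringing these together yields a zig-zag of homotopy equivalences
\[
|\mathscr{M}\ltimes X|\ \xrightarrow{\ \simeq\ }\ |\mathscr{B}_{\bullet\bullet}|\ \cong\ |\mathscr{A}_{\bullet\bullet}|\ \xleftarrow{\ \simeq\ }\ |Q(M,X)|.
\]

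The remaining, and genuinely delicate, step is to identify the composite of this zig-zag with the map induced by the inclusion $Q(M,X)\hookrightarrow\mathscr{M}\ltimes X$. For this I would restrict attention to the diagonal simplicial categories. The point is that $\mathscr{B}_{0\bullet}=N^h_\bullet(\mathscr{M}\ltimes X)$ and $\mathscr{A}_{0\bullet}=N^h_\bullet(Q(M,X))$ sit inside $\mathscr{B}_{\bullet\bullet}$ and $\mathscr{A}_{\bullet\bullet}$ as the ``zeroth rows'', and under the transposition isomorphism of \Cref{same nerves up to transposition} these two rows are carried into the respective bisimplicial objects in a way compatible with the evident inclusion $N^h_\bullet(Q(M,X))\hookrightarrow N^h_\bullet(\mathscr{M}\ltimes X)$ induced by $Q(M,X)\hookrightarrow\mathscr{M}\ltimes X$ (recall a vertical morphism $(x,y,m,\phi)$ of $\mathscr{M}\ltimes X$ is literally an object of $Q(M,X)$, and the $2$-cells of $Q(M,X)$ are precisely the $2$-cells of $\mathscr{M}\ltimes X$ between such). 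Tracing an object/morphism/$2$-cell of $Q(M,X)$ through $\Upsilon_0=\mathrm{id}$, then through the transposition, then through $\rho_0\circ(\text{inclusion})=\mathrm{id}$ on the $\mathscr{B}$ side, one checks it lands on its image under the naive inclusion. Because all three maps in the zig-zag were shown to be equivalences by exhibiting adjunctions (inclusion-at-zero vs.\ retraction-to-zero), their composite on the diagonal is homotopic to this inclusion, which is what we want.

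I expect the main obstacle to be precisely this last bookkeeping: carefully matching up the simplicial structure maps and composition laws across the transposition isomorphism of \Cref{same nerves up to transposition}, and verifying that the homotopy inverses produced in Lemmas~\ref{iota_n induces homotopy equivalence} and \ref{I_n induces homotopy equivalence} (via the explicit unit/counit $2$-cells) are compatible enough that the composite zig-zag, evaluated on the diagonal, is genuinely the map induced by the set-theoretic inclusion $Q(M,X)\subseteq\mathscr{M}\ltimes X$ rather than merely some abstract equivalence. Since the realization lemma only needs levelwise equivalences and not a strict compatibility of the homotopy inverses, the cleanest route is to phrase everything in terms of the forward maps $I_n$, $\Upsilon_n$ and the transposition, observe that these genuinely commute with all structure maps (so assemble to maps of bisimplicial categories), and only at the very end note that on the zeroth rows the composite $I_0$-then-transpose-then-$\Upsilon_0^{-1}$ is, by inspection, the inclusion. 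Everything else — that $\mathscr{A}_\bullet$ and $\mathscr{B}_\bullet$ really are simplicial objects, that applying $N^h_\bullet$ preserves the relevant equivalences, that geometric realization of a bisimplicial category may be computed on the diagonal — is standard and can be cited or left to the reader.
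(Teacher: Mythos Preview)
Your approach is essentially the paper's: assemble the $I_n$ and $\Upsilon_n$ into maps of bisimplicial categories from constant simplicial objects into $\mathscr{B}_{\bullet\bullet}$ and $\mathscr{A}_{\bullet\bullet}$, use \Cref{same nerves up to transposition}, and then identify the zig-zag with the inclusion on diagonals. One small correction in your final bookkeeping: the ``zeroth rows'' language is misleading, since under transposition the zeroth row of $\mathscr{A}_{\bullet\bullet}$ becomes the zeroth \emph{column} of $\mathscr{B}_{\bullet\bullet}$, not its zeroth row; the clean way (which the paper uses) is to work on the diagonal, where $N^h_n(\mathscr{A}_n)=N^h_n(\mathscr{B}_n)$ literally, and simply check that for each $n$ the square with sides $N^h_n(I_n)$, $N^h_n(\Upsilon_n)$, this equality, and $N^h_n$ of the inclusion $Q(M,X)\hookrightarrow\mathscr{M}\ltimes X$ commutes.
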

\begin{proof}
By Lemmas \ref{iota_n induces homotopy equivalence}, \ref{I_n induces homotopy equivalence} and \ref{same nerves up to transposition}, we have a diagram as below, where the homotopy equivalences on the left and right are induced by maps of simplicial double categories whose sources are constant simplicial objects (in fact, simply the inclusion of the zero simplices of the target):
\begin{align*}
|\mathscr{M}\ltimes X|\xrightarrow{\ \simeq\ } |\mathscr{B}_\bullet|\cong |\mathscr{B}_{\bullet\bullet}|\cong |\mathscr{A}_{\bullet\bullet}|\cong |\mathscr{A}_\bullet|\xleftarrow{\ \simeq\ } |Q(M,X)|.
\end{align*}
To see that this homotopy equivalence is induced by the inclusion, we analyse the diagonal instead of collapsing to the horizontal and vertical axes. This leaves us with a zig-zag of simplicial categories, which levelwise fits into the diagram below, where the left vertical map is the one induced by $Q(M,X)\rightarrow \mathscr{M}\ltimes X$.
\begin{center}
\begin{tikzpicture}
\matrix (m) [matrix of math nodes,row sep=2em,column sep=3em, text height=1.5ex, text depth=0.25ex]
  {
	N_n^h(\mathscr{M}\ltimes X) & N^h_n(\mathscr{B}_n) \\
	N_n^h(Q(M,X)) & N^h_n(\mathscr{A}_n) \\
  };
   \path[-stealth]
	(m-1-1) edge (m-1-2)
	(m-2-1) edge (m-2-2)
	;
	\path[left hook-stealth]
(m-2-1) edge (m-1-1)	
	;
	\path[-]
	(m-1-2) edge[double distance=2pt] (m-2-2)
	;
\end{tikzpicture}
\end{center}
Tracing through the definitions, we see that this diagram commutes for all $n\geq 0$ and the claim follows.
\end{proof}

Combined with the homotopy equivalence of \Cref{Phi induces homotopy equivalence}, we have the following immediate corollary.

\begin{corollary}\label{classifying spaces of strict action double cat and action 2-cat are htpy eq}
The zig-zag of double functors
\begin{align*}
\mathscr{M}\times X \rightarrow \mathscr{M}\ltimes X \leftarrow Q(M,X)
\end{align*}
induces a homotopy equivalence of geometric realisations.
\end{corollary}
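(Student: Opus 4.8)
The plan is simply to assemble the two homotopy equivalences that are already in hand, so the argument consists of one step: quoting and composing them. First I would recall that by \Cref{Phi induces homotopy equivalence} the double functor $\Phi\colon\mathscr{M}\times X\rightarrow\mathscr{M}\ltimes X$ induces a homotopy equivalence $|\mathscr{M}\times X|\xrightarrow{\ \simeq\ }|\mathscr{M}\ltimes X|$ on geometric realisations, and that by \Cref{classifying spaces of lax action double cat and action 2-cat are htpy eq} the inclusion $Q(M,X)\rightarrow\mathscr{M}\ltimes X$ induces a homotopy equivalence $|Q(M,X)|\xrightarrow{\ \simeq\ }|\mathscr{M}\ltimes X|$. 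Applying geometric realisation to the zig-zag of double functors then produces a zig-zag of spaces
\begin{align*}
|\mathscr{M}\times X|\xrightarrow{\ \simeq\ }|\mathscr{M}\ltimes X|\xleftarrow{\ \simeq\ }|Q(M,X)|
\end{align*}
in which both arrows are homotopy equivalences; choosing a homotopy inverse for the right-hand arrow yields a homotopy equivalence $|\mathscr{M}\times X|\xrightarrow{\ \simeq\ }|Q(M,X)|$, which is the assertion. As usual this homotopy class does not depend on the chosen inverse, by the standard formalism of spans of weak equivalences, so there is nothing further to check.

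It is worth recording the interpretation, although it is not logically needed here: since $|\mathscr{M}\times X|$ is, as observed just after its construction, a model for the homotopy quotient $|X|_{|M|}$ of the topological monoid $|M|$ acting on $|X|$, this corollary identifies $|Q(M,X)|$ with that same homotopy quotient. This is the form in which the result will feed into the work of \Cref{comparison with algebraic K-theory}, applied to the action of $M\times M^{\otimes^{\operatorname{op}}}$ on $M$ by left and right multiplication.

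I do not expect any obstacle at this stage: all of the real content lies in the three preceding lemmas --- \Cref{iota_n induces homotopy equivalence} and \Cref{I_n induces homotopy equivalence}, which carry out the Waldhausen-style ``inclusion at zero / retraction to zero'' adjunction arguments one simplicial level at a time, and \Cref{same nerves up to transposition}, which identifies $\mathscr{A}_{\bullet\bullet}$ with the transpose of $\mathscr{B}_{\bullet\bullet}$ --- together with the adjunction behind \Cref{Phi induces homotopy equivalence}. The present corollary merely packages the outputs of \Cref{classifying spaces of lax action double cat and action 2-cat are htpy eq} and \Cref{Phi induces homotopy equivalence}, so the only ``work'' is bookkeeping of which arrow points which way.
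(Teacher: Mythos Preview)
Your proposal is correct and matches the paper's approach exactly: the corollary is stated there as an immediate consequence of \Cref{Phi induces homotopy equivalence} and \Cref{classifying spaces of lax action double cat and action 2-cat are htpy eq}, with no further argument given. Your additional commentary about the interpretation as a homotopy quotient is also in line with the paper, which records precisely that observation as the next corollary.
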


The geometric realisation $|M|$ is naturally a topological monoid and the action of $M$ on $X$ defines an action of $|M|$ on the geometric realisation $|X|$.

\begin{corollary}
The homotopy quotient $|X|_{|M|}$ of the action of $|M|$ on $|X|$ is homotopy equivalent to the geometric realisation of $Q(M,X)$.
\end{corollary}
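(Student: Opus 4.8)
The plan is to combine \Cref{classifying spaces of strict action double cat and action 2-cat are htpy eq} with the standard identification of $|\mathscr{M}\times X|$ as a bar-construction model for the homotopy quotient. Concretely, once we know that $|\mathscr{M}\times X|$ computes $|X|_{|M|}$, the corollary follows immediately from the homotopy equivalence $|\mathscr{M}\times X|\simeq|Q(M,X)|$ established there via the zig-zag through $\mathscr{M}\ltimes X$.

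First I would unwind the vertical nerve of $\mathscr{M}\times X=[M\times X\rightrightarrows X]$. A vertical morphism has the form $m\colon x\to mx$, so an object of $N^v_n(\mathscr{M}\times X)$ is a chain $x_0\xrightarrow{m_1}x_1\xrightarrow{m_2}\cdots\xrightarrow{m_n}x_n$ with $x_i=m_ix_{i-1}$, i.e. the data of $(x_0;m_1,\dots,m_n)$, and, matching the source/target conditions on the $2$-cells, a morphism of such chains is determined by a morphism $x_0\to x_0'$ in $X$ together with morphisms $m_i\to m_i'$ in $M$. Thus $N^v_n(\mathscr{M}\times X)\cong X\times M^n$ as categories, naturally in $[n]\in\Delta$, and tracing through the simplicial structure maps one sees that $N^v_\bullet(\mathscr{M}\times X)$ is precisely the two-sided bar construction $B_\bullet(\ast,M,X)$ of $M$ acting on $X$: the face maps use the action $M\times X\to X$, the multiplication of $M$, and the projection forgetting the outermost copy of $M$, while the degeneracies insert the unit $e\in M$.

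Taking geometric realisations --- first the classifying space of each $N^v_n$, then the realisation of the resulting simplicial space, as recalled in \Cref{appendix nerves and geometric realisations} --- and using that the classifying space functor carries finite products of categories to products of spaces, I obtain
\[
|\mathscr{M}\times X|\;\cong\;\bigl|\,[n]\mapsto |X|\times|M|^{n}\,\bigr|\;=\;B\bigl(\ast,|M|,|X|\bigr),
\]
the usual bar-construction model for the homotopy quotient $|X|_{|M|}$ of the topological monoid $|M|$ acting on $|X|$ via the realisation of $M\times X\to X$ (the monoid and action structures on $|M|$, $|X|$ being exactly the ones induced by those of $M$, $X$). Hence $|\mathscr{M}\times X|$ is a model for $|X|_{|M|}$, and combining with \Cref{classifying spaces of strict action double cat and action 2-cat are htpy eq} gives $|Q(M,X)|\simeq|X|_{|M|}$, as claimed.

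The only point requiring attention --- and it is pure bookkeeping rather than a real obstacle --- is the identification $N^v_\bullet(\mathscr{M}\times X)\cong B_\bullet(\ast,M,X)$ and the ensuing $|\mathscr{M}\times X|\cong B(\ast,|M|,|X|)$: one must check that the simplicial structure maps of the vertical nerve genuinely correspond to the bar-construction faces and degeneracies, and that passing to realisations (which commutes with the finite products $X\times M^n$, by Milnor's theorem on realisation of simplicial sets) turns this into the topological two-sided bar construction. This may reasonably be dispatched in a sentence or left to the reader.
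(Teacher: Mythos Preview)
Your proposal is correct and follows essentially the same approach as the paper: identify the vertical nerve of $\mathscr{M}\times X$ with the bar construction $[n]\mapsto M^n\times X$, use that geometric realisation commutes with finite products to get $|\mathscr{M}\times X|\simeq |X|_{|M|}$, and then invoke \Cref{classifying spaces of strict action double cat and action 2-cat are htpy eq}. The paper dispatches the nerve identification in a single sentence, whereas you spell out the objects, morphisms, and structure maps explicitly, but the argument is the same.
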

\begin{proof}
The vertical nerve of the double category $\mathscr{M}\times X=[M\times X\rightrightarrows X]$ is the simplicial category whose category of $n$-simplices is $M^n\times X$ with the usual simplicial structure maps given by projection, action and product. Since geometric realisation commutes with finite products, the realisation of this is the homotopy quotient $|X|_{|M|}$.
\end{proof}

Finally, we consider the special case that we will need in following section.

\begin{observation}\label{M acting on itself}
Let $(M,\otimes)$ be a strict monoidal category and consider the monoidal category $M\times M^{\otimes^{\operatorname{op}}}$, where the second factor is the category $M$ with the opposite product: $a\otimes^{\operatorname{op}}b=b\otimes a$. The monoidal category $M\times M^{\otimes^{\operatorname{op}}}$ acts on the category $M$ by left and right multiplication: $(a,b).m=a\otimes m\otimes b$ for all objects $a,b,m$ in $M$. We see that the $2$-category
\begin{align*}
Q_2(M):=Q(M\times M^{\otimes^{\operatorname{op}}},M)
\end{align*}
is given as follows:
\begin{itemize}[label=$\ast$]
\item the objects are those of $M$, 
\item a morphism $m\rightarrow m'$ is a tuple $(a,b,\phi\colon amb\rightarrow m')$ where $a$ and $b$ are objects of $M$ and $\phi$ is a morphism in $M$,
\item  and a $2$-cell $(a,b,\phi)\rightarrow (a',b',\phi')$ is a pair of morphisms $\alpha\colon a\rightarrow a'$, $\beta\colon b\rightarrow b'$ in $M$ such that $\phi=\phi'\circ (\alpha \,\operatorname{id}_m\,\beta)$,
\end{itemize}

and the various composites are as follows:
\begin{itemize}[label=$\ast$]
\item for morphisms:
\begin{center}
\begin{tikzpicture}
\matrix (m) [matrix of math nodes,row sep=2em,column sep=3em,, text height=1.5ex, text depth=0.25ex]
  {
	m & \\   
	m' & m'' \\   
  };
  \path[-stealth] 
	(m-1-1) edge node[left]{$(c,d,\psi)$} (m-2-1) edge node[above right]{$(ac, db, \phi\circ (\operatorname{id}_a\,\psi\,\operatorname{id}_b))$} (m-2-2) 
	(m-2-1) edge node[below]{$(a,b,\phi)$} (m-2-2) 
  ;
\end{tikzpicture}
\end{center}

\item composition of $2$-cells along morphisms (i.e.~within hom-categories) is given by coordinatewise composition:
\begin{center}
\begin{tikzpicture}
\matrix (m) [matrix of math nodes,row sep=2em,column sep=3em, text height=1.5ex, text depth=0.25ex]
  {
	(a,b,\phi) & \\   
	(a',b',\phi') & (a'',b'',\phi'') \\   
  };
  \path[-stealth] 
	(m-1-1) edge node[left]{$(\alpha,\beta)$} (m-2-1) edge node[above right]{$(\alpha'\circ \alpha,\beta'\circ\beta)$} (m-2-2) 
	(m-2-1) edge node[below]{$(\alpha',\beta')$} (m-2-2) 
  ;
\end{tikzpicture}
\end{center}

\item composition of $2$-cells along objects is given by the monoidal product:  the composite of the following $2$-cells
\begin{center}
\begin{tikzcd}[row sep=2em,column sep=5em]
 m \arrow[bend left=35]{r}[name=u,below]{}[above]{(c,d,\psi)}
    \arrow[bend right=35]{r}[name=d]{}[below]{(c', d',\psi')}
    \arrow[Rightarrow,to path=(u) -- (d)\tikztonodes]{r}{\,(\gamma,\delta)}
    & 
    m' \arrow[bend left=35]{r}[name=u,below]{}[above]{(a,b,\phi)}
    \arrow[bend right=35]{r}[name=d]{}[below]{(a',b',\phi')}
    \arrow[Rightarrow,to path=(u) -- (d)\tikztonodes]{r}{\,(\alpha,\beta)}
    & 
    m''
\end{tikzcd}
\end{center}
is the $2$-cell
\begin{align*}
(\alpha\gamma,\delta\beta)\colon (ac, db, \phi\circ (\operatorname{id}_a\,\psi\,\operatorname{id}_b))\longrightarrow (a'c', d'b', \phi'\circ (\operatorname{id}_{a'}\,\psi'\,\operatorname{id}_{b'}))
\end{align*}
\end{itemize}
Note that this also makes sense if $M$ isn't small.\exend
\end{observation}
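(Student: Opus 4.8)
The statement is obtained by unwinding \Cref{action Q-construction} in the special case $M'=M\times M^{\otimes^{\operatorname{op}}}$ acting on $X=M$ via $(a,b).m=amb$, so the ``proof'' is entirely a matter of substituting this data into the general definitions and simplifying. First I would recall the general shape of $Q(M',X)$: by \Cref{action Q-construction} it is the vertical $2$-category of the double category $\mathscr{M}'\ltimes X=[M'.X\rightrightarrows X]$ introduced in \Cref{lax action double category}. Hence its objects are the objects of $X$; a morphism $x\to y$ is an object of $M'.X$ with source $x$ and target $y$, i.e.\ a tuple $(x,y,\mu,\phi\colon\mu.x\to y)$ with $\mu$ an object of $M'$; and a $2$-cell is a morphism of $M'.X$ whose two $X$-components are identities, i.e.\ a tuple $(\operatorname{id}_x,\operatorname{id}_y,\nu)$ with $\nu$ a morphism of $M'$, subject to the commuting square of \Cref{lax action double category}. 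The composites in $Q(M',X)$ are likewise inherited: composition of morphisms is vertical composition $\circ_v$ of vertical morphisms in $\mathscr{M}'\ltimes X$, composition of $2$-cells inside a hom-category is composition of morphisms in $M'.X$, and composition of $2$-cells along objects is $\circ_v$ on $2$-cells.

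Next I would plug in $M'=M\times M^{\otimes^{\operatorname{op}}}$ and $X=M$. An object of $M\times M^{\otimes^{\operatorname{op}}}$ is a pair $(a,b)$ of objects of $M$, and $(a,b).m=amb$; so a morphism $m\to m'$ is exactly a tuple $(a,b,\phi\colon amb\to m')$, the bookkeeping of the source and target objects being redundant. A morphism of $M\times M^{\otimes^{\operatorname{op}}}$ is a pair $(\alpha,\beta)$ of morphisms of $M$, since passing to the opposite monoidal product changes neither the underlying category nor its composition; so a $2$-cell $(a,b,\phi)\Rightarrow(a',b',\phi')$ between morphisms $m\to m'$ is a pair $(\alpha\colon a\to a',\beta\colon b\to b')$, and the commuting square of \Cref{lax action double category} reduces in this case to $\phi=\phi'\circ(\alpha\,\operatorname{id}_m\,\beta)$, as stated.

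Finally I would read off the composites. For composition of morphisms, take $m\xrightarrow{(c,d,\psi)}m'\xrightarrow{(a,b,\phi)}m''$; in the formula $(y,z,n,\psi)\circ_v(x,y,m,\phi)=(x,z,nm,\psi\circ n\phi)$ of \Cref{lax action double category} the outer morphism contributes the $M'$-object $(a,b)$ and the inner one contributes $(c,d)$, and using that $(a,b)\otimes_{M'}(c,d)=(a\otimes c,\ b\otimes^{\operatorname{op}}d)=(ac,db)$ and that $(a,b)$ acts on $\psi$ by $\operatorname{id}_a\,\psi\,\operatorname{id}_b$, the composite becomes $(ac,db,\phi\circ(\operatorname{id}_a\,\psi\,\operatorname{id}_b))$. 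Composition of $2$-cells inside a hom-category is composition in $M'.X$, which is coordinatewise, giving $(\alpha'\circ\alpha,\beta'\circ\beta)$. Composition of $2$-cells along an object is $\circ_v$ on $2$-cells, which multiplies the $M'$-morphism components via the monoidal product of $M'$, so $(\alpha,\beta)\otimes_{M'}(\gamma,\delta)=(\alpha\otimes\gamma,\ \beta\otimes^{\operatorname{op}}\delta)=(\alpha\gamma,\delta\beta)$, with source and target the composites of the underlying morphisms computed above; this is precisely the description in the statement.

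There is no real obstacle here; the whole content is keeping the indices straight, and the single point that deserves attention is the one already visible above: because the second factor carries the opposite monoidal product, all products and composites of the $b$- and $\beta$-components occur in the reversed order ($db$ instead of $bd$, $\delta\beta$ instead of $\beta\delta$), which is exactly what makes ``multiply on the left by $a$ and on the right by $b$'' into a genuine left action of $M\times M^{\otimes^{\operatorname{op}}}$ on $M$. I would also note, as the statement does, that although $Q(M,X)$ was set up for $M$ small, smallness is used nowhere in this unwinding, so the description of $Q_2(M)$ makes sense for an arbitrary monoidal category $M$.
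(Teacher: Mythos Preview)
Your proposal is correct and takes precisely the approach the paper intends: the statement is labeled an \emph{Observation} with no accompanying proof, because its content is exactly the routine unwinding of \Cref{lax action double category} and \Cref{action Q-construction} in the special case $M'=M\times M^{\otimes^{\operatorname{op}}}$, $X=M$, which you have carried out correctly (including the one subtle point, that the opposite monoidal product in the second factor is what yields $db$ and $\delta\beta$ rather than $bd$ and $\beta\delta$).
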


Let $M$ be a small monoidal category and $|M|$ the topological monoid given by the geometric realisation of $M$. Recall that the classifying space $B|M|$ of $|M|$ is the total geometric realisation of the standard bar construction $B_\bullet M$ whose category of $n$-simplices is $M^n$ and whose simplicial structure maps are given by the monoidal structure of $M$. If $M$ is an essentially small monoidal category, then we denote by $|M|$ and $B|M|$ the topological monoid and its classifying space defined as above for some equivalent small monoidal category.

\begin{corollary}\label{Q construction and monoidal category have homotopy equivalent classifying spaces}
Let $M$ be an essentially small strict monoidal category. The classifying space $B|M|$ of the topological monoid $|M|$ is homotopy equivalent to the geometric realisation of the $2$-category $Q_2(M)$ of \Cref{M acting on itself}.
\end{corollary}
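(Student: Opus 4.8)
The plan is to specialise the machinery of this section to the left-and-right multiplication action of $M\times M^{\otimes^{\operatorname{op}}}$ on $M$, and then to recognise the resulting simplicial category as the edgewise subdivision of the bar construction. First I would replace $M$ by an equivalent small strict monoidal category; a monoidal equivalence induces equivalences of all the $2$-categories and double categories appearing here, hence homotopy equivalences on geometric realisations, so we may assume $M$ is small. Then \Cref{classifying spaces of strict action double cat and action 2-cat are htpy eq}, applied with the monoidal category $M\times M^{\otimes^{\operatorname{op}}}$ acting on the category $M$, supplies a homotopy equivalence $|Q_2(M)|=|Q(M\times M^{\otimes^{\operatorname{op}}},M)|\simeq|\mathscr{M}\times M|$, where the right-hand side is the geometric realisation of the double category $[(M\times M^{\otimes^{\operatorname{op}}})\times M\rightrightarrows M]$ encoding the action. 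As recalled in the discussion above, this geometric realisation is the total realisation of the vertical nerve: the simplicial category whose category of $n$-simplices is $(M\times M^{\otimes^{\operatorname{op}}})^n\times M\cong M^{2n+1}$, with simplicial structure maps assembled from the unit of $M$, the two multiplications in $M\times M^{\otimes^{\operatorname{op}}}$, and the left/right action on $M$.

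The key step is to identify this simplicial category with the edgewise subdivision $\operatorname{sd}(B_\bullet M)$ of the standard bar construction $B_\bullet M$, where $B_nM=M^n$ with structure maps built from $\otimes$ and the unit. Recall that $\operatorname{sd}$ is precomposition with the functor $\Delta\to\Delta$, $[n]\mapsto[n]^{\operatorname{op}}\star[n]\cong[2n+1]$, so that $\operatorname{sd}(B_\bullet M)_n=B_{2n+1}M=M^{2n+1}$. Writing the $2n+1$ tensor factors of an $n$-simplex of the vertical nerve as $(b_n,\dots,b_1,m,a_1,\dots,a_n)$ — encoding the chain of vertical morphisms $m\to a_1mb_1\to a_2a_1mb_1b_2\to\cdots$ — one checks directly that the outer face map deletes the outermost pair $(a_n,b_n)$, the central face map absorbs $(a_1,b_1)$ into $m$ via the action, the intermediate face maps compose consecutive pairs via the product of $M\times M^{\otimes^{\operatorname{op}}}$, and the degeneracies insert identity pairs; and that under the above relabelling these agree with the face maps $\widehat{d}_i=d_i\,d_{2n+1-i}$ and degeneracies $\widehat{s}_i=s_i\,s_{2n+1-i}$ of the edgewise subdivision of $B_\bullet M$. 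This is where the opposite product on the second factor is essential: it matches the order reversal in the left half $[n]^{\operatorname{op}}$ of the join, so that vertical composition really does correspond to adjacent multiplication on both sides. This verification — a direct but bookkeeping-heavy unwinding of definitions, and the precise form of the informal claim made in the outline at the start of the section — is the main obstacle.

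Finally, I would invoke the classical fact that edgewise subdivision does not change geometric realisation: there is a natural homeomorphism $|\operatorname{sd}(Y_\bullet)|\cong|Y_\bullet|$ for a simplicial set, and hence, applied in one variable of a bisimplicial set, $|\operatorname{sd}(B_\bullet M)|\cong|B_\bullet M|$. Combining with the previous steps gives $|Q_2(M)|\simeq|\mathscr{M}\times M|\cong|\operatorname{sd}(B_\bullet M)|\cong|B_\bullet M|=B|M|$, the last equality being the definition of the classifying space of the topological monoid $|M|$ recalled just above (note that, since geometric realisation commutes with finite products, $|B_\bullet M|$ is indeed the total realisation of the simplicial space $[n]\mapsto|M|^n$). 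This yields the asserted homotopy equivalence $|Q_2(M)|\simeq B|M|$.
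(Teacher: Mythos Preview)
Your proposal is correct and follows essentially the same route as the paper's proof: reduce to the small case, apply \Cref{classifying spaces of strict action double cat and action 2-cat are htpy eq} to the action of $M\times M^{\otimes^{\operatorname{op}}}$ on $M$, identify the vertical nerve of the resulting double category with the edgewise subdivision of $B_\bullet M$, and invoke Segal's homeomorphism $|\operatorname{sd}(Y_\bullet)|\cong|Y_\bullet|$. The only difference is that you spell out the face-and-degeneracy bookkeeping for the edgewise subdivision identification, whereas the paper simply asserts it.
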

\begin{proof}
We may assume $M$ to be small. The vertical nerve of the double category
\begin{align*}
[(M\times M^{\otimes^{\operatorname{op}}})\times M\rightrightarrows M]
\end{align*}
given by the action of $M\times M^{\otimes^{\operatorname{op}}}$ on $M$ is the edgewise subdivision of the bar construction $B_\bullet M$. The claim then follows from \Cref{classifying spaces of strict action double cat and action 2-cat are htpy eq} together with the fact that the geometric realisation of the edgewise subdivision is homeomorphic to the geometric realisation of the original simplicial space (\cite[Proposition A.1]{Segal73}).
\end{proof} 

\begin{remark}\label{Segal}
The above result should be compared with the classical result of Segal: the classifying space of a topological monoid $M$ is homeomorphic to the geometric realisation of a topological category $\mathscr{C}(M)$ with objects the objects of $M$ and morphisms $(a,b)\colon m\rightarrow m'$ where $a$ and $b$ are objects of $M$ such that $amb=m'$ (\cite[Proposition 2.5]{Segal73}).
\exend
\end{remark}

\section{Comparison with (stable) algebraic K-theory}\label{comparison with algebraic K-theory}

In this last section of the paper, we compare the categories $\operatorname{RBS}(M)$ with the (stable) algebraic K-theory space. We have already remarked on this at the beginning of the previous section in order to motivate the results proved there. To recap, we associate to any exact category $\mathscr{E}$, a strict monoidal category $M_{\mathscr{E}}$ of flags and associated gradeds, and when $\mathscr{E}=\mathcal{P}(A)$ is the exact category of finitely generated projective modules over an associative ring $A$, then the monoidal category $M_{\mathcal{P}(A)}$ decomposes into a disjoint union of $\operatorname{RBS}(M)$'s. We show that the monoidal category $M_{\mathscr{E}}$ defines a model for the algebraic K-theory space $K(\mathscr{E})$ by comparing with Quillen's Q-construction $Q(\mathscr{E})$. We find that $B|M_{\mathscr{E}}|\simeq |Q(\mathscr{E})|$, so that in particular, $K(\mathscr{E})\simeq \Omega B|M_{\mathscr{E}}|$.

\medskip

In fact, we will work in slightly greater generality, namely with categories with filtrations as introduced below. This is a category with a distinguished class of short exact sequences satisfying a set of axioms enabling us to merge and split filtrations. Any exact category is a category with filtrations, but we do not need the full power of exact categories for our constructions, so we choose to work in this broader setting. It is also clear that Quillen's Q-construction can be defined verbatim for categories with filtrations. They have the advantage of including for example the category of vector spaces of dimension at most $n$.

\subsection{Categories with filtrations}

\begin{definition}
Let $\mathscr{C}$ be a category with a zero object $0$ and a distinguished class $C$ of triples $a\rightarrow b\rightarrow c$ called \textit{short exact sequences}. If a morphism appears as the first morphism in a short exact sequence, we call it an \textit{admissible monomorphism} and denote it by $\rightarrowtail$; if it appears as the second, we call it an \textit{admissible epimorphism} and denote it by $\twoheadrightarrow$. We say that $\mathscr{C}$ is a \textit{category with filtrations} (with respect to the collection $C$) if it satisfies the following axioms:
\begin{enumerate}
\item $C$ is closed under isomorphisms,
\item the sequences $0\rightarrow a \xrightarrow{=} a$ and $a\xrightarrow{=} a\rightarrow 0$ are short exact sequences for all objects $a$,
\item the composite of two admissible monomorphisms (epimorphisms) is itself an admissible monomorphism (epimorphism),
\item admissible monomorphisms are kernels of their corresponding admissible epimorphisms, and admissible epimorphisms are cokernels of their corresponding admissible monomorphisms,
\item the pullback of an admissible epimorphism along an admissible monomorphism is an admissible epimorphism,
\item the pushout of an admissible monomorphism along an admissible epimorphism is an admissible monomorphism.
\defend
\end{enumerate}
\end{definition}

Existence of pullbacks and pushouts in axioms (5) and (6) comes for free, so we do not need to assume this --- see \Cref{pullbacks and pushouts} below.

\begin{example}
Let $\mathscr{A}$ be an abelian category, and let $\mathscr{C}$ be a full subcategory containing $0$ which is closed under isomorphisms. Let $C$ be the class of sequences $A\rightarrow B\rightarrow C$ in $\mathscr{C}$ which are exact in $\mathscr{A}$. Suppose the classes of admissible monomorphisms respectively admissible epimorphisms are closed under composition. Then $\mathscr{C}$ is a category with filtrations.
\exend
\end{example}

In view of this we have the following list of examples.

\begin{example}
\ 
\begin{enumerate}
\item Exact categories.
\item Consider the abelian category $\operatorname{Vect}(k)$ of finite dimensional vector spaces over a field $k$. Fix $n\in \N$ and let $\operatorname{Vect}(k)_{\leq n}$ denote the subcategory spanned by the vector spaces of dimension less than or equal to $n$. This is a category with filtrations.
\item Similarly, if $R$ is a ring such that the rank of projective modules is well-defined, then the category $\mathcal{P}(R)_{\leq n}$ of projective $R$-modules of rank at most $n$ is a category with filtrations.
\exend
\end{enumerate}
\end{example}

The following proposition is an immediate consequence of the axioms. Note that the roles of monomorphisms and epimorphisms are swapped when comparing with axioms (5) and (6) of the definition.

\begin{proposition}\label{pullbacks and pushouts}
The pullback of an admissible monomorphism along an admissible epimorphism exists and is an admissible monomorphism. The pushout of an admissible epimorphism along an admissible monomorphism exists and is an admissible epimorphism. Moreover, in both cases the squares are bicartesian.
\end{proposition}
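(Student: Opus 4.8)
The plan is to derive the two statements from the six axioms by exploiting the duality built into them: axioms (5) and (6), together with (4), are exchanged under passing to the opposite category with short exact sequences reversed. So it suffices to prove one of the two claims, say the statement about the pullback of an admissible monomorphism along an admissible epimorphism, and then apply it in $\mathscr{C}^{\mathrm{op}}$ to obtain the dual statement. I would state this observation first, check that $\mathscr{C}^{\mathrm{op}}$ with the class of reversed triples is again a category with filtrations (axioms (1)--(4) are self-dual, and (5) and (6) swap), and then concentrate on the monomorphism-along-epimorphism case.

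For that case, suppose given $i\colon a\rightarrowtail b$ and $p\colon c\twoheadrightarrow b$; I want to construct the pullback $a\times_b c$ and show the projection to $c$ is an admissible monomorphism. Choose a short exact sequence $a\rightarrowtail b\twoheadrightarrow b''$ witnessing $i$, and a short exact sequence $c'\rightarrowtail c\twoheadrightarrow b$ witnessing $p$. The composite $c\twoheadrightarrow b\twoheadrightarrow b''$ is an admissible epimorphism by axiom (3); call it $q$, and let $k\colon d\rightarrowtail c$ be its kernel, which exists and is an admissible monomorphism by axiom (4). The claim is that $d$ with the maps $k\colon d\to c$ and the induced map $d\to a$ (which exists because $q\circ$ (that map) $= 0$, i.e. $d\to c\to b$ factors through $\ker(b\to b'') = a$) is the pullback square, and that it is bicartesian. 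Verifying the universal property is a diagram chase using that admissible monos are kernels and admissible epis are cokernels of each other (axiom (4)); one checks a test object mapping compatibly to $a$ and $c$ factors uniquely through $d$. That the square is bicartesian then follows because in this setup $d\rightarrowtail c\twoheadrightarrow b''$ and $a\rightarrowtail b\twoheadrightarrow b''$ fit into a map of short exact sequences with identity on the right, which forces the left square to be bicartesian — again using axiom (4) to identify $d = \ker(c\to b'')$ with the pullback.

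The main obstacle I anticipate is the bicartesian claim and, more precisely, making the identifications in axiom (4) carry enough weight: I must be careful that "admissible monomorphisms are kernels of their admissible epimorphisms" is used in the form that a kernel computed as an admissible mono really is \emph{the} categorical pullback along the relevant epimorphism, and symmetrically for cokernels. The cleanest route is probably to first prove the purely formal lemma: in any pointed category, given a short exact sequence $a\rightarrowtail b\twoheadrightarrow b''$ (meaning the mono is a kernel of the epi and the epi a cokernel of the mono) and any admissible epimorphism $c\twoheadrightarrow b''$, the kernel of $c\twoheadrightarrow b''$ serves as the pullback $b\times_{b''} c$, and the resulting square is bicartesian. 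Then apply this with the second epimorphism taken to be the composite $c\twoheadrightarrow b\twoheadrightarrow b''$ constructed above. With that lemma in hand, both halves of the proposition reduce to axioms (3) and (4), and the epimorphism-along-monomorphism half is obtained by dualising. I would finish by remarking that the existence assertions in axioms (5) and (6) are therefore genuinely redundant, as promised in the text.
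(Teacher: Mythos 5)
Your overall strategy (dualise, then build the pullback as $\ker(c\twoheadrightarrow b\twoheadrightarrow b'')$ using axioms (3) and (4)) is fine, and the existence-plus-admissible-mono part of the argument is correct: the kernel $d\rightarrowtail c$ of the composite admissible epimorphism does satisfy the universal property of $a\times_b c$, by exactly the chase you describe (using that admissible monos, being kernels, are monic). The paper offers no written proof to compare with --- it asserts the proposition is immediate from the axioms --- so the issue is purely whether your sketch closes.

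It does not, at the bicartesian step. Your ``clean formal lemma'' is both misstated (the kernel of $c\to b''$ is $a\times_b c$, not $b\times_{b''}c$) and, more seriously, false in the generality you claim: in an arbitrary pointed category a morphism of kernel--cokernel pairs with identity on the cokernels has left square a pullback, but \emph{not} in general a pushout. For a concrete counterexample take pointed groups: the map of kernel--cokernel pairs $(2\mathbb{Z}\rightarrowtail\mathbb{Z}\twoheadrightarrow\mathbb{Z}/2)\to(2\mathbb{Z}\rightarrowtail\mathbb{Z}\twoheadrightarrow\mathbb{Z}/2)$ given by multiplication by $3$ on the middle terms and the identity on $\mathbb{Z}/2$ has left-hand square a pullback, but its pushout is the trefoil group $\langle n,h\mid n^3=h^2\rangle$, not $\mathbb{Z}$. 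So axiom (4) alone cannot force the pushout property; some epi-ness of the induced map $d\to a$ is needed, and nothing in your sketch supplies it (additivity is not available in a category with filtrations). The repair is to invoke axiom (5), which you never use: the square you have just shown to be a pullback exhibits $d\to a$ as the pullback of the admissible epi $p\colon c\twoheadrightarrow b$ along the admissible mono $i\colon a\rightarrowtail b$, so by (5) it is an admissible epi, in particular an epimorphism. Then the pushout property follows by a short chase: given $u\colon a\to t$, $v\colon c\to t$ with $u\circ(d\to a)=v\circ(d\to c)$, note $\ker p$ factors through $d$ and maps to $0$ in $a$ (as $i$ is monic), so $v$ kills $\ker p$ and factors uniquely as $v=w\circ p$ through the cokernel $p$; finally $w\circ i$ and $u$ agree after precomposition with the epi $d\to a$, hence are equal. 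Dually, the pushout half uses axiom (6) in the same way. With these additions (and the corrected statement of the lemma) your proof is complete; your closing remark that only the \emph{existence} clauses of (5) and (6) are redundant remains correct, since the admissibility clauses are exactly what the bicartesian argument consumes.
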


\begin{remark}
We implicitly assume that all categories with filtrations are essentially small, that is, equivalent to a small category with filtrations.
\exend
\end{remark}

Let $\mathscr{C}$ be a category with filtrations. We now introduce the formalities of filtrations, flags and associated gradeds needed for our constructions.

\begin{definition}
Let $I=\{i_0<\cdots <i_k\}$ be a finite linearly ordered set, let $m$ be an object in $\mathscr{C}$ and let $(a_i)_{i\in I}$ be an $I$-graded object in $\mathscr{C}$. An \textit{$I$-indexed filtration in $m$ with associated graded $(a_i)_{i\in I}$} is an equivalence class $[x^I, (\rho_i)_{i\in I}]$ of diagrams as below satisfying that $x_{i-1}\rightarrowtail x_i\stackrel{\rho_i}{\twoheadrightarrow }a_i$ is a short exact sequence for all $i\in I$, where $x_{i_0-1}:=0$.

\begin{center}
\begin{tikzpicture}
\matrix (m) [matrix of math nodes,row sep=2em,column sep=2em, text height=1.5ex, text depth=0.25ex]
  {
x_{i_0} & x_{i_1} & \cdots & x_{i_{k-1}} & x_{i_k}=m \\
a_{i_0} & a_{i_1} & \cdots & a_{i_{k-1}} & a_{i_k} \\
  };
  \path[>->] 
	(m-1-1) edge (m-1-2)
	(m-1-2) edge (m-1-3)
	(m-1-3) edge (m-1-4)
	(m-1-4) edge (m-1-5)
  ;
  \path[->>]
  	(m-1-1) edge node[right]{$\rho_{i_0}$} (m-2-1)
  	(m-1-2) edge node[right]{$\rho_{i_1}$} (m-2-2)
  	(m-1-4) edge node[right]{$\rho_{i_{k-1}}$} (m-2-4)
  	(m-1-5) edge node[right]{$\rho_{i_k}$} (m-2-5)
  ;
\end{tikzpicture}
\end{center}

Two such diagrams are equivalent, if there is a commutative diagram

\begin{center}
\begin{tikzcd}[row sep=1em,column sep=0.4em,text height=1.5ex, text depth=0.25ex]
& y_{i_0} \arrow[>->,rr] \arrow[->>,dd] & & y_{i_1} \arrow[>->,rr] \arrow[->>,dd]  & & \cdots \arrow[>->,rr] & & y_{i_{k-1}} \arrow[>->,rr] \arrow[->>,dd] & & m \arrow[->>,dd] \\
x_{i_0} \arrow[>->,rr, crossing over] \arrow[->>,dr] \arrow[->,ur, "\cong" near start] & & x_{i_1} \arrow[>->,rr, crossing over] \arrow[->>,dr] \arrow[->,ur, "\cong" near start] & & \cdots \arrow[>->,rr] & & x_{i_{k-1}} \arrow[>->,rr, crossing over] \arrow[->>,dr] \arrow[->,ur, "\cong" near start] & & m \arrow[->>,dr] \arrow[-,double equal sign distance,ur] &\\
& a_{i_0} & & a_{i_1} & & \cdots & & a_{i_{k-1}} & & a_{i_k} \\
\end{tikzcd}
\end{center}
In that case the isomorphisms $x_i\rightarrow y_i$ are necessarily unique, so a representing diagram is unique up to unique isomorphism.

\medskip

An $I$-indexed filtration in $m$ with associated graded $(a_i)_{i\in I}$ is called an \textit{($I$-indexed) flag (with associated graded)} if $a_i\neq 0$ for all $i\in I$. Equivalently, some (and thus any) representative is a sequence of non-invertible monomorphisms.
\defend
\end{definition}

We observe that any filtration has an underlying flag given by composing all invertible admissible monomorphisms with the succeeding morphism.

\medskip

The existence of pullbacks of admissible monomorphisms along admissible epimorphisms and the fact that these are themselves admissible monomorphisms enable us to merge filtrations as in the definition below. The universal property of pullbacks implies that this is well-defined, that is, independent of the choice of representatives of the filtrations.

\begin{definition}\label{merging of filtrations}
Let $\theta\colon I\rightarrow J$ be a surjective order preserving map. Suppose we are given a $J$-indexed filtration $[x^J, (\pi_j)_{j\in J}]$ in $m$ with associated graded $(b_j)_{j\in J}$ and for every $j\in J$, a $\theta^{-1}(j)$-indexed filtration
\begin{align*}
[y^j, (\rho_i)]:=[y^{\theta^{-1}(j)}, (\rho_i)_{i\in \theta^{-1}(j)}],
\end{align*}
in $b_j$ with associated graded $(a_i)_{i\in \theta^{-1}(j)}$.

\medskip

The \textit{merging of (the collection) $[y^j,(\rho_i)]_{j\in J}$ into $[x^J, (\pi_j)]$} is the $I$-indexed filtration in $m$ with associated graded $(a_i)_{i\in I}$ represented by a sequence $(\hat{y}^I, (\hat{\rho}_i))$ satisfying that for all $j\in J$, the restriction
\begin{align*}
(\hat{y}^{\theta^{-1}(j)}, (\hat{\rho}_i)_{i\in \theta^{-1}(j)})
\end{align*}
to $\theta^{-1}(j)=\{i_0<\cdots <i_k\}$ factors through $(y^j, (\rho_i))$ as indicated by the commutative diagram of pullbacks below
\begin{center}
\begin{tikzpicture}
\matrix (m) [matrix of math nodes,row sep=2em,column sep=2em, text height=1.5ex, text depth=0.25ex]
  {
\hat{y}_{i_0} & \hat{y}_{i_1} & \cdots & \hat{y}_{i_{k-1}} & \hat{y}_{i_k}=x_j \\ 
y_{i_0} & y_{i_1} & \cdots & y_{i_{k-1}} & b_j \\
a_{i_0} & a_{i_1} & \cdots & a_{i_{k-1}} & a_{i_k} \\
  };
  \path[>->] 
	(m-1-1) edge (m-1-2)
	(m-1-2) edge (m-1-3)
	(m-1-3) edge (m-1-4)
	(m-1-4) edge (m-1-5)
	(m-2-1) edge (m-2-2)
	(m-2-2) edge (m-2-3)
	(m-2-3) edge (m-2-4)
	(m-2-4) edge (m-2-5)
  ;
  \path[->>]
  	(m-1-1) edge (m-2-1)
  	(m-1-2) edge (m-2-2)
  	(m-1-4) edge (m-2-4)
  	(m-1-5) edge node[right]{$\pi_j$} (m-2-5)
  	(m-2-1) edge node[right]{$\rho_{i_0}$} (m-3-1)
  	(m-2-2) edge node[right]{$\rho_{i_1}$} (m-3-2)
  	(m-2-4) edge node[right]{$\rho_{i_{k-1}}$} (m-3-4)
  	(m-2-5) edge node[right]{$\rho_{i_k}$} (m-3-5)
  ;
  \path[-]
  (m-1-1) edge[style=transparent] node[opaque,pos=0.1]{\scalebox{1.5}{$\lrcorner$}} (m-2-2)
  (m-1-2) edge[style=transparent] node[opaque,pos=0.1]{\scalebox{1.5}{$\lrcorner$}} (m-2-3)
  (m-1-4) edge[style=transparent] node[opaque,pos=0.1]{\scalebox{1.5}{$\lrcorner$}} (m-2-5)
  ;
\end{tikzpicture}
\end{center}

where $\hat{\rho_i}$ is the composite of $\rho_i$ with the admissible epimorphism $\hat{y}_i\twoheadrightarrow y_i$.

\medskip

We denote the merging by $[x^J, (\pi_j)]\circ \Big([y^j,(\rho_i)]_{j\in J}\Big):=[\hat{y}^I,(\hat{\rho}_i)]$.
\defend
\end{definition}

\begin{remark}\label{merging and splitting of filtrations in vector spaces}
The existence and uniqueness of a merging together with the observation that we can split flags should be interpreted as a generalisation of the following statement for vector spaces: for a surjective order preserving map $\theta\colon I\rightarrow J$ of finite linearly ordered sets, a $J$-indexed filtration $\{V_j\}_{j\in J}$ of $V$ together with a $\theta^{-1}(j)$-indexed filtration of the cokernel $V_j/V_{j-1}$ for all $j$ is equivalent to an $I$-indexed filtration of $V$.
\exend
\end{remark}

\subsection{A monoidal category of flags and associated gradeds}\label{M_C}

We now define a monoidal category encoding the data of flags with associated gradeds in a given category with filtrations. Intuitively, the objects should be thought of as associated gradeds, and the morphisms as those induced by flags where we allow refinement of flags. For example, a morphism from $(a,b,c)$ to $(m)$ is a $3$-step filtration of $m$ with associated graded $(a,b,c)$.

\medskip

Let $\mathscr{C}$ be a category with filtrations. The monoidal category $M_{\mathscr{C}}$ is defined in Constructions \ref{objects and morphisms}, \ref{composition} and \ref{monoidal product in M_C} (see also \Cref{non-symmetric multicategories} for a different perspective in terms of multicategories).

\begin{construction}[Objects and morphisms]\label{objects and morphisms}
The objects of $M_{\mathscr{C}}$ are tuples $(I, (m_i)_{i\in I})$, where $I$ is a finite linearly ordered set and $(m_i)_{i\in I}$ is an $I$-graded object in $\mathscr{C}$ with $m_i\neq 0$ for all $i\in I$. We just write $(m_i)_{i\in I}$ and call such an object an \textit{$I$-indexed list}, and we include the empty list $\emptyset$. A morphism $\phi\colon (m_i)_{i\in I}\rightarrow (n_j)_{j\in J}$ consists of the following data
\begin{enumerate}
\item a surjective order preserving map $\theta\colon I\rightarrow J$,
\item for every $j\in J$, a $\theta^{-1}(j)$-indexed flag in $n_j$ with associated graded $(m_i)_{i\in \theta^{-1}(j)}$:
\begin{align*}
[x^j, (\rho_i)]:=[x^{\theta^{-1}(j)},(\rho_i)_{i\in \theta^{-1}(j)}]
\end{align*}
\end{enumerate}
We write $\phi=(\theta, [x^j, (\rho_i)]_{j\in J})\colon (m_i)_{i\in I}\rightarrow (n_j)_{j\in J}$.
\exend
\end{construction}

\begin{remark}
Diagramatically, one can picture a morphism $\phi$ as specified above as follows. Writing out the list of objects of the source in the top line, and the list of objects of the target in the bottom line, we connect the objects as specified by the order preserving map and label the target objects by the appropriate flags. Of course, this can be more or less detailed in order to emphasise the relevant data or structure.

\begin{center}
\begin{tikzpicture}
\matrix (m) [matrix of math nodes,row sep=2em,column sep=0.5em]
  {
(m_i)_{i\in \theta^{-1}(j_0)} & (m_i)_{i\in \theta^{-1}(j_1)} & \quad\cdots\quad  & (m_i)_{i\in \theta^{-1}(j_k)} \\
 n_{j_0} & n_{j_1} & \cdots & n_{j_k} \\
  };
  \path[-]
	($(m-1-1.south west)!.30!(m-1-1.south east)$) edge (m-2-1) 
	($(m-1-1.south west)!.50!(m-1-1.south east)$) edge (m-2-1)
	($(m-1-1.south west)!.70!(m-1-1.south east)$) edge (m-2-1)
	($(m-1-2.south west)!.30!(m-1-2.south east)$) edge (m-2-2) 
	($(m-1-2.south west)!.50!(m-1-2.south east)$) edge (m-2-2)
	($(m-1-2.south west)!.70!(m-1-2.south east)$) edge (m-2-2)
	($(m-1-3.south west)!.30!(m-1-3.south east)$) edge (m-2-3) 
	($(m-1-3.south west)!.50!(m-1-3.south east)$) edge (m-2-3)
	($(m-1-3.south west)!.70!(m-1-3.south east)$) edge (m-2-3)
	($(m-1-4.south west)!.30!(m-1-4.south east)$) edge (m-2-4)
	($(m-1-4.south west)!.50!(m-1-4.south east)$) edge (m-2-4) 
	($(m-1-4.south west)!.70!(m-1-4.south east)$) edge (m-2-4) 
;
\node
also [label=below: {$\scriptstyle [x^{j_0},\, (\rho_i)]$}] (m-2-1)
;
\node
also [label=below: {$\scriptstyle [x^{j_1},\, (\rho_i)]$}] (m-2-2)
;
\node
also [label=below: {$\scriptstyle [x^{j_k},\, (\rho_i)]$}] (m-2-4)
;
\end{tikzpicture}
\end{center}

We will use diagrams like this to picture composition and an important decomposition below, but other than that, we only include this remark hoping that it might help the reader to detach themselves a little from the technical aspects and notation.
\exend
\end{remark}

Before defining composition, we observe that the concatenation operation on the objects of $M_{\mathscr{C}}$ can be extended to the morphisms. This will also be used to define a monoidal product in $M_{\mathscr{C}}$ (see \Cref{monoidal product in M_C}).

\begin{construction}[Concatenation]\label{concatenation}
We denote the concatenation of linearly ordered sets $I=\{i_0<\cdots < i_k\}$ and $J=\{j_0<\cdots <j_l\}$ by
\begin{align*}
I\circledast J=\{i_0<\cdots i_k<j_0<\cdots <j_l\}.
\end{align*}

Recall that the concatenation of graded objects $(m_i)_{i\in I}$ and $(n_j)_{j\in J}$ is the $(I\circledast J)$-graded object
\begin{align*}
(m_i)_{i\in I}\circledast(n_j)_{j\in J}=((m\circledast n)_i)_{i\in I\circledast J}=(m_{i_0},\ldots,m_{i_k},n_{j_0},\ldots,n_{j_l}).
\end{align*}

For morphisms, we can likewise concatenate the data: the concatenation of 
\begin{align*}
(\theta,[x^j,(\rho_i)])\colon (m_i)_{i\in I}\rightarrow (k_j)_{j\in J},\quad\text{and}\quad (\sigma,[y^j,(\pi_i)])\colon (n_i)_{i\in I'}\rightarrow (l_j)_{j\in J'},
\end{align*}
is the morphism
\begin{align*}
(\theta,[x^j,(\rho_i)])\circledast (\sigma,[y^j,(\pi_i)])\colon ((m\circledast n)_i)_{i\in I\circledast I'}\rightarrow ((k\circledast l)_j)_{j\in J\circledast J'},
\end{align*}
given by
\begin{enumerate}
\item the surjective order preserving map $\theta\circledast \sigma\colon I\circledast I' \rightarrow J\circledast J'$ defined on $I$ respectively $I'$ by $\theta$ respectively $\sigma$.
\item the flag $[x^j,(\rho_i)]$ for $j\in J$, and the flag $[y^j,(\pi_i)]$ for $j\in J'$.
\end{enumerate}
We also write $(\theta,[x^j,(\rho_i)])\circledast (\sigma,[y^j,(\pi_i)])=(\theta\circledast \sigma,[x^j,(\rho_i)]\circledast[y^j,(\pi_i)])$.
\exend
\end{construction}

\begin{definition}
Let $I$ be a finite linearly ordered set. An \textit{interval} $I'\subseteq I$ is a subset satisfying that if $i<j<l$ and $i,l\in I'$, then $j\in I'$. A \textit{partition} of $I$ is a decomposition $I=\circledast_{t\in T}I_t$ for ordered intervals $I_t\subseteq I$, $t\in T$, where $T$ is some linearly ordered set.
\defend
\end{definition}

We observe that any morphism in $M_{\mathscr{C}}$ can be completely decomposed as the concatenation of morphisms to one object lists.

\begin{observation}\label{complete decomposition}
Let $(\theta,[x^j,(\rho_i)]_{j\in J})\colon (m_i)_{i\in I}\rightarrow (n_j)_{j\in J}$ be a morphism in $M_{\mathscr{C}}$. Then
\begin{align*}
(\theta,[x^j,(\rho_i)]_{j\in J})=\circledast_{j\in J} \,(\theta^j,[x^j,(\rho_i)]_{j\in \{j\}})=(\theta, \circledast_{j\in J} [x^j,(\rho_i)]_{j\in \{j\}}),
\end{align*}
where $(\theta^j,[x^j,\rho_i]_{j\in \{j\}})\colon (m_i)_{i\in \theta^{-1}(j)}\rightarrow (n_j)_{j\in \{j\} }$ is the morphism given by
\begin{enumerate}
\item the unique map $\theta^j=\theta\vert_{\theta^{-1}(j)}\colon \theta^{-1}(j)\rightarrow \{j\}$,
\item the flag $[x^j,(\rho_i)]$ in $n_j$.\exend
\end{enumerate}
\end{observation}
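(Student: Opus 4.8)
The plan is to prove this by unwinding the definition of a morphism in $M_{\mathscr{C}}$ (\Cref{objects and morphisms}) and the definition of concatenation (\Cref{concatenation}); no deeper input is needed. The one structural point to record is the following elementary fact about a surjective order preserving map $\theta\colon I\rightarrow J$ of finite linearly ordered sets: each fibre $\theta^{-1}(j)$ is nonempty (by surjectivity) and is an interval of $I$ (by order preservation), and if $j<j'$ in $J$ then every element of $\theta^{-1}(j)$ is smaller than every element of $\theta^{-1}(j')$. Consequently the fibres constitute a partition $I=\circledast_{j\in J}\theta^{-1}(j)$ in the sense of the definition of partition above, with the index set $J$ of the partition ordered exactly as the given $J$; I would state this as the canonical identification $I\cong\circledast_{j\in J}\theta^{-1}(j)$ of linearly ordered sets.

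Granting that, the first thing I would note is what it says at the level of graded objects: under $I\cong\circledast_{j\in J}\theta^{-1}(j)$, the $I$-indexed list $(m_i)_{i\in I}$ is literally the concatenation $\circledast_{j\in J}(m_i)_{i\in\theta^{-1}(j)}$, and likewise $(n_j)_{j\in J}=\circledast_{j\in J}(n_j)_{j\in\{j\}}$ is the concatenation of the one-object lists. Hence the source and target of $\circledast_{j\in J}(\theta^j,[x^j,(\rho_i)]_{j\in\{j\}})$ agree with those of $\phi$. Next I would compare the remaining data, using \Cref{concatenation}: the order preserving map underlying the iterated concatenation is $\circledast_{j\in J}\theta^j\colon\circledast_{j\in J}\theta^{-1}(j)\rightarrow\circledast_{j\in J}\{j\}$, and since $\theta^j=\theta|_{\theta^{-1}(j)}$ this is exactly $\theta$ under the identifications already in play; and the flag attached to a target index $j\in J$ by the iterated concatenation is precisely the flag $[x^j,(\rho_i)]$, so the full flag datum is $[x^j,(\rho_i)]_{j\in J}$. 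Since a morphism in $M_{\mathscr{C}}$ is determined by its underlying surjective order preserving map together with its flag datum, this forces $\circledast_{j\in J}(\theta^j,[x^j,(\rho_i)]_{j\in\{j\}})=(\theta,[x^j,(\rho_i)]_{j\in J})=\phi$. The second displayed equality in the statement is then nothing but the notational convention of \Cref{concatenation} that records a concatenation of morphisms as the concatenation of its component data, i.e.\ $\circledast_{j\in J}(\theta^j,[x^j,(\rho_i)]_{j\in\{j\}})=(\theta,\circledast_{j\in J}[x^j,(\rho_i)]_{j\in\{j\}})$.

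I do not expect any genuine obstacle here; this is bookkeeping, and the only care required is to make the identification $I\cong\circledast_{j\in J}\theta^{-1}(j)$ explicit so that the two displayed sides are being compared as morphisms between the \emph{same} objects, and to invoke that concatenation of morphisms was defined componentwise so that an iterated concatenation is controlled by its pieces. If desired, I would also remark that the same argument applies verbatim to any partition $I=\circledast_{t\in T}I_t$ refining the fibres of $\theta$, yielding a decomposition of $\phi$ as a $T$-indexed concatenation of morphisms of $M_{\mathscr{C}}$; the asserted case is the finest such, in which every factor has a one-object list as target.
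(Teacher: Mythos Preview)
Your proposal is correct and matches the paper's treatment: the paper states this as an observation without proof, precisely because it is immediate from unwinding the definitions of morphisms in $M_{\mathscr{C}}$ and of concatenation, which is exactly what you do. Your care in making the partition $I\cong\circledast_{j\in J}\theta^{-1}(j)$ explicit is the only non-notational point, and it is handled correctly.
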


We now define composition in $M_{\mathscr{C}}$.

\begin{construction}[Composition]\label{composition}
Let
\begin{align*}
&(\theta,[y^j,(\rho_i)]_{j\in J})\colon (m_i)_{i\in I}\rightarrow (n_j)_{j\in J} \quad\text{and}\quad (\sigma,[x^l,(\pi_j)]_{l\in L})\colon (n_j)_{j\in J}\rightarrow (k_l)_{l\in L}
\end{align*}
be morphisms in $M_{\mathscr{C}}$. The composite is defined by merging the flags of the given morphisms for each $l\in L$:
\begin{align*}
(\sigma,[x^l,(\pi_j)]_{l\in L})&\circ (\theta,[y^j,(\rho_i)]_{j\in J}) \\
&:=\bigg(\sigma\circ \theta, \circledast_{l\in L} \, [x^l,(\pi_j)]\circ\Big([y^j,(\rho_i)]_{j\in \sigma^{-1}(l)}\Big) \bigg),
\end{align*}
where $[x^l,(\pi_j)]\circ\Big([y^j,(\rho_i)]_{j\in \sigma^{-1}(l)}\Big)$ is the $(\sigma\circ \theta)^{-1}(l)$-indexed flag in $k_l$ with associated graded $(a_i)_{i\in (\sigma\circ \theta)^{-1}(l)}$ as defined in \Cref{merging of filtrations}.
\exend
\end{construction}

\begin{remark}
For each $l\in L$, the composition can be pictured by the diagram below, where we have omitted the objects and just denote the finite linearly ordered sets and write $\sigma^{-1}(l)=\{j_0,\cdots, j_k\}$.

\begin{center}
\begin{tikzpicture}
\matrix (m) [matrix of math nodes,row sep=2em,column sep=0.5em]
  {
\theta^{-1}(j_0) & \cdots & \theta^{-1}(j_k) & \qquad\qquad & (\sigma\circ \theta)^{-1}(l) \\
\{j_0\} & \cdots & \{j_k\} & \ \quad\   & \\
& \{l\} & & & \{l\} \\
  };
  \path[-]
	($(m-1-1.south west)!.30!(m-1-1.south east)$) edge (m-2-1) 
	($(m-1-1.south west)!.50!(m-1-1.south east)$) edge (m-2-1)
	($(m-1-1.south west)!.70!(m-1-1.south east)$) edge (m-2-1)
	($(m-1-2.south west)!.20!(m-1-2.south east)$) edge (m-2-2) 
	($(m-1-2.south west)!.50!(m-1-2.south east)$) edge (m-2-2)
	($(m-1-2.south west)!.80!(m-1-2.south east)$) edge (m-2-2)
	($(m-1-3.south west)!.30!(m-1-3.south east)$) edge (m-2-3) 
	($(m-1-3.south west)!.50!(m-1-3.south east)$) edge (m-2-3)
	($(m-1-3.south west)!.70!(m-1-3.south east)$) edge (m-2-3)
	($(m-2-1.south west)!.50!(m-2-1.south east)$) edge (m-3-2)
	($(m-2-1.south west)!.80!(m-2-1.south east)$) edge (m-3-2) 
	($(m-2-2.south west)!.30!(m-2-2.south east)$) edge (m-3-2) 
	($(m-2-2.south west)!.50!(m-2-2.south east)$) edge (m-3-2)
	($(m-2-2.south west)!.70!(m-2-2.south east)$) edge (m-3-2)
	($(m-2-3.south west)!.20!(m-2-3.south east)$) edge (m-3-2) 
	($(m-2-3.south west)!.50!(m-2-3.south east)$) edge (m-3-2)
	($(m-1-5.south west)!.10!(m-1-5.south east)$) edge (m-3-5)
	($(m-1-5.south west)!.30!(m-1-5.south east)$) edge (m-3-5) 
	($(m-1-5.south west)!.50!(m-1-5.south east)$) edge (m-3-5)
	($(m-1-5.south west)!.70!(m-1-5.south east)$) edge (m-3-5)
	($(m-1-5.south west)!.90!(m-1-5.south east)$) edge (m-3-5)
;
\path[->]
	(m-2-4.west) edge[decorate, decoration={snake}] node[above]{$\circ$} (m-2-4.east)
;
\node
also [label=below: {$\scriptstyle [x^l,\, (\pi_j)]$}] (m-3-2)
;
\node
also [label=above left: {$\scriptstyle [y^{j_0}, (\rho_i)]$}] (m-2-1)
;
\node
also [label=above right: {$\scriptstyle [y^{j_k}, (\rho_i)]$}] (m-2-3)
;
\node
also [label=below: {$\scriptstyle [x^l,\,(\pi_j)]\circ\big([y^j,(\rho_i)]_{j\in \sigma^{-1}(l)}\big)$}] (m-3-5)
;
\end{tikzpicture}
\end{center}
\exend
\end{remark}

The concatenation operation defines a monoidal product in $M_{\mathscr{C}}$.

\begin{construction}[Monoidal product]\label{monoidal product in M_C}
Let
\begin{align*}
\circledast\colon M_{\mathscr{C}}\times M_{\mathscr{C}}\rightarrow M_{\mathscr{C}}
\end{align*}
be the functor given by:
\begin{align*}
(m_i)_{i\in I}\circledast (n_j)_{j\in J}=((m\circledast n)_i)_{i\in I\circledast J},
\end{align*}
and
\begin{align*}
(\theta,[x^j,(\rho_i)])\circledast (\sigma,[y^j,(\pi_i)])=(\theta\circledast \sigma,[x^j,(\rho_i)]\circledast [y^j,(\pi_i)]).
\end{align*}

One easily verifies that this defines a strict monoidal product with identity object the empty list $\emptyset$.
\exend
\end{construction}

\begin{remark}\label{non-symmetric multicategories}
The category $M_{\mathscr{C}}$ can also be interpreted as the strict monoidal category coming from a non-symmetric multicategory whose objects are those of $\mathscr{C}$ and where a morphism $(a_1,\ldots,a_n)\rightarrow (b)$ is a flag in $b$ with associated graded $(a_1,\ldots, a_n)$. See \cite[Definition 2.1.1 and \S 2.3]{Leinster04} or \cite[Definitions 3.1.6 and 3.1.7]{GepnerHaugseng15}.
\exend
\end{remark}

\begin{remark}
Let us relate this definition to the reductive Borel--Serre categories defined in \Cref{RBS(M)definition}. Let $A$ be an associative ring and let $\mathcal{P}(A)$ be the exact category of finitely generated projective $A$-modules. For $M\in \mathcal{P}(A)$, there is a fully faithful functor
\begin{align*}
F_M\colon \operatorname{RBS}(M)\rightarrow M_{\mathcal{P}(A)}
\end{align*}
given by sending a splittable flag to its associated graded. For a morphism $gU_{\mathcal{F}}\colon \mathcal{F}\rightarrow \mathcal{F}'$ in $\operatorname{RBS}(M)$, the corresponding morphism is most easily described as a composite
\begin{align*}
\operatorname{gr}(\mathcal{F})\xrightarrow{g} \operatorname{gr}(g\mathcal{F})\xrightarrow{\operatorname{incl}} \operatorname{gr}(\mathcal{F}'),
\end{align*}
where the first map is the map of associated gradeds induced by the isomorphism $g\colon \mathcal{F}\rightarrow g\mathcal{F}$ and the second is the one induced by the refinement $g\mathcal{F}\subset \mathcal{F}'$. More precisely, the refinement $g\mathcal{F}\subset \mathcal{F}'$ defines a morphism as follows, where we write
\begin{align*}
\mathcal{F}=(M_1 \subsetneq \cdots \subsetneq M_{d-1}), \quad\text{and}\quad \mathcal{F}'=(N_1 \subsetneq \cdots \subsetneq N_{e-1})
\end{align*}
and recall that there is an order-preserving injective map $f\colon \{1,\ldots, e-1\}\rightarrow \{1,\ldots, d-1\}$ such that $N_j=gM_{f(j)}$. The surjective order preserving map $\theta\colon \{1,\ldots, d\}\rightarrow \{1,\ldots, e\}$ is given by $i\mapsto  \operatorname{min}\{j\mid i\leq f(j)\}$. For $j\in \{1,\ldots, e\}$, we have $\theta^{-1}(j)=\{f(j-1)+1,\ldots, f(j)\}$ (setting $f(0)=0$) and we choose the flag in $N_{j}/N_{j-1}$ given by the image of the flag
\begin{align*}
[gM_{f(j-1)+1}\subset gM_{f(j-1)+2}\subset \cdots \subset gM_{f(j)}=N_{j}].
\end{align*}

For a set $\mathcal{M}$ of representatives of finitely generated projective $A$-modules, these functors provide an equivalence $M_{\mathcal{P}(A)}\simeq \coprod_{M\in \mathcal{M}} \operatorname{RBS}(M)$.
\exend
\end{remark}

We make the following useful observations.

\begin{proposition}\label{all morphism in M_C are monomorphisms}
All morphisms in $M_{\mathscr{C}}$ are monomorphisms.
\end{proposition}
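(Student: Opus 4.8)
The plan is to unwind the definition of a morphism in $M_{\mathscr{C}}$ and reduce, via the complete decomposition of \Cref{complete decomposition} and the fact that $\circledast$ is a strict monoidal product, to checking the monomorphism property for morphisms whose target is a one-object list $(n)$. Concretely, suppose $\phi = (\theta, [x^j,(\rho_i)]_{j\in J})\colon (m_i)_{i\in I}\to (n_j)_{j\in J}$ and we are given two morphisms $\psi, \psi'\colon (l_k)_{k\in K}\to (m_i)_{i\in I}$ with $\phi\circ\psi = \phi\circ\psi'$. I first want to observe that the underlying surjective order-preserving maps satisfy $\theta\circ\sigma = \theta\circ\sigma'$ (where $\sigma,\sigma'$ are the maps underlying $\psi,\psi'$); since $\theta$ is surjective, hence an epimorphism of linearly ordered sets, this forces $\sigma = \sigma'$. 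So $\psi$ and $\psi'$ have the same underlying combinatorial data and it remains to compare the flag data, which we may do one target-index $j\in J$ at a time, i.e.\ after restricting to $\theta^{-1}(j)$ and to the sub-list mapping into it. This is exactly the reduction to $J = \{j\}$, a single object $(n)$.

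In that reduced situation, a morphism into $(n)$ is the data of a flag in $n$ with prescribed associated graded, i.e.\ (a representative of) a chain $0 = x_{-1}\rightarrowtail x_0\rightarrowtail\cdots\rightarrowtail x_{k} = n$ together with the admissible epimorphisms $\rho_i\colon x_i\twoheadrightarrow a_i$ exhibiting the short exact sequences $x_{i-1}\rightarrowtail x_i\twoheadrightarrow a_i$; and composing with a further morphism on the source side amounts, by \Cref{merging of filtrations}, to pulling this flag back along the given flag in each $a_i$. The key point is then that each such merging step is built entirely out of pullbacks of admissible monomorphisms along admissible epimorphisms, and that the representing diagram of a filtration is unique up to unique isomorphism (this uniqueness is part of the definition of an $I$-indexed filtration). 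So if $\psi$ and $\psi'$ have the same underlying $\theta$ and produce the same merged flag after composing with $\phi$, then tracing through the pullback squares in \Cref{merging of filtrations} and using the universal property of pullbacks, the data of $\psi$ and of $\psi'$ (the flags $[y^j,(\rho_i)]$ on each $m_i$) are recovered from the merged data together with the fixed flag $[x^j,(\rho_i)]$ of $\phi$; hence $\psi = \psi'$.

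I would organize the write-up so that the genuine content is isolated in the single-target case: there, a morphism $(l_k)_{k\in K}\to (n)$ is literally a flag in $n$, and the claim "$\phi$ is mono" becomes the statement that, given the outer flag $[x,(\rho_i)]$ of $\phi$ fixed, the inner flags on the $m_i$ are determined by the merged flag. This in turn follows because in each of the pullback squares appearing in the merging diagram, the top horizontal map $\hat y_{i-1}\rightarrowtail \hat y_i$ together with the vertical admissible epimorphism $\hat y_i\twoheadrightarrow y_i$ and the commuting square determine the map $\hat y_i \to y_i$, and then $y_{i-1}\rightarrowtail y_i$ is recovered as (the image of) $\hat y_{i-1}$; combined with uniqueness-up-to-unique-isomorphism of representing diagrams this pins down $[y^j,(\rho_i)]$. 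The main obstacle I anticipate is purely bookkeeping: making the induction on the number of steps of the flags precise and keeping track of which squares are bicartesian (using \Cref{pullbacks and pushouts} and axioms (4)--(6)), so that the universal property arguments actually apply; there is no conceptual difficulty, just the usual care needed with equivalence classes of filtration diagrams. An alternative, possibly cleaner, route I would consider is to exhibit a faithful functor from $M_{\mathscr{C}}$ to some category where monomorphisms are transparent — e.g.\ recording a morphism by its effect on the "total object" together with all the intermediate subobjects — but I expect the direct pullback argument above to be the most self-contained.
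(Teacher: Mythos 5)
There is a genuine gap at the very first step, and it sits exactly where the real content of the proposition lies. From $\phi\circ\psi=\phi\circ\psi'$ you correctly get $\theta\circ\sigma=\theta\circ\sigma'$ for the underlying order-preserving maps, but you then conclude $\sigma=\sigma'$ ``since $\theta$ is surjective, hence an epimorphism''. Epimorphisms cancel on the right, not on the left: $\theta\circ\sigma=\theta\circ\sigma'$ would force $\sigma=\sigma'$ only if $\theta$ were injective, which the surjective structure map of a morphism in $M_{\mathscr{C}}$ typically is not. Concretely, take $\theta\colon\{1,2\}\to\{1\}$ the unique map and $\sigma\neq\sigma'\colon\{1,2,3\}\to\{1,2\}$ the two distinct surjective order-preserving maps; then $\theta\circ\sigma=\theta\circ\sigma'$. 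So the equality of the combinatorial data of $\psi$ and $\psi'$ cannot be obtained by cancellation; it is itself part of what must be proved, and it is exactly the point where the paper's proof uses the flag structure: because flags are chains of \emph{non-invertible} admissible monomorphisms, the merged flag determines at which indices it passes through the subobjects $x_j$ of the fixed outer flag, and this is what recovers the index map. Your subsequent reduction ``one target-index at a time'' also implicitly relies on $\sigma=\sigma'$; it can be repaired (only $\theta\circ\sigma=\theta\circ\sigma'$ is needed to obtain a common decomposition of the source and reduce to one-object targets), but as written the combinatorial half of the monomorphism property is assumed rather than proved.

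The second half of your argument --- recovering the inner flags from the merged flag once the index maps agree --- is essentially the right idea and close to the paper's, except that the universal property you want is the pushout (cokernel) one rather than the pullback one: in the diagram of \Cref{merging of filtrations}, $\hat{y}_i$ is obtained from $y_i$ by pulling back along the admissible epimorphism $x_j\twoheadrightarrow b_j$, so going back requires quotienting, i.e.\ using that the relevant squares are bicartesian (\Cref{pullbacks and pushouts}) and that admissible epimorphisms are cokernels of their corresponding monomorphisms (axiom (4)); the phrase ``image of $\hat{y}_{i-1}$'' has no meaning in a category with filtrations except through this cokernel description, which is the ``universal property of pushouts'' invoked in the paper. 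With that adjustment this part goes through, but the proposal does not constitute a proof until the equality of the underlying order-preserving maps is established by a genuine argument.
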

\begin{proof}
It suffices to show that morphisms to one object lists are monomorphisms. This amounts to showing that for given any flag $[x^J,(\rho_j)]$ in $m$ with associated graded $(b_j)_{j\in J}$ and any given surjective order preserving maps $\theta,\sigma\colon I\rightarrow J$, and flags $[y^{\theta^{-1}(j)}, (\mu_i)]$ and $[z^{\sigma^{-1}(j)}, (\nu_i)]$ in $b_j$ for all $j\in J$, we have
\begin{align*}
[x^J, (\rho_j)]\circ \Big([y^{\theta^{-1}(j)}, (\mu_i)]_{j\in J}\Big) =[x^J, (\rho_j)]\circ \Big([z^{\sigma^{-1}(j)}, (\nu_i)]_{j\in J}\Big),
\end{align*}
if and only if $\theta=\sigma$ and $[y^{\theta^{-1}(j)}, (\mu_i)] =[z^{\sigma^{-1}(j)}, (\nu_i)]$ for all $j\in J$. The equality $\theta=\sigma$ follows directly from the fact that flags are defined by sequences of non-invertible admissible monomorphisms, and the equality of flags is then verified by the universal property of pullbacks.
\end{proof}

The following proposition gives a decomposition of morphisms which will be crucial to our proofs later on. We picture the decomposition diagrammatically and provide some intuition below.

\begin{proposition}\label{decomposition technical statement}
Let $\phi\colon (a_i)_{i\in I_1}\circledast (m_i)_{i\in I_2}\circledast (b_i)_{i\in I_3}\rightarrow (n_j)_{j\in J}$ be a morphism in $M_{\mathscr{C}}$ given by an order preserving map $\theta\colon I_1\circledast I_2\circledast I_3\rightarrow J$. Let $J=J_1\circledast J_2\circledast J_3$ be the partition given by $J_1=\theta(I_1)-\theta(I_2\cup I_3)$ and $J_3=\theta(I_3)-\theta(I_2\cup I_1)$. Then $\phi$ can be written on the form
\begin{align*}
\phi=\phi_A\circledast (f\circ (\phi_a\circledast \operatorname{id}\circledast \phi_b))\circledast \phi_B,
\end{align*}
for a morphism 
\begin{align*}
f\colon a\circledast (m_i)_{i\in I_2}\circledast b\rightarrow (n_j)_{j\in J_2} 
\end{align*}
with $a$ and $b$ one object lists or the empty list, and morphisms
\begin{align*}
&\phi_A\colon (a_i)_{i\in I_A}\rightarrow (n_j)_{j\in J_1},\quad\text{and}\quad \phi_a\colon (a_i)_{i\in I_a}\rightarrow a,\\
&\phi_B\colon (b_i)_{i\in I_B}\rightarrow (n_j)_{j\in J_3},\quad\text{and}\quad \phi_b\colon (b_i)_{i\in I_b}\rightarrow b,
\end{align*}
where $I_A\subset I_1$ is the preimage of $J_1$ and $I_a$ is its complement, and $I_B\subset I_3$ is the preimage of $J_3$ and $I_b$ its complement.

\medskip

Moreover, if $\phi=\phi'_A\circledast (f'\circ (\phi_{a'}\circledast \operatorname{id}\circledast \phi_{b'}))\circledast \phi'_B$ is another such decomposition, then $\phi_A=\phi_A'$, $\phi_B=\phi_B'$ and
\begin{align*}
\phi_a=\alpha\circ\phi_{a'},\quad \phi_b=\beta\circ \phi_{b'}, \quad \text{and}\quad f'=f\circ (\alpha \circledast \operatorname{id}_m\circledast \beta)
\end{align*}
for unique isomorphisms $\alpha\colon a'\rightarrow a$, $\beta\colon b'\rightarrow b$.
\end{proposition}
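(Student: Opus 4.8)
The plan is to unwind the definitions of concatenation and composition in $M_{\mathscr{C}}$, reducing everything to \Cref{complete decomposition} and the universal properties of the merging operation (\Cref{merging of filtrations}). First I would fix the morphism $\phi=(\theta,[x^j,(\rho_i)]_{j\in J})$ and observe that the partition $J=J_1\circledast J_2\circledast J_3$ is well-defined: since $\theta$ is order preserving, $\theta(I_1)$, $\theta(I_2)$, $\theta(I_3)$ are intervals of $J$ which may only overlap in adjacent pairs, so $J_1=\theta(I_1)\smallsetminus\theta(I_2\cup I_3)$, $J_3=\theta(I_3)\smallsetminus\theta(I_2\cup I_1)$ and $J_2:=J\smallsetminus(J_1\cup J_3)$ really do form a partition into intervals, with $J_2\supseteq\theta(I_2)$. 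Correspondingly $I_1=I_A\circledast I_a$ and $I_3=I_b\circledast I_B$ where $I_A=\theta^{-1}(J_1)\cap I_1$ etc. Then by \Cref{complete decomposition} applied blockwise — i.e. grouping the one-object factors $(\theta^j,[x^j,(\rho_i)])$ according to whether $j\in J_1$, $j\in J_2$, or $j\in J_3$ — we get $\phi=\phi_A\circledast\phi_2\circledast\phi_B$ where $\phi_A$ is the concatenation of the factors over $J_1$ (whose sources, by the definition of $J_1$, lie entirely in $I_A\subseteq I_1$), similarly $\phi_B$ over $J_3$ with source in $I_B\subseteq I_3$, and $\phi_2\colon (a_i)_{i\in I_a}\circledast(m_i)_{i\in I_2}\circledast(b_i)_{i\in I_b}\to(n_j)_{j\in J_2}$ is the concatenation of the factors over $J_2$.

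Next I would produce the inner factorisation $\phi_2=f\circ(\phi_a\circledast\operatorname{id}\circledast\phi_b)$. Let $j_-$ be the smallest element of $J_2$ and $j_+$ the largest. The indices $i\in I_a$ all map under $\theta$ into $\{j_-\}$ (again by the definition of $J_1$: anything in $I_a=I_1\smallsetminus I_A$ lands in $\theta(I_1)\cap\theta(I_2\cup I_3)$, which is at most $\{j_-\}$), so the flag structure over $j_-$ decomposes: the $\theta^{-1}(j_-)$-indexed flag in $n_{j_-}$ restricts along $I_a\subset\theta^{-1}(j_-)$ to a flag with some associated-graded object which I take to be $a$, and we set $\phi_a\colon(a_i)_{i\in I_a}\to a$ to be that restricted flag (the empty list if $I_a=\emptyset$). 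Symmetrically, $I_b$ maps into $\{j_+\}$, giving $b$ and $\phi_b\colon (b_i)_{i\in I_b}\to b$. Feeding the list $a\circledast(m_i)_{i\in I_2}\circledast b$ back in, the remaining data of $\phi_2$ assembles into a morphism $f\colon a\circledast(m_i)_{i\in I_2}\circledast b\to(n_j)_{j\in J_2}$ — here one uses exactly the transitivity/well-definedness of merging from \Cref{merging of filtrations} to see that $f\circ(\phi_a\circledast\operatorname{id}\circledast\phi_b)$ recovers $\phi_2$ on the nose. This is a routine but slightly fiddly bookkeeping step; the key conceptual input is that merging a flag on $a$ into the $a$-slot of $f$ is the inverse operation to restricting a flag to an interval of its index set.

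Finally, uniqueness. Suppose $\phi=\phi'_A\circledast(f'\circ(\phi_{a'}\circledast\operatorname{id}\circledast\phi_{b'}))\circledast\phi'_B$ is another decomposition of the required shape. Since all morphisms in $M_{\mathscr{C}}$ are monomorphisms (\Cref{all morphism in M_C are monomorphisms}) and concatenation is compatible with the underlying order-preserving maps, comparing the induced maps on linearly ordered index sets forces the partitions to agree, hence $\phi_A'=\phi_A$, $\phi_B'=\phi_B$ by cancelling the monomorphisms $\phi_A,\phi_B$; likewise the middle factors $f\circ(\phi_a\circledast\operatorname{id}\circledast\phi_b)$ and $f'\circ(\phi_{a'}\circledast\operatorname{id}\circledast\phi_{b'})$ must coincide. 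Now both $\phi_a$ and $\phi_{a'}$ are flags on (necessarily isomorphic) one-object lists $a$, $a'$ whose associated graded is $(a_i)_{i\in I_a}$ in the same order; by the uniqueness-up-to-unique-isomorphism clause in the definition of a flag, there is a unique isomorphism $\alpha\colon a'\to a$ with $\phi_a=\alpha\circ\phi_{a'}$ (and $a=a'$, $\alpha=\operatorname{id}$ when $I_a=\emptyset$), and similarly a unique $\beta\colon b'\to b$ with $\phi_b=\beta\circ\phi_{b'}$. Substituting back into the equality of middle factors and using that $\phi_a\circledast\operatorname{id}\circledast\phi_b$ is a (split) epimorphism onto its image-list — more precisely, cancelling it using \Cref{all morphism in M_C are monomorphisms} after noting the relevant square commutes — yields $f'=f\circ(\alpha\circledast\operatorname{id}_m\circledast\beta)$. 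The main obstacle I anticipate is purely organisational: keeping the three blocks $J_1,J_2,J_3$ and the six index sets straight while invoking the merging well-definedness at the right moments; there is no essential difficulty beyond that, since every cancellation is licensed by \Cref{all morphism in M_C are monomorphisms} and every isomorphism by the flag-uniqueness built into the definitions.
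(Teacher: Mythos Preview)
The paper does not give a full proof of this proposition --- it explicitly says ``We do not provide a full proof of this, as it is a straightforward albeit technical observation'' and then works through an illustrative example in $\operatorname{Vect}(k)$. Your proposal fleshes out precisely the strategy that example suggests: split $\phi$ block-wise over $J_1\circledast J_2\circledast J_3$ via \Cref{complete decomposition}, then at the boundary indices $j_-,j_+$ of $J_2$ split the flags to extract the one-object lists $a,b$ and the inner morphism $f$, using the merging/splitting correspondence of \Cref{merging of filtrations}. So you are on the same track as the paper, and in fact doing more of the work.

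One point to tighten in the uniqueness argument: to deduce $f'=f\circ(\alpha\circledast\operatorname{id}\circledast\beta)$ from
\[
\big(f\circ(\alpha\circledast\operatorname{id}\circledast\beta)\big)\circ(\phi_{a'}\circledast\operatorname{id}\circledast\phi_{b'})=f'\circ(\phi_{a'}\circledast\operatorname{id}\circledast\phi_{b'})
\]
you need to cancel on the \emph{right}, which requires $(\phi_{a'}\circledast\operatorname{id}\circledast\phi_{b'})$ to be an epimorphism, not a monomorphism; and it is certainly not a \emph{split} epimorphism, since the underlying index maps in $M_{\mathscr{C}}$ are surjections and so only isomorphisms admit sections. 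The fix is that every morphism in $M_{\mathscr{C}}$ is also an epimorphism: the underlying order map is surjective, and a coarse flag is uniquely recovered from any refinement together with the specification of which steps to retain --- essentially the dual of the argument in \Cref{all morphism in M_C are monomorphisms}. With that in hand your cancellation goes through. Similarly, the equalities $\phi_A=\phi_A'$, $\phi_B=\phi_B'$ follow directly from reading off the flag data over $J_1$ and $J_3$ in the concatenation, rather than from any cancellation.
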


We do not provide a full proof of this, as it is a straightforward albeit technical observation. Instead we provide an example below in the case when $\mathscr{C}=\operatorname{Vect}(k)$ is the category of finite dimensional vector spaces over some field $k$, as this illustrates the intuition behind the decomposition better. The reader can readily verify that this generalises directly. The decomposition relies crucially on the fact that filtrations can be merged and split, mirroring the way filtrations of vector spaces behave (see \Cref{merging and splitting of filtrations in vector spaces}).  The idea is to ``collapse'' the outer tuples to one-object (or empty) lists so that we find a ``terminal decomposition'' of the morphism --- this will be vital to our arguments later on (see \Cref{terminal decomposition}). The point is to identify the ``inner'' part of the morphism emitting from the interval $I_2$, where it may be necessary to add an object on either side to actually get a morphism. Before explaining the example, we note that the decomposition can be illustrated by the diagram below (where we have replaced the objects by the finite linearly ordered sets for notational simplicity). The dashed arrows illustrate that the decompositions differ by (unique) isomorphisms of the one object lists that are interpolated.

\begin{center}
\begin{tikzpicture}
\matrix (m) [matrix of math nodes,row sep=2em,column sep=1em, text height=1.5ex, text depth=0.25ex]
  {
(I_1) & (I_2) & (I_3) & \quad\quad & (I_A) & (I_a) & (I_2) & (I_b) & (I_B)\\
  && &  \quad\quad & (I_A) & \bullet & (I_2) & \bullet & (I_B)  \\
  & (J) &&& &(J_1) & (J_2) & (J_3) & \\
  };
  \path[-]
  ($(m-1-1.south west)!.30!(m-1-1.south east)$) edge node[left]{$\phi\quad$} (m-3-2) 
  ($(m-1-1.south west)!.50!(m-1-1.south east)$) edge (m-3-2)
  ($(m-1-1.south west)!.70!(m-1-1.south east)$) edge (m-3-2)
  ($(m-1-2.south west)!.30!(m-1-2.south east)$) edge (m-3-2) 
  ($(m-1-2.south west)!.50!(m-1-2.south east)$) edge (m-3-2)
  ($(m-1-2.south west)!.70!(m-1-2.south east)$) edge (m-3-2)
  ($(m-1-3.south west)!.30!(m-1-3.south east)$) edge (m-3-2) 
  ($(m-1-3.south west)!.50!(m-1-3.south east)$) edge (m-3-2)
  ($(m-1-3.south west)!.70!(m-1-3.south east)$) edge (m-3-2)
  ($(m-1-6.south west)!.30!(m-1-6.south east)$) edge (m-2-6) 
  ($(m-1-6.south west)!.50!(m-1-6.south east)$) edge (m-2-6)
  ($(m-1-6.south west)!.70!(m-1-6.south east)$) edge (m-2-6)
  ($(m-1-8.south west)!.30!(m-1-8.south east)$) edge (m-2-8) 
  ($(m-1-8.south west)!.50!(m-1-8.south east)$) edge (m-2-8)
  ($(m-1-8.south west)!.70!(m-1-8.south east)$) edge  node[right]{$\phi_b$} (m-2-8)
  ($(m-2-7.south west)!.30!(m-2-7.south east)$) edge (m-3-7) 
  ($(m-2-7.south west)!.50!(m-2-7.south east)$) edge (m-3-7)
  ($(m-2-7.south west)!.70!(m-2-7.south east)$) edge node[right]{$f$} (m-3-7)
  ($(m-2-5.south west)!.30!(m-2-5.south east)$) edge node[below left]{$\phi_A$} (m-3-6) 
  ($(m-2-5.south west)!.50!(m-2-5.south east)$) edge (m-3-6)
  ($(m-2-5.south west)!.70!(m-2-5.south east)$) edge (m-3-6)
  ($(m-2-9.south west)!.30!(m-2-9.south east)$) edge (m-3-8) 
  ($(m-2-9.south west)!.50!(m-2-9.south east)$) edge (m-3-8)
  ($(m-2-9.south west)!.70!(m-2-9.south east)$) edge node[below right]{$\phi_B$} (m-3-8)
(m-1-5) edge[double equal sign distance] node[left]{$\operatorname{id}$} (m-2-5)
(m-1-7) edge[double equal sign distance] node[left]{$\operatorname{id}$} (m-2-7)
(m-1-9) edge[double equal sign distance] node[right]{$\operatorname{id}$} (m-2-9)
(m-2-6) edge (m-3-7)
(m-2-8) edge (m-3-7)
  ;
  \path[->,dashed]
(m-2-8) edge[out= -30,in= 30,looseness=5] (m-2-8)
(m-2-6) edge[out= 210,in= 150,looseness=5] (m-2-6)
  ;
  \path[->]
	(m-2-4.west) edge[decorate, decoration={snake}] node[above]{$\scriptstyle \text{decomposition}$} (m-2-4.east)
;
\end{tikzpicture}
\end{center}

\begin{example}
Let $k$ be a field and $\mathscr{C}=\operatorname{Vect}(k)$, and consider a morphism
\begin{align*}
\phi\colon (A_0,A_1,A_2,M_0,M_1,B_0,B_1,B_2,B_3) \longrightarrow (N_0,N_1,N_2)
\end{align*}
in $M_{\mathscr{C}}$, which is given by the surjective order preserving map $[8]\cong [2]\circledast [1]\circledast [3]\rightarrow [2]$ which partitions $[8]$ into $\{0<1<2<3\}$, $\{4<5<6\}$ and $\{7<8\}$. Then $\phi$ is additionally given by three flags with associated gradeds:
\begin{itemize}[label=$\ast$]
\item a flag $F_0\subset F_1\subset F_2\subset F_3=N_0$ together with an identification
\begin{align*}
F_0\oplus F_1/F_0\oplus F_2/F_1\oplus F_3/F_2\cong A_0\oplus A_1\oplus A_2 \oplus M_0,
\end{align*}
respecting the grading;
\item a flag $E_0\subset E_1\subset E_2=N_1$ together with an identification
\begin{align*}
E_0\oplus E_1/E_0\oplus E_2/E_1\cong M_1\oplus B_0\oplus B_1,
\end{align*}
respecting the grading;
\item a flag $V_0\subset V_1=N_2$ together with an identification $V_0\oplus V_1/V_0\cong B_2\oplus B_3$, respecting the grading.
\end{itemize}

We illustrate this morphism by the following diagram.

\begin{center}
\begin{tikzpicture}
\matrix (m) [matrix of math nodes,row sep=2em,column sep=2em, text height=1.5ex, text depth=0.25ex]
  {
 A_0 & A_1 & A_2 & M_0 & M_1 & B_0 & B_1 & B_2 & B_3 \\
   & & N_0 & & & N_1 & & N_2 & \\
  };
\path[-]
(m-1-1) edge (m-2-3)
(m-1-2) edge (m-2-3)
(m-1-3) edge (m-2-3)
(m-1-4) edge (m-2-3)
(m-1-5) edge (m-2-6)
(m-1-6) edge (m-2-6)
(m-1-7) edge (m-2-6)
(m-1-8) edge (m-2-8)
(m-1-9) edge (m-2-8)
;
\node
also [label=below:$\scriptstyle F_0\subset F_1\subset F_2\subset F_3$] (m-2-3)
;
\node
also [label=below:$\scriptstyle E_0\subset E_1\subset E_2$] (m-2-6)
;
\node
also [label=below:$\scriptstyle V_0\subset V_1$] (m-2-8)
;
\end{tikzpicture}
\end{center}

We obtain the desired decomposition by replacing the subset $(A_0,A_1,A_2)$ of the source tuple by $(F_2)$ and the subset $(B_0,B_1)$ by $(E_2/E_0)$ and splitting the flags accordingly. This is illustrated by the following diagram. The identification of the associated gradeds are the obvious ones inherited from the data defining $\phi$.

\begin{center}
\begin{tikzpicture}
\matrix (m) [matrix of math nodes,row sep=2em,column sep=0.5em, text height=1.5ex, text depth=0.25ex]
  {
 A_0 & & A_1 & & A_2 & & M_0 & & M_1 & & B_0 & & B_1 & & & B_2 & & B_3\\
   & & F_2 & & & & M_0 & & M_1 & & & E_2/E_0 & & & & B_2  & & B_3 \\
   & & & & N_0 & & & & & & N_1 & & & & & N_2 & & \\
  };
\path[-]
(m-1-1) edge (m-2-3)
(m-1-3) edge (m-2-3)
(m-1-5) edge (m-2-3)
(m-1-7) edge[double equal sign distance] (m-2-7)
(m-1-9) edge[double equal sign distance] (m-2-9)
(m-1-11) edge (m-2-12)
(m-1-13) edge (m-2-12)
(m-1-16) edge[double equal sign distance] (m-2-16)
(m-1-18) edge[double equal sign distance] (m-2-18)
(m-2-3) edge (m-3-5)
(m-2-7) edge (m-3-5)
(m-2-9) edge (m-3-11)
(m-2-12) edge (m-3-11)
(m-2-16) edge (m-3-16)
(m-2-18) edge (m-3-16)
;
\node
also [label=below left:$\scriptstyle F_0\subset F_1\subset F_2\hspace{-1em}$] (m-2-3)
;
\node
also [label=below right:$\hspace{-1.3em}\scriptstyle E_1/E_0\subset E_2/E_0$] (m-2-12)
;
\node
also [label=below:$\scriptstyle F_2\subset F_3$] (m-3-5)
;
\node
also [label=below:$\scriptstyle E_0\subset E_2$] (m-3-11)
;
\node
also [label=below:$\scriptstyle V_0\subset V_1$] (m-3-16)
;
\end{tikzpicture}
\end{center}

In the general case, where we do not have canonical choices of subspaces and quotients, one needs to make a choice of subobject $F_2$ and a choice of quotient object $E_2/E_0$. These are unique up to unique isomorphism, so decompositions given by different choices differ by unique isomorphisms of these one object lists.

\medskip

With the notation of the proposition, we have
\begin{itemize}[label=$\ast$]
\item $J=[2]=\emptyset\circledast \{0<1\}\circledast \{2\}=J_1\circledast J_2\circledast J_3$,
\item $\phi_A\colon \emptyset\rightarrow \emptyset$ is the unique morphism, and $\phi_B\colon (B_2,B_3)\rightarrow (N_2)$ is given by the flag $V_0\subset V_1=N_2$ and the identification of the associated graded coming from of $\phi$,
\item $\phi_a\colon (A_0,A_1,A_2)\rightarrow (F_2)$  is given by the flag $F_0\subset F_1\subset F_2$ and the identification of the associated graded coming from $\phi$,
\item $\phi_b\colon (B_0,B_1)\rightarrow (E_2/E_0)$ is given by the flag $E_1/E_0 \subset E_2/E_0$ and the identification of the associated graded coming from $\phi$,
\item $f\colon (F_2,M_0,M_1,E_2/E_0)\rightarrow (N_0,N_1)$ is the concatenation of two morphisms
\begin{align*}
f_1\colon (F_2,M_0)\rightarrow (N_0)\quad\text{and}\quad f_2\colon (M_1, E_2/E_0)\rightarrow (N_1)
\end{align*}
given by the flags $F_2\subset F_3=N_0$ respectively $E_0\subset E_2=N_1$ where the identifications of the associated gradeds are given by the identities on $F_2$ respectively $E_2/E_0$ and by the isomorphisms $F_3/F_2\cong M_0$ respectively $E_0\cong M_1$ given by $\phi$.
\end{itemize}
\exend
\end{example}

\subsection{A 2-categorical Q-construction}\label{Q-constructions}

In this section, we associate a $2$-category $Q_2(M)$ to any strict monoidal category $M$. See \Cref{appendix nerves and geometric realisations} for the basic notions of $2$-categories needed here.

\medskip

Let $M$ be a strict monoidal category. We denote the monoidal product by juxtaposition. The following construction is simply the action $2$-category $Q(M\times M^{\otimes^{\operatorname{op}}}, M)$ of \Cref{M acting on itself}, but we remind ourselves of the details for clarity.

\begin{construction}
We define a $2$-category $Q_2(M)$ as follows:
\begin{itemize}[label=$\ast$]
\item the objects are those of $M$, 
\item a morphism $m\rightarrow m'$ is a tuple $(a,b,\phi\colon amb\rightarrow m')$ where $a$ and $b$ are objects of $M$ and $\phi$ is a morphism in $M$,
\item  and a $2$-cell $(a,b,\phi)\rightarrow (a',b',\phi')$ is a pair of morphisms $\alpha\colon a\rightarrow a'$, $\beta\colon b\rightarrow b'$ in $M$ such that $\phi=\phi'\circ (\alpha \,\operatorname{id}_m\,\beta)$,
\end{itemize}

and the various composites are as follows:
\begin{itemize}[label=$\ast$]
\item for morphisms:
\begin{center}
\begin{tikzpicture}
\matrix (m) [matrix of math nodes,row sep=2em,column sep=3em,, text height=1.5ex, text depth=0.25ex]
  {
	m & \\   
	m' & m'' \\   
  };
  \path[-stealth] 
	(m-1-1) edge node[left]{$(c,d,\psi)$} (m-2-1) edge node[above right]{$(ac, db, \phi\circ (\operatorname{id}_a\,\psi\,\operatorname{id}_b))$} (m-2-2) 
	(m-2-1) edge node[below]{$(a,b,\phi)$} (m-2-2) 
  ;
\end{tikzpicture}
\end{center}

\item composition of $2$-cells along morphisms (i.e.~within hom-categories) is given by coordinatewise composition:
\begin{center}
\begin{tikzpicture}
\matrix (m) [matrix of math nodes,row sep=2em,column sep=3em, text height=1.5ex, text depth=0.25ex]
  {
	(a,b,\phi) & \\   
	(a',b',\phi') & (a'',b'',\phi'') \\   
  };
  \path[-stealth] 
	(m-1-1) edge node[left]{$(\alpha,\beta)$} (m-2-1) edge node[above right]{$(\alpha'\circ \alpha,\beta'\circ\beta)$} (m-2-2) 
	(m-2-1) edge node[below]{$(\alpha',\beta')$} (m-2-2) 
  ;
\end{tikzpicture}
\end{center}

\item composition of $2$-cells along objects is given by the monoidal product:  the composite of the following $2$-cells
\begin{center}
\begin{tikzcd}[row sep=2em,column sep=5em]
 m \arrow[bend left=35]{r}[name=u,below]{}[above]{(c,d,\psi)}
    \arrow[bend right=35]{r}[name=d]{}[below]{(c', d',\psi')}
    \arrow[Rightarrow,to path=(u) -- (d)\tikztonodes]{r}{\,(\gamma,\delta)}
    & 
    m' \arrow[bend left=35]{r}[name=u,below]{}[above]{(a,b,\phi)}
    \arrow[bend right=35]{r}[name=d]{}[below]{(a',b',\phi')}
    \arrow[Rightarrow,to path=(u) -- (d)\tikztonodes]{r}{\,(\alpha,\beta)}
    & 
    m''
\end{tikzcd}
\end{center}
is the $2$-cell
\begin{align*}
(\alpha\gamma,\delta\beta)\colon (ac, db, \phi\circ (\operatorname{id}_a\,\psi\,\operatorname{id}_b))\longrightarrow (a'c', d'b', \phi'\circ (\operatorname{id}_{a'}\,\psi'\,\operatorname{id}_{b'}))
\end{align*}
\end{itemize}

We also define a $1$-category $Q_1(M)$ by taking components of the hom-categories in $Q_2(M)$. More precisely, the objects of $Q_1(M)$ are those of $Q_2(M)$ and the hom-sets are given by taking equivalence classes of morphisms in $Q_2(M)$, where two morphisms are equivalent, if there is a zig-zag of $2$-cells between them.

\medskip

Interpreting $Q_1(M)$ as a $2$-category with only identity $2$-cells, there is a canonical (strict) pseudofunctor $\kappa_M\colon Q_2(M)\rightarrow Q_1(M)$.
\exend
\end{construction}

\begin{observation}
A strong monoidal functor $M\rightarrow N$ induces a pseudofunctor between the associated $2$-categories $Q_2(M)\rightarrow Q_2(N)$, and a monoidal natural transformation of strong monoidal functors induces an oplax natural transformation of pseudofunctors. In particular, if $M$ is essentially small, then $Q_2(M)$ and $Q_1(M)$ admit geometric realisations.
\exend
\end{observation}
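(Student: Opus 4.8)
The assertion is a matter of unwinding definitions, and the plan is to treat its three parts in turn. First I would construct, from a strong monoidal functor $F\colon M\rightarrow N$ with structure isomorphisms $\mu_{a,b}\colon F(a)F(b)\overset{\sim}{\rightarrow}F(ab)$ and $\mu_0\colon\mathbf 1_N\overset{\sim}{\rightarrow}F(\mathbf 1_M)$, the pseudofunctor $Q_2(F)$ by: $Q_2(F)(m)=F(m)$ on objects; a morphism $(a,b,\phi\colon amb\rightarrow m')$ goes to $(F(a),F(b),F(\phi)\circ\mu)$, where $\mu\colon F(a)F(m)F(b)\rightarrow F(amb)$ is the iterated structure isomorphism; and a $2$-cell $(\alpha,\beta)$ goes to $(F(\alpha),F(\beta))$, the defining relation $\phi=\phi'\circ(\alpha\,\mathrm{id}_m\,\beta)$ being preserved by functoriality of $F$ and naturality of $\mu$. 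Since composition of $1$-cells in $Q_2$ uses the monoidal product of the outer objects, $Q_2(F)$ will only be a \emph{pseudo}functor, with compositor $2$-cells assembled from $\mu$ (applied to the relevant pairs of outer objects) and unitor $2$-cell from $\mu_0$. I would then verify the associativity and unit coherences for this pseudofunctor; diagram by diagram these reduce to the coherence axioms of $(\mu,\mu_0)$, so this step is routine.

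Second, for a monoidal natural transformation $\eta\colon F\Rightarrow G$ of strong monoidal functors I would build the oplax natural transformation $Q_2(\eta)\colon Q_2(F)\Rightarrow Q_2(G)$ with component $1$-cell $(\mathbf 1_N,\mathbf 1_N,\eta_m)\colon F(m)\rightarrow G(m)$ at each object $m$, and structure $2$-cell $(\eta_a,\eta_b)$ at each $1$-cell $(a,b,\phi)\colon m\rightarrow m'$. The only real computation is to check that $(\eta_a,\eta_b)$ is indeed a $2$-cell of $Q_2(N)$ from $(\mathbf 1,\mathbf 1,\eta_{m'})\circ Q_2(F)(a,b,\phi)$ to $Q_2(G)(a,b,\phi)\circ(\mathbf 1,\mathbf 1,\eta_m)$; expanding both composites with the composition rule of $Q_2$, and writing $\mu^F,\mu^G$ for the structure isomorphisms of $F,G$, this is the identity
\[
\eta_{m'}\circ F(\phi)\circ\mu^F \;=\; G(\phi)\circ\mu^G\circ\bigl(\mathrm{id}_{G(a)}\,\eta_m\,\mathrm{id}_{G(b)}\bigr)\circ\bigl(\eta_a\,\mathrm{id}_{F(m)}\,\eta_b\bigr),
\]
which I would derive from the interchange law (to rewrite the composite of the two middle maps on the right as $\eta_a\,\eta_m\,\eta_b$), then the monoidal-naturality square $\mu^G\circ(\eta_a\,\eta_b)=\eta_{ab}\circ\mu^F$ of $\eta$ in iterated form, and finally the ordinary naturality of $\eta$ at $\phi$. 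This is precisely the step that uses that $\eta$ is \emph{monoidal} rather than an arbitrary natural transformation; the remaining oplax-naturality axioms (compatibility with composition of $1$-cells and of $2$-cells, and preservation of identities) follow from the same coherences.

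Finally, for the last clause I would note that if $M$ is essentially small it may be replaced by an equivalent small strict monoidal category, whence $Q_2(M)$ is a small $2$-category and $Q_1(M)$ a small $1$-category, and both therefore have a nerve and hence a geometric realisation (cf.\ \Cref{appendix nerves and geometric realisations}). To see the realisation does not depend on the chosen small model I would invoke the first two parts: an equivalence of strict monoidal categories is a strong monoidal functor possessing a strong monoidal pseudo-inverse, with unit and counit given by monoidal natural isomorphisms, so it induces pseudofunctors between the corresponding $Q_2$'s together with oplax natural isomorphisms exhibiting them as mutually inverse up to equivalence; geometric realisation of $2$-categories sends pseudofunctors to maps and oplax natural transformations to homotopies, giving a homotopy equivalence of realisations, and the statement for $Q_1$ follows by applying $\kappa_M$ (or directly). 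The main obstacle here is not conceptual but bookkeeping: keeping straight the lax/oplax direction conventions and the placement of the structure isomorphisms $\mu$ in the two coherence checks of the previous paragraphs.
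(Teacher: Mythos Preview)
The paper gives no proof of this observation; it is stated with the \exend marker and left to the reader as a routine unwinding of definitions. Your proof plan is correct and is exactly the kind of verification the paper is implicitly deferring: the construction of $Q_2(F)$ and its compositor/unitor from the structure isomorphisms of $F$, the construction of $Q_2(\eta)$ with component $1$-cells $(\mathbf 1_N,\mathbf 1_N,\eta_m)$ and structure $2$-cells $(\eta_a,\eta_b)$, and the reduction of the required coherence identity to interchange, the monoidal compatibility of $\eta$, and naturality of $\eta$ at $\phi$ are all right. Your treatment of the ``in particular'' clause---reducing to a small model and using the first two parts to show independence of the chosen model via the fact that oplax natural transformations induce homotopies on geometric realisations (as recorded in \Cref{appendix nerves and geometric realisations})---is also the intended argument.
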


\begin{remark}
This construction is related to Quillen's $S^{-1}S$-construction, an intermediary construction used to prove the ``$Q=+$'' Theorem (\cite{Grayson}). For a monoidal category $S$, if the hom-categories in $Q_2(S)$ are groupoids, then $Q_1(S)$ is the category $\langle S\times S, S\rangle$ as defined in \cite{Grayson}. In comparison, Quillen's $S^{-1}S$-construction is the category $S^{-1}S=\langle S, S\times S\rangle$. (see also Remark \ref{Q(M,X) vs Quillen's SS}).
\exend
\end{remark}

We observe that $Q_2(M)$ contracts onto $Q_1(M)$ in the following situation.

\begin{proposition}\label{contracts to 1-category}
Let $M$ be a strict monoidal category. If for all pairs of objects $m, m'$ in $M$, the hom-category $\operatorname{Hom}_2(m,m')$ is a disjoint union of categories with terminal objects, then the pseudofunctor $\kappa_M\colon Q_2(M)\rightarrow Q_1(M)$ admits a right $2$-adjoint. In particular, if $M$ is essentially small, then $\kappa_M$ induces a homotopy equivalence of geometric realisations.
\end{proposition}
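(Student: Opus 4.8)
The plan is to construct an explicit right $2$-adjoint $r_M\colon Q_1(M)\to Q_2(M)$ to the projection $\kappa_M$, exploiting the hypothesis that each hom-category $\operatorname{Hom}_2(m,m')$ is a disjoint union of categories with terminal objects. On objects both $\kappa_M$ and $r_M$ are the identity. On morphisms, $\kappa_M$ sends $(a,b,\phi)$ to its equivalence class under zig-zags of $2$-cells; since each connected component of $\operatorname{Hom}_2(m,m')$ has a terminal object, every such equivalence class is precisely a connected component, and it contains a distinguished terminal object. Define $r_M$ on a morphism of $Q_1(M)$ (i.e. on a connected component) by sending it to a chosen terminal representative $(a,b,\phi)$. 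One has to check this is well-defined up to the coherence needed for a lax/strict pseudofunctor — here the uniqueness of maps into a terminal object does the work — and that $r_M$ is genuinely a pseudofunctor: composition in $Q_1(M)$ corresponds, after applying $r_M$, to composing the terminal representatives and then comparing with the terminal representative of the composite component, the comparison $2$-cell being the unique map into the terminal object.

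The $2$-adjunction $\kappa_M \dashv r_M$ is then witnessed as follows. The composite $\kappa_M\circ r_M$ is the identity on $Q_1(M)$ (since $Q_1(M)$ has only identity $2$-cells, it suffices that it is the identity on objects and morphisms, which is immediate: the component of a terminal representative is the component we started with). For the composite $r_M\circ\kappa_M$, define an oplax (or lax, depending on the variance convention used elsewhere in the paper) natural transformation $r_M\circ\kappa_M \Rightarrow \operatorname{id}_{Q_2(M)}$ whose component at an object $m$ is the identity morphism, and whose component at a morphism $(a,b,\phi)\colon m\to m'$ is the unique $2$-cell in $Q_2(M)$ from $(a,b,\phi)$ to the terminal representative of its component — this exists and is unique by the terminality hypothesis. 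Naturality and the coherence conditions for an oplax natural transformation all reduce to uniqueness of maps into terminal objects, hence hold automatically. The triangle identities are checked the same way: both composites of unit/counit are forced to be identities because the relevant $2$-cells land in terminal objects and are therefore unique.

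Finally, to deduce the statement about geometric realisations when $M$ is essentially small: a $2$-adjunction induces an adjunction on the associated homotopy categories after geometric realisation, and more to the point, the existence of a right (or left) $2$-adjoint means in particular that $\kappa_M$ admits an adjoint after passing to the underlying $\infty$-categories / after realisation. Concretely, one invokes the fact used repeatedly in this section (see the proofs of \Cref{I_n induces homotopy equivalence} and \Cref{iota_n induces homotopy equivalence}) that a $2$-functor with a right $2$-adjoint induces a homotopy equivalence on geometric realisations, because the unit and counit realise to homotopies. Thus $|\kappa_M|\colon |Q_2(M)|\to |Q_1(M)|$ is a homotopy equivalence.

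The main obstacle I anticipate is purely bookkeeping rather than conceptual: one must be careful about the variance conventions (lax versus oplax pseudofunctors, and the direction of the comparison $2$-cells in "right $2$-adjoint") so that the constructed $r_M$ and the unit/counit land on the correct side, matching the conventions fixed in \Cref{appendix nerves and geometric realisations}; and one must verify that $r_M$ really is a (strict or lax) pseudofunctor, which involves choosing the terminal representatives coherently and checking the associativity and unitality $2$-cells. None of these checks are hard — each is an instance of "there is a unique map into a terminal object" — but there are several of them and they must be organised carefully.
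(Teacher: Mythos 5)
Your proposal is correct and is essentially the paper's own argument, just written out in full: the right $2$-adjoint is given by choosing a terminal object in each component of each hom-category, and the unique $2$-cells from morphisms to these terminal representatives furnish the (lax) unit, with the other composite being the identity and all coherences and triangle identities forced by terminality. The only caveat is the variance slip you already flag yourself: with the appendix's conventions the nontrivial transformation is the unit $\operatorname{id}_{Q_2(M)}\Rightarrow r_M\circ\kappa_M$ (2-cells going into the terminal representatives), not a transformation $r_M\circ\kappa_M\Rightarrow\operatorname{id}$.
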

\begin{proof}
The adjoint is given by fixing a choice of terminal object in each component of all hom-categories in $Q_2(M)$. The unique $2$-cells from morphisms to the terminal objects define a lax unit transformation.
\end{proof}

Let $\mathscr{C}$ be a category with filtrations, let $M_{\mathscr{C}}$ be the monoidal category of flags and associated gradeds as defined in \Cref{M_C}, and consider the $2$-categorical Q-construction $Q_2(M_{\mathscr{C}})$. Since we implicitly assume that $\mathscr{C}$ is essentially small, so is $M_{\mathscr{C}}$, and thus the Q-constructions admit geometric realisations. The following proposition exploits the decomposition of morphisms in $M_{\mathscr{C}}$ given in \Cref{decomposition technical statement}.

\begin{proposition}\label{terminal decomposition}
For any morphism
\begin{align*}
((a_i)_{i\in I_a}, (b_i)_{i\in I_b}, \phi)\colon (m_i)_{i\in I}\rightarrow (n_j)_{j\in J},
\end{align*}
in $Q_2(M_{\mathscr{C}})$ there is a unique $2$-cell to a morphism of the form
\begin{align*}
\bigg((n_j)_{j\in J_1},\ (n_j)_{j\in J_3},\ \operatorname{id}_{(n_j)_{j\in J}}\bigg)\ \circ\  (a,b,f),
\end{align*}
for some partition $J=J_1\circledast J_2\circledast J_3$ and some $(a,b,f)\colon (m_i)_{i\in I}\rightarrow (n_j)_{j\in J_2}$ in $Q_2(M_{\mathscr{C}})$ with $a$ and $b$ one object lists or the empty list. Moreover, this morphism is unique up to a change of $(a,b,f)$, and two such representatives given by $(a,b,f)$ respectively $(a',b',f')$ will be connected by a unique (necessarily invertible) $2$-cell $(\alpha,\beta)\colon (a,b,f)\rightarrow (a',b',f')$.
\end{proposition}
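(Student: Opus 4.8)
The plan is to deduce everything from the decomposition of morphisms of $M_{\mathscr{C}}$ established in \Cref{decomposition technical statement}, after translating between the language of $2$-cells in $Q_2(M_{\mathscr{C}})$ and that of concatenations in $M_{\mathscr{C}}$. Recall that the given morphism is a triple $(\mathbf{a},\mathbf{b},\phi)$ in which $\mathbf{a}=(a_i)_{i\in I_a}$, $\mathbf{b}=(b_i)_{i\in I_b}$ are objects of $M_{\mathscr{C}}$ and $\phi\colon \mathbf{a}\circledast (m_i)_{i\in I}\circledast \mathbf{b}\rightarrow (n_j)_{j\in J}$ is a morphism of $M_{\mathscr{C}}$. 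First I would apply \Cref{decomposition technical statement} to $\phi$, with the source partitioned as $I_a\circledast I\circledast I_b$. This produces the partition $J=J_1\circledast J_2\circledast J_3$ (determined by $\phi$), one-object-or-empty lists $a$ and $b$, and a factorisation
\[
\phi=\phi_A\circledast\bigl(f\circ(\phi_a\circledast\operatorname{id}_{(m_i)_{i\in I}}\circledast\phi_b)\bigr)\circledast\phi_B
\]
with $f\colon a\circledast (m_i)_{i\in I}\circledast b\rightarrow (n_j)_{j\in J_2}$, $\phi_A\colon (a_i)_{i\in I_A}\rightarrow (n_j)_{j\in J_1}$, $\phi_a\colon (a_i)_{i\in I_a\smallsetminus I_A}\rightarrow a$, and symmetrically $\phi_B,\phi_b$. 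I would then set $\alpha:=\phi_A\circledast\phi_a\colon \mathbf{a}\rightarrow (n_j)_{j\in J_1}\circledast a$ and $\beta:=\phi_b\circledast\phi_B\colon \mathbf{b}\rightarrow b\circledast (n_j)_{j\in J_3}$; these are legitimate morphisms of $M_{\mathscr{C}}$ since $I_a=I_A\circledast(I_a\smallsetminus I_A)$ and $I_b=(I_b\smallsetminus I_B)\circledast I_B$ as ordered sets. Using the composition formula of $Q_2$ and the strict functoriality of $\circledast$, the target morphism $\bigl((n_j)_{j\in J_1},(n_j)_{j\in J_3},\operatorname{id}_{(n_j)_{j\in J}}\bigr)\circ(a,b,f)$ has underlying triple with third component $\operatorname{id}_{(n_j)_{j\in J_1}}\circledast f\circledast\operatorname{id}_{(n_j)_{j\in J_3}}$, and
\[
\bigl(\operatorname{id}_{(n_j)_{j\in J_1}}\circledast f\circledast\operatorname{id}_{(n_j)_{j\in J_3}}\bigr)\circ\bigl(\alpha\circledast\operatorname{id}_{(m_i)_{i\in I}}\circledast\beta\bigr)=\phi_A\circledast\bigl(f\circ(\phi_a\circledast\operatorname{id}\circledast\phi_b)\bigr)\circledast\phi_B=\phi,
\]
which is exactly the condition that $(\alpha,\beta)$ be a $2$-cell from $(\mathbf{a},\mathbf{b},\phi)$ to the asserted morphism. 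This gives existence.

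For the uniqueness statements I would run this argument backwards. Given any $2$-cell $(\alpha',\beta')$ from $(\mathbf{a},\mathbf{b},\phi)$ to a morphism of the required form $\bigl((n_j)_{j\in J_1'},(n_j)_{j\in J_3'},\operatorname{id}\bigr)\circ(a',b',f')$, I would use \Cref{complete decomposition} (a morphism of $M_{\mathscr{C}}$ with target a concatenation of lists factors uniquely as the corresponding concatenation of morphisms with one-object-or-empty targets) to write $\alpha'=\phi_A'\circledast\phi_{a'}$ over $(n_j)_{j\in J_1'}\circledast a'$ and $\beta'=\phi_{b'}\circledast\phi_B'$ over $b'\circledast (n_j)_{j\in J_3'}$. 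The defining equation of the $2$-cell then rewrites, just as above, into a factorisation of $\phi$ of precisely the shape in \Cref{decomposition technical statement}; since the partition $J_1,J_3$ and the index sets $I_A,I_B$ there are forced by $\phi$, this yields $J_1'=J_1$, $J_3'=J_3$, $\phi_A'=\phi_A$, $\phi_B'=\phi_B$, and by the uniqueness clause of \Cref{decomposition technical statement} the remaining data $(a',b',f')$ agrees with $(a,b,f)$ up to a unique pair of isomorphisms $\alpha_0\colon a'\rightarrow a$, $\beta_0\colon b'\rightarrow b$ with $f'=f\circ(\alpha_0\circledast\operatorname{id}\circledast\beta_0)$ — that is, up to a unique invertible $2$-cell $(a',b',f')\rightarrow(a,b,f)$ in $Q_2(M_{\mathscr{C}})$. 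Finally, to see that the $2$-cell into a fixed target is literally unique, I note that two such give two factorisations of $\phi$ with the same $f$; the induced isomorphisms satisfy $f=f\circ(\alpha_0\circledast\operatorname{id}\circledast\beta_0)$, and since every morphism of $M_{\mathscr{C}}$ is a monomorphism (\Cref{all morphism in M_C are monomorphisms}) this forces $\alpha_0=\operatorname{id}$, $\beta_0=\operatorname{id}$, whence the two $2$-cells coincide.

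I expect the only real work to lie in the translation step: matching the ordered index sets on the nose through the various concatenations, checking that the $Q_2$-composition and $2$-cell equations unwind exactly to the concatenation identities of \Cref{decomposition technical statement}, and verifying that the partition $J_1\circledast J_2\circledast J_3$ extracted from an arbitrary such $2$-cell is the canonical one (which holds because $a'$ and $b'$ are single objects, so a component of the source of $f'$ contributes to at most one $n_j$, preventing $J_1'$ or $J_3'$ from spilling into $J_2$ or vice versa). None of this is conceptually difficult once \Cref{decomposition technical statement}, \Cref{complete decomposition}, and \Cref{all morphism in M_C are monomorphisms} are in hand; it is the same sort of technical bookkeeping the authors already flagged for \Cref{decomposition technical statement} itself.
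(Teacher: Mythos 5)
Your proposal is correct and follows essentially the same route as the paper's own proof: apply \Cref{decomposition technical statement} to $\phi$, take the $2$-cell $(\phi_A\circledast\phi_a,\ \phi_b\circledast\phi_B)$ into $\bigl((n_j)_{j\in J_1}\circledast a,\ b\circledast(n_j)_{j\in J_3},\ \operatorname{id}\circledast f\circledast\operatorname{id}\bigr)$, and deduce both uniqueness statements from the uniqueness clause of that decomposition together with \Cref{all morphism in M_C are monomorphisms}. You simply spell out in more detail the bookkeeping (the $Q_2$-composition formula, the splitting of $\alpha',\beta'$ via \Cref{complete decomposition}, and the monomorphism step) that the paper compresses into a one-line citation.
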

\begin{proof}
In view of \Cref{decomposition technical statement}, any morphism
\begin{align*}
\phi\colon (a_i)_{i\in I_a}\circledast (m_i)_{i\in I}\circledast (b_i)_{i\in I_b}\rightarrow (n_j)_{j\in J}
\end{align*}
in $M_{\mathscr{C}}$ can be written on the form
\begin{align*}
\phi=\phi_A\circledast (f\circ (\phi_a\circledast \operatorname{id}\circledast \phi_b))\circledast \phi_B.
\end{align*}

It follows that we have a $2$-cell in $Q_2(M_{\mathscr{C}})$
\begin{center}
\begin{tikzpicture}
\matrix (m) [matrix of math nodes,row sep=3em,column sep=1em, text height=1.5ex, text depth=0.25ex]
  {
  \bigg((a_i)_{i\in I_A\circledast I_a},\ (b_i)_{i\in I_b\circledast I_B},\ \phi_A\circledast (f\circ (\phi_a\circledast \operatorname{id}_{(m_i)_{i\in I}}\circledast \phi_b))\circledast \phi_B\bigg) \\
\bigg((n_j)_{j\in J_1}\circledast a,\ b\circledast(n_j)_{j\in J_3},\ \operatorname{id}_{(n_j)_{j\in J_1}} \circledast f\circledast \operatorname{id}_{(n_j)_{j\in J_3}}\bigg) \\
  };
  \path[-stealth] 
(m-1-1) edge node[right]{$(\phi_A\circledast \phi_a,\phi_b\circledast\phi_B)$} (m-2-1)
  ;
\end{tikzpicture}
\end{center}
which is unique by the uniqueness observation of \Cref{decomposition technical statement} and the fact that $f$ is a monomorphism (\Cref{all morphism in M_C are monomorphisms}). The final statement also follows directly from the uniqueness of the decomposition.
\end{proof}

As a direct consequence of this, we can apply \Cref{contracts to 1-category}.

\begin{corollary}\label{Q-construction contract to 1-categories}
The hom-categories of $Q_2(M_{\mathscr{C}})$ are disjoint unions of categories with terminal objects. In particular, the pseudofunctor $\kappa_{M_{\mathscr{C}}}\colon Q_2(M_{\mathscr{C}})\rightarrow Q_1(M_{\mathscr{C}})$ admits a right $2$-adjoint, and thus induces a homotopy equivalence of geometric realisations.
\end{corollary}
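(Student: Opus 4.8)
The plan is to deduce the statement from \Cref{contracts to 1-category}, whose hypothesis is exactly that every hom-category of $Q_2(M_{\mathscr{C}})$ is a disjoint union of categories with terminal objects; once that is known, the existence of the right $2$-adjoint to $\kappa_{M_{\mathscr{C}}}$ and the homotopy equivalence on geometric realisations are automatic, using that $\mathscr{C}$, and hence $M_{\mathscr{C}}$, is essentially small. So the whole content is to verify that hypothesis, and this is where \Cref{terminal decomposition} does the work.

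First I would fix objects $(m_i)_{i\in I}$ and $(n_j)_{j\in J}$ of $M_{\mathscr{C}}$, write $\mathcal{D}=\operatorname{Hom}_{Q_2(M_{\mathscr{C}})}\big((m_i)_{i\in I},(n_j)_{j\in J}\big)$, and reduce to showing that every connected component $\mathcal{D}_0$ of $\mathcal{D}$ has a terminal object. By \Cref{terminal decomposition}, each object $g$ of $\mathcal{D}$ admits a unique $2$-cell $u_g\colon g\to t(g)$ whose target $t(g)$ has the distinguished ``terminal'' form (for some partition $J=J_1\circledast J_2\circledast J_3$ and some inner morphism $(a,b,f)$), and any two such targets are connected by a unique invertible $2$-cell compatible with the $u_g$. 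I would then argue that $g\mapsto t(g)$ is constant, up to these canonical isomorphisms, on each component: given a $2$-cell $h\colon g\to g'$ in $\mathcal{D}$, the composite $u_{g'}\circ h\colon g\to t(g')$ is again a $2$-cell from $g$ to a terminal-form morphism, so the uniqueness clause of \Cref{terminal decomposition} produces a unique invertible $2$-cell $t(g)\xrightarrow{\sim}t(g')$ through which it factors; propagating this identification along zig-zags in $\mathcal{D}_0$ identifies all the $t(g)$ with a single morphism $t_0$. Finally, every object of $\mathcal{D}_0$ receives a $2$-cell to $t_0$ (namely $u_g$ followed by the canonical iso $t(g)\xrightarrow{\sim}t_0$), and this $2$-cell is unique, either directly by \Cref{terminal decomposition} or because any two such become equal after postcomposition with the monomorphism underlying $t_0$ and then reading off the concatenation factors (using \Cref{all morphism in M_C are monomorphisms}); hence $t_0$ is terminal in $\mathcal{D}_0$.

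Granting this, \Cref{contracts to 1-category} applies verbatim and yields a right $2$-adjoint to $\kappa_{M_{\mathscr{C}}}$ together with the homotopy equivalence $|Q_2(M_{\mathscr{C}})|\simeq |Q_1(M_{\mathscr{C}})|$. The step I expect to be the main obstacle is the middle one: upgrading the ``unique up to a change of $(a,b,f)$'' statement of \Cref{terminal decomposition} to a genuinely coherent identification that survives arbitrary zig-zags within a component, i.e.\ checking that the interpolating isomorphisms $(\alpha,\beta)$ compose correctly and really do glue the reflections $t(g)$ into one well-defined terminal object, rather than merely producing them pairwise. Everything else is a direct invocation of results already established.
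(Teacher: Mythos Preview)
Your proposal is correct and follows exactly the approach the paper intends: the paper states the corollary as a direct consequence of \Cref{terminal decomposition} and \Cref{contracts to 1-category}, and you have spelled out that implication in detail. The coherence worry you flag is less serious than you fear, since the invertible $2$-cell between any two terminal-form representatives is \emph{unique} by the final clause of \Cref{terminal decomposition}, so the identifications $t(g)\cong t(g')$ are forced and automatically compose correctly along any zig-zag.
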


\begin{remark}
The decomposition in the proof of \Cref{terminal decomposition} will be referred to as the \textit{terminal decomposition} of a morphism.
\exend
\end{remark}

\subsection{Comparing with Quillen's Q-construction}

Let $\mathscr{C}$ be a category with filtrations and let $M_{\mathscr{C}}$ be the monoidal category of flags and associated gradeds defined in \Cref{M_C}. We want to compare the classifying space $B|M_{\mathscr{C}}|$ with Quillen's Q-construction $Q(\mathscr{C})$. To do this, we first of all compare $Q_1(M_{\mathscr{C}})$ with $Q(\mathscr{C})$ and then combine this with the results of the previous sections.

\medskip

Recall that Quillen's Q-construction $Q(\mathscr{C})$ is the category with objects those of $\mathscr{C}$, and where a morphism $x\rightarrow y$ is given by an isomorphism class of diagrams of the form $x\twoheadleftarrow z \rightarrowtail y$, where two such diagrams are isomorphic if there is an isomorphism between the middle objects which commutes with the morphisms to $x$ and $y$. Composition is given by pullbacks, that is, the composite of $[x_1\twoheadleftarrow z_1\rightarrowtail x_2]$ and $[x_2\twoheadleftarrow z_2\rightarrowtail x_3]$ is given by the sequence $x_1\twoheadleftarrow z_1\times_{x_2} z_2\rightarrowtail x_3$.

\medskip

We define a functor $\Psi\colon Q(\mathscr{C})\rightarrow Q_1(M_{\mathscr{C}})$. On objects, it is given by
\begin{align*}
\Psi(x)=(x),\quad\text{for }x\neq 0,\quad \text{and}\quad \Psi(0)=\emptyset.
\end{align*}

Defining $\Psi$ on morphisms requires a little more work. For a fixed representative $x\twoheadleftarrow z\rightarrowtail y$ of a morphism in $Q(\mathscr{C})$, fix additionally an admissible monomorphism $a\rightarrowtail z$ corresponding to the admissible epimorphism $z\twoheadrightarrow x$ and an admissible epimorphism $y\twoheadrightarrow b$ corresponding to the admissible monomorphism $z\rightarrowtail y$. Consider the morphism $\phi\colon \Psi(a)\circledast \Psi(x)\circledast \Psi(b)\rightarrow \Psi(y)$ in $M_{\mathscr{C}}$ given by the underlying flag of the filtration with associated graded represented by the diagram
\begin{center}
\begin{tikzpicture}
\matrix (m) [matrix of math nodes,row sep=2em,column sep=2em, text height=1.5ex, text depth=0.25ex]
  {
 	a & z & y \\
 	a & x & b \\
  };
  \path[>->] 
	(m-1-1) edge (m-1-2)
	(m-1-2) edge (m-1-3)
  ;
  \path[->>]
	(m-1-2) edge (m-2-2)
	(m-1-3) edge (m-2-3)
  ;
  \path[-]
	(m-1-1) edge[double equal sign distance] (m-2-1)
  ;
\end{tikzpicture}
\end{center}

The following lemma is easily verified by tracing through the definitions.

\begin{lemma}
The morphism $[\Psi(a),\Psi(b),\phi]$ in $Q_1(M_{\mathscr{C}})$ defined above is independent of the choice of representative of the morphism $[x\twoheadleftarrow z\rightarrowtail y]$ and of the choice of $a\rightarrowtail z$ and $y\twoheadrightarrow b$. Moreover, the morphism $(\Psi(a),\Psi(b),\phi)$ in $Q_2(M_{\mathscr{C}})$ is a terminal representative of this morphism.
\end{lemma}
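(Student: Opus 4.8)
The plan is to prove the two assertions separately, the common engine being that every ambiguity in the construction of $\phi$ is detected by a \emph{unique} isomorphism of $\mathscr{C}$, and that such isomorphisms assemble into an invertible $2$-cell of $Q_2(M_{\mathscr{C}})$; since $Q_1(M_{\mathscr{C}})$ is formed by collapsing each hom-category of $Q_2(M_{\mathscr{C}})$ to its set of components, a $2$-cell between two morphisms forces them to agree in $Q_1(M_{\mathscr{C}})$.

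First I would treat independence of the choices. Suppose $x\twoheadleftarrow z'\rightarrowtail y$ is a second representative of the given morphism of $Q(\mathscr{C})$, with auxiliary choices $a'\rightarrowtail z'$ and $y\twoheadrightarrow b'$; by definition of $Q(\mathscr{C})$ there is an isomorphism $z\xrightarrow{\sim}z'$ compatible with the maps to $x$ and $y$. Since admissible monomorphisms are kernels of the corresponding admissible epimorphisms and admissible epimorphisms are cokernels of the corresponding admissible monomorphisms, this isomorphism induces \emph{unique} isomorphisms $a\xrightarrow{\sim}a'$ and $b\xrightarrow{\sim}b'$ compatible with all the structure morphisms, i.e.\ an isomorphism of the two diagrams defining $\phi$ and $\phi'$ which is the identity on $x$ and on $y$. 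Interpreting $a\xrightarrow{\sim}a'$ and $b\xrightarrow{\sim}b'$ as length-one flags, i.e.\ as morphisms $\alpha\colon\Psi(a)\to\Psi(a')$ and $\beta\colon\Psi(b)\to\Psi(b')$ of $M_{\mathscr{C}}$, I would then check that merging $\alpha$, $\operatorname{id}_{\Psi(x)}$, $\beta$ into the flag $\phi'$ recovers $\phi$ — this uses the commutativity of the isomorphism data and the fact that equivalent filtrations represent the same morphism of $M_{\mathscr{C}}$ — so that $(\alpha,\beta)$ is an invertible $2$-cell $(\Psi(a),\Psi(b),\phi)\Rightarrow(\Psi(a'),\Psi(b'),\phi')$ in $Q_2(M_{\mathscr{C}})$. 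The case of keeping $z$ fixed and varying only the admissible mono $a\rightarrowtail z$ and admissible epi $y\twoheadrightarrow b$ is the special case $z=z'$. This establishes independence of all choices, so $[\Psi(a),\Psi(b),\phi]$ is well defined in $Q_1(M_{\mathscr{C}})$.

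For the terminal-representative statement I would appeal to \Cref{terminal decomposition}. The morphism $(\Psi(a),\Psi(b),\phi)$ of $Q_2(M_{\mathscr{C}})$ has source $\Psi(x)$ and target $\Psi(y)$, each a one-object list or the empty list, and its outer entries $\Psi(a),\Psi(b)$ are again one-object lists or the empty list. Running the terminal decomposition of \Cref{decomposition technical statement} in this situation, the target index set has at most one element, so the induced partition $J=J_1\circledast J_2\circledast J_3$ has $J_1=J_3=\emptyset$ and $J_2=J$, and the terminal form $\big((n_j)_{j\in J_1},(n_j)_{j\in J_3},\operatorname{id}_{(n_j)_{j\in J}}\big)\circ(a'',b'',f)$ collapses to $(a'',b'',f)$ with $a''$ and $b''$ one-object or empty lists. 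Thus $(\Psi(a),\Psi(b),\phi)$ is \emph{already} of the terminal shape, whence the unique $2$-cell to a terminal form provided by \Cref{terminal decomposition} is the identity and $(\Psi(a),\Psi(b),\phi)$ is a terminal representative of its class. The degenerate situations where $x=0$ or $y=0$ — so that a list becomes empty — are covered by the same bookkeeping.

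The step I expect to demand the most care is notational rather than conceptual: in the first part one must match up the full flag-and-associated-graded data underlying $\phi$ and $\phi'$ under the kernel/cokernel-induced isomorphisms and verify the $2$-cell identity $\phi=\phi'\circ(\alpha\circledast\operatorname{id}_{\Psi(x)}\circledast\beta)$ on the nose. The uniqueness clauses — for the induced isomorphisms $a\xrightarrow{\sim}a'$, $b\xrightarrow{\sim}b'$, and in \Cref{terminal decomposition} — are precisely what let this argument proceed without introducing any further choices.
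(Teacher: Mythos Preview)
Your proposal is correct and is essentially what the paper has in mind; the paper itself offers no proof, stating only that the lemma ``is easily verified by tracing through the definitions,'' and your argument is the natural way to do that tracing. One small remark: your claim that $J_1=J_3=\emptyset$ in the terminal decomposition relies on $I_2\neq\emptyset$ (i.e.\ $x\neq 0$), since $J_1=\theta(I_1)\smallsetminus\theta(I_2\cup I_3)$; in the degenerate case $x=0$ one instead has $z=0$, hence $a=0$ and $b\cong y$, and $(\emptyset,\Psi(y),\operatorname{id})$ is directly seen to be terminal---which is consistent with your closing sentence about degenerate cases.
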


In view of this, we set
\begin{align*}
\Psi([x\twoheadleftarrow z\rightarrowtail y])=[\Psi(a),\Psi(b),\phi].
\end{align*}

One can check that this preserves composition and is associative, so that we have indeed defined a functor
\begin{align*}
\Psi\colon Q(\mathscr{C})\rightarrow Q_1(M_{\mathscr{C}}).
\end{align*}

\begin{remark}
To see that it preserves composition, one needs to identify the composite, i.e.~the $5$-step filtration given by the diagram of pullbacks below, and then determine a terminal representative of the resulting morphism by identifying the terminal decomposition of it as in the proof of \Cref{terminal decomposition}. Doing this, we find that it is represented by the subfiltration with associated graded given by picking out the sequence $a'\rightarrowtail z_3 \rightarrowtail x_3$ below, which is easily seen to represent the image of the composite in $Q(\mathscr{C})$.
\begin{center}
\begin{tikzpicture}
\matrix (m) [matrix of math nodes,row sep=2em,column sep=2em, text height=1.5ex, text depth=0.25ex]
  {
 	c & a' & z_3 & z_2 & x_3 \\
 	c & a & z_1 & x_2 & d \\
 	 & a & x_1 & b & \\
  };
  \path[>->] 
	(m-1-1) edge (m-1-2)
	(m-1-2) edge (m-1-3)
	(m-1-3) edge (m-1-4)
	(m-1-4) edge (m-1-5)
	(m-1-2) edge (m-1-3)
	(m-1-3) edge (m-1-4)
	(m-2-2) edge (m-2-3)
	(m-2-3) edge (m-2-4)
  ;
  \path[->>]
	(m-1-2) edge (m-2-2)
	(m-1-3) edge (m-2-3)
	(m-1-4) edge (m-2-4)
	(m-1-5) edge (m-2-5)
	(m-2-3) edge (m-3-3)
	(m-2-4) edge (m-3-4)
  ;
  \path[-]
	(m-1-1) edge[double equal sign distance] (m-2-1)
	(m-2-2) edge[double equal sign distance] (m-3-2)
	(m-1-2) edge[style=transparent] node[opaque,pos=0.1]{\scalebox{1.5}{$\lrcorner$}} (m-2-3)
	(m-1-3) edge[style=transparent] node[opaque,pos=0.1]{\scalebox{1.5}{$\lrcorner$}} (m-2-4)  
  ;
\end{tikzpicture}
\end{center}
\exend
\end{remark}

We will apply Quillen's Theorem A to the functor $\Psi$ to show that it induces a homotopy equivalence of geometric realisations. First of all, we make the following observations.

\begin{lemma}\label{Psi fully faithful}
The functor $\Psi\colon Q(\mathscr{C})\rightarrow Q_1(M_{\mathscr{C}})$ is fully faithful.
\end{lemma}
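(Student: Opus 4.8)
The plan is to prove that $\Psi$ is fully faithful by giving an explicit inverse bijection on hom-sets. Fix objects $x,y$ of $Q(\mathscr{C})$. We must understand $\operatorname{Hom}_{Q_1(M_{\mathscr{C}})}(\Psi(x),\Psi(y))$. By \Cref{terminal decomposition} (and the lemma preceding the definition of $\Psi$), every morphism in $Q_1(M_{\mathscr{C}})$ admits a terminal representative in $Q_2(M_{\mathscr{C}})$, unique up to a unique invertible $2$-cell; hence $\operatorname{Hom}_{Q_1(M_{\mathscr{C}})}(\Psi(x),\Psi(y))$ is in bijection with isomorphism classes of terminal morphisms $(a,b,f)\colon \Psi(x)\rightarrow\Psi(y)$ in $Q_2(M_{\mathscr{C}})$, where $a,b$ are one-object lists or empty and $f$ is a morphism with no ``slack'' on the outside. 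First I would unwind exactly what such a terminal $(a,b,f)$ is: $f$ is a morphism $a\circledast\Psi(x)\circledast b\rightarrow\Psi(y)$, i.e.\ a flag in $y$ (when $y\neq 0$) whose associated graded is the list $(a,x,b)$ with $x$ in the middle (dropping zero entries appropriately when $x=0$ or $y=0$). Such a flag is precisely the data of an admissible monomorphism $a\rightarrowtail z$, an admissible epimorphism $z\twoheadrightarrow x$ realising $a$ as its kernel, and an admissible monomorphism $z\rightarrowtail y$ with cokernel $b$ — in other words exactly the data $x\twoheadleftarrow z\rightarrowtail y$ together with the auxiliary choices $a\rightarrowtail z$, $y\twoheadrightarrow b$ that went into the definition of $\Psi$ on morphisms.

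The key step is then to check that the assignment sending a terminal $(a,b,f)$ to the span $[x\twoheadleftarrow z\rightarrowtail y]$ obtained from the middle object $z$ of the flag is well-defined on isomorphism classes and is inverse to $\Psi$. For well-definedness: an invertible $2$-cell $(\alpha,\beta)\colon(a,b,f)\rightarrow(a',b',f')$ in $Q_2(M_{\mathscr{C}})$ amounts to isomorphisms $a\cong a'$, $b\cong b'$ compatible with $f$, and by the uniqueness in \Cref{decomposition technical statement} this forces an isomorphism of the middle objects $z\cong z'$ commuting with the maps to $x$ and $y$ — i.e.\ the spans agree in $Q(\mathscr{C})$. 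Conversely, isomorphic spans in $Q(\mathscr{C})$ yield isomorphic terminal representatives, since the choices of $a\rightarrowtail z$ (a kernel) and $y\twoheadrightarrow b$ (a cokernel) are unique up to unique isomorphism. That $\Psi$ followed by this map is the identity on $\operatorname{Hom}_{Q(\mathscr{C})}(x,y)$ is immediate from the construction of $\Psi$ on morphisms: the flag built from $x\twoheadleftarrow z\rightarrowtail y$ has middle object $z$. That the composite the other way is the identity is exactly the statement of the lemma just before the definition of $\Psi$, namely that $(\Psi(a),\Psi(b),\phi)$ is a terminal representative of $[\Psi(a),\Psi(b),\phi]$.

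I expect the main obstacle to be bookkeeping around the zero object and the empty list: $\Psi(0)=\emptyset$, so when $x=0$ or $y=0$ the middle object $z$ may be $0$ and various entries of the list $(a,x,b)$ disappear (recall objects of $M_{\mathscr{C}}$ have all entries nonzero). One has to verify that in these degenerate cases the correspondence between terminal morphisms $\emptyset\to\Psi(y)$, or $\Psi(x)\to\emptyset$, or $\emptyset\to\emptyset$, and spans in $Q(\mathscr{C})$ still holds — e.g.\ a span $0\twoheadleftarrow 0\rightarrowtail y$ corresponds to the flag $0\rightarrowtail a\cong z$ giving $a=z$ mapping to $y$, etc. This is routine but must be done carefully so that $\Psi$ really is a bijection and not merely an injection on each hom-set. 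Apart from that, the argument is a direct translation between the two descriptions of ``a subquotient of $y$ through which we factor $x$'', and no hard input beyond \Cref{decomposition technical statement} and \Cref{terminal decomposition} is needed.
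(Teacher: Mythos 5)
Your proposal is correct and follows essentially the same route as the paper: fullness comes from reading a (terminal) three-step flag in $y$ with associated graded $(a,x,b)$ as exactly the data of a span $x\twoheadleftarrow z\rightarrowtail y$, and faithfulness from the fact that a $2$-cell identifying two such representatives forces a unique isomorphism of the middle objects commuting with the maps to $x$ and $y$. Your packaging as an explicit inverse bijection via \Cref{terminal decomposition}, together with the extra care about the zero object and empty lists, is just a more detailed write-up of the paper's argument.
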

\begin{proof}
It is easy to see that it is full, since a 3-step filtration in $y$ whose associated graded is $(a,x,b)$ identifies $x$ as a subquotient of $y$, which exactly corresponds to a morphism in $Q(\mathscr{C})$. To see that it is faithful, consider diagrams as below and assume that they define the same morphism in $Q_1(M_{\mathscr{C}})$.
\begin{center}
\begin{tikzpicture}
\matrix (m) [matrix of math nodes,row sep=2em,column sep=2em, text height=1.5ex, text depth=0.25ex]
  {
 	a & z & y & & c & z' & y\\
 	a & x & b & & c & x & d\\
  };
  \path[>->] 
	(m-1-1) edge (m-1-2)
	(m-1-2) edge (m-1-3)
	(m-1-5) edge (m-1-6)
	(m-1-6) edge (m-1-7)
  ;
  \path[->>]
	(m-1-2) edge (m-2-2)
	(m-1-3) edge (m-2-3)
	(m-1-6) edge (m-2-6)
	(m-1-7) edge (m-2-7)
  ;
  \path[-]
	(m-1-1) edge[double equal sign distance] (m-2-1)
	(m-1-5) edge[double equal sign distance] (m-2-5)
  ;
\end{tikzpicture}
\end{center}

Then there exist isomorphisms $\alpha\colon c\rightarrow a$ and $\beta\colon d\rightarrow b$ such that the composite of the associated graded of the right hand diagram with $(\alpha ,\operatorname{id}_x, \beta)$ defines the same filtration with associated graded as the left hand diagram. In particular, there exists a (unique) isomorphism $z\rightarrow z'$ which commutes with the morphisms to $x$ and $y$, i.e.~$[x\twoheadleftarrow z\rightarrowtail y]=[x\twoheadleftarrow z'\rightarrowtail y]$.
\end{proof}

The following proposition is an immediate consequence of \Cref{terminal decomposition} and \Cref{Psi fully faithful}.

\begin{proposition}\label{representatives of morphisms in Q_1 using Psi}
Let $x$ be an object in $Q(\mathscr{C})$ and $(n_j)_{j\in J}$ an object in $M_{\mathscr{C}}$. Any morphism in $Q_1(M_{\mathscr{C}})$ from $\Psi(x)$ to $(n_j)_{j\in J}$ can be written uniquely as a composite
\begin{align*}
[(n_j)_{j<j_0},\ (n_j)_{j>j_0},\ \operatorname{id}_{(n_j)_{j\in J}}]\circ \Psi([x\twoheadleftarrow z\rightarrowtail n_{j_0}])
\end{align*}
for some $j_0\in J$ and some morphism $[x\twoheadleftarrow z\rightarrowtail n_{j_0}]$ in $Q(\mathscr{C})$.
\end{proposition}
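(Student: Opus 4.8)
The plan is to exhibit the asserted description as a natural bijection
\[
\operatorname{Hom}_{Q_1(M_{\mathscr{C}})}\bigl(\Psi(x),(n_j)_{j\in J}\bigr)\ \cong\ \coprod_{j_0\in J}\operatorname{Hom}_{Q(\mathscr{C})}(x,n_{j_0}),
\]
under which $(j_0,g)$ corresponds to $[(n_j)_{j<j_0},(n_j)_{j>j_0},\operatorname{id}_{(n_j)_{j\in J}}]\circ\Psi(g)$. Unwinding definitions, a morphism $\Psi(x)\to(n_j)_{j\in J}$ in $Q_1(M_{\mathscr{C}})$ is the equivalence class of a morphism of $Q_2(M_{\mathscr{C}})$, that is of a triple $\bigl((a_i)_{i\in I_a},(b_i)_{i\in I_b},\phi\bigr)$ with $\phi\colon (a_i)_{i\in I_a}\circledast\Psi(x)\circledast (b_i)_{i\in I_b}\to(n_j)_{j\in J}$ a morphism of $M_{\mathscr{C}}$. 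I will spell out the case $x\neq 0$, so that $\Psi(x)=(x)$ is a one--object list; the case $x=0$ (where $\Psi(x)=\emptyset$) runs the same way, with the minor caveats recorded at the end.

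The first step is to apply the terminal decomposition, \Cref{terminal decomposition}, to each such morphism, taking the distinguished middle list to be $\Psi(x)=(x)$: it produces a terminal representative
\[
\bigl((n_j)_{j\in J_1},(n_j)_{j\in J_3},\operatorname{id}_{(n_j)_{j\in J}}\bigr)\circ(a,b,f),\qquad (a,b,f)\colon (x)\to(n_j)_{j\in J_2},
\]
for a partition $J=J_1\circledast J_2\circledast J_3$ and one--object--or--empty lists $a,b$, unique up to a unique (invertible) $2$--cell on $(a,b,f)$. The one point worth checking here is that $J_2$ is a single element: in the notation of \Cref{decomposition technical statement} one finds $J_2=\theta(I_2)\cup\bigl(\theta(I_1)\cap\theta(I_3)\bigr)$, and since $I_2$ has a single element and $\theta$ is order preserving, $\max\theta(I_1)\le\theta(I_2)\le\min\theta(I_3)$, so $J_2=\theta(I_2)=\{j_0\}$ and correspondingly $J_1=\{j\in J:j<j_0\}$, $J_3=\{j\in J:j>j_0\}$. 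Passing to $Q_1(M_{\mathscr{C}})$, this assigns to our morphism the pair $\bigl(j_0,[(a,b,f)]\bigr)$ with $[(a,b,f)]\in\operatorname{Hom}_{Q_1(M_{\mathscr{C}})}(\Psi(x),(n_{j_0}))$.

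For the inverse, given $j_0\in J$ and $h\colon\Psi(x)\to(n_{j_0})$ in $Q_1(M_{\mathscr{C}})$, post--compose with $[(n_j)_{j<j_0},(n_j)_{j>j_0},\operatorname{id}_{(n_j)_{j\in J}}]$; choosing a terminal representative $(a,b,f)$ of $h$, the resulting composite is represented by $\bigl((n_j)_{j<j_0},(n_j)_{j>j_0},\operatorname{id}_{(n_j)_{j\in J}}\bigr)\circ(a,b,f)$, which is \emph{by definition} of terminal form (with partition $\{j<j_0\}\circledast\{j_0\}\circledast\{j>j_0\}$), hence by the uniqueness clause of \Cref{terminal decomposition} is its terminal representative; so the two assignments are mutually inverse (in one direction this is the uniqueness just cited, in the other it is the tautology that a morphism is the class of its terminal representative). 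It then remains to identify $\operatorname{Hom}_{Q_1(M_{\mathscr{C}})}(\Psi(x),(n_{j_0}))$ with $\operatorname{Hom}_{Q(\mathscr{C})}(x,n_{j_0})$: since $n_{j_0}\neq 0$ we have $(n_{j_0})=\Psi(n_{j_0})$, so this is full faithfulness of $\Psi$, \Cref{Psi fully faithful}, and the lemma recording that $(\Psi(a),\Psi(b),\phi)$ is a terminal representative of $\Psi([x\twoheadleftarrow z\rightarrowtail n_{j_0}])$ ensures that this identification matches the two descriptions above. Splicing the bijections and reading off the distinguished representatives yields the stated formula, with uniqueness of the pair $\bigl(j_0,[x\twoheadleftarrow z\rightarrowtail n_{j_0}]\bigr)$ coming from uniqueness of the terminal decomposition together with faithfulness of $\Psi$.

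Beyond keeping straight the passage between $Q_2(M_{\mathscr{C}})$ and $Q_1(M_{\mathscr{C}})$, the only real content is the two short verifications above (that $J_2$ is a singleton, and that post--composition with $[(n_j)_{j<j_0},(n_j)_{j>j_0},\operatorname{id}]$ stays in terminal form), so I do not expect a genuine obstacle. The one nuisance is the degenerate cases: when $x=0$ the combinatorics can leave $J_2$ empty, in which case one absorbs $n_{j_0}$ for a suitable $j_0\in J$ into the span $[0\twoheadleftarrow z\rightarrowtail n_{j_0}]$; and when $(n_j)_{j\in J}$ is the empty list the statement is essentially vacuous. Neither affects the main line of argument.
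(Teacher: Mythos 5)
Your argument is exactly the paper's (the paper records this proposition as an immediate consequence of \Cref{terminal decomposition} and \Cref{Psi fully faithful} without further detail), and for $x\neq 0$ your write-up is complete and correct: the three points that actually need checking are the ones you check, namely that $J_2$ is the singleton $\{j_0\}=\theta(I_2)$, that post-composition of a terminal representative of $h$ with $[(n_j)_{j<j_0},(n_j)_{j>j_0},\operatorname{id}]$ is again in terminal form, and that $\operatorname{Hom}_{Q_1(M_{\mathscr{C}})}(\Psi(x),(n_{j_0}))\cong\operatorname{Hom}_{Q(\mathscr{C})}(x,n_{j_0})$ via full faithfulness of $\Psi$ (using $n_{j_0}\neq 0$).

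The one place where your proposal is too quick is the case $x=0$, which does not ``run the same way'': your absorption trick rescues existence, but uniqueness genuinely fails there, because when $\Psi(x)=\emptyset$ there is no middle entry to pin down $j_0$. Concretely, if $k\in J$ is not maximal and $k'$ is its successor, then the morphism $[(n_j)_{j\le k},\ (n_j)_{j>k},\ \operatorname{id}_{(n_j)_{j\in J}}]\colon \Psi(0)\to (n_j)_{j\in J}$ equals both $[(n_j)_{j<k},(n_j)_{j>k},\operatorname{id}]\circ\Psi([0\twoheadleftarrow n_k\rightarrowtail n_k])$ and $[(n_j)_{j<k'},(n_j)_{j>k'},\operatorname{id}]\circ\Psi([0\twoheadleftarrow 0\rightarrowtail n_{k'}])$, two distinct pairs $(j_0,\text{span})$; and for $x=0$, $J=\emptyset$ the identity of $\emptyset$ admits no expression of the stated form at all. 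This is really a defect of the proposition as stated rather than of your main argument --- indeed the paper's own proof of \Cref{contractible comma category} relies on precisely this ambiguity, since adjacent subcategories $C_{i}$, $C_{i+1}$ of the comma category intersect in an object of the form $(0,[m_{<i_0},m_{\geq i_0},\operatorname{id}])$, which is exactly a morphism out of $\Psi(0)$ with two admissible values of $j_0$. So the honest statement, and what your argument actually proves, is: existence always, and uniqueness of $(j_0,[x\twoheadleftarrow z\rightarrowtail n_{j_0}])$ for $x\neq 0$, with the ambiguity for $x=0$ being exactly the adjacent-shift phenomenon above; you should not claim the uniqueness clause verbatim in the degenerate case.
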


We now show that the comma category $\Psi\downarrow \alpha$ has contractible geometric realisation for any object $\alpha$ in $Q_1(M_{\mathscr{C}})$.

\medskip

Let $(m_i)_{i\in I}$ be an object in $M_{\mathscr{C}}$. The comma category $\Psi\downarrow (m_i)$ has objects
\begin{align*}
\bigg(x,\ \bigg[(a_i),\ (b_i),\ \phi\colon (a_i)\circledast \Psi(x)\circledast (b_i)\rightarrow (m_i)\bigg]\bigg)
\end{align*}
where $x$ is an object in $\mathscr{C}$, and $[(a_i),(b_i),\phi]\colon \Psi(x)\rightarrow (m_i)$ is a morphism in $Q_1(M_{\mathscr{C}})$. A morphism in $\Psi\downarrow (m_i)$ is of the form
\begin{align*}
(x,\,[(a_i),(b_i),\phi])\xrightarrow{[x\twoheadleftarrow z\rightarrowtail y]}(y,\,[(c_i),(d_i),\psi])
\end{align*}
where $[x\twoheadleftarrow z\rightarrowtail y]$ is a morphism in $Q(\mathscr{C})$ such that
\begin{align*}
[(a_i),(b_i),\phi]=[(c_i),(d_i),\psi]\circ \Psi([x\twoheadrightarrow z\rightarrowtail y]).
\end{align*}

For every $i_0\in I$, set
\begin{align*}
m_{\leq i_0}:=(m_i)_{i\in I_{\leq i_0}},\quad m_{< i_0}:=(m_i)_{i\in I_{< i_0}},\quad m_{\geq i_0}:=(m_i)_{i\in I_{\geq i_0}},\quad m_{> i_0}:=(m_i)_{i\in I_{> i_0}}.
\end{align*}

\medskip

For all $i_0\in I$, consider the full subcategory $C_{i_0}\subseteq \Psi\downarrow (m_i)$ spanned by the objects of the form
\begin{align*}
\bigg(x,\ [m_{<i_0},m_{>i_0},\operatorname{id}]\circ \Psi([x\twoheadleftarrow z\rightarrowtail m_{i_0}]\bigg).
\end{align*}

The following two lemmas are immediate consequences of \Cref{representatives of morphisms in Q_1 using Psi}.

\begin{lemma}\label{C_i has terminal object}
Let $i_0\in I$. The object $(m_{i_0},[m_{<i_0},m_{>i_0},\operatorname{id}])$ is a terminal object in $C_{i_0}$. In particular, $C_{i_0}$ has contractible geometric realisation.
\end{lemma}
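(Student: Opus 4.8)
The plan is to verify directly that
\[
e_{i_0} \;:=\; \bigl(m_{i_0},\,[m_{<i_0},m_{>i_0},\operatorname{id}]\bigr)
\]
is a terminal object of $C_{i_0}$; the contractibility statement then follows at once, since the nerve of a category with a terminal object is contractible. The only real input will be the uniqueness clause of \Cref{representatives of morphisms in Q_1 using Psi}. First I would record that $e_{i_0}$ genuinely lies in $C_{i_0}$: the identity of $m_{i_0}$ in $Q(\mathscr{C})$ is represented by $m_{i_0}\twoheadleftarrow m_{i_0}\rightarrowtail m_{i_0}$, and $\Psi$ carries it to the identity of $(m_{i_0})$, so $[m_{<i_0},m_{>i_0},\operatorname{id}] = [m_{<i_0},m_{>i_0},\operatorname{id}]\circ \Psi([m_{i_0}\twoheadleftarrow m_{i_0}\rightarrowtail m_{i_0}])$, which is exactly the shape of objects of $C_{i_0}$.

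Next I would fix an arbitrary object of $C_{i_0}$, which by definition has the form $\bigl(x,\,[m_{<i_0},m_{>i_0},\operatorname{id}]\circ \Psi([x\twoheadleftarrow z\rightarrowtail m_{i_0}])\bigr)$ for some morphism $[x\twoheadleftarrow z\rightarrowtail m_{i_0}]$ of $Q(\mathscr{C})$, and unwind the definition of the comma category $\Psi\downarrow(m_i)$: a morphism from this object to $e_{i_0}$ is precisely a morphism $[x\twoheadleftarrow w\rightarrowtail m_{i_0}]$ of $Q(\mathscr{C})$ such that
\[
[m_{<i_0},m_{>i_0},\operatorname{id}]\circ \Psi([x\twoheadleftarrow z\rightarrowtail m_{i_0}]) \;=\; [m_{<i_0},m_{>i_0},\operatorname{id}]\circ \Psi([x\twoheadleftarrow w\rightarrowtail m_{i_0}]).
\]
Existence is immediate by taking $w:=z$. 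For uniqueness, note that both sides of this identity are factorisations of the \emph{same} morphism $\Psi(x)\to (m_i)_{i\in I}$ in $Q_1(M_{\mathscr{C}})$ of the form appearing in \Cref{representatives of morphisms in Q_1 using Psi}, and that the index $j_0$ of that factorisation is forced to be $i_0$ by the surrounding lists $m_{<i_0}$ and $m_{>i_0}$; hence the uniqueness assertion of that proposition yields $[x\twoheadleftarrow w\rightarrowtail m_{i_0}] = [x\twoheadleftarrow z\rightarrowtail m_{i_0}]$ in $Q(\mathscr{C})$. Thus there is exactly one morphism to $e_{i_0}$, so $e_{i_0}$ is terminal in $C_{i_0}$, and $|C_{i_0}|\simeq\ast$.

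I expect no serious obstacle here: the statement is essentially a repackaging of \Cref{representatives of morphisms in Q_1 using Psi}. The single point deserving a moment's care is that post-composition with $[m_{<i_0},m_{>i_0},\operatorname{id}]$ loses no information — it recovers both the index $i_0$ and the underlying $Q(\mathscr{C})$-morphism uniquely — but this is exactly what the cited proposition guarantees.
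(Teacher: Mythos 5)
Your proof is correct and is essentially the paper's argument: the paper simply declares the lemma an immediate consequence of \Cref{representatives of morphisms in Q_1 using Psi}, and your write-up just spells out that unwinding — existence of the morphism to $(m_{i_0},[m_{<i_0},m_{>i_0},\operatorname{id}])$ comes from the defining presentation of an object of $C_{i_0}$, and uniqueness from the uniqueness clause of that proposition.
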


\begin{lemma}\label{C_i's cover comma category}
The subcategories $C_{i_0}$, $i_0\in I$, cover $\Psi\downarrow (m_i)$.
\end{lemma}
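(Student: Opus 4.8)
\textbf{Proof plan for \Cref{C_i's cover comma category}.}

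The statement to prove is that the full subcategories $C_{i_0}$, as $i_0$ ranges over $I$, cover $\Psi\downarrow (m_i)$, i.e.\ that every object of the comma category lies in some $C_{i_0}$. An object of $\Psi\downarrow(m_i)$ is a pair $(x, \mu)$ where $x$ is an object of $\mathscr{C}$ and $\mu\colon \Psi(x)\rightarrow (m_i)_{i\in I}$ is a morphism in $Q_1(M_{\mathscr C})$. The plan is to feed this datum into \Cref{representatives of morphisms in Q_1 using Psi}, which is tailor-made for exactly this situation: it says that any such $\mu$ admits a \emph{unique} expression as a composite
\begin{align*}
\mu = [\, m_{<j_0},\ m_{>j_0},\ \operatorname{id}_{(m_i)_{i\in I}}\,]\circ \Psi([x\twoheadleftarrow z\rightarrowtail m_{j_0}])
\end{align*}
for some $j_0\in I$ and some morphism $[x\twoheadleftarrow z\rightarrowtail m_{j_0}]$ in $Q(\mathscr C)$. (Here I am using the notation $m_{<j_0}=(m_i)_{i\in I_{<j_0}}$ etc.\ introduced just before \Cref{C_i has terminal object}.) But this is precisely the shape of object that spans $C_{j_0}$ by definition. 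Hence $(x,\mu)\in C_{j_0}$, and since $(x,\mu)$ was arbitrary, the $C_{i_0}$ cover $\Psi\downarrow(m_i)$.

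In slightly more detail: I would first recall that $C_{i_0}\subseteq \Psi\downarrow(m_i)$ is by definition the full subcategory on the objects of the form $\big(x,\, [m_{<i_0},m_{>i_0},\operatorname{id}]\circ\Psi([x\twoheadleftarrow z\rightarrowtail m_{i_0}])\big)$; since these subcategories are full, covering is a statement purely about objects, so there is nothing to check on morphisms. Then, given an arbitrary object $(x,\mu)$, apply \Cref{representatives of morphisms in Q_1 using Psi} to the morphism $\mu\colon\Psi(x)\rightarrow(m_i)_{i\in I}$ to produce the index $j_0$ and the factorisation above; this exhibits $(x,\mu)$ as an object of $C_{j_0}$. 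That completes the proof.

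There is essentially no obstacle here — the lemma is an immediate unravelling of \Cref{representatives of morphisms in Q_1 using Psi}, exactly as the text advertises (``immediate consequence''). The only thing to be slightly careful about is a degenerate edge case: if $x=0$ then $\Psi(x)=\emptyset$ and a morphism $\emptyset\rightarrow(m_i)_{i\in I}$ still factors as in \Cref{representatives of morphisms in Q_1 using Psi}, so the argument is unaffected; and if $I=\emptyset$ then the only object $(m_i)$ is the empty list and $\Psi\downarrow\emptyset$ is handled separately (or is itself empty of the relevant objects), so there is no content to cover. Beyond noting these, the proof is a one-line citation of the previous proposition, and the main ``work'' has already been done in establishing \Cref{decomposition technical statement}, \Cref{terminal decomposition} and \Cref{representatives of morphisms in Q_1 using Psi}.
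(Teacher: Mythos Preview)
Your proposal is correct and matches the paper's approach exactly: the paper simply declares this lemma (together with \Cref{C_i has terminal object}) to be an ``immediate consequence of \Cref{representatives of morphisms in Q_1 using Psi}'' and gives no further argument. Your unwinding of that citation, together with the observation that fullness of the $C_{i_0}$ reduces covering to a statement about objects, is precisely what is intended.
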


With this, we can make the final observation.

\begin{proposition}\label{contractible comma category}
The comma category $\Psi\downarrow (m_i)$ has contractible geometric realisation.
\end{proposition}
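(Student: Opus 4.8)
The plan is to apply a Mayer--Vietoris / nerve-of-a-cover argument to the decomposition of the comma category $\Psi\downarrow (m_i)_{i\in I}$ furnished by \Cref{C_i's cover comma category}. Concretely, the subcategories $C_{i_0}$ for $i_0\in I$ form a cover of $\Psi\downarrow (m_i)$, each $C_{i_0}$ is contractible by \Cref{C_i has terminal object}, and one expects the nerve of this cover to be contractible because the indexing poset $I$ is linearly ordered (hence its nerve $\Delta^{|I|-1}$ is contractible). So the first step is to make precise in what sense the $C_{i_0}$ give a ``closed cover'' whose realisation can be computed from the realisations of the pieces and their intersections.

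First I would identify the intersections. Given $i_0\le i_1$ in $I$, I claim the intersection $C_{i_0}\cap C_{i_1}$ (the full subcategory on objects lying in both) is again contractible, and more precisely that it admits a terminal object. An object of $C_{i_0}\cap C_{i_1}$ is an $x$ together with a morphism $\Psi(x)\to (m_i)$ which, by the uniqueness in \Cref{representatives of morphisms in Q_1 using Psi}, is simultaneously of the form $[m_{<i_0},m_{>i_0},\operatorname{id}]\circ\Psi([x\twoheadleftarrow z\rightarrowtail m_{i_0}])$ and $[m_{<i_1},m_{>i_1},\operatorname{id}]\circ\Psi([x\twoheadleftarrow z'\rightarrowtail m_{i_1}])$; unwinding this forces $x$ to be a subquotient of $(m_i)_{i_0\le i\le i_1}$ arising from a flag, and the terminal such object is $(m_{i_0},\ldots,m_{i_1})$ collapsed appropriately — more precisely the object corresponding to the identity flag on the ``block'' from $i_0$ to $i_1$, which receives a unique map from every other object of the intersection. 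Thus every finite (non-empty) intersection of the $C_{i_0}$'s is either empty or has contractible realisation.

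Next I would feed this into a descent statement. Since all the subcategories $C_{i_0}\subseteq \Psi\downarrow(m_i)$ and their intersections are \emph{full} subcategories, and they cover, the realisation $\vert \Psi\downarrow(m_i)\vert$ is computed as the colimit over the nerve of the cover of the realisations of the intersections — this is the $\operatorname{Cat}_\infty$ incarnation of \Cref{closeddescentcat} applied to the $C_{i_0}$ viewed as a cover by subcategories (each inclusion of intersections being fully faithful, the relevant hypothesis is exactly that for each object of $\Psi\downarrow(m_i)$ the poset of indices $i_0$ with the object in $C_{i_0}$ is contractible; by \Cref{representatives of morphisms in Q_1 using Psi} that poset is a nonempty interval in $I$, hence contractible). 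Since each term in this colimit is either empty (discard it) or contractible, and the indexing is over the nerve of the poset of nonempty-intersection patterns — which, because $I$ is totally ordered and one checks the nonempty intersections are exactly the intervals, is a contractible simplicial object (the nerve of a totally ordered set) — the colimit is $\ast$. The main obstacle I anticipate is the bookkeeping in step two: pinning down exactly which intersections are nonempty and exhibiting the terminal object of each nonempty intersection, since this requires carefully re-running the terminal-decomposition analysis of \Cref{terminal decomposition} relative to a subinterval of $J$ rather than to a single index $j_0$. Once that combinatorial picture is in hand, the descent argument is formal.
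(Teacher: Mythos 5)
Your overall strategy (cover by the $C_{i_0}$'s, contractibility of the pieces via \Cref{C_i has terminal object}, then a closed-descent/nerve argument) is exactly the paper's, but the two verifications you build it on are incorrect. First, the intersections are not what you claim. For an object $(x,\phi)$ with $x\neq 0$, the slot $j_0$ in which $x$ lands is an invariant of the morphism class $\phi$ (this is precisely the uniqueness in \Cref{representatives of morphisms in Q_1 using Psi}), so such an object lies in exactly \emph{one} $C_{i_0}$; your ``unwinding'' to an $x$ that is a subquotient of the block $(m_i)_{i_0\le i\le i_1}$ contradicts the very uniqueness statement you invoke, and indeed an extension of that block is in general not a subquotient of any single $m_{j_0}$, so no such ``terminal block object'' exists in $\Psi\downarrow (m_i)$ at all. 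The only overlaps come from $x=0$: the object $(0,[m_{\le i_0},\,m_{>i_0},\,\operatorname{id}])$ lies in $C_{i_0}$ (via $z=m_{i_0}$) and in $C_{i_0+1}$ (via $z'=0$), and a short check of the cut data shows these are the only coincidences. Hence $C_i\cap C_j$ is empty for $|i-j|\ge 2$ and is a single (zero-object) point for $j=i+1$; the nerve of the cover is a path, not a simplex.

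Second, your application of \Cref{closeddescentcat} does not verify its hypothesis. That corollary requires, for each object, contractibility of the subposet of the cover $P$ \emph{under inclusion} containing it. With $P=\{C_{i_0}\}_{i_0\in I}$ this poset is discrete (the $C_{i_0}$ are pairwise incomparable), so for the special zero objects above the relevant subposet is a discrete two-point set, which is not contractible; the ``nonempty interval in $I$'' you appeal to is an interval for the linear order on $I$, which is not the order relevant to the descent statement. The repair is to enlarge $P$ by the pairwise intersections $C_{i_0}\cap C_{i_0+1}$ (each a left-closed one-object full subcategory), after which every object's subposet is a point or a cospan, hence contractible, and the colimit is indexed by (the subdivision of) a path, which is contractible — this is exactly how the paper concludes. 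So the final statement survives, but only after the intersection computation is corrected; as written, your step one would fail and your descent step is not applicable in the form stated.
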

\begin{proof}
Note that for any $i,j\in I$,
\begin{align*}
C_i\cap C_j=\begin{cases}
(0,[m_{<i_0},m_{\geq i_0},\operatorname{id}]) & \text{if } \{i,j\}=\{i_0,i_0+1\}, \\
C_i & \text{if } i=j, \\
\emptyset & \text{else},
\end{cases}
\end{align*}
where $(0,[m_{<i},m_{\geq i},\operatorname{id}])$ denotes the terminal category on this object. Hence, $|\Psi\downarrow (m_i)|$ is contractible by \Cref{closeddescentcat}, since the nerve of this cover is contractible (alternatively, one can use the Nerve Theorem of \cite[Page 234]{Borsuk48}, see also \cite[Exercise 4G.4 and Corollary 4G.3]{Hatcher}).
\end{proof}

Then by Quillen's Theorem A (\cite{Qfg}) and the proposition above, we have the following result.

\begin{proposition}\label{Quillen's Q-construction and our collapsed Q-construction}
The functor $\Psi\colon Q(\mathscr{C})\rightarrow Q_1(M_{\mathscr{C}})$ induces a homotopy equivalence of geometric realisations.
\end{proposition}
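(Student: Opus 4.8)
The plan is to verify the two hypotheses of Quillen's Theorem A for the functor $\Psi\colon Q(\mathscr{C})\rightarrow Q_1(M_{\mathscr{C}})$ and then invoke it to conclude. Recall that Quillen's Theorem A states that if the comma category $\Psi\downarrow \alpha$ has contractible geometric realisation for every object $\alpha$ of $Q_1(M_{\mathscr{C}})$, then $\Psi$ induces a homotopy equivalence of geometric realisations. So the whole proof reduces to the statement of \Cref{contractible comma category}, which has already been established, combined with the identification of the objects of $Q_1(M_{\mathscr{C}})$: every object of $Q_1(M_{\mathscr{C}})$ is of the form $(m_i)_{i\in I}$ for some finite linearly ordered $I$ and $I$-indexed list of nonzero objects, so \Cref{contractible comma category} indeed covers all objects.

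Concretely, I would first recall the statement of Quillen's Theorem A (citing \cite{Qfg}), phrased for the functor $\Psi$: it suffices to check that $|\Psi\downarrow \alpha|$ is contractible for all objects $\alpha\in Q_1(M_{\mathscr{C}})$. Next I would observe that the objects of $Q_1(M_{\mathscr{C}})$ are exactly the objects of $M_{\mathscr{C}}$, namely the $I$-indexed lists $(m_i)_{i\in I}$ (including the empty list $\emptyset$), so that \Cref{contractible comma category} applies verbatim to each such $\alpha$. (The case $\alpha = \emptyset$ is degenerate but harmless: the comma category $\Psi\downarrow \emptyset$ has the terminal object $(0, [\emptyset,\emptyset,\operatorname{id}])$ coming from $\Psi(0)=\emptyset$, hence is contractible.) Applying Quillen's Theorem A then gives the desired homotopy equivalence $|Q(\mathscr{C})|\overset{\sim}{\rightarrow}|Q_1(M_{\mathscr{C}})|$.

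Since essentially all the content is already packaged into \Cref{contractible comma category}, there is no serious obstacle remaining at this stage — the proof is a one-line invocation of Theorem A. The only mild subtlety to flag is that Quillen's Theorem A requires the comma categories over \emph{all} objects; one should make sure the reader understands that the family $\{(m_i)_{i\in I}\}$ really exhausts $\operatorname{ob} Q_1(M_{\mathscr{C}}) = \operatorname{ob} M_{\mathscr{C}}$, which is immediate from the construction of $Q_1$ as having the same objects as $Q_2(M_{\mathscr{C}})$, which in turn has the same objects as $M_{\mathscr{C}}$. So the proof I would write is:

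\begin{proof}
By Quillen's Theorem A (\cite{Qfg}), it suffices to show that for every object $\alpha$ of $Q_1(M_{\mathscr{C}})$ the comma category $\Psi\downarrow \alpha$ has contractible geometric realisation. But the objects of $Q_1(M_{\mathscr{C}})$ are precisely the objects of $M_{\mathscr{C}}$, i.e.\ the $I$-indexed lists $(m_i)_{i\in I}$ of nonzero objects of $\mathscr{C}$, together with the empty list $\emptyset$. For $\alpha=(m_i)_{i\in I}$ this contractibility is \Cref{contractible comma category}; for $\alpha=\emptyset$ the comma category $\Psi\downarrow \emptyset$ has terminal object $(0,[\emptyset,\emptyset,\operatorname{id}])$ (recall $\Psi(0)=\emptyset$) and is therefore contractible. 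Hence $\Psi$ induces a homotopy equivalence of geometric realisations.
\end{proof}
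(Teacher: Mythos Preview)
Your proposal is correct and matches the paper's approach exactly: the paper simply states that the proposition follows from Quillen's Theorem~A together with \Cref{contractible comma category}, without even giving a separate proof environment. Your added remark on the empty list $\alpha=\emptyset$ is a harmless elaboration (the paper implicitly folds this into the general case).
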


Combining this with \Cref{Q-construction contract to 1-categories}, we have the following.

\begin{corollary}\label{Q-constructions have homotopy equivalent classifying spaces}
The zig-zag
\begin{align*}
Q(\mathscr{C})\xrightarrow{\ \Psi\ } Q_1(M_{\mathscr{C}})\xleftarrow{\kappa_{M_{\mathscr{C}}}} Q_2(M_{\mathscr{C}})
\end{align*}
induces a homotopy  equivalence of geometric realisations, $|Q(\mathscr{C})|\simeq |Q_2(M_{\mathscr{C}})|$.
\end{corollary}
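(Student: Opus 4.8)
The plan is to obtain the corollary as an immediate splicing of the two homotopy equivalences established in the two results just above, so there is very little genuinely new to do. First I would invoke \Cref{Quillen's Q-construction and our collapsed Q-construction}, which says that $\Psi\colon Q(\mathscr{C})\to Q_1(M_{\mathscr{C}})$ induces a homotopy equivalence $|Q(\mathscr{C})|\xrightarrow{\ \sim\ }|Q_1(M_{\mathscr{C}})|$; recall that this was proved by applying Quillen's Theorem A, the key input being the contractibility of the comma categories $\Psi\downarrow(m_i)$ from \Cref{contractible comma category}, which in turn rested on the uniqueness of terminal representatives of morphisms in $M_{\mathscr{C}}$. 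Second I would invoke \Cref{Q-construction contract to 1-categories}: the pseudofunctor $\kappa_{M_{\mathscr{C}}}\colon Q_2(M_{\mathscr{C}})\to Q_1(M_{\mathscr{C}})$ admits a right $2$-adjoint, since the hom-categories of $Q_2(M_{\mathscr{C}})$ are disjoint unions of categories with terminal objects by the terminal decomposition \Cref{terminal decomposition}; hence it induces a homotopy equivalence $|Q_2(M_{\mathscr{C}})|\xrightarrow{\ \sim\ }|Q_1(M_{\mathscr{C}})|$.

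With both maps in the zig-zag
$$Q(\mathscr{C})\xrightarrow{\ \Psi\ } Q_1(M_{\mathscr{C}})\xleftarrow{\ \kappa_{M_{\mathscr{C}}}\ } Q_2(M_{\mathscr{C}})$$
inducing homotopy equivalences on geometric realisations, I would conclude by composing the first with a homotopy inverse of the second, giving the asserted equivalence $|Q(\mathscr{C})|\simeq|Q_2(M_{\mathscr{C}})|$. The only point needing a word of care is that the two references land in the same intermediary object $Q_1(M_{\mathscr{C}})$, and that the geometric realisation of $1$- and $2$-categories (and of pseudofunctors between them) is well-behaved enough for a $2$-adjunction and a Theorem-A argument to yield genuine homotopy equivalences — all of which is recorded in the appendix on nerves and geometric realisations referenced earlier.

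There is no real obstacle here: this corollary is pure bookkeeping, all the substantive work — the decomposition-of-morphisms results \Cref{decomposition technical statement} and \Cref{terminal decomposition}, the full faithfulness of $\Psi$ in \Cref{Psi fully faithful}, and the comma-category computation \Cref{contractible comma category} — having already been carried out. Written out in full the proof amounts to a single sentence citing \Cref{Quillen's Q-construction and our collapsed Q-construction} and \Cref{Q-construction contract to 1-categories}.
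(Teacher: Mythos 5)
Your proposal is correct and matches the paper exactly: the corollary is obtained by combining \Cref{Quillen's Q-construction and our collapsed Q-construction} (that $\Psi$ induces a homotopy equivalence, via Theorem A and the contractible comma categories) with \Cref{Q-construction contract to 1-categories} (that $\kappa_{M_{\mathscr{C}}}$ induces a homotopy equivalence, via the right $2$-adjoint coming from terminal decompositions). There is nothing further to add.
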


Now we can combine this with the results of \Cref{monoidal categories and actions} to show that the monoidal category of flags and associated gradeds produces a model for the algebraic K-theory space. More precisely, we apply \Cref{Q construction and monoidal category have homotopy equivalent classifying spaces} which says that for any strict monoidal category $M$, there is a homotopy equivalence $B|M|\simeq |Q_2(M)|$ between the classifying space of the topological monoid $|M|$ and the geometric realisation of the $2$-categorical Q-construction.

\begin{theorem}\label{the K-theory space}
For any category with filtrations $\mathscr{C}$, the geometric realisation of Quillen's Q-construction $Q(\mathscr{C})$ is homotopy equivalent to the classifying space $B|M_{\mathscr{C}}|$ of the topological monoid $|M_{\mathscr{C}}|$. In particular, for any exact category $\mathscr{E}$, the space $\Omega B|M_{\mathscr{E}}|$ is a model for the algebraic K-theory space $K(\mathscr{E})$.
\end{theorem}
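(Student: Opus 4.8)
The plan is simply to concatenate the two chains of homotopy equivalences established in the previous subsections. On one side, \Cref{Q-constructions have homotopy equivalent classifying spaces} already provides a zig-zag
\[
Q(\mathscr{C})\xrightarrow{\ \Psi\ } Q_1(M_{\mathscr{C}})\xleftarrow{\ \kappa_{M_{\mathscr{C}}}\ } Q_2(M_{\mathscr{C}})
\]
inducing $|Q(\mathscr{C})|\simeq |Q_2(M_{\mathscr{C}})|$; on the other side, \Cref{Q construction and monoidal category have homotopy equivalent classifying spaces} gives $B|M|\simeq |Q_2(M)|$ for any essentially small strict monoidal category $M$. So the first step is to verify the (mild) hypotheses needed to apply the latter with $M = M_{\mathscr{C}}$: by \Cref{monoidal product in M_C} the category $M_{\mathscr{C}}$ carries a genuinely strict monoidal product $\circledast$ with unit the empty list, and since $\mathscr{C}$ is assumed essentially small, so is $M_{\mathscr{C}}$ (an object is a finite list of nonzero objects of $\mathscr{C}$, a morphism a surjective order-preserving map together with a finite list of flags). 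Composing the two equivalences then yields
\[
|Q(\mathscr{C})|\ \simeq\ |Q_2(M_{\mathscr{C}})|\ \simeq\ B|M_{\mathscr{C}}|,
\]
which is the first assertion of the theorem.

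For the ``in particular'' clause, recall that every exact category $\mathscr{E}$ is a category with filtrations, with distinguished short exact sequences the admissible ones (cf.\ the examples following the definition of categories with filtrations), so the first assertion applies verbatim to $\mathscr{E}$. By Quillen's definition of higher algebraic K-theory via the Q-construction, $K(\mathscr{E}) = \Omega |Q(\mathscr{E})|$ based at the zero object; applying the loop functor to $|Q(\mathscr{E})|\simeq B|M_{\mathscr{E}}|$ gives $K(\mathscr{E})\simeq \Omega B|M_{\mathscr{E}}|$. Note that no group-completion theorem is needed here: we are identifying $|Q(\mathscr{E})|$ itself --- an explicit connected delooping of $K(\mathscr{E})$ --- with the classifying space $B|M_{\mathscr{E}}|$ of the topological monoid $|M_{\mathscr{E}}|$.

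There is essentially no remaining obstacle: the substantive content is exactly the two corollaries being chained, namely the terminal-decomposition analysis of \Cref{terminal decomposition} forcing the hom-categories of $Q_2(M_{\mathscr{C}})$ to be disjoint unions of categories with terminal objects, the Theorem~A argument for $\Psi$ via the contractible comma categories of \Cref{contractible comma category}, and the edgewise-subdivision input behind \Cref{Q construction and monoidal category have homotopy equivalent classifying spaces}. The one point that warrants a word of care is strictness: \Cref{M acting on itself} and the results of \Cref{monoidal categories and actions} were stated for \emph{strict} monoidal categories, so we should emphasise that $M_{\mathscr{C}}$ is strict on the nose (not merely up to coherent isomorphism) --- which is built into its construction, concatenation of linearly ordered sets and of flags being strictly associative and unital --- and that for a general exact category this strictness is purchased, exactly as in the construction of $M_{\mathscr{C}}$, by taking chosen representatives for subobjects and quotient objects as part of the data of a morphism.
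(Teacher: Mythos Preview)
Your proposal is correct and follows essentially the same approach as the paper: the paper's argument is exactly to chain \Cref{Q-constructions have homotopy equivalent classifying spaces} with \Cref{Q construction and monoidal category have homotopy equivalent classifying spaces}, and then invoke Quillen's definition $K(\mathscr{E})=\Omega|Q(\mathscr{E})|$ for the exact-category statement. Your added remarks on strictness and essential smallness of $M_{\mathscr{C}}$ are appropriate checks of the hypotheses and do not deviate from the paper's route.
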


\appendix
\section{Nerves and geometric realisations}\label{appendix nerves and geometric realisations}

We give a quick recap of the definitions of double categories and $2$-categories, their nerves and their geometric realisations. We only define the notions that we will need and we refrain from specifying the various coherency axioms; these can be found in any good source on the subject (see for example \cite{Leinster}).

\subsection{Double categories.}

Let $\operatorname{Cat}$ denote the category of small categories.

\begin{definition}
A \textit{double category} is a category internal to $\operatorname{Cat}$: it consists of an object category $C_0$ and a morphism category $C_1$ equipped with source and target maps $s,t\colon C_1\rightarrow C_0$, an identity section $e\colon C_0\rightarrow C_1$, and a vertical composition rule $c\colon C_1\times_{C_0}C_1\rightarrow C_1$ satisfying the necessary coherency axioms. We write $\mathscr{C}=[C_1\rightrightarrows C_0]$, omitting the identity and vertical composition functors from the notation.

\medskip

The objects of $C_0$ are called the \textit{objects} of $\mathscr{C}$, the morphisms of $C_0$ are call the \textit{horizontal morphisms}, the objects of $C_1$ are called the \textit{vertical morphisms}, and the morphisms of $C_1$ are called \textit{$2$-cells}.
\defend
\end{definition}

\begin{definition}
Let $\mathscr{C}=[C_1\rightrightarrows C_0]$ be a double category. The \textit{transpose} $\mathscr{C}^t$ of $\mathscr{C}$ is the double category obtained by interchanging vertical and horizontal morphisms.
\defend
\end{definition}

\begin{definition}
Let $\mathscr{C}=[C_1\rightrightarrows C_0]$ and $\mathscr{D}=[D_1\rightrightarrows D_0]$ be double categories. A \textit{double functor} $F\colon \mathscr{C}\rightarrow \mathscr{D}$ is a pair of functors $(F_1\colon C_1\rightarrow D_1,F_0\colon C_0\rightarrow D_0)$ which commute with the source, target, identity and vertical composition functors.
\defend
\end{definition}

Let us remark that in the following definition, when we say simplicial category, we mean a simplicial object in categories and \textit{not} a category enriched in simplicial sets.

\begin{definition}
Let $\mathscr{C}=[C_1\rightrightarrows C_0]$. The \textit{horizontal nerve} of $\mathscr{C}$ is the simplicial category $N^h_\bullet(\mathscr{C})$ defined as follows: the category $N^h_n(\mathscr{C})$ has object set $N_n(C_0)$ and morphism set $N_n(C_1)$ with the inherited source and target maps, i.e.~a morphism from $c_0\xrightarrow{f_1} \cdots \xrightarrow{f_n} c_n$ to $d_0\xrightarrow{g_1} \cdots \xrightarrow{g_n} d_n$ is a sequence
\begin{align*}
\phi_0\xrightarrow{\alpha_1}\cdots \xrightarrow{\alpha_n}\phi_n
\end{align*}
in $C_1$ with $s(\phi_i)=c_i$, $t(\phi_i)=d_i$, $s(\alpha_i)=f_i$ and $t(\alpha_i)=g_i$ for all $i$. Composition is given by vertical composition in $\mathscr{C}$.

\medskip

The \textit{vertical nerve} of $\mathscr{C}$ is the simplicial category $N^v_\bullet(\mathscr{C})=N^h_\bullet(\mathscr{C}^t)$ given by the horizontal nerve of the transpose double category. More precisely, the category $N^v_n(\mathscr{C})$ has as objects sequences of vertical morphisms
\begin{align*}
c_0\xrightarrow{\phi_1} c_1 \xrightarrow{\phi_2} \cdots \xrightarrow{\phi_n} c_n
\end{align*}
and a morphism from $c_0\xrightarrow{\phi_1} \cdots \xrightarrow{\phi_n} c_n$ to $d_0\xrightarrow{\psi_1} \cdots \xrightarrow{\psi_n} d_n$ is a collection of $2$-cells
\begin{align*}
\alpha_i\colon \phi_i\Rightarrow \psi_i
\end{align*}
satisfying $t(\alpha_i)=s(\alpha_{i+1})$ for all $i$.

\medskip

The \textit{double nerve} of $\mathscr{C}$ is the bisimplicial set $N_{\bullet\bullet}(\mathscr{C})$ obtained by applying the usual $1$-categorical nerve functor levelwise to the horizontal nerve of $\mathscr{C}$:
\begin{align*}
N_{nk}(\mathscr{C})=N_n^{}(N^h_k(\mathscr{C}))=N_k^{}(N^v_n(\mathscr{C})).
\end{align*}

\medskip

The \textit{geometric realisation} of $\mathscr{C}$, denoted $|\mathscr{C}|$, is the total geometric realisation of $N_{\bullet\bullet}(\mathscr{C})$.
\defend
\end{definition}

\begin{observation}
A double functor induces a continuous map of geometric realisations.
\exend
\end{observation}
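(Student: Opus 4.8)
The assignment $\mathscr{C}\mapsto|\mathscr{C}|$ factors as a composite of three functorial operations, so the claim is a matter of observing that a double functor is compatible with each of them. The plan is to go through these three steps in order.

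First, I would observe that a double functor $F=(F_1\colon C_1\to D_1,\ F_0\colon C_0\to D_0)$ induces a morphism of horizontal nerves $N^h_\bullet(F)\colon N^h_\bullet(\mathscr{C})\to N^h_\bullet(\mathscr{D})$, i.e.\ a map of simplicial objects in $\operatorname{Cat}$. Indeed, for each $n$ the ordinary nerve functor $N_\bullet\colon\operatorname{Cat}\to\operatorname{sSet}$ applied to $F_0$ and $F_1$ gives functors $N_n(C_0)\to N_n(D_0)$ and $N_n(C_1)\to N_n(D_1)$; since $F_0$ and $F_1$ commute with the source, target, identity section and vertical composition functors of the double category structure (this is exactly the definition of a double functor), these assemble into a functor $N^h_n(\mathscr{C})\to N^h_n(\mathscr{D})$ for every $n$, and the same compatibilities, together with functoriality of $N_\bullet$, show these functors commute with the simplicial structure maps of the horizontal nerve. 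Hence $N^h_\bullet(F)$ is a well-defined morphism of simplicial categories.

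Second, I would apply the ordinary nerve functor $N_\bullet\colon\operatorname{Cat}\to\operatorname{sSet}$ levelwise. Since $N_\bullet$ is a functor, this takes the morphism of simplicial categories $N^h_\bullet(F)$ to a morphism of bisimplicial sets
$$N_{\bullet\bullet}(F)\colon N_{\bullet\bullet}(\mathscr{C})\longrightarrow N_{\bullet\bullet}(\mathscr{D}),$$
with $N_{nk}(F)=N_n(N^h_k(F))$. Third, total geometric realisation is a functor from bisimplicial sets to topological spaces (it is computed by a coend, hence is functorial in the bisimplicial set), so $N_{\bullet\bullet}(F)$ induces a continuous map $|\mathscr{C}|=|N_{\bullet\bullet}(\mathscr{C})|\to|N_{\bullet\bullet}(\mathscr{D})|=|\mathscr{D}|$. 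Finally I would note that each of the three operations preserves identities and composites, so the whole assignment $F\mapsto|F|$ is functorial; in particular $|\cdot|$ upgrades to a functor from the category of (small) double categories and double functors to $\operatorname{Top}$.

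There is no real obstacle here: the only thing that requires a moment's attention is the compatibility of $N^h_\bullet(F)$ with the simplicial structure maps of the horizontal nerve, and this is immediate from the axioms of a double functor together with functoriality of the classical nerve. Everything else is formal functoriality of the nerve and of geometric realisation, so the proof is a short paragraph rather than a computation.
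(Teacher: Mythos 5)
Your proposal is correct and is exactly the argument the paper leaves implicit (the Observation is stated without proof, the point being that the horizontal nerve, the levelwise classical nerve, and total geometric realisation are all functorial, and a double functor is by definition compatible with the structure maps defining the horizontal nerve). Nothing to add.
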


\subsection{Strict \texorpdfstring{$2$}{2}-categories.}

We will need to work with $2$-categories which are not necessarily small, nor even locally small, that is, the hom-categories need not be small either. However, we may restrict our attention to strict $2$-categories.

\begin{definition}
A \textit{strict $2$-category} $Q$ consists of a collection $\operatorname{ob} Q$ of objects and for each pair of objects $a,b\in \operatorname{ob} Q$, a hom-category $Q(a,b)$. It is equipped with composition functors $Q(b,c)\times Q(a,b)\rightarrow Q(a,c)$ and identities $\operatorname{id}_a\in Q(a,a)$ for all $a,b,c\in \operatorname{ob} Q$, and these must satisfy the necessary (strict) coherency axioms.

\medskip

The objects of the hom categories are called \textit{morphisms} and the morphisms are called \textit{$2$-cells}. We denote composition of $2$-cells along morphisms (within the hom-categories) by $\circ_h$ ($h$ for horizontal), and composition of $2$-cells along objects (via the composition functors) by $\circ_v$ ($v$ for vertical). Composition of morphisms will be denoted by $\circ$ (note that this is also given by the composition functors, so these composites are compatible with the vertical composition of $2$-cells).

\medskip

Let $Q,R$ be strict $2$-categories. A \textit{pseudofunctor} $F\colon Q\rightarrow R$ consists of the following data:
\begin{enumerate}
\item an assignment $F\colon \operatorname{ob} Q\rightarrow \operatorname{ob} R$,
\item for every pair of objects $a,b\in \operatorname{ob} Q$, a functor $F_{a,b}\colon Q(a,b)\rightarrow R(F(a),F(b))$,
\item for any pair of composable morphism $f\colon a\rightarrow b$, $g\colon b\rightarrow c$ in $Q$, an invertible $2$-cell $\hat{F}_{g,f}\colon F_{b,c}(g)\circ F_{a,b}(f)\rightarrow F_{a,c}(g\circ f)$,
\item for all objects $a\in \operatorname{ob} Q$, an invertible $2$-cell $\hat{F}_a\colon \operatorname{id}_{F(a)}\rightarrow F_{a,a}(\operatorname{id}_a)$,
\end{enumerate}
subject to the necessary coherency axioms.

We say that a pseudofunctor is \textit{strict} (or a \textit{$2$-functor}), if the invertible $2$-cells above are the identity.
\defend
\end{definition}

\begin{definition}
Let $Q,R$ be strict $2$-categories, and let $F,G\colon Q\rightarrow R$ be pseudofunctors. An \textit{oplax natural transformation} $\alpha\colon F\Rightarrow G$ consists of the following data:
\begin{itemize}[label=$\ast$]
\item for each $a\in \operatorname{ob} Q$, a morphism $\alpha_a\colon F(a)\rightarrow G(a)$,
\item for all $a,b\in \operatorname{ob} Q$, a natural transformation
\begin{align*}
\hat{\alpha}\colon(\alpha_b)_*\circ F_{a,b}\Rightarrow (\alpha_a)^*\circ G_{a,b}
\end{align*}
of functors $Q(a,b)\rightarrow R(F(a),G(b))$,
\end{itemize}
such that the following diagrams of horizontal composites of $2$-cells commute:
\begin{enumerate}
\item for all $a\in \operatorname{ob} Q$,
\begin{center}
\begin{tikzpicture}
\matrix (m) [matrix of math nodes,row sep=2em,column sep=2em]
  {
	\alpha_a & G_{a,a}(\operatorname{id}_a)\circ \alpha_a \\
	\alpha_a \circ F_{a,a}(\operatorname{id}_a) &  \\
  };
  \path[-stealth] 
	(m-1-1) edge node[above]{$\scriptstyle\hat{G}_a\circ_v\operatorname{id}_{\alpha_a}$} (m-1-2) edge node[left]{$\scriptstyle\operatorname{id}_{\alpha_a}\circ_v\hat{F}_a$} (m-2-1)
	(m-2-1) edge node[below right]{$\scriptstyle\hat{\alpha}_{\operatorname{id}_a}$} (m-1-2)
  ;
\end{tikzpicture}
\end{center}

\item for all composable morphisms $g\colon a\rightarrow b$, $f\colon b\rightarrow c$ in $Q$,
\begin{center}
\begin{tikzpicture}
\matrix (m) [matrix of math nodes,row sep=2em,column sep=3em]
  {
	\alpha_c \circ F_{b,c}(g)\circ F_{a,b}(f) & \alpha_c\circ F_{a,c}( g\circ f) & G_{a,c}(g\circ f)\circ\alpha_a \\
	G_{b,c}(g)\circ\alpha_b\circ F_{a,b}(f) & G_{b,c}(g)\circ G_{a,b}(f) \circ \alpha_a &  \\
  };
  \path[-stealth] 
	(m-1-1) edge node[above]{$\scriptstyle\operatorname{id}_{\alpha_c}\circ_v \hat{F}_{g,f}$} (m-1-2) edge node[left]{$\scriptstyle\hat{\alpha}_g\circ_v \operatorname{id}_{F_{a,b}(f)}$} (m-2-1)
	(m-1-2) edge node[above]{$\scriptstyle\hat{\alpha}_{g\circ f}$} (m-1-3)
	(m-2-1) edge node[below]{$\scriptstyle\operatorname{id}_{G_{b,c}(g)}\circ_v \hat{\alpha}_f$} (m-2-2)
	(m-2-2) edge node[below right]{$\scriptstyle\hat{G}_{g,f} \circ_v \operatorname{id}_{\alpha_a}$} (m-1-3)
  ;
\end{tikzpicture}
\end{center}
\end{enumerate}
A \textit{lax natural transformation} is as above, but with the natural transformation $\hat{\alpha}$ reversed.
\defend
\end{definition}

\begin{definition}
A strict $2$-category $Q$ is \textit{small} if the hom-categories are small and $\operatorname{ob} Q$ is a set. It is \textit{essentially small} if it is equivalent to a small $2$-category.
\defend
\end{definition}

\begin{observation}
Any small strict $2$-category $Q$ can be viewed as a double category with only identity horizontal morphisms, i.e. with discrete object category:
\begin{align*}
\mathscr{Q}=\bigg[\coprod_{a,b\in \operatorname{ob} Q} Q(a,b)\ \rightrightarrows\  \operatorname{ob} Q\ \bigg]
\end{align*}
with the obvious structure maps.
\exend
\end{observation}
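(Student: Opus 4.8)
The plan is to write down the data of a double category $\mathscr{Q}=[C_1\rightrightarrows C_0]$ directly from the strict $2$-categorical structure of $Q$, and then observe that the coherence axioms demanded of a double category are, in this discrete-object-category situation, literally the axioms of a strict $2$-category.

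First I would take $C_0$ to be the discrete category on the set $\operatorname{ob} Q$ (objects the objects of $Q$, only identity morphisms), and $C_1:=\coprod_{a,b\in\operatorname{ob} Q}Q(a,b)$, so that an object of $C_1$ is a morphism $f\colon a\to b$ of $Q$ and a morphism of $C_1$ is a $2$-cell $\alpha\colon f\Rightarrow g$ between parallel morphisms $f,g\colon a\to b$, with composition the $\circ_h$-composition inside the hom-categories. The source and target functors $s,t\colon C_1\to C_0$ send $f\colon a\to b$ to $a$ resp.\ $b$ on objects, and are forced to send every $2$-cell $\alpha\colon f\Rightarrow g$ to $\operatorname{id}_a$ resp.\ $\operatorname{id}_b$, these being the only morphisms available in the discrete category $C_0$; this is manifestly functorial. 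The identity section $e\colon C_0\to C_1$ is $a\mapsto\operatorname{id}_a\in Q(a,a)$, extended by identities on $2$-cells.

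Next I would supply the vertical composition functor. Because $C_0$ is discrete, the pullback $C_1\times_{C_0}C_1$ is simply the full subcategory of $C_1\times C_1$ on composable pairs $(f,g)$ with $f\colon a\to b$, $g\colon b\to c$; over each such pair of objects its morphisms are exactly those of $Q(b,c)\times Q(a,b)$. I would then let the vertical composition $c\colon C_1\times_{C_0}C_1\to C_1$ be precisely the composition functor $Q(b,c)\times Q(a,b)\to Q(a,c)$ of the $2$-category, so $c(g,f)=g\circ f$ on morphisms of $Q$ and $c(\beta,\alpha)=\beta\circ_v\alpha$ on $2$-cells. Functoriality of $c$ is exactly the interchange law relating $\circ_h$ and $\circ_v$ in a strict $2$-category, and compatibility with $s$, $t$, $e$ is immediate from the definitions. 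The two remaining double-category axioms — associativity and unitality of $c$ (with respect to $e$) — unwind to the strict associativity and unit laws of $Q$, which hold on the nose since $Q$ is strict, so there are no coherence isomorphisms to carry.

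Finally I would observe that the horizontal morphisms of $\mathscr{Q}$, being by definition the morphisms of $C_0$, are all identities, so $\mathscr{Q}$ has discrete object category as claimed; and its vertical morphisms, $2$-cells, and the composites $\circ$, $\circ_h$, $\circ_v$ recover those of $Q$ verbatim. There is no genuine obstacle: the only point worth flagging is that discreteness of $C_0$ collapses the fibre product $C_1\times_{C_0}C_1$ to the category of composable pairs, so the ``extra'' double-categorical data ($2$-cells with non-identity horizontal boundary, and their horizontal composition) is vacuous, and exactly the strict $2$-categorical structure survives. If desired I would add a one-line remark that this construction is natural — a (strict) $2$-functor induces a double functor, and an oplax transformation an appropriate transformation — which is what legitimises applying the nerve and geometric realisation machinery of this appendix to the $2$-categories $Q_2(M)$, $\mathscr{A}_n$ appearing in the body of the paper.
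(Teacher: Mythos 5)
Your construction is correct and is exactly the unwinding the paper intends: the paper records this as an observation with ``the obvious structure maps'', and your proposal simply makes those maps explicit, with functoriality of vertical composition being the interchange law and strictness of $Q$ giving associativity and unitality on the nose. Nothing is missing, and the concluding naturality remark matches how the observation is used in the appendix.
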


\begin{definition}
Let $Q$ be a small strict $2$-category. The \textit{geometric realisation} $|Q|$ of $Q$ is the geometric realisation of the double category $\mathscr{Q}$ defined above.
\defend
\end{definition}

\begin{remark}
This definition agrees with the usual definition of the geometric realisation of $Q$ via the double nerve (see for example \cite{BullejosCegarra}). There are, however, various options for defining the nerve of a small (strict) $2$-category. See \cite{CarrascoCegarraGarzon} for a comparison in which it is also established that the ten different nerve constructions (of small bicategories) that they consider all have homotopy equivalent geometric realisations. 
\exend
\end{remark}

The following proposition is most easily proved by exploiting the fact that there is a natural homotopy equivalence between the geometric realisation of a small $2$-category and the geometric realisation of its geometric nerve (\cite[Theorem 1]{BullejosCegarra}), see for example \cite[Proposition 7.1]{CarrascoCegarraGarzon}.
\begin{proposition}
Lax and oplax natural transformations induce homotopies between the induced maps of geometric realisations.
\end{proposition}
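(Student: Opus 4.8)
The plan is to reduce the statement to the analogous, well-documented fact for the geometric (Duskin--Street) nerve of a $2$-category, and then transport along the comparison equivalence between the two notions of geometric realisation.

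First I would recall the \emph{geometric nerve} $N^{\Delta}(Q)$ of a small strict $2$-category $Q$: its set of $n$-simplices is the set of normal lax functors $[n]\to Q$, where $[n]$ is regarded as a $2$-category with only identity $2$-cells; concretely an $n$-simplex records objects $q_0,\dots,q_n$, morphisms $f_{ij}\colon q_i\to q_j$ for $i\le j$ with $f_{ii}=\operatorname{id}$, and $2$-cells $f_{jk}\circ f_{ij}\Rightarrow f_{ik}$ for $i\le j\le k$ satisfying the evident coherence. A pseudofunctor $F\colon Q\to R$ induces a simplicial map $N^{\Delta}(F)\colon N^{\Delta}(Q)\to N^{\Delta}(R)$ by post-composing this data, using the coherence $2$-cells of $F$. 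By \cite[Theorem~1]{BullejosCegarra} (see also \cite[Proposition~7.1]{CarrascoCegarraGarzon}) there is a homotopy equivalence $|N^{\Delta}(Q)|\simeq |Q|$, natural in $Q$ with respect to pseudofunctors, where $|Q|$ on the right is the double-nerve realisation defined in this appendix. It therefore suffices to produce, for an oplax natural transformation $\alpha\colon F\Rightarrow G$, a simplicial homotopy $N^{\Delta}(Q)\times\Delta^1\to N^{\Delta}(R)$ from $N^{\Delta}(F)$ to $N^{\Delta}(G)$; the lax case is handled dually (replacing $\alpha$ by its mate, equivalently passing to co-$2$-categories, which only reverses the orientation of the interval).

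Second I would build this homotopy. Let $\mathbf 2$ denote the category $\{0\to 1\}$, so $N(\mathbf 2)=\Delta^1$. The coherence data of an oplax natural transformation $\alpha\colon F\Rightarrow G$ is exactly what is needed to assemble $F$, $G$, the components $\alpha_a$, and the naturality $2$-cells $\hat\alpha$ into a single pseudofunctor $H_\alpha\colon Q\times\mathbf 2\to R$ with $H_\alpha|_{Q\times\{0\}}=F$ and $H_\alpha|_{Q\times\{1\}}=G$: matching the two displayed diagrams in the definition of oplax natural transformation with the unit and associativity coherences of a pseudofunctor on the product is precisely the standard ``mapping cylinder'' description. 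Since $\mathbf 2$ has only identity $2$-cells, a normal lax functor $[n]\to Q\times\mathbf 2$ is the same as a pair consisting of a normal lax functor $[n]\to Q$ and a functor $[n]\to\mathbf 2$, so $N^{\Delta}(Q\times\mathbf 2)\cong N^{\Delta}(Q)\times\Delta^1$. Applying $N^{\Delta}(-)$ to $H_\alpha$ then yields
$$N^{\Delta}(Q)\times\Delta^1 \;\cong\; N^{\Delta}(Q\times\mathbf 2) \xrightarrow{\;N^{\Delta}(H_\alpha)\;} N^{\Delta}(R),$$
and precomposing with the inclusions $N^{\Delta}(Q)=N^{\Delta}(Q)\times\{i\}\hookrightarrow N^{\Delta}(Q)\times\Delta^1$ gives exactly $N^{\Delta}(F)$ and $N^{\Delta}(G)$. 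Taking geometric realisations and using $|N^{\Delta}(Q)\times\Delta^1|\cong|N^{\Delta}(Q)|\times|\Delta^1|$ produces a homotopy between $|N^{\Delta}(F)|$ and $|N^{\Delta}(G)|$; naturality of the Bullejos--Cegarra equivalence with respect to $F$ and $G$ then transports this to a homotopy between the induced maps $|Q|\to|R|$.

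The main obstacle is the bookkeeping in the second step: checking carefully that an oplax natural transformation, as axiomatised above, really does package into a pseudofunctor $Q\times\mathbf 2\to R$, and fixing normalisation conventions so that $N^{\Delta}(H_\alpha)$ is a bona fide simplicial map restricting correctly on the two copies of $Q$. This is routine but is where all the genuine content lies; invoking \cite{BullejosCegarra,CarrascoCegarraGarzon} is what lets us avoid re-deriving the comparison between the various possible nerve constructions and their realisations.
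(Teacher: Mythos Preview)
Your overall strategy is exactly the one the paper indicates: the paper's ``proof'' is really just a pointer to \cite[Theorem~1]{BullejosCegarra} and \cite[Proposition~7.1]{CarrascoCegarraGarzon}, i.e.\ pass to the geometric (Duskin--Street) nerve via the natural comparison and produce the homotopy there. So the reduction you describe is the intended one.

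There is, however, a genuine slip in your second step. An oplax natural transformation $\alpha\colon F\Rightarrow G$ does \emph{not} in general assemble into a pseudofunctor $H_\alpha\colon Q\times\mathbf 2\to R$. The naturality $2$-cells $\hat\alpha_f\colon \alpha_b\circ F(f)\Rightarrow G(f)\circ \alpha_a$ are not assumed invertible, and these are precisely what would have to serve as (some of) the coherence isomorphisms of $H_\alpha$: in $Q\times\mathbf 2$ the morphism $(f,!)\colon (a,0)\to(b,1)$ factors both as $(\operatorname{id}_b,!)\circ(f,\operatorname{id}_0)$ and as $(f,\operatorname{id}_1)\circ(\operatorname{id}_a,!)$, and comparing $H_\alpha$ on these two factorisations forces an invertible $2$-cell between $\alpha_b\circ F(f)$ and $G(f)\circ\alpha_a$, which you do not have. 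What you actually obtain is a \emph{normal lax functor} $H_\alpha\colon Q\times\mathbf 2\to R$ (and dually for lax $\alpha$).

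This is not fatal. The geometric nerve has $n$-simplices the normal lax functors $[n]\to(-)$, and post-composition with a normal lax functor preserves normal lax functors, so $H_\alpha$ still induces a simplicial map $N^\Delta(Q)\times\Delta^1\cong N^\Delta(Q\times\mathbf 2)\to N^\Delta(R)$ restricting to $N^\Delta(F)$ and $N^\Delta(G)$ at the endpoints. The transport along the Bullejos--Cegarra comparison only needs naturality with respect to $F$ and $G$, which are pseudofunctors, so that part of your argument is fine. In short: replace ``pseudofunctor'' by ``normal lax functor'' in your mapping-cylinder step, observe that $N^\Delta$ is functorial for such, and the rest of your outline goes through unchanged.
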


\begin{definition}
For essentially small strict $2$-category $Q$, we define the \textit{geometric realisation} $|Q|$ of $Q$ to be the geometric realisation of any equivalent small $2$-category.
\defend
\end{definition}

\bibliographystyle{alpha}
\bibliography{RBS}

\end{document}